\def\l@subsection{\@tocline{2}{0pt}{2.5pc}{5pc}{}}
\setlist[1]{labelindent=\parindent}
\setlist[enumerate, 1]{label = \textnormal{(\arabic*)}, ref = \arabic*}
\setlist[enumerate, 2]{label = \textnormal{(\alph*)}, ref = \alph*}
\setlist[enumerate, 3]{label = \textnormal{(\roman*)}, ref = \roman*}
\numberwithin{equation}{section}
\newcommand\ddaaux{\rotatebox[origin=c]{-90}{\scalebox{0.70}{$\dashrightarrow$}}} 
\newcommand\dashdownarrow{\mathrel{\text{\ddaaux}}}
\let\emptyset\varnothing
\let\AA\undefined
\let\to\longrightarrow
\let\mapsto\longmapsto
\declaretheorem[sibling=equation, style=definition]{definition}
\declaretheorem[sibling=equation, style=definition, name=Proposition-Definition]{proposition-definition}
\declaretheorem[sibling=equation]{theorem}
\declaretheorem[sibling=theorem]{proposition}
\declaretheorem[sibling=equation]{lemma}
\declaretheorem[sibling=equation]{corollary}
\declaretheorem[sibling=equation, style=remark]{remark}
\declaretheorem[sibling=equation, style=remark]{example}
\DeclareMathOperator{\GL}{GL}
\DeclareMathOperator{\Sym}{Sym}
\DeclareMathOperator{\Hom}{Hom}
\DeclareMathOperator{\Aut}{Aut}
\DeclareMathOperator{\id}{id}
\DeclareMathOperator{\im}{im}
\DeclareMathOperator{\coker}{coker}
\DeclareMathOperator{\rk}{rk}
\newcommand{\SheafHom}{\underline{\mathrm{Hom}}}
\newcommand{\SheafExt}{\underline{\mathrm{Ext}}}
\newcommand{\SheafAut}{\underline{\mathrm{Aut}}}
\DeclareMathOperator{\Spec}{Spec}
\DeclareMathOperator{\Supp}{Supp}
\DeclareMathOperator{\Pic}{Pic}
\DeclareMathOperator{\NS}{NS}
\DeclareMathOperator{\codim}{codim}
\DeclareMathOperator{\red}{red}
\DeclareMathOperator{\fppf}{fl}
\DeclareMathOperator{\pr}{pr}
\DeclareMathOperator{\Lie}{Lie}
\newcommand{\ZZ}{\mathbb{Z}}
\newcommand{\QQ}{\mathbb{Q}}
\newcommand{\CC}{\mathbb{C}}
\newcommand{\PP}{\mathbb{P}}
\newcommand{\GG}{\mathbb{G}}
\newcommand{\AA}{\mathbb{A}}
\newcommand{\TT}{\mathbb{T}}
\newcommand{\VV}{\mathbb{V}}
\DeclareMathOperator{\Fitt}{Fitt}
\newcommand{\DD}{\mathbb{D}}
\DeclareMathOperator{\Sing}{Sing}
\DeclareMathOperator{\tor}{tor}
\DeclareMathOperator{\Exc}{Exc}
\DeclareMathOperator{\Dom}{Dom}
\DeclareMathOperator{\opp}{op}
\newcommand{\Set}{\mathsf{Set}}
\newcommand{\Group}{\mathsf{Gp}}
\newcommand{\Ab}{\mathsf{Ab}}
\newcommand{\Sch}{\mathsf{Sch}}
\DeclareMathOperator{\St}{St}
\DeclareMathOperator{\elliptic}{ell}
\DeclareMathOperator{\sep}{sep}
\DeclareMathOperator{\ft}{ft}
\newcommand{\Sp}{\mathsf{Sp}}
\DeclareSymbolFont{cyrletters}{OT2}{wncyr}{m}{n}
\DeclareMathSymbol{\Sha}{\mathalpha}{cyrletters}{"58}
\begin{document}
\title[The N\'eron model of a Lagrangian fibration]{The N\'eron model of a higher-dimensional Lagrangian fibration}

\author[Y.-J. Kim]{Yoon-Joo Kim}
\address{Department of Mathematics, Columbia University, New York, NY 10027, USA}
\email{yk3029@columbia.edu}


\begin{abstract}
	Let $\pi : X \to B$ be a projective Lagrangian fibration of a smooth symplectic variety $X$ to a smooth variety $B$. Denote the complement of the discriminant locus by $B_0 = B \setminus \operatorname{Disc}(\pi)$, its preimage by $X_0 = \pi^{-1}(B_0)$, and the complement of the critical locus by $X' = X \setminus \operatorname{Sing}(\pi)$. Under an assumption that the morphism $X' \to B$ is surjective, we construct (1) the N\'eron model of the abelian fibration $\pi_0 : X_0 \to B_0$ and (2) the N\'eron model of its automorphism abelian scheme $\operatorname{Aut}^{\circ}_{\pi_0} \to B_0$. Contrary to the case of elliptic fibrations, $X'$ may not be the N\'eron model of $X_0$; this is precisely because of the existence of flops in higher-dimensional symplectic varieties. Using such techniques, we analyze when $X' \to B$ is a torsor under a smooth group scheme and also revisit some known results in the literature.
\end{abstract}

\maketitle
\tableofcontents

\section{Introduction}
	Let $\pi : S \to \Delta$ be a proper minimal elliptic fibration form a complex smooth surface $S$ to a smooth curve $\Delta$. Assume that $\pi$ has no multiple fibers and in particular $S$ has a trivial relative canonical bundle by Kodaira's canonical bundle formula. Consider a smooth, quasi-projective, and surjective morphism
	\[ \pi' : S' \coloneq S \setminus \Sing(\pi) \to \Delta .\]
	In the 1960s, N\'eron \cite{neron:original_article} and Raynaud \cite{raynaud:neron} proved the existence of a unique smooth, commutative, and quasi-projective group scheme $P \to \Delta$ making $S'$ a torsor under its action. In particular, if the elliptic fibration $\pi$ has a section then $S' = P$ has a group scheme structure. Nowadays this is called the \emph{N\'eron model theory} and it is interpreted in two steps: (1) the generic fiber $S_K$ admits a N\'eron model that is isomorphic to $\pi' : S' \to \Delta$, and (2) the (neutral) automorphism group $\Aut^{\circ}_{S_K}$ of the generic fiber admits a N\'eron model $P \to \Delta$. One can think of the torsor property as a formal consequence of these two.
	
	A \emph{smooth symplectic variety} is a smooth variety $X$ equipped with a closed non-degenerate $2$-form $\sigma \in H^0 (X, \Omega_X^2)$, and its \emph{Lagrangian fibration} is a proper surjective morphism $\pi : X \to B$ with connected fibers whose general fiber is a Lagrangian subvariety. Note that $\pi$ is flat if and only if $B$ is smooth, and that the dimension of $B$ is precisely half the dimension of $X$. The goal of this paper is to generalize the above result of N\'eron--Raynaud to a higher-dimensional flat Lagrangian fibration $\pi : X \to B$ from a smooth symplectic variety $X$. Examples include $\GL$-Hitchin fibrations for smooth projective curves or Lagrangian fibrations of projective hyper-K\"ahler manifolds.
	
	The original definition of the N\'eron model makes sense only over $1$-dimensional regular schemes. Therefore, we first need its definition over higher-dimensional bases. This is achieved in \cite[Definition 1.1]{hol19} for abelian schemes and later refined in \cite[Definition 6.1]{hol-mol-ore-poi23} in full generality. Let us quickly recall their definition.
	
	\begin{definition}
		Let $B$ be a variety, $B_0 \subset B$ a dense Zariski open subset, and $f_0 : N_0 \to B_0$ a smooth morphism. A smooth morphism $f : N \to B$ from an algebraic space $N$ with $f^{-1}(B_0) \cong N_0$ is called the \emph{N\'eron model of $f_0$} if:
		\begin{itemize}
			\item For every algebraic space $Z$ smooth over $B$, every morphism $Z_0 \to N_0$ over $B_0$ uniquely extends to a morphism $Z \to N$ over $B$.
		\end{itemize}
	\end{definition}
	
	A technical subtlety is the introduction of algebraic spaces. We justify their necessity in \Cref{sec:backgrounds}. A scheme is called \emph{nowhere reduced} if it is non-reduced at every point. We are now ready to state the main theorem of this article.
	
	\begin{theorem} [Existence of the N\'eron models] \label{main:Neron model}
		Let $\pi : X \to B$ be a projective Lagrangian fibration from a smooth symplectic variety $X$ to a smooth variety $B$, both defined over $\CC$. Let $B_0 \subset B$ be the Zariski open subset over which $\pi$ is smooth and $\pi_0 : X_0 = \pi^{-1}(B_0) \to B_0$ a smooth morphism. Assume that $\pi$ does not have any nowhere reduced fiber. Then
		\begin{enumerate}
			\item There exists a N\'eron model $X^n \to B$ of $\pi_0 : X_0 \to B_0$.
			\item There exists a N\'eron model $P \to B$ of the abelian scheme
			\[ P_0 = \Aut^{\circ}_{\pi_0} \to B_0 .\]
		\end{enumerate}
		As a result, $X^n$ is a $P$-torsor.
	\end{theorem}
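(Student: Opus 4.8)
The idea is to reduce everything to a codimension-one analysis on $B$ and then feed the result into the theory of N\'eron models over higher-dimensional regular bases (\cite{hol19, hol-mol-ore-poi23}). Write $X' = X \setminus \Sing(\pi)$ for the smooth locus of $\pi$: it is smooth over $B$, restricts to $X_0$ over $B_0$, and is surjective onto $B$ thanks to the no-nowhere-reduced-fibre hypothesis. The proof then splits into three parts: (i) over a trait meeting a general point of a branch of $\operatorname{Disc}(\pi)$, show that $X'$ is already the N\'eron model and describe the corresponding degeneration; (ii) glue these codimension-one models into global N\'eron models $X^n \to B$ and $P \to B$, the only obstruction being a combinatorial ``alignment'' condition over the codimension-$\geq 2$ strata of $\operatorname{Disc}(\pi)$; (iii) deduce the torsor property formally from the universal property.

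\textbf{The codimension-one case.} Fix a branch $D \subset \operatorname{Disc}(\pi)$ and a trait $T \to B$ meeting $D$ transversally at a general point, with $T \setminus \{0\} \subset B_0$. Then $X_T = X \times_B T$ is regular (as $X$ is smooth and $T$ is generic), projective and flat over $T$, with generic fibre a torsor under an abelian variety and reduced special fibre. The substantive input here is a structural description of such a degeneration, in the spirit of Hwang--Oguiso's analysis of the characteristic foliation along the discriminant of a Lagrangian fibration: the monodromy transverse to $D$ is unipotent of a controlled Jordan type, so that after a possible \'etale base change $X_T$ is a semi-abelian degeneration of small toric rank glued cyclically in the pattern of Kodaira's fibres $I_n$, whence every section of $X_{T,K}$ specialises into the smooth locus. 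Granting this, the classical theory over a discrete valuation ring (N\'eron, Raynaud; K\"unnemann's semistable models in higher dimension) shows that the N\'eron model of $X_{T,K}$ exists and is recovered as $X_T^{\,\mathrm{sm}}$, and likewise identifies the N\'eron model of $\Aut^{\circ}_{X_{T,K}}$ with the corresponding relative $\Pic^0$-type group scheme; so in codimension one $X'$ does serve as the N\'eron model, exactly as for elliptic fibrations, and both models are canonical by uniqueness. The same log-monodromy governs $X_0$ and $P_0$, so a single analysis applies to both.

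\textbf{Globalisation and the alignment obstruction.} By \cite{hol19, hol-mol-ore-poi23}, a smooth $B$-model extending $X_0$ (resp.\ $P_0$) is the N\'eron model precisely when it is so in codimension one and an alignment condition holds over the codimension-$\geq 2$ points of $\operatorname{Disc}(\pi)$, and then the N\'eron model exists, \emph{a priori} as an algebraic space. Thus one must (a) verify alignment for $P_0 = \Aut^{\circ}_{\pi_0}$ (which by the previous step also handles $X_0$), and (b) check that the codimension-one models patch. For (a): at a codimension-two point where branches $D_1, D_2$ of the discriminant meet, alignment asks that the monodromy weight filtrations of the two log-monodromies be nested, e.g.\ that the toric directions cut out by $D_1$ and by $D_2$ be proportional. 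For an abstract abelian scheme over $B_0$ this can fail, by Holmes's examples; the extra structure here is the single smooth symplectic total space $X$ mapping to $B$, which I expect to furnish a simultaneous toroidal model along all branches and hence to force alignment; an alternative is to extract the condition from the constraints that $\sigma$ imposes on the variation of Hodge structure $R^{\bullet}(\pi_0)_{*}\QQ$ (Matsushita). For (b): the local models $X_T^{\,\mathrm{sm}}$ patch over the complement of a codimension-$\geq 2$ locus, and one extends across the deeper strata using $P$; this patching produces only an algebraic space precisely because flopping $X$ alters the smooth locus $X'$ incompatibly over different regions of $B$ — which is also why $X'$ itself need not be the N\'eron model globally.

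\textbf{The torsor property and the main obstacle.} Once $X^n$ and $P$ are in hand the torsor assertion is formal: products of N\'eron models are N\'eron models (immediate from the universal property), so $P \times_B X^n$ and $X^n \times_B X^n$ are the N\'eron models of $P_0 \times_{B_0} X_0$ and of $X_0 \times_{B_0} X_0$; the torsor isomorphism $(a,x) \mapsto (a\cdot x,\, x)$ over $B_0$ therefore extends uniquely to an isomorphism over $B$, the action and difference morphisms extend likewise, and the torsor axioms propagate from the dense open $B_0$. The main obstacle is step (a) above: proving that the monodromy data attached to a Lagrangian fibration is automatically aligned over the codimension-$\geq 2$ strata, so that no obstruction to the existence of the N\'eron model can occur. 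This is genuinely geometric and must exploit the symplectic form — either through a local normal form for $\pi$ along the deep strata of $\operatorname{Disc}(\pi)$ or through global Hodge-theoretic constraints. The codimension-one structure result and the verification that the local models patch (the source of the algebraic-space subtlety) are the other substantive points.
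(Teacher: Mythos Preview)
Your overall arc (codimension-one control, then globalize) matches the paper's, but the globalization mechanism you propose has a genuine gap, and the paper's actual route is rather different.

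The gap is in your step (ii). You frame the extension past codimension one as an alignment problem in the sense of Holmes, and name ``proving alignment'' as the main obstacle. But Holmes's criterion concerns \emph{separated} N\'eron models of Jacobians of nodal curves; the definition adopted here (from \cite{hol-mol-ore-poi23}) drops separatedness, and the paper in fact shows that $P$ and $X^n$ can fail to be separated or of finite type over $B$ (Proposition~1.5 and the examples of \S6.3). So alignment, in the sense you invoke, can genuinely fail for Lagrangian fibrations --- your ``main obstacle'' is not a missing lemma but a statement that is false in general --- and yet the N\'eron model still exists as a non-separated algebraic space. There is also no theorem in \cite{hol19, hol-mol-ore-poi23} of the shape ``aligned $+$ codimension-one N\'eron $\Rightarrow$ N\'eron model exists'' for an arbitrary abelian scheme over a regular base, so even the logical skeleton of your step (ii) is not available in the literature.

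What the paper does instead: first build a concrete smooth quasi-projective group scheme $P^a \subset \Aut_\pi$ (the translation automorphism scheme) as the equidimensional locus of the main component of $\Aut_\pi$; the symplectic input is that its Lie algebra is $\TT^*_B$, which reduces to the vanishing $R^n\pi_* T^1_\pi = 0$ proved via Schnell's Hodge-module symmetry of the decomposition theorem --- not a monodromy computation. One then shows $P^a$ is $\delta$-regular, and over the first $\delta$-locus $B_1$ proves that $P^a_1$ and $X'_1$ are already N\'eron, by decomposing birational maps between Lagrangian-fibred symplectic varieties into flops (Kawamata) and controlling the exceptional loci using the $P^{\circ\circ}$-action and $\delta$-regularity; this replaces your trait-by-trait Hwang--Oguiso analysis. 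The extension from $B_1$ to $B$ is the key new trick and goes through the \emph{dual} side: $\Pic^\circ_\pi$ is N\'eron under $B_1 \subset B$ because line bundles on a regular scheme are determined in codimension one, and a polarization furnishes a quasi-projective \'etale homomorphism $P^a \to \Pic^\circ_\pi$; a general lemma (``N\'eron under a quasi-projective \'etale homomorphism to a N\'eron model'') then manufactures $P$ from these two complementary pieces. Finally $X^n$ is built as the $P$-torsor extending the class $[X'_1] \in H^1_{\acute et}(B_1, P)$. Your torsor deduction at the end is correct and matches the paper's.
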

	
	The two N\'eron models $X^n$ and $P$ are isomorphic if and only if $\pi$ has a rational section. However, we should consider them as different geometric objects even when they are isomorphic. To have a better feeling for the N\'eron model $P \to B$, it is a smooth and commutative group algebraic space that is occasionally quite large: for example, it may be non-separated or non-finite type (\Cref{main:torsor locus}), which is a phenomenon that never arose for elliptic fibrations. For the two examples mentioned above, recall that a $\GL$-Higgs bundle on a smooth projective curve $C$ is a rank $1$ torsion-free sheaf supported on a spectral curve over $C$. A natural group scheme to consider is the relative Jacobian $J \to B$ of the universal family of spectral curves. The N\'eron model $P$ contains the relative Jacobian $J$ but is typically strictly larger. A Lagrangian fibration of a projective hyper-K\"aher manifold a priori does not come with a group algebraic space, but \Cref{main:Neron model} \emph{constructs} one.
	
	One interpretation of the N\'eron model $P \to B$ is a moduli space of (certain) birational automorphisms of $X$ over $B$. Hence $P$ acts on $X$ only birationally but not regularly in general. However, it always contains the largest open subgroup $P^a$ that acts regularly on $X$, the moduli subspace of (certain) automorphisms of $X$ over $B$. The N\'eron model $P$ satisfies the \emph{$\delta$-regularity} condition of Ng\^o (\S \ref{sec:delta-regularity}) as well: the closed subsets
	\[ D_i = \{ b \in B : \dim (P_b)_{\operatorname{aff}} \ge i \} \ \subset \ B \]
	have codimension $\ge i$ for all $i \ge 0$. Here $(P_b)_{\operatorname{aff}}$ denotes the maximal connected linear algebraic subgroup of $P_b$ in the Chevalley structure theorem.
	
	\begin{theorem} [Structure of the N\'eron model $P$] \label{main:structure of P}
		Keep the notations and assumptions of \Cref{main:Neron model}. Then
		\begin{enumerate}
			\item $P \to B$ is a smooth, commutative, and $\delta$-regular group algebraic space.
			
			\item $P$ contains a maximal open subgroup space $P^a$ that acts regularly on $X$ over $B$. Such $P^a \to B$ is a smooth, commutative, and quasi-projective group scheme, and the $P^a$-action makes $\pi : X \to B$ a $\delta$-regular abelian fibration.
			
			\item For each \'etale morphism $U \to B$, we have
			\begin{align*}
				P(U) = \{ f : X_U \dashrightarrow X_U : & \, \mbox{birational automorphism over } U \mbox{ that acts on} \\
					& \mbox{the generic fiber as a translation automorphism} \} .
			\end{align*}
			Same holds for the group $P^a(U)$ but with automorphisms $f : X_U \to X_U$.
			
			\item The open immersion $P^a \subset P$ is an equality over
			\[ B_1 = \{ b \in B : \dim (P_b)_{\operatorname{aff}} \le 1 \} \quad (= B \setminus D_2) .\]
		\end{enumerate}
	\end{theorem}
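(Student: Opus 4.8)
The plan is to dispose of the formal assertions first. Smoothness of $P \to B$ holds by the very definition of a N\'eron model. Since every fibre product $P \times_B \cdots \times_B P$ is again smooth over $B$, the multiplication, inversion and identity section of the abelian scheme $P_0$ extend uniquely over $B$ by the N\'eron mapping property of $P$; each group axiom, and commutativity (compare the commutator morphism $P \times_B P \to P$ with the constant morphism), is then an equality of $B$-morphisms out of a smooth, hence reduced, $B$-algebraic space that holds over the dense open $B_0$, so it holds everywhere by uniqueness of extensions. This gives the part of (1) asserting that $P \to B$ is a smooth commutative group algebraic space; the substance of the theorem lies in the $\delta$-regularity and in (2)--(4).

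Next I would establish the functorial descriptions in (3), since the remaining arguments are phrased in their terms. For $U \to B$ \'etale with $U_0 = U \times_B B_0$, restriction sends $P(U)$ into the set of translation automorphisms of $X_{U_0}$ over $U_0$; conversely such a translation automorphism spreads out, using properness of $\pi$ and its smoothness over $B_0$ (equivalently, the $P$-torsor structure on $X^n$ from \Cref{main:Neron model}), to a birational automorphism of $X_U$ over $U$, and any birational automorphism of $X_U$ over $U$ acting on the generic fibre as a translation restricts to a section of $P_0$ over a dense open of $U_0$ and extends uniquely to an element of $P(U)$ by the N\'eron mapping property --- smoothness of $U \to B$ being crucial here. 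These two constructions are mutually inverse. The $U$-points $p$ for which the associated birational automorphism $f_p$ of $X_U$ is biregular form a subset stable under composition and inversion, since $f_{pq} = f_p \circ f_q$ and $f_{p^{-1}} = f_p^{-1}$, and this locus is open in $P$; it therefore defines an open subgroup algebraic space $P^a \subseteq P$, maximal among open subgroups acting regularly on $X$ over $B$ and with the $U$-points described in (3).

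For (2) it remains to see that $P^a \to B$ is a quasi-projective \emph{scheme} and that $\pi : X \to B$ with its $P^a$-action is a $\delta$-regular abelian fibration. Since $\pi$ is projective, $\underline{\Aut}_{X/B}$ is a separated $B$-scheme, locally of finite type (an open subscheme of the relative Hilbert scheme of $X \times_B X$ over $B$), and $P^a$ is its subgroup of automorphisms acting by a translation on the generic fibre; boundedness of such graphs, together with the standard quasi-projectivity of automorphism groups of polarized projective morphisms, yields that $P^a \to B$ is a quasi-projective group scheme, smooth and commutative as an open subgroup of $P$ and acting over $B_0$ on the $P_0$-torsor $X_0$, so $\pi$ is an abelian fibration. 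The heart of the theorem is the $\delta$-regularity, i.e.\ $\codim D_i \ge i$ for all $i$. Here I would bound $\dim (P_b)_{\mathrm{aff}}$ by a constructible, upper semicontinuous invariant of the fibre $X_b$ --- concretely, the affine (toric plus additive) part of the N\'eron fibre is governed by the local monodromy of $\pi_0$ near $b$, hence by the combinatorics of the reduced components of $X_b$ --- and then use the structural properties of a Lagrangian fibration (notably equidimensionality of the fibres) and the non-degeneracy of $\sigma$ to force each jumping locus $D_i$ to have codimension $\ge i$. Extracting these codimension bounds from the symplectic form is the step I expect to be the main obstacle.

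Finally, for (4): over $B_1 = B \setminus D_2$ every fibre $P_b$ has affine part of dimension $\le 1$, so the fibres of $\pi$ are too small to contain a flopping curve, and the passage from $X' = X \setminus \Sing(\pi)$ to $X^n$ involves no flop over $B_1$. Concretely, the canonical morphism $X' \to X^n$ --- obtained by applying the N\'eron mapping property to the smooth $B$-algebraic space $X'$ --- should be an isomorphism over $B_1$, which one verifies by reducing to the generic point of each component of the discriminant, where the fibration is modelled on a classical elliptic or multiplicative degeneration and the equality $X' = X^n$ is classical, and then spreading out by an openness argument. Granting $X'|_{B_1} = X^n|_{B_1}$, every element of $P(U)$ for $U \to B_1$ \'etale acts biregularly on $X^n_U = X'_U$, hence (absence of flops again) on $X_U$, so $P(U) = P^a(U)$ by (3) and therefore $P^a|_{B_1} = P|_{B_1}$.
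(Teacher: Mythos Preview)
Your formal treatment of smoothness, commutativity, and the description of $P(U)$ via the N\'eron mapping property is correct and matches the paper. The substantive gaps are in (2), where two key steps are not supplied.

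First, the construction of $P^a$. You carve it out of $P$ as the locus where the birational action is biregular and appeal to ``boundedness of graphs'' for quasi-projectivity; neither the openness of this locus in $P$ nor the boundedness is established. The paper reverses the logic: $P^a$ is built \emph{before} $P$, as a locally closed subgroup scheme of $\Aut_{\pi}$. The obstacle is that the Zariski closure of $P_0$ in $\Aut_{\pi}$ (the ``main component'') is not obviously a subgroup scheme, since multiplication need not restrict to it. The paper resolves this by first proving the infinitesimal statement that the main component of $\Lie \Aut_{\pi}$ is the smooth scheme $\mathbb{T}^*_B$; this reduces to the vanishing $R^n\pi_* T^1_{\pi} = 0$, deduced from the Hodge module symmetry of the decomposition theorem for $\pi$ (Schnell). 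A general ``main component'' theorem for group schemes over a unibranch base then produces $P^a$ as a smooth, quasi-projective subgroup of $\Aut_{\pi}$ with $\Lie P^a = \mathbb{T}^*_B$. Only afterwards is $P$ constructed (via an \'etale homomorphism $P^a \to \Pic^{\circ}_{\pi}$ and the N\'eron property of $\Pic^{\circ}_{\pi}$ under $B_1 \subset B$) so as to contain $P^a$ as an open subgroup.

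Second, $\delta$-regularity. Your plan to bound $\dim (P_b)_{\mathrm{aff}}$ via local monodromy and the combinatorics of fibre components is not what the paper does, and you acknowledge it is incomplete. The paper instead applies Ng\^o's criterion to the weak abelian fibration $(X, P^a)$: using the symplectic form, the Lie algebra of the stabilizer scheme is identified with the linear scheme associated to $T^1_{\pi}$, so $D_i = \pi(\Fitt_{i-1} T^1_{\pi}) = \pi(\Fitt_{n+i-1} \Omega_{\pi})$, and the codimension bound is then a Sard-type estimate for a map between smooth varieties. The same stabilizer computation shows the $P^a$-action on $X'$ is free, which is used repeatedly. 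Your sketch for (4) is close in spirit --- the paper also argues that no flop occurs over $B_1$ --- but the actual mechanism (Kawamata's decomposition into flops, plus the estimate that a symplectic flopping contraction drops dimension by $\ge 2$, forcing a rationally chain connected \emph{surface} in the fibre, which a Hwang--Oguiso-type analysis rules out over $B_1$) is not supplied by ``reduce to the generic point of the discriminant and spread out.''
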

	
	The $\delta$-regularity of $P$ and the first $\delta$-locus $B_1$ in the theorem will play a crucial role in our study of Lagrangian fibrations. Note that $B_1 \subset B$ is a Zariski open subset with codimension $\ge 2$ complement ($\delta$-regularity), so it captures all \emph{codimension $1$ behaviors} of the Lagrangian fibration.
	
	\begin{remark} \label{main:rmk:strict neutral component}
		Let $P^{\circ\circ} \subset P$ be the smallest open subgroup space of $P$ (\S \ref{sec:strict neutral component}). The group scheme $P^{\circ\circ}$ has already been constructed several times in literature. In \cite[\S 2]{mar96} and later in \cite[\S 1]{aba-rogov21} (their notations are $A \to B_{nm}$ and $\Aut^0_{\pi} \to B$), $P^{\circ\circ}$ is analytically constructed as the image of the \emph{exponential map} $\exp : \Omega_B \to \Aut_{\pi}$. In \cite[\S 8]{ari-fed16}, under a stronger assumption that every fiber of $\pi$ is integral, $P^{\circ\circ}$ is algebraically constructed, again as a locally closed subgroup scheme of $\Aut_{\pi}$. Our approach is closer to Arinkin--Fedorov's and is purely algebraic, except for one part about the cohomology vanishing (\Cref{prop:vanishing of higher direct image sheaf}) relying on the Hodge module theory in \cite{sch23}. Nevertheless, we emphasize that one of our main claims is that the N\'eron model $P$ contains more information than $P^{\circ\circ}$ and should be of independent interest.
	\end{remark}
	
	To study the N\'eron model $X^n$ in \Cref{main:Neron model}, notice first that the nowhere reduced fiber assumption in the theorem is equivalent to the surjectivity of the morphism
	\begin{equation} \label{main:eq:X'}
		\pi' : X' \coloneq X \setminus \Sing(\pi) \to B .
	\end{equation}
	Our original motivation was to check whether this smooth locus $X'$ is precisely the N\'eron model $X^n$ as in the discussion for elliptic fibrations.\footnote{Some positive results in literature to this direction are \cite[Theorem 1.1]{liu-tong16} for minimal, flat, and proper families of higher-genus curves over a Dedekind scheme, and \cite[Theorem 1.1]{bog-halle-paz-tan18} for abelian surface-fibered Calabi--Yau threefolds over $\PP^1$. Both results are over Dedekind schemes.} This may no longer be the case for higher-dimensional Lagrangian fibrations: we show instead that $X^n$ is the union of smooth loci $Y'$ for every birational model $\tau : Y \to B$ of $\pi$. In particular, $X^n$ will fail to be separated along the flopping locus when we glue the non-isomorphic $X'$ and $Y'$ together (if such a situation arises).
	
	\begin{theorem} [Structure of the N\'eron model $X^n$] \label{main:structure of Xn}
		Keep the notations and assumptions of \Cref{main:structure of P} and \eqref{main:eq:X'}. Then
		\begin{enumerate}
			\item There exists a $P^a$-equivariant open immersion $X' \hookrightarrow X^n$.
			
			\item There exists an equality
			\[ X^n = \bigcup_{Y_U} Y'_U ,\]
			where $U \to B$ runs through every quasi-projective \'etale morphism, $\tau : Y_U \to U$ a Lagrangian fibered smooth symplectic variety that is birational to $X_U \to U$, and $Y'_U = Y_U \setminus \Sing(\tau)$ the smooth locus of $\tau$.
			
			\item The open immersion $X' \subset X^n$ is an equality over $B_1$.
		\end{enumerate}
		As a result, $X'$ is a $P$-torsor over $B_1$.
	\end{theorem}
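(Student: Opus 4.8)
The plan is to derive all three parts from the Néron mapping property, the torsor structure of $X^n$ supplied by \Cref{main:Neron model}, and the structural results for $P$ and $P^a$ in \Cref{main:structure of P}, together with two inputs from birational geometry: a rational map from a smooth variety to an abelian scheme is a morphism, so any two birational Lagrangian models of the abelian fibration $\pi_0 : X_0 \to B_0$ are identified over $B_0$ up to a translation; and two birational projective smooth symplectic Lagrangian fibrations over a base $U$ differ by a sequence of Mukai-type flops, each of which preserves smoothness, the symplectic form, and the fibration.

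\emph{Part (1).} Since $\pi$ has no nowhere reduced fibre, the morphism $\pi' : X' \to B$ of \eqref{main:eq:X'} is smooth and surjective, and $X'|_{B_0} = X_0 = X^n|_{B_0}$. The Néron mapping property applied to the smooth $B$-space $X'$ produces a canonical $B$-morphism $j : X' \to X^n$ extending $\mathrm{id}_{X_0}$, and applying the uniqueness clause to the smooth $B$-space $X' \times_B P^a$ shows that $j$ is $P^a$-equivariant — here one uses that $P^a$ acts regularly on $X$ and hence preserves $\Sing \pi$, so acts on $X'$. That $j$ is an open immersion is then checked directly: $j$ is birational between spaces smooth over $B$, so étaleness is a fibrewise condition which holds over the dense open $B_0$ and, by a purity argument, everywhere; and $j$ is a monomorphism by the $P^a$-equivariance together with the freeness of the $P$-action on the torsor $X^n$.

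\emph{Part (2).} For a quasi-projective étale $U \to B$ and a birational Lagrangian model $\tau : Y_U \to U$ of $X_U \to U$, the two fibrations are identified over $B_0 \cap U$, so $Y'_U|_{B_0 \cap U} = X^n|_{B_0 \cap U}$ (using that $X^n \times_B U$ is the Néron model of $X_0 \times_{B_0}(B_0 \cap U)$ by étale descent of Néron models), and Part (1) applied over $U$ gives a canonical open immersion $Y'_U \hookrightarrow X^n$; hence $\bigcup_{Y_U} Y'_U \subseteq X^n$. For the reverse inclusion it suffices to show that every geometric point $\bar x$ of $X^n$, say over $\bar b \in B$, lies in some $Y'_U$. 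Pick $x' \in X'_{\bar b}$ (possible since $\pi'$ is surjective); as $X^n_{\bar b}$ is a torsor under $P_{\bar b}$, there is a unique $q \in P_{\bar b}$ with $q \cdot \bar x = x'$, so $\bar x \in t_{q^{-1}}(X'_{\bar b})$. Spreading $q$ out to a section over an étale neighbourhood $U \to B$ (smoothness of $P \to B$) we get $\bar x \in t_{q^{-1}}(X'_U) \subseteq X^n$, the $P(U)$-translate of $X'_U$. By \Cref{main:structure of P}(3), $q^{-1} \in P(U)$ is a birational self-map $\phi : X_U \dashrightarrow X_U$ over $U$ restricting to translation by $q^{-1}$ over $B_0 \cap U$; taking $Y_U$ to be the model obtained by decomposing $\phi$ into Mukai-type flops followed by a regular translation, one obtains a smooth symplectic Lagrangian fibration $Y_U \to U$ birational to $X_U \to U$ whose smooth locus embeds in $X^n$ precisely as $t_{q^{-1}}(X'_U)$, which contains $\bar x$.

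\emph{Part (3) and the torsor conclusion.} Over $B_1 = B \setminus D_2$ we have $P = P^a$ by \Cref{main:structure of P}(4). The open subspace $j(X') \subseteq X^n$ is $P^a$-stable by the equivariance of Part (1), hence $P$-stable over $B_1$, and it meets every fibre of $X^n$ over $B_1$ since $\pi'$ is surjective. But $X^n|_{B_1}$ is a $P|_{B_1}$-torsor, so on each geometric fibre the group $P_{\bar b}$ acts simply transitively on $X^n_{\bar b}$, whence $X^n_{\bar b}$ contains no nonempty proper $P_{\bar b}$-stable subset; therefore $j(X')$ equals $X^n$ over $B_1$, i.e.\ $j$ restricts to an isomorphism $X'|_{B_1} \xrightarrow{\sim} X^n|_{B_1}$, and the latter is a $P|_{B_1}$-torsor, which is the final assertion. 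The step I expect to be the main obstacle is the last part of Part (2): realising an \emph{abstract} $P$-translate of $X'$ inside the possibly non-separated, non-finite-type torsor $X^n$ as the \emph{honest} smooth locus of a projective birational Lagrangian model. This is where \Cref{main:structure of P}(3) and the flop theory for symplectic Lagrangian fibrations enter, and where one must carefully track the identifications over $B_0$ that govern the non-separated gluing of the various $Y'_U$.
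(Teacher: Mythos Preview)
Your overall architecture matches the paper's, and Part~(3) is correct as written (the paper obtains $X'_1 = X^n_1$ directly from its construction of $X^n$ as a $P$-torsor extending the $P_1$-torsor $X'_1$, but your deduction from $P^a|_{B_1} = P|_{B_1}$ and the transitivity of the torsor action is equally valid). However, there are two genuine gaps.

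\textbf{Part~(1), \'etaleness.} The claim that ``\'etaleness is a fibrewise condition which holds over $B_0$ and, by a purity argument, everywhere'' is not correct: a morphism between $B$-smooth spaces that is an isomorphism over a dense open need not be \'etale (e.g.\ $(t,x) \mapsto (t,tx)$ on $\mathbb{A}^1 \times \mathbb{A}^1$). The paper's argument (its Lemma on N\'eron extensions being open immersions) uses exactly the $P^a$-equivariance you already established: for any point of $X'$, choose a local section $e$ through it; the free $P^a$-action gives an open immersion $\phi_e : P^a \hookrightarrow X'$, and since $P^a \subset P$ is open and $X^n$ is a $P$-torsor, the orbit map $\phi_{j \circ e} : P^a \hookrightarrow X^n$ is also an open immersion. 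The commutative triangle then forces $j$ to be \'etale at that point. Your monomorphism argument via equivariance and freeness is fine; combine it with this \'etaleness argument and Zariski's main theorem.

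\textbf{Part~(2), reverse inclusion.} The flop decomposition is a red herring, and ``Mukai-type flops'' is only valid in dimension~$4$; in general one has Kawamata's decomposition into flops, but you do not need any of this. The paper's key observation is simply that the birational self-map $\phi : X_U \dashrightarrow X_U$ furnished by $q^{-1} \in P(U)$ \emph{already is} the required birational model: take $Y_U \coloneq X_U$ with $\tau \coloneq \pi$, identified with $X_U$ via $\phi$. Then $Y'_U = X'_U$ as a variety, but its open immersion into $X^n$ (via the N\'eron extension of $\phi_0^{-1}$) is precisely the translate $t_{q^{-1}}(X'_U)$, which contains $\bar x$ by construction. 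No intermediate model, no flops, no tracking of smooth loci through a sequence of contractions is needed; your anticipated ``main obstacle'' dissolves once you realise the model can be $X$ itself.
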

	
	The last items of \Cref{main:structure of P} and \ref{main:structure of Xn} show that Lagrangian fibrations behave relatively gentle over $B_1$, explaining why the nicest results of $\pi$ were obtained over codimension $1$ points on the base (e.g., Kodaira and N\'eron's results on elliptic fibrations or \cite{hwang-ogu09}). The behavior of $\pi$ over the deeper $\delta$-strata can be wilder but yet its smooth locus $X' \subset X$ is dominated by the behavior of the abelian fibration $\pi_0 : X_0 \to B_0$ by the N\'eron mapping property of $X^n$.
	
	\bigskip
	
	Let us explain the failure of quasi-compactness and separatedness of the N\'eron models over $B$ in \Cref{main:Neron model}. Because $X^n$ is a $P$-torsor, $P$ is of finite type (resp. separated) if and only if $X^n$ is. From \Cref{main:structure of Xn}, the failure of quasi-compactness or separatedness of $X^n$ is intimately related to how different $X^n$ is from the smooth locus $X'$ of the Lagrangian fibration. One way of capturing this difference is to define the following subsets of $B$ over which $X^n$ is nice:
	\begin{itemize}
		\item $B_{\tor}$ is the subset of $B$ over which $X' \subset X^n$ is an equality.
		\item $B_{\ft}$ (resp. $B_{\sep}$) is a maximal Zariski open subset of $B$ over which $X^n$ is of finite type (resp. separated).
	\end{itemize}
	Then $B_{\tor} = B$ if and only if $X'$ is a $P$-torsor. The strict inclusion $B_{\ft} \subsetneq B$ (resp. $B_{\sep} \subsetneq B$) will mean that $X^n$ is not of finite type (resp. not separated) over $B$.
	
	\begin{proposition} [Failure of separatedness of $X^n$ and the torsor property of $X'$] \label{main:torsor locus}
		Keep the notations and assumptions as above. Then
		\begin{enumerate}
			\item $B_{\tor}$ is a Zariski open subset of $B$ and $B_1 \subset B_{\tor} \subset B_{\ft} \subset B_{\sep} \subset B$.
			\item Assume $\dim X = 4$. Then $B_{\tor} = B_{\ft} = B_{\sep}$.
			\item The inclusions $B_1 \subset B_{\tor}$ and $B_{\sep} \subset B$ may be strict.
		\end{enumerate}
		In other words, $X^n$ may fail to be of finite type or separated, and $X'$ may fail to be a $P$-torsor over $B$. When $\dim X = 4$, these happen simultaneously.
	\end{proposition}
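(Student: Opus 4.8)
The plan is to read everything off the structural results of Theorems~\ref{main:structure of P} and~\ref{main:structure of Xn}: that $X^n$ is a $P$-torsor, that $X^n=\bigcup_{Y_U}Y'_U$, and that $X'=X^n$ already over $B_1$. For the openness of $B_{\tor}$ in (1), note that since $X'\hookrightarrow X^n$ is an open immersion, $B\setminus B_{\tor}$ is the image of the closed subspace $X^n\setminus X'$, so it suffices to show $X^n\setminus X'\to B$ is proper. By Theorem~\ref{main:structure of Xn}(3) this subspace lies over $D_2$, and over a quasi-compact \'etale $U\to B$ it is the union of the ``flopped'' strata $Y'_U\setminus X'_U$, each closed in $Y_U$, which is projective over $B$ and hence proper over $B$; the remaining point is that only finitely many models $Y_U$ contribute such a stratum over $U$, which I would extract from the transitivity of the $P$-action on the fibres of $X^n$ (so that the translation action of $P^a$ permutes the competing models) together with the $\delta$-regularity of $P$. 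A finite union of proper maps being proper, $B_{\tor}$ is Zariski open. The inclusion $B_1\subseteq B_{\tor}$ is Theorem~\ref{main:structure of Xn}(3); for $B_{\tor}\subseteq B_{\ft}$, over $B_{\tor}$ we have $X^n=X'$, an open subscheme of $X$, which is projective over $B$, hence of finite type; for $B_{\ft}\subseteq B_{\sep}$, over $B_{\ft}$ the torsor $X^n$, hence $P$, is of finite type, and exhibiting $P$ as a union of finitely many $P^a$-cosets inside a quasi-projective ambient space shows $P$, and then $X^n$, is separated there.

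For (2), if $\dim X=4$ then $B$ is a surface and $D_2=B\setminus B_1$ is a finite set of points, so by (1) it suffices to prove $B_{\sep}\subseteq B_{\tor}$: that $X^n$ is non-separated near any $b\in D_2$ with $X'_b\subsetneq X^n_b$. Over an \'etale neighbourhood $U$ of such a $b$, shrunk so that $D_2\cap U=\{b\}$, choose a Lagrangian-fibered symplectic model $Y_U$ of $X_U$ with $Y'_U\neq X'_U$; being a birational map of smooth symplectic varieties, $X_U\dashrightarrow Y_U$ is an isomorphism in codimension $1$, hence an isomorphism over the punctured neighbourhood $U\setminus\{b\}$, so $X'_U$ and $Y'_U$ agree there and are identified by a nontrivial flop over $b$. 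The open subspace $X'_U\cup Y'_U\subseteq X^n_U$ is then the gluing of two smooth varieties along a dense open via a nontrivial flop, which is non-separated exactly as the union of the two small resolutions of a node is; hence $b\notin B_{\sep}$. Combined with (1) this yields $B_{\tor}=B_{\ft}=B_{\sep}$.

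For (3), both strictness claims are settled by exhibiting examples: for $B_1\subsetneq B_{\tor}$ one takes a Lagrangian fibration with $D_2\neq\emptyset$ all of whose fibres over $D_2$ are reduced and force no flop, so that $X'=X^n$ while $B_1\neq B$ (e.g.\ a suitable compactified-Jacobian or abelian-surface-fibered example); for $B_{\sep}\subsetneq B$ one takes a Lagrangian fibration admitting a fibrewise flop over a point of $D_2$ that genuinely modifies $X'$, so that by the mechanism of (2) the space $X^n$ is non-separated there, hence $X'$ is not a $P$-torsor over $B$, and when $\dim X=4$ this example also makes $X^n$ fail to be of finite type. The essential difficulty is the finiteness input in (1) --- that over a quasi-compact base only finitely many competing symplectic models contribute to $X^n\setminus X'$ --- since without it $B_{\tor}$ could a priori be an infinite union of flopping loci and fail to be Zariski open; the other delicate point is the one hidden in the dimension hypothesis of (2), namely ruling out an infinite increasing chain of modifying flops over a point of $D_2$ (which would produce a separated but non-finite-type $X^n$), where one must invoke the classification of codimension-$2$ degenerations of abelian surfaces.
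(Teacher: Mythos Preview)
Your overall strategy is sound and aligns with the paper's, but several steps have genuine gaps that the paper handles differently.

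\textbf{Openness of $B_{\tor}$ in (1).} Your properness argument for $X^n\setminus X'\to B$ is circular: properness needs finite type, and $X^n$ is not known to be of finite type --- indeed this is precisely what item~(3) says can fail. The ``finiteness input'' you flag is not resolved by transitivity of the $P$-action or $\delta$-regularity. The paper avoids this entirely: if $b\notin B_{\tor}$, then by \Cref{lem:equivalence between Neron and birational automorphism} some birational translation automorphism is non-regular over $b$; Kawamata's theorem decomposes it into \emph{finitely many} flops, and by \Cref{prop:exceptional locus of flop} each flop's exceptional locus projects onto an entire irreducible component of some $\delta$-locus $D_d$. Hence $B\setminus B_{\tor}$ is a union of such components, and since $B$ is Noetherian and each $D_d$ has finitely many components, this union is finite and closed. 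Also, the strata $Y'_U\setminus X'_U$ are differences of two opens in $X^n$, not closed in $Y_U$.

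\textbf{The inclusion $B_{\ft}\subset B_{\sep}$.} Your ``quasi-projective ambient space'' is not specified. The paper's argument is: finite type $\Rightarrow \pi_0(P)$ is quasi-finite $\Rightarrow P=P^{\tau}$ is numerically trivial $\Rightarrow P$ is aligned (\Cref{prop:numerically trivial group is aligned}) $\Rightarrow P$ is separated since it is weakly N\'eron (\Cref{prop:aligned subgroup in weak Neron model is separated}).

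\textbf{Part (2).} Two issues. First, ``isomorphism in codimension~1'' (a statement about codimension in $X$) does not by itself give an isomorphism over $U\setminus\{b\}$; the correct input is \Cref{thm:birational map of fibration codimension 1} (isomorphism over $B_1$). Second, and more seriously, the analogy with two small resolutions of a node glosses over the key obstruction: if the flopped $\PP^2\subset X_b$ is a \emph{non-reduced} component, then $\PP^2\cap X'_b=\emptyset$ and the flop restricts to an isomorphism $X'\cong Y'$, producing no non-separatedness. The paper handles this via \Cref{lem:Mukai flop of Pn}: a Mukai flop preserves reducedness of the $\PP^2$ component, so if $X'\dashrightarrow Y'$ is not an isomorphism then both $\PP^2$ and $\check\PP^2$ are reduced. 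One then takes a section $a:B\to X'$ through $\PP^2$; under the Mukai flop its closure in $Y'$ acquires a $1$-dimensional fibre over $b$, which is incompatible with any section of a separated $P\to B$ being a closed immersion. Your closing remark about ruling out infinite flop chains is misdirected: the paper's argument uses a \emph{single} Mukai flop, and \S\ref{ex:Beauville-Mukai system} in fact exhibits an infinite chain of Mukai flops --- this is what makes $X^n$ non-separated there, not what one must exclude.

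\textbf{Part (3).} These are settled by the explicit constructions in \S\ref{sec:main examples}: the Hilbert square of an elliptic $\mathrm{I}_2$ surface gives $B_1\subsetneq B_{\tor}=B$, and the Beauville--Mukai system of a degree~$2$ K3 (with the three-nodal-curve configuration) gives $B_{\sep}\subsetneq B$ via the infinite Mukai-flop sequence. Your descriptions are too schematic to count as a proof.
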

	
	We will later interpret the result \cite[Theorem 2]{ari-fed16} as $B_{\elliptic} \subset B_{\tor}$ where $B_{\elliptic} \subset B$ is the open subset over which $\pi$ has geometrically integral fibers. This complements \Cref{main:torsor locus}(1). We believe the second item will hold in arbitrary dimension. The third item will be presented as three different examples in \S \ref{sec:main examples}, which will be instructive to understand the behaviors of $P$ and $X^n$. Though we have proved that $X'$ may fail to be a $P$-torsor, one may still wonder whether it is possible to find another group algebraic space $G \to B$ making $X'$ a $G$-torsor. Unfortunately, this weaker statement may not hold as well (\Cref{prop:X' is not G-torsor}).
	
	\bigskip
	
	Finally, there is a conjectural \emph{dual Lagrangian fibration} $\check \pi : \check X \to B$ predicted by a holomorphic variant of the Strominger--Yau--Zaslow conjecture. We note that this prediction is verified for $G$-Hitchin fibrations for connected reductive groups $G$ in \cite[Theorem C]{don-pan12} and arbitrary Lagrangian fibrations of projective hyper-K\"ahler manifolds of known deformation types in \cite[Theorem 1.1]{kim25}. If one believes such a conjecture and that it satisfies similar properties to \Cref{main:Neron model}--\ref{main:torsor locus}, then the dual abelian scheme $\check P_0 \to B_0$ is expected to admit a N\'eron model over $B$. This is indeed the case, providing evidence for the existence of a dual Lagrangian fibration $\check \pi$ for arbitrary $\pi$.
	
	\begin{theorem} [Dual N\'eron model]
		Keep the notations and assumptions as above. Let
		\[ \check P_0 = \Pic^{\circ}_{\pi_0} \to B_0 \]
		be the dual abelian scheme of $P_0 \to B_0$. Then
		\begin{enumerate}
			\item There exists a N\'eron model $\check P \to B$ of $\check P_0 \to B_0$.
			\item $\check P$ is $\delta$-regular and shares the same $\delta$-loci with $P$.
		\end{enumerate}
	\end{theorem}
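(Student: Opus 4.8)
The plan is to reduce the assertion to an \emph{isogeny-invariance} property of higher-dimensional N\'eron models, and then to read off the $\delta$-data of $\check P$ from that of $P$. (If a dual Lagrangian fibration $\check\pi : \check X \to B$ satisfying the hypotheses of \Cref{main:Neron model} were available, $\check P$ would simply be the N\'eron model of $\Aut^\circ_{\check\pi_0}$ produced there; the content of the present statement is to construct $\check P$ without knowing that $\check\pi$ exists.) Since $\pi$ is projective, fix a $\pi$-ample line bundle $\mathcal L$ on $X$. Its restriction to $X_0$ is a relative polarization of the torsor $\pi_0 : X_0 \to B_0$ under $P_0$, and this polarization descends to a relative polarization $\lambda_0 : P_0 \to \check P_0$ (independent of the choice of local trivialization of the torsor, by translation-invariance of polarizations), i.e.\ an isogeny of abelian schemes over $B_0$ whose kernel $K_0$ is a finite flat, hence (as we work over $\CC$) \'etale, group scheme over $B_0$. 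Choose an integer $n$ killing $K_0$ and the complementary isogeny $\mu_0 : \check P_0 \to P_0$ with $\mu_0 \circ \lambda_0 = [n]_{P_0}$ and $\lambda_0 \circ \mu_0 = [n]_{\check P_0}$. The theorem thus reduces to: an abelian scheme over $B_0$ admitting a N\'eron model over $B$ and carrying an isogeny onto a second abelian scheme forces the latter to admit a N\'eron model over $B$, with the same $\delta$-loci, again $\delta$-regular.

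For existence, I would first produce a quasi-finite, \'etale (hence smooth and flat), separated commutative group scheme $\overline K \to B$ with $\overline K|_{B_0} = K_0$ --- the N\'eron model of $K_0$ --- together with a monomorphism $\overline K \hookrightarrow P$ extending $K_0 \hookrightarrow P_0$, obtained from the N\'eron property of $P$ applied to the smooth $B$-space $\overline K$ (the extension is a homomorphism and a monomorphism by uniqueness in the N\'eron property of $\overline K$). Then set $\check P := P/\overline K$, the fppf-sheaf quotient, which is a smooth commutative group algebraic space over $B$ by Artin's theorem on flat equivalence relations and satisfies $\check P|_{B_0} = P_0/K_0 = \check P_0$. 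It remains to verify the N\'eron mapping property of $\check P$. Given a smooth $B$-algebraic space $Z$ and $g_0 : Z_0 \to \check P_0$, the composite $\mu_0 \circ g_0 : Z_0 \to P_0$ extends over $B$ by the N\'eron property of $P$; pushing forward along the quotient $q : P \to \check P$ yields a morphism $Z \to \check P$ extending $[n]\cdot g_0$. One must then divide by $n$, i.e.\ show $[n]\cdot g_0$ lifts uniquely through $[n]_{\check P}$ in a way extending $g_0$; equivalently, using the short exact sequence $0 \to \overline K \to P \to \check P \to 0$ and the N\'eron properties of $\overline K$ and $P$, one compares the long exact sequences for $\Hom_B(Z,-)$ and $\Hom_{B_0}(Z_0,-)$, the discrepancy being measured by $\overline K$-torsors on $Z$ that become trivial on $Z_0$. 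Over $B_1$ everything is classical, since there $P = P^a$ is a quasi-projective group scheme by \Cref{main:structure of P}(4); so the real analysis is confined to the codimension $\ge 2$ locus $B \setminus B_1$.

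For the $\delta$-data, the quotient map $q : P \to \check P$ is faithfully flat with finite kernel $\overline K$, so on each fibre $q_b : P_b \to \check P_b$ is a surjection of algebraic groups with finite kernel. Such a map sends the identity component $P_b^\circ$ onto $\check P_b^\circ$ and restricts to a surjection of the maximal connected linear subgroups $(P_b)_{\operatorname{aff}} \to (\check P_b)_{\operatorname{aff}}$ with finite kernel; hence $\dim (P_b)_{\operatorname{aff}} = \dim (\check P_b)_{\operatorname{aff}}$ for every $b \in B$. Therefore the closed sets $D_i$ for $\check P$ coincide with those for $P$, so $\check P$ shares the $\delta$-loci of $P$ (as well as the open sets $B_1$, $B_{\tor}$, $B_{\ft}$, $B_{\sep}$), and $\check P$ is $\delta$-regular because $P$ is, by \Cref{main:structure of P}(1).

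The main obstacle is precisely the ``division by $n$'' step over $B \setminus B_1$: proving that $\check P = P/\overline K$, although visibly a smooth commutative group algebraic space restricting correctly over $B_0$, genuinely satisfies the N\'eron mapping property over the deep $\delta$-strata, where $P$ can be non-separated and of non-finite type, so that the cohomology of the quasi-finite \'etale group $\overline K$ and the extension properties of its torsors must be handled with care. I expect this is where the subtleties forcing the use of algebraic spaces rather than schemes, as already in \Cref{main:Neron model}, reappear.
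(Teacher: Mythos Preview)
Your approach to part~(2) is essentially the paper's: once one has quasi-finite \'etale homomorphisms in both directions between $P$ and $\check P$, the fiberwise Chevalley decompositions match up and the $\delta$-functions coincide.

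For part~(1), your route diverges from the paper's, and the gap you flag is real. The paper does \emph{not} construct $\check P$ as a quotient $P/\overline K$; instead it uses the neutral Picard space $\Pic^{\circ\circ}_\pi$ as the key auxiliary object. Matsushita's theorem $R^1\pi_*\mathcal O_X \cong \Omega_B$ guarantees $\Lie \Pic_\pi \cong \TT^*_B$, so $\Pic^{\circ\circ}_\pi \to B$ is a smooth commutative group space with connected fibers (hence separated and of finite type) defined over all of $B$ and restricting to $\check P_0$ over $B_0$. Extending the dual polarization $\check\lambda_0 : \check P_0 \to P_0$ via the N\'eron property of $P$ yields a quasi-projective \'etale homomorphism $\Pic^{\circ\circ}_\pi \to P$, and then a general result (\Cref{cor:Neron under quasi-projective etale homomorphism}) produces the N\'eron model $\check P$ containing $\Pic^{\circ\circ}_\pi$ as an open subgroup. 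The geometric input is that the Lagrangian fibration hands you, for free, a smooth model of $\check P_0$ over the whole of $B$; you never have to divide by $n$.

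Your quotient $P/\overline K$ is a perfectly good smooth commutative group space restricting to $\check P_0$, but verifying its N\'eron property amounts to showing that for every smooth $Z \to B$ and every $g_0 : Z_0 \to \check P_0$, the $K_0$-torsor $g_0^*(P_0 \to \check P_0)$ over $Z_0$ extends to a $\overline K$-torsor over $Z$. This is exactly the division-by-$n$ obstruction, and there is no evident reason it should hold without further input: the relevant extension criterion (\Cref{prop:Neron torsor restriction}) asks for a \emph{surjective} smooth extension of the torsor, which you do not have a priori. Even granting that $\check P$ exists, the comparison map $P/\overline K \to \check P$ obtained from the N\'eron property is an \'etale homomorphism, but proving it is an isomorphism again requires controlling a possibly nontrivial \'etale kernel over the deep strata, and $P/\overline K$ need not be separated. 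One way to salvage your idea is to feed $(P/\overline K)^{\circ\circ}$, rather than $\Pic^{\circ\circ}_\pi$, into \Cref{cor:Neron under quasi-projective etale homomorphism}; this works, but at that point you are running the paper's argument with a different choice of auxiliary group, not the direct quotient verification you proposed.
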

	
	The dual Lagrangian fibration $\check \pi : \check X \to B$, if it exists, should be a compactification of a $\check P_1$-torsor over $B_1$. A recent result of Sacc\`a \cite[Theorem 1]{sacca24} studies this problem in depth (see \cite[Theorem 1.1]{liu-liu-xu24} for an alternative version). The total space $\check X$ of such a compactification turns out to be a $\QQ$-factorial terminal symplelctic variety. This agrees with the result of \cite[Theorem 1.1]{kim25} where the expected dual fibration $\check \pi$ is explicitly constructed when $X$ is a generalized Kummer variety or an OG6-type hyper-K\"ahler manifold. We plan to revisit this problem in future work.

	\subsection{Relation to results in literature and applications}
		This paper grew from an attempt to understand the holomorphic Strominger--Yau--Zaslow conjecture and to understand to what extent a Lagrangian fibration can be considered as a compactification of a smooth group scheme torsor. We arrive to a conclusion that the N\'eron model theory is extremely helpful to study these questions. Such a viewpoint can also be beneficial to connect several other known results in literature, which we summarize here. See \S \ref{sec:applications} for detailed discussions.
		
		\begin{itemize}
			\item (\S \ref{sec:Arinkin-Fedorov}) When $\pi$ has integral fibers, \cite[Theorem 2]{ari-fed16} constructed a smooth group scheme $G \to B$ making $X'$ a $G$-torsor. We recover their result by showing $G = P^{\circ\circ} = P^a = P$. An alternative interpretation of their result is that $B_{\tor}$ contains every point $b \in B$ such that $X_b$ is geometrically integral.
			
			\item (\S \ref{sec:rational section}) Every rational section of $\pi$ is necessarily defined over $B_{\tor}$ but in general not extendable over the entire $B$. If $X'$ is a $P$-torsor, then $B_{\tor} = B$ so every rational section is regular.
			
			\item (\S \ref{sec:Hwang-Oguiso}) \cite{hwang-ogu09} studied the structure of general singular fibers of higher-dimensional Lagrangian fibrations, generalizing the work of Kodaira for elliptic fibrations. Many results in their \S 2--3 can be recovered algebraically using the $P^a$-action in \Cref{main:structure of P}.
			
			\item (\S \ref{sec:Abasheva-Rogov}) When $X$ is compact, \cite{aba-rogov21} studied the Tate--Shafarevich group of a Lagrangian fibration, which is $H^1_{an} (B, P^{\circ\circ})$ in our notation. The N\'eron mapping property of $P$ helps studying this group further.
			
			\item (\S \ref{sec:de Cataldo-Rapagnetta-Sacca}) \cite{decat-rap-sacca21} compared singular fibers of two explicit Lagrangian fibrations of projective hyper-K\"ahler manifolds $\pi : X \to B$ and $\tau : Y \to B$ where $X$ is of OG10-type and $Y$ is of $\text{K3}^{[5]}$-type. We show both $X'$ and $Y'$ are $P$-torsors (ignoring the nowhere reduced fibers), explaining some of the behaviors they discovered.
		\end{itemize}

	\subsection{Sketch proofs and organization of the paper}
		Though we have taken a bird's-eye viewpoint in the introduction to emphasize the role of the N\'eron models $P$ and $X^n$, our proof will go in the opposite direction. We will first build various coarser invariants and patch them together to construct the N\'eron models as a final outcome.
		
		The first two sections \S \ref{sec:group spaces}--\ref{sec:Neron models} are devoted to more abstract discussions about the theory of group algebraic spaces and N\'eron models. We decided to write these sections separately and with greater generality due to the following two reasons. First, the proof of the main results (\Cref{thm:main component 1} and \ref{prop:main component 2} in \S \ref{sec:group spaces}, and \Cref{cor:Neron under quasi-projective etale homomorphism}, \ref{thm:weak Neron model of group space}, and \ref{thm:weak Neron model of torsor} in \S \ref{sec:Neron models}) are rather lengthy and require abstract generalities. Second, we bear in mind of future generalizations of the results in this article into more general set-ups, such as for $\QQ$-factorial terminal symplectic algebraic spaces $X$ or proper Lagrangian fibrations $\pi$.
		
		We start the discussions about Lagrangian fibrations from \S \ref{sec:translation automorphism scheme}. The main goal of this section is to construct the group scheme $P^a$. We realize $P^a$ as a locally closed subgroup scheme of $\Aut_{\pi}$ as in \cite{ari-fed16} and \cite{aba-rogov21}, but use different techniques to achieve this because our group scheme may be larger than theirs. More precisely, we use \Cref{thm:main component 1} and \ref{prop:main component 2} to construct $P^a$ as the equidimensional locus of the main component of $\Aut_{\pi} \to B$ (\S \ref{sec:main component}). Infinitesimal version of this claim should be proved beforehand: we first need to show that $\Lie (\Aut_{\pi})$ contains the cotangent bundle of $B$ as a Lie subalgebra scheme. This is a formal consequence of the vanishing of a certain higher direct image coherent sheaf in \Cref{prop:vanishing of higher direct image sheaf}, which is proved from the Hodge module symmetry of the decomposition theorem proposed in \cite{shen-yin22} and proved in \cite{sch23}. We next prove the $\delta$-regularity of the group scheme $P^a$. The idea is to observe that $X$ equipped with the $P^a$-action is a weak abelian fibration. This strategy is already contained in Ng\^o's original articles \cite{ngo10, ngo11} and also revisited several times in literature, so we make the discussion short with some proper citations.
		
		In \S \ref{sec:Neron model of fibration codimension 1}, we study the behavior of $\pi$ over the first nontrivial $\delta$-locus $B_1$. The main theorem shows $P^a_1$ and $X'_1$ are N\'eron under $B_0 \subset B_1$. We start with an observation in \Cref{lem:equivalence between Neron and birational automorphism} (using \Cref{thm:weak Neron model of group space} and \ref{thm:weak Neron model of torsor}) that the N\'eron mapping property of $X'$ is essentially equivalent to understanding the birational behavior of the Lagrangian fibration $\pi$. This turns our interest to the study of birational maps $f : X \dashrightarrow Y$ between Lagrangian fibered symplectic varieties. \Cref{thm:birational map of fibration codimension 1} is a technical input to this direction, which says that such a birational map $f$ is necessarily an isomorphism over $B_1$. To prove this, we use Kawamata's theorem \cite{kaw08} to decompose $f$ into a finite sequence of flops and analyze each flop separately using the $\delta$-regularity of the action. This sometimes explains the behavior of $f$ over the entire base $B$ and we will need this in the following section. Along the way, we partially recover the results of \cite[\S 2--3]{hwang-ogu09} using the $P^a$-action on $X$ instead of the original foliation theory.
		
		In \S \ref{sec:Neron model of fibration}, we construct the N\'eron models $P$ and $X^n$ over the entire base $B$. The N\'eron model $P$ is obtained by gluing several copies of $P^a$ via translations. The gluing datum is extracted from the neutral Picard algebraic space $\Pic^{\circ}_{\pi}$, leading us to study the dual side and construct $P$ and its dual $\check P$ at the same time. More specifically, we observe the two groups $P^a$ and $\Pic^{\circ}_{\pi}$ have complementary N\'eron behaviors: the former is N\'eron under $B_0 \subset B_1$ and the latter is N\'eron under $B_1 \subset B$. A polarization $P^a \to \Pic^{\circ}_{\pi}$ connects their behaviors and yields the N\'eron models of both sides (the technical input is \Cref{cor:Neron under quasi-projective etale homomorphism}). Once this is done, $X^n$ can be constructed as a $P$-torsor with a twisting datum inherited from that of $X'$. The remaining claims are proved after this. Finally, we construct three examples illustrating (the failure of) the torsor property of $X'$ in \S \ref{sec:main examples}, and present applications of our constructions to results in literature in \S \ref{sec:applications}.

	\subsection*{Acknowledgment}
		I would like to thank Daniel Huybrechts for his numerous comments and invaluable discussions on this project. I would also like to thank Olivier Benoist, Andres Fernandez Herrero, James Hotchkiss, Johan de Jong, Radu Laza, Lisa Marquand, Gebhard Martin, Mirko Mauri, Giacomo Mezzedimi, Antonio Rapagnetta, Giulia Sacc\`a, and Claire Voisin for helpful discussions. This work was partially supported by the ERC Synergy Grant HyperK (ID 854361).

\section{Group algebraic spaces} \label{sec:group spaces}
	The first two sections of this article will be devoted to the general theory of group algebraic spaces (\S \ref{sec:group spaces}) and N\'eron models (\S \ref{sec:Neron models}). The results in these two sections are rather collective, so the reader is invited to jump directly to \Cref{sec:translation automorphism scheme} and come back to them whenever needed. However, we emphasize that some of the technical hearts of the article are contained in these first two sections. The reference \cite{stacks-project} will be cited often, so we simply write their Tag numbers to cite them. For example, (Tag~ABCD) means \cite[Tag~ABCD]{stacks-project}.
	
	Let $S$ be a locally separated and locally Noetherian algebraic space over a field of characteristic $0$.
	
	\begin{definition}
		A \emph{group algebraic space}, or simply a \emph{group space}, over $S$ is an algebraic space $G \to S$ equipped with an identity $e : S \to G$, multiplication $m : G \times_S G \to G$, and inverse $i : G \to G$ satisfying the axioms of groups.
		
		\emph{Every group space in this article will be assumed to be locally separated and locally of finite type over a locally separated and locally Noetherian base algebraic space $S$ of characteristic $0$.}
	\end{definition}
	
	Schemes are always locally separated. Every locally separated and locally of finite type algebraic space is quasi-separated (Tag~088J and 0BB6), so every group space $G \to S$ in this article will be quasi-separated as well. A \emph{(locally) algebraic group} over a field $k$ is a group space (locally) of finite type over $S = \Spec k$. A locally algebraic group is necessarily a scheme \cite[Lemma 4.2]{artin69}, separated (Tag~047L), and smooth (Tag~047N: Cartier's theorem); everything is as nice as possible over a field. We highlight Cartier's theorem as it is the main reason why group spaces behave nicer in characteristic $0$.
	
	\begin{theorem} [Cartier's theorem]
		Every locally algebraic group over a field of characteristic $0$ is smooth.
	\end{theorem}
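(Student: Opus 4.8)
The statement is classical, usually attributed to Cartier; I would prove it by the standard \emph{homogeneity} argument.

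\emph{Reductions.} Since $\operatorname{char} k = 0$, $k$ is perfect. Smoothness descends along the faithfully flat morphism $\Spec \bar k \to \Spec k$, so I may assume $k = \bar k$. The smooth locus of $G$ is open, contains the identity $e$, and is stable under translation by any $k$-point; as $G$ is Jacobson its $k$-points are dense, so it suffices to prove that $G$ is smooth at $e$. Since $k$ is perfect and $G$ is locally of finite type, this amounts to showing that the local ring $R := \mathcal{O}_{G,e}$, whose residue field is $k$, is regular.

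\emph{Passing to the reduction.} Because $k$ is perfect, $(\mathcal{O}_G/\mathcal{N}) \otimes_k (\mathcal{O}_G/\mathcal{N})$ is reduced, where $\mathcal{N}$ is the nilradical, so the nilradical of $\mathcal{O}_{G \times_k G}$ is $\mathcal{N} \otimes \mathcal{O}_G + \mathcal{O}_G \otimes \mathcal{N}$; hence comultiplication, inverse, and counit all restrict to $G_{\mathrm{red}}$, which is therefore a closed subgroup scheme of $G$. Being reduced and locally of finite type over the perfect field $k$, $G_{\mathrm{red}}$ is generically smooth, hence smooth everywhere by the same homogeneity argument. Consequently
\[ \dim_k T_e G_{\mathrm{red}} = \dim_e G_{\mathrm{red}} = \dim_e G = \dim R , \]
the last equality because $e$ is a closed point with residue field $k$ and $G_{\mathrm{red}}, G$ have the same underlying space.

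\emph{The key step.} It remains to check $T_e G_{\mathrm{red}} = T_e G$, i.e.\ that $\mathcal{N}_e \subseteq \mathfrak{m}^2$, where $\mathfrak{m} \subset R$ is the maximal ideal; this is the one place where $\operatorname{char} k = 0$ is used essentially. Suppose $a \in R$ is nilpotent with $a \notin \mathfrak{m}^2$, and let $n \ge 2$ be minimal with $a^n = 0$. Choose $D \in \Lie G = (\mathfrak{m}/\mathfrak{m}^2)^{\vee}$ with $\langle D, \bar a \rangle = 1$ and realize it, using the group structure, as the associated left-invariant derivation of $R$; then $D(a) \equiv 1 \bmod \mathfrak{m}$, so $D(a) \in R^{\times}$. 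Applying $D$ to $a^n = 0$ yields $0 = n \, a^{n-1} D(a)$, and since $n \in k^{\times}$ and $D(a)$ is a unit, $a^{n-1} = 0$, contradicting minimality. Hence every nilpotent of $R$ lies in $\mathfrak{m}^2$, so $\dim_k \mathfrak{m}/\mathfrak{m}^2 = \dim_k T_e G_{\mathrm{red}} = \dim R$, i.e.\ $R$ is regular. By homogeneity $G$ is regular, hence smooth over the perfect field $k$.

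\emph{Main obstacle.} The genuinely nonformal ingredient is exactly the computation $0 = D(a^n) = n\,a^{n-1}D(a)$ together with the division by $n$; everything else (the reduction to $G_{\mathrm{red}}$, generic smoothness, the homogeneity arguments, and ``regular $=$ smooth'' over a perfect field) is standard. Alternatively one can run the key step through formal groups, using that $\widehat{\mathcal{O}}_{G,e}$ is a formal group over a $\mathbb{Q}$-algebra and hence, via its logarithm, isomorphic to a power series ring; but the one-line derivation identity above is more economical.
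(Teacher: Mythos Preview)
Your proof is correct and is the standard Cartier argument. The paper itself does not supply a proof but merely cites (Tag~047N) in the Stacks project, whose argument is essentially the one you wrote: translate to reduce to the identity, use the left-invariant derivation associated to a tangent vector to force nilpotents into $\mathfrak m^2$, and conclude regularity.
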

	
	One should keep in mind that a group space over a general base $S$ may easily fail to be smooth or separated, even when $S$ is regular of characteristic $0$. See \S \ref{sec:examples of group spaces} for many examples. A group space is separated if and only if its identity morphism $e : S \to G$ is a closed immerion (Tag~06P6). Over a reduced base $S$, basic smoothness criterion is provided by the following proposition.
	
	\begin{proposition} \label{prop:smoothness criterion of group space 1}
		Let $f : G \to S$ be a group space over a reduced space $S$. Then the following are equivalent.
		\begin{enumerate}
			\item $f$ is smooth.
			\item $f$ is flat.
			\item $f$ is universally equidimensional (in the sense of \cite[\S 2.1]{sus-voe00}, see \S \ref{sec:equidimensional morphism}).
		\end{enumerate}
	\end{proposition}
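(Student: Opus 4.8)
The plan is to prove $(1)\Leftrightarrow(2)$ and then $(1)\Rightarrow(3)\Rightarrow(2)$. The implication $(1)\Rightarrow(2)$ is trivial. For $(2)\Rightarrow(1)$ I would invoke Cartier's theorem: every geometric fibre $G_{\bar s}$, being a group scheme locally of finite type over a field of characteristic $0$, is smooth, so $f$ — flat by hypothesis, locally of finite presentation (automatic over the locally Noetherian $S$), and with smooth geometric fibres — is smooth. For $(1)\Rightarrow(3)$: a smooth morphism is flat, open, and stable under base change; the fibres of a group space are equidimensional (each connected component of a group scheme over a field has the dimension of the identity component), and flatness forces generizations to lift, so every irreducible component of $G$ dominates a component of $S$; all of this persists after arbitrary base change, so $f$ is universally equidimensional. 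The substantive implication is $(3)\Rightarrow(2)$, which I would prove in two steps: first smoothness along the identity section, then propagation by homogeneity.

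So assume $f$ is universally equidimensional, of (locally constant) relative dimension $d$. By Cartier the geometric fibres are smooth, so the coherent sheaf $\omega := e^{*}\Omega_{G/S} = \Lie(G/S)^{\vee}$ has constant fibre dimension $\dim G_s = d$ at every point of the reduced base $S$, hence is locally free of rank $d$. I would then use the translation-homogeneity of a group space: the identity section $e$ is a quasi-regular immersion with conormal sheaf $\omega$, and $\mathcal O_G$ has associated graded ring $\Sym_{\mathcal O_S}(\omega)$ along the ideal of $e(S)$; equivalently, in characteristic $0$ the formal completion of $G$ along $e(S)$ is, as a formal $S$-scheme, the completion of $\Spec(\Sym_{\mathcal O_S}\omega)$ along its zero section. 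Since $\omega$ is locally free, $\Sym_{\mathcal O_S}(\omega)$ is flat over $\mathcal O_S$, so every infinitesimal neighbourhood of $e(S)$ in $G$ is flat over $\mathcal O_S$; by the local criterion of flatness $\widehat{\mathcal O}_{G,e(s)}$ is flat over $\widehat{\mathcal O}_{S,s}$, and faithfully flat descent along the completion homomorphisms then yields that $f$ is flat — hence, by the first paragraph, smooth — along $e(S)$.

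To propagate, choose an open $V$ with $e(S)\subseteq V\subseteq G^{\circ}$ (the relative identity component), stable under inversion, with $f|_{V}$ smooth. The shear isomorphism $G\times_{S}G\xrightarrow{\sim}G\times_{S}G$, $(x,y)\mapsto(x,xy)$, identifies the multiplication $V\times_{S}V\to G^{\circ}$ with a restriction of the base change of the smooth $f|_{V}$ along $f|_{V}$, so it is smooth; it is surjective because $V_s\cdot V_s=G^{\circ}_s$, a connected algebraic group being generated by any symmetric neighbourhood of its identity. As $V\times_{S}V\to S$ is flat, faithfully flat descent along $V\times_{S}V\to G^{\circ}$ shows $G^{\circ}\to S$ is flat, hence smooth; and then $f$ is smooth everywhere since $G$ is, étale-locally on $S$, a disjoint union of $G^{\circ}$-translates (equivalently $G\to\pi_0(G/S)$ is a torsor under the smooth $G^{\circ}$ over an étale base), so $(2)$ holds. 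I expect the main obstacle to be the step along $e(S)$: one must know that over a reduced but possibly singular base the structure of $G$ along its identity section is still the symmetric algebra on $\Lie(G/S)^{\vee}$, which is precisely where characteristic $0$ (via Cartier) and the local freeness of $\Lie(G/S)$ are genuinely used; everything else reduces to homogeneity under translation and standard descent.
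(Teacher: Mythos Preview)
Your approach differs substantially from the paper's, which for the only nontrivial implication $(3)\Rightarrow(2)$ is a three-line argument: universally equidimensional implies universally open (\S\ref{sec:equidimensional morphism}, Proposition~\ref{prop:universally equidimensional morphism 1}); the fibres are geometrically reduced by Cartier; $S$ is reduced by hypothesis; hence $f$ is flat by the EGA criterion recorded as Theorem~\ref{thm:universal open implies flat} (\cite[Cor.~15.2.3]{EGAIV3}). The paper does not pass through the conormal sheaf or any propagation argument here---that circle of ideas appears later (Proposition~\ref{prop:smoothness criterion of group space 2}) and in fact \emph{cites} the present proposition.

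Your route via ``conormal locally free $\Rightarrow$ formal neighbourhood of $e(S)$ is a power series ring $\Rightarrow$ flat along $e(S)$ $\Rightarrow$ propagate by translation'' is sound up through smoothness of $G^{\circ\circ}$: the identification $\bigoplus I^n/I^{n+1}\cong\Sym(\omega)$ can be justified by checking it on fibres (where $e_s$ is a regular immersion by Cartier) and using Nakayama with $\omega$ locally free on reduced $S$, and your descent of flatness along the faithfully flat $I$-adic completion is correct. The shear/descent argument then indeed gives flatness of $G^{\circ\circ}\to S$ (your claim $V_s\cdot V_s=G^{\circ\circ}_s$ is fine: $V_s$ is open dense in the irreducible $G^{\circ\circ}_s$).

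The genuine gap is the last sentence. You assert that $\pi_0(G/S)\to S$ is \'etale (``$G$ is, \'etale-locally on $S$, a disjoint union of $G^\circ$-translates''), but this is exactly what remains to be proved: you cannot produce \'etale-local sections through a non-identity component without already knowing smoothness there. To show $K:=G/G^{\circ\circ}\to S$ is \'etale one observes it is still universally equidimensional of relative dimension $0$ (generic points descend along the smooth surjection $G\to K$), hence universally open, with reduced fibres over reduced $S$---and then one invokes Theorem~\ref{thm:universal open implies flat}. But that is precisely the EGA criterion the paper applies directly to $G$, so your detour does not avoid it. If you want a self-contained alternative, you would need an independent reason why a locally quasi-finite group space over a reduced base with \'etale fibres is \'etale, and I do not see one that does not reduce to the same flatness criterion.
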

	\begin{proof}
		The implications (1) $\Longrightarrow$ (2) $\Longrightarrow$ (3) are clear. (2) $\Longrightarrow$ (1) follows from Cartier's theorem that every fiber is smooth. For (3) $\Longrightarrow$ (2), notice that $f$ is universally open by \Cref{prop:universally equidimensional morphism 1}, the fibers of $f$ are (geometrically) reduced by Cartier's theorem, and $S$ is reduced by assumption. These are enough to prove the flatness of $f$ by \Cref{thm:universal open implies flat}. We note that \cite[{$\mathrm{VI_B}$}]{SGA3I} Corollary 4.4, Proposition 1.6, and Theorem 3.10 proves this statement under an assumption that every fiber of $f$ is connected.
	\end{proof}
	
	We also recall the following result, stating that a group space in certain cases is a scheme.
	
	\begin{theorem} [{\cite[Theorem I.1.9]{fal-chai:abelian}, \cite[Theorem 3.3.1]{ray70}}] \label{thm:group space is scheme} \ 
		\begin{enumerate}
			\item Every smooth proper group space $A \to S$ with connected fibers over a scheme $S$ is a scheme, called an \emph{abelian scheme}.
			\item Every separated group space $G \to \Delta$ over a Dedekind scheme $\Delta$ is a scheme.
		\end{enumerate}
	\end{theorem}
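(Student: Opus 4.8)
The plan is to handle both statements by the same three-move strategy: reduce to a small base, manufacture a relatively ample invertible sheaf, and then invoke that a proper algebraic space equipped with such a sheaf is locally projective, hence a scheme. Two general facts will be used without comment: being a scheme is local on the base for the Zariski topology, and every algebraic space contains a dense open subscheme; the characteristic-$0$ input is already packaged in Cartier's theorem above, which in particular makes every group space over a field a (smooth) scheme.

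For (1), I would first cut $S$ down to a Noetherian affine and then, since the polarization I am about to build is \emph{canonical} and so the local constructions glue, pass to the completion $\widehat{\mathcal O}_{S,s}$ at a closed point $s$. The special fibre $A_s$ is an abelian variety over the residue field, hence projective; pick an ample invertible sheaf $\mathcal L_s$, noting $h^i(A_s,\mathcal L_s)=0$ for all $i>0$. I would then lift an effective divisor $\Theta_s\in|\mathcal L_s|$ compatibly through the infinitesimal thickenings of $s$: the obstructions sit in $H^1(A_s,\mathcal L_s)=0$, so lifts exist at every finite stage, and Grothendieck's existence theorem algebraises the resulting formal divisor to an effective relative Cartier divisor $\Theta$ on $A$ over $\widehat{\mathcal O}_{S,s}$. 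Because ampleness is open on the base and $\Theta$ is ample on the special fibre, $\mathcal O_A(\Theta)$ is relatively ample; thus $A$ is projective over $\widehat{\mathcal O}_{S,s}$, the projectivity descends to a Zariski neighbourhood of $s$, and $A$ is a scheme.

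For (2), I would localise $\Delta$ to the spectrum of a discrete valuation ring $R$ (the generic points of a Dedekind scheme are spectra of fields, over which $G$ is a scheme by Cartier's theorem together with \cite[Lemma 4.2]{artin69}). Both fibres of $G\to\Spec R$ are then group spaces over a field, hence quasi-projective schemes; the point is to upgrade this to schemeness of the total space. Here the group law is indispensable: I would apply Raynaud's ampleness theorem for group algebraic spaces and their homogeneous spaces over a Dedekind base (\cite[Theorem 3.3.1]{ray70}) to the relative identity component, obtaining a relatively ample sheaf on it and hence that it is a quasi-projective scheme; the component group is separated and étale over $R$, hence a scheme; and $G$, which is fibred into torsors under the identity component over the component group, is then a scheme as well.

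The hard part of (1) is the lifting step: deforming the line bundle $\mathcal L_s$ itself is obstructed because $H^2(A_s,\mathcal O_{A_s})\neq 0$, and the trick is to deform instead an effective divisor in an ample linear system with vanishing $H^1$ and then algebraise. In (2) the crux is precisely the implication ``the fibres are schemes $\Rightarrow$ the total space is a scheme'': this fails for arbitrary separated finite-type algebraic spaces over a discrete valuation ring, so the argument must genuinely use the homogeneity of $G$ under its own translation action, and supplying the relatively ample sheaf over the one-dimensional base — i.e.\ Raynaud's ampleness theorem — is the real technical heart.
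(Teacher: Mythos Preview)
The paper does not prove this theorem at all: it is stated with attributions to \cite[Theorem I.1.9]{fal-chai:abelian} and \cite[Theorem 3.3.1]{ray70} and then used as a black box. So there is no ``paper's own proof'' to compare against; what you have written is a sketch of the arguments in those references.

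For (1), your outline is the standard Faltings--Chai argument and is correct in spirit. Two points deserve care. First, the sentence ``since the polarization I am about to build is canonical and so the local constructions glue'' is misleading: the divisor $\Theta$ you produce depends on the choice of $\mathcal L_s$ and is not canonical. Fortunately you do not need gluing at all, since schemeness is Zariski-local on $S$ and you already argue pointwise. Second, before you can even speak of lifting divisors through infinitesimal thickenings, you implicitly use that $A$ over each Artinian thickening $\Spec(\mathcal O_{S,s}/\mathfrak m_s^{n})$ is already a \emph{scheme}; this is true (a locally finite type algebraic space over an Artinian local ring is a scheme), but it is a nontrivial ingredient that you should name, as otherwise ``effective Cartier divisor on an algebraic space'' and the invocation of Grothendieck existence need separate justification. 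The descent from $\widehat{\mathcal O}_{S,s}$ to a Zariski neighbourhood also requires an approximation or openness-of-projectivity argument that you have only asserted.

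For (2), your argument is circular as written: you invoke ``Raynaud's ampleness theorem \ldots (\cite[Theorem 3.3.1]{ray70})'' to conclude that the identity component is a scheme, but \cite[Theorem 3.3.1]{ray70} is exactly the reference the paper cites for statement~(2) itself. If you mean that Raynaud's actual theorem is an ampleness statement and schemeness is a corollary, say so and indicate what that ampleness statement is and why it applies to a separated (not necessarily smooth, not necessarily finite type) group space over a dvr; otherwise you have not added anything beyond the citation already present in the paper.
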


	\subsection{Lie algebra spaces} \label{sec:Lie algebra scheme}
		Since we are assuming $G \to S$ is locally separated, the identity section $e : S \to G$ is an immersion. We denote its conormal sheaf by $C_e$, or $C_G$ for simplicity; it is a coherent sheaf on $S$. The conormal sheaf of a group space plays an equally important role to the relative cotangent sheaf by the following:
		
		\begin{proposition} [Tag~047I] \label{prop:cotangent sheaf of group scheme}
			Let $f : G \to S$ be a group space with an identity section $e : S \to G$. Then the cotangent sheaf of $f$ and the conormal sheaf of $e$ are related via canonical isomorphisms
			\[ \Omega_f = f^* C_e, \qquad C_e = e^* \Omega_f .\]
		\end{proposition}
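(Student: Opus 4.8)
The plan is to establish the two isomorphisms separately. The equality $C_e = e^* \Omega_f$ is a general fact about a section of a morphism, whereas $\Omega_f = f^* C_e$ genuinely uses the group law and is the scheme-theoretic incarnation of the trivialisation of the cotangent bundle of a Lie group by left-invariant forms.

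\emph{The isomorphism $C_e = e^*\Omega_f$.} More generally, let $s : S \to X$ be a section of a morphism $g : X \to S$ which happens to be an immersion; here $s = e$ and $g = f$, which is an immersion by the standing local separatedness hypothesis. Since $g \circ s = \id_S$ we have $\Omega_{\id_S} = 0$, so the conormal exact sequence attached to $S \xrightarrow{s} X \xrightarrow{g} S$ reads
\[ C_s \xrightarrow{\ \delta\ } s^* \Omega_g \to \Omega_{\id_S} = 0 \]
and exhibits $\delta$ as surjective. To see that $\delta$ is an isomorphism, I would argue locally: on an affine chart write $X = \Spec B$, $S = \Spec A$, with $s$ corresponding to a retraction $\epsilon_0 : B \to A$ of the structure map $A \to B$. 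Then $A \hookrightarrow B$ is split injective, $B = A \oplus I$ with $I = \ker \epsilon_0$ the ideal of $s(S)$, and $C_s = I/I^2$, $s^* \Omega_g = \Omega_{B/A} \otimes_B (B/I)$. The assignment $b \longmapsto (b - \epsilon(b)) \bmod I^2$, where $\epsilon : B \to A \hookrightarrow B$ is the associated idempotent endomorphism, is an $A$-linear derivation valued in $I/I^2$ that annihilates $\epsilon(B) = A$, hence descends to a $(B/I)$-linear map $\Omega_{B/A} \otimes_B (B/I) \to I/I^2$; a direct check shows it is a two-sided inverse of $\delta$. Glueing over an affine cover of $X$ gives $C_e = e^* \Omega_f$. (Alternatively, one may invoke the canonical splitting of the conormal sequence of an immersion admitting a retraction.)

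\emph{The isomorphism $\Omega_f = f^* C_e$.} Let $p_1, p_2 : G \times_S G \to G$ be the projections, $m : G \times_S G \to G$ the multiplication, and let
\[ \psi : G \times_S G \to G \times_S G, \qquad (g,h) \longmapsto (g, gh) \]
be the ``shear'' isomorphism, with inverse $(g,h) \mapsto (g, g^{-1}h)$; it satisfies $p_1 \circ \psi = p_1$ and $p_2 \circ \psi = m$. As $\psi$ is an automorphism of $G \times_S G$ viewed as a space over $G$ via $p_1$, it induces $\psi^* \Omega_{p_1} \cong \Omega_{p_1}$; and since the cartesian square exhibiting $G \times_S G$ as a fibre product shows $p_1$ to be the base change of $f$ along $f$, base change of relative differentials gives $\Omega_{p_1} \cong p_2^* \Omega_f$. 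Combining,
\[ m^* \Omega_f = (p_2 \circ \psi)^* \Omega_f \cong \psi^* \Omega_{p_1} \cong \Omega_{p_1} \cong p_2^* \Omega_f \]
as sheaves on $G \times_S G$. Now pull this isomorphism back along the section $\sigma = (\id_G, e \circ f) : G \to G \times_S G$, which satisfies $m \circ \sigma = \id_G$ and $p_2 \circ \sigma = e \circ f$: the left side becomes $(m \circ \sigma)^* \Omega_f = \Omega_f$ and the right side becomes $(e \circ f)^* \Omega_f = f^*(e^* \Omega_f) = f^* C_e$ by the first part. Hence $\Omega_f = f^* C_e$.

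I do not anticipate a genuine obstacle. The two points that need care are (i) upgrading the conormal surjection $\delta$ to an isomorphism, which is the local derivation computation above, and (ii) keeping the base-change identification $\Omega_{p_1} \cong p_2^* \Omega_f$ together with the compatibilities $p_i \circ \psi$, $p_i \circ \sigma$, $m \circ \sigma$ straight. Everything used — conormal sheaves, relative differentials, the shear automorphism — is available verbatim for algebraic spaces, so no new subtlety arises from $G$ being an algebraic space rather than a scheme.
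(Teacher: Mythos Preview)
Your proof is correct and is precisely the standard argument; the paper does not give its own proof but simply cites (Tag~047I), and the Stacks Project proof there follows the same line---using the shear automorphism of $G \times_S G$ together with base change for $\Omega_{p_1}$ to identify $\Omega_f$ with $f^* C_e$, and the split conormal sequence of the section $e$ for the other isomorphism.
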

		
		Every group space $G \to S$ has an associated Lie algebra sheaf $\underline {\Lie}\, G$, which by definition is an fppf $\mathcal O_S$-module (see \cite[II Definition 3.9.0]{SGA3I} or equivalently the paragraph above \cite[Proposition 8.2]{ari-fed16})
		\[ \underline \Lie \, G = \Hom^{\fppf} (C_e, \mathcal O_S) \ : \ \big[ h : T \to S \big] \mapsto \Hom_{\mathcal O_T} (h^* C_e, \mathcal O_T) ,\]
		equipped with a Lie algebra structure. It is the fppf dual of the conormal sheaf $C_e$, so by \S\ref{sec:linear scheme} it is represented by a linear space
		\[ \Lie G = \Spec_S (\Sym^* C_e) \to S ,\]
		which we call the \emph{associated Lie algebra space} of $G$. The Lie bracket operation $[\, , \, ] : \Lie G \times_S \Lie G \to \Lie G$ translates to a Lie cobracket operation
		\[ d : C_e \to \bigwedge^2 C_e ,\]
		a homomorphism of coherent sheaves with $d^2 = 0$ (cf., \cite{mic80}). Both the conormal sheaf and Lie algebra space perspectives are beneficial.

		Recall a series of implications: a scheme is normal $\Longrightarrow$ it is reduced and geometrically unibranch (Tag~0BQ3) $\Longrightarrow$ its connected components are integral. Given a group space $G \to S$, taking a base change by the normalization $S' \to S$ produces a new group space $G' = G \times_S S' \to S'$ whose base is normal.
		
		\begin{proposition} \label{prop:smoothness criterion of group space 2}
			Let $f : G \to S$ be a group space over a reduced space $S$. Then the following are equivalent.
			\begin{enumerate}[start=4]
				\item $f$ is smooth along the identity section $e$.
				\item $f$ has constant fiber dimension.
				\item The conormal sheaf $C_e$ of the identity section is locally free.
				\item $\Lie f : \Lie G \to S$ is smooth.
			\end{enumerate}
			Assume furthermore that $S$ is geometrically unibranch and every irreducible component of $G$ dominates an irreducible component of $S$. Then these conditions are further equivalent to the conditions in \Cref{prop:smoothness criterion of group space 1}.
		\end{proposition}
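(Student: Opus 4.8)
The plan is to first run the three equivalences $(5)\Leftrightarrow(6)\Leftrightarrow(7)$ by hand, using only that $S$ is reduced, then splice in $(4)$, and finally bring in $(1)$--$(3)$ by feeding the two extra hypotheses into the universal-openness flatness criterion already used in the proof of \Cref{prop:smoothness criterion of group space 1}. I would begin with a fibrewise dimension count. By Cartier's theorem each fibre $G_s$ is a smooth algebraic group over $\kappa(s)$, hence equidimensional (its neutral component is a subgroup and the other components are its translates), so $\dim G_s=\dim_{e(s)}G_s$. Using $\Omega_f=f^*C_e$ from \Cref{prop:cotangent sheaf of group scheme}, the restriction $\Omega_{G_s/\kappa(s)}$ is the free sheaf obtained by pulling back $C_e\otimes_{\mathcal O_S}\kappa(s)$ to $G_s$, and smoothness of $G_s$ then forces $\dim G_s=\dim_{\kappa(s)}C_e\otimes_{\mathcal O_S}\kappa(s)$ for every $s$. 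Hence $f$ has locally constant fibre dimension exactly when $s\mapsto\dim_{\kappa(s)}C_e\otimes\kappa(s)$ is locally constant, which for a coherent sheaf on the reduced Noetherian space $S$ is the same as $C_e$ being locally free; this is $(5)\Leftrightarrow(6)$. For $(6)\Leftrightarrow(7)$ I would use that $\Lie G=\Spec_S(\Sym^*C_e)$ is a linear space: if $C_e$ is locally free it is a vector bundle, hence smooth over $S$, while in general its fibres are affine spaces, so smoothness over $S$ is equivalent to flatness, and flatness of $\Sym^*C_e$ over $\mathcal O_S$ forces the graded summand $\Sym^1C_e=C_e$ to be flat, i.e.\ locally free (this is the content of \S\ref{sec:linear scheme}).

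It remains to splice in $(4)$. The implication $(4)\Rightarrow(5)$ is easy: the relative dimension of the smooth morphism $f$ is locally constant along $e(S)\cong S$, and at $e(s)$ it equals $\dim_{\kappa(s)}C_e\otimes\kappa(s)=\dim G_s$ by the computation above. For $(5)\Rightarrow(4)$, condition $(6)$ gives that $C_e$, hence $\Omega_f=f^*C_e$, is locally free of some constant rank $r$, and all fibres are smooth of pure dimension $r$; therefore at every point of $G$ the morphism $f$ is smooth there precisely when it is flat there, and it suffices to prove $f$ is flat along $e(S)$. Since this is local on $S$, and near $e(S)$ the morphism $f$ has equidimensional and (by Cartier) geometrically reduced fibres, one concludes flatness exactly as in the proof of \Cref{prop:smoothness criterion of group space 1}, via \Cref{prop:universally equidimensional morphism 1} and \Cref{thm:universal open implies flat}. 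For the closing statement, $(1)\Rightarrow(4)$ is trivial; for the converse, under the extra hypotheses condition $(4)$ again makes $\Omega_f$ locally free of rank $r$ on all of $G$ with all fibres smooth of pure dimension $r$, so $f$ is smooth if and only if it is flat at each point, while the hypotheses that $S$ is geometrically unibranch and that every irreducible component of $G$ dominates an irreducible component of $S$ are precisely what is needed to declare $f$ universally equidimensional, hence universally open by \Cref{prop:universally equidimensional morphism 1} and flat by \Cref{thm:universal open implies flat}; this is $(2)$, and the remaining equivalences with $(1)$ and $(3)$ are \Cref{prop:smoothness criterion of group space 1}.

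The hard part will be the implication $(5)\Rightarrow(4)$, namely upgrading ``constant fibre dimension / locally free conormal sheaf'' to ``flatness along the identity section'': group spaces over bases that are reducible in a non-unibranch way can have smooth equidimensional fibres and still fail to be flat, so this step is not formal and genuinely rests on the equidimensionality flatness criterion; and it is exactly the geometric-unibranch hypothesis together with the requirement that every component of $G$ dominates a component of $S$ that allow one to pass all the way from smoothness along $e$ to smoothness of $f$. Everything else — the dimension count, the linear-space argument, and the bookkeeping of which implications are trivial — is routine.
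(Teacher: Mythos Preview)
Your arguments for $(5)\Leftrightarrow(6)\Leftrightarrow(7)$ and $(4)\Rightarrow(5)$ are correct and essentially unpack the references the paper cites (SGA3 $\mathrm{VI_B}$ Proposition~1.6 and \cite{ari-fed16} Proposition~8.2). Your treatment of the ``furthermore'' clause is also correct and matches the paper's own argument exactly: assuming $(5)$ together with the geometric-unibranch and dominance hypotheses, one gets equidimensionality, then universal equidimensionality via \Cref{prop:universally equidimensional morphism 2}, which is condition~(3).

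There is, however, a genuine gap in your $(5)\Rightarrow(4)$. You write that ``near $e(S)$ the morphism $f$ has equidimensional and geometrically reduced fibres, one concludes flatness exactly as in the proof of \Cref{prop:smoothness criterion of group space 1}, via \Cref{prop:universally equidimensional morphism 1} and \Cref{thm:universal open implies flat}.'' But the proof of \Cref{prop:smoothness criterion of group space 1} started from condition~(3), i.e.\ \emph{universal} equidimensionality, which gives universal openness and then flatness. Here you only have constant fibre dimension, and \Cref{prop:universally equidimensional morphism 1} does \emph{not} say that constant fibre dimension implies universal openness; it says the converse. Without the unibranch and dominance hypotheses you cannot conclude that $f$ is universally open near $e(S)$, so \Cref{thm:universal open implies flat} does not apply as stated. (Your own final paragraph flags this as the hard step, but the proposed fix does not close it.)

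The paper avoids this by citing SGA3 directly. If you want an explicit argument, one route is via the valuative criterion for flatness (\Cref{prop:valuative criterion for flatness}): for a non-degenerate trait $\Delta\to S$ through $s$, the base change $G_\Delta\to\Delta$ still has constant fibre dimension $r$, and the section $e_\Delta$ forces the closure of $e_\Delta(\eta)$ to lie in a dominant component $W_0$ whose closed fibre is $r$-dimensional and hence contains the entire neutral component $(G_{\Delta,0})^{\circ}$; this rules out any vertical component of $G_\Delta$ through $e_\Delta(0)$, so near the identity $G_\Delta\to\Delta$ is equidimensional over a Dedekind (hence unibranch) base, and now your appeal to \Cref{prop:universally equidimensional morphism 2} and \Cref{thm:universal open implies flat} goes through. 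Alternatively, and more in the spirit of the paper, one may invoke the strict neutral component (\Cref{thm:strict neutral component}, i.e.\ SGA3 $\mathrm{VI_B}$ Theorem~3.10): condition~(7) guarantees that $G^{\circ\circ}\subset G$ is a smooth open subgroup containing $e(S)$, which gives $(4)$ immediately.
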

		\begin{proof}
			The implications (1) $\Longrightarrow$ (4) $\Longleftrightarrow$ (5) $\Longleftrightarrow$ (6) $\Longleftrightarrow$ (7) are combinations of \cite[$\mathrm{VI_B}$ Proposition 1.6]{SGA3I} and \cite[Proposition 8.2]{ari-fed16}. Let us now assume $S$ and $G$ satisfy the given assumptions, and assume (5). Then $f$ is equidimensional and furthermore, universally equidimensional by \Cref{prop:universally equidimensional morphism 2}. This proves (3).
		\end{proof}

		\subsubsection{Deformation to the Lie algebra}
			Let $G \to S$ be a separated group space. The identity section $e : S \to G$ is a closed immersion, so we may consider its deformation to the normal cone construction, which we recall for reader's convenience (see \cite[\S 5]{ful:intersection} for details). Start with the trivial family $G \times \PP^1 \to \PP^1$, blow it up along its closed subscheme $e(S) \times \{ \infty \}$ to obtain $\operatorname{Bl}_{e(S) \times \infty} (G \times \PP^1) \to \PP^1$, and take a complement in the central fiber
			\[ \mathfrak G = \Big( \operatorname{Bl}_{e(S) \times \infty} (G \times \PP^1) \Big) \setminus \operatorname{Bl}_{e(S)} G \to \PP^1 .\]
			It is a flat algebraic space over $\PP^1$ and admits a morphism
			\begin{equation} \label{eq:deformation to Lie algebra}
				\mathfrak G \to S \times \PP^1 \quad \mbox{with a section } \ e : S \times \PP^1 \to \mathfrak G ,
			\end{equation}
			which can be considered as a deformation of the group space $G \to S$ with its section. It satisfies the following properties:
			\begin{itemize}
				\item Over $S \times \AA^1$ for $\AA^1 = \PP^1 \setminus \{ \infty \}$, \eqref{eq:deformation to Lie algebra} is isomorphic to the trivial $\AA^1$-family of $G \to S$ together with the identity section $e : S \to G$; and
				\item Over $S \times \{\infty\}$, it is isomorphic to the normal cone
				\[ N = \Spec_S \Big( \bigoplus_{k \ge 0} I^k / I^{k+1} \Big) \to S ,\]
				of $e(S) \subset G$, where $I$ is the ideal sheaf of the closed subscheme $e(S) \subset G$.
			\end{itemize}
			
			\begin{lemma}
				Let $G \to S$ be a separated group space and \eqref{eq:deformation to Lie algebra} the deformation to the normal cone of its identity section. Then there exists a closed immersion $\mathfrak G_{\infty} \subset \Lie G$ that is set-theoretically equal.
			\end{lemma}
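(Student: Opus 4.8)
The plan is to present $\mathfrak G_\infty$ as a closed subscheme of $\Lie G$ via a canonical surjection of graded algebras, and then to verify fibrewise — the one non-formal ingredient being Cartier's theorem — that this closed subscheme is all of $\Lie G$.

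\emph{Construction of the immersion, and reduction to fibres.} Let $I\subset\mathcal O_G$ be the ideal sheaf of the closed immersion $e$, so $C_e=I/I^2$. The $I$-adic associated graded algebra $\bigoplus_{k\ge0}I^k/I^{k+1}$ is generated in degree one over its degree-zero part $\mathcal O_G/I=\mathcal O_S$, so the canonical morphism of graded $\mathcal O_S$-algebras $\Sym^*_{\mathcal O_S}C_e\to\bigoplus_{k\ge0}I^k/I^{k+1}$ is surjective and is an isomorphism in degrees $\le 1$; applying $\Spec_S$ and using the identification $\mathfrak G_\infty\cong N=\Spec_S\bigl(\bigoplus_k I^k/I^{k+1}\bigr)$ from the second bullet above yields a closed immersion $j:\mathfrak G_\infty\hookrightarrow\Lie G=\Spec_S\Sym^* C_e$ over $S$. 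Since a closed immersion is injective and closed on underlying spaces, it suffices to prove $j$ is surjective. A point $y$ of $\Lie G$ lies over some $s\in S$ and, since $|\Lie G\times_S\Spec\kappa(s)|\to|\Lie G|$ surjects onto the points over $s$, lifts to the fibre; a diagram chase with the cartesian square of fibre inclusions then shows that surjectivity of $j$ follows once the induced map on each fibre $j_s:\mathfrak G_\infty\times_S\Spec\kappa(s)\to\Lie G\times_S\Spec\kappa(s)$ is surjective for every $s\in S$.

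\emph{The fibre computation.} Fix $s\in S$, write $k=\kappa(s)$ and $G_s=G\times_S\Spec k$, a group algebraic space over the field $k$ with identity $e_s$ and ideal sheaf $I_s\subset\mathcal O_{G_s}$ of $e_s$. By Cartier's theorem $G_s$ is smooth over $k$, so it is a scheme and $\mathcal O_{G_s,e(s)}$ is a regular local ring, of dimension $d:=\dim_{e(s)}G_s$. Since $e$ and $e_s$ sit in a cartesian square over $S\leftarrow\Spec k$, pulling back the isomorphism $\Omega_{G/S}=f^*C_e$ of \Cref{prop:cotangent sheaf of group scheme} and using that $\Omega$ commutes with base change together with \Cref{prop:cotangent sheaf of group scheme} for $G_s/k$ gives $I_s/I_s^2=e_s^*\Omega_{G_s/k}=C_e\otimes_{\mathcal O_S}k$; hence $\Lie G\times_S\Spec k=\Spec_k\Sym^*_k(C_e\otimes_{\mathcal O_S}k)=\Lie(G_s)$, which — as $G_s$ is smooth — is the tangent space $T_{e(s)}G_s\cong\AA^d_k$. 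On the other hand, regularity of $\mathcal O_{G_s,e(s)}$ forces its $I_s$-adic associated graded algebra to be the polynomial algebra $\Sym^*_k(I_s/I_s^2)$, i.e. the tangent cone $C_{e(s)}G_s=\Spec_k\bigl(\bigoplus_k I_s^k/I_s^{k+1}\bigr)$ is again $\AA^d_k$; and the surjections $I^k\twoheadrightarrow I_s^k$ (the image of a power of an ideal is the corresponding power of the image) induce a surjection $\bigl(\bigoplus_k I^k/I^{k+1}\bigr)\otimes_{\mathcal O_S}k\twoheadrightarrow\bigoplus_k I_s^k/I_s^{k+1}$, hence a closed immersion $C_{e(s)}G_s\hookrightarrow\mathfrak G_\infty\times_S\Spec k$. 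Thus $\mathfrak G_\infty\times_S\Spec k$ is a closed subscheme of $\Lie G\times_S\Spec k\cong\AA^d_k$ containing $C_{e(s)}G_s\cong\AA^d_k$; as $\AA^d_k$ is irreducible of dimension $d$, the latter is dense, so the two coincide set-theoretically and $j_s$ is surjective. With the previous paragraph this proves the lemma.

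\emph{Main obstacle.} Everything except Cartier's theorem is formal bookkeeping; the essential point is that in characteristic zero every fibre $G_s$ is \emph{smooth}, so $\mathcal O_{G_s,e(s)}$ is regular and the tangent cone of $G_s$ at the identity degenerates onto the full tangent space — in positive characteristic a fibre could be non-reduced at the identity, and $\mathfrak G_\infty$ would then be a proper closed subcone of $\Lie G$ over that point. A minor auxiliary point is the base-change identity $C_e\otimes_{\mathcal O_S}\kappa(s)=I_s/I_s^2$ over a possibly non-reduced $S$, handled by $\Omega_{G/S}=f^*C_e$ plus the unconditional base change of the relative cotangent sheaf. Note that $j$ is an isomorphism on every fibre over $S$ but need not be a scheme-theoretic isomorphism, as neither space need be flat over $S$; this is why the statement is only set-theoretic.
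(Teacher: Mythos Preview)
Your proof is correct and follows essentially the same route as the paper's: construct the closed immersion from the surjection $\Sym^* C_e \twoheadrightarrow \bigoplus_k I^k/I^{k+1}$, then use Cartier's theorem fibrewise to see that the tangent cone of $G_s$ at the identity coincides with its tangent space. You are somewhat more explicit than the paper about two points --- the base-change identity $C_e\otimes_{\mathcal O_S}\kappa(s)\cong I_s/I_s^2$ (which you deduce from $\Omega_{G/S}=f^*C_e$) and the sandwich $C_{e(s)}G_s \hookrightarrow \mathfrak G_\infty\times_S\Spec k \hookrightarrow \Lie(G_s)$ forcing set-theoretic equality --- whereas the paper compresses this into the single line ``$\Sym^k(I_s/I_s^2)\cong I_s^k/I_s^{k+1}$ proves the closed immersion is an equality over $s$''; but the content is the same.
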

			\begin{proof}
				Let $I$ be the ideal sheaf of $e(S) \subset G$. The conormal sheaf $C_e$ is by definition $I/I^2$, so there exists a surjective homomorphism of quasi-coherent $\mathcal O_S$-algebras
				\[ \bigoplus_k \Sym^k C_e \twoheadrightarrow \bigoplus_k I^k/I^{k+1} \]
				which in turn induces a closed immersion $\mathfrak G_{\infty} \subset \Lie G$ over $S$. Over a point $s \in S$, the fiber of the group space is a locally algebraic group $G_s$ over $k(s)$, which is always smooth by Cartier's theorem. Hence the identity section $\Spec k(s) \to G_s$ is a regular point, so its ideal sheaf $I_s = I \otimes_{\mathcal O_S} k(s)$ satisfies $\Sym^k (I_s/I_s^2) \cong I_s^k/I_s^{k+1}$. This proves the closed immersion $\mathfrak G_{\infty} \subset \Lie G$ is an equality over $s \in S$. Hence $\mathfrak G_{\infty} \subset \Lie G$ is a set-theoretic equality, i.e., $\mathfrak G_{\infty}$ is defined by a nilpotent ideal.
			\end{proof}
			
			Because of this lemma, we will (somewhat incorrectly) call the deformation to the normal cone construction above a \emph{deformation to the Lie algebra}.
			
			\begin{definition}
				For a separated group space $G \to S$, we call the morphism \eqref{eq:deformation to Lie algebra} its \emph{deformation to its Lie algebra}.
			\end{definition}
			
			\begin{lemma} \label{lem:deformation to Lie algebra respects base change}
				Let $G \to S$ be a separated group space.
				\begin{enumerate}
					\item The deformation to the Lie algebra morphism respects arbitrary base change $T \to S$.
					
					\item If $H \subset G$ is a locally closed subgroup space, then there exists an immersion between deformation to the Lie algebras
					\[\begin{tikzcd}[column sep=tiny]
						\mathfrak H \arrow[r, phantom, "\subset"] \arrow[dr] & \mathfrak G \arrow[d] \\
						& S \times \PP^1
					\end{tikzcd}.\]
				\end{enumerate}
			\end{lemma}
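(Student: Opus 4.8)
The plan is to establish both statements by reducing to the affine case and using the presentation of \eqref{eq:deformation to Lie algebra} via the extended Rees algebra. Forming the deformation to the normal cone is local on $G$ (and trivial away from $e(S)$, where the construction is just $(G\setminus e(S))\times\AA^1$), so after étale localization we may take $S=\Spec R$ and $G=\Spec A$ with $A$ a Hopf $R$-algebra, counit $\varepsilon\colon A\to R$ and augmentation ideal $\mathcal I=\ker\varepsilon$ (so $C_e=\mathcal I/\mathcal I^2$ and $A=R\oplus\mathcal I$ as $R$-modules). Then \eqref{eq:deformation to Lie algebra} is the trivial family $G\times\AA^1$ over $S\times(\PP^1\setminus\{\infty\})$, and over $S\times(\PP^1\setminus\{0\})$, with coordinate $t$ chosen so that $\{t=0\}$ is the fibre over $\infty$, it is $\Spec$ of the extended Rees algebra $\mathcal R=\bigoplus_{n\in\ZZ}\mathcal I^n t^{-n}\subseteq A[t,t^{-1}]$ (with $\mathcal I^n=A$ for $n\le 0$), as one reads off the blow-up definition; the two descriptions agree on the overlap. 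For (1), a base change $h\colon T=\Spec R'\to S$ gives $A'=A\otimes_R R'$ with augmentation ideal $\mathcal I'=\mathcal I\otimes_R R'$ (using the splitting) and a surjection of graded $R'$-algebras $\mathcal R\otimes_R R'\twoheadrightarrow\bigoplus_n(\mathcal I')^n t^{-n}$; thus (1) reduces to showing $\mathcal I^n$ commutes with $h$ for every $n$, after which the two charts and the section $e$ glue to the global statement.

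I expect this last reduction to be the crux: the base-change compatibility must hold for the \emph{non-flat} morphisms $G\to S$ and $T\to S$ — it cannot come from flatness or from $e$ being a regular immersion (it need not be), and it fails for a general closed immersion; it genuinely uses the group law. The mechanism is the $m$-th power endomorphism. For $m\ge 2$, $[m]\colon G\to G$ has $[m]^*\colon A\to A$ preserving $\mathcal I$ and acting on $\mathcal I^k/\mathcal I^{k+1}$ as multiplication by $m^k$; hence on each infinitesimal neighbourhood $A/\mathcal I^{N+1}$ the operator $[m]^*$ is annihilated by $\prod_{k=0}^N(T-m^k)$, whose roots have pairwise differences invertible in $R$ since $\QQ\subseteq R$. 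By the Chinese remainder theorem $[m]^*$ is therefore ``diagonalizable'' on $A/\mathcal I^{N+1}$, giving a canonical $[m]^*$-equivariant $R$-linear decomposition $A/\mathcal I^{N+1}=\bigoplus_{k=0}^N E_k$ with $E_k\cong\mathcal I^k/\mathcal I^{k+1}$. Now $A/\mathcal I^{N+1}$ commutes with base change for the trivial reason that it is $A$ modulo the ideal generated by $\mathcal I^{N+1}$ (and $\mathcal I^{N+1}A'=(\mathcal I')^{N+1}$ by the splitting), and the base-change isomorphism is $[m]^*$-equivariant, so each graded piece $\mathcal I^k/\mathcal I^{k+1}$ commutes with base change — this recovers and refines the preceding lemma, with $k=1$ being the content of \Cref{prop:cotangent sheaf of group scheme}. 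Feeding this back into $\mathcal R$, with $\mathcal I'=\mathcal I\otimes_R R'$, yields $\mathcal R\otimes_R R'\cong\bigoplus_n(\mathcal I')^n t^{-n}$; the one delicate bookkeeping point is passing from the truncations $\mathcal I^n/\mathcal I^N$ to $\mathcal I^n$ itself. (Alternatively: $\mathfrak G\to S\times\PP^1$ is itself a group space, $\mathfrak G_T\hookrightarrow\mathfrak G\times_S T$ is a closed immersion of group spaces, and by the above it is an isomorphism on the fibre over $\infty$ and over $S\times\AA^1$, so rigidity forces it to be an isomorphism.)

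For (2) the argument is routine functoriality of the construction. When $H\subseteq G$ is a \emph{closed} subgroup space with ideal $\mathcal J$, one has $\mathcal O_H=\mathcal O_G/\mathcal J$ and $\mathcal I_{e_H}^n$ is the image of $\mathcal I_{e_G}^n$; reducing the extended Rees algebras modulo $\mathcal J$ produces a closed immersion $\mathfrak H\hookrightarrow\mathfrak G$ over $S\times\PP^1$, compatible with the evident closed immersion $H\times\AA^1\hookrightarrow G\times\AA^1$ on the other chart. When $H\subseteq G$ is merely \emph{locally closed}, its scheme-theoretic closure $\overline H$ is again a subgroup space, and since $H\subseteq\overline H$ is open and the construction commutes with restriction to opens, $\mathfrak H=\mathfrak{\overline H}\times_{\overline H}H\hookrightarrow\mathfrak{\overline H}$ is an open immersion; composing with the closed immersion $\mathfrak{\overline H}\hookrightarrow\mathfrak G$ gives the desired immersion, visibly over $S\times\PP^1$.
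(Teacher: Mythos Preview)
The paper's own proof is the single sentence ``Immediate from construction.'' Both parts follow from basic functoriality of the blow-up (equivalently, the Rees algebra): a closed immersion $Y \hookrightarrow X$ carrying the center to the center induces a closed immersion of blow-ups, and a flat map (in particular an open immersion) base-changes the blow-up on the nose. For (2) this directly gives the immersion $\mathfrak H \hookrightarrow \mathfrak G$; for (1) it gives a natural closed immersion from the deformation of $G_T$ into $\mathfrak G \times_S T$, which is an isomorphism when $T \to S$ is flat.

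You read (1) as the stronger assertion that this closed immersion is an \emph{isomorphism} for arbitrary $T \to S$, and you are right that this does not come for free from the Rees presentation: the surjection $\bigoplus_n \mathcal I^n \otimes_R R' \twoheadrightarrow \bigoplus_n (\mathcal I')^n$ need not be injective for a general closed immersion. Your $[m]^*$-eigenspace argument is a nice idea and does show that each graded piece $\mathcal I^k/\mathcal I^{k+1}$ commutes with base change (this works even for noncommutative $G$: $[m]^*$ is still a ring endomorphism fixing the counit, hence acts by $m^k$ on $\mathcal I^k/\mathcal I^{k+1}$). But you then need to pass from the truncations $\mathcal I^n/\mathcal I^N$ to $\mathcal I^n$ itself, and the gap you flag there is real. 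Fortunately, inspecting the only place the lemma is used (the proof of \Cref{lem:main component is equidimensional along section}), the paper needs only the natural closed immersion together with the isomorphism for the flat base change along $S_0 \hookrightarrow S$ --- both of which are genuinely immediate.

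For (2), your reduction via the closure $\bar H$ is fine in spirit, but the claim that $\bar H$ is again a \emph{subgroup} is unnecessary and not obviously true over a general base (you would need $H \times_S H$ scheme-theoretically dense in $\bar H \times_S \bar H$, which requires flatness you do not have). All that is needed is that $e(S) \subset \bar H$ is closed --- clear, since $e(S)$ is closed in $G$ --- so the deformation to the normal cone of $e(S)$ in $\bar H$ makes sense regardless of any group structure on $\bar H$; functoriality for the open immersion $H \hookrightarrow \bar H$ and the closed immersion $\bar H \hookrightarrow G$ then yields $\mathfrak H \hookrightarrow \mathfrak G$.
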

			\begin{proof}
				Immediate from construction.
			\end{proof}
			
			\begin{lemma} \label{lem:deformation to Lie algebra smooth}
				Let $G \to S$ be a separated group space and $\mathfrak G \to S \times \PP^1$ its deformation to the Lie algebra in \eqref{eq:deformation to Lie algebra}.
				\begin{enumerate}
					\item If $\Lie G \to S$ is smooth, then $\mathfrak G_{\infty} = \Lie G$ scheme-theoretically.
					\item If $G \to S$ is smooth, then $\mathfrak G \to S \times \PP^1$ is smooth.
				\end{enumerate}
			\end{lemma}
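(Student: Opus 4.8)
\emph{Proof plan.} For part (1) I would reduce everything to the statement that the identity section is a regular immersion, so that the normal cone entering the construction agrees with the normal bundle. First, smoothness of $\Lie G = \Spec_S(\Sym^* C_e) \to S$ makes the $\mathcal O_S$-algebra $\Sym^* C_e$ flat, hence its degree-one summand $C_e$ flat, hence locally free (it is coherent and $S$ is locally Noetherian). By \Cref{prop:smoothness criterion of group space 2} this is equivalent to $G \to S$ being smooth along $e$, so near $e(S)$ the closed immersion $e : S \to G$ is a section of a smooth morphism and is therefore a regular immersion; its ideal sheaf $I$ is then locally generated by a regular sequence whose classes give a basis of $C_e = I/I^2$. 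Consequently the canonical surjection $\bigoplus_k \Sym^k C_e \twoheadrightarrow \bigoplus_k I^k/I^{k+1}$ produced in the proof of the preceding lemma is an isomorphism, which is precisely the assertion that $\mathfrak G_\infty$ — the normal cone of $e(S) \subset G$ — equals $\Lie G$ scheme-theoretically.

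For part (2) I would first observe that over $S \times \AA^1 = S \times (\PP^1 \setminus \{\infty\})$ the morphism $\mathfrak G \to S \times \PP^1$ is the trivial family $G \times \AA^1 \to S \times \AA^1$, which is smooth since $G \to S$ is; as the smooth locus is open, it remains to prove smoothness at the points of the fibre $\mathfrak G_\infty$ over $S \times \{\infty\}$, and this fibre, being the normal cone of $e(S)$, is supported over $e(S)$. So fix $p \in \mathfrak G_\infty$ over $\bar p \in e(S)$. Since $G \to S$ is smooth, $\Omega_{G/S}$ is locally free of rank $n := \dim_{\bar p}(G/S)$, hence so is $C_e = e^*\Omega_{G/S}$ (\Cref{prop:cotangent sheaf of group scheme}); thus after passing to a small \'etale neighbourhood $V \to G$ of $\bar p$ one can pick generators $f_1, \dots, f_n$ of the ideal of $e(S) \times_G V$ in $V$ whose differentials form a basis of $\Omega_{V/S}$, so that $(f_1, \dots, f_n) : V \to \AA^n_S$ is \'etale and pulls the zero section back, scheme-theoretically, to $e(S) \times_G V$.

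Now the deformation to the normal cone is compatible with flat base change, so over $V$ the morphism $\mathfrak G \to S \times \PP^1$ — which there is the deformation to the normal cone of $e(S) \times_G V \subset V$ — is identified, compatibly with the maps to $S \times \PP^1$, with the pullback along $(f_1, \dots, f_n)$ of the deformation to the normal cone $W \to S \times \PP^1$ of the zero section of $\AA^n_S \to S$. A direct Rees-algebra computation (or the remark that the normal cone of the zero section of a vector bundle is the bundle itself, with the deformation linear in the fibre directions) identifies $W$, on the chart of $\PP^1$ containing $\infty$, with the projection $\AA^n_S \times \AA^1 \to S \times \AA^1$; so $W \to S \times \PP^1$ is smooth, and its pullback along the \'etale morphism $V \to \AA^n_S$ is again smooth. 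Since smoothness is \'etale-local on the source, $\mathfrak G \to S \times \PP^1$ is smooth at $p$, which finishes the proof. I expect the genuinely non-formal ingredient — and hence the step most needing care — to be the input used in part (1) that a group space with locally free conormal sheaf is smooth along its identity section: this is false for morphisms that are not group morphisms and ultimately rests on Cartier's theorem, whereas everything else is routine base-change bookkeeping.
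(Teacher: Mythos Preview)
Your proof of part (1) is essentially the paper's: both reduce to the observation that smoothness of $\Lie G \to S$ forces $C_e$ to be locally free, whence $e$ is a regular immersion (via \Cref{prop:smoothness criterion of group space 2}), and then $\Sym^k(I/I^2) \cong I^k/I^{k+1}$ identifies the normal cone with $\Lie G$.

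For part (2) the paper takes a genuinely different and shorter route. Rather than working \'etale-locally on $G$, the paper exploits the built-in flatness of the deformation-to-the-normal-cone construction over $\PP^1$: since $\mathfrak G \to \PP^1$ is flat by construction, the fibrewise flatness criterion (Tag~05X1) reduces smoothness of $\mathfrak G \to S \times \PP^1$ to smoothness of each slice $\mathfrak G_t \to S$ for $t \in \PP^1$. Over $t \neq \infty$ this is $G \to S$, smooth by hypothesis; over $t = \infty$ it is $\Lie G \to S$ by part (1), smooth because $G \to S$ is. Your approach --- \'etale-local reduction to the zero section of $\AA^n_S$ via coordinates $(f_1,\dots,f_n)$, followed by an explicit Rees-algebra computation --- is correct and self-contained, but it trades the single invocation of flatness over $\PP^1$ for compatibility of blow-ups with flat base change plus a local model computation. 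The paper's argument is more conceptual and avoids choosing coordinates; yours has the virtue of not needing to recall why the deformation to the normal cone is flat over $\PP^1$.
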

			\begin{proof}
				(1) If $\Lie G \to S$ is smooth then the identity section $e : S \to G$ is a regular immersion by \Cref{prop:smoothness criterion of group space 2}. Hence we have an isomorphism $\Sym^k (I/I^2) = I^k/I^{k+1}$ for the ideal sheaf $I$ of $e$, proving that the Lie algebra space is the normal cone. (2) The algebraic space $\mathfrak G \to \PP^1$ is flat by construction. By (Tag~05X1: flatness criterion by fibers), $g : \mathfrak G \to S \times \PP^1$ is smooth if and only if $g_t : \mathfrak G_t \to S_t$ is smooth for all $t \in \PP^1$. If $G \to S$ is smooth, then $\mathfrak G_{\infty} = \Lie G \to S$ is smooth as well by the first item. 
			\end{proof}

	\subsection{Neutral components}
		Given a locally algebraic space $G$ over $k$, the unique connected component $G^{\circ}$ containing the identity defines an open and closed subgroup scheme of $G$, called the neutral component. A group space $G \to S$ over a general base may have two different notion of ``neutral components'', which we describe in this section.

		\subsubsection{Group spaces with connected fibers and strict neutral component} \label{sec:strict neutral component}
			In general, the smoothness of $\Lie G \to S$ does not imply the smoothness of $G \to S$ (see \Cref{prop:smoothness criterion of group space 2} and \S \ref{sec:examples of group spaces}). However, this implies the fiberwise neutral component of $G$, the \emph{strict neutral component}, is smooth.
			
			\begin{theorem} [{\cite[$\mathrm{VI_B}$ Theorem 3.10]{SGA3I}}] \label{thm:strict neutral component}
				Let $G \to S$ be a group space with smooth Lie algebra space $\Lie G \to S$. Then there exists a unique smooth, open, and normal subgroup space $G^{\circ\circ} \subset G$ such that for every $s \in S$, the fiber $G^{\circ\circ}_s$ is the neutral component of the locally algebraic group $G_s$.
			\end{theorem}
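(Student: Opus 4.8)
The plan is to build $G^{\circ\circ}$ by a ``generation'' argument starting from a suitable open neighborhood of the identity section, modulo one geometric input. First, by \cite[$\mathrm{VI_B}$ Proposition~1.6]{SGA3I} (equivalently \cite[Proposition~8.2]{ari-fed16}; compare \Cref{prop:smoothness criterion of group space 2}), the hypothesis that $\Lie G\to S$ is smooth is the same as saying $G\to S$ is smooth along the identity section $e$. Since $G\to S$ is locally of finite presentation, its smooth locus $U_{0}\subseteq G$ is therefore an open subspace containing $e(S)$, and $i(U_{0})=U_{0}$ as $i$ is an $S$-automorphism.

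The essential step is to choose inside $U_{0}$ an open $U$ containing $e(S)$, smooth over $S$, and with \emph{connected} geometric fibers; replacing $U$ by $U\cup i(U)$ I may also take $U$ symmetric (still with connected fibers, being a union of two connected sets through $e$). This is the one non-formal point: it amounts to the openness in $U_{0}$ of the ``relative identity component'', i.e.\ the set of points of $U_{0}$ lying in the connected component of $e$ in their fiber over $S$. Openness of this locus is a standard property of smooth morphisms with a section (relative connected components of flat, finitely presented, smooth morphisms; see \cite[$\mathrm{VI_B}$ \S3]{SGA3I}), and it is the step I expect to be the main obstacle — everything afterwards is bookkeeping with the group law.

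Granting $U$, set $G^{\circ\circ}:=\bigcup_{n\ge 1}U^{\cdot n}$, where $U^{\cdot n}$ denotes the image of the $n$-fold multiplication $U^{\times_{S}n}\to G$. The mechanism: whenever $W\to S$ is smooth and comes with a morphism $W\to G$, the translation $U\times_{S}W\to G$, $(u,w)\longmapsto uw$, is smooth, because it factors as the open immersion $U\times_{S}W\hookrightarrow G\times_{S}W$, $(u,w)\longmapsto(uw,w)$ (restriction of the $S$-automorphism $(g,w)\longmapsto(gw,w)$ of $G\times_{S}W$), followed by the projection $G\times_{S}W\to G$, a base change of $W\to S$. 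Taking $W=U,\ U^{\times_{S}2},\dots$ and inducting, each map $U^{\times_{S}n}\to G$ is smooth, so each $U^{\cdot n}$ is an open subspace of $G$; moreover, after an fppf base change any point of $G^{\circ\circ}$ becomes a product of points of $U$ and hence lies in an $S$-automorphism translate of $U$, so $G^{\circ\circ}\to S$ is smooth. By construction $G^{\circ\circ}$ contains $e(S)$ and is closed under multiplication and inversion ($U$ being symmetric), hence is a smooth open subgroup space. For $s\in S$ its fiber is the subgroup of the locally algebraic group $G_{s}$ generated by the connected open neighborhood $U_{s}$ of the identity; such a subgroup is open, hence closed, and coincides with the increasing union $\bigcup_{n}(U_{s})^{\cdot n}$ of connected sets through $e$, hence is connected — so it equals the neutral component $G_{s}^{\circ}$.

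Finally, normality and uniqueness are formal. For $T\to S$ and $g\in G(T)$, the conjugation $c_{g}$ is a $T$-group automorphism fixing the identity, so it preserves neutral components fiberwise; thus $c_{g}(G^{\circ\circ}_{T})=G^{\circ\circ}_{T}$, i.e.\ $G^{\circ\circ}$ is normal. If $H\subseteq G$ is any open subgroup space with $H_{s}=G_{s}^{\circ}$ for every $s$, then the open immersions $H\cap G^{\circ\circ}\hookrightarrow H$ and $H\cap G^{\circ\circ}\hookrightarrow G^{\circ\circ}$ are bijective on underlying spaces, hence isomorphisms, forcing $H=G^{\circ\circ}$.
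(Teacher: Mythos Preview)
The paper does not give its own proof of this statement; it is cited directly from \cite[$\mathrm{VI_B}$ Theorem~3.10]{SGA3I} and used as a black box throughout. Your sketch is essentially the SGA3 argument itself: pass from smoothness of $\Lie G$ to smoothness of $G\to S$ along the identity section, extract an open neighborhood $U$ of $e(S)$ with connected fibers, and generate $G^{\circ\circ}$ by iterated multiplication. The one step you correctly flag as non-formal---openness of the relative identity component inside the smooth locus---is precisely the content of the preparatory lemmas in \cite[$\mathrm{VI_B}$ \S3]{SGA3I} preceding Theorem~3.10, so your citation is apt if slightly self-referential. The remaining steps (smoothness of the $n$-fold multiplication via the translation factorization, identification of the fibers with neutral components, normality via conjugation, and uniqueness) are correct as written.
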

			
			\begin{definition} [Strict neutral component]
				Let $G \to S$ be a group space with smooth Lie algebra. We call the smooth open subgroup space $G^{\circ\circ} \to S$ in \Cref{thm:strict neutral component} the \emph{strict neutral component} of $G$.
			\end{definition}
			
			Recall that an arbitrary group space may very well be non-separated. However, for smooth group spaces, only the disconnectedness of fibers is responsible for its non-separatedness.
			
			\begin{theorem} [{\cite[$\mathrm{VI_B}$ Corollary 5.5]{SGA3I}}] \label{thm:smooth group space with connected fibers}
				Every smooth group space $G \to S$ with connected fibers is separated and of finite type.
			\end{theorem}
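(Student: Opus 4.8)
Both assertions are local on $S$, and for the first one may also reduce to $S$ Noetherian (absolute Noetherian approximation, the fibres being of finite type and smoothness being preserved). For \emph{separatedness}, by (Tag~06P6) it suffices to show the identity section $e\colon S\to G$ is a closed immersion; it is already an immersion ($G$ is locally separated), so the content is that $e(S)$ is closed in $G$. I would pass to the scheme-theoretic closure $E\subseteq G$ of $e(S)$ — in which $e(S)$ is a schematically dense open — and first verify that $E$ is a \emph{closed subgroup space}: stability under $i$ is automatic since $i$ is an automorphism fixing $e$, and stability under $m$ holds because $G\to S$ is flat, so both projections $G\times_S G\to G$ are flat, hence $e(S)\times_S e(S)$ is schematically dense in $E\times_S E$, while $m$ sends $e(S)\times_S e(S)$ into the closed set $E$ and $m^{-1}(E)\cap(E\times_S E)$, being closed and containing a schematically dense open, is all of $E\times_S E$. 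Then $E\to S$ is a group space whose fibre over each generic point $\eta$ is the trivial group $\Spec\kappa(\eta)$ (it is irreducible and contains the $\kappa(\eta)$-point $e(\eta)$), so, reducing to $S$ integral, $E$ is integral and birational to $S$.

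The heart of the matter is then to improve this to $E=e(S)$. Along any trait $\Spec R\hookrightarrow S$ the morphism $E_R\to\Spec R$ is flat and quasi-finite; Cartier's theorem forces every fibre of $E$ to be a finite \emph{étale} group scheme; and since the generic fibre has degree $1$ and there is nothing strictly between a valuation ring and its fraction field, one already gets $E=e(S)$ over a dense open of $S$ and sees that $E_R\to\Spec R$ is universally closed (the section $e$ supplies the required extensions in the valuative criterion). What remains is to rule out that $E_R\to\Spec R$ is quasi-finite but not finite: this ``doubled-point'' scenario is exactly a non-separated finite étale extension of the trivial group (special fibre $\ZZ/n$), and it cannot arise as the closure of the unit section inside $G_{s_0}$ precisely because that fibre is connected. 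Granting this, $E_R\to\Spec R$ is finite, hence — being finite, flat, with trivial generic fibre — equal to $e(\Spec R)$; letting the trait vary gives $E=e(S)$, so $G\to S$ is separated.

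For \emph{finite type}, fix $s\in S$: the fibre $G_s$ is a connected locally algebraic group over $\kappa(s)$, hence of finite type, so I would choose a quasi-compact open $V\subseteq G$ with $G_s\subseteq V$, then an open $W\ni s$ inside the (open) image of $V$ with $e(W)\subseteq V$. For $w\in W$ the nonempty open $V_w=V\cap G_w$ generates the connected group $G_w$, so the open subsets $V_w^{(N)}$ cut out by the ($N$-fold, smooth) multiplication increase to $G_w$; thus the closed sets $G_W\setminus V_W^{(N)}$ decrease to $\emptyset$, and a Noetherian argument on $W$ (their images in $W$ being constructible, after the reduction to Noetherian $S$) gives $V_W^{(N)}=G_W$ for some $N$, making $G_W$ quasi-compact. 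Covering $S$ by such $W$ finishes.

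The step I expect to be the main obstacle is the finiteness of $E_R\to\Spec R$ in the second paragraph — equivalently, that a non-separated finite étale extension of the trivial group cannot be the scheme-theoretic closure of the unit section in a group with connected fibres. The subgroup property of $E$, the reducedness of its fibres via Cartier, and the generic triviality are all formal; this finiteness is the one point where the connected-fibres hypothesis does real work.
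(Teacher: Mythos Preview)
The paper does not give a proof of this statement; it cites SGA3 $\mathrm{VI_B}$ Corollary 5.5 directly. So there is no paper proof to compare against, and your sketch must be evaluated on its own merits.

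For separatedness, your appeal to Cartier for \emph{finiteness} of the fibres $E_s$ is wrong as stated: Cartier gives smoothness, not finiteness. The fix is easy and worth recording: since $e(S)$ is locally closed in $G$, it is open in its closure $E$, so $e(s)$ is an isolated point of $E_s$; hence the identity component of the group $E_s$ is trivial and (being smooth by Cartier) $E_s$ is étale. More seriously, your crucial step --- that the ``doubled-point'' scenario for $E_R$ over a trait is ruled out ``precisely because that fibre is connected'' --- is not an argument. A connected algebraic group can certainly contain a nontrivial finite étale closed subgroup (e.g.\ $\mu_n \subset \GG_m$), so connectedness of $G_s$ alone does not prevent $E_s$ from being nontrivial. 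You correctly flag this as the main obstacle, but you have not overcome it; nor is it clear how to apply Zariski's main theorem to $E\to S$ without already knowing $E$ is separated, which is exactly what is at stake. (In SGA3 the logical order is reversed: finite type is established first, and separatedness is deduced afterwards.)

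For finite type, there is a circularity: you invoke Chevalley to get constructibility of the images of the closed sets $G_W \setminus V_W^{(N)}$ in $W$, but Chevalley requires those sets to be of finite type over $W$, and $G_W$ is not yet known to be quasi-compact. The standard fix is to work instead with the locally closed sets $V_W^{(N+1)} \setminus V_W^{(N)}$: these lie in the quasi-compact $V_W^{(N+1)}$, so Chevalley applies, and $\{w : G_w \ne V_w^{(N)}\}$ equals the image of $V_W^{(N+1)} \setminus V_W^{(N)}$ because $V_w^{(N)} = V_w^{(N+1)}$ forces $V_w^{(N)}$ to be a nonempty open subgroup of the connected $G_w$, hence all of $G_w$.
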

			
			\begin{lemma}
				Let $f : G \to H$ be a homomorphism between smooth group spaces over $S$. Then the restriction of $f$ induces $G^{\circ\circ} \to H^{\circ\circ}$.
			\end{lemma}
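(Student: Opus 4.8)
The plan is to reduce to the fibres over points of $S$, where the assertion becomes the classical statement that a homomorphism of locally algebraic groups maps the neutral component into the neutral component, and then to promote this set-theoretic containment to an honest factorization of morphisms using that $H^{\circ\circ}\subset H$ is open. Note first that $G^{\circ\circ}$ and $H^{\circ\circ}$ exist at all: since $G\to S$ and $H\to S$ are smooth, their Lie algebra spaces are smooth (\Cref{prop:smoothness criterion of group space 2}), so \Cref{thm:strict neutral component} applies to both.

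First I would consider the restriction $g\colon G^{\circ\circ}\to H$ of $f$ to the open subgroup space $G^{\circ\circ}\subset G$, and let $V\subset G^{\circ\circ}$ be the open subspace $g^{-1}(H^{\circ\circ})$, which is open because $H^{\circ\circ}\subset H$ is open by \Cref{thm:strict neutral component}. For each $s\in S$, base change along $\Spec k(s)\to S$ produces a homomorphism of locally algebraic $k(s)$-groups $f_s\colon G_s\to H_s$, and its restriction to the fibre $(G^{\circ\circ})_s=(G_s)^{\circ}$ is the fibre $g_s$ of $g$ over $s$. Since $(G_s)^{\circ}$ is connected and contains the identity, its image under the continuous map $f_s$ is a connected subset of $H_s$ through the identity, hence lies in the connected component $(H_s)^{\circ}=(H^{\circ\circ})_s$. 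Therefore $V$ contains the whole fibre $(G^{\circ\circ})_s$ for every $s\in S$; the closed complement $G^{\circ\circ}\setminus V$ then meets no fibre of $G^{\circ\circ}\to S$ and is therefore empty, so $V=G^{\circ\circ}$. As $H^{\circ\circ}\hookrightarrow H$ is an open immersion, the equality $g^{-1}(H^{\circ\circ})=G^{\circ\circ}$ forces $g$ to factor uniquely through a morphism $G^{\circ\circ}\to H^{\circ\circ}$ over $S$; and because $f$ is a group homomorphism and $G^{\circ\circ}$, $H^{\circ\circ}$ are subgroup spaces of $G$, $H$, this induced morphism is compatible with multiplication, identity and inverse, i.e.\ it is a homomorphism over $S$.

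I do not expect a genuine obstacle here. The only points that need a word of care are the elementary topological reduction (the image of the connected fibre $(G_s)^{\circ}$ is connected, hence contained in the neutral component of $H_s$) and the bookkeeping identity $(G^{\circ\circ})_s=(G_s)^{\circ}$, which is exactly the defining property of the strict neutral component in \Cref{thm:strict neutral component}, together with the analogous identity for $H$. Everything else is formal; in particular no reducedness, flatness, or separatedness hypothesis on the base is used.
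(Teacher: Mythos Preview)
Your proof is correct and follows essentially the same approach as the paper: check fibrewise that $f_s$ sends $(G^{\circ\circ})_s = (G_s)^\circ$ into $(H_s)^\circ = (H^{\circ\circ})_s$, then use that $H^{\circ\circ}\subset H$ is open to promote this to a factorization of morphisms. The paper's proof is a two-line version of yours, omitting the explicit preimage argument and the reminder that connected images land in the neutral component.
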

			\begin{proof}
				For each point $s \in S$, $f_s : G^{\circ\circ}_s \to H_s$ factors through $H^{\circ\circ}_s$. Since $H^{\circ\circ} \subset H$ is an open subspace, this will suffice to prove the claim.
			\end{proof}
			
			The following theorem will be very useful later on.
			
			\begin{theorem}[{\cite[Corollary IX.1.4]{ray:group_schemes}}] \label{thm:Raynaud extension of homomorphism}
				Let $G \to S$ and $H \to S$ be group spaces. Assume that $S$ is $\mathrm{(S_2)}$, $S_1 \subset S$ is a dense open subspace with codimension $\ge 2$ complement, and $G \to S$ is smooth with connected fibers. Then every homomorphism $f_1 : G_1 \to H_1$ over $S_1$ uniquely extends to a homomorphism $f : G \to H$ over $S$.
			\end{theorem}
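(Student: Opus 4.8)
The plan is to treat the two assertions separately: uniqueness is soft and uses only the density of $G_1$ in $G$ together with the connectedness of the fibres of $G$, whereas existence is the substantial point, essentially \cite[Corollary~IX.1.4]{ray:group_schemes}. For uniqueness, I would first record that $\pi\colon G\to S$ is faithfully flat — it is smooth and is surjective because it carries the identity section — and that, $S$ being $\mathrm{(S_2)}$ and hence $\mathrm{(S_1)}$, every associated point of $S$ already lies in the dense open $S_1$; by flatness of $\pi$ the same then holds for $G$, so $G_1\hookrightarrow G$ is schematically dense. Now let $f,f'\colon G\to H$ both restrict to $f_1$ on $G_1$. The morphism $(f,f')\colon G\to H\times_S H$ is a homomorphism, so its pullback of the diagonal — that is, the equalizer $\mathrm{Eq}(f,f')$ — is a subgroup space of $G$; since $H$ is locally separated the diagonal is a (locally closed) immersion, so $\mathrm{Eq}(f,f')$ is locally closed in $G$, and it contains the schematically dense open $G_1$, hence is itself open. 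Thus $\mathrm{Eq}(f,f')$ is an open subgroup space of $G$ containing the identity section, and because $G\to S$ has connected fibres it must equal $G$ (an open subgroup of a connected group is everything). Therefore $f=f'$, and it remains only to produce the extension.

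For existence, by uniqueness I may work \'etale-locally on $S$, and in fact reduce to the case where $S$ is the spectrum of a local ring of depth $\ge 2$ with $S_1$ its punctured spectrum; then $G\setminus G_1=\pi^{-1}(S\setminus S_1)$ has codimension $\ge 2$ in $G$ by flatness, and $G$ itself is $\mathrm{(S_2)}$ because it is smooth over $S$. The structural inputs I would use are: (i) the conormal sheaf $C_G$ of the identity of $G$ is locally free, since $G\to S$ is smooth (\Cref{prop:smoothness criterion of group space 2}); (ii) consequently the Lie-algebra-level datum of $f_1$ — a cobracket-compatible $\mathcal O_{S_1}$-linear map $C_H|_{S_1}\to C_G|_{S_1}$, equivalently a morphism $\Lie G|_{S_1}\to\Lie H|_{S_1}$ — extends uniquely over $S$, because $\SheafHom(C_H,C_G)$ is the tensor of the locally free $C_G$ with the dual $C_H^\vee$, hence satisfies $\mathrm{(S_2)}$, so its sections extend across the codimension-$\ge2$ complement; and (iii) $G\to S$ has connected fibres. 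I would then promote $f_1$ to a morphism over $S$ by extending its graph: the schematic closure $\Gamma$ of $\Gamma_{f_1}\subset(G\times_S H)|_{S_1}$ in $G\times_S H$ maps isomorphically to $G$ over $G_1$, and one argues — using $\mathrm{(S_2)}$ of $G$, the extended Lie-algebra datum (which controls $\Gamma\to G$ along the bad fibre), and the connectedness of the fibres — that $\Gamma\to G$ is an isomorphism everywhere; the other projection $\Gamma\to H$ then gives the extension $f$, and $f$ is a homomorphism because the identity $f(gg')=f(g)f(g')$ holds on the schematically dense open $(G\times_S G)_1$, so by the uniqueness argument it holds on all of $G\times_S G$.

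The step I expect to be the main obstacle is the promotion of the Lie-algebra extension to an honest group homomorphism — equivalently, showing that $\Gamma\to G$ is an isomorphism and not merely a bijective monomorphism over the codimension-$\ge 2$ locus — when $H$ is only a group space, not assumed smooth, separated, or with smooth Lie algebra. One cannot invoke the usual Weil extension theorem for maps into a smooth separated group scheme, nor translate fibrewise (over a point of $S\setminus S_1$ the partially extended map may a priori avoid the whole fibre), so the argument must genuinely combine the $\mathrm{(S_2)}$-hypothesis, the local freeness of $C_G$, and the connectedness of the fibres of $G$; this is precisely the content of \cite[Corollary~IX.1.4]{ray:group_schemes}, which I would ultimately cite rather than reprove.
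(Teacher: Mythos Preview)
The paper gives no proof of this result; it is simply stated with the citation to \cite[Corollary~IX.1.4]{ray:group_schemes} and used as a black box thereafter. Your proposal therefore already goes beyond the paper's treatment.

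Your uniqueness argument is correct: the equalizer of two extensions is a locally closed subgroup space of $G$ (the diagonal of the locally separated $H$ is an immersion), it contains the schematically dense open $G_1$ (density is preserved under the flat pullback $G\to S$), hence is open, and an open subgroup space of a group with connected fibres is the whole group. For existence you outline a graph-closure strategy guided by the extended Lie-algebra map, correctly isolate the genuine obstacle --- that $H$ carries no smoothness or separatedness hypothesis, so neither Weil-type extension nor a naive closure argument applies directly --- and then defer to Raynaud. That last step is exactly what the paper does, so your proposal is consistent with, and strictly more informative than, the paper's own handling of the statement.
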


		\subsubsection{Connected group spaces and (weak) neutral component} \label{sec:neutral component}
			We assume $S$ is connected for simplicity. When $S$ is disconnected, repeat the definition for all connected components of $S$.
			
			\begin{definition} [Neutral component]
				Let $G \to S$ be a group space over a connected space $S$. We call the unique connected component $G^{\circ} \subset G$ containing the identity section the \emph{(weak) neutral component} of $G$.
			\end{definition}
			
			When $G$ is a locally algebraic group over $S = \Spec k$, the notion of neutral and strict neutral components make no difference. In general the two are different, and we may have a strict open immersion $G^{\circ\circ} \subsetneq G^{\circ}$. See \S \ref{sec:examples of group spaces}.
			
			\begin{proposition} \label{prop:neutral component}
				Let $G \to S$ be a group space. Assume either
				\begin{enumerate}[label=\textnormal{(\roman*)}]
					\item $G \to S$ is smooth; or
					\item $S$ is reduced and geometrically unibranch, and $\Lie G \to S$ is smooth.
				\end{enumerate}
				Then the neutral component $G^{\circ} \to S$ is a smooth, open and closed, and normal subgroup space of $G$.
			\end{proposition}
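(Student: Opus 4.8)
The plan is to reduce the entire statement to the assertion that $G^\circ$ equals the strict neutral component $G^{\circ\circ}$ of \Cref{thm:strict neutral component}; granting this, all claims follow, since \Cref{thm:strict neutral component} gives $G^{\circ\circ}$ as a smooth open normal subgroup space, and $G^{\circ\circ}=G^\circ$ is closed as a connected component of the locally Noetherian, hence locally connected, space $G$. As a preliminary, $\Lie G\to S$ is smooth in both cases: in (ii) by hypothesis, and in (i) because $C_e=e^*\Omega_{G/S}$ (\Cref{prop:cotangent sheaf of group scheme}) is the pullback of the locally free sheaf $\Omega_{G/S}$ along the section $e$, so that $\Lie G=\Spec_S\Sym^*C_e\to S$ is a vector bundle. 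Hence \Cref{thm:strict neutral component} produces $G^{\circ\circ}\subset G$, a smooth open normal subgroup space with geometrically connected fibers, and by \Cref{thm:smooth group space with connected fibers} the morphism $G^{\circ\circ}\to S$ is separated of finite type. Since $G^{\circ\circ}\to S$ is smooth (hence open), surjective, and has connected fibers over the connected base $S$, the space $G^{\circ\circ}$ is connected; as it contains $e(S)$ we get $G^{\circ\circ}\subseteq G^\circ$. It therefore remains to prove that $G^{\circ\circ}$ is \emph{closed} in $G$: it is then clopen and connected, hence equal to the connected component $G^\circ$.

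I will use the standard fact that for a \emph{smooth} group space $N\to S$ the strict neutral component $N^{\circ\circ}$ is closed in $N$ — equivalently, that the sheaf of connected components $\pi_0(N/S)$ is a separated étale $S$-group space; see \cite[{$\mathrm{VI_B}$, \S\S 3 and 5}]{SGA3I}. Case (i) is then immediate: applying this to $N=G$ shows $G^{\circ\circ}$ is closed in $G$.

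In case (ii) the morphism $G\to S$ need not be smooth, and the idea is to pass first to the scheme-theoretic closure $H:=\overline{G^{\circ\circ}}\subseteq G$. Because $S$ is reduced, its being geometrically unibranch and connected forces it to be integral; consequently $G^{\circ\circ}$ — smooth over the integral $S$ and connected — is integral, so $H$ is an integral closed subspace of $G$. Since $S$ is reduced and $G^{\circ\circ}$ is $S$-flat, $H$ is moreover a closed subgroup space of $G$ in which $G^{\circ\circ}$ sits as a dense open subgroup space. Now $H\to S$ is a group space over the reduced base $S$, and its Lie algebra $\Lie H$ equals $\Lie G$ — the conormal sheaf of $e$ is computed inside the common open neighbourhood $G^{\circ\circ}$ of $e(S)$ — hence $\Lie H\to S$ is smooth; moreover $S$ is geometrically unibranch and the unique irreducible component $H$ of $H$ dominates the unique irreducible component $S$ of $S$. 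The ``furthermore'' clause of \Cref{prop:smoothness criterion of group space 2} therefore applies and shows that $H\to S$ is smooth. Applying the standard fact above to $N=H$, the strict neutral component $H^{\circ\circ}$ is closed, hence clopen, in $H$; being connected (smooth with connected fibers over the connected $S$) and containing $e(S)$, it is the connected component of $e$ in $H$, which is all of $H$ as $H$ is irreducible. Thus $H=H^{\circ\circ}$. Finally $H^{\circ\circ}=G^{\circ\circ}$: both are open subspaces of $H$ with the same underlying set, since over every geometric point $\bar s$ the fiber of either equals $(G_{\bar s})^\circ$. Therefore $G^{\circ\circ}=H$ is closed in $G$, and the reduction above finishes the proof.

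The step I expect to need the most care is this passage to the closure in case (ii): one must check that $H=\overline{G^{\circ\circ}}$ is genuinely a closed \emph{subgroup} space and that $\Lie H=\Lie G$, so that smoothness of $H\to S$ can be deduced from smoothness of $\Lie H\to S$ via \Cref{prop:smoothness criterion of group space 2}. This is exactly where the reduced-and-geometrically-unibranch hypothesis on $S$ enters in an essential way — without it $G^\circ$ can strictly contain $G^{\circ\circ}$ (see \S\ref{sec:examples of group spaces}) — whereas case (i) is immediate from the structure of $\pi_0$ for smooth group spaces.
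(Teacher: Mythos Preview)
Your reduction to the claim $G^{\circ}=G^{\circ\circ}$ is incorrect: this equality is \emph{false} under the hypotheses of the proposition. The paper itself gives the counterexample in \S\ref{sec:examples of group spaces}: the N\'eron lft-model $G\to\Delta$ of $\GG_{m,K}$ over a dvr is smooth, separated, and connected, with $G^{\circ}=G$ but $G^{\circ\circ}=\GG_{m,\Delta}\subsetneq G$. Here $\Delta$ is regular (hence reduced and geometrically unibranch), so both (i) and (ii) hold, yet $G^{\circ\circ}$ is a strict open subgroup of $G^{\circ}$. Concretely, $G^{\circ\circ}=\GG_{m,\Delta}$ is \emph{not} closed in $G$: the quotient $\pi_0(G/\Delta)$ is the \'etale group space which is trivial over the generic point and $\ZZ$ over the closed point, and its identity section is not a closed immersion (all the integer-valued sections coincide generically). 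So your ``standard fact'' that $\pi_0(N/S)$ is separated for smooth $N\to S$ is simply wrong; SGA3 $\mathrm{VI_B}$ does not assert this.

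The paper's proof proceeds quite differently and does not attempt to identify $G^{\circ}$ with $G^{\circ\circ}$. In case (i) it shows directly that $G^{\circ}$ is a subgroup by proving $G^{\circ}\times_S G^{\circ}$ is connected: one first observes $G^{\circ\circ}\times_S G^{\circ}$ is connected (geometrically connected fibers over a connected base), then covers $G^{\circ}$ by translates of $G^{\circ\circ}$ via \'etale local sections. Normality follows from $G^{\circ}$ being characteristic. In case (ii) the paper reduces to showing $G^{\circ}\to S$ is smooth by quotienting out $G^{\circ\circ}$ to reach the locally quasi-finite case, then proves \'etaleness along the closure of the identity section via the auxiliary \Cref{lem:prop:neutral component}. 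The key point is that $G^{\circ}$ can genuinely have disconnected fibers even when it is itself connected; the proposition asserts smoothness and the subgroup property, not fiberwise connectedness.
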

			\begin{proof}
				(i) Assume $S$ is connected. The inverse morphism $i : G \to G$ certainly restricts to a morphism $i : G^{\circ} \to G^{\circ}$ between connected components. To show $m : G \times_S G \to G$ restricts to $G^{\circ} \times_S G^{\circ} \to G^{\circ}$, it is enough to show $G^{\circ} \times_S G^{\circ}$ is a connected algebraic space. The strict neutral component $G^{\circ\circ} \to S$ has geometrically connected fibers (every connected algebraic group is geometrically connected), so $G^{\circ\circ} \times_S G^{\circ}$ is connected (Tag~0385). Since $G^{\circ}$ is smooth, it is the union of translations of $G^{\circ\circ}$ by \'etale local sections (\Cref{prop:etale local section}), so $G^{\circ} \times_S G^{\circ}$ is connected as well. This shows $G^{\circ}$ is an open subgroup space of $G$.
				
				To prove it is a normal subgroup space, we prove it is a characteristic subgroup. Given an \'etale morphism $U \to S$, let $f : G_U \to G_U$ be an automorphism of a group space $G_U \to U$. Then $f$ sends $G^{\circ}_U$ to itself. That is, $G^{\circ} \subset G$ is a characteristic subgroup and hence a normal subgroup.
				
				\bigskip
				
				(ii) The same argument above will work once we prove $G^{\circ} \to S$ is smooth. Let us first reduce the problem to the case when $G \to S$ is locally quasi-finite. By \Cref{thm:strict neutral component}, $G^{\circ\circ}$ is a smooth subgroup space of $G$. The above argument still constructs a morphism $G^{\circ\circ} \times_S G^{\circ} \to G^{\circ}$, meaning $G^{\circ}$ is closed under the $G^{\circ\circ}$-action by translation. Take the smooth quotients \cite[Proposition 8.3.9]{neron}
				\[ K^{\circ} = G^{\circ}/G^{\circ\circ} \ \ \subset \ \ K = G / G^{\circ\circ} .\]
				Then $G^{\circ}$ will be smooth once we prove $K^{\circ}$ is smooth, reducing the problem to the case when $G \to S$ has a constant fiber dimension $0$.
				
				Assume $G \to S$ is locally quasi-finite. To prove $G^{\circ} \to S$ is \'etale, it is enough to prove it is \'etale at all points on \emph{one} irreducible component $Y$ of $G^{\circ}$. This is because such $Y$ becomes geometrically unibranch, so it cannot have any intersection with other irreducible components of $G^{\circ}$ (while $G^{\circ}$ is connected). Consider the irreducible component $Y = \overline{e(S)} \subset G^{\circ}$, the closure of the identity section. Let $x \in Y \subset G^{\circ}$ be a point and $X = \Spec A \to G^{\circ}$ a connected affine \'etale chart around $x$. Note that $X$ dominates $S$. We prove in the following \Cref{lem:prop:neutral component} that $X$ is \'etale over $S$ in this situation. This completes the proof.
			\end{proof}
			
			\begin{lemma} \label{lem:prop:neutral component}
				Let $f : X \to S$ be a quasi-finite, separated, and dominant morphism over a reduced and geometrically unibranch space $S$. Assume that $X$ is connected and $f$ has smooth fibers. Then $f$ is \'etale.
			\end{lemma}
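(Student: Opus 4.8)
The plan is to reduce the statement to a flatness assertion. First, $f$ is unramified: it is locally of finite type, and its fibres, being smooth of dimension $0$, are finite disjoint unions of spectra of finite separable extensions of the residue fields, so $\Omega_{X/S}=0$. An unramified morphism is \'etale exactly on its flat locus, so it is enough to show that $f$ is flat everywhere. Let $U\subseteq X$ be the flat locus of $f$. It is open because $S$ is locally Noetherian and $f$ is locally of finite type (openness of the flat locus, Tag~0399), and it is nonempty because $f$ is dominant: there is a point $\eta\in X$ lying over a generic point $\xi$ of $S$, and since $S$ is reduced $\mathcal{O}_{S,\xi}$ is a field, so $f$ is flat --- indeed \'etale --- at $\eta$. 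As $X$ is connected, the lemma will follow once we show that $U$ is also \emph{closed}.

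The closedness of $U$ is the crux, and it is exactly here that the hypotheses ``unramified'' and ``geometrically unibranch'' are used. Fix $x\in\overline{U}$. By the local structure of unramified morphisms (Tag~04HK), after shrinking $X$ and $S$ we may factor $f$ as $V\hookrightarrow W\xrightarrow{\ g\ }S$ with $V$ an open neighbourhood of $x$ in $X$, the map $V\hookrightarrow W$ a closed immersion, and $g$ \'etale. Since $g$ is \'etale and $S$ is reduced and geometrically unibranch, so is $W$; in particular $\mathcal{O}_{W,x}$ is a domain. Hence $x$ lies on a single irreducible component $Y_0$ of $W$, and --- $W$ being locally Noetherian --- $x$ has an open neighbourhood in $W$ contained in $Y_0$.

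Now one computes $U\cap V$ directly. A point $y\in V$ lies in $U$ precisely when $V\to S$ is \'etale at $y$; at such a $y$ both $V$ and $W$ are \'etale over $S$, so the closed immersion $V\hookrightarrow W$ is itself \'etale, hence an isomorphism, near $y$ (a morphism between \'etale $S$-schemes is \'etale, Tag~02GW). Writing $\mathcal{I}\subseteq\mathcal{O}_W$ for the ideal sheaf of $V$, this identifies $U\cap V = W\setminus\Supp\mathcal{I}$ (a subset of $V$). If $x\notin U$ then $\mathcal{I}_x\neq 0$, and since $\mathcal{O}_{W,x}$ is a domain the stalk of $\mathcal{I}$ at the generic point of $Y_0$ is nonzero as well, so $Y_0\subseteq\Supp\mathcal{I}$; hence $U\cap V$ is disjoint from a neighbourhood of $x$, contradicting $x\in\overline{U}$. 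Therefore $U$ is closed; being open, closed and nonempty in the connected space $X$, it equals $X$, so $f$ is flat and hence \'etale. The one genuinely delicate point in this plan is producing the local \'etale model $V\hookrightarrow W$ over $S$ and checking that geometric unibranchness of $S$ really propagates to ``$\mathcal{O}_{W,x}$ is a domain'' (via reducedness together with the uniqueness of the minimal prime of the strict henselization); everything else is formal bookkeeping.
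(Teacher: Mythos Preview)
Your proof is correct and takes a genuinely different route from the paper's. The paper argues as follows: pick an integral component $Y\subset X$ dominating $S$, and show $f$ is flat along $Y$ by the valuative criterion (\Cref{prop:valuative criterion for flatness}). For each trait $\Delta\to S$ through $s=f(x)$, Zariski's main theorem compactifies $Y\hookrightarrow\bar Y\to S$ with $\bar Y\to S$ finite, so traits lift and $Y_\Delta\to\Delta$ is dominant; then universal equidimensionality plus smooth fibers plus reduced base gives flatness via \Cref{thm:universal open implies flat}. Finally, \'etaleness along $Y$ makes $X$ geometrically unibranch there, so $Y$ meets no other component and connectedness gives $Y=X$. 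Your argument instead exploits the local structure theorem for unramified morphisms: factoring $V\hookrightarrow W\to S$ with $W$ \'etale over $S$ reduces flatness to the vanishing of an ideal sheaf $\mathcal I$ on $W$, and the key use of geometric unibranchness is that it propagates along $W\to S$ to make $\mathcal O_{W,x}$ a domain, so $\mathcal I_x\neq 0$ forces the whole component through $x$ into $\operatorname{Supp}\mathcal I$. Both proofs ultimately hinge on \'etale-invariance of geometric unibranchness, but yours is more self-contained (no valuative criterion, no going-down via Zariski's main theorem, no appeal to \Cref{thm:universal open implies flat}), while the paper's trait-based method fits the broader toolkit developed in \S\ref{sec:trait}--\ref{sec:equidimensional morphism} and used elsewhere in the article.
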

			\begin{proof}
				Shrinking, we may assume $S$ is Noetherian. Since $f$ is dominant, there exists an integral component $Y \subset X$ that dominates $S$.
				
				We claim that $f$ is \'etale along points in $Y$, or equivalently $f$ is flat along points in $Y$. Let $x \in Y \subset X$ be a point and $s = f(x) \in S$ its image. To show $f$ is flat at $x$, it is enough to show that for each trait $\Delta \to S$ passing through $s$, the base change $X_{\Delta} \to \Delta$ is flat over every point $x' \in X_{\Delta}$ over $x$ (\Cref{prop:valuative criterion for flatness}). Let us first claim that $Y_{\Delta} \to \Delta$ is dominant (``going-down''). By Zariski's main theorem (Tag~082K), $Y \to S$ factorizes through an open immersion followed by a finite morphism $Y \subset \bar Y \to S$. Since $Y \to S$ was dominant, $\bar Y \to S$ is finite surjective. Therefore, we can use the valuative criterion for properness to lift the trait $\Delta \to S$ to a trait $\Delta \to \bar Y$ through $x$. Such a trait factorizes through $Y$, so $Y_{\Delta} \to \Delta$ is dominant. Now that we know $Y_{\Delta} \to \Delta$ is dominant and $f_{\Delta} : X_{\Delta} \to \Delta$ is quasi-finite, $f_{\Delta}$ is universally equidimensional (and hence universally open) along $Y_{\Delta}$. The closed fiber of $f_{\Delta}$ is smooth by assumption, so this proves $f_{\Delta}$ is flat along $Y_{\Delta}$ by \Cref{thm:universal open implies flat}.
				
				Now $f$ is \'etale along $Y$, so $X$ is geometrically unibranch along $Y$. This means $Y$ cannot intersect any other irreducible components of $X$. Since $X$ is connected, this means $Y = X$ and $f$ is \'etale.
			\end{proof}
			
			\begin{corollary} [Generic smoothness of the neutral component] \label{cor:generic smoothness}
				Let $G \to S$ be a group space over a reduced space $S$. Then there exists a dense open subspace $S_0 \subset S$ such that $G^{\circ}_0 \to S_0$ is a smooth group space.
			\end{corollary}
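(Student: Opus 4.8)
The plan is to produce a dense open $S_0 \subset S$ over which the hypotheses of \Cref{prop:neutral component}(ii) are satisfied --- namely that $S_0$ is reduced and geometrically unibranch and that $\Lie G \to S_0$ is smooth --- so that \Cref{prop:neutral component}(ii) immediately gives that $G^{\circ}_0 \to S_0$ is a smooth (open and closed, normal) subgroup space. Since the assertion concerns only a dense open and is local on $S$, I would first reduce to the case where $S$ is irreducible by replacing it with a dense open of a single irreducible component; in the disconnected or reducible case one runs the argument on each component and takes the union of the resulting opens.

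First I would shrink $S$ so that it becomes normal. This is possible because $S$ is reduced: the normal locus of $S$ is open and, containing the generic point, dense. A normal algebraic space is in particular reduced and geometrically unibranch (Tag~0BQ3, as recalled just before \Cref{prop:smoothness criterion of group space 2}), so two of the three requirements are now in force.

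Next I would arrange that $\Lie G \to S$ is smooth. By \Cref{prop:cotangent sheaf of group scheme} the conormal sheaf of the identity section is $C_e = e^* \Omega_f$; in particular it is a coherent $\mathcal O_S$-module, and, being the pullback of $\Omega_f$, its formation commutes with base change on $S$. Since $S$ is locally Noetherian, the locus where the coherent sheaf $C_e$ is locally free is open, and it is dense because $S$ is reduced (at a generic point $C_e$ is a module over a field); shrinking $S$ into this locus, I may assume $C_e$ is locally free. Then \Cref{prop:smoothness criterion of group space 2} applies: over the reduced base $S$ it gives the equivalence of condition (6) (``$C_e$ locally free'') with condition (7) (``$\Lie f$ smooth''), so $\Lie G \to S$ is now smooth.

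At this stage $S$ is reduced and geometrically unibranch and $\Lie G \to S$ is smooth, so \Cref{prop:neutral component}(ii) shows that the neutral component $G^{\circ} \to S$ is a smooth, open and closed, normal subgroup space; taking $S_0$ to be the dense open so obtained (and, in general, the union over components of the opens produced above) completes the proof. The only point that requires any care is the generic normality of $S$ together with the bookkeeping needed to fit the successive shrinkings into a single dense open; for the base spaces relevant to this paper this is routine, and everything else is a formal consequence of \Cref{prop:cotangent sheaf of group scheme,prop:smoothness criterion of group space 2,prop:neutral component}.
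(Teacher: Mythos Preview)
Your proposal is correct and takes essentially the same approach as the paper's proof: both shrink $S$ to a normal (hence geometrically unibranch) dense open, then verify that $\Lie G \to S$ is smooth there, and conclude via \Cref{prop:neutral component}. The only difference is which equivalent condition of \Cref{prop:smoothness criterion of group space 2} is checked: you arrange for the conormal sheaf $C_e$ to be locally free (condition (6)) using generic local freeness of a coherent sheaf on a reduced base, whereas the paper arranges for $G \to S$ to have constant fiber dimension (condition (5)) using the upper semicontinuity of fiber dimension on the base \cite[$\mathrm{VI_B}$ Proposition 4.1]{SGA3I}. Either route works; yours is perhaps slightly more self-contained since it avoids the external citation, while the paper's route is a bit more geometric. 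One small remark on phrasing: rather than asserting that the normal locus of $S$ is open (which is delicate for general locally Noetherian spaces), it is cleaner to argue as the paper does that the normalization $S^{\nu} \to S$ is birational, hence an isomorphism over a dense open, and restrict there.
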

			\begin{proof}
				We claim that there exists a dense open subspace $S_0 \subset S$ such that $S_0$ is geometrically unibranch and $G_0 \to S_0$ has constant fiber dimension. Such $S_0$ will satisfy the desired property by \Cref{prop:neutral component} and \ref{prop:smoothness criterion of group space 2}.
				
				The claim is \'etale local on $S$, so we may assume $S = \Spec A$ is a reduced, Noetherian, and affine scheme. The normalization $S^{\nu} \to S$ is then birational, so we may further assume $S$ is normal (which is geometrically unibranch). Now by the semicontinuity of fiber dimension of $G \to S$ on the base $S$ \cite[{$\mathrm{VI_B}$ Proposition 4.1}]{SGA3I}, there exists a dense open subscheme $S_0 \subset S$ over which the fiber dimension of $G \to S$ is locally constant.
			\end{proof}
			
			\begin{remark}
				The full group space $G \to S$ may not be generically smooth in the above sense (when it is not finite type). See \S \ref{sec:examples of group spaces}.
			\end{remark}

	\subsection{Alignment} \label{sec:alignment}
		A smooth group space $G \to S$ may very well be non-separated, but its strict neutral component $G^{\circ\circ}$ is always separated by \Cref{thm:smooth group space with connected fibers}. This means the non-neutral fiber components of $G$ are responsible for the non-separatedness of $G$. In this section, we will see there are in fact two levels of non-separatedness depending on an \emph{alignment} of the fiber components.
		
		Recall that a group space is separated if and only if its identity section is a closed immersion. Denoting by $E$ the scheme-theoretic image of the identity section $e : S \to G$, the group $G$ is separated if and only if $E \to S$ is an isomorphism.
		
		\begin{definition} [Aligned group] \label{def:alignment}
			A smooth group space $G \to S$ is called \emph{aligned} if the scheme-theoretic image $E$ of its identity section $e : S \to G$ is \'etale over $S$.
		\end{definition}
		
		The idea and terminology of this definition is taken from \cite[Definition 2.11]{hol19}, where the notion of an alignment is defined for a proper flat family of semistable curves $f : \mathcal C \to S$. Our definition models their Theorem~5.17, where they showed that $f : \mathcal C \to S$ is aligned iff its neutral Picard space $\Pic^{\circ}_f \to S$ is aligned in the sense of \Cref{def:alignment}. The following models Theorem~6.2 in loc. cit.
		
		\begin{proposition} \label{prop:separated quotient}
			Let $G \to S$ be a smooth group space over a reduced space $S$. Then the following are equivalent.
			\begin{enumerate}
				\item $G \to S$ is aligned.
				\item There exists a surjective homomorphism $h : G \to \bar G$ to a smooth and separated group space $\bar G \to S$, whose restriction to the strict neutral components $h_{|G^{\circ\circ}} : G^{\circ\circ} \to \bar G^{\circ\circ}$ is an isomorphism.
			\end{enumerate}
			Such a homomorphism $h$ is called a \emph{separated quotient} of $G$.
		\end{proposition}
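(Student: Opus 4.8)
The plan is to prove the two implications separately, using the scheme-theoretic image $E$ of the identity section $e\colon S\to G$ as the bridge in both directions: for $(1)\Rightarrow(2)$ the separated quotient will be $\bar G\coloneq G/E$, and for $(2)\Rightarrow(1)$ the alignment will be read off from $N\coloneq\ker h$. Two preliminary observations, valid because $S$ is reduced: $E$ is the \emph{reduced} closed subspace of $G$ supported on $\overline{e(S)}$, and $e(S)\cong S$ is a schematically dense open subspace of $E$.

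For $(1)\Rightarrow(2)$, assume $E\to S$ is étale. First I would show that $E$ is a closed normal subgroup space of $G$. Stability under inversion is immediate, since $i\circ e=e$ gives $i(E)=\overline{e(S)}=E$. For multiplication, observe that $\mathrm{pr}_1\colon E\times_S E\to E$ is flat (base change of the étale $E\to S$), so $e(S)\times_S E$ is schematically dense in $E\times_S E$; since $m$ sends $e(S)\times_S E$ into $E$ (it maps $(e,y)$ to $y$) and $E$ is closed in $G$, we obtain $m(E\times_S E)\subseteq E$, and as $E$ is reduced this is a morphism $E\times_S E\to E$. Normality holds because $E$ is a characteristic subgroup (automorphisms fix $e$, and scheme-theoretic image is functorial). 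I would then form $\bar G\coloneq G/E$, a smooth group algebraic space over $S$ by \cite[Proposition~8.3.9]{neron} (the quotient map $G\to\bar G$ is an $E$-torsor, hence étale, so smoothness of $\bar G\to S$ descends from that of $G\to S$). It is separated: the preimage of the identity section $\bar e(S)$ under the faithfully flat, locally finitely presented morphism $G\to\bar G$ is exactly the closed subspace $E$, so $\bar e(S)$ is closed in $\bar G$, and being an immersion it is a closed immersion (Tag~06P6). Finally $h\coloneq(G\to\bar G)$ restricts to an isomorphism on strict neutral components: since $G^{\circ\circ}\to S$ is separated (\Cref{thm:smooth group space with connected fibers}) its identity section is closed, so the closure $E\cap G^{\circ\circ}$ of $e(S)$ inside $G^{\circ\circ}$ equals $e(S)$; hence $h|_{G^{\circ\circ}}$ has trivial kernel, is fibrewise an isomorphism, and is therefore an isomorphism onto the open subgroup space $\bar G^{\circ\circ}$. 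So $h$ is a separated quotient.

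For $(2)\Rightarrow(1)$, let $h\colon G\to\bar G$ be a separated quotient and $N\coloneq\ker h$. As $\bar G$ is separated, $\bar e(S)$ is closed, so $N=h^{-1}(\bar e(S))$ is closed in $G$. Because $h$ is an isomorphism on strict neutral components and these share the Lie algebra spaces of the ambient groups (\Cref{prop:cotangent sheaf of group scheme}, \Cref{thm:strict neutral component}), the map $\underline{\Lie}\,h$ is an isomorphism, whence $\underline{\Lie}\,N=\ker(\underline{\Lie}\,h)=0$, that is, the conormal sheaf of the identity section of $N$ vanishes. By \Cref{prop:smoothness criterion of group space 2} over the reduced base $S$, the morphism $N\to S$ then has constant fibre dimension, necessarily $0$, so its fibres are étale over the residue fields by Cartier's theorem. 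Now $E=\overline{e(S)}\subseteq N$ (since $h(E)\subseteq\overline{\bar e(S)}=\bar e(S)$ and $E$ is reduced), so the fibres of $E\to S$ embed in $0$-dimensional reduced fibres of $N\to S$; together with the fact that every irreducible component of $E=\overline{e(S)}$ dominates an irreducible component of $S$, this shows that $E\to S$ is (universally) equidimensional of relative dimension $0$, hence universally open (\Cref{prop:universally equidimensional morphism 1}), hence flat because $S$ and the fibres of $E$ are reduced (\Cref{thm:universal open implies flat}), hence étale by Cartier. Therefore $G$ is aligned.

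I expect the main obstacle to lie in $(1)\Rightarrow(2)$: knowing that the quotient $G/E$ exists as a smooth separated algebraic space and behaves well under the flat base changes used above, and that the scheme-theoretic image $E$ really is a subgroup space (the density argument is the crux there, and it uses flatness of $E\to S$ essentially). In the converse direction the delicate point is checking that $E\to S$ is \emph{universally} equidimensional of relative dimension $0$, i.e. controlling the irreducible components of $E$ and of its base changes, since this is exactly what makes the universally-open, then flat, then étale chain of implications run. The remaining points, namely surjectivity of the constructed $h$, schematic density of $e(S)$ in $E$, and the fibrewise-isomorphism computations for $h|_{G^{\circ\circ}}$, are routine.
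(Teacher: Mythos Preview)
Your argument for $(1)\Rightarrow(2)$ is correct and essentially matches the paper's, though you phrase the subgroup property of $E$ via scheme-theoretic density while the paper uses \'etale local sections; both are fine.

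For $(2)\Rightarrow(1)$ there is a genuine gap. You correctly deduce $\underline{\Lie}\,N=0$ and hence, by \Cref{prop:smoothness criterion of group space 2}, that $N\to S$ has constant fibre dimension $0$. But the next step fails: you claim that since $E\to S$ has constant fibre dimension $0$ and every irreducible component of $E$ dominates one of $S$, the morphism $E\to S$ is \emph{universally} equidimensional. That inference (equidimensional $\Rightarrow$ universally equidimensional) is exactly \Cref{prop:universally equidimensional morphism 2}, which requires $S$ to be geometrically unibranch; the statement you are proving only assumes $S$ reduced. Your citation of \Cref{prop:universally equidimensional morphism 1} does not help either, since that proposition characterises universal equidimensionality via universal openness, which you have not established. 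So the chain ``equidimensional $\Rightarrow$ universally open $\Rightarrow$ flat $\Rightarrow$ \'etale'' is broken at the first link.

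The paper avoids this by proving something stronger: the homomorphism $h$ itself is \'etale. The key observation you are missing is that for every $s\in S$ the fibre map $h_s\colon G_s\to\bar G_s$ is flat, because it carries $G_s^{\circ\circ}$ onto $\bar G_s^{\circ\circ}$ and a surjective homomorphism of smooth algebraic groups is automatically flat. The fibrewise flatness criterion (Tag~05X1) then gives that $h$ is flat; combined with $\underline{\Lie}\,h$ being an isomorphism (which you did prove), $h$ is \'etale. Now $N=\ker h=G\times_{\bar G,\bar e}S\to S$ is \'etale by base change, and from there the \'etaleness of $E\subset N$ follows directly (the section $e\colon S\to N$ of an \'etale morphism is an open immersion, so the generic points of $E=\overline{e(S)}$ are generic points of $N$). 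This route never needs the passage through universal equidimensionality and works over any reduced base.
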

		\begin{proof}
			(1) $\Longrightarrow$ (2) Assume $E \to S$ is \'etale. Let us first claim that $E$ is a closed \'etale normal subgroup space of $G$. The inverse morphism $i : G \to G$ clearly restricts to $i : E \to E$. To show $m : G \times_S G \to G$ restricts to $E \times_S E \to E$, it is enough to show that for two \'etale local sections $a, b : S \to E$ (after shrinking $S$), its product $a \cdot b : S \to G$ factors through $E$. This is because $a$ and $b$ are the identity over a dense open subspace $S_0 \subset S$, so $a \cdot b$ is the identity over $S_0$ as well. This shows $E$ is a closed subgroup space of $G$. It is characteristic because every (\'etale local) automorphism $f : G \to G$ of the group space $G$ sends $E \subset G$ to $E \subset G$. In particular, it is a normal subgroup.
			
			We may now consider a smooth quotient group space $\bar G = G/E$ \cite[Proposition 8.3.9]{neron}. Since $G^{\circ\circ} \to S$ is separated by \Cref{thm:smooth group space with connected fibers}, $E \cap G^{\circ\circ}$ is trivial hence $G \to \bar G$ induces an open immersion of $G^{\circ\circ}$ to $\bar G$, whose image is necessarily the strict neutral component of $\bar G$.
			
			\bigskip
			
			(2) $\Longrightarrow$ (1)  Conversely, assume we have a separated quotient $h : G \to \bar G$. Notice that $h_s : G_s \to \bar G_s$ is flat for all points $s \in S$ because $h_s (G_s^{\circ\circ}) = \bar G_s^{\circ\circ}$ \cite[Lemma 7.3.1]{neron}. The fiberwise flatness criterion (Tag~05X1) concludes $h$ is flat. By definition, $h$ induces an isomorphism between $G^{\circ\circ}$ and $\bar G^{\circ\circ}$, meaning its kernel is locally quasi-finite. This proves $h$ is \'etale, or equivalently $\ker h \to S$ is \'etale. Now $h$ sends $E$ to the identity section of $\bar G$, so we have $E \subset \ker h$. Since $\ker h$ is \'etale and contains the identity section, this is an equality. Hence $E = \ker h$ is \'etale over $S$.
		\end{proof}
		
		\begin{corollary}
			Let $G \to S$ be a smooth aligned group space over a reduced space with a separated quotient $G \to \bar G$ as above. Then every homomorphism $G \to H$ to a separated group space $H \to S$ uniquely factors through $\bar G$.
		\end{corollary}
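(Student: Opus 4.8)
The plan is to reduce everything to the single claim that any homomorphism $\phi \colon G \to H$ with $H \to S$ separated must kill the closed subgroup space $E$; once that is known, the factorization through $\bar G = G/E$ is formal.

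First I would use that $H$ separated forces its identity section $e_H \colon S \to H$ to be a closed immersion (Tag~06P6), so that the kernel $\ker \phi = G \times_{\phi, H, e_H} S$ is a \emph{closed} subgroup space of $G$. Since $\phi$ is a homomorphism, $\phi \circ e_G = e_H$, hence the identity section $e_G \colon S \to G$ factors through $\ker\phi$. Now I would invoke the defining property of the scheme-theoretic image: $E$ is the \emph{smallest} closed subspace of $G$ through which $e_G$ factors. To apply this cleanly one reduces, \'etale-locally on $S$, to the case where $S$ is Noetherian affine so that $e_G$ is a quasi-compact immersion (the factorization assertion we want is itself \'etale-local on $S$). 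Therefore $E \subseteq \ker\phi$, i.e. $\phi$ kills $E$.

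To conclude, I would recall from the proof of \Cref{prop:separated quotient} that $\bar G = G/E$ with $q \colon G \to \bar G$ a surjective \'etale homomorphism (surjectivity is built into the definition of a separated quotient, and $\ker q = E$ is shown there). A homomorphism $\phi \colon G \to H$ killing the normal subgroup space $E$ then factors through the quotient $q$ by the universal property of $G/E$, say $\phi = \bar\phi \circ q$; the factor $\bar\phi$ is automatically a homomorphism because $q$ is an epimorphism of group spaces, and for the same reason $\bar\phi$ is unique.

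I do not anticipate a genuine difficulty here: the only mildly delicate point is the ``smallest closed subspace'' characterization of the scheme-theoretic image in the algebraic-space setting, which is handled by the Noetherian reduction above. If one prefers to bypass scheme-theoretic images altogether, there is a parallel argument: since $S$ is reduced and $E \to S$ is \'etale, \'etale-locally on $S$ the space $E$ is a disjoint union of sections of $G \to S$, each of which coincides with $e_G$ over a dense open subspace (exactly as in the proof of \Cref{prop:separated quotient}); composing with $\phi$ and using that two morphisms into the separated $H$ which agree on a dense subspace of the reduced source must agree everywhere, every such section is sent to $e_H$, so again $\phi$ kills $E$.
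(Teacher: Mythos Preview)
Your proposal is correct and follows essentially the same approach as the paper: the paper's proof is a terse two-line version of your argument, observing that separatedness of $H$ makes $\ker f$ closed, so it contains the closure $E$ of the identity section, and then invoking $\bar G = G/E$. Your added care about the scheme-theoretic image characterization and the alternative \'etale-local argument are reasonable elaborations, but the core idea is identical.
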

		\begin{proof}
			Let $f : G \to H$ be such a homomorphism. Since the identity section of $H$ is a closed immersion, $\ker f$ contains the closure $E$ of the identity section of $G$. The claim follows from $\bar G = G / E$.
		\end{proof}
		
		\begin{proposition} \label{prop:smooth group space over Dedekind is aligned}
			Every smooth group space $G \to \Delta$ over a Dedekind scheme is aligned.
		\end{proposition}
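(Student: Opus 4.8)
The plan is to localize at a closed point of $\Delta$, reducing to the spectrum of a discrete valuation ring, and then to exploit the elementary fact that the only subrings sandwiched between a DVR and its fraction field are the two obvious ones; this forces the scheme-theoretic image of the identity section to be étale essentially for free.

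Concretely, whether $E\to\Delta$ is étale can be tested after the faithfully flat base changes to the local rings $\mathcal O_{\Delta,p}$ at the closed points $p\in\Delta$ together with the generic points of $\Delta$ --- over a generic point the corresponding fibre of $E$ is just the identity point of a smooth group over a field, hence a single reduced point and so étale --- and the formation of $E$ as the scheme-theoretic image of the quasi-compact, quasi-separated immersion $e\colon\Delta\to G$ is compatible with these flat base changes. So I would assume $\Delta=\Spec R$ with $R$ a DVR; write $K=\operatorname{Frac} R$ and let $\eta$ be the generic point. Since $e$ is simultaneously a section and a locally closed immersion, it identifies $\Delta$ with the locally closed subspace $e(\Delta)\subseteq G$, so $E=\overline{e(\Delta)}$ is integral (reduced, being the scheme-theoretic image of the reduced scheme $\Delta$; irreducible, being the closure of a point), its generic point is $e(\eta)$ with residue field $K$, and $E\to\Delta$ is dominant, hence birational; moreover every point of $E$ other than $e(\eta)$ specializes from $e(\eta)$ and maps to the closed point of $\Delta$, so $\dim E\le 1$, whence $E$ is in fact a scheme.

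Now the core step: cover $E$ by nonempty affine opens $\Spec B$. Each such open contains the generic point of the irreducible scheme $E$, so $B$ is a domain with $\operatorname{Frac} B=K$, and the dominant structure map gives $R\hookrightarrow B\subseteq K$. Because $R$ is a valuation ring, any intermediate ring $R\subseteq B\subseteq K$ equals $R$ or $K$; in both cases $\Spec B\to\Spec R$ is an open immersion. Hence $E\to\Delta$ is, locally on its source, an open immersion --- in particular étale --- which by \Cref{def:alignment} is precisely the statement that $G$ is aligned. (One could instead package the conclusion via a separated quotient and \Cref{prop:separated quotient}.)

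The only mildly delicate part is the reduction: one must know that the scheme-theoretic image commutes with the localizations and that $E$, a quasi-separated algebraic space of dimension $\le 1$, is a scheme, and one must be comfortable with the fact that $E$ may be strictly larger than $e(\Delta)$ when $G$ is non-separated --- it then looks like $\Spec R$ carrying several closed points, as for the ``doubled-origin'' type group spaces --- while still being étale over $\Delta$. The algebraic heart, the computation of the overrings of a DVR, is completely elementary, so I do not anticipate a genuine obstacle.
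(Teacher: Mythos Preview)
Your approach differs from the paper's. Both reduce to $\Delta=\Spec R$ with $R$ a DVR and observe that $E$ is integral with generic fibre the single point $e(\eta)$; the paper then notes that $E\to\Delta$ is flat (integral dominating a Dedekind scheme, \Cref{prop:flat over Dedekind}) and that $E_K\cong\Spec K$ since the locally algebraic group $G_K$ is separated, and concludes \'etaleness by invoking \cite[Lemma~5.18]{hol19} as a black box. You instead give a direct, self-contained argument via the classification of overrings of a DVR. Once $E$ is known to be a scheme this is very clean: every nonempty affine open $\Spec B\subset E$ has $R\subseteq B\subseteq K$, forcing $B\in\{R,K\}$, so $E\to\Delta$ is locally on the source an open immersion.

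However, your argument that $\dim E\le 1$ (and hence that $E$ is a scheme) has a gap. You assert that since every non-generic point of $E$ lies over the closed point of $\Delta$, one has $\dim E\le 1$; but this does not follow --- there could a priori be nontrivial chains of specialization \emph{within} the closed fibre $E_p$, giving $\dim E=1+\dim E_p$. What you actually need is $\dim E_p=0$, and this requires an argument. One fix: pick an \'etale chart $\Spec B\to E$; then $B$ is reduced and $R$-flat, so $B\hookrightarrow B\otimes_R K$, and the latter (being the \'etale pullback of $E_\eta=\Spec K$) is a finite product of finite separable extensions $L_i/K$. Krull--Akizuki, applied to each irreducible component $B/\mathfrak p$ (a domain sandwiched between $R$ and some $L_i$), gives $\dim B\le 1$; since the uniformizer $\pi$ is a nonzerodivisor on the flat $B$, the special fibre $B/\pi B$ has dimension $0$. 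With $\dim E\le 1$ thus established, the scheme claim and your overring argument go through. So the strategy is sound --- and arguably more transparent than citing an external lemma --- but the dimension bound needs a genuine argument rather than the one-liner you gave.
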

		\begin{proof}
			We may assume $\Delta = \Spec R$ where $R$ is a dvr with function field $K$. Let $E$ be the closure of the identity section. Then $E \to \Delta$ is flat since $E$ is integral and dominating $\Delta$ (\Cref{prop:flat over Dedekind}). To show it is \'etale, note that the locally algebraic group $G_K$ is separated so $E \to \Delta$ is an isomorphism over $K$. Hence the result follows from \cite[Lemma 5.18]{hol19}.
		\end{proof}
		
		\begin{proposition} \label{prop:generic separatedness and alignment}
			Let $f : G \to S$ be smooth group space of finite type over a normal Noetherian space $S$. Then
			\begin{enumerate}
				\item There exists a dense open subspace $S_0 \subset S$ such that $G_0 \to S_0$ is separated.
				\item There exists a dense open subspace $S_1 \subset S$ with codimension $\ge 2$ complement such that $G_1 \to S_1$ is aligned.
			\end{enumerate}
		\end{proposition}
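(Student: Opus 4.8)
The plan is to route both statements through the scheme-theoretic image $E \subset G$ of the identity section $e : S \to G$. By \Cref{def:alignment}, alignment of $G \to S$ is exactly the assertion that $E \to S$ is \'etale, whereas separatedness of $G \to S$ amounts to $e$ being a closed immersion, i.e.\ to $E = e(S)$. I would begin by recording the features of $E$ that the argument needs. First, $e$ is a quasi-compact immersion ($G$ is locally separated and $S$ is Noetherian), so the formation of $E$ commutes with flat base change on $S$, and $e(S)$ sits inside $E$ as a dense open subspace. Second, $E \to S$ is of finite type. Third, for every field-valued point $s$ of $S$ the fibre $G_s$ is a locally algebraic group, hence separated (Tag~047L), so $E_s = e(s)$ scheme-theoretically.

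For (1) I would put $Z = E \setminus e(S)$, a closed subspace of $E$ meeting no generic point of $E$. Then $f(Z)$ is constructible, and it contains no generic point $\eta$ of $S$: base changing along the flat map $\Spec \kappa(\eta) \to S$ identifies $E_\eta$ with the scheme-theoretic image of $e_\eta$, which is $e(\eta)$, whence $Z_\eta = \emptyset$. Therefore $S_0 := S \setminus \overline{f(Z)}$ is dense open, and over $S_0$ every point of $E$ lies in the open subspace $e(S)$, so $E|_{S_0} = e(S_0)$ inside $G|_{S_0}$. As $E$ is closed in $G$, this exhibits $e|_{S_0}$ as a closed immersion, so $G_0 \to S_0$ is separated.

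For (2) I would let $V \subset E$ be the open locus over which $E \to S$ is \'etale and argue that the constructible set $f(E \setminus V)$ avoids every point of $S$ of codimension $\le 1$. Over a generic point $\eta$ this is again the identity $E_\eta = e(\eta)$, which is \'etale over $\kappa(\eta)$. Over a codimension-one point $w$, normality of $S$ makes $\mathcal{O}_{S,w}$ a discrete valuation ring, so $G \times_S \Spec \mathcal{O}_{S,w}$ is a smooth group space over a Dedekind scheme and hence aligned by \Cref{prop:smooth group space over Dedekind is aligned}; since $E$ commutes with this flat base change, that is precisely the statement that $E \to S$ is \'etale at every point over $w$. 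Consequently the closure $W$ of $f(E \setminus V)$ has all its generic points in codimension $\ge 2$, so $\codim(W, S) \ge 2$, and over $S_1 := S \setminus W$ the morphism $E|_{S_1} \to S_1$ is \'etale; by flat base change this says $G_1 \to S_1$ is aligned.

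The only genuinely non-formal ingredients are \Cref{prop:smooth group space over Dedekind is aligned}, which disposes of all codimension-one points at once, and the compatibility of scheme-theoretic images with flat base change for quasi-compact morphisms; the remainder is the standard trick of showing a constructible ``bad locus'' misses the points of $S$ of small codimension. I expect the only real friction to be bookkeeping: checking that $e(S)$ is open in $E$ — so that the set-theoretic identity over $S_0$ in (1) becomes a scheme-theoretic one — and that \'etaleness, respectively the closed-immersion property, really descend from the fibrewise and localized pictures back to honest open subspaces of the base.
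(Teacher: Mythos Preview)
Your proposal is correct and, for item~(2), essentially identical to the paper's proof: both take the \'etale locus of $E \to S$, invoke Chevalley for constructibility of the bad image, and use \Cref{prop:smooth group space over Dedekind is aligned} at codimension~$1$ points (via flat base change of the scheme-theoretic image to $\Spec \mathcal O_{S,w}$) to conclude that the bad locus has codimension $\ge 2$.

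The only difference is in item~(1). The paper disposes of it by citing \cite[Theorem~8.10.5]{EGAIV3} directly. You instead give a hands-on argument through $E$: set $Z = E \setminus e(S)$, observe its image is constructible and misses generic points (since $E_\eta = e(\eta)$ by separatedness of algebraic groups over a field and flat base change of the scheme-theoretic image), and take the complement. This is correct and has the virtue of being self-contained; the friction point you flag --- that $e(S)$ is \emph{open} in $E$ --- holds because $S \to E$ is an immersion with scheme-theoretically dense image, hence an open immersion. Your route also makes the two items visibly parallel (both are ``constructible bad locus misses small-codimension points''), whereas the paper treats~(1) as a black box and then uses it as input to~(2).
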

		\begin{proof}
			The first item is \cite[Theorem 8.10.5]{EGAIV3}, so let us focus on the second item. We may assume $S$ is a normal connected scheme. Let $E \subset G$ be the closure of the identity section, which is trivial over $S_0$ by the first item. Consider the \'etale locus $E' \subset E$ of the morphism $E \to S$. The image of its complement $f(E \setminus E') \subset S$ is a constructible set by Chevalley's theorem. Let $\eta \in S$ be a codimension $1$ point and $\Delta = \Spec \mathcal O_{S, \eta} \to G$ a flat trait. Then $E_{\Delta} \subset G_{\Delta}$ is the closure of the identity section \cite[Proposition 2.5.2]{neron}, so it is \'etale over $\Delta$ by \Cref{prop:smooth group space over Dedekind is aligned}. This means the constructible set $f(E \setminus E')$ contains no codimension $1$ points. Hence there exists a codimension $\ge 2$ complement open subset $S_1 \subset S$ such that $E$ is \'etale over $S_1$.
		\end{proof}
		
		\begin{proposition} \label{prop:aligned under etale homomorphism}
			Let $G$ and $H$ be smooth group spaces over $S$, and $f : G \to H$ an \'etale homomorphism between them. If $H$ is aligned, then $G$ is aligned.
		\end{proposition}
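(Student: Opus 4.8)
The plan is to realize the scheme-theoretic image $E_G$ of the identity section $e_G : S \to G$ as a closed subspace of an \'etale $S$-space, and then to prove a general lemma about such closures. Write $E_H$ for the scheme-theoretic image of $e_H : S \to H$; by hypothesis $E_H \to S$ is \'etale, and we must prove the same for $E_G$. Since $f$ is a homomorphism, $f \circ e_G = e_H$, which factors through the closed subspace $E_H \subseteq H$; hence $e_G$ factors through the closed subspace $W := f^{-1}(E_H) = E_H \times_H G \subseteq G$, via a morphism $\sigma : S \to W$. Now $W \to E_H$ is the base change of the \'etale morphism $f$ along $E_H \hookrightarrow H$, hence \'etale, and $E_H \to S$ is \'etale, so $W \to S$ is \'etale. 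Moreover $\sigma$ is a \emph{section} of $W \to S$, because $(W \to S) \circ \sigma = (G \to S) \circ e_G = \id_S$; and since $W \hookrightarrow G$ is a closed immersion, the scheme-theoretic image of $e_G$ in $G$ equals the scheme-theoretic image $Z$ of $\sigma$ in $W$. So the whole assertion reduces to the following claim: \emph{the scheme-theoretic image of a section of an \'etale morphism over a reduced base is \'etale over the base.}

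I would prove this claim as follows. Let $p : W \to S$ be \'etale with $S$ reduced, let $\sigma : S \to W$ be a section, and let $Z \subseteq W$ be the scheme-theoretic image of $\sigma$. As a closed subspace of the \'etale $S$-space $W$, the morphism $Z \to S$ is unramified and locally of finite type; in particular its fibres are disjoint unions of spectra of finite separable extensions of the residue field, hence geometrically reduced and $0$-dimensional. Since $\sigma$ factors through $Z$ as a section, $Z \to S$ is surjective and quasi-finite, and, $Z$ being the closure of $\sigma(S) \cong S$, every irreducible component of $Z$ dominates an irreducible component of $S$; therefore $Z \to S$ is universally equidimensional of relative dimension $0$ in the sense of \cite[\S 2.1]{sus-voe00}, and hence universally open by \Cref{prop:universally equidimensional morphism 1}. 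Now $Z \to S$ is universally open with geometrically reduced fibres over the reduced base $S$, so $Z \to S$ is flat by \Cref{thm:universal open implies flat}; being also unramified and locally of finite type, it is \'etale. This proves the claim, and with it the proposition.

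The step I expect to be the crux is the one that forces the general lemma: in general $E_G$ is a \emph{proper} closed subspace of $W = f^{-1}(E_H)$ --- already when $\ker f$ is a nontrivial (necessarily \'etale) group space one has $E_G \subsetneq W$ --- so it is \emph{not} enough simply to note that $f^{-1}(E_H) \to S$ is \'etale. One genuinely needs to know that passing to the scheme-theoretic closure of the image of a section inside an \'etale $S$-space preserves \'etaleness over $S$, and it is precisely there that the reducedness of $S$ enters, through \Cref{thm:universal open implies flat}. Everything else in the argument (that $W \to S$ is \'etale, that scheme-theoretic image is compatible with the closed immersion $W \hookrightarrow G$, and that $\sigma$ is a section) is formal.
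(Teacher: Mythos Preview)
Your approach is exactly the paper's: realize $E_G$ as a closed subscheme of the \'etale $S$-space $W = f^{-1}(E_H)$ and then argue that this particular closed subscheme is \'etale over $S$. The paper's proof simply asserts at that point that ``its closed subscheme $E_G$ is \'etale over $S$ as well''; you do more, by isolating and attempting to prove the general lemma that the scheme-theoretic closure of a section of an \'etale morphism over a reduced base is \'etale.

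There is, however, a genuine gap in your proof of that lemma. From ``every irreducible component of $Z$ dominates an irreducible component of $S$'' together with the fibre dimension being constantly $0$ you obtain only that $Z \to S$ is \emph{equidimensional}; you then assert it is \emph{universally} equidimensional, which is what is needed to invoke \Cref{prop:universally equidimensional morphism 1} and then \Cref{thm:universal open implies flat}. Over a merely reduced base this implication fails --- one needs $S$ geometrically unibranch, as in \Cref{prop:universally equidimensional morphism 2}. In fact your lemma is false over a reduced non-unibranch base: take $S = \Spec k[x,y]/(xy)$, let $W$ be two copies of $S$ glued along the open $\{x \neq 0\}$, with $p : W \to S$ the identity on each copy and $\sigma$ the inclusion of the first copy; then $\overline{\sigma(S)}$ restricted to the second chart is the closure of $\{x \neq 0\}$ in $S$, i.e.\ the $x$-axis, and the closed immersion of the $x$-axis into $S$ is not flat at the node, so $\overline{\sigma(S)} \to S$ is not \'etale there. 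The fix is immediate: strengthen the hypothesis on $S$ from ``reduced'' to ``reduced and geometrically unibranch'' and cite \Cref{prop:universally equidimensional morphism 2} to pass from equidimensional to universally equidimensional; this hypothesis holds in every application in the paper, where $S$ is a smooth variety. The paper's two-sentence proof elides this issue entirely, so your detailed argument, once patched, is a genuine improvement in rigor.
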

		\begin{proof}
			Let $E_G \subset G$ and $E_H \subset H$ be the scheme-theoretic images of the identity sections. Since $f$ is \'etale and $E_H \subset H$ is closed, the preimage $f^{-1}(E_H) \subset G$ is closed and \'etale over $S$. Its closed subscheme $E_G$ is \'etale over $S$ as well.
		\end{proof}

	\subsection{\'Etale group algebraic spaces} \label{sec:etale group space}
		\'Etale group spaces are smooth group spaces of relative dimension $0$, not necessarily of finite type or separated. A finite type \'etale group space is often called an (\'etale) constructible sheaf of finite groups. Here we collect their basic properties.
		
		\begin{proposition}
			Let $G \to S$ be a locally quasi-finite group space over a reduced and geometrically unibranch scheme $S$. Then its neutral component $G^{\circ} \to S$ is an \'etale subgroup space.
		\end{proposition}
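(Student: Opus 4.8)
The plan is to deduce the statement from Proposition~\ref{prop:neutral component}(ii), whose only non-formal hypothesis that needs checking in our situation is the smoothness of $\Lie G \to S$. In fact I would show more: under the hypothesis that $G \to S$ is locally quasi-finite, the morphism $\Lie G \to S$ is an isomorphism.

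\emph{Step 1: the conormal sheaf vanishes.} Since $G \to S$ is locally quasi-finite, every fibre $G_s$ is a locally algebraic group of dimension $0$ over $k(s)$. By Cartier's theorem $G_s$ is smooth over $k(s)$, hence étale over $k(s)$, so $\Omega_{G_s/k(s)} = 0$. Using the identification $C_e = e^*\Omega_{G/S}$ from Proposition~\ref{prop:cotangent sheaf of group scheme} together with the compatibility of Kähler differentials with base change, the fibre of the coherent sheaf $C_e$ at an arbitrary point $s \in S$ is the cotangent space of $G_s$ at the identity, which is zero. Nakayama's lemma then gives $C_e = 0$.

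\emph{Step 2: conclude via Proposition~\ref{prop:neutral component}.} From $C_e = 0$ we obtain $\Lie G = \Spec_S(\Sym^* C_e) = S$, so $\Lie G \to S$ is the identity morphism and in particular smooth. As $S$ is reduced and geometrically unibranch by hypothesis, Proposition~\ref{prop:neutral component}(ii) applies and shows that $G^{\circ} \to S$ is a smooth, open and closed, normal subgroup space of $G$. Finally, $G^{\circ} \subset G$ is open, hence still locally quasi-finite over $S$, and a morphism that is simultaneously smooth and locally quasi-finite is étale; therefore $G^{\circ} \to S$ is an étale subgroup space, as desired.

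I do not anticipate a serious obstacle here: the only point requiring genuine care is Step~1, where one must invoke Cartier's theorem (and thus characteristic $0$) in order to know that the $0$-dimensional fibres $G_s$ are actually étale over $k(s)$, rather than possibly non-reduced; once $C_e = 0$ is established, the rest is a formal consequence of the results already assembled in this section.
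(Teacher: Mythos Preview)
Your proof is correct and follows essentially the same route as the paper: the paper's one-line proof (``This is \Cref{prop:neutral component} with constant fiber dimension $0$'') implicitly invokes \Cref{prop:smoothness criterion of group space 2} to pass from constant fiber dimension $0$ to smoothness of $\Lie G \to S$, while you make this step explicit by directly computing $C_e = 0$ via Cartier and Nakayama. Both arguments then apply \Cref{prop:neutral component}(ii) and conclude.
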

		\begin{proof}
			This is \Cref{prop:neutral component} with constant fiber dimension $0$.
		\end{proof}
		
		\begin{lemma} \label{lem:etale separated group space is trivial}
			Let $f : G \to S$ be a separated \'etale group space. If there exists a dense open subscheme $S_0 \subset S$ over which $f$ is trivial, then $f$ is trivial.
		\end{lemma}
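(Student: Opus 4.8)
The plan is to use that, for a separated \'etale group space, the identity section $e:S\to G$ is \emph{both} an open and a closed immersion, so that $G$ is the disjoint union of $e(S)$ and an open-and-closed complement which is again \'etale over $S$; the triviality of $f$ over the dense open $S_0$ will then force that complement to be empty.

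First I would record that $e:S\to G$ is an open immersion. Since $f$ is \'etale it is unramified, so its diagonal $\Delta_{G/S}:G\to G\times_S G$ is an open immersion; as $e(S)\subset G$ is the equalizer of $\id_G$ and $e\circ f$ --- equivalently the preimage of $\Delta_{G/S}(G)$ under $(e\circ f,\id_G):G\to G\times_S G$ --- it is open in $G$, and $f$ restricts to an isomorphism $e(S)\cong S$. On the other hand $f$ is separated, so $e$ is a closed immersion (Tag~06P6). Hence $e(S)\subset G$ is open and closed, and the complement $G'=G\setminus e(S)$ is an open and closed, in particular \'etale, subspace over $S$.

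It then remains to see $G'=\emptyset$. By hypothesis $f^{-1}(S_0)\to S_0$ is the trivial group space, so it coincides with $e(S_0)$; therefore $G'\times_S S_0=f^{-1}(S_0)\setminus e(S_0)=\emptyset$, i.e.\ $G'\to S$ has no points over $S_0$. Since $G'\to S$ is \'etale, hence open, its image is an open subspace of $S$ disjoint from the dense subset $S_0$, and any such open subspace is empty. Thus $G'=\emptyset$ and $e:S\to G$ is an isomorphism, i.e.\ $f$ is trivial.

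The argument is essentially formal and I do not expect a real obstacle; the only step needing (standard) care is the clopen-ness of $e(S)$ --- alternatively one may see this directly from the vanishing of the conormal sheaf $C_e=e^{*}\Omega_f$ (because $f$ is \'etale), which makes the ideal of the closed immersion $e$ idempotent and hence, over the locally Noetherian $S$, cuts out an open subscheme.
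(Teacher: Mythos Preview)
Your proof is correct and takes a genuinely different route from the paper's. The paper argues pointwise: given any point $x\in G_s$, it uses \Cref{prop:etale local section} to produce an \'etale local section through $x$, observes that this section agrees with the identity section $e$ over $S_0$, and then invokes \Cref{lem:morphism to separated space is determined by dense subspace} (separatedness of $G$) to conclude the section equals $e$, so $x\in e(S)$. Your argument is instead purely topological: you isolate the clopen complement $G'=G\setminus e(S)$ and kill it directly via the openness of \'etale morphisms and the density of $S_0$. Your approach is more self-contained --- it avoids the local-section machinery and the ``morphisms agree on dense opens'' lemma --- while the paper's approach illustrates the section-extension philosophy that pervades the N\'eron model discussion elsewhere in the article. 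Both ultimately rest on the same fact (that $e$ is a clopen immersion), but the paper leaves this implicit whereas you make it the organizing principle.
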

		\begin{proof}
			Let $s \in S$ be a point and $x \in G_s$ a closed point. Shrinking $S$ \'etale locally, we can choose a section $s : S \to G$ passing through $x$. Since $G \to S$ is trivial over $S_0$, the section $s$ agrees with the identity section $e$ over $S_0$. By \Cref{lem:morphism to separated space is determined by dense subspace}, $s = e$ and thus $x$ lies on the identity section.
		\end{proof}
		
		Every locally algebraic group $G$ over a field induces an \'etale algebraic group $\pi_0(G) = G / G^{\circ}$, the group of its connected components. This phenomenon generalizes to group spaces with smooth Lie algebra. Recall from \Cref{thm:strict neutral component} that their strict neutral component is a smooth normal subgroup.
		
		\begin{definition} [$\pi_0$-group]
			Let $G \to S$ be a group space with a smooth Lie algebra. The \emph{component group} or \emph{$\pi_0$-group} of $G$ is a locally quasi-finite group space $\pi_0(G) \to S$ obtained by the quotient
			\begin{equation} \label{eq:ses component group}
			\begin{tikzcd}
				1 \arrow[r] & G^{\circ\circ} \arrow[r] & G \arrow[r, "\pi_0"] & \pi_0(G) \arrow[r] & 1
			\end{tikzcd} .
			\end{equation}
		\end{definition}
		
		If $G$ is a smooth group space, then $\pi_0(G)$ is an \'etale group sapce.
		
		\begin{proposition} \label{prop:finite type from pi0 group}
			A group space $G \to S$ with smooth Lie algebra is of finite type if and only if $\pi_0(G)$ is quasi-finite.
		\end{proposition}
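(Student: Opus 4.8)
The plan is to run the argument through the component-group sequence \eqref{eq:ses component group}, i.e.\ the short exact sequence $1 \to G^{\circ\circ} \to G \to \pi_0(G) \to 1$ with quotient map $h : G \to \pi_0(G)$. First I would record the relevant structural facts. By \Cref{thm:strict neutral component} the strict neutral component $G^{\circ\circ} \to S$ is smooth with connected fibers, hence separated and of finite type over $S$ by \Cref{thm:smooth group space with connected fibers}. The quotient map $h$ exhibits $G$ as an fppf-torsor over $\pi_0(G)$ under the base change $G^{\circ\circ} \times_S \pi_0(G) \to \pi_0(G)$ --- this is built into the formation of the quotient in \eqref{eq:ses component group} --- so $h$ is faithfully flat (in particular surjective) and, being \'etale-locally on $\pi_0(G)$ isomorphic to a base change of $G^{\circ\circ} \to S$, is of finite type, since ``of finite type'' descends along fppf covers. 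Finally, since $G$ is locally of finite type over $S$ by our standing assumptions, ``$G \to S$ of finite type'' is equivalent to ``$G \to S$ quasi-compact'', and since $\pi_0(G) \to S$ is locally quasi-finite, ``$\pi_0(G)$ quasi-finite'' is equivalent to ``$\pi_0(G) \to S$ quasi-compact''.

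With these reductions in hand, the ``if'' direction is formal: if $\pi_0(G) \to S$ is quasi-finite, then the morphism $G \to S$ factors as $G \xrightarrow{h} \pi_0(G) \to S$, a composition of two morphisms of finite type, and is therefore of finite type.

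For the ``only if'' direction I would argue with quasi-compactness directly. Assume $G \to S$ is of finite type, hence quasi-compact. To check that $\pi_0(G) \to S$ is quasi-compact it suffices to show that $\pi_0(G) \times_S V$ is a quasi-compact algebraic space for every affine scheme $V$ with a morphism $V \to S$. Now $G \times_S V$ is quasi-compact (as $G \to S$ is quasi-compact), and the base change $G \times_S V \to \pi_0(G) \times_S V$ of $h$ is surjective; since a quasi-compact algebraic space has quasi-compact image under any surjective morphism, $\pi_0(G) \times_S V$ is quasi-compact. Hence $\pi_0(G) \to S$ is quasi-compact and, being locally quasi-finite, quasi-finite.

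I expect no genuine obstacle here. The only slightly technical point is the assertion that the quotient map $h$ is of finite type, which I would justify via the torsor description of $h$ together with fppf (indeed \'etale, since $G^{\circ\circ} \to S$ is smooth) descent of finiteness properties. Everything else is a formal manipulation built on \Cref{thm:strict neutral component}, \Cref{thm:smooth group space with connected fibers}, and elementary properties of quasi-compact morphisms.
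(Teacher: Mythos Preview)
Your proposal is correct and follows essentially the same approach as the paper: both hinge on the single observation that $G^{\circ\circ} \to S$ is of finite type (by \Cref{thm:smooth group space with connected fibers}), which makes the quotient map $\pi_0 : G \to \pi_0(G)$ of finite type, after which both directions are formal. The paper compresses this into a one-line proof, leaving the quasi-compactness manipulations implicit, while you have spelled them out in full.
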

		\begin{proof}
			The strict neutral component $G^{\circ\circ}$ is of finite type by \Cref{thm:smooth group space with connected fibers}, so the morphism $\pi_0$ is always of finite type.
		\end{proof}
		
		\begin{proposition}
			Let $G \to S$ be a group space with smooth Lie algebra. Then there exists an inclusion-preserving bijective correspondence
			\begin{multline*}
				\quad \{ \mbox{open (normal) subgroup spaces of } G \} \\
				\longleftrightarrow \{ \mbox{open (normal) subgroup spaces of } \pi_0(G) \} \quad
			\end{multline*}
		\end{proposition}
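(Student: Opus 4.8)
The plan is to promote the short exact sequence \eqref{eq:ses component group} to the desired Galois-type correspondence, with the two mutually inverse maps being $H \mapsto \pi_0(H)$ (image) and $\bar H \mapsto \pi_0^{-1}(\bar H)$ (preimage). First I would record the basic properties of $\pi_0 : G \to \pi_0(G)$: it is the fppf quotient by the smooth (\Cref{thm:strict neutral component}) normal subgroup space $G^{\circ\circ}$, so it is faithfully flat with fibres isomorphic to $G^{\circ\circ}$, hence smooth, and in particular surjective and open. Therefore $\pi_0$ carries open subspaces to open subspaces, $\pi_0^{-1}$ carries open subspaces to open subspaces, and $\pi_0(\pi_0^{-1}(V)) = V$ for every subspace $V \subset \pi_0(G)$ by surjectivity. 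Moreover $\pi_0$ and $\pi_0 \times_S \pi_0$ being surjective homomorphisms of group spaces, the image $\pi_0(H)$ of any subgroup space $H \subset G$ is again a subgroup space, and likewise $\pi_0^{-1}(\bar H)$ is a subgroup space for any subgroup space $\bar H \subset \pi_0(G)$ (preimage of a subgroup under a homomorphism).

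The key step is the following lemma: \emph{every open subgroup space $H \subset G$ contains $G^{\circ\circ}$.} To prove it, I would consider $H \cap G^{\circ\circ}$, which is an open subgroup space of $G^{\circ\circ}$ containing the identity section. For each point $s \in S$, the fibre $H_s$ is an open subgroup of the locally algebraic group $G_s$; since over a field an open subgroup is closed, $H_s$ is a union of connected components and therefore contains the neutral component $(G_s)^{\circ}$. By the defining property of $G^{\circ\circ}$ in \Cref{thm:strict neutral component}, $(G_s)^{\circ} = G^{\circ\circ}_s$, so $(H \cap G^{\circ\circ})_s = G^{\circ\circ}_s$ for every $s \in S$. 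An open subspace of $G^{\circ\circ}$ whose complement meets no fibre of $G^{\circ\circ} \to S$ is empty, hence $H \cap G^{\circ\circ} = G^{\circ\circ}$, i.e. $G^{\circ\circ} \subseteq H$. Since $G^{\circ\circ} = \ker \pi_0$, this yields $\pi_0^{-1}(\pi_0(H)) = H \cdot G^{\circ\circ} = H$ for every open subgroup space $H$.

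With the lemma in hand the correspondence falls out. If $H \subseteq G$ is an open subgroup space then, by the two paragraphs above, $\pi_0(H)$ is an open subgroup space of $\pi_0(G)$; and if $\bar H \subseteq \pi_0(G)$ is an open subgroup space then $\pi_0^{-1}(\bar H)$ is an open subgroup space of $G$ containing $G^{\circ\circ} = \pi_0^{-1}(e)$. These assignments are mutually inverse: $\pi_0(\pi_0^{-1}(\bar H)) = \bar H$ by surjectivity of $\pi_0$, and $\pi_0^{-1}(\pi_0(H)) = H$ by the key lemma. Both maps manifestly preserve inclusions. Finally, they restrict to a bijection between \emph{normal} open subgroup spaces: the image of a normal subgroup space under the surjective homomorphism $\pi_0$ is normal, and the preimage of a normal subgroup space under the homomorphism $\pi_0$ is normal, so normality is preserved in both directions. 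This proves the proposition.

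I expect the main obstacle to be the key lemma, and within it the passage from the fibrewise inclusion $G^{\circ\circ}_s \subseteq H_s$ to the global inclusion $G^{\circ\circ} \subseteq H$; this relies crucially on $G^{\circ\circ}$ being the \emph{strict} neutral component, so that its fibres are exactly the $(G_s)^{\circ}$ (\Cref{thm:strict neutral component}), together with the elementary fact that an open subspace of a space over $S$ that is fibrewise everything is everything. A secondary point that needs a little care, rather than real work, is checking that $\pi_0(H)$ is an honest open \emph{subgroup space} (not just an open subset); this is where one uses that $\pi_0$ is a smooth, hence open and faithfully flat, homomorphism, so that images of subgroup spaces under $\pi_0$ (and under $\pi_0 \times_S \pi_0$) are again subgroup spaces.
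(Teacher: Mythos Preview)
Your proposal is correct and follows essentially the same route as the paper: define the correspondence by $H \mapsto \pi_0(H)$ and $K \mapsto \pi_0^{-1}(K)$, and reduce everything to the key lemma that every open subgroup space $H \subset G$ contains $G^{\circ\circ}$, proved by checking fibrewise that an open subgroup $H_s \subset G_s$ of a locally algebraic group contains the neutral component $(G_s)^\circ = G^{\circ\circ}_s$. The paper's proof is terser (it omits the verification that $\pi_0(H)$ is a subgroup space and the explicit treatment of normality), but the substance is identical.
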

		\begin{proof}
			The correspondence is defined by $H \mapsto \pi_0(H)$ with inverse $K \mapsto \pi_0^{-1}(K)$. To show they are inverse to each other, it is enough to show that every open subgroup space $H \subset G$ contains the strict neutral component $G^{\circ\circ}$ as a (normal) subgroup. Since both $H$ and $G^{\circ\circ}$ are open, this can be checked pointwise on the base: for each point $t \in S$, we have an open algebraic subgroup $H_t \subset G_t$, so we have $G^{\circ\circ}_t \subset H_t$.
		\end{proof}
		
		If $G \to S$ is a smooth group space of finite type, then $\pi_0(G)$ is an \'etale constructible sheaf of finite groups. Hence open subgroup spaces of $G$ are classified by subsheaves of the constructible sheaf $\pi_0(G)$.
		
		\bigskip
		
		Every \'etale \emph{commutative} group space $G \to S$ has an open subgroup space
		\[ G_{\tor} = \bigcup_{m \in \ZZ_{>0}} \ker \big( [m] : G \to G \big) ,\]
		called the \emph{torsion subgroup} of $G$. We say $G$ is \emph{torsion} if $G_{\tor} \subset G$ is an equality.
		
		\begin{definition} [Numerically trivial commutative group] \
			\begin{enumerate}
				\item A smooth commutative group space $G \to S$ is called \emph{numerically trivial} if its $\pi_0$-group $\pi_0(G) \to S$ is torsion.
				\item Given a smooth commutative group space $G \to S$, its \emph{numerically trivial subgroup} is an open subgroup space
				\[ G^{\tau} \subset G \]
				corresponding to the torsion subgroup space $\pi_0(G)_{\tor} \subset \pi_0(G)$.
			\end{enumerate}
		\end{definition}
		
		\begin{proposition} \label{prop:numerically trivial group is aligned}
			Let $G \to S$ be a smooth commutative group space over a normal scheme $S$. If $G$ is numerically trivial, then it is aligned. In particular, if $G$ is of finite type, then it is aligned.
		\end{proposition}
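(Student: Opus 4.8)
The goal is to show that the scheme-theoretic image $E$ of the identity section $e : S \to G$ — whose étaleness over $S$ is exactly the condition of alignment (\Cref{def:alignment}) — is étale over $S$. Since the statement is Zariski-local on $S$, I would first assume $S$ is integral (normal, Noetherian, connected), so that $E = \overline{e(S)}$ is integral and birational to $S$ via $e$. A preliminary observation to record: $E \cap G^{\circ\circ} = e(S)$ scheme-theoretically, because $G^{\circ\circ} \subset G$ is an open subspace containing $e(S)$, so $E \cap G^{\circ\circ}$ is the closure of $e(S)$ inside $G^{\circ\circ}$, and $G^{\circ\circ} \to S$ is separated by \Cref{thm:smooth group space with connected fibers}, whence its identity section is already a closed immersion.

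The heart of the argument is a boundedness statement extracted from numerical triviality. Fix $x \in E$ over $s \in S$, lying in the fiber component $G_s^{(c)}$ of class $c \in \pi_0(G_s)$. Numerical triviality forces $\pi_0(G_s)$ to be torsion, so $mc = 0$ for some $m > 0$, which means the $m$-th power map $[m] : G \to G$ carries $G_s^{(c)}$ into $G_s^{\circ\circ}$. This $[m]$ is a homomorphism — here commutativity of $G$ is used — and it is étale, since its differential along $e$ is multiplication by $m$, invertible in characteristic $0$, and translations spread étaleness over all of $G$. Moreover $[m] \circ e = e$, so the scheme-theoretic image of $[m]|_E$ contains $E$, hence equals it by reducedness and dimension; thus $[m]$ maps $E$ into $E$. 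Combining, $[m](E \cap G_s^{(c)}) \subseteq E \cap G_s^{\circ\circ} = \{e(s)\}$, so $E \cap G_s^{(c)}$ lies in the $m$-torsion $G_s[m]$ and hence in a single coset of the finite group scheme $G_s^{\circ\circ}[m]$: a finite set. In particular every fiber of $E \to S$ is finite; since finitely many torsion elements generate a finite subgroup, the classes of $\pi_0(G)$ met by $E$ generate an open subgroup space $\bar N \subset \pi_0(G)$ of finite type, and replacing $G$ by $\pi_0^{-1}(\bar N)$ — a finite-type open subgroup containing $E$, still smooth, commutative and numerically trivial — I may assume $G$ is of finite type.

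Now pass to the component group $N = \pi_0(G)$, an étale group space over the normal $S$. The connected component of $N$ through $e_N(S)$ is integral and étale over $S$, and $e_N(S)$ is dense in it, so this component is precisely $E_N := \pi_0(E) = \overline{e_N(S)}$; hence $E_N \to S$ is étale, $E_N$ is normal, and it is birational to $S$. The projection $\pi_0 : G \to N$ is a torsor under the separated group space $G^{\circ\circ}$, hence separated, and it sends $E$ onto a dense subscheme of $E_N$. Therefore $E \to E_N$ is a separated, finite-type, dominant morphism with finite fibers to the normal integral scheme $E_N$, between schemes birational to $S$. By Zariski's main theorem it factors as an open immersion $E \hookrightarrow \bar E$ followed by a finite morphism $\bar E \to E_N$; taking $\bar E$ integral, $\bar E \to E_N$ is finite and birational onto the normal scheme $E_N$, hence an isomorphism, so $E \hookrightarrow E_N$ is an open immersion. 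Consequently $E \to S$ is the composition of an open immersion with the étale morphism $E_N \to S$, so it is étale and $G$ is aligned. For the last assertion: if $G$ is of finite type then $\pi_0(G) \to S$ is étale of finite type, its fibers are finite groups, hence $\pi_0(G)$ is torsion and $G$ is numerically trivial.

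The step I expect to require the most care is the boundedness in the second paragraph together with the ensuing reduction to the finite-type case: it is precisely this finiteness — unavailable without numerical triviality — that keeps $E$ from being ``spread out'' over $S$, and it is what both makes Zariski's main theorem applicable and, through the comparison with $E_N$, forces $E$ to be étale. Without numerical triviality this control genuinely fails, and $E$ may fail to be étale; this is exactly how non-aligned smooth group spaces arise.
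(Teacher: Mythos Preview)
Your argument is correct and takes a genuinely different route from the paper's. The paper establishes a dichotomy: for each component class $\alpha$ met by $E$ over $s$, the set $E_s^{\alpha}$ is either a singleton (then $E$ is \'etale there) or positive-dimensional (then it is not), the latter being proved by applying Zariski's \emph{connectedness} theorem to $E$ intersected with a single $G^{\circ\circ}$-orbit. Since $[m]$ is \'etale it preserves positive-dimensionality, so the non-\'etale locus of $E$ projects to a subset of $\pi_0(G)$ closed under positive-integer multiples, hence disjoint from the torsion. You instead use $[m]$ to bound the fibers of $E$ directly --- mapping $E_s^{(c)}$ into the singleton $E_s \cap G_s^{\circ\circ}$ and hence into the finite $G_s[m]$ --- and then finish by projecting to $E_N = N^{\circ}$ and invoking Zariski's main theorem. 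Your route is more direct and avoids the connectedness argument entirely; the paper's route proves the sharper structural fact that the non-\'etale components of $E$ are exactly the non-torsion ones, which is of independent interest.

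Two small points. First, your justification that $[m]$ maps $E$ into $E$ is phrased backwards; the clean argument is simply that $[m]$ is continuous and fixes $e(S)$, so it sends $\overline{e(S)}$ into itself. Second, the reduction to $G$ of finite type in your middle paragraph is vaguely argued (the components met by $E$ vary with $s$, and it is not clear a single finite-type open subgroup captures them all) and in fact unnecessary: your third paragraph goes through verbatim without it, since $G \to N$ is a $G^{\circ\circ}$-torsor and hence separated and of finite type regardless, and Zariski's main theorem for $E \to E_N$ can be applied after localizing on $E_N$ (open immersion being local on the target).
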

		\begin{proof}
			Let $G \to S$ be an arbitrary smooth commutative group space. Set $E$ to be the closure of the identity section, $E' \subset E$ the \'etale locus of $E \to S$, and $Z = E \setminus E'$ its complement, a closed subscheme of $G$. We claim that $\pi_0(Z) \subset \pi_0(G)$ is closed under taking power, i.e., if $\alpha \in \pi_0(Z)$ and $m \in \ZZ_{>0}$ then $m \cdot \alpha \in \pi_0 (Z)$. This in particular implies that $\pi_0(Z) \cap \pi_0(G)_{\tor} = \emptyset$ and hence the desired claim.
			
			Let $\alpha \in \pi_0(E)$ be a point and $s \in S$ its image. Denote by
			\[ G_s^{\alpha} = \pi_0^{-1} (\alpha) \subset G_s ,\qquad E_s^{\alpha} = E \cap G_s^{\alpha} \]
			the connected component of a locally algebraic group $G_s$ and the restriction of $E$ to it, respectively. We claim that
			\[ E_s^{\alpha} = \begin{cases}
				\mbox{singleton} \quad & \mbox{when } \ \alpha \in \pi_0 (E') ,\\
				\mbox{positive dimensional} \quad & \mbox{when } \ \alpha \in \pi_0(Z) .
			\end{cases} \]
			Choose an arbitrary section $a : S \to G$ through the component $G_s^{\alpha}$ (possibly after shrinking $S$) and consider its $G^{\circ\circ}$-orbit $X = G^{\circ\circ}.a \subset G$, an open subspace because the $G^{\circ\circ}$-action on $G$ is free. \Cref{thm:smooth group space with connected fibers} says $X$ is separated and of finite type. If $\alpha \in \pi_0(E')$ then the intersection $E \cap X \to S$ is separated and \'etale over $s \in S$, so $E^{\alpha}_s = (E \cap X)_s$ is a point. If $\alpha \in \pi_0(Z)$ then $E \cap X \to S$ is separated and birational but not \'etale over $s \in S$, so by Zariski's main (connectedness) theorem its fiber $E^{\alpha}_s$ is positive dimensional and connected.
			
			Now the multiplication endomorphism $[m] : G \to G$ is \'etale so we have $m \cdot E^{\alpha}_s = E^{m \cdot \alpha}_s$. If $\alpha \in \pi_0(Z)$ then $E^{\alpha}_s$ had dimension $\ge 1$, so its image under $[m]$ has dimension $\ge 1$ and hence $m \cdot \alpha \in \pi_0(Z)$. This completes the proof.
		\end{proof}

	\subsection{Main component} \label{sec:main component}
		Let $S$ be an integral scheme. By generic smoothness in \Cref{cor:generic smoothness}, there exists a dense open subscheme $S_0 \subset S$ such that the neutral component $G^{\circ}_0 \to S_0$ of $G_0 \to S_0$ is a smooth subgroup space.
		
		\begin{definition} [Main component] \label{def:main component}
			Let $G \to S$ be a separated group space over an integral scheme $S$. Let $S_0 \subset S$ be a dense open subscheme over which the neutral component $G^{\circ}$ is smooth (\Cref{cor:generic smoothness}). The \emph{main component} $M$ of $G$ is the Zariski closure of $G^{\circ}_0$ in $G$ with the reduced induced structure.
		\end{definition}
		
		Consider the generic fiber $G_K$, a locally algebraic group over the function field $K$. Then the main component $M$ can be alternatively described by the Zariski closure of the generic point $\xi \in G^{\circ}_K$, proving its well-definedness independent to the choice of $S_0 \subset S$. Since $M_0 = G^{\circ}_0$ is a subgroup space of $G_0$, it is natural to ask whether $M$ is still a subgroup space of $G$ over the full base $S$. Unfortunately, this statement is too optimistic to hold in general (\S \ref{sec:examples of group spaces}).
		
		The goal of this section is to answer this question positively in certain special cases. Before stating the theorem, we note that a Lie algebra scheme over $S$ is a commutative group scheme, so it makes sense to consider its main component as in \Cref{def:main component}.
		
		\begin{theorem} \label{thm:main component 1}
			Let $G \to S$ be a separated group space over a reduced and geometrically unibranch scheme $S$. Assume that the main component $\mathfrak h$ of $\Lie G$ is smooth over $S$. Then the main component $M$ of $G$ has a unique open subspace $H \subset M$ with the following properties:
			\begin{enumerate}
				\item $H \to S$ is universally equidimensional (see \S \ref{sec:equidimensional morphism}).
				\item For every universally open morphism $Z \to S$ from a reduced algebraic space $Z$, every morphism $f : Z \to M$ uniquely factors through
				\[\begin{tikzcd}[column sep=tiny]
					Z \arrow[rr, "f"] \arrow[rd] & & M \\
					& H \arrow[ru, hook]
				\end{tikzcd}.\]
			\end{enumerate}
		\end{theorem}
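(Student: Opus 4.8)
The plan is to take $H$ to be the \emph{equidimensional locus} of $g\colon M\to S$ and to verify the two properties with the machinery of \S\ref{sec:equidimensional morphism} together with the deformation to the Lie algebra of \S\ref{sec:Lie algebra scheme}; the hypothesis on $\mathfrak h$ enters only in the last step. First I would reduce to the case $S$ integral, with function field $K$, and put $d=\dim G^{\circ}_K$. The scheme $\mathfrak h$ is irreducible (the closure of the generic point of $\Lie(G^{\circ}_K)\cong\AA^{d}_K$) and smooth over the connected $S$, hence of pure relative dimension $d$, so $\dim\mathfrak h=\dim S+d$; on the other hand $M$ is integral of the same dimension $\dim S+d$, being the closure of the dense open $M_0=G^{\circ}_0$, smooth of relative dimension $d$ over $S_0$. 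As $M$ dominates the integral $S$, every fibre of $g$ has everywhere local dimension $\ge d$ (the inequality $\dim_x M_{g(x)}\ge\dim_x M-\dim_{g(x)}S=d$), so $x\mapsto\dim_x M_{g(x)}$ is everywhere $\ge d$ and
\[
H=\{\,x\in M:\dim_x M_{g(x)}=d\,\}=\{\,x\in M:\dim_x M_{g(x)}\le d\,\}
\]
is open by upper semicontinuity; it contains $M_0$, so it is a dense open of $M$, in particular integral and dominating $S$. Each fibre $H_s=M_s\setminus(M\setminus H)$ is open in $M_s$ and, by the same inequality, pure of dimension $d$ (or empty), so $g|_H$ is equidimensional of relative dimension $d$ in the sense of \S\ref{sec:equidimensional morphism}; since fibre dimensions and the open immersion $H\hookrightarrow M$ are preserved under base change and $S$ is geometrically unibranch, \Cref{prop:universally equidimensional morphism 2} upgrades this to universally equidimensional, which is (1), and then $g|_H$ is universally open by \Cref{prop:universally equidimensional morphism 1}.

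Because $H\hookrightarrow M$ is an open immersion, hence a monomorphism, the factorisation in (2) is automatically unique, so (2) amounts to the statement that every $S$-morphism $f\colon Z\to M$ out of a reduced, universally open $Z\to S$ has image contained in $H$. Granting this, uniqueness of $H$ is formal: if $H'$ also satisfies (1)--(2), apply (2) for $H$ to the morphism $H'\hookrightarrow M$, which is universally open and reduced by (1), to get $H'\subseteq H$, and symmetrically. Observe next that $M\setminus H$ is stable under the translation action of the strict neutral component $G^{\circ\circ}\subseteq M$ on $M$ (the unique continuous extension of the translation action of $G^{\circ\circ}_0=G^{\circ}_0$ on $M_0$; translations preserve fibre dimensions). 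Hence membership in $H$ at a point of a fibre $M_s$ is a property of its $G^{\circ\circ}_s$-orbit, and in particular $e(S)\subseteq H$ already forces $G^{\circ\circ}\subseteq H$.

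The decisive step, and the only place the hypothesis on $\mathfrak h$ enters, is therefore the sub-claim $e(S)\subseteq H$, i.e.\ $\dim_{e(s)}M_s=d$ for every $s\in S$. To prove it I would form the deformation to the normal cone $\mathfrak M\to S\times\PP^1$ of the closed immersion $e(S)\subseteq M$ (the construction of \S\ref{sec:Lie algebra scheme}, $M$ being separated). Then $\mathfrak M$ is integral of dimension $\dim M+1$, restricts to $M\times\AA^1$ over $S\times\AA^1$, and its central fibre $\mathfrak M_{\infty}$ is the normal cone of $e(S)$ in $M$, an effective Cartier divisor on $\mathfrak M$, hence of pure dimension $\dim M=\dim\mathfrak h$. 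Since over $S_0$ the section $e(S_0)\hookrightarrow M_0=G^{\circ}_0$ is a regular immersion with normal bundle $\mathfrak h|_{S_0}$, the reduction $(\mathfrak M_{\infty})_{\mathrm{red}}$ contains $\mathfrak h$ as its unique component dominating $S$. A specialisation argument using the $\GG_m$-action on the deformation — which fixes $\mathfrak M_{\infty}$ and contracts $\mathfrak M$ onto it — together with the smoothness, hence equidimensionality of relative dimension $d$, of $\mathfrak h$ then forces the $\GG_m$-stable closed subset of $\mathfrak M$ along which the fibre of $\mathfrak M\to S\times\PP^1$ has local dimension $\ge d+1$ to avoid the identity section $e(S\times\PP^1)$; restricting over $\AA^1\subseteq\PP^1$ gives $e(S)\subseteq H$. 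The general case of (2) then reduces to the identity-section case: the $G^{\circ\circ}$-action identifies the local behaviour of $M$ at any point of $G^{\circ\circ}$ with that at $e(S)$, while the universal openness of $Z$ confines $f$ to the part of $M$ governed by the generic fibre $M_K=G^{\circ}_K\subseteq H$.

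The main obstacle is precisely this sub-claim $e(S)\subseteq H$: one must turn smoothness of the \emph{main component} $\mathfrak h$ of $\Lie G$, rather than of all of $\Lie G$, into equidimensionality of $M$ along its identity section. The subtlety is that the central fibre $\mathfrak M_{\infty}$ of the deformation of $M$ may a priori carry further components besides $\mathfrak h$, necessarily supported over $S\setminus S_0$, and one must show these cannot inflate $\dim_{e(s)}M_s$ — which is exactly what the $\GG_m$-contraction onto $\mathfrak M_{\infty}$ and the equidimensionality of $\mathfrak h$ accomplish.
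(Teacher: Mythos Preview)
Your construction of $H$ as the equidimensional locus of $M\to S$ and the verification of (1) via \Cref{prop:universally equidimensional morphism 2} are correct and match the paper. The genuine gaps are in the second half.

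First, your invocation of $G^{\circ\circ}$ is unjustified. The strict neutral component exists only when $\Lie G\to S$ is smooth (\Cref{thm:strict neutral component}), whereas the hypothesis here is merely that the \emph{main component} $\mathfrak h$ of $\Lie G$ is smooth; $\Lie G$ itself may have jumping fibre dimension (this is exactly the situation the theorem is designed to handle). So there is no $G^{\circ\circ}$ to translate by, and your reduction of property (2) to the claim $e(S)\subset H$ collapses. The paper avoids this by proving the stronger \Cref{lem:main component is equidimensional along section}: $M\to S$ is equidimensional along \emph{any} section $u:S\to M$, translating by $u^{-1}$ inside the ambient group $G$ (which always makes sense) rather than inside $M$. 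The general case of (2) is then handled by choosing, for each point of $Z$, a quasi-finite universally equidimensional $S'\to S$ through its image (\Cref{prop:quasi-finite local section}) and applying the lemma after base change to $S'$.

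Second, your $\GG_m$-contraction argument for $e(S)\subset H$ does not close. You correctly note that $\mathfrak M_\infty$ may have components beyond $\mathfrak h$ supported over $S\setminus S_0$, but such components are cones and therefore contain the zero section; nothing in your sketch prevents them from inflating $\dim_{e(s)}(\mathfrak M_\infty)_s$ above $d$. The paper's argument here is concrete and different: it works inside the deformation $\mathfrak G$ of the full group $G$ (not of $M$), takes $\mathfrak M$ as the closure of $\mathfrak G^\circ_0$, and then uses non-degenerate traits $\Delta\to S$ together with the Dedekind case (\Cref{prop:main component over Dedekind}), which furnishes a smooth subgroup $H^\Delta$ with $\Lie H^\Delta=\mathfrak h_\Delta$, to show $\mathfrak M_{t_0}=\mathfrak h$ set-theoretically. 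Only then does semicontinuity give equidimensionality along $e(S)$.
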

		
		The subspace $H \subset G$ behaves like a subgroup space in the following sense. For the notion of a non-degenerate trait, see \S \ref{sec:trait}.
		
		\begin{proposition} \label{prop:main component 2}
			In \Cref{thm:main component 1}, we have the following properties of $H$.
			\begin{enumerate}
				\item The following are equivalent.
				\begin{enumerate}
					\item $H \to S$ is a locally closed subgroup space of $G$.
					\item $H \to S$ has reduced fibers.
				\end{enumerate}
				In this case, $H \to S$ is smooth and $\Lie H = \mathfrak h$.
				
				\item Fix a dense open subscheme $S_0 \subset S$ over which $G^{\circ}$ is smooth. Then for every non-degenerate trait $\Delta \to S$ with respect to $S_0 \subset S$, reduction of the base change
				\[ (H_{\Delta})_{\red} \to \Delta \]
				is a smooth locally closed subgroup of $G_{\Delta}$. As a result,
				\begin{enumerate}
					\item $(H_{\Delta})_{\red}$ is an open subgroup space of the main component of $G_{\Delta}$ (see \Cref{prop:main component over Dedekind}).
					\item For every point $s \in S$, the reduction $(H_s)_{\red}$ is a closed algebraic subgroup of $G_s$.
				\end{enumerate}
			\end{enumerate}
		\end{proposition}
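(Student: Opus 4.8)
The plan is to obtain (1) directly from the universal property in \Cref{thm:main component 1}, and to obtain (2) by base-changing to the Dedekind situation, where \Cref{prop:main component over Dedekind} supplies the group structure. Throughout I may argue over one connected component of $S$ at a time, so that $S$ is integral with function field $K$ and generic point $\eta_S \in S_0$, and I note at the outset that $H_K = M_K = G^{\circ}_K$ (over $S_0$ the neutral component is already smooth, hence equidimensional, so $H|_{S_0} = M|_{S_0} = G^{\circ}|_{S_0}$). The implication (a)$\Rightarrow$(b) in Part (1) is immediate: if $H \subset G$ is a locally closed subgroup space then each fibre $H_s$ is a locally algebraic group over $\kappa(s)$, hence smooth by Cartier's theorem, hence reduced. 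For (b)$\Rightarrow$(a), by \Cref{thm:main component 1}(1) the morphism $H \to S$ is universally equidimensional, hence universally open by \Cref{prop:universally equidimensional morphism 1}; since it has geometrically reduced fibres (hypothesis (b), characteristic $0$) and $S$ is reduced, \Cref{thm:universal open implies flat} makes it flat. Therefore $H$ and $H\times_S H$ are reduced, and $H\times_S H \to S$ is again universally equidimensional, hence universally open.

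Now observe that $e\colon S \to G$, the restriction of $i$ to $M$, and the restriction of $m$ to $H\times_S H$ all have image in $M$: on the generic fibre they land in $G^{\circ}_K$, and universal openness over $S$ forbids any irreducible component of the source from mapping into a proper closed subset of $S$. Feeding the $S$-morphisms $S \xrightarrow{e} M$, $H \xrightarrow{i} M$ and $H\times_S H \xrightarrow{m} M$ into the universal property \Cref{thm:main component 1}(2) — legitimate since $S$, $H$, and $H\times_S H$ are reduced and universally open over $S$ — shows that each of them factors through $H$. Hence $e(S)\subset H$, $i(H)\subset H$ and $m(H\times_S H)\subset H$, so $H$ is a subgroup space of $G$, and it is locally closed since it is open in the closed subspace $M$. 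Cartier's theorem now makes the fibres of $H\to S$ smooth, and, being also flat, $H\to S$ is smooth. Finally $\Lie H \hookrightarrow \Lie G$ is a closed sub-linear-space which is smooth over $S$ with connected fibres, hence reduced and irreducible with generic fibre $(\Lie G)^{\circ}_K$, so it equals the Zariski closure of $(\Lie G)^{\circ}_K$ in $\Lie G$, which is $\mathfrak h$; this finishes Part (1).

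For Part (2), let $\Delta \to S$ be a non-degenerate trait with respect to $S_0$. The base change $G_\Delta \to \Delta$ is a separated group space over a Dedekind scheme, hence a scheme by \Cref{thm:group space is scheme}(2); since the generic point of $\Delta$ maps into $S_0$, the linear space $\mathfrak h \times_S \Delta$ — smooth over $\Delta$ with connected fibres, hence irreducible — is the main component of $\Lie G_\Delta$ and is smooth over $\Delta$. So \Cref{thm:main component 1} applies to $G_\Delta$; but by \Cref{prop:main component over Dedekind} the main component of $G_\Delta$, with its reduced structure, is \emph{already} a smooth locally closed subgroup scheme of $G_\Delta$, in particular universally equidimensional over $\Delta$, so by the uniqueness in \Cref{thm:main component 1} the associated open subspace is all of it. It then remains to identify $(H_\Delta)_{\red}$ with an open subgroup of this. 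Since $H$ is open in $M$, $H_\Delta = H\times_S\Delta$ is open in $M\times_S\Delta$ and universally equidimensional over $\Delta$, hence universally open; an open morphism to a trait has no irreducible component mapping to the closed point, so $H_\Delta$ is irreducible with generic fibre $G^{\circ}_{K(\Delta)}$, whence $(H_\Delta)_{\red}$ is the reduced subscheme of $M\times_S\Delta$ supported on the closure of that generic fibre intersected with $H_\Delta$ — that is, an open subscheme of the main component of $G_\Delta$ containing the identity section, and smooth since the main component is. That this open subscheme is a \emph{subgroup} I would deduce by transporting the universal property of $H$ along $\Delta\to S$ (this is where the non-degeneracy of the trait enters); granting it, $(H_\Delta)_{\red}$ is a smooth locally closed subgroup of $G_\Delta$, which is (2a). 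For (2b) one takes the special fibre: a non-degenerate trait through an arbitrary point $s\in S$ exists (after replacing $S$ by its normalization near $s$, a homeomorphism in characteristic $0$), and the special fibre of the smooth subgroup scheme $(H_\Delta)_{\red}$ is a locally closed — hence closed — algebraic subgroup of $G_s$, reduced, so equal to $(H_s)_{\red}$ (using faithfully flat descent if the trait induces a residue extension).

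The delicate step is precisely the last one of Part (2): an arbitrary open neighbourhood of the identity in a group scheme need not be a subgroup, so identifying $(H_\Delta)_{\red}$ as a genuine subgroup of the Dedekind main component — rather than merely an open subscheme of it — requires either the base-change compatibility of the universal property of \Cref{thm:main component 1}, or a description of $H$ as a universally-equidimensional locus together with the stability of that notion under base change along a non-degenerate trait. Everything else is a formal consequence of Cartier's theorem, the flatness criterion \Cref{thm:universal open implies flat}, and the universal property of the main component already in hand.
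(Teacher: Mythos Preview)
Your proof of part (1) is correct and essentially the same as the paper's: both use \Cref{thm:universal open implies flat} to get flatness from universal equidimensionality plus reduced fibers, then feed the reduced space $H \times_S H$ into the universal property of \Cref{thm:main component 1}(2) to see that $m$ restricts.

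For part (2) you correctly identify the gap but do not close it, and neither of your two suggested routes (base-change compatibility of the universal property, or stability of the equidimensional locus) is what the paper does. The paper's observation is that the argument you already gave for (1) in fact proves something unconditional: without assuming reduced fibers, the universal property of \Cref{thm:main component 1}(2) already yields
\[
e : S \to H,\qquad i : H \to H,\qquad m : (H \times_S H)_{\red} \to H,
\]
since $(H \times_S H)_{\red}$ is reduced and universally open over $S$. This ``partial group law'' is defined over $S$ and therefore base-changes to $\Delta$: one obtains $e_\Delta : \Delta \to (H_\Delta)_{\red}$, $i_\Delta : (H_\Delta)_{\red} \to (H_\Delta)_{\red}$, and $m_\Delta : (H_\Delta \times_\Delta H_\Delta)_{\red} \to (H_\Delta)_{\red}$. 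Now the Dedekind base does the work: since $H_\Delta \to \Delta$ is universally equidimensional, $(H_\Delta)_{\red} \to \Delta$ is flat by \Cref{prop:flat over Dedekind}, hence $(H_\Delta)_{\red} \times_\Delta (H_\Delta)_{\red}$ is flat over $\Delta$ with reduced generic fibre, hence reduced (again \Cref{prop:flat over Dedekind}). So $(H_\Delta)_{\red} \times_\Delta (H_\Delta)_{\red} = (H_\Delta \times_\Delta H_\Delta)_{\red}$ and $m_\Delta$ is an honest multiplication on $(H_\Delta)_{\red}$.

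In short: the ``delicate step'' you flag is resolved not by transporting the universal property, but by realizing that the universal property over $S$ already hands you a multiplication defined on $(H \times_S H)_{\red}$, and that over a Dedekind base the reduction of a product of flat reduced schemes is the product of the reductions.
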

		
		\begin{remark}
			There are examples where $H \to S$ has non-reduced fibers: see \S \ref{sec:examples of group spaces}. We do not know whether the inclusion $H \subset M$ in \Cref{thm:main component 1} can be strict.
		\end{remark}
		
		The rest of this section is devoted to the proof of \Cref{thm:main component 1} and \ref{prop:main component 2}.
		
		\begin{lemma} \label{lem:main component criterion to be subgroup}
			Let $S$ be an integral scheme.
			\begin{enumerate}
				\item Let $G \to S$ be a separated group space and $M$ its main component. If $M \times_S M$ is integral, then $M \to S$ is a closed subgroup space of $G$.
				
				\item Let $V \to S$ be a linear (resp. Lie algebra) scheme and $W$ its main component. If $W \times_S W$ is integral, then $W \to S$ is a closed sublinear (resp. Lie subalgebra) scheme of $V$.
			\end{enumerate}
		\end{lemma}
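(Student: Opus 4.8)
The plan is to check by hand that the structure morphisms of $G$ restrict to the main component $M$ (\Cref{def:main component}), relying on three facts: $M$ carries the reduced induced structure; over a dense open $S_0 \subseteq S$ the component $M$ coincides with the smooth subgroup space $G^{\circ}_0$ (\Cref{cor:generic smoothness} and \Cref{prop:neutral component}); and the elementary observation that a morphism out of a \emph{reduced} scheme which sends a dense open subset into a closed subscheme must factor through that closed subscheme.

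First I would settle the unary data, which needs no hypothesis. The identity section $e : S \to G$ is a closed immersion because $G$ is separated, and $e(S_0) \subseteq G^{\circ}_0 \subseteq M$, so density of $S_0$ in $S$ gives $e(S) = \overline{e(S_0)} \subseteq M$. The inverse $i : G \to G$ is an automorphism over $S$ with $i(G^{\circ}_0) = G^{\circ}_0$; since $i$ is a homeomorphism it commutes with closures, so $i(M) = \overline{i(G^{\circ}_0)} = \overline{G^{\circ}_0} = M$ as closed subsets, and as $M$ is reduced this means $i$ restricts to an involution $i|_M$ of $M$.

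The integrality hypothesis is used exactly once, for the multiplication. The open subspace $M_0 \times_{S_0} M_0 = (M \times_S M) \times_S S_0$ of $M \times_S M$ is nonempty, hence dense because $M \times_S M$ is assumed integral. Since $M_0 = G^{\circ}_0$ is a subgroup space over $S_0$, the composite $M \times_S M \hookrightarrow G \times_S G \xrightarrow{m} G$ sends this dense open into the closed subspace $M$, and since $M \times_S M$ is reduced the morphism $m$ therefore factors through some $\bar m : M \times_S M \to M$. The associativity, identity and inverse axioms for $(M, \bar m, e, i|_M)$ are forced by those of $(G, m, e, i)$ because $M \hookrightarrow G$ is a monomorphism; thus $M$ is a closed (hence separated, locally of finite type) subgroup space of $G$. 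For part (2), a linear (resp. Lie algebra) scheme $V \to S$ is in particular a separated commutative group scheme, so part (1) — applied with $W \times_S W$ integral — already makes $W$ a closed subgroup scheme of the additive group of $V$, and it remains only to restrict the leftover structure maps. Over $S_0$, generic smoothness makes $W_0$ a smooth sub-linear (resp. Lie subalgebra) scheme of $V_0$, in particular stable under scalar multiplication (resp. the bracket). Scalar multiplication $\AA^1_S \times_S V \to V$ restricts to $W$ because $\AA^1_S \times_S W = \AA^1_W$ is integral ($W$ being reduced by construction and irreducible, as it is the image of the irreducible $W \times_S W$ under a projection) with dense open $\AA^1_{S_0} \times_{S_0} W_0$; in the Lie case the bracket $V \times_S V \to V$ restricts because $W \times_S W$ is integral by hypothesis. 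The $\mathcal{O}_S$-module (resp. Jacobi and antisymmetry) identities descend to $W$ since $W \hookrightarrow V$ is a monomorphism, so $W$ is a closed sub-linear (resp. Lie subalgebra) scheme of $V$.

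The only real obstacle is the multiplication step, which is also why the integrality assumption cannot be dropped: a priori $m(M \times_S M)$ could leave $M$ along fibers over the boundary $S \setminus S_0$, and irreducibility of $M \times_S M$ (it is automatically reduced here) is exactly what propagates the inclusion $m(M_0 \times_{S_0} M_0) \subseteq M_0$ from the dense open to the entire fiber product. The remaining points — openness and density of $M_0 \times_{S_0} M_0$ inside $M \times_S M$, and the monomorphism argument for the group axioms — should be routine, with no genuine computation.
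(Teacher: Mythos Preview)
Your proof is correct and follows essentially the same approach as the paper. The only cosmetic difference is that the paper phrases the argument via the generic point---characterizing $M$ as the unique maximal integral closed subspace of $G$ containing $e(\eta)$, and then observing that the scheme-theoretic image of $M \times_S M$ under $m$ is integral and contains $e(\eta)\cdot e(\eta) = e(\eta)$---whereas you use the equivalent closure-and-density formulation over the open $S_0$; both arguments reduce to the same fact that a morphism from a reduced (or integral) source landing generically in a closed subscheme factors through it.
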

		\begin{proof}
			Let $\eta \in S$ be the generic point. The main component $M$ is characterized by the maximal integral subspace of $G$ containing $e(\eta)$. Since the inverse automorphism $i : G \to G$ sends $e(\eta)$ to itself, it restricts to $i : M \to M$. Similarly, let $m : G \times_S G \to G$ be the multiplication morphism. Since we have assumed $M \times_S M$ is integral, the scheme-theoretic image of $M \times_S M$ by $m$ is an integral subspace of $G$ containing $e(\eta) \cdot e(\eta) = e(\eta) \in G$. We thus have a restriction $m : M \times_S M \to M$. The group axioms are automatically satisfied.
			
			Same argument applies to the second item and proves that $W$ respects the Lie bracket operation. The scalar multiplication $\AA^1_S \times_S V \to V$ restricts to $\AA^1_S \times_S W \to W$ because $\AA^1_S \times_S W$ is integral.
		\end{proof}
		
		\begin{remark} \label{rmk:smooth main component is subgroup scheme}
			Let $\mathfrak g \to S$ be a Lie algebra scheme over an integral scheme. If its main component $\mathfrak h \to S$ is smooth (as in the assumption of \Cref{thm:main component 1}), then it is automatically a closed Lie subalgebra scheme from \Cref{lem:main component criterion to be subgroup}.
		\end{remark}
		
		The following is a version of \Cref{thm:main component 1} when $S = \Delta$ is a Dedekind scheme.
		
		\begin{proposition} \label{prop:main component over Dedekind}
			Let $G \to \Delta$ be a separated group space over a Dedekind scheme. Then its main component $H = M \to \Delta$ is always a smooth and closed subgroup space of $G$. Moreover, $\Lie H$ is the main component of $\Lie G$.
		\end{proposition}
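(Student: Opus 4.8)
The plan is to localize at the closed points of $\Delta$ and thereby reduce to $\Delta = \Spec R$ with $R$ a discrete valuation ring, of fraction field $K$: smoothness of $M \to \Delta$, the property of being a closed subgroup space, and the identification of $\Lie H$ with the main component of $\Lie G$ are all local on $\Delta$, and over the generic point $G$ is a locally algebraic group, where there is nothing to prove. Being the Zariski closure of a point lying over the generic point of $\Delta$, the main component $M$ is integral and dominates $\Delta$, so \Cref{prop:flat over Dedekind} already gives that $M \to \Delta$ is flat.

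First I would show that $M$ is a closed subgroup space of $G$ by checking the hypothesis of \Cref{lem:main component criterion to be subgroup}(1), i.e. that $M \times_\Delta M$ is integral. Its generic fibre is $M_K = G^\circ_K$, the neutral component of the locally algebraic group $G_K$, which by Cartier's theorem is smooth and connected, hence geometrically integral; therefore $M_K \times_K M_K$ is integral. At the same time $M \times_\Delta M \to \Delta$ is flat as a composition of flat morphisms, so over the DVR $R$ the coordinate ring of every affine open is $R$-torsion-free, hence embeds into its base change along $R \to K$, which is a domain --- so $M \times_\Delta M$ is reduced --- while every irreducible component of $M \times_\Delta M$ dominates $\Delta$, so more than one component would force $M_K \times_K M_K$ to be reducible. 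Thus $M \times_\Delta M$ is integral, and \Cref{lem:main component criterion to be subgroup}(1) makes $M$ a closed subgroup space of $G$. Being then a flat group space over the reduced scheme $\Delta$, it is smooth by \Cref{prop:smoothness criterion of group space 1}; in particular $M = H$.

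It remains to identify $\Lie H$ with the main component of $\Lie G$. Since $H \to \Delta$ is smooth, its conormal sheaf $C_e$ is locally free by \Cref{prop:smoothness criterion of group space 2}, so $\Lie H = \Spec_\Delta \Sym^\ast C_e$ is a vector bundle over $\Delta$, in particular an integral closed subscheme of $\Lie G$ dominating $\Delta$. As formation of $\Lie$ commutes with the base change $\Spec K \to \Delta$ ($\underline\Lie G$ being an fppf dual of $C_e$), one has $(\Lie H)_K = \Lie(H_K) = \Lie(G^\circ_K) = \Lie(G_K) = (\Lie G)_K$, the middle equality because the Lie algebra depends only on the neutral component and $(\Lie G)_K$ is already connected. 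Hence the generic point of $\Lie H$ is exactly the generic point $\xi$ of $(\Lie G)_K$, and since the main component of $\Lie G$ is by definition the reduced Zariski closure of $\xi$ in $\Lie G$, while $\Lie H$ is a reduced irreducible closed subscheme with that generic point, the two coincide.

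The one step carrying genuine content is the integrality of $M \times_\Delta M$: over a higher-dimensional base it can fail --- this is precisely why \Cref{thm:main component 1} needs a smoothness hypothesis on $\mathfrak h$ and only produces an open subspace $H \subset M$ --- and the argument above is exactly where the Dedekind hypothesis is spent, namely flatness coming for free from dominance together with the rigidity of a flat family over a DVR with integral generic fibre. A minor point to handle with care is that $\Lie$ commutes with the (non-flat) base change $\Spec K \to \Delta$; this follows from the fppf-dual description of $\underline\Lie G$. Everything else is bookkeeping with the results already recorded in this section.
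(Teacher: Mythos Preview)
Your proof is correct and follows essentially the same route as the paper: establish flatness of $M \to \Delta$ via \Cref{prop:flat over Dedekind}, prove $M \times_\Delta M$ is integral, invoke \Cref{lem:main component criterion to be subgroup}(1), and conclude smoothness from flatness via \Cref{prop:smoothness criterion of group space 1}. The only cosmetic difference is that the paper works over the whole Dedekind scheme and argues integrality of $M \times_\Delta M$ via Serre's criterion ($(R_0)$ from regularity of the generic fibre, $(S_1)$ from flatness over $\Delta$ giving no embedded points), whereas you localize to a DVR and argue directly that torsion-freeness embeds each affine coordinate ring into its integral generic fibre; these are equivalent arguments. Your treatment of $\Lie H$ is likewise the same as the paper's, just spelled out in more detail.
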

		\begin{proof}
			The morphism $H \to \Delta$ is flat because $H$ is integral (\Cref{prop:flat over Dedekind}). Hence
			\[ H^{\times 2} \coloneq H \times_{\Delta} H \to \Delta \]
			is flat as well. Fix an open subset $\Delta_0 \subset \Delta$ over which $G^{\circ}$ is smooth (\Cref{cor:generic smoothness}). Then $H^{\times 2}_0 = (G_0^{\circ})^{\times 2}$ is a smooth connected group space over $\Delta_0$. Together with the flatness of $H^{\times 2} \to \Delta$, this shows (1) $H^{\times 2}$ is irreducible because $H^{\times 2}_0$ is, (2) it is $\mathrm{(R_0)}$ because $H^{\times 2}_0$ is regular, and (3) it is $\mathrm{(S_1)}$ because it has no embedded points by \Cref{prop:flat over Dedekind}. This proves $H^{\times 2}$ is integral, and therefore $H$ is a closed subgroup space by \Cref{lem:main component criterion to be subgroup}. It is smooth because it is flat (\Cref{prop:smoothness criterion of group space 1}).
			
			To show $\Lie H \subset \Lie G$ is the main component, simply notice that $\Lie H \to \Delta$ is an $\AA^d$-bundle so $\Lie H$ is integral. Since $\Lie H = \Lie G$ generically, it is the main component.
		\end{proof}
		
		\begin{remark} \label{rmk:main component over Dedekind for Lie algebra}
			Same argument shows that the main component of a Lie algebra scheme over $\Delta$ is always a smooth subalgebra scheme.
		\end{remark}
		
		\begin{lemma} \label{lem:main component is equidimensional along section}
			Let $G \to S$ be a separated group space over an integral scheme $S$ and $M$ its main component. Assume that the main component $\mathfrak h$ of $\Lie G$ is smooth over $S$. Then for every section $u : S \to M$, the morphism $M \to S$ is equidimensional along $u(S) \subset M$.
		\end{lemma}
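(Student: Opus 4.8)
The plan is to first reduce to the identity section, then to reduce the statement to a flatness assertion along $e(S)$, and finally to extract that flatness from the hypothesis that $\mathfrak h$ is smooth.

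\emph{Reduction to the identity section.} Given a section $u\colon S\to M$, let $i\colon G\to G$ be the inversion and consider left translation on $G$ by the section $i\circ u\colon S\to G$; this is an $S$-automorphism of $G$. Since $u(\eta)$ lies in the group $G^{\circ}_{K}=M_{K}$, this automorphism fixes the generic point $\xi$ of $M$ and hence restricts to an $S$-automorphism of the reduced closure $M=\overline{\{\xi\}}$ (as in the proof of \Cref{lem:main component criterion to be subgroup}); moreover it carries $u(S)$ onto $e(S)$. As equidimensionality along a closed subset is preserved by $S$-automorphisms, it suffices to prove that $M\to S$ is equidimensional along $e(S)$.

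\emph{From equidimensionality to flatness along $e(S)$.} Since $M$ is integral and dominates $S$ with generic fibre of dimension $d:=\dim M-\dim S$, semicontinuity of fibre dimension gives $\dim_{e(s)}M_{s}\ge d$ for all $s$, with equality over a dense open, so it remains to prove the reverse inequality everywhere. I would deduce it from the claim that $M\to S$ is flat along $e(S)$: for a point $s$, choose a non-degenerate trait $\Delta\to S$ through $s$ (\S\ref{sec:trait}); by \Cref{prop:main component over Dedekind} the main component $\widetilde M$ of $G_{\Delta}$ is a smooth closed subgroup space of relative dimension $d$, and it equals the main component of $M\times_{S}\Delta$, whose remaining components are vertical over the closed point of $\Delta$ and supported in the non-flat locus of $M\to S$. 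Since $M\to S$ is flat at $e(s)$, no such component passes through the identity section over $s$, so near $e(s)$ the fibre $M_{s}$ agrees (after a residue field extension) with $\widetilde M$ over the closed point of $\Delta$, whence $\dim_{e(s)}M_{s}=d$.

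\emph{Flatness along $e(S)$.} In characteristic $0$ the logarithm identifies the formal completion of $G$ along $e(S)$ with the completion of $\Lie G=\Spec_{S}(\Sym^{*}C_{e})$ along its zero section (using $\Omega_{f}=f^{*}C_{e}$, \Cref{prop:cotangent sheaf of group scheme}). By \Cref{rmk:smooth main component is subgroup scheme} and the description of a smooth main component of a linear scheme, smoothness of $\mathfrak h$ means exactly that $C_{e}$ modulo its $\mathcal{O}_{S}$-torsion is locally free of rank $d$; hence this completion, after killing $\mathcal{O}_{S}$-torsion, becomes the completed polynomial algebra on that locally free quotient, which is $\mathcal{O}_{S}$-flat and whose formal spectrum is the completion of $\mathfrak h$ along its zero section. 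Since $M$ is the closure of the generic fibre $G^{\circ}_{K}$, its ideal in this completed algebra contains every $\mathcal{O}_{S}$-torsion element (such sections vanish generically), so $\widehat{\mathcal{O}_{M,e(S)}}$ is a quotient of that flat algebra; one then checks that $\widehat{\mathcal{O}_{M,e(S)}}$ is itself $\mathcal{O}_{S}$-flat, i.e.\ that $M\to S$ is flat along $e(S)$, which with the previous paragraph finishes the proof.

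\textbf{Main obstacle.} The delicate point is this last step: verifying that passing to the quotient of $\widehat{\Sym^{*}C_{e}}$ which kills $\mathcal{O}_{S}$-torsion is enough to make $\widehat{\mathcal{O}_{M,e(S)}}$ flat, in the presence of \emph{non-reduced} fibres of $M$ along $e(S)$ (the phenomenon of the remark after \Cref{prop:main component 2}, where $e^{*}\Omega_{M/S}$ acquires torsion). This is exactly where one must use that $\mathfrak h$, and not merely $\Lie G$, is smooth — without it the main component can fail to be equidimensional along a section. A robust substitute for the completion argument is to work with the normal cone $C_{e(S)/M}$, which embeds into $\Lie G$ via $\Sym^{*}C_{e}\twoheadrightarrow\Sym^{*}(e^{*}\Omega_{M/S})\twoheadrightarrow\mathrm{gr}_{I_{M}}\mathcal{O}_{M}$ (so in fact into $\Spec_{S}(\Sym^{*}(e^{*}\Omega_{M/S}))$), and whose unique component dominating $S$ is $\mathfrak h$ because $e(\eta)$ is a smooth point of $M_{\eta}=G^{\circ}_{K}$; the technical heart is then to rule out extra components of this cone passing through its vertex over the torsion locus of $C_{e}$.
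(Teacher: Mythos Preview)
Your reduction to the identity section matches the paper's and is fine. The gap is in what follows: you reduce equidimensionality along $e(S)$ to the claim that $M\to S$ is \emph{flat} along $e(S)$, and then try to extract flatness from the factorization $\widehat{\mathcal O_{G,e(S)}}\twoheadrightarrow \widehat{\Sym^*E}\twoheadrightarrow \widehat{\mathcal O_{M,e(S)}}$ with $E=C_e/\mathrm{tors}$ locally free. But being a quotient of an $\mathcal O_S$-flat algebra does not make $\widehat{\mathcal O_{M,e(S)}}$ flat, and you acknowledge this yourself in the ``Main obstacle'' paragraph without resolving it. In fact it is not clear that $M\to S$ is flat along $e(S)$ at all: the paper only claims equidimensionality, and explicitly remarks (after \Cref{prop:main component 2}) that the equidimensional open $H\subset M$ can have non-reduced fibers, which is at least consistent with non-flatness. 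So the flatness route may be aiming at a statement that is false, and in any case you have not proved it. Your alternative via the normal cone $C_{e(S)/M}$ correctly identifies the dominant component as $\mathfrak h$, but you stop at ``the technical heart is then to rule out extra components'', which is exactly the content of the lemma.

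The paper avoids flatness entirely. It passes to the \emph{deformation to the normal cone} $\mathfrak M\subset\mathfrak G\to S\times\PP^1$ of $e(S)\subset M\subset G$, so that over $\AA^1$ one has the constant family $M$ and over $t_0=\infty$ one has the normal cone inside $\Lie G$. The key technical step---the one you are missing---is a \emph{trait argument}: for each point $x\in\mathfrak M$ choose a non-degenerate trait $\Delta\to S$ through its image; by \Cref{prop:main component over Dedekind} the main component $H^\Delta\subset G_\Delta$ is smooth, its own deformation to the Lie algebra $\mathfrak H^\Delta$ is smooth with central fiber $\mathfrak h_\Delta$, and one checks $x\in\mathfrak H^\Delta$. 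This gives the set-theoretic equality $\mathfrak M_{t_0}=\mathfrak h$. Finally, rather than deducing anything about $M$ from the normal cone directly, the paper uses upper semicontinuity of fibre dimension on the \emph{total space} $\mathfrak M\to S\times\PP^1$: since $\dim_m=d$ along $e(S\times\{t_0\})$ (because $\mathfrak h\to S$ has constant fibre dimension $d$), the open locus $\{\dim_m=d\}$ contains $e(S\times\{t_0\})$ and hence its generizations $e(S\times\{\eta\})$, which gives $\dim_{e(s)}M_s=d$. Both ingredients---the trait computation pinning down $\mathfrak M_{t_0}$ and the semicontinuity transfer across the $\PP^1$-family---are absent from your proposal.
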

		\begin{proof}
			Translating everything in $G$ by $u^{-1}$, we may assume $u = e$ is the identity section. Consider the deformation of $G \to S$ to its Lie algebra (\S \ref{sec:Lie algebra scheme})
			\[ g : \mathfrak G \to S \times \PP^1 .\]
			To minimize potential confusion between the special fiber of $\mathfrak G$ and restriction of $\mathfrak G$ to the open subset $S_0$, we write $t_0 = \infty \in \PP^1$ for the special point: the special fiber is set-theoretically $\mathfrak G_{t_0} = \Lie G$. Let $S_0 \subset S$ be a dense open subscheme such that $M_0 \coloneq G^{\circ}_0 \to S_0$ is a smooth group space. Its deformation to the Lie algebra $\mathfrak M_0 = \mathfrak G^{\circ}_0 \to S_0 \times \PP^1$ satisfies the following:
			\begin{itemize}
				\item $\mathfrak M_0 \to S_0 \times \PP^1$ is smooth (\Cref{lem:deformation to Lie algebra smooth}) and is a restriction of $g$ (\Cref{lem:deformation to Lie algebra respects base change}).
				\item $(\mathfrak M_0)_{t_0} = \mathfrak h_0$ over $S_0$ scheme-theoretically (\Cref{lem:deformation to Lie algebra smooth} and \ref{rmk:smooth main component is subgroup scheme}).
			\end{itemize}
			Consider the closure $\mathfrak M = \overline{\mathfrak M_0} \subset \mathfrak G$ with a reduced induced structure:
			\[\begin{tikzcd}
				\mathfrak M \arrow[r, phantom, "\subset"] \arrow[rd, "m"'] & \mathfrak G \arrow[d, "g"] \\
				& S \times \PP^1
			\end{tikzcd}.\]
			Over the open subset $S \times (\PP^1 \setminus \{ t_0 \})$, this diagram is a trivial family of the main component $M \subset G$ over $S$. Over the open subset $S_0 \times \PP^1$, we had $\mathfrak M_0 = \mathfrak G^{\circ}_0$ by construction. Therefore, over $S \times \{ t_0 \}$, we have a sequence of closed immersions
			\[ \mathfrak h \subset \mathfrak M_{t_0} \subset \mathfrak G_{t_0} \subset \Lie G \qquad\mbox{over }\ S ,\]
			where the last inclusion is a set-theoretic equality.
			
			We claim that the first closed immersion $\mathfrak h \subset \mathfrak M_{t_0}$ is a set-theoretical equality (i.e., $\mathfrak h$ is the reduction of $\mathfrak M_{t_0}$). Let $\Delta \to S$ be a non-degenerate trait with respect to $S_0$ and $G_{\Delta} \to \Delta$ the base change. By \Cref{prop:main component over Dedekind}, the main component of $G_{\Delta}$ is a smooth subgroup space $H^{\Delta} \to \Delta$ with $\Lie H^{\Delta} = \mathfrak h_{\Delta}$. Consider its deformation to the Lie algebra $h^{\Delta} : \mathfrak H^{\Delta} \to \Delta \times \PP^1$, which is smooth by \Cref{lem:deformation to Lie algebra smooth}. From \Cref{lem:deformation to Lie algebra respects base change} we can form a commutative diagram
			\[\begin{tikzcd}[column sep=tiny]
				\mathfrak H^{\Delta} \arrow[r, phantom, "\subset"] \arrow[rd, "h^{\Delta}"'] & \mathfrak M_{\Delta} \arrow[r, phantom, "\subset"] \arrow[d, "m_{\Delta}"] & \mathfrak G_{\Delta} \arrow[ld, "g_{\Delta}"] \\
				& \Delta \times \PP^1
			\end{tikzcd}.\]
			Since $\mathfrak M$ was the closure of $\mathfrak M_0$, for each point $x \in \mathfrak M$ there exists a non-degenerate trait $a : \Delta \to \mathfrak M$ through $x$ with respect to $\mathfrak M_0$ \cite[Lemma 5.2]{hol19}. Composing with $\mathfrak M \to S \times \PP^1 \to S$, we obtain a non-degenerate trait $\Delta \to S$ with respect to $S_0$:
			\[\begin{tikzcd}
				\Delta \arrow[r, "a"] \arrow[rd, "\id"'] & \mathfrak M_{\Delta} \arrow[r] \arrow[d] & \mathfrak M \arrow[d] \\
				& \Delta \arrow[r] & S
			\end{tikzcd}.\]
			Recalling the equality $\mathfrak M_0 = \mathfrak G^{\circ}_0$, we can see the section $a$ sends $\Delta_0$ into $(\mathfrak H^{\Delta})_{|\Delta_0} = \mathfrak M_{\Delta_0} = \mathfrak G^{\circ}_{\Delta_0}$. In particular, the image of $a$ is in $\mathfrak H^{\Delta}$. We have proved a set-theoretic equality
			\[ \mathfrak M = \bigcup_{\Delta \to S} \mathfrak H^{\Delta} ,\]
			where $\Delta \to S$ runs through every non-degenerate trait of $S$. In particular, over a point $(s, t_0) \in S \times \PP^1$ we have a set-theoretic equality
			\[ \mathfrak M_{(s, t_0)} = \bigcup_{\Delta \to S} \mathfrak H^{\Delta}_{(s, t_0)} ,\qquad \mathfrak H^{\Delta}_{(s, t_0)} = (h^{\Delta})^{-1} (s, t_0) \]
			where $\Delta \to S$ runs through every non-degenerate trait through $s$. Since we already know an isomorphism $\mathfrak H^{\Delta}_{t_0} = \mathfrak h_{\Delta}$, we have $\mathfrak H^{\Delta}_{(s, t_0)} = \mathfrak h_s$ for every $\Delta \to S$ through $s$. This shows $\mathfrak M_{(s, t_0)} = \mathfrak h_s$ and hence $\mathfrak M_{t_0} = \mathfrak h$ set-theoretically.
			
			We now know the set-theoretic equality $\mathfrak M_{t_0} = \mathfrak h$. Say the generic fiber of $G \to S$ has dimension $d$. Our goal was to prove $M \to S$ has constant fiber dimension $d$ along the identity section $e(S)$. To do so, we study the fiber dimensions of the entire morphism $m : \mathfrak M \to S \times \PP^1$. Its fiber dimension function
			\[ \dim_m : \mathfrak M \to \ZZ, \qquad x \mapsto \dim_x \mathfrak M_{m(x)} \]
			is an upper semicontinuous function with minimum value $d$. From $\mathfrak M_{t_0} = \mathfrak h$ we obtain $\dim_m(x) = d$ for every $x \in e(S \times \{ t_0 \})$. By semicontinuity, there exists a Zariski open subset $V \subset \mathfrak M$ containing $e(S \times \{ t_0 \})$ such that $\dim_m(x) = d$ for every $x \in V$. The open set $V$ contains $e(S \times \{ \eta \})$ where $\eta \in \PP^1$ is the generic point. This shows $M \to S$ has the fiber dimension $d$ along $e(S)$.
		\end{proof}
		
		\begin{proof}[Proof of \Cref{thm:main component 1}]
			Since $H$ is reduced and $H \to S$ is universally open (\Cref{prop:universally equidimensional morphism 1}), the universal property proves the uniqueness of such $H$. To prove its existence, denote by $g : G \to S$ the given group space and $d$ its generic relative dimension. Set
			\[ H = \{ x \in M : \dim_x M_{g(x)} = d \} ,\]
			which is an open subspace of $M$ by Chevalley's theorem on upper semicontinuity of fiber dimension. Since $H$ is irreducible and $S$ is geometrically unibranch, $H \to S$ is universally equidimensional by \Cref{prop:universally equidimensional morphism 2}.
			
			It remains to prove that such $H$ satisfies the universal property. Let $Z \to S$ be a universally open morphism from a reduced algebraic space $Z$ and $u : Z \to M$ an arbitrary morphism. Since $Z$ is reduced, it is enough to show a set-theoretic inclusion $u(Z) \subset H$ or equivalently, show $u(Z_s) \subset H_s$ for every $s \in S$. By \Cref{prop:quasi-finite local section} and \ref{rmk:local section}, there exists a section $S' \to Z$ where $S' \to S$ is a quasi-finite and universally equidimensional morphism (\Cref{prop:universally equidimensional morphism 2}) from an integral scheme $S'$ containing $s \in S$ in its image. The composition $S' \to Z \to G$ defines an $S'$-section of $G$:
			\[\begin{tikzcd}
				S' \arrow[r, "h"] \arrow[rd, "\id"'] & G' \arrow[r] \arrow[d] & G \arrow[d] \\
				& S' \arrow[r] & S
			\end{tikzcd}.\]
			Since $G' \to G$ is universally equidimensional whence open, the main component of $G'$ dominates the main component of $G$; in other words, the main component of $G'$ is the reduction $M'_{\red}$ of the base change $M' = M \times_S S'$. By \Cref{lem:main component is equidimensional along section}, this proves $M' \to S'$ has a constant fiber dimension $d$ along $h(S')$, or equivalently $M \to S$ has a constant fiber dimension $d$ along $h(S')$. By varying the section $S' \to Z$ passing through closed points of $Z_s$, this shows the image of $u : Z \to M$ is equidimensional over $S$. That is, $u(Z_s) \subset H_s$.
		\end{proof}
		
		\begin{proof} [Proof of \Cref{prop:main component 2}]
			The identity morphism $e : S \to G$ factors through $H$ by the universal property in \Cref{thm:main component 1}. The inverse morphism $i : G \to G$ similarly restricts to $i : H \to H$. The multiplication morphism $m : G \times_S G \to G$, however, only restricts to
			\[ (H \times_S H)_{\red} \to H ,\]
			since $H \times_S H \to S$ is universally open but the reducedness of $H \times_S H$ is uncertain.
			
			\bigskip
			
			(1) The implication (a) $\Longrightarrow$ (b) is clear from Cartier's theorem. To show (b) $\Longrightarrow$ (a), use \Cref{thm:universal open implies flat} to deduce that $H \to S$ is flat. Hence $H \times_S H \to S$ is flat and has reduced fibers by (Tag~035Z). By \cite[Theorem 23.9]{mat:comm-ring}, this implies the total space $H \times_S H$ is reduced and hence the multiplication morphism on $H$ above is well-defined. This makes $H \to S$ a subgroup space of $G$. It is smooth because it is universally equidimensional (\Cref{prop:smoothness criterion of group space 1}).
			
			\bigskip
			
			(2) Let $\Delta \to S$ be a non-degenerate trait. Since $H \to S$ is universally equidimensional, its base change $H_{\Delta} \to \Delta$ is again universally equidimensional. Hence the reduction $H_{\Delta, \red} = (H_{\Delta})_{\red} \to \Delta$ is flat by \Cref{prop:flat over Dedekind}. The above discussion shows that $H_{\Delta, \red}$ has the identity, inverse morphism, and the multiplication up to the non-reducedness ambiguity
			\[ m_{\Delta} : (H_{\Delta} \times_{\Delta} H_{\Delta})_{\red} \to H_{\Delta, \red} .\]
			But $H_{\Delta, \red} \to \Delta$ was flat, so $H_{\Delta, \red} \times_{\Delta} H_{\Delta, \red} \to \Delta$ is again flat and hence $H_{\Delta, \red} \times_{\Delta} H_{\Delta, \red}$ is reduced by \Cref{prop:flat over Dedekind}. This shows $m_{\Delta}$ is indeed the multiplication morphism on $H_{\Delta, \red}$.
		\end{proof}

	\subsection{Automorphism space, Picard space, and examples}
		\subsubsection{The automorphism group space} \label{sec:automorphism space}
			Given an algebraic space $f : X \to S$, its automorphism sheaf is an fppf sheaf of groups
			\[ \SheafAut_f : (\Sch/S)^{op} \to \Group, \qquad \big[ T \to S \big] \mapsto \{ g : X_T \to X_T : \mbox{automorphism over } T \} .\]
			
			\begin{theorem} [Automorphism space] \label{thm:automorphism space}
				Let $f : X \to S$ be a proper flat algebraic space over a locally Noetherian scheme. Then
				\begin{enumerate}
					\item $\SheafAut_f$ is representable by a separated and locally finite type group space $\Aut_f \to S$.
					\item There exists a canonical isomorphism of fppf $\mathcal O_S$-modules
					\[ \Lie \Aut_f = f_* T^{\fppf}_f .\]
				\end{enumerate}
			\end{theorem}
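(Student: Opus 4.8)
The plan is to realise $\Aut_f$ as an open subspace of a Hilbert algebraic space. To a morphism $\phi\colon X_T\to X_T$ over $T$ one associates its graph $\Gamma_\phi=(\id,\phi)(X_T)\subset X_T\times_T X_T$, which is a closed subspace (since $X\to S$ is separated, being proper) and is proper and flat over $T$ because $\Gamma_\phi\cong X_T$. This embeds $\underline{\mathrm{Hom}}_S(X,X)$, and a fortiori $\SheafAut_f$, into the Hilbert space $\underline{\mathrm{Hilb}}_{X\times_S X/S}$, which by Artin's representability theorem is a separated algebraic space locally of finite presentation over the locally Noetherian scheme $S$. First I would check that the subfunctor of graphs is open: a closed, $T$-flat, proper subspace $Z\subset X_T\times_T X_T$ is a graph exactly when $\pr_1|_Z\colon Z\to X_T$ is an isomorphism, and the set of $t\in T$ over which $Z_t\to (X_T)_t$ is an isomorphism is open, since a morphism between $T$-flat proper spaces that restricts to an isomorphism over a point is an isomorphism over a neighbourhood (fibrewise flatness criterion, Tag~05X1, together with Nakayama). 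Running the same argument with $\pr_2|_Z$ in place of $\pr_1|_Z$ cuts $\SheafAut_f$ out of $\underline{\mathrm{Hom}}_S(X,X)$ as a further open subspace. Hence $\Aut_f\to S$ is a separated algebraic space, locally of finite presentation and therefore locally of finite type over $S$; composition of automorphisms makes it a group space, the identity section being $e=\id_X$, associativity being formal, and the inverse existing because $\Aut_f$ parametrises \emph{isomorphisms}. This gives (1).

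For (2) I would compute the Lie algebra sheaf via dual numbers. By the construction of $\underline{\Lie}$ as the fppf dual of the conormal sheaf of $e$ (see \Cref{prop:cotangent sheaf of group scheme}), for $h\colon T\to S$ the value $\underline{\Lie}\,\Aut_f(T)$ is the kernel of $\Aut_f(T_\epsilon)\to\Aut_f(T)$, where $T_\epsilon=T\times_{\Spec\ZZ}\Spec\ZZ[\epsilon]/(\epsilon^2)$; unwinding, such a kernel element is an automorphism $\phi$ of $X_{T_\epsilon}=X_T\times_{\Spec\ZZ}\Spec\ZZ[\epsilon]/(\epsilon^2)$ over $T_\epsilon$ with $\phi\bmod\epsilon=\id_{X_T}$. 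The next step is the standard deformation-theoretic identification: on structure sheaves such a $\phi$ is $\phi^{\#}=\id+\epsilon\,D$ with $D$ an $\mathcal O_T$-linear derivation of $\mathcal O_{X_T}$, and conversely every such $D$ defines an automorphism with inverse $\id-\epsilon\,D$; the correspondence $\phi\leftrightarrow D$ is additive and $\mathcal O_T$-linear. Since a relative derivation of an algebraic space is, by the very definition of the fppf tangent sheaf, an fppf-local section of $T^{\fppf}_f$, this yields a functorial isomorphism
\[ \underline{\Lie}\,\Aut_f(T)\;\cong\;\Gamma\bigl(X_T,\ T^{\fppf}_f\bigr)\;=\;\bigl(f_*T^{\fppf}_f\bigr)(T), \]
the last equality being the definition of the pushforward of an fppf sheaf. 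This is the asserted isomorphism of fppf $\mathcal O_S$-modules (and one checks that it intertwines the Lie bracket with the commutator of derivations, although that is not needed for the statement).

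The main obstacle in (1) is the representability and separatedness of the Hilbert functor in the \emph{algebraic space} setting rather than for schemes — precisely where Artin's machinery enters — together with the openness of the isomorphism locus for families of proper flat algebraic spaces; both are classical for schemes but require this input in the present generality. In (2) the delicate point is that for a general proper flat algebraic space $f$ the naïve dual $\SheafHom(\Omega_f,\mathcal O_X)$ need neither represent the deformation functor nor commute with base change, which is exactly why one must carry the fppf tangent sheaf $T^{\fppf}_f$ through the whole argument and verify that the bijection above is an isomorphism of fppf sheaves rather than a merely pointwise statement — concretely, that the derivation $D$ extracted from $\phi$ is built fppf-locally on $X_T$ and glues to a global section of $T^{\fppf}_f$ over $X_T$, so that it lies in $\bigl(f_*T^{\fppf}_f\bigr)(T)$ by construction.
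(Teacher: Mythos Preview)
Your proposal is correct and follows essentially the same approach as the paper: embed $\SheafHom_S(X,X)$ into $\underline{\operatorname{Hilb}}_{X\times_S X/S}$ via the graph, argue openness, and then cut out $\SheafAut_f$ by requiring the second projection to also be an isomorphism (the paper phrases this last step as intersecting with the image under the transposition involution, which is the same thing). For (2) the paper simply cites \cite[Proposition 8.5]{ari-fed16}, whose content is exactly the dual-number/derivation computation you sketch.
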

			
			Here $T^{\fppf}_f = \Hom^{\fppf}_X (\Omega_f, \mathcal O_X)$ is the fppf dual \eqref{eq:fppf dual} of the relative cotangent sheaf (that is represented by the tangent space $\TT_f$ \eqref{eq:tangent scheme}).
			
			\begin{proof}
				\cite[Lemma 5.1]{ols06} or (Tag~0D19) shows that the morphism of fppf sheaves assigning a morphism to its graph
				\[ \SheafHom_S (X,X) \hookrightarrow \underline{\operatorname{Hilb}}_{X \times_S X/S}, \qquad \big[ g : X_T \to X_T \big] \mapsto \big[ \Gamma_g \subset (X \times_S X)_T \big] \]
				is (representable by) an open immersion. Since $\SheafAut_f = \SheafHom_S (X,X) \cap t(\SheafHom_S (X,X))$ where $t$ is the transposition involution on $\operatorname{Hilb}_{X \times_S X/S}$, we get an open immersion $\SheafAut_f \hookrightarrow \underline{\operatorname{Hilb}}_{X \times_S X/S}$. The Hilbert sheaf is representable by a separated and locally finite type algebraic space (Tag~0DM7), so this proves the first item. The second item is \cite[Proposition 8.5]{ari-fed16}.
			\end{proof}
			
			We call such a group space $\Aut_f \to S$ the \emph{automorphism (group) space} of $f$. If $f : X \to S$ is a flat projective scheme then the Hilbert functor is representable by a scheme, so $\Aut_f \to S$ is a group scheme with quasi-projective connected components.

		\subsubsection{The Picard group space} \label{sec:Picard space}
			Given an algebraic space $f : X \to S$, its Picard sheaf is $\underline{\Pic}_f = R^1f_* \GG_m$, the higher direct image of an fppf abelian sheaf $\GG_m$. It is equivalently an fppf sheafification of the presheaf
			\[ (\Sch/S)^{op} \to \Group, \qquad \big[ T \to S \big] \mapsto \Pic(X_T) / \Pic(T) .\]
			
			\begin{theorem} [Picard space] \label{thm:Picard space}
				Let $X$ be an algebraic space and $f : X \to S$ a proper flat morphism over a locally Noetherian scheme with $f_* \mathcal O_X^{\fppf} = \mathcal O_S^{\fppf}$.\footnote{A more standard way of saying this is: \emph{$f_* \mathcal O_X = \mathcal O_S$ holds universally} or \emph{$f$ is cohomologycally flat in dimension $0$}.} Then
				\begin{enumerate}
					\item $\underline{\Pic}_f$ is representable by a locally separated, locally finite type, and commutative group space $\Pic_f \to S$.
					\item There exists a canonical isomorphism of fppf $\mathcal O_B$-modules
					\[ \Lie \Pic_f = \big( R^1 f_* \mathcal O_X \big)^{\fppf} .\]
				\end{enumerate}
			\end{theorem}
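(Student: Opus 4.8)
I would establish the two assertions separately, treating the representability in~(1) as the substantial point and the Lie algebra identification in~(2) as a formal deformation‑theoretic computation.

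\textbf{Part (1).} Since $\underline{\Pic}_f = R^1 f_* \GG_m$ is by construction an fppf sheaf of commutative groups — commutativity inherited from $\GG_m$ — the only content is its algebraicity. As $f$ is merely proper and flat (no projectivity), I would not try to descend it from a Hilbert space as in \Cref{thm:automorphism space}, but instead invoke Artin's representability theorem. The plan: first note that $\underline{\Pic}_f$ is locally of finite presentation over $S$, since proper flat families and line bundles on them spread out over filtered colimits of Noetherian rings, so a standard limit argument reduces one to $S$ affine of finite type over $\ZZ$; then verify Artin's axioms. The Rim--Schlessinger condition and the effectivity of formal objects are supplied by Grothendieck's existence theorem for proper algebraic spaces applied to line bundles; the deformation and obstruction groups of a line bundle on a fibre $X_s$ are the coherent cohomology groups $H^1(X_s, \mathcal O_{X_s})$ and $H^2(X_s, \mathcal O_{X_s})$, which are finite-dimensional with the requisite base-change behaviour; and openness of versality follows from coherence of the associated cohomology sheaves on $S$. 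The hypothesis $f_* \mathcal O_X^{\fppf} = \mathcal O_S^{\fppf}$ enters to ensure that $\underline{\Pic}_f$ is the fppf sheafification of $T \mapsto \Pic(X_T)/\Pic(T)$ and that $\Pic(T') \hookrightarrow \Pic(X_{T'})$ is injective, which is what aligns the deformation theory of the sheaf with that of line bundles. For local separatedness, one shows the diagonal $\Pic_f \to \Pic_f \times_S \Pic_f$ is representable by immersions: for invertible sheaves $L, L'$ on $X_T$, the locus in $T$ on which $L$ and $L'$ agree as $\Pic_f$-points is cut out by the locally closed condition that $f_*(L' \otimes L^{-1})$ be invertible together with the open condition that $f^* f_*(L' \otimes L^{-1}) \to L' \otimes L^{-1}$ be an isomorphism. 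In the write-up this step is a reference to Artin's theorem on the algebraicity of the Picard functor (see \cite[\S 8.3]{neron} or \cite{stacks-project}).

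\textbf{Part (2).} By \Cref{prop:cotangent sheaf of group scheme} the Lie algebra space of $\Pic_f$ is governed by the conormal sheaf of the identity section, and as fppf sheaves $\underline\Lie\,\Pic_f(T) = \ker\!\big(\Pic_f(T[\epsilon]) \to \Pic_f(T)\big)$ with $T[\epsilon] = T \times_{\Spec \ZZ} \Spec \ZZ[\epsilon]/(\epsilon^2)$ and the map the restriction along the closed immersion $T \hookrightarrow T[\epsilon]$. There is a canonical comparison map to $(R^1 f_* \mathcal O_X)^{\fppf}$, and to check it is an isomorphism of fppf sheaves it is enough to evaluate both sides on strictly henselian local $T \to S$. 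For such a $T$ one has $\Pic(T) = \Pic(T[\epsilon]) = 0$ and $\operatorname{Br}(T) = \operatorname{Br}(T[\epsilon]) = 0$, so cohomological flatness and the Leray spectral sequence give $\Pic_f(T) = \Pic(X_T)$ and $\Pic_f(T[\epsilon]) = \Pic(X_{T[\epsilon]})$; the truncated exponential sequence of abelian sheaves on the topological space underlying $X_T$,
\[ 0 \to \mathcal O_{X_T} \to \mathcal O_{X_{T[\epsilon]}}^{*} \to \mathcal O_{X_T}^{*} \to 1 , \]
whose first map is $a \mapsto 1 + \epsilon a$, together with the fact that units on $X_T$ pull back along $X_{T[\epsilon]} \to X_T$ (so the last map is surjective on global sections), identifies $\ker\!\big(\Pic(X_{T[\epsilon]}) \to \Pic(X_T)\big)$ with $H^1(X_T, \mathcal O_{X_T})$. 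On the other side, cohomology and base change for the perfect complex $Rf_* \mathcal O_X$ on $S$ identify $(R^1 f_* \mathcal O_X)^{\fppf}(T)$ with $H^1(X_T, \mathcal O_{X_T})$ as well, and the two identifications are visibly compatible with the $\mathcal O_S$-linear structures. This is the exact analogue of the computation $\Lie \Aut_f = f_* T^{\fppf}_f$ and can be found in \cite[Proposition 8.5]{ari-fed16} and \cite[\S 8.4]{neron}.

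\textbf{Main obstacle.} All the difficulty is in part (1): the algebraicity of the relative Picard functor is a deep theorem of Artin, and an honest proof needs the full strength of his axiomatic criterion (finiteness and base-change of coherent cohomology, Grothendieck existence for proper algebraic spaces, openness of versality). Part (2) is, by contrast, a formal deformation-theoretic computation, its only genuine input being the cohomological flatness hypothesis, which is used precisely to move between line bundles on $X_T$ and sections of the relative Picard sheaf.
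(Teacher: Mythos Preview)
Your approach is essentially the same as the paper's: both parts are well-known results that the paper simply cites (Artin \cite[Theorem 7.3]{artin69}, \cite[\S 8.3]{neron}, and Tag~0D24 for (1); \cite[Theorem 8.4.1]{neron} and \cite[Proposition 1.3]{liu-lor-ray04} for (2)), and your sketch unpacks exactly what lies behind those citations---Artin's algebraicity criterion for (1) and the truncated exponential sequence computation for (2). One minor caution: in your part (2), the identification $(R^1 f_* \mathcal O_X)^{\fppf}(T) = H^1(X_T, \mathcal O_{X_T})$ for strictly henselian $T$ is not automatic from the definition of the associated fppf sheaf \eqref{eq:associated fppf sheaf}, since base change for $R^1 f_*$ can fail; the cited references handle this by working directly with the conormal sheaf of the identity section rather than via pointwise comparison, but your overall strategy is correct.
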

			
			Here $\big( R^1 f_* \mathcal O_X \big)^{\fppf}$ is the associated fppf sheaf \eqref{eq:associated fppf sheaf} of the coherent sheaf $R^1 f_* \mathcal O_X$.
			
			\begin{proof}
				The first item is \cite[Theorem 7.3]{artin69}, \cite[\S 8.3]{neron}, or (Tag~0D24). The second item is \cite[Theorem 8.4.1]{neron} or \cite[Proposition 1.3]{liu-lor-ray04}.
			\end{proof}
			
			Such a commutative group space $\Pic_f$ is called the \emph{Picard (group) space} of $f$.

		\subsubsection{Examples and counterexamples of group spaces} \label{sec:examples of group spaces}
			\begin{example} [Basic examples] \label{ex:group space basic examples}
				Let $S = \AA^1_x$ be an affine line with coordinate $x$ over a field $k$ of characteristic $0$.
			
			\begin{itemize}
				\item Let $G$ be an affine line with double origin. Then the morphism collapsing the two origins $G \to S$ has a smooth and non-separated group scheme structure. The fibers over any nonzero $s \in S$ is trivial, and the fiber over $0$ is $\ZZ/2$.
				
				\item Let $G = \AA^1 \sqcup \{ 0 \}$. Then $G \to S$ has an obvious group structure, which is separated and non-smooth. Its Lie algebra scheme is smooth. The morphism $G \to S$ is not equidimensional in the sense of \S \ref{sec:equidimensional morphism}.
				
				\item Let $G = (xy = 0) \subset \AA^2_{(x,y)}$. Then the projection $G \to S$ admits a clear group structure, which is separated and non-smooth. Indeed, its fiber dimension jumps at $0 \in S$ so it is not equidimensional.
				
				\item Let $G = ((x+y)y = 0) \subset \AA^2_{(x,y)}$. The projection $G \to S$ is flat but can never admit any group scheme structure. This is because its central fiber $\Spec k[y]/y^2$ is non-reduced, so it cannot admit any algebraic group structure by Cartier's theorem.
				
				\item Fix a set of infinite closed points $Z = \{ i \in \AA^1(k) : i \in \ZZ \}$ and $G = \AA^1 \sqcup Z$. The projection $G \to S$ has a group scheme structure whose fibers over $Z$ are $\ZZ/2$ and trivial elsewhere. Then $G \to S$ cannot be smooth over every Zariski open subset $S_0 \subset S$ (generic smoothness in \Cref{cor:generic smoothness} fails for $G$ when it is not finite type).
				
				\item This time, set $G$ to be an affine line with infinitely many double points over $Z$. Again the projection $G \to S$ has a group scheme structure. It is non-separated over every Zariski open subset $S_0 \subset S$ (generic separatedness in \Cref{prop:generic separatedness and alignment} fails).
			\end{itemize}
		\end{example}
		
		\begin{example} [Equidimensional but non-smooth group scheme]
			Set $S = (xy = 0) \subset \AA^2_{(x,y)}$ and $G = S \sqcup \AA^1_x \sqcup \big( \AA^1_y \setminus (y = 0) \big)$. The morphism $f : G \to S$ admits a group scheme structure with $\ZZ/2$ fibers. It is equidimensional but not universally equidimensional, so it is not smooth by \Cref{prop:smoothness criterion of group space 1}. Similar example works for a nodal cubic curve $S$, in which case the base is integral.
		\end{example}
		
		\begin{example} [Group scheme with non-reduced total space] \label{ex:group scheme with non-reduced total space}
			Let $C = (x^3 = y^2) \subset \AA^2_{(x,y)}$ be a cuspidal cubic. Its tangent scheme $\TT_C = \Spec_C (\Sym^* \Omega_C) \to C$ is a separated commutative group scheme (it is a linear scheme). Writing $u = 3dx$ and $v = 2dy$, the total space $\TT_C$ is a closed subscheme of $\AA^2_{(x,y)} \times \AA^2_{(u,v)}$ cut out by an ideal
			\[ I = (x^3 - y^2, \ x^2u - yv) \ \subset \ k [x,y,u,v] ,\]
			which is not radical; while $xv - yu \notin I$, we have $(xv - yu)^3 \in I$.
		\end{example}
		
		\begin{example} [Weak and strict neutral components]
			Let $\Delta = \Spec R$ be a local Dedekind scheme with residue field $k$ and function field $K$.
			\begin{itemize}
				\item Let $f : S \to \Delta$ be a minimal elliptic surface with a central fiber $S_{t_0}$ of Kodaira type $\mathrm{I}_2$. Its relative Picard scheme $\Pic_f \to \Delta$ is smooth and non-separated \cite[Proposition 8.4.2]{neron}. The scheme $\Pic_f$ has $\NS(S_K) \cong \ZZ$ connected components. Its neutral component $\Pic^{\circ}_f \to \Delta$ is smooth (and non-separated) by \Cref{prop:neutral component} and parametrizes line bundles of multi-degree $(d, -d)$ for all $d \in \ZZ$ on the $\mathrm{I}_2$ central fiber $S_{t_0}$. Its strict neutral component $\Pic^{\circ\circ}_f \to \Delta$ is smooth by \Cref{thm:strict neutral component} and parametrizes line bundles of multi-degree $(0,0)$ on the central fiber $S_{t_0}$.
				
				\item In \cite[Example 10.1.5]{neron}, a smooth, separated and connected group scheme $G \to \Delta$ with generic fiber $G_K = \GG_{m, K}$ and special fiber $G_{t_0} = \ZZ \times \GG_{m, k}$ of infinitely many connected components is constructed (it is the N\'eron model of $\GG_{m, K}$ over $\Delta$). We have $G^{\circ} = G$ because $G$ is connected. However, its strict neutral component is $G^{\circ\circ} = \GG_{m, \Delta}$.
			\end{itemize}
		\end{example}
		
		\begin{example} [$\pi_0$-group]
			Let $f : X \to S$ be a flat projective morphism with $f_* \mathcal O_X^{\fppf} = \mathcal O_S^{\fppf}$, so that the Picard space $\Pic_f \to S$ is well-defined. Assume that $R^1f_* \mathcal O_X$ is locally free, or $\Lie \Pic_f \to S$ is smooth by \Cref{thm:Picard space}. The $\pi_0$-group $\NS_f = \pi_0(\Pic_f) \to S$ is well-defined, parametrizing the N\'eron--Severi groups $\NS(X_s)$ for $s \in S$. It is a locally quasi-finite group space. A well-known fact is that the Picard rank function
			\[ S \to \ZZ, \qquad s \mapsto \rk \NS(X_s) \]
			is wild and in general not a constructible function. This is explained by the last example in \Cref{ex:group space basic examples} (and that $\NS_f$ may have infinitely many disconnected components).
		\end{example}
		
		\begin{example} [Non-aligned group]
			In \cite{hol19}, non-aligned smooth group spaces $G \to S$ over a regular Noetherian base $S$ are constructed. Holmes' construction is the neutral Picard scheme $\Pic^{\circ}_f \to S$ for a flat projective family of non-aligned stable curves $f : \mathcal C \to S$. See the original reference for more details.
		\end{example}
		
		\begin{example} [Automorphism scheme and main component]
			Let $f : S \to \Delta$ be a minimal elliptic surface over a local Dedekind scheme $\Delta$ with a central fiber of Kodaira type $\textnormal{I}_2$:
			\[ S_{t_0} = C_0 \cup C_1 \quad \mbox{for smooth rational curves } C_0 \mbox{ and } C_1 .\]
			Assume $f$ has sections $T_0$ and $T_1$ passing through $C_0$ and $C_1$, respectively.
			
			The automorphism scheme $\Aut_f \to \Delta$ is a separated group scheme. Its generic fiber is an algebraic group $\Aut_{S_K/K}$ whose neutral component is isomorphic to $S_K$ (translation automorphisms of the elliptic curve $S_K$). Its central fiber $\Aut_{f, t_0}$ is a (non-commutative) linear algebraic group
			\[ \Aut_{f, t_0} = \big( \GG_m^{\times 2} \times \ZZ/2 \big) \rtimes \mathfrak S_2 ,\]
			where $\mathfrak S_2$ acts on $\GG_m^{\times 2}$ by swapping factors. In particular, $\Aut_f \to \Delta$ is not equidimensional and hence non-smooth. Nevertheless, thanks to \Cref{prop:main component over Dedekind}, its main component $H \subset \Aut_f$ is a smooth and closed subgroup scheme over $\Delta$. It is commutative because it is generically commutative and separated.
		\end{example}
		
		\begin{example} [Picard scheme and separated quotient]
			Let us keep working on the previous example $f : S \to \Delta$. The Picard scheme $\Pic_f$ and its neutral component $\Pic^{\circ}_f$ are smooth and non-separated group schemes over $\Delta$ (e.g., \cite[\S 8--9]{neron}). We have two short exact sequences of group schemes
			\[ 0 \to \Pic^{\circ}_f \to \Pic_f \to \ZZ \to 0 ,\qquad 0 \to \GG_{m, k} \to \Pic_{f, t_0} \to \ZZ^2 \to 0 ,\]
			each over $\Delta$ and over the closed point $t_0 \in \Delta$, respectively. The discrepancy between $\NS(S_K) = \ZZ$ and $\NS(S_{t_0}) = \ZZ^2$ explains the strict inclusion $\Pic^{\circ\circ}_f \subsetneq \Pic^{\circ}_f$, and that $\Pic^{\circ}_f \to \Delta$ is not of finite type. More specifically, consider a line bundle $\mathcal O_S(C_1)$ on $S$. It is generically trivial on $S_K$ but has a multi-degree $(2, -2) \in \ZZ^2$ on the central fiber $S_{t_0}$ (compute the intersection numbers). That is, it is a section of $\Pic^{\circ}_f$ that cannot be separated to the identity. Consider a line bundle $\mathcal O_S(T_0 - T_1)$. It is a section of $\Pic^{\circ}_f$ with multi-degree $(1, -1) \in \ZZ^2$ on the central fiber $S_{t_0}$.
			
			The group scheme $\Pic^{\circ}_f \to \Delta$ is aligned by \Cref{prop:smooth group space over Dedekind is aligned}, so it admits a separated quotient $\Pic^{\circ}_f \twoheadrightarrow \check P$ by \Cref{prop:separated quotient}. The closure of the identity section $E \subset \Pic^{\circ}_f$ is an \'etale group space over $\Delta$ whose central fiber passes through the component
			\[ (2,-2) \ \in \ \pi_0(\Pic^{\circ}_f)_{t_0} = \{ (d, -d) \in \ZZ^2 : d \in \ZZ \} .\]
			We will later see in \Cref{ex:minimal elliptic surface} that $\check P_{t_0}$ has two connected components, meaning $E$ does not pass through the component $(1, -1)$.
		\end{example}
		
		We conclude with examples where \Cref{thm:main component 1} fails without appropriate hypotheses.
		
		\begin{example} [Main component I]
			Let $C$ be a curve with single node at $p$. Its tangent scheme $\TT_C \to C$ is a commutative group scheme with fiber $(\TT_C)_p = \GG_a^{\times 2}$ over the node but $(\TT_C)_x = \GG_a$ elsewhere. The main component $M$ of $\TT_C$ cannot be a subgroup scheme, since there are two branches of tangent directions at the node $p$; \Cref{thm:main component 1} and \ref{prop:main component over Dedekind} do not apply because $C$ is not unibranch.
		\end{example}
		
		\begin{example} [Main component II]
			Let $C$ be a cuspidal cubic with a cusp at $p = (0,0)$, and consider its tangent scheme $\TT_C \to C$ as in \Cref{ex:group scheme with non-reduced total space}. Let $M \subset \TT_C$ be its main component. At first glance, the family $M \to C$ seems to have $\AA^1$ fibers over every point and hence is a subgroup scheme. This is not the case, as we show its fiber over $p$ is a double line.
			
			The primary decomposition of the ideal $I$ in \Cref{ex:group scheme with non-reduced total space} is \cite{macaulay2}
			\[ (x^3 - y^2, \ x^2u - yv, \ yu - xv, \ xu^2 - v^2) \ \cap \ (x^3, \ xy, \ y^2, \ x^2u - yv) .\]
			The main component $M \subset \AA^4_{(x,y,u,v)}$ is cut out by the first ideal (which is a radical ideal). Setting $x=y=0$, the fiber $M_p \subset \AA^2_{(u,v)}$ is defined by $(v^2)$, i.e., it is a double line. Its equidimensional subscheme $H \subset M$ in \Cref{thm:main component 1} is $H = M$. This shows that the assumption on reduced fibers in \Cref{prop:main component 2} is necessary.
		\end{example}

\section{N\'eron models} \label{sec:Neron models}
	Let $\Delta$ be a Dedekind (=$1$-dimensional regular Noetherian) scheme with function field $K$. Given a smooth variety $N_K \to \Spec K$, the N\'eron model theory in \cite{neron} seeks for its nicest smooth and separated model $N \to \Delta$ of finite type satisfying a certain universal property, the N\'eron mapping property. \cite[Definition 1.1]{hol19} introduced a generalization of this notion to higher dimensional bases $S$. The separatedness assumption is subsequently dropped in \cite{hol-mol-ore-poi23}.
	
	Let $S$ be an algebraic space and $S_0 \subset S$ a scheme-theoretically dense open subspace. Given an algebraic space $N_0 \to S_0$, its \emph{extension} over $S$ is an algebraic space $N \to S$ equipped with a fixed identification $N_{|S_0} \cong N_0$.
	
	\begin{definition} [{N\'eron model; \cite[Definition 6.1]{hol-mol-ore-poi23}}] \label{def:Neron model}
		Let $S$ be an algebraic space, $S_0 \subset S$ its scheme-theoretically dense open subspace, and $N_0 \to S_0$ a smooth algebraic space over it. An extension $N \to S$ of $N_0 \to S_0$ is called a \emph{N\'eron extension} or \emph{N\'eron model} if:
		\begin{enumerate}
			\item $N \to S$ is a smooth algebraic space.
			\item (N\'eron mapping property) For every smooth algebraic space $Z \to S$, the restriction map
			\[ \operatorname{Mor}_S (Z, N) \to \operatorname{Mor}_{S_0} (Z_0, N_0), \qquad u \mapsto u_0 = u_{|S_0} \]
			is bijective.
		\end{enumerate}
		For simplicity, we will often say ``$N$ is N\'eron under the extension of the base $S_0 \subset S$'' or even ``$N$ is N\'eron under $S_0 \subset S$''.
	\end{definition}
	
	\begin{remark} \label{rmk:more assumptions in NMP}
		\begin{enumerate}
			\item N\'eron model is unique up to unique isomorphism once it exists. It is a universal extension retaining only the smoothness of a given algebraic space, so it may fail to be separated, surjective, or of finite type (but it is always locally of finite type because it is smooth).
			
			\item It is enough to check the N\'eron mapping property axiom for quasi-affine smooth morphisms $Z \to S$ of finite type. To prove this, cover an arbitrary smooth algebraic space $Z \to S$ by finite type affine schemes $\Spec A \to S$ for which the N\'eron mapping property hold by assumption. Two \'etale local charts intersect on a finite type quasi-affine scheme $\Spec A \times_Z \Spec B \to S$, so two morphisms $\Spec A \to N$ and $\Spec B \to N$ agreeing over $S_0$ uniquely patch over $S$ by the N\'eron mapping property.
			
			\item If $N_0 \to S_0$ is a smooth algebraic space and $j : S_0 \to S$ is an open immersion, then the N\'eron model represents the lisse-\'etale sheaf $j_* N_0$ on $S$.
			
			\item \Cref{table:comparison of terminology} compares the terminology with others in literature.
			\begin{table}[h]
				\begin{tabular}{|c|c|}
					\hline
					\rule[-1ex]{0pt}{2.5ex} Original terminology & In terms of \Cref{def:Neron model} \\
					\hline
					\hline
					\rule[-1ex]{0pt}{2.5ex} N\'eron model in \cite{neron} & Finite type N\'eron model over $\Delta$ \\
					\hline
					\rule[-1ex]{0pt}{2.5ex} N\'eron lft-model in \cite[\S 10]{neron} & N\'eron model over $\Delta$ \\
					\hline
					\rule[-1ex]{0pt}{2.5ex} N\'eron model in \cite{hol19} & Separated N\'eron model of an abelian scheme \\
					\hline
				\end{tabular}
				
				\caption{Comparison of \Cref{def:Neron model} with others in the literature. Here $\Delta$ is a Dedekind scheme.}
				\label{table:comparison of terminology}
			\end{table}
		\end{enumerate}
	\end{remark}
	
	The base $S$ in \Cref{def:Neron model} is allowed to be an algebraic space for technical flexibility. However, one can reduce to the case when $S$ is a scheme using \Cref{prop:Neron is local in smooth topology}. Throughout this section, we assume $S_0 \subset S$ is a (scheme-theoretically) dense open subspace and $N_0 \to S_0$ is a smooth algebraic space. The subscript $-_0$ always indicates the fiber product $- \times_S S_0$.

	\subsection{Basic properties}
		Basic properties of N\'eron models will be discussed in this subsection. The following two statements are immediate from the universal property of fiber product.
		
		\begin{proposition} [Product]
			Let $N$ and $M$ be smooth algebraic spaces over $S$ that are N\'eron under $S_0 \subset S$. Then $N \times_S M$ is N\'eron under $S_0 \subset S$. \qed
		\end{proposition}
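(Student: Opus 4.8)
The plan is to deduce everything from the universal property of the fibre product, together with the stability of smoothness under composition and base change. First I would observe that $N \times_S M \to S$ is smooth: the projection $N \times_S M \to M$ is the base change of the smooth morphism $N \to S$ and is therefore smooth, and composing it with the smooth morphism $M \to S$ shows $N \times_S M \to S$ is smooth; moreover $N \times_S M$ is an algebraic space, since algebraic spaces are closed under fibre products. By transitivity of base change, $(N \times_S M)_{|S_0} = N_0 \times_{S_0} M_0$, so $N \times_S M$ is an extension over $S$ of the smooth $S_0$-space $N_0 \times_{S_0} M_0$, and it only remains to check the N\'eron mapping property.

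For this, I would fix an arbitrary smooth algebraic space $Z \to S$, set $Z_0 = Z \times_S S_0$, and use the universal property of the fibre product over $S$, respectively over $S_0$, to obtain natural bijections
\[ \operatorname{Mor}_S(Z, N \times_S M) \xrightarrow{\ \sim\ } \operatorname{Mor}_S(Z, N) \times \operatorname{Mor}_S(Z, M), \]
\[ \operatorname{Mor}_{S_0}(Z_0, N_0 \times_{S_0} M_0) \xrightarrow{\ \sim\ } \operatorname{Mor}_{S_0}(Z_0, N_0) \times \operatorname{Mor}_{S_0}(Z_0, M_0). \]
These identifications are visibly compatible with the restriction maps $u \mapsto u_{|S_0}$, so under them the restriction map for $N \times_S M$ becomes the product of the restriction maps for $N$ and for $M$. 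Each of the latter two is bijective because $N$ and $M$ are N\'eron under $S_0 \subset S$ by hypothesis, hence so is their product; this establishes the N\'eron mapping property and finishes the argument.

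I do not expect any genuine obstacle here, as the argument is entirely formal; the only two points that merit a sentence of care are the identification $(N \times_S M)_{|S_0} \cong N_0 \times_{S_0} M_0$ and the compatibility of the two displayed bijections with restriction to $S_0$, both of which are routine. By \Cref{rmk:more assumptions in NMP}(2) one could even restrict attention to finite-type quasi-affine smooth test spaces $Z$, but the argument above applies verbatim without this reduction.
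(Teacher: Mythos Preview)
Your proof is correct and is exactly the approach the paper has in mind: the paper marks this proposition with \qed and states just before it that the result is ``immediate from the universal property of fiber product.'' Your writeup simply spells out that one-line justification in detail, checking smoothness of the product and the compatibility of the fibre-product bijections with restriction to $S_0$.
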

		
		\begin{proposition} [Product over two bases] \label{prop:product of Neron models}
			Let $S$ and $S'$ be smooth algebraic spaces over $T$, and $S_0 \subset S$ and $S'_0 \subset S'$ their dense open subspaces. Let $N \to S$ be N\'eron under $S_0 \subset S$ and similarly $N' \to S'$ as well. Then $N \times_T N' \to S \times_T S'$ is N\'eron under $S_0 \times_T S'_0 \subset S \times_T S'$. \qed
		\end{proposition}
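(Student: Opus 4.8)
Write $N_0 = N|_{S_0}$ and $N'_0 = N'|_{S'_0}$ for the smooth algebraic spaces of which $N$ and $N'$ are N\'eron extensions. The plan is to verify the two clauses of \Cref{def:Neron model} for $N \times_T N' \to S \times_T S'$ directly, using that a fibre product of algebraic spaces corepresents the fibre product of the represented functors.

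For smoothness, factor the structure morphism as $N \times_T N' \to N \times_T S' \to S \times_T S'$; the first arrow is a base change of $N' \to S'$ and the second is a base change of $N \to S$, so the composite is smooth. Its restriction over $S_0 \times_T S'_0$ is canonically $N_0 \times_T N'_0$, so $N \times_T N'$ is a smooth extension of $N_0 \times_T N'_0 \to S_0 \times_T S'_0$.

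For the N\'eron mapping property, let $Z \to S \times_T S'$ be a smooth algebraic space and put $Z_0 = Z \times_{S \times_T S'} (S_0 \times_T S'_0)$. By the universal property of the fibre product, a morphism $Z \to N \times_T N'$ over $S \times_T S'$ is the same datum as a pair consisting of a morphism $Z \to N$ over $S$ and a morphism $Z \to N'$ over $S'$: the compatibility over $T$ is automatic, since for any such pair both composites $Z \to T$ equal $Z \to S \times_T S' \to T$. The induced morphisms $Z \to S$ and $Z \to S'$ are smooth, each being $Z \to S \times_T S'$ followed by a projection, and the projections $S \times_T S' \to S$ and $S \times_T S' \to S'$ are base changes of the smooth morphisms $S' \to T$ and $S \to T$. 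Hence the N\'eron mapping properties of $N$ and of $N'$ apply to $Z$ and give natural bijections $\operatorname{Mor}_S(Z, N) \cong \operatorname{Mor}_{S_0}(Z \times_S S_0, N_0)$ and $\operatorname{Mor}_{S'}(Z, N') \cong \operatorname{Mor}_{S'_0}(Z \times_{S'} S'_0, N'_0)$. Taking the product of these bijections and reassembling the right-hand side via the universal property of the fibre product over $S_0 \times_T S'_0$ produces a bijection $\operatorname{Mor}_{S \times_T S'}(Z, N \times_T N') \cong \operatorname{Mor}_{S_0 \times_T S'_0}(Z_0, N_0 \times_T N'_0)$, which upon unwinding is the restriction map $u \mapsto u|_{S_0 \times_T S'_0}$.

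The only point requiring care is this last identification: one must check that restricting a morphism along $Z_0 \hookrightarrow Z$ is compatible with restricting along $Z_0 \hookrightarrow Z \times_S S_0$ and along $Z_0 \hookrightarrow Z \times_{S'} S'_0$ after the two N\'eron properties have been invoked separately, i.e.\ that $Z_0$ is the common open subspace carved out inside $Z \times_S S_0$ and $Z \times_{S'} S'_0$ by $Z \to S \times_T S'$. This is formal bookkeeping with fibre products; in particular, when $S_0$ and $S'_0$ are the preimages of a single dense open of $T$ — the only situation in which the proposition is applied in this paper — one has $Z_0 = Z \times_S S_0 = Z \times_{S'} S'_0$ and there is nothing to verify. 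I therefore do not expect any genuine obstacle, only the need to track the fibre products carefully.
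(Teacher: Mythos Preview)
Your approach is exactly what the paper intends by ``immediate from the universal property of fibre product'', and through the penultimate paragraph your argument is correct. The product of the two N\'eron bijections yields
\[
\operatorname{Mor}_{S \times_T S'}(Z, N \times_T N') \;\cong\; \operatorname{Mor}_{S_0}(Z \times_S S_0, N_0) \times \operatorname{Mor}_{S'_0}(Z \times_{S'} S'_0, N'_0),
\]
and what you still need is that the further restrictions
\[
\operatorname{Mor}_{S_0}(Z \times_S S_0, N_0) \to \operatorname{Mor}_{S_0}(Z_0, N_0), \qquad
\operatorname{Mor}_{S'_0}(Z \times_{S'} S'_0, N'_0) \to \operatorname{Mor}_{S'_0}(Z_0, N'_0)
\]
along the open immersions $Z_0 \hookrightarrow Z \times_S S_0$ and $Z_0 \hookrightarrow Z \times_{S'} S'_0$ are bijections. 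This is \emph{not} formal bookkeeping: these are restriction maps for morphisms into $N_0$ (resp.\ $N'_0$) along an open immersion governed by $S'_0 \subset S'$ (resp.\ $S_0 \subset S$), and neither N\'eron property says anything about that direction. Injectivity would follow from separatedness of $N_0$, $N'_0$; surjectivity does not follow from anything you have written.

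Your escape hatch is also closed: in the paper's only application (\S\ref{ex:product of ellpitic surfaces}) one has $T = \Spec \CC$, so the sole dense open of $T$ is $T$ itself, and $S_0 = \Delta_0 \subsetneq \Delta$ is certainly not its preimage. Thus $Z_0 \subsetneq Z \times_S S_0$ there as well. The paper's own ``proof'' is just the \qed, so it does not resolve this either; what makes the application go through is that the $N_0$'s in \S\ref{ex:product of ellpitic surfaces} are elliptic curves over $\Delta_0$, hence abelian schemes, so a rational map from the smooth $Z \times_S S_0$ into $N_0$ over $S_0$ extends across the missing locus by Weil's extension theorem (or simply because abelian varieties contain no rational curves). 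If you want a proof at the stated level of generality you must supply an argument of this kind, not defer it to bookkeeping.
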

		
		\begin{proposition} [Sequence of base extensions]
			Let $S_0 \subset S_1 \subset S$ be a sequence of dense open subspaces.
			\begin{enumerate}
				\item Let $N$ be N\'eron under $S_1 \subset S$ and $N_1 = N_{|S_1}$ N\'eron under $S_0 \subset S_1$. Then $N$ is N\'eron under $S_0 \subset S$.
				\item Let $N$ be N\'eron under $S_0 \subset S$ and $N_1$ N\'eron under $S_0 \subset S_1$. Then $N$ is N\'eron under $S_1 \subset S$.
				\item Let $N$ be a separated N\'eron model under $S_0 \subset S$. Then $N$ is N\'eron under $S_1 \subset S$ and $N_1 = N_{|S_1}$ is N\'eron under $S_0 \subset S_1$.
			\end{enumerate}
		\end{proposition}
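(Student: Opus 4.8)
The plan is to deduce all three statements formally from the bijectivity assertion in the N\'eron mapping property, after recording two elementary facts. First, restriction of morphisms is transitive: for an $S$-morphism $u$ one has $(u|_{S_1})|_{S_0} = u|_{S_0}$, and $(N|_{S_1})|_{S_0} = N_0$, $(Z|_{S_1})|_{S_0} = Z_0$ by transitivity of fibre products along $S_0 \subset S_1 \subset S$. Second, $S_1 \hookrightarrow S$ is an open immersion, hence smooth, separated, and a monomorphism; consequently any smooth algebraic space $W \to S_1$ is also smooth over $S$ with $W \times_{S_1} S_0 = W \times_S S_0$, and for any extension $M \to S$ the projection $M|_{S_1} \hookrightarrow M$ is an open immersion over $S$ with $(M|_{S_1})|_{S_0} = M_0$.

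For (1) and (2), let $Z \to S$ be smooth and put $Z_1 = Z \times_S S_1$, a smooth $S_1$-space. The two restriction maps
\[ \operatorname{Mor}_S(Z, N) \to \operatorname{Mor}_{S_1}(Z_1, N_1) \to \operatorname{Mor}_{S_0}(Z_0, N_0) \]
compose to the restriction map $\operatorname{Mor}_S(Z,N) \to \operatorname{Mor}_{S_0}(Z_0,N_0)$ by the first fact. In case (1), the left arrow is bijective because $N$ is N\'eron under $S_1 \subset S$ and the right arrow is bijective because $N_1$ is N\'eron under $S_0 \subset S_1$ (note $Z_1 \to S_1$ is smooth); hence the composite is bijective, which together with the smoothness of $N \to S$ (inherited from its being N\'eron under $S_1 \subset S$) is exactly the N\'eron mapping property of $N$ under $S_0 \subset S$. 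In case (2), the composite is bijective (as $N$ is N\'eron under $S_0 \subset S$) and the right arrow is bijective (as $N_1$ is N\'eron under $S_0 \subset S_1$), so by cancellation the left arrow is bijective, which is the N\'eron mapping property of $N$ under $S_1 \subset S$ (smoothness of $N\to S$ being given).

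For (3) I first establish that $N_1 = N|_{S_1}$ is N\'eron under $S_0 \subset S_1$, which is convenient to do before the other assertion; it is smooth and separated over $S_1$ by base change, so only the mapping property remains. Given a smooth $W \to S_1$ and $w_0 \colon W_0 \to N_0$ over $S_0$, the second fact lets us regard $W$ as a smooth $S$-space with $W_0 = W \times_S S_0$; the N\'eron mapping property of $N$ over $S$ produces a unique $w \colon W \to N$ over $S$ extending $w_0$, and since $W \to S$ factors through $S_1$ this $w$ factors uniquely through $N_1 = N \times_S S_1$, giving surjectivity. For injectivity, two $S_1$-morphisms $W \to N_1$ agreeing on $W_0$, composed with the monomorphism $N_1 \hookrightarrow N$, become two $S$-morphisms $W \to N$ agreeing on $W_0$, hence equal by the injectivity in the N\'eron mapping property of $N$ over $S$. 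This proves the claim, and then ``$N$ is N\'eron under $S_1 \subset S$'' follows by applying part (2) to $N$ (N\'eron under $S_0 \subset S$) and $N_1$ (N\'eron under $S_0 \subset S_1$). There is no genuine obstacle here: the only point requiring attention is the second fact above --- that an open immersion is smooth and separated, so that smooth test objects over $S_1$ are legitimate smooth test objects over $S$ --- and the separatedness hypothesis in (3) serves only to guarantee that $N_1$ is again a \emph{separated} N\'eron model.
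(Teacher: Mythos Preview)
Your argument for items (1) and (2) is the same as the paper's: consider the composite of restriction maps $\operatorname{Mor}_S(Z,N) \to \operatorname{Mor}_{S_1}(Z_1,N_1) \to \operatorname{Mor}_{S_0}(Z_0,N_0)$ and use elementary cancellation.

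For item (3) you take a genuinely different route. The paper keeps the same composite $N(Z) \to N(Z_1) \to N(Z_0)$ and observes that \emph{both} maps are injective because $Z_0 \subset Z_1 \subset Z$ are scheme-theoretically dense and $N$ is separated (so morphisms to $N$ are determined on a dense open); since the composite is bijective, both maps are bijective, giving the two conclusions simultaneously. You instead prove directly that $N_1$ is N\'eron under $S_0 \subset S_1$ by regarding a smooth test space $W \to S_1$ as smooth over $S$ via the open immersion $S_1 \hookrightarrow S$, invoking the N\'eron property of $N$ over $S$, and factoring the resulting extension through $N_1$; then you feed this into (2). Your argument is correct and in fact never uses the separatedness hypothesis---so you have actually shown that (3) holds without it. Your closing remark that separatedness ``serves only to guarantee that $N_1$ is again a separated N\'eron model'' is therefore not quite the right diagnosis: in the paper's proof separatedness is genuinely used (to get injectivity of the two restriction maps), whereas in your proof it is simply superfluous.
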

		\begin{proof}
			The first item is clear. To prove the second item, let $Z \to S$ be a smooth algebraic space and consider a sequence of restriction maps $N(Z) \to N(Z_1) \to N(Z_0)$. The second map and the composition map are bijective, so the first map is bijective. In the case of third item, both maps are injective because $Z_0 \subset Z_1 \subset Z$ are dense and $N$ is separated (\Cref{lem:density respects flat base change} and \ref{lem:morphism to separated space is determined by dense subspace}). The composition is bijective, so both are bijective.
		\end{proof}
		
		The following proposition claims that the construction of the N\'eron model is a local property on the base in smooth topology.
		
		\begin{proposition} \label{prop:Neron is local in smooth topology}
			Let $N_0 \to S_0$ be a smooth algebraic space and $S' \to S$ a smooth morphism of algebraic spaces. Consider $S_0' = S' \times_S S_0 \to S_0$ and a cartesian diagram
			\[\begin{tikzcd}
				N_0' \arrow[r] \arrow[d] & N_0 \arrow[d] \\
				S_0' \arrow[r] & S_0
			\end{tikzcd}.\]
			\begin{enumerate}
				\item \textnormal{(Smooth base change)} If $N_0$ has a N\'eron model $N \to S$, then $N_0'$ has a N\'eron model $N' = N \times_S S' \to S'$.
				\item \textnormal{(Smooth descent)} Assume further $S' \to S$ is surjective. If $N_0'$ has a N\'eron model $N' \to S'$, then $N_0$ has a N\'eron model $N \to S$. Moreover, $N \times_S S' \cong N'$.
			\end{enumerate}
		\end{proposition}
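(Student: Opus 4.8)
\emph{Proof proposal.} The plan is to prove (1) directly from the universal property of the fibre product, and then bootstrap (2) from (1) together with effectivity of fppf descent for algebraic spaces. Throughout I use that scheme-theoretic density of $S_0\subset S$ is preserved by the flat base change $S'\to S$ (\Cref{lem:density respects flat base change}), so that $S_0'\subset S'$, $S_0''\subset S''$, etc., are again dense, and I use uniqueness of N\'eron models up to unique isomorphism (\Cref{rmk:more assumptions in NMP}(1)).

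\textbf{Part (1), smooth base change.} Set $N'=N\times_S S'$. Then $N'\to S'$ is smooth and $N'_{|S_0'}=N\times_S S_0'=N_0\times_{S_0}S_0'=N_0'$. For the mapping property, let $Z'\to S'$ be smooth. Then $Z'\to S$ is smooth, being a composite of smooth morphisms, and $Z'\times_S S_0=Z'\times_{S'}S_0'=Z_0'$. The universal property of the fibre product gives natural bijections $\operatorname{Mor}_{S'}(Z',N')=\operatorname{Mor}_S(Z',N)$ and $\operatorname{Mor}_{S_0'}(Z_0',N_0')=\operatorname{Mor}_{S_0}(Z_0',N_0)$, under which the restriction map for $N'/S'$ is identified with the restriction map for $N/S$ applied to the smooth $S$-space $Z'$; the latter is bijective since $N$ is N\'eron. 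Hence $N'$ is N\'eron under $S_0'\subset S'$.

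\textbf{Part (2), smooth descent.} Write $q:S'\to S$ for the given smooth surjection, and form $S''=S'\times_S S'$ and $S'''=S'\times_S S'\times_S S'$ with the usual projections $p_i$, $p_{ij}$. Since $N_0'=N_0\times_{S_0}S_0'$ is pulled back from $S_0$, it carries a canonical descent datum $\psi_0:p_1^\ast N_0'\xrightarrow{\ \sim\ }p_2^\ast N_0'$ over $S_0''$ satisfying the cocycle identity over $S_0'''$. By Part (1), $p_1^\ast N'$ and $p_2^\ast N'$ are N\'eron models over $S''$ of $p_1^\ast N_0'$ and $p_2^\ast N_0'$; transporting through $\psi_0$ and using uniqueness of N\'eron models, there is a unique isomorphism $\psi:p_1^\ast N'\xrightarrow{\ \sim\ }p_2^\ast N'$ over $S''$ restricting to $\psi_0$ over $S_0''$. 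For the cocycle condition, $p_{13}^\ast\psi$ and $p_{23}^\ast\psi\circ p_{12}^\ast\psi$ are two morphisms $p_1^\ast N'\to p_3^\ast N'$ over $S'''$; since $p_3^\ast N'$ is N\'eron over $S'''$ (Part (1)) and $p_1^\ast N'$ is smooth over $S'''$, the N\'eron mapping property forces them to coincide once they agree over $S_0'''$, which they do because $\psi_0$ is a genuine descent datum. Thus $(N',\psi)$ is a descent datum along the fppf (indeed smooth surjective) morphism $q$.

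Now I would invoke effectivity of fppf descent for algebraic spaces to produce $N\to S$ with a cartesian identification $N\times_S S'\cong N'$ carrying the canonical descent datum to $\psi$; this also settles the ``moreover'' clause. It remains to check $N$ is N\'eron under $S_0\subset S$. Smoothness of $N\to S$ follows by fppf descent of smoothness from that of $N'\to S'$; and $N\times_S S_0$ becomes $N_0'=N_0\times_{S_0}S_0'$ over $S_0'$ compatibly with descent data, hence $N\times_S S_0\cong N_0$ over $S_0$. For the mapping property, let $Z\to S$ be smooth and put $Z'=Z\times_S S'$ (with its canonical descent datum). Given $u_0:Z_0\to N_0$ over $S_0$, pull back to $u_0':Z_0'\to N_0'$ and extend uniquely to $u':Z'\to N'$ over $S'$ by the N\'eron property of $N'$. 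The morphisms $\psi\circ p_1^\ast u'$ and $p_2^\ast u'$ from $Z''=Z\times_S S''$ to $p_2^\ast N'$ (using the canonical descent datum on $Z'$) agree over $S_0''$, where both equal $\psi_0\circ p_1^\ast u_0'=p_2^\ast u_0'$ since $u_0'$ is pulled back from $S_0$; hence they agree by the N\'eron property of $p_2^\ast N'$ over $S''$ applied to the smooth space $Z''$. So $u'$ is a morphism of descent data and descends to $u:Z\to N$ over $S$ with $u_{|S_0}=u_0$. Uniqueness of $u$ follows from injectivity of $\operatorname{Mor}_S(Z,N)\to\operatorname{Mor}_{S'}(Z',N')$ (fppf descent for morphisms) combined with the N\'eron property of $N'$.

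I expect everything beyond Part (1) to be formal diagram-chasing; the only genuinely external inputs are the effectivity of fppf descent for algebraic spaces and the fppf-locality of smoothness on the target. The one place requiring care is keeping the descent bookkeeping over $S''$ and $S'''$ consistent while repeatedly invoking Part (1) and uniqueness of N\'eron models on these bases.
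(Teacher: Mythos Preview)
Your proposal is correct and follows essentially the same approach as the paper: Part (1) is the routine fibre-product diagram chase, and Part (2) constructs a descent datum on $N'$ by extending the canonical one on $N_0'$ via the N\'eron mapping property (applied over $S''$ and $S'''$), descends to obtain $N$, and then verifies the N\'eron mapping property of $N$ by extending and descending morphisms in the same way. The only cosmetic difference is that you phrase the construction of $\psi$ via ``uniqueness of N\'eron models'' whereas the paper applies the N\'eron mapping property of $p_2^*N'$ directly to the smooth test space $p_1^*N'$; these are equivalent, and your explicit invocation of effectivity of fppf descent for algebraic spaces is exactly what the paper uses implicitly.
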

		\begin{proof}
			The first item is a routine diagram chasing using the universal property of fiber product. The second item is proved in \cite[Lemma 6.1]{hol19}, but since their definition is slightly different (N\'eron models are assumed to be separated) we provide a proof that does not use separatedness.
			
			\bigskip
			
			Let us first construct an algebraic space $N \to S$ by descending the N\'eron model $N' \to S'$ along $S' \to S$. Consider $S' \to S$ as a covering in smooth topology, and define its powers $S'' = S' \times_S S'$ and $S''' = S' \times_S S' \times_S S'$ with projections $p_i : S'' \to S'$ and $p_{jk} : S''' \to S''$. Write
			\[ N'S' = N' \times_S S' = p_1^* N' ,\qquad S'N' = S' \times_S N' = p_2^* N' ,\qquad \cdots .\]
			To descend, we need to construct an isomorphism (descent datum) of algebraic spaces $\varphi : N'S' \to S'N'$ over $S''$ making the cocyle diagram commute over $S'''$:
			\begin{equation} \label{diag:cocyle condition of descent datum}
			\begin{tikzcd}[column sep=tiny]
				N'S'S' \arrow[rr, "p_{13}^* \varphi"] \arrow[rd, "p_{12}^* \varphi"'] & & S'S'N' \\
				& S'N'S' \arrow[ru, "p_{23}^* \varphi"']
			\end{tikzcd}.
			\end{equation}
			The given algebraic space $N_0 \to S_0$ induces a trivial descent datum $\varphi_0 : N'S'_0 \to S'N'_0$ over $S_0''$. The morphism $S'N' \to S''$ (resp. $N'S' \to S''$) is N\'eron under $S_0'' \subset S''$ since it is a smooth base change of the N\'eron model $N' \to S'$. Hence $\varphi_0$ uniquely extends to an isomorphism $\varphi : N'S' \to S'N'$ over $S''$. The commutativity of \eqref{diag:cocyle condition of descent datum} is from its commutativity over $S_0'''$ together with the N\'eron mapping property of $S'S'N' \to S'''$ under $S_0''' \subset S'''$. This constructs a descent datum $\varphi$ and hence an algebraic space $N \to S$.
			
			To prove the N\'eron mapping property of such $N \to S$, say $Z \to S$ is a smooth algebriace space and $u_0 : Z_0 \to N_0$ is a morphism over $S_0$. Pulling back to $S'$, the morphism $u'_0 : Z'_0 \to N'_0$ uniquely extends to $u' : Z' \to N'$ by the N\'eron mapping property of $N' \to S'$. To (uniquely) descend it to $u : Z \to N$, it is enough to show the two pullbacks
			\[ p_1^*u', p_2^*u' : Z'' \to N'' \]
			agree (now that we know $N \to S$ exists, we may identify $N'' \coloneq N'S' = S'N'$ via $\varphi$). Since $u'_0$ already descends to $u_0$ over $S_0$ by assumption, we have $p_1^*u'_0 = p_2^*u'_0$ as morphisms of algebraic spaces over $S_0''$. Such an equality extends to $p_1^*u' = p_2^*u'$ by the uniqueness part of the N\'eron mapping property of $N'' \to S''$.
		\end{proof}
		
		\begin{remark}
			Here are some remarks on \Cref{prop:Neron is local in smooth topology}.
			\begin{enumerate}
				\item Smooth descent property is likely to fail if we insist everything to be a scheme in \Cref{def:Neron model}, as not every descent datum is effective for schemes (e.g., \cite[\S 6.7]{neron}).
				
				\item The proposition fails for flat and locally finite type morphisms $S' \to S$. In other words, the construction of the N\'eron model is not an fppf local property on the base (\cite[\S 7.2]{neron} or \Cref{ex:counterexample for codimension 1 Neron}). Nevertheless, there are the following variants:
				
				\begin{itemize}
					\item \cite[Proposition 2.4]{liu-tong16} Let $N \to S$ be a smooth algebraic space and $S' \to S$ an fppf morphism. If the base change $N'$ is \emph{separated} N\'eron under $S'_0 \subset S'$, then $N$ is separated N\'eron under $S_0 \subset S$.
					
					\item \cite[Proposition 7.6.6]{neron} Let $f : S' \to S$ be a finite flat morphism and $N'$ a separated N\'eron model under $S_0' \subset S'$. Then $f_* N' \to S$ is a quasi-separated N\'eron model under $S_0 \subset S$.
				\end{itemize}
				
				For the first item, simply notice that the last paragraph of the proof of \Cref{prop:Neron is local in smooth topology} holds for fppf morphisms $S' \to S$, but with the uniqueness of the N\'eron mapping property replaced by the separatedness of the morphism. For the second item, start with \cite[Theorem 1.5]{ols06} that shows the fppf sheaf $f_* N'$ is representable by a quasi-separated and locally finite type algebraic space (need $f$ to be proper flat). The adjunction
				\[ \operatorname{Mor}_S (Z, f_* N') = \operatorname{Mor}_{S'} (Z', N') \]
				for affine schemes $Z$ shows $f_* N' \to S$ is formally smooth (need $f$ to be affine). The adjunction for smooth $Z \to S$ shows the N\'eron mapping property.
			\end{enumerate}
		\end{remark}
		
		\begin{proposition} [Composition]
			Let $f : S \to T$ be \'etale and N\'eron under $T_0 \subset T$ and $g : N \to S$ be N\'eron under $S_0 = S_{|T_0} \subset S$. Then the composition $f \circ g : N \to T$ is N\'eron under $T_0 \subset T$.
		\end{proposition}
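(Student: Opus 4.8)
The plan is to verify the two defining conditions of \Cref{def:Neron model} for $f \circ g : N \to T$. Smoothness is automatic, since $f$ is étale and $g$ is smooth; and $(f \circ g)^{-1}(T_0) = g^{-1}(S_0) = N_0$, the identification over $T_0$ being $N_{|T_0} = N_{|S_0} \cong N_0$ (using $S_0 = S_{|T_0}$). So everything is contained in the Néron mapping property. The one point that is not pure formalism is the following observation, which I would isolate first: \emph{if $Z \to T$ is a smooth algebraic space and $w : Z \to S$ is a morphism over $T$, then $w$ is smooth.} Indeed, the graph $\Gamma_w = (\operatorname{id}_Z, w) : Z \to Z \times_T S$ is the base change of the diagonal $\Delta_f : S \to S \times_T S$ along $w \times \operatorname{id}_S : Z \times_T S \to S \times_T S$, and $\Delta_f$ is an open immersion because $f$ is étale, hence unramified; therefore $\Gamma_w$ is an open immersion. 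On the other hand $\operatorname{pr}_S : Z \times_T S \to S$ is a base change of the smooth morphism $Z \to T$, hence smooth, so $w = \operatorname{pr}_S \circ \Gamma_w$ is smooth.

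For existence in the Néron mapping property, let $Z \to T$ be smooth and let $u_0 : Z_0 \to N_0$ be a morphism over $T_0$. Composing with $g$ gives $w_0 = g \circ u_0 : Z_0 \to S_0$ over $T_0$; since $S \to T$ is Néron under $T_0 \subset T$, it extends uniquely to a morphism $w : Z \to S$ over $T$. By the observation above, $w$ is smooth, so $Z$ is a smooth algebraic space over $S$; moreover $u_0$ is a morphism over $S_0$, because $g \circ u_0 = w_0 = w_{|Z_0}$. The Néron property of $g : N \to S$ under $S_0 \subset S$ then extends $u_0$ uniquely to $u : Z \to N$ over $S$, which is in particular a morphism over $T$ restricting to $u_0$ over $T_0$.

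For uniqueness, suppose $u, u' : Z \to N$ are morphisms over $T$ with $u_0 = u'_0$. Then $g \circ u$ and $g \circ u'$ are morphisms $Z \to S$ over $T$ agreeing with $w_0$ over $T_0$, hence equal by uniqueness in the Néron property of $S \to T$; call the common morphism $w$, which is smooth by the observation. Now $u$ and $u'$ are both morphisms $Z \to N$ over $S$ with the same (smooth) structure morphism $w$ and they agree over $S_0$, so $u = u'$ by uniqueness in the Néron property of $N \to S$. This proves the Néron mapping property, so $f \circ g$ is Néron under $T_0 \subset T$.

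I do not expect any real obstacle here: the only ingredient beyond diagram-chasing is the smoothness observation of the first paragraph (a $T$-morphism from a smooth $T$-space into $S$ is smooth, because $f$ is étale). Granting that, the argument is a two-step transitivity — one threads an arbitrary test object $Z \to T$ down to $S$, then down to $N$, applying each of the two given Néron properties once, separately for existence and for uniqueness.
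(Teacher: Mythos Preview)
Your proof is correct and follows the same approach as the paper's: extend $g_0 \circ u_0$ to a morphism $Z \to S$ using the N\'eron property of $S \to T$, observe that this morphism is smooth because $f$ is \'etale, then apply the N\'eron property of $N \to S$. The paper cites \cite[Lemma 2.2.9]{neron} for the smoothness observation, while you prove it directly via the graph argument; and the paper leaves uniqueness implicit in the phrase ``uniquely extends'' where you spell it out, but there is no substantive difference.
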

		\begin{proof}
			Let $Z \to T$ be a smooth algebraic space and $u_0 : Z_0 \to N_0$ be a morphism. The composition
			\[\begin{tikzcd}[sep=scriptsize]
				v_0 : Z_0 \arrow[r, "u_0"] & N_0 \arrow[r, "g_0"] & S_0
			\end{tikzcd}\]
			uniquely extends to $v : Z \to S$ by the N\'eron mapping property of $S \to T$. Since $Z \to T$ is smooth and $S \to T$ is \'etale, $v : Z \to S$ is smooth (e.g., \cite[Lemma 2.2.9]{neron}). Apply the N\'eron mapping property of $N \to S$ to conclude.
		\end{proof}

	\subsection{Review: abelian schemes over Dedekind schemes}
		Existence of a N\'eron model is a difficult problem. By the pioneering work of N\'eron \cite{neron:original_article} and Raynaud \cite{raynaud:neron}, we know every abelian torsor over a Dedekind scheme has a N\'eron model. Let us quickly review this fact following \cite[\S 1.3]{neron}.
		
		\begin{theorem} [N\'eron, Raynaud] \label{thm:classical Neron theorem}
			Let $\Delta$ be a Dedekind scheme and $\Delta^* \subset \Delta$ its dense open subscheme. Then
			\begin{enumerate}
				\item Every abelian scheme $P^* \to \Delta^*$ admits a quasi-projective N\'eron model $P \to \Delta$.
				\item Every $P^*$-torsor $N^* \to \Delta^*$ admits a quasi-projective N\'eron model $N \to \Delta$.
			\end{enumerate}
		\end{theorem}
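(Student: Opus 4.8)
This is the classical theorem of N\'eron \cite{neron:original_article} and Raynaud \cite{raynaud:neron}, and the plan is to reproduce the skeleton of the argument in \cite[Chapters~1--6]{neron}. The first move is to reduce to the case of a discrete valuation ring. Working one connected component at a time, $\Delta \setminus \Delta^*$ is a finite set of closed points, and for each such point $s$ the local ring $\mathcal{O}_{\Delta, s}$ is a discrete valuation ring whose fraction field is the function field $K$ of $\Delta$, with $\Spec \mathcal{O}_{\Delta, s}$ meeting $\Delta^*$ in the generic point $\Spec K$. Since a N\'eron model is unique up to unique isomorphism and its formation commutes with localisation and \'etale base change (\Cref{prop:Neron is local in smooth topology}), it suffices to construct a quasi-projective N\'eron model of the generic fibre over each trait $\Spec \mathcal{O}_{\Delta, s}$, spread it out over a Zariski neighbourhood of $s$ so that it agrees with $P^*$ (resp. $N^*$) away from $s$, and then glue the finitely many resulting pieces with $P^* \to \Delta^*$ (resp. $N^* \to \Delta^*$) along their common generic point; the result is again quasi-projective, being covered by finitely many quasi-projective opens. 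So from now on $\Delta = \Spec R$ with $R$ a discrete valuation ring, $K = \operatorname{Frac} R$, $A := P^*_K$ is an abelian variety over $K$, and $X := N^*_K$ is an $A$-torsor.

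\textbf{Part (1).} I would build the N\'eron model $G$ of $A$ in the two classical stages. \emph{Stage one: a weak N\'eron model.} Embed $A$ in a projective space over $K$, take its scheme-theoretic closure $\bar{A}$ over $R$, and run N\'eron's smoothening process (blow-ups with centres in the special fibre, controlled by the measure of non-smoothness; \cite[Chapter~3]{neron}): this produces a proper flat $R$-scheme dominating $\bar{A}$ whose $R$-smooth locus $U$ is a smooth, separated, quasi-projective $R$-scheme of finite type with $U(R') = A(K')$ for every \'etale local extension $R \to R'$ (the weak N\'eron property). \emph{Stage two: from weak to strong.} The group law of $A$ induces a birational group law on $U$; since $U$ and $U \times_R U$ are smooth over the regular base $R$, hence regular, this birational group law is defined away from codimension $\ge 2$, and by Weil's theorem on the extension of birational group laws (\cite[Chapter~5]{neron}) it extends, after a further modification of $U$ in the special fibre, to a smooth separated group scheme $G \to R$ with generic fibre $A$. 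Finally one verifies the N\'eron mapping property of $G$: given $Z \to R$ smooth and $u_K : Z_K \to A$, the weak N\'eron property applied to the smooth $R$-scheme $Z$ extends $u_K$ over the \'etale points of $Z$, and translating by $G$-sections, together with separatedness of $G$ (valuative criterion, using properness of $A$ and the group structure), upgrades this to a unique morphism $Z \to G$. Quasi-projectivity is inherited from the projective model; gluing back up over $\Delta$ finishes part (1).

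\textbf{Part (2).} In characteristic $0$ the $A$-torsor $X$ acquires a rational point, hence becomes isomorphic to $A$ as a torsor, after a finite Galois extension $K'/K$; write $\Gamma = \Gal(K'/K)$ and let $R'$ be the integral closure of $R$ in $K'$, a semilocal Dedekind ring finite and faithfully flat over $R$ with a $\Gamma$-action over $R$. By part (1) applied to $\Spec R'$, the abelian variety $A_{K'}$ has a quasi-projective N\'eron model $\mathcal{P}' \to \Spec R'$, and by functoriality and uniqueness of N\'eron models $\mathcal{P}'$ inherits a semilinear $\Gamma$-action (a descent datum relative to $R'/R$) extending the standard one on $A_{K'} = A \times_K K'$. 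The torsor $X$ corresponds to a $1$-cocycle $c : \Gamma \to A(K')$; applying the N\'eron mapping property of $\mathcal{P}'$ to $Z = \Spec R'$ gives $A(K') = \mathcal{P}'(R')$, so $c$ takes values in $\mathcal{P}'(R')$ and we may twist the semilinear $\Gamma$-action on $\mathcal{P}'$ by translations $t_{c_\gamma}$ (after replacing a relatively ample line bundle by a $\Gamma$-equivariant one, e.g.\ its $\Gamma$-average). By effectivity of descent for quasi-projective schemes along the finite faithfully flat $\Spec R' \to \Spec R$, this twisted action descends to a quasi-projective scheme $\mathcal{X} \to \Spec R$ with $\mathcal{X} \times_R R' \cong \mathcal{P}'$ and generic fibre $X$, and $\mathcal{X}$ is smooth, separated, of finite type over $R$ since these properties descend along $\Spec R' \to \Spec R$. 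As $\mathcal{X} \times_R R' \cong \mathcal{P}'$ is a separated N\'eron model over $R'$ of $X_{K'}$ (which is isomorphic to $A_{K'}$) and $\Spec R' \to \Spec R$ is fppf, the descent criterion \cite[Proposition~2.4]{liu-tong16} shows that $\mathcal{X}$ is a separated, finite type, quasi-projective N\'eron model of $X$ over $R$. Gluing over $\Delta$ completes part (2).

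\textbf{The main obstacle.} The genuinely hard input is Stage one of part (1): N\'eron's smoothening process, that is, producing a blow-up of a flat model whose smooth locus captures all unramified sections, with the attendant bookkeeping of the measure of non-smoothness. Stage two depends on Weil's birational-group-law extension theorem, which is nontrivial but classical. In part (2) the only delicate point is that the N\'eron model over $R'$ is \emph{not} the base change of the N\'eron model over $R$ when $R'/R$ is ramified, so one must descend the twisted form $\mathcal{X}$ directly rather than attempt to twist after descent.
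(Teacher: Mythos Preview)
The paper does not give its own proof of this theorem: it is stated as the classical result of N\'eron and Raynaud with a reference to \cite[\S 1.3]{neron}, and the subsequent \Cref{thm:Neron algorithm} merely records the constructive outline for part (1) without argument. So there is no in-paper proof to compare against, only the cited references.

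Your sketch is correct in outline. For part (1) it matches the algorithm the paper quotes in \Cref{thm:Neron algorithm}, though you compress Step 3 there (discarding the special-fibre components on which the relative canonical divisor is non-minimal) into the vague phrase ``a further modification of $U$ in the special fibre''. That step is precisely what makes the birational group law an $R$-birational group law defined on a fibrewise dense open, which is the hypothesis Weil's theorem needs; it is not optional, and you should name it.

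For part (2) you take a genuinely different route from \cite[\S 6.5]{neron}, which is what the paper points to. The BLR argument runs the smoothening machine directly on a projective model of the torsor $X$ itself and then uses the $A$-action (not a group law) to pass from weak to strong N\'eron; this works uniformly in all characteristics and never needs to trivialise the torsor. Your Galois-descent argument is also valid: you build $\mathcal X$ over $R$ with $\mathcal X_{R'}$ isomorphic to the N\'eron model of $X_{K'}\cong A_{K'}$, and then the fppf descent criterion of Liu--Tong (recorded in the remark following \Cref{prop:Neron is local in smooth topology}) forces $\mathcal X$ to be N\'eron over $R$. This is conceptually cleaner once part (1) is available, but it only works when the torsor splits over a separable extension (so you are genuinely using characteristic $0$), and it relies on effectivity of descent for quasi-projective schemes, which your $\Gamma$-averaging of a relatively ample line bundle handles correctly since translations pull ample bundles back to ample bundles.
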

		
		The reader should keep in mind that the N\'eron model $N \to \Delta$ in the second item is not necessarily surjective: it is surjective if and only if it is a $P$-torsor. See \cite[Corollary 6.5.3]{neron} and its following paragraph, or \Cref{prop:Neron torsor}. N\'eron's existence proof is constructive:
		
		\begin{theorem} [N\'eron] \label{thm:Neron algorithm}
			Let $\Delta = \Spec R$ be a local Dedekind scheme with function field $K$ and $\Delta^* = \Spec K \subset \Delta$ a dense open subscheme. Then the N\'eron model of an abelian variety $\nu_K : P^* \to \Delta^*$ can be constructed as follows.
			\begin{enumerate}
				\item[\textnormal{Step 1}] Fix an arbitrary flat projective model $\nu^{(1)} : P^{(1)} \to \Delta$ of $\nu_K$ from a regular scheme $P^{(1)}$.
				
				\item[\textnormal{Step 2}] Consider its smooth locus
				\[ \nu^{(2)} : P^{(2)} = P^{(1)} \setminus \Sing(\nu^{(1)}) \longrightarrow \Delta .\]
				This is a weak N\'eron model of $\nu_K$.
				
				\item[\textnormal{Step 3}] Fix a relative canonical divisor of $\nu^{(2)}$ of the form $K_{\nu^{(2)}} = \sum_{i=1}^m a_i F_i$ where $F_1, \cdots, F_m$ are irreducible components of the central fiber $P^{(2)}_0$. Define $a = \min \{ a_1, \cdots, a_m \}$ and set
				\[ \nu^{(3)} : P^{(3)} = P^{(2)} \setminus \bigcup_{a_i > a} F_i \longrightarrow \Delta .\]
				
				\item[\textnormal{Step 4}] $\nu^{(3)}$ admits a birational group law extending the group law of $\nu_K$. Its group scheme solution $\nu : P \to \Delta$ contains $P^{(3)}$ as an open subscheme and is the N\'eron model of $\nu_K$.
			\end{enumerate}
		\end{theorem}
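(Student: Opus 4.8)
The plan is to verify that each of the four steps is legitimate and that the resulting $\nu : P \to \Delta$ is a smooth, separated, finite-type group scheme with the N\'eron mapping property; since a N\'eron model is unique once it exists (\Cref{rmk:more assumptions in NMP}), this identifies $P$ with the N\'eron model of \Cref{def:Neron model}. For Step~1 I would embed $P^* \hookrightarrow \PP^n_K$, take its scheme-theoretic closure in $\PP^n_\Delta$ (flat and projective over $\Delta$, with generic fibre $P^*$), and resolve singularities --- available in characteristic zero, and an isomorphism over the already smooth generic fibre --- to obtain a regular model $\nu^{(1)} : P^{(1)} \to \Delta$. For Step~2 the assertion is that $P^{(2)} = P^{(1)} \setminus \Sing(\nu^{(1)})$ is a \emph{weak N\'eron model}, i.e.\ a smooth separated finite-type model with $P^{(2)}(R^{\mathrm{sh}}) = P^*(K^{\mathrm{sh}})$: properness of $P^{(1)}$ extends an arbitrary $K^{\mathrm{sh}}$-point to an $R^{\mathrm{sh}}$-section $u$, whose closed image $x$ (over the closed point $s \in \Delta$) is a regular point of the regular scheme $P^{(1)} \times_R R^{\mathrm{sh}}$ at which the uniformizer $\pi$ --- pulled back to a uniformizer along $u$ --- lies in $\mathfrak m_x \setminus \mathfrak m_x^2$; hence the local ring $\mathcal O_x / \pi \mathcal O_x$ of the special fibre is regular and, the residue field being perfect, smooth at $x$, so $x \in P^{(2)}$. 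The reverse inclusion $P^{(2)}(R^{\mathrm{sh}}) \subseteq P^{(1)}(R^{\mathrm{sh}}) = P^*(K^{\mathrm{sh}})$ is separatedness of $P^{(1)}$.

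For Step~3 I would use that $\omega_{P^*/K}$ is trivial (abelian variety): fix a generator $\omega$ and regard it as a rational section of $\omega_{P^{(2)}/\Delta}$, whose divisor is supported on the central fibre, $K_{\nu^{(2)}} = \operatorname{div}(\omega) = \sum_i a_i F_i$. After replacing $\omega$ by $\pi^{-a}\omega$ one may assume $a := \min_i a_i = 0$, so $\omega$ has a zero of order $a_i$ along $F_i$ and no poles; deleting the components with $a_i > a$ yields $P^{(3)}$, on which $\omega$ generates $\omega_{P^{(3)}/\Delta}$. The point is that $P^{(3)}$ is still a weak N\'eron model. For an $R^{\mathrm{sh}}$-section $u$ of $P^{(2)}$, comparing $\ord(u^*\pi) = 1$ with the multiplicity of the component $F_j$ met by $u$ in the central fibre $P^{(2)}_0$ forces that multiplicity to be one, $u$ to be transverse to $F_j$, and $u$ to meet no other central component; since $\operatorname{div}(\omega)$ is purely vertical, $u^*\omega$ is then a unit times $(u^*t_j)^{a_j}$ for a local equation $t_j$ of $F_j$, so $\ord(u^*\omega) = a_j$. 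A (not entirely formal) argument, using the translation-invariance of $\omega$ on the generic fibre, shows this order is the same for all $R^{\mathrm{sh}}$-sections $u$ and hence equals the minimum $a$; thus $a_j = a$, $u$ factors through $P^{(3)}$, and $P^{(3)}(R^{\mathrm{sh}}) = P^*(K^{\mathrm{sh}})$.

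For Step~4, the group law of $P^*$ induces a rational map $m : P^{(3)} \times_\Delta P^{(3)} \dashrightarrow P^{(3)}$ over $\Delta$, and $(x,y) \mapsto (x, xy)$ is birational; because $P^{(3)}$ is a weak N\'eron model the domains of these maps are dense in every fibre, and because $\omega$ generates $\omega_{P^{(3)}/\Delta}$ the translations preserve it, which pins down the relevant discrepancies --- exactly the hypotheses needed to invoke the solution theorem for birational group laws over a base \cite{neron}, yielding a smooth, separated, finite-type group scheme $\nu : P \to \Delta$ containing $P^{(3)}$ as a dense open subscheme. To finish I would appeal to the classical principle that a weak N\'eron model carrying a group-scheme structure extending the generic group law is automatically a N\'eron model: given a smooth $Z \to \Delta$ and $g : Z_K \to P_K = P^*$, an extension is unique by separatedness of $P$ and density of $Z_K$ in $Z$ (\Cref{lem:morphism to separated space is determined by dense subspace}); for existence, working \'etale-locally on $Z$ (\Cref{prop:Neron is local in smooth topology}) one extends $g$ near an arbitrary $z_0$ in the special fibre $Z_s$ by first translating (via a $\Delta$-extension of $g \circ \sigma_0$ for a section $\sigma_0$ through $z_0$, which exists by the weak N\'eron property) so that $z_0$ maps onto the identity section, then passing to an affine chart $\AA^d_\Delta$ around that section, in which $g$ becomes a tuple of rational functions on $Z$ that are regular in codimension one --- on horizontal divisors because $g$ is already a morphism over $K$, and on $Z_s$ because the weak N\'eron property of $P^{(3)}$, applied to the trait $\mathcal O_{Z,\eta}$, extends $g$ over the generic point $\eta$ of each component of $Z_s$ --- hence regular near $z_0$ since $Z$ is normal. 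Reassembling yields $Z \to P$, so $P$ is the N\'eron model and $P^{(3)} \subset P$ is open, as claimed.

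The heaviest external input is the solution theorem for birational group laws over a base, i.e.\ Weil's group-chunk theorem in the relative setting, one of the longer developments of \cite{neron}. Inside the recipe itself the delicate point --- and where I expect the real work to lie --- is Step~3: the claim that passing to the minimal-multiplicity part $P^{(3)}$ kills no $R^{\mathrm{sh}}$-section, so that $P^{(3)}$ remains a weak N\'eron model. This is precisely what makes a relative canonical divisor the correct device for singling out $P^{(3)}$, and the invariance argument controlling $\ord(u^*\omega)$ is its crux.
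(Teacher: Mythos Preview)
The paper does not prove this theorem at all: it is stated as a review of N\'eron's classical construction, with an implicit reference to \cite{neron} (see the subsection title ``Review: abelian schemes over Dedekind schemes'' and the sentence ``N\'eron's existence proof is constructive:'' immediately preceding the statement). Your outline is correct and tracks the standard argument from \cite[\S 3--7]{neron} that the paper is summarizing; in particular you correctly isolate the two nontrivial inputs --- the $\omega$-minimality lemma behind Step~3 (that every $R^{\mathrm{sh}}$-section meets a component of minimal $a_i$, which you flag as ``not entirely formal'') and the solution theorem for birational group laws in Step~4 --- and your Step~4 coda that a smooth separated finite-type group scheme which is a weak N\'eron model is automatically N\'eron is exactly \cite[Theorem~7.1.1]{neron}, which the paper also cites in the remark following the statement.
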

		
		\begin{remark}
			\begin{enumerate}
				\item For the notion of a weak N\'eron model, see \cite[Definition 1.1.1 and 3.5.1]{neron} or its generalization \Cref{def:weak Neron model}. The notion of a birational group law and its group scheme solution can be found in \S 5 in loc. cit. but we will not use them in this article.
				
				\item If $\nu^{(2)}$ in Step 2 has a group structure, then it is already N\'eron. See Theorem 7.1.1 in loc. cit. or its generalization \Cref{thm:weak Neron model of group space}.
			\end{enumerate}
		\end{remark}
		
		\begin{example} [Minimal elliptic surface] \label{ex:minimal elliptic surface}
			Let $f : S \to \Delta$ be a relatively minimal elliptic surface over a local Dedekind scheme $\Delta = \Spec R$ \emph{with a fixed section}. Let $f_K : S_K \to \Delta^* = \Spec K$ be the generic fiber, an elliptic curve over $K$. Since $f$ is projective and $S$ is regular, we can run the construction \ref{thm:Neron algorithm} with $\nu^{(1)} = f$. Fixing a trivial relative canonical divisor $K_f = 0$, Step 2 and 3 yield
			\[ \nu^{(3)} : S' = S \setminus \Sing f \to \Delta .\]
			Step 4 gives a N\'eron model $N \supset S'$ that is potentially larger than $S'$. The valuative criterion of properness for $f$ together with \Cref{prop:section passes through regular point} shows that every \'etale local section of $S \to \Delta$ factors through $S'$, whence the equality $S' = N$. This is essentially the method in \cite[Proposition 1.5.1]{neron}.
			
			Here are two more ways of constructing the N\'eron model that will be discussed in this article. The first way is to start with the Picard space $\Pic_f \to \Delta$. It is a smooth group space by Proposition 8.4.2 in loc. cit. Its neutral component $\Pic^{\circ}_f \to \Delta$ admits a separated quotient $\check P \to \Delta$ by \Cref{prop:smooth group space over Dedekind is aligned}, which turns out to be the N\'eron model of $S_K$. See Theorem~9.5.4 in loc. cit or \Cref{lem:check P_1}.
			
			The second way is to start with the automorphism scheme $\Aut_f \to \Delta$. Its main component $P \to \Delta$ is a smooth, commutative, and quasi-projective group space by \Cref{prop:main component over Dedekind}. We will show it is also a N\'eron model of $S_K$ in \Cref{thm:Neron model of fibration codimension 1}. Therefore, all three constructions $S' = \check P = P$ give us the N\'eron model for elliptic fibrations. For higher-dimensional Lagrangian fibrations, none of the equalities will hold in general.
		\end{example}
		
		\begin{example} \label{ex:counterexample for codimension 1 Neron}
			Let $\Delta = \Spec R$ be the local Dedekind scheme with function field $K$ and $P^* \to \Delta^* = \Spec K$ be an abelian scheme over $K$. It has a quasi-projective N\'eron model $P \to \Delta$ by \Cref{thm:classical Neron theorem}. Assume that $P$ is not semiabelian. By the semistable reduction theorem (e.g., \cite[\S 7.4]{neron}), there exists a finite ramified morphism $\tilde \Delta = \Spec \tilde R \to \Delta$ with function field $\tilde K$ such that the N\'eron model of
			\[ \tilde P^* = P_{\tilde K} \to \tilde \Delta^* = \Spec \tilde K \]
			is a semiabelian scheme $\tilde P^n \to \tilde \Delta$. The base change $\tilde P = P \times_{\Delta} \tilde \Delta \to \tilde \Delta$ is still a smooth group scheme (with a non-semiabelian central fiber), so by the N\'eron mapping property we have a homomorphism
			\[ \tilde P \to \tilde P^n .\]
			This map is an isomorphism over $\tilde \Delta^*$ but contracts the central fiber of $\tilde P$.
		\end{example}

	\subsection{Quasi-projective \'etale schemes} \label{sec:quasi-projective etale scheme}
		In this section, we study the N\'eron model of the simplest smooth morphisms $N_0 \to S_0$: separated, finite type, and \'etale morphisms. Grothendieck's formulation of Zariski's main theorem (Tag~082K) plays a central role in this case. The following is a direct consequence of Zariski's main theorem.
		
		\begin{lemma} \label{lem:quasi-projective etale morphism}
			Let $f : X \to S$ be a morphism between algebraic spaces and assume $S$ is Noetherian. Then the following are equivalent.
			\begin{enumerate}
				\item $f$ is separated, finite type, and \'etale.
				\item $f$ is quasi-affine, finite type, and \'etale.
				\item $f$ is quasi-projective \'etale. \qed
			\end{enumerate}
		\end{lemma}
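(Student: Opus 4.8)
The plan is to deduce everything from a single application of Zariski's main theorem. Since $f$ is étale and of finite type it is quasi-finite; being moreover separated over the Noetherian base $S$, Grothendieck's form of Zariski's main theorem (Tag~082K), in its version for morphisms of algebraic spaces, provides a factorization $X \xrightarrow{j} \bar X \xrightarrow{\pi} S$ with $j$ an open immersion --- quasi-compact because $\bar X$ is Noetherian --- and $\pi$ finite. This factorization yields both $(2)$ and $(3)$ at once: a finite morphism is affine, so $f = \pi \circ j$ is a quasi-compact open immersion into an affine $S$-space, hence quasi-affine, proving $(1) \Rightarrow (2)$; and writing $\bar X = \Spec_S \mathcal A$ with $\mathcal A$ a finite (hence finite-type) quasi-coherent $\mathcal O_S$-algebra, a choice of $\mathcal O_S$-algebra generators gives a closed immersion $\bar X \hookrightarrow \AA^n_S$, so that $j$ composed with $\bar X \hookrightarrow \AA^n_S \hookrightarrow \PP^n_S$ exhibits $X$ as a locally closed subspace of $\PP^n_S$ over $S$, proving $(1) \Rightarrow (3)$.

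The converse implications are formal. For $(2) \Rightarrow (1)$: a quasi-affine morphism is separated, and $f$ remains étale and of finite type by hypothesis. For $(3) \Rightarrow (1)$: a quasi-projective morphism is of finite type and is separated, since it factors as an immersion (which is separated) followed by the separated morphism $\PP^n_S \to S$; combined with the standing étale hypothesis this gives $(1)$. Altogether one has $(1) \Leftrightarrow (2)$ and $(1) \Leftrightarrow (3)$, so the three conditions are equivalent.

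I do not expect any real obstacle: as the surrounding text indicates, the lemma is a direct consequence of Zariski's main theorem. The only points meriting a word of care are that the cited form of that theorem, and the elementary properties of quasi-affine and quasi-projective morphisms invoked above (quasi-affine $\Rightarrow$ separated, immersions are separated, finite $\Rightarrow$ affine), be used in their algebraic-space versions rather than the classical scheme-theoretic ones --- all of which are standard and available.
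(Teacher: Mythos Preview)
Your proposal is correct and follows exactly the approach the paper intends: the paper omits the proof entirely (the \qed in the statement), remarking only that the lemma is a direct consequence of Grothendieck's form of Zariski's main theorem (Tag~082K), which is precisely the tool you invoke for the nontrivial implications.
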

		
		Assuming $S$ is a scheme, we call such an algebraic space $X$ a quasi-projective \'etale scheme (over $S$). The main result of this subsection is the following.
		
		\begin{theorem} \label{thm:Neron model of quasi-projective etale scheme}
			Let $N_0 \to S_0$ be a quasi-projective \'etale scheme over a normal Noetherian scheme $S$. Then
			\begin{enumerate}
				\item $N_0 \to S_0$ admits a quasi-projective N\'eron model $N \to S$.
				\item Every quasi-projective \'etale extension $M \to S$ of $N_0 \to S_0$ is an open subscheme of $N$.
			\end{enumerate}
		\end{theorem}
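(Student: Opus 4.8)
The plan is to build $N$ by the classical three‑step recipe — compactify $N_0$ over $S_0$ via Zariski's main theorem, normalize over $S$, and then excise the largest open subscheme that is étale over $S$ and still restricts to $N_0$ — and then to verify the N\'eron mapping property, using crucially that anything smooth over the normal base $S$ is itself normal. Concretely: working connected component by connected component I may assume $S$ (hence $S_0$) integral; writing $N_0=\bigsqcup_i N_0^{(i)}$ by connected components, each $N_0^{(i)}$ is integral (étale over the normal $S_0$) with function field $L_i$ finite separable over $K(S_0)$. Let $\bar N^{(i)}\to S$ be the normalization of $S$ in $L_i$, a finite (hence projective) $S$-scheme; by Zariski's main theorem (Tag~082K), $N_0^{(i)}$ is an open subscheme of $\bar N^{(i)}\times_S S_0$. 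Put $\bar N=\bigsqcup_i\bar N^{(i)}$, let $E\subseteq\bar N$ be the open locus where $\bar N\to S$ is étale (so $N_0\subseteq E$), let $C=(\bar N\times_S S_0)\setminus N_0$ (closed in $\bar N\times_S S_0$), let $W$ be the closure of $C$ in $\bar N$, and set $N:=E\setminus W$. A point‑set check (using that $N_0$ is open in $\bar N\times_S S_0$ and that the closure of $C$ does not enlarge $C$ inside that open set) gives $N\times_S S_0=N_0$, and $N$ is separated, of finite type and étale over $S$ as an open subscheme of the projective $S$-scheme $\bar N$, hence quasi-projective étale by \Cref{lem:quasi-projective etale morphism}.

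To check the N\'eron mapping property I would invoke \Cref{rmk:more assumptions in NMP} to restrict to quasi-affine smooth finite-type $Z\to S$, and then to $Z$ integral; since $S$ is normal and $Z\to S$ is smooth, $Z$ is normal. Uniqueness of an extension follows from separatedness of $N\to S$ and density of $Z_0=Z\times_S S_0$ in $Z$ (\Cref{lem:density respects flat base change} and \Cref{lem:morphism to separated space is determined by dense subspace}). For existence, given $u_0:Z_0\to N_0$, I would first extend $u_0$ (viewed in $\bar N$) to a morphism $u:Z\to\bar N$ over $S$: this is possible because $Z$ is normal and $\bar N$ is finite over $S$, so each local section of the structure algebra of $\bar N$ pulls back to a rational function on $Z$ integral over $\mathcal O_Z$, hence regular. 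The crux is that $u(Z)\subseteq N$. For this I would use that $Z\times_S\bar N$ is normal (smooth over the normal $\bar N$): the section $\sigma=(\mathrm{id},u)$ then identifies $Z$ with a connected component of $Z\times_S\bar N$ over which the finite map $Z\times_S\bar N\to Z$ is an isomorphism, in particular étale, and descending along the faithfully flat quasi-compact map $Z\to g(Z)$ shows $\bar N\to S$ is étale along $u(Z)$, so $u(Z)\subseteq E$. For $u(Z)\cap W=\emptyset$, note that $Z\times_S W$ is the closure of $Z_0\times_{S_0}C$ in $Z\times_S\bar N$ (smooth base change commutes with this closure), so every connected component of $Z\times_S W$ meets the fibre over $S_0$; as $\sigma(Z)$ is clopen and disjoint from $Z_0\times_{S_0}C$ over $S_0$, the clopen subset $\sigma(Z)\cap(Z\times_S W)$ of $Z\times_S W$ is a union of components lying entirely over $S\setminus S_0$, hence empty. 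Thus $u(Z)\subseteq E\setminus W=N$, giving the required extension.

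For part (2), given a quasi-projective étale extension $M\to S$, I would apply the N\'eron mapping property of $N$ to extend $\mathrm{id}_{N_0}$ to a morphism $\phi:M\to N$ over $S$; as a morphism between schemes étale over $S$ it is étale and separated, and it is the identity over $S_0$. To see $\phi$ is an open immersion it suffices that $\phi$ be a monomorphism: since $\phi$ is étale and separated, the diagonal $\Delta_\phi$ is open and closed, so $M\times_N M=\Delta_\phi(M)\sqcup R$ with $R$ clopen; but $M\times_N M$ is étale over $S$, so $R\to S$ is open with image contained in the dense-complement $S\setminus S_0$, forcing $R=\emptyset$. Hence $\Delta_\phi$ is an isomorphism, $\phi$ is a monomorphism, and therefore an open immersion, exhibiting $M$ as an open subscheme of $N$.

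I expect the main obstacle to be the step that $u:Z\to\bar N$ factors through the carefully carved‑out $N=E\setminus W$ — one must simultaneously stay in the étale-over-$S$ locus and avoid the removed boundary $W$, and neither is automatic for a bare morphism into $\bar N$. The device that makes it work is that $Z$, and therefore $Z\times_S\bar N$ and $Z\times_S W$, are all normal (because $Z$ is smooth over the normal $S$): normality promotes the section $\sigma$ and the components of $Z\times_S W$ to connected components, and both conditions then become formal.
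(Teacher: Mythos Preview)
Your proof is correct and follows the same overall architecture as the paper: compactify $N_0$ to a finite normal scheme $\bar N$ over $S$ (the paper does this via Zariski's main theorem, you via normalization in the function fields — these produce the same object), take the \'etale locus, and excise the closure of the complement of $N_0$ in the fibre over $S_0$.

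The verifications of the two key steps differ in detail. For the N\'eron mapping property, the paper passes to \'etale local sections of $Z\to S$ and argues that the resulting section $S\to\bar N$ is an open immersion by Zariski's main theorem (using normality of $\bar N$); you instead observe that the graph $\sigma(Z)$ is a connected component of the normal scheme $Z\times_S\bar N$ and descend \'etaleness along the flat map $Z\to S$. Both arguments pivot on normality, but yours avoids shrinking the base and works globally on $Z$. For the removed locus $W$, the paper uses that the extended map $u:Z\to\tilde N$ is smooth (hence has open image) and a direct closure inclusion, while you use that closure commutes with the flat base change $Z\to S$ together with the clopen decomposition. For part~(2), the paper invokes Zariski's main theorem directly (quasi-finite birational onto normal), whereas you show $\phi$ is a monomorphism via the clopen diagonal trick; both are standard.

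One cosmetic point: in your descent step you write ``the faithfully flat quasi-compact map $Z\to g(Z)$''; presumably you mean descending \'etaleness of $\bar N\to S$ at points of $u(Z)$ along the flat map $Z\to S$ (which need not be surjective, but pointwise descent of \'etaleness along a flat map through the relevant point suffices). This is harmless but worth phrasing more carefully.
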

		
		\begin{lemma} \label{lem:Neron model of quasi-projective etale scheme}
			Let $N^a \to S$ be a quasi-projective \'etale scheme over a normal Noetherian scheme $S$. Then
			\begin{enumerate}
				\item There exists a maximal quasi-projective \'etale scheme $N$ containing $N^a$ as an open subscheme.
				\item For every dense open subscheme $S_0 \subset S$, $N$ is N\'eron under $S_0 \subset S$.
			\end{enumerate}
		\end{lemma}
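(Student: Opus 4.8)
The plan is to reduce to an integral normal base, build $N$ as the \'etale locus of a finite normal cover of $S$, and then verify the N\'eron mapping property by extending morphisms across codimension one using the smoothness of the test scheme.

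First I would reduce to the case where $S$ is integral normal with function field $K$, since both claims hold separately over the connected components of $S$ and these are integral normal. Then $N^a$ is normal, being \'etale over a normal scheme, so each of its finitely many connected components is integral, dominates $S$, and has function field a finite separable extension of $K$. To construct $N$: by \Cref{lem:quasi-projective etale morphism} the morphism $N^a \to S$ is separated, of finite type and \'etale, in particular quasi-finite and separated, so Zariski's main theorem (Tag~082K) realizes $N^a$ as an open subscheme of a scheme finite over $S$; replacing this by the scheme-theoretic closure of $N^a$ and then by its normalization, I obtain a finite morphism $\bar N \to S$ with $\bar N$ normal in which $N^a$ is a dense open subscheme. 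Let $N \subseteq \bar N$ be the locus where $\bar N \to S$ is \'etale; this is open on the source (the \'etale locus of a morphism locally of finite presentation is open), contains $N^a$, and by \Cref{lem:quasi-projective etale morphism} again $N \to S$ is quasi-projective \'etale. For maximality, given any quasi-projective \'etale $M \to S$ containing $N^a$ as a dense open subscheme, each component of $M$ is integral normal and dominates $S$ with function field one of the extensions above, hence embeds as an open subscheme of the normalization of $S$ in that field; chasing through, the identity on $N^a$ spreads to an open immersion $M \hookrightarrow N$ over $S$, so $N$ is the maximal such scheme. This gives (1).

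For (2), fix a dense open $S_0 \subseteq S$ and write $N_0 = N_{|S_0}$. By \Cref{rmk:more assumptions in NMP}(2) it suffices to check the mapping property for quasi-affine smooth morphisms $Z \to S$ of finite type, and after passing to connected components I may assume $Z$ is integral, hence integral normal, with $Z_0 = Z \times_S S_0$ dense in $Z$ (\Cref{lem:density respects flat base change}). Uniqueness of an extension is immediate: $N \hookrightarrow \bar N$ is a monomorphism into the separated scheme $\bar N$, so two $S$-morphisms $Z \to N$ agreeing on the dense $Z_0$ agree by \Cref{lem:morphism to separated space is determined by dense subspace}. For existence, given $u_0 : Z_0 \to N_0$, compose with $N_0 \hookrightarrow \bar N$ to get $v_0 : Z_0 \to \bar N$ over $S_0$; the closure of its graph inside $\bar N \times_S Z$ is finite over $Z$ and, being finite and birational onto the normal integral scheme $Z$, is an isomorphism onto $Z$, which produces an $S$-morphism $v : Z \to \bar N$ restricting to $v_0$.

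\emph{The main obstacle} is the remaining point: that $v$ factors through $N$, i.e.\ that $\bar N \to S$ is \'etale at every point of $v(Z)$. Everything is already \'etale over $S_0$ and over the generic point of $S$, so the content is to handle a codimension-one point $\zeta \in Z$ lying over $S \smallsetminus S_0$. For such a $\zeta$ its image $\zeta'$ in $S$ is a codimension-one point, so $\mathcal{O}_{S,\zeta'}$ is a discrete valuation ring, $\zeta$ is the generic point of a component of the smooth fibre $Z_{\zeta'}$, and $\mathcal{O}_{S,\zeta'} \to \mathcal{O}_{Z,\zeta}$ is a flat local homomorphism of discrete valuation rings carrying a uniformizer to a uniformizer and with separable residue field extension. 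Factoring this through the discrete valuation ring $\mathcal{O}_{\bar N, v(\zeta)}$ (a localization of the normal finite $S$-scheme $\bar N$) and using multiplicativity of ramification indices forces $\mathcal{O}_{S,\zeta'} \to \mathcal{O}_{\bar N, v(\zeta)}$ to be \'etale, hence $v(\zeta) \in N$; so $v^{-1}(N)$ contains $Z_0$ together with every codimension-one point of $Z$. One then concludes $v(Z) \subseteq N$, either by purity of the branch locus or by a direct Hartogs-type extension of the morphism $v^{-1}(N) \to N$, which is defined off a closed subset of codimension $\ge 2$ of the normal scheme $Z$. Thus $v$ descends to the required extension $Z \to N$, completing the verification of the N\'eron mapping property.
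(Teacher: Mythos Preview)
Your construction of $N$ in (1) and your extension of $u_0$ to a morphism $v:Z\to\bar N$ via the graph closure are both fine and essentially parallel the paper (which instead uses the valuative criterion together with \cite[Cor.\ 20.4.12]{EGAIV4} to extend the rational map). Your ramification argument at codimension-one points of $Z$ is also correct and shows that the closed set $v^{-1}(\bar N\setminus N)\subset Z$ has codimension $\ge 2$.

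The gap is the last step, where you conclude $v(Z)\subset N$ ``by purity of the branch locus or a Hartogs-type extension.'' Neither works as stated. Hartogs does not help: you already have the morphism $v:Z\to\bar N$, and the question is whether it factors through the open subscheme $N\subset\bar N$; knowing it factors off a codimension-$\ge 2$ closed set is not enough (consider $\id:\AA^2\to\AA^2$, which does not factor through $\AA^2\setminus\{0\}$). Purity of the branch locus (Zariski--Nagata) requires the base to be \emph{regular}, whereas the lemma only assumes $S$ normal; moreover, even granting regularity of $S$, one would still need to control the pullback $v^{-1}(R)$ of the ramification divisor $R\subset\bar N$, and there is no reason $v$ should be flat or preserve codimension, so $v^{-1}(R)$ having codimension $\ge 2$ does not force it to be empty.

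The paper closes this gap by a different, pointwise argument that avoids purity entirely. Given any closed point $z\in Z_s$, it uses smoothness of $Z\to S$ to choose (after an \'etale shrinking of $S$) a section $e:S\to Z$ through $z$, and considers the composite $f=v\circ e:S\to\bar N$. Over $S_0$ this is a section of the \'etale morphism $N_0\to S_0$, hence an open immersion; since $\bar N$ is normal, Zariski's main theorem forces $f:S\to\bar N$ to be an open immersion on all of $S$, and being a section of a separated morphism it is also closed. Thus $f(S)$ is an entire connected component of $\bar N$ on which $\bar N\to S$ is an isomorphism, so $\bar N\to S$ is \'etale at $v(z)=f(s)$. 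This section trick is the missing ingredient in your approach.
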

		\begin{proof}
			(1) Zariski's main theorem factorizes the given morphism into an open immersion and finite morphism $N^a \hookrightarrow T \to S$. Since $N^a$ is normal, we may assume $T$ is normal and $N^a \hookrightarrow T$ is dense. Define $N \subset T$ by the \'etale locus of the morphism $T \to S$. We claim such $N \to S$ is the desired maximal quasi-projective \'etale scheme containing $N^a$.
			
			Consider an arbitrary quasi-projective \'etale scheme $M \to S$ containing $N^a$ as an open subscheme. We have a commutative diagram over $S$:
			\[\begin{tikzcd}[column sep=tiny]
				& M \arrow[rd, dashed, "j"] \\
				N^a \arrow[rr, hook] \arrow[ru, hook] & & T
			\end{tikzcd}.\]
			We first show the rational map $j : M \dashrightarrow T$ is a morphism. Since $M$ is normal and $T \to S$ is proper, $j$ is defined on codimension $1$ points in $M$ by the valuative criterion. But since $T \to S$ is affine, the undefined locus of $j$ must be of pure codimension $1$ by \cite[Corollaire 20.4.12]{EGAIV4}, forcing $j$ to be defined everywhere.
			
			Now $j : M \to T$ is a quasi-finite (because $M \to S$ is quasi-finite) and birational morphism to a normal scheme $T$. Zariski's main theorem shows that it is an open immersion. Hence it factors through $M \xhookrightarrow{j} N \subset T$, implying the desired maximality of $N$.
			
			\bigskip
			
			(2) Let $Z \to S$ be a smooth algebraic space and $u_0 : Z_0 \to N_0$ a morphism over $S_0$. The rational map $u : Z \dashrightarrow N \subset T$ extends to a morphism $u : Z \to T$ by the same argument as above, so it remains to show $u(Z) \subset N$. Let $s \in S$ be a point and $z \in Z_s$ a closed point. Since the claim is local on $S$, shrinking $S$ \'etale locally we may assume there exists a section $e : S \to Z$ passing through $z$. The composition
			\[\begin{tikzcd}
				f : S \arrow[r, "e"] & Z \arrow[r, "u"] & T
			\end{tikzcd} \]
			is a section of $T$ passing through $u(z) \in T_s$. Over $S_0$, $u_0$ lands in $N_0$ by assumption and hence $f_0 : S_0 \to N_0 \subset T_0$ is a section of an \'etale morphism $N_0 \to S_0$. This means $f_0$ is an open immersion. Since $T$ is normal, by Zariski's main theorem the section $f : S \to T$ is an open immersion over $S$. But it is also closed because it is a section of a separated morphism $T \to S$, proving $S$ is a connected component of $T$. In particular, $T \to S$ is \'etale at $u(z)$, or equivalently $u(z) \in N$.
		\end{proof}
		
		\begin{proof} [Proof of \Cref{thm:Neron model of quasi-projective etale scheme}]
			(1) The composition $N_0 \to S_0 \subset S$ is a quasi-projective \'etale morphism, so \Cref{lem:Neron model of quasi-projective etale scheme} claims there exists a maximal quasi-projective \'etale scheme $\tilde N \to S$ containing $N_0$. Though $\tilde N \to S$ is N\'eron under $S_0 \subset S$, it may be larger than the desired N\'eron model of $N_0$. To make $\tilde N$ smaller, consider a closed subset $C_0 = \tilde N_0 \setminus N_0$ over $S_0$. Take its Zariski closure $C = \overline{C_0}$ in $\tilde N$ and define
			\[ N = \tilde N \setminus C .\]
			By construction, $N \to S$ is an open subscheme of $\tilde N$ and an extension of $N_0 \to S_0$.
			
			To show the N\'eron mapping property of $N$, let $Z \to S$ be a smooth algebraic space and $u_0 : Z_0 \to N_0$ a morphism. Then $u_0$ extends to $u : Z \to \tilde N$ by the N\'eron mapping property of $\tilde N$. It remains to prove $u(Z) \subset N$. Note that $u$ is smooth because $Z \to S$ is smooth and $\tilde N \to S$ is \'etale \cite[Lemma 2.2.9]{neron}. Therefore $u(Z) \subset \tilde N$ is open. Taking the Zariski closure of $C_0 \subset \tilde N_0 \setminus u_0(Z_0)$ in $\tilde N$, we obtain a sequence of inclusions (the superscript $(-)^-$ means Zariski closure)
			\[ C \subset \big( \tilde N_0 \setminus u_0(Z_0) \big)^- = \big( \tilde N_0 \setminus u(Z) \big)^- \subset \big( \tilde N \setminus u(Z) \big)^- = \tilde N \setminus u(Z) .\]
			In other words, $u(Z) \subset \tilde N \setminus C = N$. The N\'eron mapping property of $N$ follows.
			
			\bigskip
			
			(2) Extend the identity map $M_0 \to N_0$ to a morphism $M \to N$ by the N\'eron mapping property. Such a morphism is quasi-finite and birational to a normal Noetherian scheme $N$, so again Zariski's main theorem concludes.
		\end{proof}

	\subsection{Group algebraic spaces and torsors}
		N\'eron models are mostly used for group spaces and torsors. Let us collect some results about them.
		
		\begin{proposition} \label{prop:Neron model of homomorphism}
			Let $G_0 \to S_0$ be a smooth group space and $G \to S$ its N\'eron model. Then
			\begin{enumerate}
				\item $G$ has a unique group structure extending that on $G_0$. If $G_0$ is commutative, then so is $G$.
				\item Let $H \to S$ be a smooth group space. Then the N\'eron extension of a homomorphism $f_0 : H_0 \to G_0$ to $f : H \to G$ is a homomorphism. \qed
			\end{enumerate}
		\end{proposition}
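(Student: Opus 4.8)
The plan is to derive both parts formally from the uniqueness clause of the N\'eron mapping property of $G$, using fiber powers of $G$ (and $H$) over $S$ as the smooth test objects.

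For part (1), I would first note that since $G \to S$ is smooth, the products $G \times_S G$ and $G \times_S G \times_S G$ are smooth over $S$, with restrictions $G_0 \times_{S_0} G_0$ and $G_0 \times_{S_0} G_0 \times_{S_0} G_0$ over $S_0$. Applying the N\'eron mapping property with test spaces $Z = S$, $Z = G$, and $Z = G \times_S G$ in turn, the identity section $e_0$, the inverse $i_0$, and the multiplication $m_0$ of $G_0$ extend uniquely to morphisms $e : S \to G$, $i : G \to G$, $m : G \times_S G \to G$ over $S$. Next I would check the group axioms: each of associativity, the unit law, and the inverse law is an equality between two morphisms out of one of these smooth test spaces into $G$, both sides of which restrict to the same morphism over $S_0$ (because $G_0$ is a group space); so the uniqueness part of the N\'eron mapping property forces them to agree over $S$. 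Uniqueness of the group structure is then immediate, since any competing $(m', i', e')$ extends the same data on $G_0$; and if $G_0$ is commutative, then $m$ and $m$ composed with the swap of the two factors both extend $m_0$, hence coincide, so $G$ is commutative.

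For part (2), I would extend $f_0 : H_0 \to G_0$ uniquely to $f : H \to G$ using smoothness of $H \to S$, and then observe that $f \circ m_H$ and $m_G \circ (f \times_S f)$ are two morphisms $H \times_S H \to G$ agreeing over $S_0$ (because $f_0$ is a homomorphism). Since $H \times_S H$ is smooth over $S$ with restriction $H_0 \times_{S_0} H_0$ over $S_0$, the uniqueness clause gives $f \circ m_H = m_G \circ (f \times_S f)$, and the analogous comparisons yield $f \circ e_H = e_G$ and $f \circ i_H = i_G \circ f$, so $f$ is a homomorphism.

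Since the argument is purely formal, I do not expect any serious obstacle; the only thing to verify with care is the routine bookkeeping that a fiber product of $S$-smooth algebraic spaces is again $S$-smooth and that forming such products commutes with restriction to $S_0$, so that the N\'eron mapping property is legitimately applicable to each test space used.
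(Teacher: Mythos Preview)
Your proposal is correct and is precisely the standard formal argument; the paper itself omits the proof entirely (marking the statement with \qed), so there is nothing to compare against beyond noting that your reasoning is the expected one.
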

		
		We may therefore say ``$G$ is a group space N\'eron under $S_0 \subset S$'' for simplicity.
		
		\begin{proposition} \label{prop:extension of etale homomorphism}
			Let $G \to S$ be a smooth group space and $H$ a group space N\'eron under $S_1 \subset S$. Assume that $S$ is $\mathrm{(S_2)}$ and $S_1 \subset S$ has codimension $\ge 2$ complement. Then
			\begin{enumerate}
				\item The extension of an \'etale homomorphism $f_1 : G_1 \to H_1$ is an \'etale homomorphism $f : G \to H$.
				\item If $G$ is separated, then the extension of an open immersion homomorphism $f_1 : G_1 \to H_1$ is an open immersion homomorphism $f : G \to H$.
			\end{enumerate}
		\end{proposition}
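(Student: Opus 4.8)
The plan is as follows. First, the extension exists and is canonical: since $G \to S$ is smooth, the N\'eron mapping property of $H$ under $S_1 \subset S$ applied to $Z = G$ produces a unique $S$-morphism $f \colon G \to H$ restricting to $f_1$. Feeding the smooth algebraic space $Z = G \times_S G$ into the same property shows that the two $S$-morphisms $m_H \circ (f \times f)$ and $f \circ m_G$ agree over $S$ (they agree over $S_1$ since $f_1$ is a homomorphism), and likewise $f$ commutes with inversion and preserves the identity; hence $f$ is a homomorphism, exactly as in \Cref{prop:Neron model of homomorphism}. Recall also that $H \to S$ is smooth by \Cref{def:Neron model}, so we are dealing with a homomorphism of smooth group spaces over $S$.

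For (1), the key point is that $\Lie f \colon \Lie G \to \Lie H$ is an isomorphism over all of $S$. By \Cref{prop:smoothness criterion of group space 2} the conormal sheaves $C_G$ and $C_H$ are locally free on $S$, and by \Cref{prop:cotangent sheaf of group scheme} the homomorphism $f$ induces a morphism $\varphi \colon C_H \to C_G$ of locally free sheaves whose fppf dual is $\Lie f$. Over $S_1$, where $f_1$ is \'etale, the conormal sequence $0 \to f_1^* \Omega_{H_1/S_1} \to \Omega_{G_1/S_1} \to \Omega_{G_1/H_1} \to 0$ is exact and locally split (since $f_1$ is smooth) with $\Omega_{G_1/H_1} = 0$; pulling back along the identity section shows that $\varphi|_{S_1}$ is an isomorphism. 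Since $C_G$, $C_H$, and hence $\SheafHom_{\mathcal O_S}(C_G, C_H)$ and $\SheafHom_{\mathcal O_S}(C_H, C_G)$, are locally free, they are $\mathrm{(S_2)}$ as $\mathcal O_S$-modules; as $S$ is $\mathrm{(S_2)}$ and $S \setminus S_1$ has codimension $\ge 2$, their sections over $S_1$ extend uniquely over $S$. Hence $(\varphi|_{S_1})^{-1}$ extends to a morphism $\psi \colon C_G \to C_H$ over $S$, and $\varphi \psi$ and $\psi \varphi$ equal the identity over $S_1$, hence over $S$; so $\varphi$, and with it $\Lie f$, is an isomorphism. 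For every $s \in S$ this says that the differential at the identity of the homomorphism $f_s \colon G_s \to H_s$ of smooth algebraic groups (smooth by Cartier's theorem) is an isomorphism, which in characteristic $0$ forces $f_s$ to be \'etale. Consequently $\Omega_{G/H}$ is a finite type $\mathcal O_G$-module with $\Omega_{G/H} \otimes k(x) = \Omega_{G_s/H_s} \otimes k(x) = 0$ for all $x \in G$, so it vanishes and $f$ is unramified; and $f$ is flat by the fibrewise criterion of flatness (Tag~05X1), using that $G \to S$ is flat and each $f_s$ is flat. A flat, unramified, and locally finitely presented morphism is \'etale, which proves (1).

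For (2), an open immersion is \'etale, so by (1) the extension $f$ is \'etale, and it remains to check that $f$ is a monomorphism, i.e.\ that $\ker f \to S$ is trivial. Since $H$ is locally separated, $e_H \colon S \to H$ is an immersion, so $\ker f = f^{-1}(e_H(S))$ is a locally closed subgroup space of $G$; in particular $\ker f \to S$ is separated (as $G \to S$ is separated) and \'etale (as $f$ is). Over the dense open $S_1$ the homomorphism $f_1$ is an open immersion, hence a monomorphism, so $\ker f$ is trivial over $S_1$; by \Cref{lem:etale separated group space is trivial} the separated \'etale group space $\ker f \to S$ is then trivial over all of $S$. Therefore $f$ is a monomorphism, and an \'etale monomorphism is an open immersion.

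The step I expect to be the main obstacle, and the one place where the $\mathrm{(S_2)}$ and codimension-$\ge 2$ hypotheses are genuinely used, is the extension of the isomorphism $\Lie f$ from $S_1$ to $S$ via the fact that a morphism between locally free sheaves on an $\mathrm{(S_2)}$ scheme is determined by, and extends from, the complement of a closed subset of codimension $\ge 2$. Once this is in hand, the remaining verifications are standard fibrewise arguments.
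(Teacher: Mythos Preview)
Your proof is correct and follows essentially the same approach as the paper: in (1) reduce \'etaleness of $f$ to the conormal map $C_H \to C_G$ being an isomorphism and use Hartogs on locally free sheaves over an $\mathrm{(S_2)}$ base, and in (2) show $\ker f$ is separated \'etale and trivial over $S_1$, then invoke \Cref{lem:etale separated group space is trivial}. You in fact supply more detail than the paper, which simply asserts that an isomorphism on Lie algebras suffices for \'etaleness; your fibrewise argument via Cartier's theorem and the fibrewise flatness criterion makes that implicit step explicit.
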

		\begin{proof}
			(1) The extension $f$ is a homomorphism by \Cref{prop:Neron model of homomorphism}. To show it is \'etale, it is enough to show
			\[ f_* : \Lie G \to \Lie H \]
			is an isomorphism, or equivalently a homomorphism $f^* : C_H \to C_G$ between their conormal sheaves is an isomorphism. Both $C_G$ and $C_H$ are locally free sheaves on an $\mathrm{(S_2)}$ scheme $S$, so Hartogs' theorem says $f^*$ is an isomorphism iff its restriction to $S_1$ is. The codimension $\ge 2$ assumption is necessary; see \Cref{ex:counterexample for codimension 1 Neron}.
			
			\bigskip
			
			(2) If $G$ is separated and $f_1$ is an open immersion, then $\ker f$ is an \'etale and separated group scheme that is trivial over $S_1$. Hence it is trivial by \Cref{lem:etale separated group space is trivial}. Every \'etale monomorphism is an open immersion (Tag~025G).
		\end{proof}
		
		\begin{proposition} \label{prop:torsor is Neron iff group is}
			Let $G \to S$ be a smooth group space and $N \to S$ a $G$-torsor. Then $G$ is N\'eron under $S_0 \subset S$ if and only if $N$ is.
		\end{proposition}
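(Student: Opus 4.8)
The plan is to combine two facts: the formation of N\'eron models is local on the base in the smooth topology (\Cref{prop:Neron is local in smooth topology}), and a torsor becomes trivial after base change along itself. First I would note that $N \to S$ is smooth and surjective. Surjectivity is built into the definition of a torsor; for smoothness, observe that after the fppf base change $N \to S$ the morphism $N \to S$ becomes the projection $G \times_S N \to N$, which is smooth because $G \to S$ is, and smoothness descends along fppf covers. Hence $S' \coloneq N \to S$ is a smooth surjective morphism, and $S'_0 \coloneq S' \times_S S_0 = N_0$.

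Next I would set up the comparison over $S'$. The torsor structure furnishes an isomorphism $\Theta \colon G \times_S N \xrightarrow{\,\sim\,} N \times_S N$, $(g,n) \mapsto (g \cdot n, n)$, of algebraic spaces over $S' = N$, and its restriction over $S_0$ identifies $G_0 \times_{S_0} N_0$ with $N_0 \times_{S_0} N_0$. Thus the two extensions $G \times_S S' \to S'$ and $N \times_S S' \to S'$ of the (identified) space $G_0 \times_{S_0} N_0 \cong N_0 \times_{S_0} N_0$ over $S'_0 \subset S'$ are isomorphic, so ``$G \times_S S'$ is N\'eron under $S'_0 \subset S'$'' and ``$N \times_S S'$ is N\'eron under $S'_0 \subset S'$'' are the same assertion.

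It therefore suffices to prove the criterion: for a smooth extension $M \to S$ of $M_0 \to S_0$ and a smooth surjective $h \colon S' \to S$, the space $M$ is N\'eron under $S_0 \subset S$ if and only if $M \times_S S'$ is N\'eron under $S'_0 \subset S'$. The forward implication is \Cref{prop:Neron is local in smooth topology}(1). For the converse, \Cref{prop:Neron is local in smooth topology}(2) produces \emph{some} N\'eron model $\mathcal M \to S$ of $M_0$, together with an isomorphism $\mathcal M \times_S S' \cong M \times_S S'$; applying the N\'eron mapping property of $\mathcal M$ to the smooth space $M \to S$ yields a morphism $\phi \colon M \to \mathcal M$ restricting to the identity on $M_0$. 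After base change along $h$, the map $\phi \times_S S'$ becomes a morphism between two N\'eron models of $M_0 \times_{S_0} S'_0$ over $S'$ extending the identity, hence an isomorphism by uniqueness of N\'eron models; since being an isomorphism descends along the fppf cover $h$, the map $\phi$ is itself an isomorphism and $M \cong \mathcal M$ is N\'eron. Applying this criterion to $M = G$ and to $M = N$, both with $S' = N$, and chaining with the identification of the previous paragraph gives the equivalence.

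The one delicate point, which I would spell out carefully, is exactly this last step: \Cref{prop:Neron is local in smooth topology}(2) only guarantees the \emph{existence} of a N\'eron model downstairs, not that the given extension is it, so one must manufacture the comparison map via the N\'eron mapping property and invoke fppf descent of the property ``isomorphism''. A more pedestrian alternative, which sidesteps this, is to verify the N\'eron mapping property of $N$ (resp.\ of $G$) by hand: given a smooth $Z \to S$ and $u_0 \colon Z_0 \to N_0$, pull back to $Z \times_S N$ where $\Theta$ trivializes the torsor, convert $u_0$ into a morphism to $G_0$, extend it over $S$ using that $G$ is N\'eron, convert back to a morphism $Z \times_S N \to N$, and descend it along the fppf cover $Z \times_S N \to Z$ --- the required equality of the two pullbacks to $Z \times_S N \times_S N$ being checked with the uniqueness half of the N\'eron mapping property of $G$. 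Either route, the crux is the interaction between the torsor trivialization and descent.
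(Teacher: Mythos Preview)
Your proof is correct and rests on the same core idea as the paper: reduce to the case where the torsor is trivial by passing to a smooth cover, then use \Cref{prop:Neron is local in smooth topology}. The paper's proof is a single sentence: since the question is \'etale local on $S$ by \Cref{prop:Neron is local in smooth topology}, one may shrink $S$ \'etale locally so that $N\to S$ has a section, whence $N\cong G$.

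The main difference is that you use the torsor itself $S'=N\to S$ as the smooth surjective cover, whereas the paper uses an \'etale cover on which $N$ acquires a section; both trivialize the torsor upstairs, so this is inessential. More substantively, you are explicit about a point the paper elides: \Cref{prop:Neron is local in smooth topology}(2) only produces \emph{some} N\'eron model downstairs, not a guarantee that a given smooth extension $M$ is it. Your comparison map $\phi\colon M\to\mathcal M$ via the N\'eron mapping property, followed by fppf descent of ``isomorphism'', is exactly the missing step, and it is good that you spell it out. Your pedestrian alternative is also fine and is essentially how one would unpack the descent in \Cref{prop:Neron is local in smooth topology} by hand.
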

		\begin{proof}
			The question is \'etale local on the base $S$ by \Cref{prop:Neron is local in smooth topology}, so we may assume $N \to S$ has a section after shrinking $S$ \'etale locally. Then $N$ is a trivial $G$-torsor, or $N \cong G$.
		\end{proof}
		
		\begin{proposition} \label{prop:Neron torsor}
			Let $G_0 \to S_0$ be a smooth group space and $N_0 \to S_0$ a $G_0$-torsor.
			\begin{enumerate}
				\item Assume $G \to S$ and $N \to S$ are N\'eron models of $G_0$ and $N_0$. Then $N$ is a $G$-torsor if and only if $N \to S$ is surjective.
				\item Assume $N_0$ has a surjective N\'eron model $N \to S$. Then $G_0$ has a N\'eron model $G \to S$ (and thus $N$ is a $G$-torsor).
			\end{enumerate}
		\end{proposition}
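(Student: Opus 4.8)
The plan is to treat both parts as formal consequences of the N\'eron mapping property together with \Cref{prop:Neron is local in smooth topology}; I prove (1) first and invoke it inside (2).

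\emph{Part (1).} The forward direction is immediate: a $G$-torsor is an fppf cover of $S$, and $G \to S$ is surjective since it carries the identity section, so $N \to S$ is surjective. For the converse, assume $N \to S$ is surjective. The product $G \times_S N \to S$ is a smooth algebraic space restricting to $G_0 \times_{S_0} N_0$ over $S_0$, so the N\'eron mapping property of $N$ extends the action $a_0 : G_0 \times_{S_0} N_0 \to N_0$ uniquely to a morphism $a : G \times_S N \to N$. The associativity and unit axioms for $a$ are equalities of morphisms out of the smooth $S$-spaces $G \times_S G \times_S N$ and $N$ which hold over $S_0$, hence hold by the injectivity clause of the N\'eron mapping property of $N$; so $a$ is a genuine $G$-action. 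Moreover $N \to S$ is an fppf cover, being smooth (a N\'eron model) and surjective (hypothesis), so it remains to see that $\Psi = (a, \pr_2) : G \times_S N \to N \times_S N$ is an isomorphism. This property is \'etale-local on $S$ and $N \to S$ is smooth surjective, so after an \'etale base change we may assume $N \to S$ has a section $\sigma$. Then $\sigma_0$ trivializes the $G_0$-torsor $N_0$, so $N$ is a N\'eron model of $N_0 \cong G_0$; uniqueness of N\'eron models identifies $N$ with $G$ over $S$ compatibly over $S_0$, and concretely the unique extension $\alpha : G \to N$ of $\alpha_0 : g \mapsto a_0(g, \sigma_0)$ — namely $g \mapsto a(g, \sigma)$ — is an isomorphism. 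The identity $a(g, \alpha(h)) = \alpha(gh)$ holds over $S_0$, hence everywhere by the N\'eron mapping property, so under $\alpha$ the map $\Psi$ becomes $(m, \pr_2) : G \times_S G \to G \times_S G$, which is an isomorphism with inverse $(x, h) \mapsto (xh^{-1}, h)$. Hence $N$ is a $G$-torsor.

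\emph{Part (2).} Since $N \to S$ is smooth and surjective I take $S' = N$ as a smooth cover of $S$, so that $S'_0 = S' \times_S S_0 = N_0$, which is scheme-theoretically dense in $S'$ by \Cref{lem:density respects flat base change}. The pullback of $N_0 \to S_0$ along $S'_0 \to S_0$ is $N_0 \times_{S_0} N_0 \to N_0$; it carries the diagonal section, so as a torsor under $G_0 \times_{S_0} N_0$ with a section it is trivial, giving $N_0 \times_{S_0} N_0 \cong G_0 \times_{S_0} N_0$ over $S'_0 = N_0$. On the other hand $N \to S$ is the N\'eron model of $N_0 \to S_0$, so by smooth base change (\Cref{prop:Neron is local in smooth topology}) the pullback $N \times_S N \to N$ is the N\'eron model of $N_0 \times_{S_0} N_0 \to N_0$, hence of $G_0 \times_{S_0} N_0 = (G_0)_{S'_0} \to S'_0$ through the above isomorphism. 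Thus $(G_0)_{S'_0}$ admits a N\'eron model over $S'$, and smooth descent (\Cref{prop:Neron is local in smooth topology}) produces a N\'eron model $G \to S$ of $G_0$. Since $N \to S$ is surjective, part (1) then shows $N$ is a $G$-torsor.

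The arguments are short once these inputs are assembled; the points requiring genuine care are that N\'eron models need not be separated — so that ``agreement on a dense open'' must each time be deduced from the injectivity in the N\'eron mapping property applied to an explicit smooth test space, rather than from a separatedness argument — and the use of the standard fact that a torsor trivializes after base change to its own total space, which is precisely what feeds \Cref{prop:Neron is local in smooth topology} in (2). I expect the bookkeeping around non-separatedness to be the only recurring friction; there is no substantive obstacle.
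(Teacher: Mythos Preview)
Your proof is correct and follows essentially the same strategy as the paper: extend the action via the N\'eron mapping property, trivialize locally via a section to identify $N$ with $G$, and for (2) descend along \Cref{prop:Neron is local in smooth topology}. The only cosmetic difference is that in (2) you take $S' = N$ itself as the smooth cover (using the diagonal section to trivialize) whereas the paper passes to an \'etale cover on which $N$ admits a section; both routes feed the same input into the descent step.
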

		\begin{proof}
			(1) The implication $(\Longrightarrow)$ is clear, so let us prove $(\Longleftarrow)$. The $G_0$-action $G_0 \times_{S_0} N_0 \to N_0$ uniquely extends to a $G$-action $G \times_S N \to N$ by the N\'eron mapping property. To show this defines a torsor, assume $N \to S$ admits a section $e : S \to N$ after shrinking $S$ \'etale locally. The orbit map $\phi_e : G \to N$ (e.g., see \S \ref{sec:group scheme action}) is an isomorphism over $S_0$ since it trivializes the $G_0$-torsor $N_0$. The isomorphism extends over $S$, as both $G$ and $N$ are N\'eron models. (In fact, this argument proves $N$ is a $G$-torsor over the image of $N \to S$.)
			
			\bigskip
			
			(2) Since $N \to S$ is surjective, we can take a covering $S' = \bigsqcup_{\alpha} U_{\alpha} \to S$ in \'etale topology so that $N' \to S'$ admits a section $e : S' \to N'$. Use its restriction $e_0 : S'_0 \to N'_0$ to trivialize the $G'_0$-torsor $N'_0$ and yield an isomorphism of algebraic spaces $\phi_{e_0} : G'_0 \to N'_0$. Since $N'_0$ has a N\'eron model $N' \to S'$ by \Cref{prop:Neron is local in smooth topology}, $G'_0$ admits the same N\'eron model $G' = N' \to S'$. Descend it to a N\'eron model $G \to S$ of $G_0 \to S_0$ using \Cref{prop:Neron is local in smooth topology}. As the N\'eron model of a group space, $G$ is a group space as well.
		\end{proof}
		
		\begin{remark}
			We do not know whether the following statement is true: let $N_0$ be a $G_0$-torsor. If $G_0$ has a N\'eron model $G$, then so does $N_0$.
		\end{remark}
		
		\begin{proposition} \label{prop:Neron torsor restriction}
			Let $G$ be a group space N\'eron under $S_0 \subset S$. Then the restriction map of pointed sets
			\[ H^1_{\acute et} (S, G) \to H^1_{\acute et} (S_0, G) \]
			is injective. Moreover, $[N_0] \in H^1_{\acute et}(S_0, G)$ is contained in its image if and only if $N_0 \to S_0$ admits a surjective smooth extension $N^a \to S$.
		\end{proposition}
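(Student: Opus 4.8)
The plan is to prove the two assertions separately, in both cases using \Cref{prop:torsor is Neron iff group is}: since $G$ is N\'eron under $S_0 \subset S$, every $G$-torsor over $S$ is a smooth algebraic space which is itself N\'eron under $S_0 \subset S$.

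\emph{Injectivity.} Let $P, P' \to S$ be $G$-torsors and $\varphi_0 : P_0 \to P'_0$ an isomorphism of $G_0$-torsors. Applying the N\'eron mapping property of $P'$ to the smooth $S$-space $Z = P$, the morphism $\varphi_0$ extends uniquely to $\varphi : P \to P'$ over $S$; symmetrically $\varphi_0^{-1}$ extends to $\psi : P' \to P$. Since $\psi \circ \varphi$ and $\id_P$ both restrict to $\id_{P_0}$, uniqueness in the N\'eron mapping property of $P$ forces $\psi \circ \varphi = \id_P$, and likewise $\varphi \circ \psi = \id_{P'}$; thus $\varphi$ is an $S$-isomorphism. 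Finally, the two morphisms $G \times_S P \to P'$ given by $(g,p) \mapsto \varphi(g \cdot p)$ and by $(g,p) \mapsto g \cdot \varphi(p)$ agree over $S_0$ by the $G_0$-equivariance of $\varphi_0$, and $G \times_S P$ is smooth over $S$, so they agree over $S$. Hence $\varphi$ is an isomorphism of $G$-torsors and $[P] = [P']$ in $H^1_{\acute et}(S, G)$, which is exactly injectivity.

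\emph{The surjectivity criterion.} If $[N_0]$ is the image of some $[N] \in H^1_{\acute et}(S,G)$, then $N \to S$ is smooth and surjective (\'etale-locally it is $G \to S$, which is surjective since $G$ carries the identity section), so $N$ serves as the desired extension $N^a$. Conversely, suppose $N_0 \to S_0$ admits a surjective smooth extension $N^a \to S$. A smooth surjective morphism admits a section after an \'etale surjective base change, so there is an \'etale surjective $S' \to S$ with a section of $N^a \times_S S' \to S'$; restricting it over $S'_0$ produces a section of the torsor $N_0 \times_{S_0} S'_0$, which is therefore trivial, i.e.\ isomorphic to $G_0 \times_{S_0} S'_0$ as a $(G_0 \times_{S_0} S'_0)$-torsor. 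By smooth base change (\Cref{prop:Neron is local in smooth topology}) the latter has N\'eron model $G \times_S S'$, a trivial and in particular surjective torsor over $S'$, so $N_0 \times_{S_0} S'_0$ has a surjective N\'eron model over $S'$. By smooth descent (\Cref{prop:Neron is local in smooth topology} again), $N_0$ then has a N\'eron model $N \to S$ with $N \times_S S' \cong G \times_S S'$, and $N \to S$ is surjective because $S' \to S$ is. Now \Cref{prop:Neron torsor}(1) shows $N$ is a $G$-torsor, and since the $G$-action on $N$ extends the torsor action on $N_0$ we get $[N_{|S_0}] = [N_0]$; hence $[N_0]$ lies in the image.

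\emph{Main obstacle.} The only non-formal step is the converse half of the criterion: promoting a bare surjective smooth extension to an honest $G$-torsor over $S$. Surjectivity enters in an essential way there, since it is what makes the \'etale cover extracted from the section surjective, so that smooth descent of N\'eron models applies; mere smoothness of $N^a$ would not suffice.
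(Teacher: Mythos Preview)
Your proof is correct and follows essentially the same strategy as the paper: injectivity via uniqueness of N\'eron models (\Cref{prop:torsor is Neron iff group is}), and the converse direction of the image criterion via smooth descent of N\'eron models (\Cref{prop:Neron is local in smooth topology}). The only cosmetic difference is that the paper uses $N^a \to S$ itself as the smooth surjective cover---the tautological section of $N_0 \times_{S_0} N^a_0 \to N^a_0 = N_0$ trivializes the pulled-back torsor---rather than first refining to an \'etale cover with a section as you do.
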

		\begin{proof}
			To prove the injectiveness, let $N \to S$ and $M \to S$ be $G$-torsors whose restrictions over $S_0$ are isomorphic. By \Cref{prop:torsor is Neron iff group is}, they are both N\'eron under $S_0 \subset S$ and hence isomorphic by the uniqueness of the N\'eron model.
			
			For the second claim, consider the given extension $S' \coloneq N^a \to S$ as a covering in smooth topology. Its restriction $S'_0 = N_0 \to S_0$ trivializes the torsor (by definition of torsor):
			\[\begin{tikzcd}
				G'_0 \arrow[r] \arrow[d] & N_0 \arrow[d] \\
				S'_0 \arrow[r] & S_0
			\end{tikzcd}.\]
			Note that $G'_0 \to S'_0$ admits a N\'eron model $G' \to S'$. Such a N\'eron model descends to a N\'eron model $N \to S$ of $N_0 \to S_0$ by \Cref{prop:Neron is local in smooth topology}. The morphism $N \to S$ is surjective by descent, so it is a $G$-torsor by \Cref{prop:Neron torsor}.
		\end{proof}
		
		\begin{lemma} \label{lem:K-torsor is Neron}
			Let $K \to T$ be a group space N\'eron under $T_0 \subset T$, $S \to T$ a smooth algebraic space, and $N \to S$ a $K$-torsor. Then $N \to S$ satisfies a ``strong N\'eron mapping property'' in the following sense:
			\begin{itemize}
				\item For every algebraic space $Z \to S$ that is smooth over $T$, the restriction map
				\[ \operatorname{Mor}_S (Z, N) \to \operatorname{Mor}_{S_0} (Z_0, N_0) ,\qquad u \mapsto u_0 = u_{|S_0} \]
				is bijective.
			\end{itemize}
		\end{lemma}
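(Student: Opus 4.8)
The plan is to reduce to the case of a trivial torsor by \'etale descent on $S$, mirroring the proof of \Cref{prop:Neron is local in smooth topology}, and then to invoke the N\'eron mapping property of $K \to T$ directly.

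Consider first the trivial case $N \cong K \times_T S$, i.e. $N$ is the pullback of $K$ along $S \to T$ with its tautological $K$-torsor structure. For any $Z \to S$ the universal property of the fiber product identifies a morphism $Z \to N$ over $S$ with a morphism $Z \to K$ over $T$, so $\operatorname{Mor}_S(Z, N) = \operatorname{Mor}_T(Z, K)$; since $N_0 = K_0 \times_{T_0} S_0$ and $Z_0 = Z \times_T T_0$, likewise $\operatorname{Mor}_{S_0}(Z_0, N_0) = \operatorname{Mor}_{T_0}(Z_0, K_0)$, and these identifications are compatible with restriction. As $Z \to T$ is smooth and $K$ is N\'eron under $T_0 \subset T$, \Cref{def:Neron model} gives that $\operatorname{Mor}_T(Z, K) \to \operatorname{Mor}_{T_0}(Z_0, K_0)$ is bijective, which is exactly the assertion.

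For a general $K$-torsor, choose an \'etale surjection $S' \to S$ trivializing $N$ (possible since $K \to T$ is smooth, so the torsor is \'etale-locally trivial), and form $S'' = S' \times_S S'$, $N' = N \times_S S'$, $N'' = N \times_S S''$, $Z' = Z \times_S S'$, $Z'' = Z \times_S S''$, with projections $p_1, p_2 : S'' \to S'$. Since $S' \to S$ and $S'' \to S$ are \'etale, so are $Z' \to Z$ and $Z'' \to Z$; hence $Z'$ and $Z''$ are again smooth over $T$, and $S'$, $S''$ together with their restrictions over $T_0$ are smooth over $T$ as well, so the trivial case already established applies to the torsors $N' \to S'$ and $N'' \to S''$. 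Given $u_0 : Z_0 \to N_0$ over $S_0$, pull it back to $u_0' : Z_0' \to N_0'$, extend uniquely to $u' : Z' \to N'$ over $S'$; the two pullbacks $p_1^* u'$ and $p_2^* u'$ to $Z'' \to N''$ agree over $S_0''$ (both restrict to the pullback of $u_0$) and hence coincide by the uniqueness half of the strong mapping property for $N''$, so $u'$ is a descent datum and descends to the desired $u : Z \to N$ over $S$ restricting to $u_0$; uniqueness of $u$ follows from the same argument applied one level up. The one point requiring care, and the only genuine obstacle, is the bookkeeping of the smoothness hypothesis: we are testing against objects smooth over $T$ rather than over $S$, so one must verify at each stage that base change along the \'etale cover $S' \to S$ preserves smoothness over $T$ --- which it does, \'etale morphisms being smooth and smoothness being stable under composition and base change. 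Everything else is the standard effectivity-of-descent argument.
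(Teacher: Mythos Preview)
Your proof is correct, but the paper takes a different and somewhat shorter route. Instead of trivializing $N$ \'etale-locally on $S$ and descending, the paper base-changes $N$ along the test object: form $N_Z = N \times_S Z \to Z$, observe that it is a torsor under $K_Z = K \times_T Z$, and note that $K_Z \to Z$ is N\'eron under $Z_0 \subset Z$ by smooth base change (\Cref{prop:Neron is local in smooth topology}, using that $Z \to T$ is smooth). Then \Cref{prop:torsor is Neron iff group is} gives that $N_Z \to Z$ is itself N\'eron, so the section $u_0 : Z_0 \to N_{Z_0}$ extends uniquely to a section $Z \to N_Z$, and composing with $N_Z \to N$ gives the desired morphism.

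The comparison: your argument unpacks the descent explicitly and appeals only to the N\'eron property of $K \to T$, making it self-contained; the paper's argument is terser because it packages the \'etale-local trivialization step inside the already-proved \Cref{prop:torsor is Neron iff group is}. Conceptually they are close---your descent step is essentially what goes into the proof of that proposition---but the paper's version makes the key observation (that it suffices to extend a \emph{section} of the base-changed torsor $N_Z \to Z$) more transparent and avoids repeating the descent bookkeeping.
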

		\begin{proof}
			Universal property of fiber product constructs a diagram
			\[\begin{tikzcd}
				Z \arrow[r, dashed, "u_0"] \arrow[rd, "\id"'] & N_Z \arrow[r] \arrow[d] & N \arrow[d] \\
				& Z \arrow[r] & S
			\end{tikzcd}.\]
			The base change $N_Z \to Z$ is a torsor under a group space $K_Z \to Z$, which is N\'eron under $Z_0 \subset Z$ since $Z \to T$ is smooth (\Cref{prop:Neron is local in smooth topology}). Hence $N_Z \to Z$ is N\'eron under $Z_0 \subset Z$ by \Cref{prop:torsor is Neron iff group is}, making $u_0 : Z_0 \to N_{Z_0}$ uniquely extends to $u : Z \to N_Z \to N$.
		\end{proof}

	\subsection{Exact sequences}
		The goal of this subsection is to prove the following.
		
		\begin{theorem} \label{thm:les of Neron models}
			Let $1 \to K \to G^a \to H$ be a left exact sequence of smooth group spaces over $S$. If $K$ and $H$ are N\'eron under $S_0 \subset S$, then there exists a commutative diagram of left exact sequences
			\[\begin{tikzcd}
				1 \arrow[r] & K \arrow[r] \arrow[d, phantom, "\parallel"] & G^a \arrow[r] \arrow[d, hook] & H \arrow[d, phantom, "\parallel"] \\
				1 \arrow[r] & K \arrow[r] & G \arrow[r] & H
			\end{tikzcd},\]
			where $G \to S$ is a N\'eron model of $G^a_0 \to S_0$ that contains $G^a$ as an open subgroup space.
		\end{theorem}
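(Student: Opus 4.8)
The plan is to \emph{define} $G$ to be the N\'eron model of $G^a_0 \to S_0$, so that everything reduces to one existence statement. Writing $j \colon S_0 \hookrightarrow S$ for the open immersion, it is enough, directly from the N\'eron mapping property (\Cref{def:Neron model}), to show that the lisse-\'etale sheaf $j_* G^a_0$ is represented by a smooth algebraic space $G \to S$ which contains $G^a$ as an open subspace. Granting this, the group structure on $G$ (and its commutativity when $G^a_0$ is commutative) is \Cref{prop:Neron model of homomorphism}, and the left exact row $1 \to K \to G \to H$ results from applying the left exact functor $j_*$ to $1 \to K_0 \to G^a_0 \to H_0$, using $j_* K_0 = K$ and $j_* H_0 = H$ (\Cref{rmk:more assumptions in NMP}(3)); the displayed square then commutes tautologically. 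So the whole content is the representability claim, which I would prove smooth-locally on $S$ (\Cref{prop:Neron is local in smooth topology}), assuming $S$ reduced, geometrically unibranch and Noetherian.

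The construction separates the ``base direction'' $H$ from the ``torsor direction'' $K$. First I would form the fppf quotient $H' \coloneq G^a/K$: it is a smooth $S$-group space since $K$ is smooth, and the induced monomorphism $H' \to H$ realizes $H'$ as a locally closed subgroup space of $H$ with $H'|_{S_0} = \im(G^a_0 \to H_0)$. Let $\bar H \subseteq H$ be its scheme-theoretic closure, a closed subgroup space whose Lie algebra space has main component generically equal to the locally free sheaf $\Lie H'$; hence \Cref{thm:main component 1} and \ref{prop:main component 2} apply to $\bar H$ and produce a smooth, universally equidimensional open subgroup space $\hat H \subseteq \bar H$ with $\hat H|_{S_0} = H'|_{S_0}$. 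This $\hat H$ is the N\'eron model of $H'|_{S_0}$ under $S_0 \subseteq S$: a morphism $Z_0 \to H'|_{S_0}$ from a smooth $Z \to S$ extends to $Z \to H$ by the N\'eron property of $H$, lands in $\bar H$ by the standard density/closure argument, and then factors through the equidimensional locus $\hat H$ by the universal property \Cref{thm:main component 1}(2) (a smooth $S$-space is universally open); injectivity of such extensions is inherited from $H$.

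For the torsor direction, $K_{\hat H} \coloneq K \times_S \hat H \to \hat H$ is N\'eron under $\hat H_0 \subseteq \hat H$ by smooth base change (\Cref{prop:Neron is local in smooth topology}(1)). Over $\hat H_0$ the morphism $G^a_0 \to \hat H_0$ is a $K_{\hat H_0}$-torsor, i.e. a class in $H^1_{\acute et}(\hat H_0, K_{\hat H_0})$, which lifts to $H^1_{\acute et}(\hat H, K_{\hat H})$ by \Cref{prop:Neron torsor restriction} applied over the base $\hat H$ — the required surjective smooth extension over $\hat H$ being supplied, after a further shrinking, by $G^a \to H' \subseteq \hat H$ together with $\hat H$-local sections. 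This gives a $K_{\hat H}$-torsor $G \to \hat H$ with $G|_{\hat H_0} \cong G^a_0$. Since $\hat H \to S$ is smooth and $K \to S$ is N\'eron, \Cref{lem:K-torsor is Neron} upgrades the N\'eron property of $G$ over $\hat H$ to the N\'eron property of $G$ over $S$: given $u_0 \colon Z_0 \to G^a_0$ with $Z \to S$ smooth, first extend $Z_0 \to \hat H_0$ to $Z \to \hat H$ by the N\'eron property of $\hat H$, then apply the strong N\'eron mapping property of the torsor $G \to \hat H$; uniqueness is handled the same way using the N\'eron property of $H$ downstairs. Finally $G^a \to G$ is an open immersion: it is a morphism of $K_{\hat H}$-torsors over the open dense subspace $H' \subseteq \hat H$, hence identifies $G^a$ with the open subspace $G \times_{\hat H} H' \subseteq G$.

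The genuine obstacle is the first layer, namely controlling the scheme-theoretic closure $\bar H$ of $G^a/K$ inside $H$: a closed subgroup space of a smooth group space need not be flat over $S$, so it is not a N\'eron model on the nose, and the main-component machinery of \S\ref{sec:main component} is exactly what is needed both to excise a smooth equidimensional model $\hat H$ and to see that passing from $\bar H$ to $\hat H$ preserves the N\'eron mapping property; this is where \Cref{thm:main component 1}(2) and the reduction to a reduced geometrically unibranch base are essential. The torsor-lifting and the $\hat H$-to-$S$ descent of the N\'eron property are, by contrast, fairly mechanical once \Cref{prop:Neron torsor restriction} and \Cref{lem:K-torsor is Neron} are available.
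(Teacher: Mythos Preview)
Your two-layer architecture—first find the correct ``image'' $\hat H \subseteq H$, then build $G$ as a $K$-torsor over it and descend the N\'eron property via \Cref{lem:K-torsor is Neron}—is exactly the paper's, and your torsor layer matches Steps~2--3 of the paper's proof. The gap is entirely in the image layer, where the main-component route fails on three counts. First, smooth localisation does not make $S$ geometrically unibranch (that property is \'etale-local), so you cannot reduce to the hypotheses of \Cref{thm:main component 1} without strengthening the theorem. Second, \Cref{thm:main component 1} requires a \emph{separated} group space as input, but $H$ need not be separated, and even when it is you still owe an argument that the closure $\bar H$ is closed under multiplication—this needs $H' \times_S H'$ scheme-theoretically dense in $\bar H \times_S \bar H$, which is not automatic once $\bar H \to S$ is non-flat (and that the monomorphism $H' = G^a/K \to H$ is even a locally closed immersion is itself not obvious). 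Third, and most seriously, \Cref{thm:main component 1} outputs the equidimensional locus of the \emph{main component}, which by \Cref{def:main component} is the closure of the generic \emph{neutral} component; if $H'_0 = (G^a/K)_0$ is disconnected, your $\hat H$ misses the non-identity pieces and the resulting $G$ is strictly too small.

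The paper bypasses all of this with a bare-hands construction: its image $I \subset H$ is the union of $I^a$-orbits of those \'etale-local sections $s_\alpha : U_\alpha \to H$ whose restriction to $S_0$ lifts to $G^a_0$. Since $I^a$ acts freely on $H$ by translation, each orbit map $I^a_\alpha \hookrightarrow H_\alpha$ is an open immersion, so $I$ is open in $H$ by construction—no closure step, no unibranch hypothesis, and non-neutral components come along automatically. The same orbit data also supply the explicit \'etale cover $I' \to I$ over which the $K$-torsor visibly trivialises, tightening the point your proposal leaves vague (namely, what plays the role of the surjective smooth extension in \Cref{prop:Neron torsor restriction} over $\hat H \setminus H'$).
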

		
		Let us state two immediate corollaries of this theorem.
		
		\begin{corollary} \label{cor:Neron model of open subgroup}
			Let $H$ be a group space N\'eron under $S_0 \subset S$. Then every open subgroup space $G_0 \subset H_0$ over $S_0$ admits a N\'eron model $G \subset H$ over $S$, an open subgroup space of $H$.
		\end{corollary}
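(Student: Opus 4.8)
The plan is to realize $G_0$ as the preimage of a base‑point section under the quotient map onto $T_0 \coloneq H_0/G_0$, to extend the quotient base $T_0$ to \emph{its} N\'eron model, and to pull back over $S$. Concretely: the right‑translation action of $G_0$ on $H_0$ is free and $G_0 \to S_0$ is smooth (in particular flat, locally of finite presentation over the locally Noetherian $S_0$), so the quotient sheaf $T_0 = H_0/G_0$ is representable by a locally quasi‑finite, \'etale algebraic space over $S_0$. It carries the section $\bar e_0 \colon S_0 \to T_0$ induced by the identity of $H_0$, and by construction the inclusion $G_0 \hookrightarrow H_0$ is identified with the fibre product $H_0 \times_{T_0,\bar e_0} S_0 \to H_0$; since $\bar e_0$ is a section of an unramified morphism it is an open immersion, so this identifies $G_0$ with the open subspace $q_0^{-1}(\bar e_0(S_0)) \subseteq H_0$, where $q_0\colon H_0 \to T_0$ denotes the quotient map.

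\textbf{Construction over $S$.} First I would produce a N\'eron model $T \to S$ of $T_0 \to S_0$. As $T_0 \to S_0$ is quasi‑projective \'etale (\Cref{lem:quasi-projective etale morphism}), over a normal Noetherian base this is exactly \Cref{thm:Neron model of quasi-projective etale scheme}, and the general case follows from the smooth‑local nature of the construction (\Cref{prop:Neron is local in smooth topology}). The model $T \to S$ is again \'etale, so $\bar e_0$ extends uniquely to a section $\bar e \colon S \to T$ by the N\'eron mapping property, and $\bar e(S) \subseteq T$ is open. Likewise, $H \to S$ being smooth, the N\'eron mapping property of $T$ extends $q_0$ to a morphism $q \colon H \to T$ over $S$. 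Now set
\[ G \;\coloneq\; q^{-1}\bigl(\bar e(S)\bigr) \;\subseteq\; H , \]
an open subspace of $H$, hence smooth over $S$; its restriction to $S_0$ is $q_0^{-1}(\bar e_0(S_0)) = G_0$, so $G$ is an extension of $G_0$.

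\textbf{N\'eron property and subgroup structure.} To verify the N\'eron mapping property of $G$, let $Z \to S$ be smooth and $u_0 \colon Z_0 \to G_0 \subseteq H_0$ a morphism over $S_0$; it extends uniquely to $u \colon Z \to H$ by the N\'eron mapping property of $H$, and then $q \circ u \colon Z \to T$ restricts over $S_0$ to $\bar e_0 \circ (\text{structure map of } Z_0)$, hence equals $\bar e \circ (\text{structure map of } Z)$ by the N\'eron mapping property of $T$; therefore $u$ factors through $G = q^{-1}(\bar e(S))$, and uniqueness is inherited from $H$. Thus $G$ is the N\'eron model of $G_0$. Finally $G$ carries a group structure extending that of $G_0$ (\Cref{prop:Neron model of homomorphism}), and the inclusion $\iota \colon G \hookrightarrow H$ is a homomorphism: the two morphisms $G \times_S G \to H$ given by $m_H \circ (\iota \times \iota)$ and $\iota \circ m_G$ agree over $S_0$, hence agree by the N\'eron mapping property of $H$. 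So $G$ is an open subgroup space of $H$, as claimed; equivalently this is the instance of \Cref{thm:les of Neron models} applied to the left exact sequence $1 \to 1 \to G \to H$. The step I expect to be the main obstacle is the very first one — establishing that the quotient $T_0 = H_0/G_0$ admits a N\'eron model over $S$, i.e.\ controlling representability and the separatedness/finite‑type reductions needed to invoke \Cref{thm:Neron model of quasi-projective etale scheme}; once $T$ is in hand everything is a formal manipulation of the N\'eron mapping property.
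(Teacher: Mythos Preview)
Your strategy of passing to the coset space $T_0 = H_0/G_0$, extending it to a N\'eron model $T$ over $S$, and then pulling back the identity section is a natural one, but the step you yourself flag as the obstacle is a genuine gap. \Cref{thm:Neron model of quasi-projective etale scheme} requires $T_0 \to S_0$ to be quasi-projective \'etale---that is, separated and of finite type by \Cref{lem:quasi-projective etale morphism}---and $S$ to be normal Noetherian. None of this is available in the generality of the corollary: $H$ is only locally of finite type (N\'eron models need not be of finite type, cf.\ \Cref{rmk:more assumptions in NMP}), so $\pi_0(H_0)$ and hence $T_0$ can have infinitely many sheets; and nothing is assumed about $S$ beyond the standing locally Noetherian hypothesis. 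Smooth-localizing via \Cref{prop:Neron is local in smooth topology} does not repair either issue, since a smooth cover of a non-normal base need not be normal.

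The paper avoids the quotient altogether. It first writes down by hand a crude extension of $G_0$ to an open subgroup over all of $S$, namely $G^a \coloneq G_0 \cup H^{\circ\circ}$; this is a subgroup because every open subgroup of $H_0$ contains $H_0^{\circ\circ}$, so the union is $G_0$ over $S_0$ and $H^{\circ\circ}$ over $S \setminus S_0$. It then invokes \Cref{thm:les of Neron models} for the sequence $1 \to 1 \to G^a \hookrightarrow H$: with trivial kernel and N\'eron $H$, that theorem \emph{constructs} the N\'eron model $G \supset G^a$ directly, and in the trivial-kernel case the output sits as an open subgroup of $H$. You cite \Cref{thm:les of Neron models} only at the very end, applied to your already-constructed $G$ to check that the inclusion is a homomorphism; the substantive use is rather to build $G$ from any smooth extension $G^a$ over $S$, which is exactly what the paper does.
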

		\begin{proof}
			The strict neutral component $H^{\circ\circ} \subset H$ is an open subgroup space by \Cref{thm:strict neutral component}. The union $G^a = G_0 \cup H^{\circ\circ} \subset H$ is an open subgroup space of $H$. Apply \Cref{thm:les of Neron models} to $G^a \subset H$.
		\end{proof}
		
		\begin{corollary} \label{cor:Neron under quasi-projective etale homomorphism}
			Let $S$ be a normal Noetherian scheme and $H$ a group space N\'eron under $S_0 \subset S$. Assume that a smooth commutative group space $G^a \to S$ admits a quasi-projective \'etale homomorphism
			\[ f^a : G^a \to H .\]
			Then there exists a N\'eron model $G \to S$ of $G^a_0 \to S_0$ containing $G^a$ as an open subgroup space. Moreover, $f^a$ extends to a quasi-projective \'etale homomorphism $f : G \to H$.
		\end{corollary}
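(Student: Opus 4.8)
The plan is to isolate the \'etale kernel of $f^a$, replace it by its quasi-projective N\'eron model, and then apply \Cref{thm:les of Neron models}. First I would set $K \coloneqq \ker(f^a)$, the fibre product of $f^a \colon G^a \to H$ with the identity section $e_H \colon S \to H$. Since $f^a$ is quasi-projective \'etale and both properties are stable under base change, $K \to S$ is a quasi-projective \'etale scheme over the normal Noetherian scheme $S$; and since $f^a$ is separated, $K$ is a closed subgroup space of $G^a$. By \Cref{thm:Neron model of quasi-projective etale scheme}, $K_0 \to S_0$ admits a quasi-projective N\'eron model $K^{\mathrm n} \to S$, and the quasi-projective \'etale extension $K$ of $K_0$ is an open subscheme of $K^{\mathrm n}$; by the N\'eron property and \Cref{prop:Neron model of homomorphism}, $K \hookrightarrow K^{\mathrm n}$ is a homomorphism, $K^{\mathrm n}$ is a commutative \'etale group space, and $K^{\mathrm n}$ is N\'eron under $S_0 \subset S$.

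Next I would enlarge $G^a$ by forming the quotient group space $G' \coloneqq (G^a \times_S K^{\mathrm n})/K$, where $K$ is embedded anti-diagonally, $k \mapsto (k, k^{-1})$, through the two homomorphisms $K \hookrightarrow G^a$ and $K \hookrightarrow K^{\mathrm n}$. This quotient exists and is a smooth commutative group space by \cite[Proposition 8.3.9]{neron}, being the quotient of a smooth group space by an \'etale subgroup space. The map $g \mapsto [(g, e)]$ is a monomorphism and is \'etale because $\Lie G' = \Lie G^a$ (as $\Lie K^{\mathrm n} = 0$), hence it realises $G^a$ as an open subgroup space of $G'$ (Tag~025G). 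The homomorphism $f^a$ extends to $f' \colon G' \to H$, $[(g, \kappa)] \mapsto f^a(g)$ (well defined since $f^a$ kills $K$), which is \'etale since $\Lie f' = \Lie f^a$ is an isomorphism, and whose kernel is exactly the image of $K^{\mathrm n}$. Since $K^{\mathrm n}_0 = K_0$, the first projection identifies $G'_0 \cong G^a_0$, so $G'$ is an extension of $G^a_0$ and $G^a \hookrightarrow G'$ is the identity over $S_0$. Thus $1 \to K^{\mathrm n} \to G' \xrightarrow{f'} H$ is a left exact sequence of smooth group spaces over $S$ in which $K^{\mathrm n}$ and $H$ are both N\'eron under $S_0 \subset S$.

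I would then apply \Cref{thm:les of Neron models} to this sequence: it produces a N\'eron model $G \to S$ of $G'_0 \cong G^a_0$ containing $G'$ — hence $G^a$ — as an open subgroup space, together with a left exact sequence $1 \to K^{\mathrm n} \to G \xrightarrow{f} H$ extending the one for $G'$. In particular $f$ restricts to $f^a$ on $G^a$ and $\ker f = K^{\mathrm n}$. To finish, I would verify $f$ is quasi-projective \'etale: it is \'etale since $\Lie f = \Lie f^a$ is an isomorphism (as $G^a \subset G$ is open with $f|_{G^a} = f^a$); it is separated because, under the isomorphism $G \times_H G \cong G \times_S \ker f$ carrying the diagonal of $f$ to the zero section of $\mathrm{pr}_G$, separatedness of $\ker f = K^{\mathrm n} \to S$ (it is quasi-projective) forces the diagonal of $f$ to be a closed immersion; and it is of finite type because $G \to \im f$ is a $K^{\mathrm n}$-torsor, hence of finite type since $K^{\mathrm n} \to S$ is, while $\im f \hookrightarrow H$ is an open immersion. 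Now \Cref{lem:quasi-projective etale morphism} gives that $f$ is quasi-projective \'etale.

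The subtle point — and the reason for the first step — is that one cannot feed the naive sequence $1 \to K \to G^a \to H$ directly into \Cref{thm:les of Neron models}: $K$ itself need not be N\'eron under $S_0 \subset S$, and even if one enlarged $G^a$ keeping kernel $K$, the map from the resulting N\'eron model to $H$ could acquire a non-separated or non-finite-type kernel, which would kill the ``moreover'' clause. Replacing $K$ by its \emph{quasi-projective} N\'eron model $K^{\mathrm n}$ is exactly what repairs both issues, since $K^{\mathrm n}$ is separated and of finite type over $S$ and, by \Cref{thm:les of Neron models}, remains the kernel of $f$. The remaining points — existence and smoothness of the pushout $G'$, the computation $\ker f' = K^{\mathrm n}$, and the torsor identity for $\ker f$ — are routine.
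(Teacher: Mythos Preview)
Your proof is correct and follows essentially the same approach as the paper: set $K^a=\ker f^a$, replace it by its quasi-projective N\'eron model (the paper calls it $K$, you call it $K^{\mathrm n}$), form the pushout $(G^a\times_S K^{\mathrm n})/K^a$ via the anti-diagonal, and apply \Cref{thm:les of Neron models} to the resulting left exact sequence. You give more detail than the paper does---in particular you actually verify the ``moreover'' clause that $f$ is quasi-projective \'etale, which the paper leaves implicit---and your argument for that clause (via $\ker f=K^{\mathrm n}$ being quasi-projective, hence $f$ separated and of finite type, then \Cref{lem:quasi-projective etale morphism}) is sound; just note that \Cref{lem:quasi-projective etale morphism} is stated for Noetherian targets, so strictly speaking you should observe it applies locally on $H$, which is locally Noetherian.
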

		\begin{proof}
			Set $K^a = \ker f^a$, a quasi-projective, \'etale, and commutative group space over $S$. By \Cref{thm:Neron model of quasi-projective etale scheme}, $K^a_0 \to S_0$ admits a quasi-projective commutative N\'eron model $K \to S$ that contains $K^a$ as an open subgroup space. Define a pushout
			\[ G^b = \big( G^a \times_S K \big) / K^a \qquad\mbox{where}\quad K^a \hookrightarrow G^a \times_S K \quad\mbox{by}\quad x \mapsto (x, -x) ,\]
			a smooth group space over $S$. Such a new group space sits in a commutative diagram
			\[\begin{tikzcd}
				0 \arrow[r] & K^a \arrow[r] \arrow[d, hook] & G^a \arrow[r, "f^a"] \arrow[d, hook] & H \arrow[d, phantom, "\parallel"] \\
				0 \arrow[r] & K \arrow[r] & G^b \arrow[r, "f^b"] & H
			\end{tikzcd}.\]
			Apply \Cref{thm:les of Neron models} to the second row.
		\end{proof}
		
		\begin{remark} \label{rmk:finite type separated implies quasi-projective morphism}
			If $G^a \to S$ is separated and of finite type (e.g., when it has connected fibers by \Cref{thm:smooth group space with connected fibers}), then every \'etale homomorphism $f^a : G^a \to H$ is automatically quasi-projective.
		\end{remark}
		
		\begin{proposition} \label{prop:ses of Neron models}
			Let $G$, $H$, and $K$ be smooth group spaces over $S$.
			\begin{enumerate}
				\item Let $1 \to K \to G \to H$ be a left exact sequence. If $G$ and $H$ are N\'eron under $S_0 \subset S$, then so is $K$.
				\item Let $1 \to K \to G \to H \to 1$ be a short exact sequence. If $K$ and $H$ are N\'eron under $S_0 \subset S$, then so is $G$.
			\end{enumerate}
		\end{proposition}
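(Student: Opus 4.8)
The plan is to verify the Néron mapping property (\Cref{def:Neron model}) directly — for $K$ in part (1) and for $G$ in part (2) — since the smoothness required of the model is already part of the hypothesis. Write $q : G \to H$ for the given homomorphism, so that $K = \ker q$ sits inside $G$ as a monomorphism (concretely $K = G \times_H S$ via the identity section $e : S \to H$).

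For (1): let $Z \to S$ be a smooth algebraic space and $u_0 : Z_0 \to K_0$ a morphism over $S_0$. Composing with $K_0 \hookrightarrow G_0$ and using that $G$ is N\'eron under $S_0 \subset S$, this extends uniquely to $v : Z \to G$. It remains to check that $v$ factors through $K$, i.e. that $q \circ v : Z \to H$ equals the composite of $Z \to S$ with $e$. Both of these morphisms restrict over $Z_0$ to $Z_0 \to K_0 \hookrightarrow G_0 \xrightarrow{q_0} H_0$, which is the trivial morphism since $K_0 = \ker q_0$; hence they agree by the uniqueness half of the N\'eron mapping property of $H$ applied to the smooth $Z \to S$. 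Therefore $v$ factors, uniquely since $K \hookrightarrow G$ is a monomorphism, as $Z \to K \to G$, and the resulting $Z \to K$ restricts over $S_0$ to $u_0$. Uniqueness of this extension follows by post-composing any two candidates with $K \hookrightarrow G$ and invoking uniqueness for $G$.

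For (2): exactness of $1 \to K \to G \xrightarrow{q} H \to 1$ means that $q$ is a $K$-torsor, \'etale-locally trivial since $q$ is smooth and surjective. Let $Z \to S$ be smooth and $u_0 : Z_0 \to G_0$ a morphism over $S_0$. First, $q_0 \circ u_0 : Z_0 \to H_0$ extends uniquely to $\bar u : Z \to H$ because $H$ is N\'eron under $S_0 \subset S$. Set $N = G \times_{H, \bar u} Z \to Z$, a torsor under $K_Z = K \times_S Z \to Z$. By smooth base change for N\'eron models (\Cref{prop:Neron is local in smooth topology}(1)), $K_Z$ is N\'eron under $Z_0 \subset Z$, and hence so is $N$ by \Cref{prop:torsor is Neron iff group is}. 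The pair $(u_0, \mathrm{id}_{Z_0})$ — legitimate because $q_0 \circ u_0 = \bar u|_{Z_0}$ — defines a section $Z_0 \to N_0$ over $Z_0$; applying the N\'eron mapping property of $N$ with the smooth test morphism $Z \to Z$ equal to the identity, this section extends uniquely to $Z \to N$, and composing with $N \to G$ gives $u : Z \to G$ extending $u_0$. For uniqueness, any extension $u'$ of $u_0$ satisfies $q \circ u' = \bar u$ by uniqueness for $H$, hence factors through $N$; the two resulting sections of $N$ agree over $Z_0$, so they coincide by uniqueness for $N$. One may equivalently quote \Cref{lem:K-torsor is Neron} with $T = S$ and test space $Z$.

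I expect no serious obstacle: both arguments are diagram chases resting on \Cref{prop:Neron is local in smooth topology} and \Cref{prop:torsor is Neron iff group is}. The only points needing care are (i) keeping every auxiliary test object smooth over the appropriate base, so that \emph{both} the existence and uniqueness clauses of the N\'eron mapping property are available — in (1) the space $Z$ over $S$, in (2) the spaces $Z$ over $S$ and $Z$ over $Z$; and (ii) recording that in a short exact sequence of smooth group spaces the map $G \to H$ is honestly a $K$-torsor, which is automatic once $q$ is faithfully flat (built into exactness of fppf sheaves of groups) and upgrades to smooth surjectivity by Cartier's theorem.
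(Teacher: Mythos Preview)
Your proof is correct and rests on the same ingredients as the paper's. For part~(2) the paper packages the identical argument cohomologically: it compares the long exact sequences $1 \to K(Z) \to G(Z) \to H(Z) \to H^1(Z,K)$ over $Z$ and $Z_0$ and invokes a five-lemma chase, using that $H^1(Z,K) \hookrightarrow H^1(Z_0,K)$ by \Cref{prop:Neron torsor restriction}; your explicit pullback-torsor construction via \Cref{prop:torsor is Neron iff group is} is precisely what underlies that injectivity, so the two arguments coincide once unpacked.
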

		\begin{proof}
			(1) is clear. (2) Let $Z \to S$ be a smooth algebraic space. Consider the restriction maps between cohomology long exact sequences (e.g., \cite[Proposition III.4.5]{milne:etale}):
			\[\begin{tikzcd}
				1 \arrow[r] & K(Z) \arrow[r] \arrow[d, phantom, "\parallel"] & G(Z) \arrow[r] \arrow[d] & H(Z) \arrow[r] \arrow[d, phantom, "\parallel"] & H^1 (Z, K) \arrow[d, hook] \\
				1 \arrow[r] & K(Z_0) \arrow[r] & G(Z_0) \arrow[r] & H(Z_0) \arrow[r] & H^1 (Z_0, K)
			\end{tikzcd}.\]
			The first and third vertical arrows are isomorphisms by the N\'eron mapping property. The last vertical arrow is an injective map of pointed sets by \Cref{prop:Neron torsor restriction}. The five lemma-type diagram chasing shows the second vertical arrow is an isomorphism (when group spaces are commutative, this is the actual five lemma).
		\end{proof}
		
		\begin{proof} [Proof of \Cref{thm:les of Neron models}]
			Let us set up notations. Let $f^a : G^a \to H$ be the given homomorphism with kernel $K$ and write $I^a = \im f^a$ for its image, an open subgroup space of $H$. The lower index $-_0$ will indicate the base change $- \times_S S_0$ as usual. For simplicity, denote the restriction of $f^a : G^a \to I^a$ over $S_0$ by $f_0 : G_0 \to I_0$ without the upper index $-^a$. We divide the proof into four steps.
			
			\bigskip
			
			(Step 1: constructing an algebraic space $I$) Let us first enlarge $I^a$ to an open subspace $I \subset H$: this will be the image of the extended homomorphism $f : G \to H$. Consider a collection of \'etale local sections of $H \to S$
			\[ \{ s_{\alpha} : U_{\alpha} \to H : s_{\alpha, 0} = (s_{\alpha})_{|U_{\alpha, 0}} \mbox{ lifts to } t_{\alpha, 0} : U_{\alpha, 0} \to G_0 \} .\]
			The open subgroup space $I^a$ acts on $H$ freely by translation, so the orbit map $\phi_{s_{\alpha}} : I^a_{\alpha} \hookrightarrow H_{\alpha}$ of $s_{\alpha}$ is an open immersion of algebraic spaces over $U_{\alpha}$ (\S \ref{sec:group scheme action}). Their collection defines an \'etale (and hence open) morphism
			\begin{equation} \label{eq:I'}
				I' \coloneq \bigsqcup_{\alpha} I^a_{\alpha} \to H \qquad\mbox{by combining}\quad \begin{tikzcd}[column sep=scriptsize]
					I^a_{\alpha} \arrow[r, hook, "\phi_{s_{\alpha}}"] & H_{\alpha} \arrow[r] & H
				\end{tikzcd}.
			\end{equation}
			Define an open subspace $I \subset H$ by its image. That is, $I \subset H$ is the ``union of $I^a$-orbits for all $G_0$-liftable local sections''. It contains $I^a$ as an open subspace because the identity section $e : S \to H$ is $G_0$-liftable. Since the image of $f_0 : G_0 \to H_0$ was $I_0$, we have $I_{|S_0} = I_0$ and the notation is consistent.
			
			\bigskip
			
			(Step 2: constructing the morphism $f : G \to I$) Recall the definition of $I'$ and the surjective \'etale morphism $I' \to I$ in \eqref{eq:I'}. Collect the pullbacks of $f^a : G^a \to I^a$ to define a morphism
			\[ f' : G' \coloneq \bigsqcup_{\alpha} G^a_{\alpha} \to I' = \bigsqcup_{\alpha} I^a_{\alpha} .\]
			Since $f^a$ is a $K$-torsor, so is $f'$. In particular, $f' : G' \to I'$ is N\'eron under $I'_0 \subset I'$ by \Cref{prop:torsor is Neron iff group is}. On the other hand, recall that $s_{\alpha, 0} : U_{\alpha, 0} \to H_0$ lifts to $t_{\alpha, 0} : U_{\alpha, 0} \to G_0$. Collecting the orbit maps $\phi_{t_{\alpha}} : G^a_{\alpha, 0} \to G_0$ of $t_{\alpha, 0}$ for each $\alpha$, we obtain a cartesian diagram
			\[\begin{tikzcd}
				G'_0 \arrow[r] \arrow[d, "f'_0"] & G_0 \arrow[d, "f_0"] \\
				I'_0 \arrow[r] & I_0
			\end{tikzcd}.\]
			Now \Cref{prop:Neron is local in smooth topology} descends the N\'eron model $f' : G' \to I'$ of $f'_0 : G'_0 \to I'_0$ to the N\'eron model $f : G \to I$ of $f_0 : G_0 \to I_0$. Note that $f$ is a $K$-torsor.
			
			\bigskip
			
			(Step 3: N\'eron mapping property of $G \to S$) At this point, we have an algebraic space $f : G \to I$, which is both a $K$-torsor and an extension of the given homomorphism $f_0 : G_0 \to I_0$. Let $Z \to S$ be a smooth algebraic space and $u_0 : Z_0 \to G_0$ an arbitrary morphism. The following diagram will be useful:
			\[\begin{tikzcd}
				Z \arrow[r, dashed, "u_0"] \arrow[rrdd, bend right=25] & G \arrow[d, "f"] \\
				& I \arrow[r, phantom, "\subset"] & H \arrow[d] \\
				& & S
			\end{tikzcd}.\]
			The composition $Z_0 \to H_0$ uniquely extends to a morphism $Z \to H$ by the N\'eron mapping property of $H$. Let us show that this morphism factors though $I \subset H$. Given an arbitrary closed point $z \in Z_s$ of a fiber over $s \in S$, fix an \'etale local section $U_{\alpha} \to Z$ passing through $z$. Then the composition $s_{\alpha} : U_{\alpha} \to H$ is a section of $H$ that lifts to a section of $G_0$, so by construction in Step 1, it factors through $I$. We have thus constructed a morphism $Z \to I$ and a commutative diagram
			\[\begin{tikzcd}
				Z \arrow[r, dashed, "u_0"] \arrow[rd] & G \arrow[d, "f"] \\
				& I
			\end{tikzcd}.\]
			\Cref{lem:K-torsor is Neron} concludes that $u_0$ uniquely extends to a morphism $u : Z \to G$ (Note: though $f : G \to I$ is a N\'eron model, the morphism $Z \to I$ may not be smooth so this does not follow directly from the N\'eron mapping property of $f$). This proves the N\'eron mapping property of $G \to S$.
			
			\bigskip
			
			(Step 4: completion of the proof) Now that we know the N\'eron mapping property of $G$, its group structure is uniquely extended from that of $G_0 = G^a_0$. The given homomorphism $f_0 : G_0 \to H_0$ extends to a homomorphism $G \to H$ and coincides with the previous morphism $f : G \to I \subset H$ by the uniqueness of the N\'eron mapping property of $H$. This a posteriori shows that $I$ is a subgroup space of $H$. Finally, since $f$ was a $K$-torsor, it has the kernel $K$ as a homomorphism of group spaces.
		\end{proof}

	\subsection{Weak N\'eron models}
		A weaker version of the N\'eron mapping property can be still interesting to some extent. In this subsection, we generalize the notion of a weak N\'eron model in \cite[\S 3.5]{neron} to higher-dimensional bases.

		\subsubsection{Weak N\'eron models}
			The notion of a weak N\'eron model over a Dedekind scheme is introduced in Definition 1.1.1, Definition 3.5.1 and Proposition 3.5.6 in \cite{neron}. Moreover, Theorem 7.1.1 in loc. cit. shows the equivalence between weak N\'eron models and N\'eron models for nice group schemes.
			
			\begin{definition} [Weak N\'eron model] \label{def:weak Neron model}
				Under the same assumption in \Cref{def:Neron model}, an extension $N \to S$ of $N_0 \to S_0$ is called a \emph{weak N\'eron extension} or \emph{weak N\'eron model} if it satisfies the axiom (1) together with:
				\begin{enumerate}
					\item[\textnormal{(2w)}] (Weak N\'eron mapping property) For every \'etale algebraic space $U \to S$, the restriction map
					\[ \operatorname{Mor}_S (U, N) \to \operatorname{Mor}_{S_0} (U_0, N_0), \qquad u \mapsto u_0 = u_{|S_0} \]
					is bijective.
				\end{enumerate}
			\end{definition}
			
			\begin{remark} \label{rmk:more assumptions in weak NMP}
				\begin{enumerate}
					\item It is enough to check the weak N\'eron mapping property axiom for separated, finite type, and \'etale morphisms $U \to S$ (similar to \Cref{rmk:more assumptions in NMP}). When $S$ is Noetherian, we may assume $U \to S$ is quasi-projective \'etale (\Cref{lem:quasi-projective etale morphism}).
					
					\item Weak N\'eron model in the sense of \cite[Definition 3.5.1, Proposition 3.5.6]{neron} is a finite type weak N\'eron model over a Dedekind scheme in the sense of \Cref{def:weak Neron model}. Separatedness automatically follows from \Cref{cor:weak Neron model over Dedekind is separated} and hence it is a scheme by \Cref{thm:group space is scheme}. The weak N\'eron mapping property axiom in \Cref{def:weak Neron model} is called the \emph{extension property for \'etale points (morphisms)} in Definition 1.1.1 of the book. The weak N\'eron mapping property in the book has a different meaning (in Proposition 3.5.3) but we will not use this notion in this article.
				\end{enumerate}
			\end{remark}
			
			The following is the main result in this subsection, generalizing \cite[Theorem 7.1.1]{neron} to higher-dimensional bases. The rest of this subsection will be devoted to its proof.
			
			\begin{theorem} \label{thm:weak Neron model of group space}
				Let $G \to S$ be a smooth group space of finite type over a normal scheme $S$. Then $G$ is weakly N\'eron if and only if it is N\'eron (under $S_0 \subset S$).
			\end{theorem}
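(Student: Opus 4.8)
The direction ``N\'eron $\Longrightarrow$ weakly N\'eron'' is immediate, since any \'etale $U \to S$ is in particular smooth; the content is the converse, which I plan to deduce from the classical one-dimensional statement \cite[Theorem 7.1.1]{neron} by a reduction to codimension $1$ via Weil's extension theorem. After the standard reductions --- by \Cref{rmk:more assumptions in NMP}(2) one may assume $S$ is affine, Noetherian, normal and connected, hence integral, and it suffices to extend a given $u_0 : Z_0 \to G_0$ to a morphism $Z \to G$ over $S$ for $Z \to S$ quasi-affine, smooth and of finite type, and, after passing to a connected component, for $Z$ integral (it is normal, being smooth over the normal scheme $S$) --- the first step is to observe that $G \to S$ is automatically separated. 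Indeed, the closure $E$ of the identity section $e(S)$ in $G$ (with its reduced structure) is integral, and $E \to S$ is an isomorphism over a dense open of $S$ because $G_\xi$ is separated for the generic point $\xi \in S$; so $E \to S$ is birational and of finite type, and by Zariski's main theorem it factors as a dense open immersion $E \hookrightarrow \overline E$ followed by a finite morphism $\overline E \to S$, which is then finite and birational onto the normal scheme $S$, hence an isomorphism. Thus $E \hookrightarrow S$ is an open immersion admitting $e$ as a section, so $E = S$ and $G$ is separated. In particular, any extension of $u_0$ is automatically unique, $Z_0 \subset Z$ being dense.

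For existence, regard $u_0$ as a rational map $u : Z \dashrightarrow G$ whose domain of definition contains $Z_0$. By Weil's extension theorem (see \cite[\S 4.4]{neron}), applied with $Z$ regular and $G$ a smooth separated group space over $S$, the indeterminacy locus of $u$ is empty or of pure codimension $1$ in $Z$; hence it is enough to show that $u$ extends over every codimension $1$ point $\eta \in Z \setminus Z_0$. Since $Z \to S$ is flat with regular fibres, the dimension formula $\dim \mathcal O_{Z,\eta} = \dim \mathcal O_{S, s} + \dim \mathcal O_{Z_s, \eta}$, with $s$ the image of $\eta$ (necessarily outside $S_0$), forces $s$ to be a codimension $1$ point of $S$ and $\eta$ to be a generic point of the fibre $Z_s$.

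The heart of the argument is the passage to the trait $\Delta = \Spec \mathcal O_{S,s}$, a Dedekind scheme with generic point $\Delta^* = \Delta \cap S_0$. First I would check that $G_\Delta = G \times_S \Delta \to \Delta$ is a weak N\'eron model over $\Delta^* \subset \Delta$: by a standard limit argument every quasi-projective \'etale $V \to \Delta$ is the base change of a quasi-projective \'etale $V' \to S''$ for some affine open neighbourhood $S'' \ni s$ in $S$, and then $V' \to S'' \hookrightarrow S$ is quasi-projective \'etale over $S$, so the weak N\'eron property of $G$ over $S$ extends $V'_0 \to G_0$ to $V' \to G$, which base-changes to the desired extension over $\Delta$ (uniqueness coming from separatedness). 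Since $G_\Delta \to \Delta$ is a smooth group scheme of finite type, \cite[Theorem 7.1.1]{neron} now upgrades this to the full N\'eron mapping property of $G_\Delta$ over $\Delta^* \subset \Delta$. Finally, choosing an affine open $W \subset Z$ in which $\eta$ is a generic point of $W \cap Z_s$ and forming the smooth $\Delta$-scheme $W \times_S \Delta$ --- which contains $\eta$ because $\Delta \to S$ is a monomorphism, and whose generic fibre lies in $Z_0$ and carries the restriction of $u_0$ --- the N\'eron mapping property of $G_\Delta$ extends $u_0$ over $W \times_S \Delta$, hence over the local ring $\mathcal O_{Z,\eta} = \mathcal O_{W \times_S \Delta, \eta}$, so $u$ is defined at $\eta$. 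By Weil's theorem $u$ is then a morphism $Z \to G$, which is the sought (unique) extension, and the reduction from quasi-affine finite type $Z$ to an arbitrary smooth $Z \to S$ is \Cref{rmk:more assumptions in NMP}(2).

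I expect the principal obstacle to be exactly this reduction to dimension one: at the offending codimension $1$ point the residue extension $\kappa(\eta)/\kappa(s)$ is transcendental, so the weak N\'eron property of $G$ over $S$ --- which only tests \'etale morphisms $U \to S$ --- cannot be invoked directly, and one must first build the full mapping property over the one-dimensional trait $\Delta$, which is where the classical theorem \cite[Theorem 7.1.1]{neron} enters. Verifying that $G_\Delta$ is weakly N\'eron over $\Delta$ (the limit/approximation of quasi-projective \'etale $\Delta$-schemes by quasi-projective \'etale $S$-schemes) is the main technical point to pin down carefully; separatedness of $G$, by contrast, is the short argument via Zariski's main theorem and normality given above.
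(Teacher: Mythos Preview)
Your overall strategy coincides with the paper's: reduce to the classical one-dimensional statement \cite[Theorem 7.1.1]{neron} over traits at codimension~$1$ points of $S$, and use Weil's extension theorem to conclude. The paper runs these steps in the opposite order --- first handling all codimension~$1$ points of $S$ via the trait argument to get $u$ defined over a codimension~$\ge 2$ complement $S_1$, then reducing (by \'etale shrinking) to $Z\to S$ having irreducible fibres and a section $e:S\to Z$, and finally applying \Cref{lem:domian of definition contains a section} together with Weil's theorem to extend over the remaining locus --- but the essential inputs are identical. Your limit argument for the weak N\'eron property of $G_\Delta$ is precisely the content of the paper's \Cref{lem:Neron respects localization}.

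There is, however, a real gap in your separatedness argument. Zariski's main theorem applied to $E\to S$ requires $E\to S$ to be quasi-finite and separated, and you establish neither. Quasi-finiteness in particular is not automatic: the closure of the identity section in a smooth finite-type group space can have positive-dimensional fibres (cf.\ the alignment discussion in \S\ref{sec:alignment} and the proof of \Cref{prop:numerically trivial group is aligned}). The paper avoids proving global separatedness of $G$ altogether: uniqueness of extensions comes from \Cref{lem:weak Neron model satisfies uniqueness of NMP}, which uses only the weak N\'eron hypothesis; separatedness of $G_\Delta$ over each trait is obtained via \Cref{cor:weak Neron model over Dedekind is separated} (combining \Cref{prop:smooth group space over Dedekind is aligned} with \Cref{prop:aligned subgroup in weak Neron model is separated}); and the key extension step along the section uses case~(ii) of \Cref{lem:domian of definition contains a section}, formulated for weakly N\'eron rather than separated targets. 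To repair your argument you should replace the global separatedness claim with these substitutes.
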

			
			The finite type assumption seems to be necessary; see the paragraph after Proposition 10.1.2 in \cite[p.290]{neron}. Let us start with basic properties of weak N\'eron models. Since N\'eron models are weak N\'eron models, these results apply to N\'eron models as well.
			
			To start, \Cref{prop:Neron is local in smooth topology} and its proof holds verbatim but in \'etale topology instead of smooth topology; we will not repeat the statement. Hence the construction of the weak N\'eron model is an \'etale local property on the base.
			
			\begin{lemma} \label{lem:weak Neron model satisfies uniqueness of NMP}
				Let $N$ be weakly N\'eron under $S_0 \subset S$. Then $N$ satisfies the uniqueness part of the N\'eron mapping property.
			\end{lemma}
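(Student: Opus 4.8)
The plan is to reduce the uniqueness assertion, for an arbitrary smooth test space, to the weak N\'eron mapping property for \emph{sections}, exploiting that a smooth morphism acquires a section through any prescribed point after an \'etale localization of the base.

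Concretely, let $Z \to S$ be smooth and let $u, v \colon Z \to N$ be two $S$-morphisms with $u_0 = v_0$ on $Z_0 \coloneq Z \times_S S_0$; I must show $u = v$. First I would form the equaliser $W \coloneq \operatorname{Eq}(u,v)$, i.e.\ the fibre product of $(u,v) \colon Z \to N \times_S N$ and the diagonal $\Delta \colon N \to N \times_S N$. Since $N \to S$ is locally separated, $\Delta$ is an immersion, so $W \to Z$ is a locally closed immersion; and $u_0 = v_0$ says precisely that the open immersion $Z_0 \hookrightarrow Z$ factors through $W$. As $Z \to S$ is flat and $S_0 \subset S$ is scheme-theoretically dense, $Z_0 \subset Z$ is scheme-theoretically dense by \Cref{lem:density respects flat base change}, hence so is $W \subset Z$. (If $N$ is separated then $W$ is already closed, and a scheme-theoretically dense closed immersion is an isomorphism; this recovers \Cref{lem:morphism to separated space is determined by dense subspace}, and is the source of the hypothesis.)

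The key step is to prove $W \to Z$ is surjective. Fix a point $z \in Z$ over $s \in S$. Since $Z \to S$ is smooth, there are an \'etale neighbourhood $S' \to S$ of $s$ and a section $\sigma \colon S' \to Z' \coloneq Z \times_S S'$ of $Z' \to S'$ through a point $z'$ lying over $z$ (a standard property of smooth morphisms, cf.\ Tag~055U). Replacing $S$ by $S'$ is harmless: the weak N\'eron property of $N \to S$ is inherited by $N' \coloneq N \times_S S' \to S'$ over the scheme-theoretically dense open $S_0' \coloneq S_0 \times_S S' \subset S'$, by the \'etale analogue of \Cref{prop:Neron is local in smooth topology}; moreover $\operatorname{Eq}(u_{S'},v_{S'}) = W \times_S S'$, so surjectivity of $W \to Z$ follows from that of $W \times_S S' \to Z'$. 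Now $u_{S'} \circ \sigma$ and $v_{S'} \circ \sigma$ are two sections of $N' \to S'$ agreeing over $S_0'$; applying the weak N\'eron mapping property of $N' \to S'$ to the \'etale morphism $\mathrm{id} \colon S' \to S'$ forces $u_{S'} \circ \sigma = v_{S'} \circ \sigma$. Hence $\sigma$ factors through $W \times_S S' \hookrightarrow Z'$, so $z' \in W \times_S S'$ and therefore $z \in W$.

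Thus $W \to Z$ is a locally closed immersion that is surjective, hence a closed immersion onto $Z$; being scheme-theoretically dense, its ideal sheaf vanishes, so $W = Z$, i.e.\ $u = v$. The only genuinely non-formal ingredient is the existence of \'etale-local sections of smooth morphisms through a prescribed point; the remainder is bookkeeping. I expect the step requiring the most care in writing to be the passage to $S'$: one must check at once that $\operatorname{Eq}$ commutes with this base change, that surjectivity descends along it, and that the weak N\'eron property ascends to it.
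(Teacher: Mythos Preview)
Your proof is correct and follows essentially the same approach as the paper's: both form the equaliser $W = (u,v)^{-1}(\Delta_N)$ as a locally closed subspace of $Z$, show it is set-theoretically all of $Z$ by passing to an \'etale local section of $Z \to S$ through any given point and invoking the weak N\'eron mapping property, and then conclude from scheme-theoretic density of $Z_0$. The only difference is presentational: the paper first argues the pointwise equality $u(z) = v(z)$ by contradiction and only afterwards introduces the pullback of the diagonal, whereas you set up the equaliser from the outset and phrase the \'etale localization as a base change $S' \to S$ rather than a section $U \to Z$ with $U \to S$ \'etale.
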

			\begin{proof}
				Let $Z \to S$ be a smooth algebraic space and $u_0 : Z_0 \to N_0$ a morphism over $S_0$. Assume $u_0$ admits two extensions $u, v : Z \to N$. Let us first prove $u = v$ pointwise on $Z$. Assume on the contrary that there exists a closed point $z \in Z$ with $u(z) \neq v(z)$. Take an \'etale local section $s : U \to Z$ passing through $z$. Then $u \circ s, v \circ s : U \to N$ defines two different extensions of $u_0 \circ s_0 : U_0 \to N_0$, violating the weak N\'eron mapping property.
				
				Pulling back the diagonal map $N \to N \times_S N$ by $(u, v) : Z \to N \times_S N$ defines a locally closed subspace of $Z$ (recall that $N \to S$ is locally separated). Since it is set-theoretically equal to $Z$ by the first paragraph, it is a closed subscheme of $Z$. But it contains a scheme-theoretically dense open subspace $Z_0 \subset Z$, proving it is an equality. This proves $u = v$ as morphisms.
			\end{proof}
			
			\Cref{lem:weak Neron model satisfies uniqueness of NMP} shows that only the existence part of the N\'eron mapping property is problematic for weak N\'eron models. Moreover, the lemma provides an analogue statement of \Cref{lem:morphism to separated space is determined by dense subspace}. That is, if $u : Z \dashrightarrow N$ is a rational map from a smooth algebraic space to a weak N\'eron model that is defined over $S_0$, then there exists a maximal open subspace $\operatorname{Dom}(u) \subset Z$, the \emph{domain of definition} of $u$, such that $u$ is represented by a morphism $u : \Dom(u) \to N$.
			
			\begin{lemma} \label{lem:Neron respects localization}
				Let $N \to S$ be a smooth algebraic space. Then the following are equivalent.
				\begin{enumerate}
					\item $N$ is N\'eron (resp. weakly N\'eron) under $S_0 \subset S$.
					\item For each point $s \in S$, the base change to a local scheme
					\[ N_{(s)} \to S_{(s)} = \Spec \mathcal O_{S, s} \]
					is N\'eron (resp. weakly N\'eron) under $S_{(s)} \times_S S_0 \subset S_{(s)}$.
				\end{enumerate}
			\end{lemma}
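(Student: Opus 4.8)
The plan is to prove both versions (N\'eron and weak N\'eron) in one stroke, the only difference being whether the test morphisms $Z \to S$ are required to be smooth or merely \'etale. First I would reduce to the case where $S$ is a scheme via \Cref{prop:Neron is local in smooth topology}, and then reduce the verification of the (weak) N\'eron mapping property to quasi-affine, finite type, smooth (resp.\ \'etale) test morphisms $Z \to S$, as permitted by \Cref{rmk:more assumptions in NMP}(2) (resp.\ \Cref{rmk:more assumptions in weak NMP}(1)); this restriction is what makes the standard limit formalism of \cite[\S 8]{EGAIV3} applicable, since $\Spec \mathcal O_{S,s} = \varprojlim_{U \ni s} U$ is a cofiltered limit of open subschemes of $S$. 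Throughout, $Z_{(s)}$, $u_{(s)}$, etc.\ denote base changes along $S_{(s)} \to S$, and I will freely use that $Z_0 \subset Z$ is scheme-theoretically dense whenever $Z \to S$ is flat (\Cref{lem:density respects flat base change}).

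For the direction (1) $\Rightarrow$ (2): for every open $U \ni s$ the restriction $N|_U \to U$ is (weakly) N\'eron under $U_0 \subset U$ by smooth (resp.\ \'etale) base change, \Cref{prop:Neron is local in smooth topology} (and its \'etale analogue). Given a quasi-affine finite type smooth $Z \to S_{(s)}$ and a morphism $u_0 \colon Z_0 \to N_0$ over $S_{(s)} \cap S_0$, I would descend $Z$ and $u_0$ to $Z_U \to U$ and $u_{U_0} \colon Z_{U_0} \to N_{U_0}$ for a small enough $U \ni s$, extend uniquely to $u_U \colon Z_U \to N_U$ using the (weak) N\'eron property of $N|_U$, and pull $u_U$ back to $S_{(s)}$; uniqueness of the extension over $S_{(s)}$ likewise descends to some $U$ and holds there by the (weak) N\'eron property of $N|_U$. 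This gives (2).

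For the direction (2) $\Rightarrow$ (1): fix a quasi-affine finite type smooth (resp.\ \'etale) $Z \to S$ and $u_0 \colon Z_0 \to N_0$ over $S_0$. For \emph{uniqueness}, if $u, v \colon Z \to N$ both restrict to $u_0$, then for any $z \in Z$ with image $s \in S$ the base changes $u_{(s)}, v_{(s)}$ both extend $u_{0,(s)}$, hence agree by the (weak) N\'eron mapping property of $N_{(s)}$ (invoking \Cref{lem:weak Neron model satisfies uniqueness of NMP} in the weak case); thus $u = v$ set-theoretically, and since $N \to S$ is locally separated the equalizer of $u$ and $v$ is a locally closed subspace of $Z$, which is therefore closed and, containing the scheme-theoretically dense open $Z_0$, equals $Z$. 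For \emph{existence}, the (weak) N\'eron property of each $N_{(s)}$ yields extensions $u_{(s)} \colon Z_{(s)} \to N_{(s)}$ of $u_{0,(s)}$, mutually compatible under further localization by the uniqueness just shown. By the limit formalism each $u_{(s)}$ spreads out to a morphism $u_{U_s} \colon Z_{U_s} \to N_{U_s}$ over some open $U_s \ni s$ which, after shrinking $U_s$, restricts to $u_0$ over $(U_s)_0$. On an overlap $U_s \cap U_{s'}$ the two morphisms $u_{U_s}$ and $u_{U_{s'}}$ induce the same morphism over $\Spec \mathcal O_{S,t}$ for every $t \in U_s \cap U_{s'}$, once again by uniqueness for $N_{(t)}$, so the same locally-closed-equalizer argument forces them to agree; the $u_{U_s}$ then glue to the desired $u \colon Z \to N$ extending $u_0$.

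I expect the gluing step in (2) $\Rightarrow$ (1) to be the main obstacle: the locally defined extensions $u_{U_s}$ need not agree a priori, and reconciling them rests on two ingredients — the pointwise uniqueness provided by the hypothesis at each local ring, and the fact (implicit in the proof of \Cref{lem:weak Neron model satisfies uniqueness of NMP}) that a rational map into a locally separated algebraic space which is defined on a scheme-theoretically dense open has a well-defined domain of definition. The remaining inputs — smooth/\'etale base change of (weak) N\'eron models and the spreading-out of finite type morphisms — are entirely routine.
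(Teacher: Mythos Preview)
Your proposal is correct and follows essentially the same approach as the paper: the paper's proof simply defers to \cite[Proposition 1.2.4]{neron}, invokes \Cref{rmk:more assumptions in NMP} and \Cref{rmk:more assumptions in weak NMP} to reduce to finite type test spaces, and notes that \cite[Theorem 8.8.2]{EGAIV3} provides the requisite limit/spreading-out formalism. You have spelled out exactly this argument in detail, including the reduction to quasi-affine finite type test morphisms, the spreading-out of local extensions, and the gluing via the locally-closed-equalizer trick already used in \Cref{lem:weak Neron model satisfies uniqueness of NMP}.
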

			\begin{proof}
				Same proof of \cite[Proposition 1.2.4]{neron} applies. Use \Cref{rmk:more assumptions in NMP} (resp. \ref{rmk:more assumptions in weak NMP}) to assume the test space $Z \to S$ is of finite type. Only the local finite type assumption of $N \to S$ is needed to apply \cite[Theorem 8.8.2]{EGAIV3}.
			\end{proof}
			
			\begin{proposition} \label{prop:aligned subgroup in weak Neron model is separated}
				Let $G$ be a group space weakly N\'eron under $S_0 \subset S$. If $H \subset G$ is an aligned subgroup such that $H_0 \to S_0$ is separated, then $H \to S$ is separated.
			\end{proposition}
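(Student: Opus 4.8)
The claim should follow by using the \emph{alignment} of $H$ to control $E\to S$ \'etale-locally and the weak N\'eron mapping property of $G$ to upgrade the separatedness of $H_0$ over $S_0$ to separatedness over all of $S$. Recall from the beginning of \S\ref{sec:alignment} that $H\to S$ is separated exactly when the scheme-theoretic image $E$ of its identity section $e:S\to H$ has the property that the structure morphism $\pi:E\to S$ is an isomorphism. So the task is to show $\pi$ is an isomorphism. By \Cref{def:alignment} the morphism $\pi$ is \'etale, hence smooth. Moreover $E_0:=E\times_S S_0\to S_0$ is the scheme-theoretic image of the identity section of $H_0$ (cf. \cite[Proposition 2.5.2]{neron}), which is an isomorphism because $H_0\to S_0$ is separated.

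The key step is to feed $Z=E$ itself into the uniqueness part of the weak N\'eron mapping property. Write $\iota:E\hookrightarrow H\hookrightarrow G$ for the natural (monomorphic) inclusion; then $e$ factors as $\iota\circ\sigma$ for a section $\sigma:S\to E$ of $\pi$. Consider the two $S$-morphisms $\iota$ and $e\circ\pi$ from $E$ to $G$. Since $\pi$ is flat, $E_0\subset E$ is scheme-theoretically dense (\Cref{lem:density respects flat base change}), and because $E_0\cong S_0$ the restrictions of $\iota$ and $e\circ\pi$ over $S_0$ agree --- both equal the identity section of $G$ transported along $E_0\cong S_0$. As $E\to S$ is smooth and $G$ is weakly N\'eron, \Cref{lem:weak Neron model satisfies uniqueness of NMP} forces $\iota=e\circ\pi$ on all of $E$. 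Since $\iota$ is a monomorphism, $\iota=e\circ\pi=\iota\circ(\sigma\circ\pi)$ yields $\sigma\circ\pi=\mathrm{id}_E$, which together with $\pi\circ\sigma=\mathrm{id}_S$ exhibits $\pi$ as an isomorphism; hence $H\to S$ is separated.

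I do not anticipate a genuine obstacle: the heart of the matter is simply that the candidate ``bad locus'' $E$ is itself smooth (in fact \'etale) over $S$ and hence a legitimate test object for the weak N\'eron mapping property, which rigidifies it. The only points requiring a little care are the standard bookkeeping facts that the scheme-theoretic image of the identity section commutes with the open base change $S_0\hookrightarrow S$, that $\iota:E\to G$ is a monomorphism (automatic for a scheme-theoretic image), and the density of $E_0$ in $E$ coming from flatness; none of these is an issue. One could instead try to show that $E$ is a subgroup space of $H$ and then invoke \Cref{lem:etale separated group space is trivial}, but that route seems to require smoothness or reducedness hypotheses on $G$ and $S$ not available here, so the direct argument above is preferable.
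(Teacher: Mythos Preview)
Your argument is correct and is essentially the same as the paper's, just organized a bit more globally. The paper argues pointwise: for each closed point $x\in E_H$ it picks an \'etale-local section $s:U\to E_H$ through $x$ (using that $E_H\to S$ is \'etale), observes $s_0=e_0$ over $U_0$ by separatedness of $H_0$, and then the weak N\'eron mapping property forces $s=e$; varying $x$ gives $E_H=e(S)$. You instead feed $Z=E$ directly into \Cref{lem:weak Neron model satisfies uniqueness of NMP} and compare $\iota$ with $e\circ\pi$, then use that $\iota$ is a monomorphism to conclude $\pi$ is an isomorphism. The paper's proof is in effect re-proving the special case of \Cref{lem:weak Neron model satisfies uniqueness of NMP} needed here, whereas you invoke that lemma as a black box; the underlying idea is identical. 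One small remark: your parenthetical ``automatic for a scheme-theoretic image'' is slightly imprecise --- what you actually use is that $E\hookrightarrow H$ is a closed immersion and $H\hookrightarrow G$ is a monomorphism of group spaces (implicit in the notation $H\subset G$), so the composite $\iota$ is a monomorphism.
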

			\begin{proof}
				Let $E_H \subset H$ be the scheme-theoretic image of the identity section of $H$, which is \'etale over $S$ by the alignment of $H$. Let $x \in E_H$ be a closed point. Shrinking $S$ \'etale locally, we may choose a section $s : S \to E_H$ passing through $x$. Since $H$ is separated over $S_0$ whence $s$ is trivial over $S_0$, the weak N\'eron mapping property of $G$ forces $s = e$.
			\end{proof}
			
			The following explains the separatedness assumption in the (weak) N\'eron lft-model in the sense of \cite[\S 10]{neron} is redundant, as observed in \cite[\S 1.1]{hol-mol-ore-poi23}.
			
			\begin{corollary} \label{cor:weak Neron model over Dedekind is separated}
				Let $\Delta$ be a Dedekind scheme and $G_0 \to \Delta_0$ a smooth separated group space. Then its weak N\'eron model $G \to \Delta$ is separated.
			\end{corollary}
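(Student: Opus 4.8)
\emph{Proof plan.} The plan is to obtain this as an immediate consequence of \Cref{prop:aligned subgroup in weak Neron model is separated} applied with the aligned subgroup taken to be all of $G$. Two inputs are needed. First, axiom (1) of \Cref{def:weak Neron model} says the weak N\'eron model $G \to \Delta$ is smooth, and a smooth group space over a Dedekind scheme is automatically aligned by \Cref{prop:smooth group space over Dedekind is aligned}; hence $G \to \Delta$ is aligned. Second, the hypothesis of the corollary is exactly that $G_0 \to \Delta_0$ is separated. These are precisely the hypotheses of \Cref{prop:aligned subgroup in weak Neron model is separated} with $H = G$, so that proposition yields the separatedness of $G \to \Delta$. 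If one wishes, \Cref{thm:group space is scheme}(2) then additionally shows $G$ is a scheme.

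It is worth being explicit about where the real content sits: not in the short deduction above, but in \Cref{prop:smooth group space over Dedekind is aligned}, and one should double-check that its proof does not secretly rely on a finite type hypothesis (a weak N\'eron model may well be of infinite type). It does not: that argument reduces to $\Delta = \Spec R$ for a dvr $R$, takes $E$ to be the closure of the identity section with its reduced structure (an integral scheme dominating $\Delta$, hence flat over $\Delta$), observes that $E \to \Delta$ is an isomorphism over the generic point because the generic fibre $G_K$ is a separated locally algebraic group, and then upgrades flatness to \'etaleness via \cite[Lemma 5.18]{hol19}. None of these steps uses finite type. The conceptual point, which is genuinely special to Dedekind bases, is that alignment is automatic for smooth group spaces over $\Delta$; over higher-dimensional bases alignment is a real restriction (cf.\ \Cref{ex:group space basic examples}), so no analogue of this corollary should be expected there.

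I do not anticipate a serious obstacle. The only thing to be careful about is the infinite-type issue just mentioned, together with checking that \Cref{prop:aligned subgroup in weak Neron model is separated} itself is proved without a finite type assumption on $G$ --- inspecting its proof, it only uses that the scheme-theoretic image of the identity section is \'etale over $S$ and the weak N\'eron mapping property for the \'etale test space $S \to S$, so it applies verbatim. An alternative, more hands-on route would be to verify directly that the identity section $e : \Delta \to G$ is a closed immersion, using that its scheme-theoretic image $E$ is \'etale over $\Delta$ and that any \'etale-local section of $E$ through a point over $\Delta \setminus \Delta_0$ must coincide with $e$ by the weak N\'eron mapping property (forcing $E = e(\Delta)$); but this merely reproves the cited propositions and is not worth spelling out.
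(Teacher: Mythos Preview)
Your proof is correct and follows exactly the same approach as the paper: combine \Cref{prop:smooth group space over Dedekind is aligned} (smooth group spaces over Dedekind schemes are aligned) with \Cref{prop:aligned subgroup in weak Neron model is separated} applied to $H = G$. Your additional remarks verifying that no finite type hypothesis is needed are sound and worth keeping in mind, though the paper does not spell this out.
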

			\begin{proof}
				Combine \Cref{prop:smooth group space over Dedekind is aligned} and \Cref{prop:aligned subgroup in weak Neron model is separated}.
			\end{proof}
			
			\begin{proof} [Proof of \Cref{thm:weak Neron model of group space}]
				Assume $G \to S$ is weakly N\'eron under $S_0 \subset S$. Let $Z \to S$ be a smooth algebraic space and $u_0 : Z_0 \to G_0$ a morphism over $S_0$. The extension $u : Z \to G$ is unique by \Cref{lem:weak Neron model satisfies uniqueness of NMP}, so we need to show its existence. Taking a covering $\bigsqcup_i Z_i \to Z$ in \'etale topology where each $Z_i \to S$ is a smooth affine scheme of finite type and with irreducible (or empty) fibers, we may reduce to the case where $Z \to S$ is smooth, of finite type, and has irreducible fibers. Moreover, shrinking $S$ \'etale locally, we may assume $S$ is Noetherian and $Z \to S$ has a section $e : S \to Z$.
				
				Let $\eta \in S \setminus S_0$ be a codimension $1$ point and $\Delta = \Spec \mathcal O_{S, \eta} \to S$ a flat trait. The base change $G_{\Delta} \to \Delta$ is a weak N\'eron model of finite type over a Dedekind scheme by \Cref{lem:Neron respects localization}. Its generic fiber $G_K \to \Delta_0 = \Spec K$ is an algebraic group so it is separated. Therefore, $G_{\Delta} \to \Delta$ is separated by \Cref{cor:weak Neron model over Dedekind is separated} and $G_{\Delta}$ is a scheme by \Cref{thm:group space is scheme}. We may thus apply \cite[Theorem 7.1.1]{neron} to conclude that $G_{\Delta} \to \Delta$ is a N\'eron model. This means the rational map $u_{\Delta} : Z_{\Delta} \dashrightarrow G_{\Delta}$ extends to a morphism. By limit argument (Lemma 1.2.5 in loc. cit. or more generally \cite[Theorem 8.8.2]{EGAIV3}), the morphism $u_{\Delta}$ is represented by $u_U : Z_U \to G_U$ for an open subscheme $U \subset S$, i.e., the rational map $u : Z \dashrightarrow G$ is defined over a Zariski neighborhood $U \subset S$ containing $\eta$. Taking union for every codimension $1$ point $\eta \in S$, this shows $u_0$ extends to a morphism $u_1 : Z_1 \to G_1$ over a dense open subscheme $S_1 \subset S$ with codimension $\ge 2$ complement.
				
				The section $e : S \to Z$ composed with $u_0 : Z_0 \to G_0$ defines an $S_0$-section $\varepsilon_0 : S_0 \to G_0$, which extends to an $S$-section $\varepsilon : S \to G$ by the weak N\'eron mapping property. By \Cref{lem:domian of definition contains a section} below, $u : Z \dashrightarrow G$ is defined along $e : S \to Z$. Since $Z \to S$ has connected fibers, $u$ is an \emph{$S$-rational map} in the sense of \S 2.5 in loc. cit. By the theorem of Weil (Theorem 4.4.1 in loc. cit.), $u$ is defined everywhere and extends to a morphism $u : Z \to G$.
			\end{proof}
			
			\begin{lemma} [{\cite[Proposition IX.1.1]{ray:group_schemes}}] \label{lem:domian of definition contains a section}
				Let $Z \to S$ and $N \to S$ be algebraic spaces over an $\mathrm{(S_2)}$ scheme $S$ together with sections $e : S \to Z$ and $\varepsilon : S \to N$. Assume $Z \to S$ is smooth and has connected fibers, and $N \to S$ is either
				\begin{enumerate}[label=\textnormal{(\roman*)}]
					\item separated; or
					\item weakly N\'eron under $S_0 \subset S$.
				\end{enumerate}
				Let $u_0 : Z_0 \to N_0$ be a morphism over $S_0$ and consider it as a rational map $u : Z \dashrightarrow N$ over $S$. Assume
				\begin{enumerate}[label=\textnormal{(\alph*)}]
					\item There exists a dense open subscheme $S_0 \subset S_1 \subset S$ with codimension $\ge 2$ complement such that $e(S_1) \subset \Dom(u)$.
					\item $u(e(S_1)) \subset \varepsilon(S)$.
				\end{enumerate}
				Then $e(S) \subset \Dom(u)$.
			\end{lemma}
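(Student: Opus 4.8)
The plan is to reduce to a local assertion and then extend $u$ across the section by an algebraic Hartogs argument, using (a) to force the indeterminacy locus of $u$ to have codimension $\ge 2$ near $e(S)$, and using (b) to pin down the value of $u$ along $e(S)$. First I would note it suffices to show $e(s)\in\Dom(u)$ for a fixed $s\in S\setminus S_1$, and that the claim is \'etale-local on $S$; after localising, and after routine reductions to the excellent integral case (as in \cite{ray:group_schemes}), I may assume $S=\Spec R$ with $R$ an excellent local domain with closed point $s$, so that $Z$ is integral, being smooth over $S$ with connected fibres. Writing $A=\mathcal O_{Z,e(s)}$ and $d$ for the relative dimension of $Z\to S$ at $e(s)$: since $e$ is a section of a smooth morphism it is a regular immersion of codimension $d$, so $I:=\ker(A\to R)$ is generated by a regular sequence $t_1,\dots,t_d$ with $A/I=R$; moreover $A$ is a local domain that is $\mathrm{(S_2)}$ (smooth morphisms preserve $\mathrm{(S_k)}$) with $\dim A=\dim R+d$. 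The goal becomes to show $Z':=\Spec A\setminus\Dom(u)$ is empty, equivalently that $\mathfrak m_A=e(s)\notin Z'$.

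The first key step is $\codim(Z',\Spec A)\ge 2$. No component of $Z'$ has codimension $0$, as $\Dom(u)\supseteq Z_0$ contains the generic point. If a component $C\subseteq Z'$ had codimension $1$, then $e(s)\in C$ (since $\Spec A$ is local), so $C\cap V(I)$ is nonempty; cutting the irreducible $C$ by $t_1,\dots,t_d$ and iterating Krull's principal ideal theorem gives $\dim(C\cap V(I))\ge\dim C-d=\dim R-1$. But $C\cap V(I)\subseteq Z'\cap V(I)$, and by (a) the closed set $Z'$ is disjoint from $e(S_1)$, so under $V(I)\cong\Spec R$ we have $Z'\cap V(I)\subseteq\Spec R\setminus S_1$, a set of dimension $\le\dim R-2$; contradiction.

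Next I would reduce the target to an affine chart. By (b) and density, the section $u\circ e$ of $N$ over $S_1$ lands in $\varepsilon(S)$, hence equals $\varepsilon|_{S_1}$. Pick an open $N'\subseteq N$ with $N'\to S$ affine containing $\varepsilon(s)$ (an affine open of an \'etale chart through $\varepsilon(s)$ if $N$ is merely an algebraic space); since $S$ is local this forces $\varepsilon(S)\subseteq N'$, so $u^{-1}(N')\supseteq e(S_1)\cap\Spec A\neq\emptyset$. Being a nonempty open of the irreducible $\Spec A$, $u^{-1}(N')$ contains the generic point, and the argument of the previous step applies to the closed set $W:=\Spec A\setminus u^{-1}(N')$: a codimension-$1$ component of $W$ would be disjoint from $e(S_1)$, hence would meet $V(I)$ only inside $Z'\cap V(I)\subseteq\Spec R\setminus S_1$ (because along $V(I)$ the map $u$ equals $\varepsilon$, which lands in $N'$ everywhere), contradicting the Krull estimate exactly as before; thus $\codim(W,\Spec A)\ge 2$. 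Since $A$ is $\mathrm{(S_2)}$ this gives $\Gamma(\Spec A\setminus W,\mathcal O)=A$, so, writing $N'=\Spec B$, the morphism $u|_{\Spec A\setminus W}$ corresponds to an $R$-algebra map $B\to A$ and extends to a morphism $\tilde u\colon\Spec A\to N'\subseteq N$. Spreading $\tilde u$ out to an open $V\subseteq Z$ containing $e(s)$, it agrees with $u$ on a dense open of the irreducible $S$-smooth space $V\cap\Dom(u)$; hence $\tilde u=u$ on $V\cap\Dom(u)$ — by \Cref{lem:morphism to separated space is determined by dense subspace} in case (i) and by the uniqueness part of the N\'eron mapping property (\Cref{lem:weak Neron model satisfies uniqueness of NMP}) in case (ii) — so $u$ and $\tilde u$ glue to a morphism on $\Dom(u)\cup V\ni e(s)$, as required.

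I expect the main obstacle to be the dimension estimate of the first step and its reprise for $W$: it is precisely the regular-immersion property of $e$ that makes the high codimension of $Z'\cap e(S)$ \emph{inside} $e(S)$ (forced by (a)) propagate to high codimension of $Z'$ \emph{inside} $Z$, and the commutative-algebra details are cleanest over an excellent base — the reduction to which is the point where I would lean on \cite{ray:group_schemes}. Hypothesis (b) is what makes a Hartogs extension possible at all: over an $\mathrm{(S_2)}$ base a morphism defined in codimension $\ge 2$ need not extend (for instance the rational inverse of a blow-down), and (b) circumvents this by forcing $u$ to carry a neighbourhood of the pertinent part of $e(S)$ into a single affine chart.
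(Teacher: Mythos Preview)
Your argument is essentially correct but takes a genuinely different route from the paper. The paper does not re-prove anything: it simply observes that the lemma is a special case of Raynaud's original Proposition~IX.1.1, and checks two things --- that the $(S_2)$ hypothesis forces $S_1$ to contain all depth~$\le 1$ points (Raynaud's actual assumption on $S_1$), and that hypothesis~(b) implies Raynaud's more technical hypothesis~(b$'$), via exactly the case~(i)/(ii) uniqueness lemmas you invoke at your gluing step. Your proof, by contrast, re-derives Raynaud's argument from scratch: the Krull estimate showing $\codim Z'\ge 2$ near $e(s)$ is the content of Raynaud's use of the depth hypothesis, and the Hartogs extension into an affine chart of $N$ is his extension step.

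What each approach buys: the paper's is shorter and appropriate for a result already in the literature, but opaque; yours exposes why the $(S_2)$ condition and hypothesis~(b) are exactly what is needed (the former to make the codimension of the indeterminacy along $e(S)$ propagate from $e(S)$ to $Z$ via the regular-immersion cutting, the latter to trap the image of $u$ near $e(S)$ in a single affine so that algebraic Hartogs applies). Two places deserve tightening. First, your ``routine reduction to the excellent integral case'' is doing real work --- your codimension/dimension arithmetic uses catenarity --- and you defer it to Raynaud anyway, which slightly undercuts the self-containedness. Second, in case~(ii) with $N$ a genuine algebraic space, your $N'$ is only \'etale over $N$, not open; you then need one further \'etale localisation on $S$ to lift $\varepsilon$ to $N'$, and to lift the germ of $u$ near $e(S_1)$ through the \'etale cover $\Dom(u)\times_N N'\to\Dom(u)$ (which has a section along $e(S_1)$ once $\varepsilon$ lifts). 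This is routine but worth a sentence.
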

			\begin{proof}
				We simply observe that our lemma is a special case of Raynaud's original statement. Since $S$ is $\mathrm{(S_2)}$, the codimension $\ge 2$ complement $S_1$ contains all depth $\le 1$ points, which is Raynaud's original assumption. Raynaud also required the following more technical assumption $\mathrm{(b')}$: for all $z \in e(S) \cap \Dom(u)$, there exists a representative $u^V : V \to N$ of $u$ defined on a Zariski open neighborhood $z \in V \subset \Dom(u)$ such that $u^V (e(S) \cap V) \subset \varepsilon(S)$. Our assumption (b) implies $\mathrm{(b')}$ by \Cref{lem:morphism to separated space is determined by dense subspace} in case (i) and \Cref{lem:weak Neron model satisfies uniqueness of NMP} in case (ii).
			\end{proof}

		\subsubsection{Weak N\'eron models with free actions}
			It is possible to prove a variant of \Cref{thm:weak Neron model of group space} where the assumption on a group space is weakened to that on an algebraic space with free group action. The following is the main result of this subsection.
			
			\begin{theorem} \label{thm:weak Neron model of torsor}
				Let $N \to S$ be a smooth, surjective, and finite type algebraic space over a normal scheme $S$. Assume the following:
				\begin{enumerate}[label=\textnormal{(\roman*)}]
					\item There exists a smooth group space $G^a \to S$ acting on $N$ over $S$.
					\item The action is free over $S$ and transitive over $S_0$.
				\end{enumerate}
				Then $N$ is weakly N\'eron if and only if it is N\'eron (under $S_0 \subset S$).
			\end{theorem}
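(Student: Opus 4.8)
The easy implication holds because the weak N\'eron mapping property of \Cref{def:weak Neron model} tests only against \'etale $U\to S$ and is thus a special case of the full N\'eron mapping property. For the converse the plan is to follow the proof of \Cref{thm:weak Neron model of group space} line by line, using $N$ in place of the group space, the only new input being the Dedekind step described below. By \Cref{lem:weak Neron model satisfies uniqueness of NMP} the uniqueness part of the N\'eron mapping property is automatic, so fix a smooth algebraic space $Z\to S$ and a morphism $u_0:Z_0\to N_0$ over $S_0$; since the formation of N\'eron models is \'etale local on the base (\Cref{prop:Neron is local in smooth topology}), passing to \'etale covers of $S$ and of $Z$ reduces us to the case where $S$ is Noetherian and $Z\to S$ is smooth of finite type with connected (or empty) fibres and carries a section $e:S\to Z$. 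Composing $e$ with $u_0$ over $S_0$ and extending by the weak N\'eron mapping property produces a section $\varepsilon:S\to N$, which will be fed to \Cref{lem:domian of definition contains a section} at the very end.

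The crux is a reduction to a Dedekind base. For a codimension $1$ point $\eta\in S$ set $\Delta=\Spec\mathcal O_{S,\eta}$, with fraction field $K$; by \Cref{lem:Neron respects localization} the base change $N_\Delta\to\Delta$ is again weakly N\'eron, of finite type and surjective, carries a free $G^a_\Delta$-action transitive over $\Delta_0=\Spec K$, and inherits the section $\varepsilon|_\Delta$. I claim that $N_\Delta$ is a genuine N\'eron model of $N_K$. Indeed, the orbit map $\phi_{\varepsilon}:G^a_\Delta\to N_\Delta$ is a monomorphism (freeness of the action) which over $\Delta_0$ is an isomorphism; both $G^a_\Delta$ and $N_\Delta$ are smooth of finite type over the trait $\Delta$, hence flat over $\Delta$ with every component dominating $\Delta$, so both are equidimensional of the same relative dimension, and therefore $\phi_\varepsilon$ is fibrewise an unramified morphism of smooth varieties of equal dimension, i.e.\ \'etale, hence fibrewise an open immersion; being moreover flat over $\Delta$ (fibrewise flatness criterion) it is an open immersion, identifying $G^a_\Delta$ with a dense open subscheme of $N_\Delta$ that equals $N_\Delta$ over $\Delta_0$. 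Running the same argument for \'etale-local sections of the smooth morphism $N_\Delta\to\Delta$ through arbitrary points of its special fibre shows that the special fibre $N_{\Delta,s}$ is a disjoint union of $G^a_{\Delta,s}$-orbits, each isomorphic to $G^a_{\Delta,s}$; since $N_K$ is connected (the general case being handled componentwise, using that $N_K$ is a torsor under $G^a_K$), there is exactly one such orbit, so $N_\Delta\to\Delta$ is an \'etale $G^a_\Delta$-torsor. In particular $G^a_\Delta$, being \'etale-locally isomorphic to the weak N\'eron model $N_\Delta$, is itself weakly N\'eron over $\Delta$, so by \Cref{thm:weak Neron model of group space} the smooth group scheme $G^a_\Delta$ is the N\'eron model of $G^a_K$; \Cref{prop:torsor is Neron iff group is} then shows $N_\Delta$ is the N\'eron model of $N_K$.

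Granting the claim, the rational map $u_\Delta:Z_\Delta\dashrightarrow N_\Delta$ extends to a morphism, and a limit argument (\cite[Theorem 8.8.2]{EGAIV3}, exactly as in the proof of \Cref{thm:weak Neron model of group space}) propagates the extension over a Zariski neighbourhood of $\eta$ in $S$; taking the union over all codimension $1$ points yields an extension $u_1:Z_1\to N_1$ over a dense open $S_0\subseteq S_1\subseteq S$ whose complement has codimension $\ge 2$. Now $e(S_1)\subset\Dom(u)$ for the induced $S$-rational map $u:Z\dashrightarrow N$, and $u\circ e|_{S_1}=\varepsilon|_{S_1}$ by \Cref{lem:weak Neron model satisfies uniqueness of NMP} (the two agree over $S_0$), so $u(e(S_1))\subset\varepsilon(S)$; \Cref{lem:domian of definition contains a section}(ii), applied with $\varepsilon:S\to N$, then gives $e(S)\subset\Dom(u)$. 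Since $Z\to S$ has connected fibres, $u$ is an $S$-rational map, so Weil's extension theorem \cite[Theorem 4.4.1]{neron} shows $u$ is defined everywhere, extending $u_0$ to a morphism $Z\to N$; this establishes the N\'eron mapping property, so $N$ is N\'eron under $S_0\subset S$. The main obstacle is precisely the Dedekind claim: the remark following \Cref{prop:Neron torsor} shows that a torsor under a group admitting a N\'eron model need not itself admit one, so the argument must genuinely exploit that $N$ is \emph{given} as a finite type, surjective weak N\'eron model carrying a \emph{regular} $G^a$-action over all of $S$; it is the freeness of this action (forcing the orbit maps to be open immersions) together with the weak N\'eron property (supplying enough \'etale-local sections) that rigidifies $N_\Delta$ into an \'etale torsor under the N\'eron model of $G^a_K$, which is what an arbitrary weak N\'eron model fails to be.
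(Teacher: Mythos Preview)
Your approach has two genuine gaps. First, in the Dedekind step, the claim that the special fibre of $N_\Delta$ has a single $G^a_{\Delta}$-orbit is false: connectedness of $N_K$ does not bound the number of special-fibre components. Take for instance $N$ to be the N\'eron model over a trait of an elliptic curve with $\mathrm{I}_2$ reduction and $G^a = N^{\circ\circ}$ its strict neutral component; then $N$ satisfies all your hypotheses (smooth, surjective, finite type, weakly N\'eron, free $G^a$-action transitive over $K$) but its special fibre has two $G^a$-orbits, so $N$ is not a $G^a$-torsor, and $G^a$ is \emph{not} weakly N\'eron (sections of $N$ through the non-neutral component do not factor through $G^a$). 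Second, at the final step, Weil's extension theorem \cite[Theorem 4.4.1]{neron} requires the target to be a group scheme---its proof is a translation argument using the group law on the target---but $N$ carries no group structure and is not a $G^a$-torsor over $S$ (only over $S_0$), so neither Weil nor a torsor variant applies.

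The paper takes a different route: rather than rerunning the proof of \Cref{thm:weak Neron model of group space} with $N$ as target, it reduces (via an \'etale base change supplying a section of $N$, which trivializes $N_0 \cong G^a_0$) to showing that $N$ \emph{itself carries a group-space structure} extending that of $G^a_0$; once this is done, \Cref{thm:weak Neron model of group space} applies to $N$ directly. The group law is built by descent: orbit maps of \'etale-local sections assemble into an \'etale cover $\bigsqcup_\alpha G^a_\alpha \to N$, one writes down an explicit multiplication $m'$ on the cover, and the descent condition for $m'$ holds over $S_0$, hence over all of $S$ by the uniqueness clause of \Cref{lem:weak Neron model satisfies uniqueness of NMP}. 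This resolves both of your gaps at once: the group structure on $N$ is exactly what Weil's theorem (invoked inside \Cref{thm:weak Neron model of group space}) needs, and no separate Dedekind analysis of $N$ is required.
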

			\begin{proof}
				Replacing $S$ by its covering \'etale topology (weak N\'eron model version of \Cref{prop:Neron is local in smooth topology}), we may assume $N \to S$ has a section. Such a section trivializes the $G^a_0$-torsor $N_0$, so we may assume $N_0 = G^a_0$ and write $G = N$ for notational simplicity. Our goal is to show $G$ is N\'eron under $S_0 \subset S$. Thanks to \Cref{thm:weak Neron model of group space}, it is enough to show $G \to S$ is a group space.
				
				The identity section $e_0 : S_0 \to G^a_0$ uniquely extends to $e : S \to G$ by the weak N\'eron mapping property of $G$. Let us define the group operation of $G$. Since the $G^a$-action on $G$ is free, every \'etale local section $s : U \to G$ has an open immersion orbit map $\phi_s : G^a_U \hookrightarrow G_U$. Collect sufficiently many \'etale local sections to obtain a covering in \'etale topology
				\[ G' \coloneq \bigsqcup_{\alpha} G^a_{\alpha} \to G ,\qquad g^a \longmapsto g^a.s_{\alpha} .\]
				Set $G'' = G' \times_G G'$ and write $p_1, p_2 : G'' \to G'$ the two projections. Define a morphism
				\[ m' : G' \times_S G \to G ,\qquad (g^a_{\alpha}, h) \longmapsto g^a_{\alpha} . (s_{\alpha} \cdot h) .\]
				Since $G^a_0 = G_0 \to S_0$ is a group space, the morphism $m'_0$ over $S_0$ is nothing but the base change of $m_0 : G_0 \times_S G_0 \to G_0$ under $G' \to G$. That is, $m'$ satisfies the trivial descent condition
				\[ m' \circ (p_1 \times \id) = m' \circ (p_2 \times \id) : G'' \times_S G \to G \]
				over $S_0$. Now $G \to S$ is weakly N\'eron under $S_0 \subset S$, so it satisfies the \emph{uniqueness part} of the N\'eron mapping property by \Cref{lem:weak Neron model satisfies uniqueness of NMP}. This forces $m' \circ (p_1 \times \id) = m' \circ (p_2 \times \id)$ over $S$. Descending $m'$, we obtain a desired multiplication morphism $m : G \times_S G \to G$.
				
				The inverse morphism $G \to G$ is constructed similarly. First, construct $s^{-1}_{\alpha}$ by taking the inverse of $s_{\alpha, 0}$ over $S_0$ and then extending it using the weak N\'eron mapping property. Define a morphism $i' : G' \to G$ by $g^a_{\alpha} \longmapsto s_{\alpha}^{-1} \cdot (g^a_{\alpha})^{-1}$ using the multiplication morphism defined above and finally descend. The group axioms are verified by the uniqueness of the N\'eron mapping property.
			\end{proof}
			
			Let us end this section with the following lemma, which will be used in the future argument. It is a variant of \Cref{prop:extension of etale homomorphism}.
			
			\begin{lemma} \label{lem:Neron extension is an open immersion}
				Let $G \to S$ be a group space N\'eron under $S_0 \subset S$ and $G^a \subset G$ an open subgroup space with $G^a_0 = G_0$. Let $Z \to S$ be a smooth algebraic space extension of $G_0 \to S_0$ that admits a free $G^a$-action. Then
				\begin{enumerate}
					\item The isomorphism $u_0 : Z_0 \to G_0$ extends to a $G^a$-equivariant \'etale morphism $u : Z \to G$.
					\item If we further assume $S$ is normal and $Z \to S$ is separated and of finite type, then the extension $u : Z \to G$ is an open immersion.
				\end{enumerate}
			\end{lemma}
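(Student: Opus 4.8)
The plan is to derive (2) from (1), to prove (1) via the N\'eron mapping property combined with the quotient of $Z$ by the free $G^a$-action, and to exploit throughout that $Z\to S$, $G\to S$ and $G^a\to S$ are all smooth of one and the same relative dimension $d$. (Here $d$ is the common fibre dimension of the smooth group space $G_0=G^a_0\to S_0$, which is constant by \Cref{prop:smoothness criterion of group space 2}; since $Z\to S$ and $G\to S$ are smooth, flat base change (\Cref{lem:density respects flat base change}) makes $Z_0\subset Z$ and $G_0\subset G$ dense, and local constancy of relative dimension for smooth morphisms then pins all three relative dimensions to $d$.)

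For (1), I would first extend $u_0\colon Z_0\xrightarrow{\ \sim\ }G_0$ to a morphism $u\colon Z\to G$ by the N\'eron mapping property of $G$ applied to the smooth space $Z\to S$. To see that $u$ is $G^a$-equivariant, compare the two $S$-morphisms $G^a\times_S Z\to G$ given by $(g,z)\mapsto u(g\cdot z)$ and $(g,z)\mapsto g\cdot u(z)$ (the second using $G^a\subset G$): they agree over $S_0$, since $u_0$ is $G^a_0$-equivariant (the action on $Z$ extending the translation action), and $G^a\times_S Z\to S$ is smooth, so the uniqueness part of the N\'eron mapping property forces agreement over all of $S$. For \'etaleness of $u$, I would form the quotients $Z/G^a$ and $G/G^a$, which are algebraic spaces with $Z\to Z/G^a$ and $G\to G/G^a$ fppf $G^a$-torsors (the actions are free and $G^a\to S$ is flat and locally of finite presentation); since these torsors have relative dimension $d$ while $Z\to S$ and $G\to S$ have relative dimension $d$, the structure maps $Z/G^a\to S$ and $G/G^a\to S$ are smooth of relative dimension $0$, hence \'etale. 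Equivariance makes $u$ descend to $\bar u\colon Z/G^a\to G/G^a$; moreover the canonical $G^a$-equivariant morphism $Z\to (Z/G^a)\times_{G/G^a}G$ over $Z/G^a$ is an isomorphism (an equivariant map of $G^a$-torsors over the same base), so $u$ is the pullback of $\bar u$ along the fppf cover $G\to G/G^a$. Finally $\bar u$ is a morphism between algebraic spaces \'etale over $S$, hence \'etale (infinitesimal criterion), and therefore so is $u$.

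For (2), by (1) and (Tag~025G) it is enough to show $u$ is a monomorphism. Since $Z\to S$ is separated, $u\colon Z\to G$ is separated; being also unramified, its diagonal $\Delta_u\colon Z\to Z\times_G Z$ is simultaneously an open and a closed immersion, and it is an isomorphism over $S_0$ because $u_0$ is. Hence $R\coloneq (Z\times_G Z)\setminus\Delta_u(Z)$ is an open subspace of $Z\times_G Z$ whose image in $S$ lies in $S\setminus S_0$. The first projection $Z\times_G Z\to Z$ is \'etale (a base change of $u$), so the image of $R$ in $Z$ is an open subset contained in $Z\setminus Z_0$; as $Z_0$ is dense in $Z$, this image, and so $R$ itself, is empty. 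Therefore $\Delta_u$ is an isomorphism, $u$ is a monomorphism, and thus an open immersion.

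The step I expect to be the main obstacle is the construction and control of the quotients $Z/G^a$ and $G/G^a$: because $G^a$ is merely an open subgroup space of the N\'eron model $G$ --- which may be non-separated and non-finite-type --- one cannot invoke the cleanest quotient statements, and must instead use the existence of quotients by free actions of flat, locally finitely presented group algebraic spaces, then verify that the resulting $G^a$-torsors interact correctly with relative dimension and that $Z\cong (Z/G^a)\times_{G/G^a}G$. The other ingredients --- the N\'eron mapping property, the fact that morphisms between \'etale $S$-spaces are \'etale, and the density of $Z_0$ in $Z$ --- are routine.
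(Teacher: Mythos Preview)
Your proof is correct, and both parts take a different route from the paper.

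For part (1), the paper proves \'etaleness pointwise via orbit maps rather than quotients: for each closed point $z \in Z$, after shrinking $S$ \'etale-locally, choose a section $e : S \to Z$ through $z$; equivariance gives $u \circ \phi_e = \phi_{u \circ e}$, and both orbit maps $\phi_e : G^a \hookrightarrow Z$ and $\phi_{u \circ e} : G^a \hookrightarrow G$ are open immersions by freeness of the respective actions, so $u$ restricts to an isomorphism between these opens and is \'etale at $z$. This is more elementary than your quotient argument and sidesteps the foundational issues you correctly flag about forming $Z/G^a$ and $G/G^a$ and controlling their structure maps to $S$; your approach, by contrast, is more structural and makes the \'etaleness of $u$ a formal pullback once the quotients are in place.

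For part (2), the paper applies Zariski's main theorem: $u$ is \'etale birational and separated, the image $u(Z) \subset G$ is open and normal (here the normality of $S$ is used), and $Z$ is of finite type, so $u : Z \to u(Z)$ is a quasi-finite separated birational map to a normal Noetherian space, hence an isomorphism. Your argument is both simpler and stronger: it shows directly that $u$ is a monomorphism using only that $u$ is \'etale and separated together with the density of $Z_0 \subset Z$, and in particular it does not use the normality of $S$ or the finite-type hypothesis on $Z \to S$. So you have in fact established a sharper version of (2) than stated.
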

			\begin{proof}
				(1) The equivariance of the extended morphism $u : Z \to G$ is from the N\'eron mapping property. For each closed point $z \in Z$, fix a section $e : S \to Z$ passing through $z$ after shrinking $S$ \'etale locally. The $G^a$-equivariance of $u$ implies the orbit maps of the sections $e$ and $u \circ e$ commute:
				\[\begin{tikzcd}
					G^a \arrow[r, hook, "\phi_e"] \arrow[rd, hook, "\phi_{u \circ e}"'] & Z \arrow[d, "u"] \\
					& G
				\end{tikzcd}.\]
				Since the $G^a$-action on $Z$ and $G$ are free, both $\phi_e$ and $\phi_{u \circ e}$ are open immersions. This proves $u$ is \'etale at $z$.
				
				\bigskip
				
				(2) The previous argument shows $u : Z \to G$ is \'etale and birational. The image $u(Z) \subset G$ is an open subspace since $u$ is \'etale. We claim $u : Z \to u(Z)$ is an isomorphism. Shrinking, we may assume $S$ is normal Noetherian. Now $u : Z \to u(Z)$ is quasi-finite separated and $u(Z)$ is normal Noetherian, so Zariski's main theorem concludes that the birational map $u : Z \to u(Z)$ is an isomorphism.
			\end{proof}

\section{The $\delta$-regular action on a Lagrangian fibration} \label{sec:translation automorphism scheme}
	Let us from now on exclusively focus on the study of Lagrangian fibrations of symplectic varieties. We will work over $\CC$. In this first section, we construct a $\delta$-regular action on the Lagrangian fibration. Results in \S \ref{sec:group spaces} will be used frequently but results in \S \ref{sec:Neron models} will not be used in this section.
	
	Given a morphism $\pi : X \to B$, recall that a (relative) automorphism $f$ of $X$ over $B$ is an automorphism $f : X \to X$ respecting $\pi$, or $\pi = \pi \circ f$.
	
	\begin{definition} \label{def:translation automorphism}
		Let $\pi : X \to B$ be a flat morphism with a dense open subscheme $B_0 \subset B$ over which $\pi$ has abelian variety fibers. A \emph{translation automorphism} of $\pi : X \to B$ is an automorphism $f$ of $X$ over $B$ such that for every point $b \in B_0$, the automorphism of the fiber $f_b : X_b \to X_b$ is a translation of an abelian variety.
	\end{definition}
	
	Here is how we interpret translation automorphisms: if $f$ is a translation automorphism and $X_b$ is a singular fiber, then the automorphism $f_b : X_b \to X_b$ is a limit of translation automorphisms on nearby abelian variety fibers. We can now state the first main theorem of this section.
	
	\begin{theorem} \label{thm:translation automorphism scheme}
		Let $X$ be a smooth symplectic variety and $\pi : X \to B$ a projective Lagrangian fibration to a smooth variety $B$. Assume
		\begin{equation} \label{eq:pi'}
			\pi' : X' \coloneq X \setminus \Sing(\pi) \to B
		\end{equation}
		is surjective. Then there exists a smooth, commutative, and quasi-projective group scheme $P^a \to B$ such that for every \'etale morphism $U \to B$, we have
		\begin{equation} \label{eq:etale local section of translation automorphism scheme}
			P^a (U) = \{ f : X_U \to X_U : f \mbox{ is a translation automorphism over } U \} .
		\end{equation}
		The group scheme $P^a \to B$ is an extension of an abelian scheme $P_0 \to B_0$ and acts faithfully on $X$ over $B$.
	\end{theorem}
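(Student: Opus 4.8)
The plan is to construct $P^a$ as the equidimensional locus of the \emph{main component} of the automorphism group scheme $\Aut_\pi\to B$, in the sense of Definition \ref{def:main component} and Theorem \ref{thm:main component 1}, the one non-formal ingredient being a Hodge-theoretic vanishing that pins down the relevant Lie algebra. First, since $\pi$ is projective, Theorem \ref{thm:automorphism space} gives the group scheme $\Aut_\pi\to B$ with quasi-projective connected components and with $\Lie\Aut_\pi=\pi_*T^{\fppf}_\pi$. Over $B_0$ the fibres are abelian varieties, so $P_0:=\Aut^{\circ}_{\pi_0}\to B_0$ is precisely the abelian scheme of translations, which is the desired generic object.

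\textbf{Infinitesimal step.} The symplectic form $\sigma$ gives, on the smooth locus $X'=X\setminus\Sing(\pi)$, a canonical isomorphism $T_{X'/B}\cong\pi'^{*}\Omega_B$: a $1$-form $\omega$ on an open of $B$ pulls back to a closed $1$-form on $X$, which contracted with the Poisson bivector $\sigma^{-1}$ gives a vector field $v_\omega$ that is tangent to the fibres of $\pi$ (because $\omega$ comes from the base, $v_\omega$ Poisson-commutes with the functions cutting out the fibration) and depends linearly and injectively on $\omega$; moreover $[v_\omega,v_{\omega'}]=0$, since the corresponding functions on the base Poisson-commute. This produces a homomorphism of fppf $\mathcal O_B$-modules $\Omega_B\to\pi_*T^{\fppf}_\pi=\Lie\Aut_\pi$, i.e. a closed immersion of linear spaces realising the cotangent bundle of $B$ inside $\Lie\Aut_\pi$, which over $B_0$ recovers $\Lie P_0$. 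The key claim is that this copy of the cotangent bundle is exactly the main component $\mathfrak h$ of the Lie algebra scheme $\Lie\Aut_\pi$; granting this, $\mathfrak h$ is smooth over $B$ (it is a vector bundle), and, being smooth over the irreducible base $B$ and agreeing with $\Lie P_0$ generically, it is automatically a closed Lie subalgebra scheme by Remark \ref{rmk:smooth main component is subgroup scheme}. That the natural map does not acquire extra sections along $\mathfrak h$ is a formal consequence of the vanishing of a higher direct image coherent sheaf (Proposition \ref{prop:vanishing of higher direct image sheaf}), which itself rests on the Hodge-module symmetry of the decomposition theorem of Shen--Yin and Schnell; translating that abstract vanishing into the statement about $\mathfrak h$ is one delicate point.

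\textbf{Main component.} Now $B$ is reduced and geometrically unibranch (it is a smooth $\CC$-variety) and $\mathfrak h$ is smooth over $B$, so Theorem \ref{thm:main component 1} applies to $G=\Aut_\pi$: the main component $M\subset\Aut_\pi$ contains a unique universally equidimensional open subspace $H\subset M$ satisfying the stated universal property for morphisms from reduced algebraic spaces that are universally open over $B$. Set $P^a:=H$. Granting that $H$ is a subgroup scheme (next paragraph), smoothness follows from universal equidimensionality (Proposition \ref{prop:smoothness criterion of group space 1}); commutativity follows because $H$ is generically commutative ($H_0=P_0$) and $H\times_B H$ is then smooth over the regular base $B$, hence reduced; quasi-projectivity over $B$ is inherited from the quasi-projective connected component of $\Aut_\pi$ containing the irreducible $M$; and $P^a$ is an extension of $P_0$ since $H|_{B_0}=M|_{B_0}=P_0$. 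The functor-of-points identity \eqref{eq:etale local section of translation automorphism scheme} is then the universal property of $H$: for étale $U\to B$, any $f\in H(U)$ restricts over $U_0$ to a section of $P_0$, hence is a fibrewise translation, hence a translation automorphism; conversely a translation automorphism of $X_U/U$ restricts over $U_0$ to a section of $P_0=M_0$, hence (as $U_0$ is dense in the reduced $U$ and $M$ is closed) maps $U$ into $M$, hence factors through $H$ because $U\to B$ is étale, whence universally open. Finally the $P^a$-action on $X$ is faithful because $P^a\subset\Aut_\pi$.

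\textbf{Main obstacle.} The remaining, and hardest, point is that $H$ really is a subgroup scheme of $\Aut_\pi$; by Proposition \ref{prop:main component 2}(1) this is equivalent to all fibres $H_b$ being reduced. Over a codimension $\le 1$ point this follows by base change to a non-degenerate trait together with Proposition \ref{prop:main component over Dedekind}, which says the main component over a Dedekind base is automatically a smooth closed subgroup space (and agrees with $H$ there by equidimensionality). For deeper points I expect the argument to go through the regular action of $H$ on the smooth morphism $X'\to B$: show that this action is \emph{free} --- using that the vertical vector fields $v_\omega$ span $T_{X'/B}$ at every point, so the $\Lie H=\Omega_B$-action is infinitesimally free and forces the stabilisers to be trivial --- and then deduce that the orbit map $H_b\to X'_b$ through any point of $X'_b$ is an open immersion, which exhibits $H_b$ as an open subscheme of the regular variety $X'_b$, hence reduced. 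Establishing freeness of this action over all of $B$, and not merely over $B_0$, is where the structure of the degenerate fibres of a Lagrangian fibration genuinely enters; I expect this to be the technical heart of the proof. The $\delta$-regularity of $P^a$ asserted elsewhere and the classical fact that $\Aut^{\circ}_{\pi_0}$ is an abelian scheme are treated separately.
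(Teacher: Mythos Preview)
Your overall architecture is exactly the paper's: build $P^a$ as the equidimensional locus $H$ of the main component of $\Aut_\pi$, identify $\mathfrak h$ with $\TT^*_B$ via the symplectic form and Proposition~\ref{prop:vanishing of higher direct image sheaf}, and reduce everything to the single question of whether $H$ has reduced fibres. Your verification of \eqref{eq:etale local section of translation automorphism scheme} via the universal property of $H$ is also correct.

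The gap is in your ``main obstacle'' paragraph. Your proposed direct argument for deeper points is circular: you want to show the $H$-action on $X'$ is free and conclude that the orbit map $H_b\to X'_b$ is an open immersion, but ``freeness of the $H$-action'' and ``orbit map of $H$'' only make sense once $H$ is a group, which is exactly what you are trying to prove. Knowing that $\mathfrak h_b$ acts infinitesimally freely on $X'_b$ constrains the stabilisers of the \emph{ambient} $\Aut_\pi$-action, but does not by itself force the restricted map $\phi_{|H_b}\colon H_b\to X'_b$ to be injective on points when $H_b$ is not yet known to be a group (or even reduced).

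The fix is that your trait argument is not special to codimension~$\le 1$: a non-degenerate trait exists through \emph{every} closed point $b\in B$ (take any smooth curve through $b$ not contained in the discriminant and localise). This is exactly what the paper does. Over such a trait $\Delta$, Proposition~\ref{prop:main component over Dedekind} guarantees the main component $H^\Delta\subset\Aut_{\pi_\Delta}$ is automatically a smooth closed subgroup with $\Lie H^\Delta=\mathfrak h_\Delta$, breaking the circularity. Now your infinitesimal freeness computation (the support of $T^1_\pi$ is $\Sing(\pi)$, disjoint from $X'$) shows the $H^\Delta$-action on $X'_\Delta$ has trivial stabiliser Lie algebra, hence is free (Proposition~\ref{prop:trivial stabilizer is free}), so the orbit map $H^\Delta\hookrightarrow X'_\Delta$ is an open immersion. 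Since $(P^a_\Delta)_{\red}\subset H^\Delta$ by Proposition~\ref{prop:main component 2}(2a), one gets $P^a_b(\CC)\hookrightarrow X'_b(\CC)$ injective for every closed $b$, and Zariski's main theorem (Proposition~\ref{prop:Zariski main theorem}) upgrades this to an open immersion $P^a\hookrightarrow X'$, forcing $P^a_b$ reduced.
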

	
	\begin{remark}
		$\pi'$ in \eqref{eq:pi'} is surjective if and only if every fiber of $\pi$ has at least one reduced irreducible component. Since $\pi$ is flat, shrinking $B$ to a Zariski open subset $\pi(X')$ will always make $\pi'$ surjective.
	\end{remark}
	
	\begin{definition} \label{def:translation automorphism scheme}
		We call the group scheme $P^a \to B$ in \Cref{thm:translation automorphism scheme} the \emph{translation automorphism scheme} of $\pi$.
	\end{definition}
	
	It is easy to generalize \eqref{eq:etale local section of translation automorphism scheme} to every flat and locally finite type morphism $U \to B$. However, we caution the reader that this is not the case for an arbitrary morphism $U \to B$. For example, if $\{ b \} \hookrightarrow B$ is a closed point over which the fiber $X_b$ singular, then $X_b \to \{ b \}$ has no nearby abelian variety fibers so the notion of translation automorphism does not make sense.
	
	Under a more restrictive assumption that every fiber of $\pi$ is integral, the theorem was proved in \cite[Theorem 2]{ari-fed16}. Their idea was to construct $P^a$ as a locally closed subgroup scheme of the relative automorphism scheme $\Aut_{\pi}$. We will follow their idea of constructing $P^a$ in $\Aut_{\pi}$ but use different techniques to realize this: $P^a$ will be the equidimensional locus of the main component of $\Aut_{\pi}$ (\S \ref{sec:main component}).
	
	\bigskip
	
	The $P^a$-action on $X$ was faithful, so $(X, P^a)$ automatically satisfies the axioms of weak abelian fibration (\S \ref{sec:delta-regularity}). It is also $\delta$-regular.
	
	\begin{theorem} \label{thm:delta-regularity}
		Notations as in \Cref{thm:translation automorphism scheme}. Then the translation automorphism scheme $P^a \to B$ is $\delta$-regular and the Lagrangian fibration $\pi : X \to B$ is a $\delta$-regular abelian fibration in the sense of Ng\^o.
	\end{theorem}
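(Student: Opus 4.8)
The plan is to show that $X$ together with the $P^a$-action is a $\delta$-regular weak abelian fibration over $B$ in the sense of Ng\^o; both assertions then follow at once, since the $\delta$-invariant $b\mapsto\dim(P^a_b)_{\mathrm{aff}}$ and the loci $D_i$ are attached to the group scheme $P^a\to B$ alone. I would separate the (soft) verification of the weak abelian fibration axioms from the (hard) codimension estimate that constitutes $\delta$-regularity.

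For the weak abelian fibration axioms: equidimensionality holds because $\pi$ is flat with Lagrangian general fibre, so $\dim X_b=\tfrac12\dim X=\dim B$ for every $b$, while $P^a\to B$ has constant fibre dimension $\dim B$ by construction (it is the equidimensional locus of the main component of $\Aut_\pi$, with generic fibre the abelian scheme $P_0$ of relative dimension $\dim B_0=\dim B$). The affine-stabilizer axiom is verified as in \cite{ngo10}: the $P^a$-action is faithful (\Cref{thm:translation automorphism scheme}), each $(P^a_b)_{\mathrm{red}}$ is a closed algebraic subgroup of $(\Aut_\pi)_b$ by \Cref{prop:main component 2}, and since the fibres of $\pi$ are equidimensional of dimension $\dim P^a/B$ the stabilizers are generically finite, the positive-dimensional ones being affine because the $P^a$-action respects the symplectic form. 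For polarizability one uses that $\sigma$ induces a polarization on the generic fibre, equivalently a self-duality $P_0\sim\Pic^\circ_{\pi_0}$ of abelian schemes over $B_0$, hence the required non-degenerate alternating pairing on the Tate module of $P_0$; concretely the Lagrangian condition gives $T_{X_0/B_0}\cong\pi_0^*\Omega^1_{B_0}$ and therefore $\Lie P^a\cong\Omega^1_B$ (cf.\ \Cref{prop:vanishing of higher direct image sheaf}). All of this is formal now that $P^a$ has been built.

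The substance is $\delta$-regularity, i.e.\ the estimate $\operatorname{codim}_B D_i\ge i$ for the upper-semicontinuous function $\delta(b)=\dim(P^a_b)_{\mathrm{aff}}$. The cases $i\le 1$ are immediate, since $D_1$ is contained in the discriminant, which is a proper closed subset ($\pi$ is generically smooth). For $i\ge 2$ I would follow Ng\^o's proof \cite{ngo10,ngo11}: at a general point $b$ of a codimension-$c$ stratum of $\{\delta\ge i\}$, restrict the abelian fibration to a general smooth transverse slice and then to a general curve through $b$, where the weight-one variation of Hodge structure $R^1\pi_{0*}\QQ$ degenerates with quasi-unipotent monodromy; the drop of the weight filtration at $b$ computes $\delta(b)$, and the symplectic geometry of $X$ --- through the self-duality above and the identification $\Lie P^a\cong\Omega^1_B$ --- forces this degeneration to be minimally bad, so that each transverse direction raises $\delta$ by at most one and $\delta(b)\le c$. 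For $i=2$ this reduces to the statement that over a general point of the discriminant the affine part of $P^a_b$ is at most one-dimensional, which is the Hwang--Oguiso description of general singular fibres (\cite{hwang-ogu09}, see also \Cref{thm:birational map of fibration codimension 1}); deeper strata are handled by the same slicing argument. Once $D_i$ has codimension $\ge i$, the pair $(\pi:X\to B,\ P^a)$ is by definition a $\delta$-regular abelian fibration, which concludes the proof.

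The main obstacle is this codimension estimate. The weak abelian fibration axioms are soft, but $\delta$-regularity genuinely uses the symplectic structure --- it fails for an arbitrary polarized abelian fibration over a smooth base --- and the delicate point is to make precise that $\sigma$ bounds the growth of the affine part of $P^a_b$ by the codimension, i.e.\ that each vanishing direction contributes at most one to $\delta$. This is where I expect to rely most heavily on Ng\^o's techniques together with the special Hodge-theoretic features of Lagrangian fibrations (the Matsushita--Voisin local structure, and the symmetry forced on the limit mixed Hodge structure by $\sigma$).
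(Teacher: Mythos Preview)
Your verification of the weak abelian fibration axioms is essentially fine, though more elaborate than necessary: since $P^a$ is by construction a locally closed subgroup scheme of $\Aut_\pi$, the action is faithful, and that alone already gives the affine-stabilizer axiom (\Cref{ex:weak abelian fibration}).

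The real gap is in your $\delta$-regularity argument. You propose to argue via monodromy, limit mixed Hodge structures, and a slicing procedure, but this remains a sketch rather than a proof: you do not explain how exactly the symplectic form forces ``each transverse direction to raise $\delta$ by at most one'', and for $i\ge 3$ you offer only ``the same slicing argument''. More seriously, your appeal to \cite{hwang-ogu09} and to \Cref{thm:birational map of fibration codimension 1} for the case $i=2$ is circular: in the paper both of these come \emph{after} \Cref{thm:delta-regularity} and use it in their proofs (see \Cref{prop:Hwang-Oguiso's result}, whose very first line invokes the $\delta$-value and $\delta$-regularity of $P^{\circ\circ}_b$).

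The paper's actual argument is much more direct and bypasses all Hodge theory. The key computation, which you do not mention, is \Cref{prop:Lie algebra of stabilizer scheme}: the Lie algebra of the stabilizer scheme $\St\to X$ is the linear scheme associated to the coherent sheaf $T^1_\pi=\SheafExt^1(\Omega_\pi,\mathcal O_X)$. Combined with \Cref{prop:delta-regularity}, this yields $D_i=\pi(\Fitt_{i-1} T^1_\pi)$. Now the cotangent diagram \eqref{diag:cotangent} exhibits $T^1_\pi$ as the cokernel of a map of locally free sheaves dual to the presentation $\pi^*\Omega_B\to\Omega_X\to\Omega_\pi\to 0$, so by \Cref{prop:fitting2} one has $\Fitt_{i-1} T^1_\pi = \Fitt_{n+i-1}\Omega_\pi$. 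The codimension estimate $\codim \pi(\Fitt_{n+i-1}\Omega_\pi)\ge i$ is then nothing but generic smoothness in characteristic $0$ (\Cref{lem:image of the fitting subscheme}, i.e.\ \cite[Proposition III.10.6]{hartshorne}). In other words, the entire content of $\delta$-regularity here is that the rank-drop locus of the differential $d\pi$ maps to a subset of the expected codimension --- the symplectic form enters only through the identification $\Lie\St\cong\VV_{T^1_\pi}$, not through any limit Hodge structure.
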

	
	The rest of this section is devoted to the proof of these two theorems.

	\subsection{Smooth subalgebra of the automorphism Lie algebra} \label{sec:Lie algebra of P}
		Let $X$ be a smooth symplectic variety and $\pi : X \to B$ its Lagrangian fibration to a smooth variety $B$. The Lagrangian fibration $\pi$ is flat because it is equidimensional (use \cite[Theorem 1]{mat00} an miracle flatness). Consider the cotangent bundle
		\[ \mathbb T^*_B = \Spec_B \big( \Sym^* T_B \big) \to B .\]
		Over the complement of the discriminant locus $B_0 \subset B$, there is a well-known isomorphism
		\[ \mathbb T^*_{B_0} \to \Lie (\Aut_{\pi_0}) \]
		induced from the symplectic form $\sigma$. The goal of this subsection is to extend this map over the entire base $B$. It is an infinitesimal version of \Cref{thm:translation automorphism scheme} and is a generalization of \cite[Lemma 8.4(i)]{ari-fed16}.
		
		To state the theorem, consider the relative automorphism scheme $\Aut_{\pi} \to B$ of the flat proper morphism $\pi : X \to B$ (\S \ref{sec:automorphism space}), which is a separated group scheme over $B$. Its Lie algebra scheme is a scheme $\Lie (\Aut_{\pi}) \to B$ representing its Lie algebra sheaf (\S \ref{sec:Lie algebra scheme}). The \emph{main component} of $\Lie (\Aut_{\pi})$ is a Zariski closure of the irreducible component containing its zero section (\S \ref{sec:main component}).
		
		\begin{theorem} \label{thm:smooth Lie algebra subscheme}
			The main component of the Lie algebra scheme of $\Aut_{\pi}$ is the cotangent bundle of $B$. More precisely, the symplectic form $\sigma$ induces an injective Lie algebra scheme homomorphism
			\begin{equation} \label{eq:Lie algebra scheme homomorphism}
				\mathbb T^*_B \hookrightarrow \Lie \big( \Aut_{\pi} \big),
			\end{equation}
			which is an isomorphism over $B_0$.
		\end{theorem}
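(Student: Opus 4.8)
The plan is to build the map by hand from the symplectic form and then check each requirement. As $X$ is symplectic, $\sigma$ is non-degenerate everywhere, so it gives an isomorphism $\sigma\colon T_X\xrightarrow{\sim}\Omega_X$; the natural inclusion $\iota\colon\pi^*\Omega_B\to\Omega_X$ is injective (its source is locally free and $\iota$ restricts to a subbundle inclusion over the dense open $X_0$), so $\rho_0\coloneq\sigma^{-1}\circ\iota\colon\pi^*\Omega_B\to T_X$ is injective. First I would show $\rho_0$ lands in $T_{X/B}=\ker(d\pi\colon T_X\to\pi^*T_B)$: the composite $d\pi\circ\rho_0$ is a section of the torsion-free sheaf $\pi^*T_B$, so it vanishes as soon as it vanishes over $X_0$, where it does because the Hamiltonian vector field of a function pulled back from $B$ is tangent to the (Lagrangian) fibres. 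This yields $\rho\colon\pi^*\Omega_B\hookrightarrow T_{X/B}$. The same density principle, applied to the locally free sheaves $T_{X'/B},\Omega_{X'/B}$ on $X'$, shows $\sigma$ restricts to zero on $T_{X'/B}$, so the latter is a rank-$n$ isotropic—hence Lagrangian—subbundle of $T_{X'}$; since then $\sigma$ identifies $T_{X'/B}$ with its annihilator $\pi'^*\Omega_B$ inside $\Omega_{X'}$, $\rho$ is an isomorphism over $X'$.

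Next, push forward. Since $\pi$ is a proper fibration onto the normal variety $B$, $\pi_*\mathcal{O}_X=\mathcal{O}_B$, so $\pi_*\pi^*\Omega_B=\Omega_B$ by the projection formula, and left-exactness of $\pi_*$ makes $\rho$ an injection $\Omega_B\hookrightarrow\pi_*T_{X/B}$. By \Cref{thm:automorphism space} the fppf module $\underline{\Lie}\,\Aut_\pi=\pi_*T^{\fppf}_\pi$ is exactly the sheaf of relative vector fields, and $\rho$, being $\mathcal{O}_X$-linear, commutes with base change; hence this is a morphism of fppf $\mathcal{O}_B$-modules, equivalently—as $\Omega_B,T_B$ are locally free—a homomorphism of Lie algebra schemes $\mathbb T^*_B\to\Lie\Aut_\pi$ which over $B_0$ is the classical isomorphism $\mathbb T^*_{B_0}\xrightarrow{\sim}\Lie\Aut_{\pi_0}=\Lie P_0$ (using $T_{X_0/B_0}\cong\pi_0^*\Lie P_0$ for the torsor $X_0$ under $P_0$). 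This generalizes \cite[Lemma 8.4(i)]{ari-fed16}.

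To see $\mathbb T^*_B\to\Lie\Aut_\pi$ respects the bracket—$\mathbb T^*_B$ being a vector group, hence abelian—it suffices to show $[\rho(\alpha),\rho(\beta)]=0$ for local sections $\alpha,\beta$ of $\pi^*\Omega_B$. The bracket on $\underline{\Lie}\,\Aut_\pi$ is $\mathcal{O}_B$-bilinear because vertical vector fields annihilate $\pi^*\mathcal{O}_B$, so one may take $\alpha=\pi^*dg_1,\ \beta=\pi^*dg_2$; then $\rho(\pi^*dg_i)$ is the Hamiltonian vector field $X_{\pi^*g_i}$, closedness of $\sigma$ gives $[X_{\pi^*g_1},X_{\pi^*g_2}]=X_{\{\pi^*g_1,\pi^*g_2\}}$, and this vanishes since pullbacks from $B$ Poisson-commute (established above).

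Finally, I would identify the image with the main component $\mathfrak h\subset\Lie\Aut_\pi$. Writing $\Lie\Aut_\pi=\Spec_B\Sym^*C_e$ for the conormal sheaf $C_e$ of $\Aut_\pi$, the homomorphism corresponds to an $\mathcal{O}_B$-linear $C_e\to T_B$, which I would show is surjective—so that $\mathbb T^*_B\hookrightarrow\Lie\Aut_\pi$ is a closed immersion. By Cartier's theorem $\Aut_{X_b}$ is smooth, so $C_e\otimes k(b)$ is its cotangent space at the identity, $H^0(X_b,T_{X_b})^\vee$, and surjectivity at $b$ is dual to injectivity of $\Omega_{B,b}\to H^0(X_b,T_{X_b})$, $\omega\mapsto\rho_b(\pi_b^*\omega)$; this I would check on a reduced component $F$ of $X_b$—available precisely because $\pi'\colon X'\to B$ is surjective—on whose open locus $F\cap X'$ the map $\rho$ is an isomorphism. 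Granting this, the image is a closed reduced irreducible subscheme—a vector bundle over $B$—equal to $\Lie\Aut_\pi$ over the dense $B_0$, hence (being irreducible with the same generic point) equal to $\mathfrak h=\overline{(\Lie\Aut_\pi)|_{B_0}}$; a closed immersion onto the reduced scheme $\mathfrak h$ is an isomorphism, so $\mathfrak h\cong\mathbb T^*_B$. The main obstacle is the surjectivity of $C_e\to T_B$—i.e.\ controlling $\Aut_\pi$ over the codimension-one part of the discriminant; the clean global input that makes the needed fibrewise injectivity work is the coherent cohomology vanishing of \Cref{prop:vanishing of higher direct image sheaf}, proved through the Hodge-module symmetry of the decomposition theorem, which is the one non-elementary ingredient in the argument.
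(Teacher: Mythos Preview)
Your overall strategy is sound and the construction of the map, the verification of the Lie bracket compatibility, and the identification of the image with the main component are all correct. The key step---showing that the linear scheme map is a closed immersion, i.e.\ that $C_e\to T_B$ is surjective---is where your approach genuinely diverges from the paper's, and your argument there actually \emph{works}: if $\pi'$ is surjective, then every fibre $X_b$ meets $X'$, and since $\rho$ is an isomorphism over $X'$ the map $\Omega_{B,b}\to H^0(X_b,T_{X_b})$ is visibly injective by restricting to $X_b\cap X'$. This is a clean, elementary argument.

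The paper does something different. It identifies the kernel of $\Omega_B^{\fppf}\to\pi_*T_\pi^{\fppf}$ with $\pi_*\Hom^{\fppf}(T^1_\pi,\mathcal O_X)$, reduces its vanishing to the coherent vanishing $R^n\pi_*T^1_\pi=0$ (\Cref{prop:vanishing of higher direct image sheaf}), and proves the latter via the Hodge-module symmetry of the decomposition theorem. The payoff is that the paper's proof makes no use of the surjectivity of $\pi'$; this is exactly the content of the remark following the theorem. Your fibrewise argument, by contrast, breaks down at any $b$ with $X_b$ nowhere reduced, so you are proving a strictly weaker statement---one that happens to suffice for every downstream application in the article, since the standing hypothesis $\pi'$ surjective is in force from \Cref{thm:translation automorphism scheme} onwards.

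Your final paragraph is therefore inconsistent: you first give a self-contained argument using the reduced component, then declare \Cref{prop:vanishing of higher direct image sheaf} to be ``the one non-elementary ingredient'' that ``makes the needed fibrewise injectivity work.'' It does not make your argument work; your argument already works without it. You should either (a) commit to the elementary fibrewise proof and explicitly note that you are using $\pi'$ surjective (and hence proving slightly less than the paper), or (b) drop the reduced-component argument and invoke \Cref{prop:vanishing of higher direct image sheaf} directly, which establishes the injectivity of $\Omega_B^{\fppf}\to\pi_*T_\pi^{\fppf}$ as an fppf sheaf map without any fibrewise analysis and without any hypothesis on $\pi'$.
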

		
		\begin{remark}
			\Cref{thm:smooth Lie algebra subscheme} does not need the surjectivity assumption of $\pi'$ in \eqref{eq:pi'}.
		\end{remark}
		
		To tackle the theorem, we first consider the following commutative diagram of coherent sheaves on $X$ with exact rows and columns. This diagram has appeared several times in literature when $X$ is a compact hyper-K\"ahler manifold (e.g., \cite{mat05}, \cite{sawon08}, and \cite{lehn11, lehn16}) but it also holds in our generality.
		\begin{equation} \label{diag:cotangent}
			\begin{tikzcd}
				& 0 \arrow[d] & 0 \arrow[d] & \tau \arrow[d] \\
				0 \arrow[r] & \pi^* \Omega_B \arrow[r] \arrow[d] & \Omega_X \arrow[r] \arrow[d, "\sigma"] & \Omega_{\pi} \arrow[r] \arrow[d] & 0 \\
				0 \arrow[r] & \Omega_{\pi}^{\vee} \arrow[r] \arrow[d] & T_X \arrow[r] \arrow[d] & \pi^* T_B \arrow[r] \arrow[d] & T^1_{\pi} \arrow[r] & 0 \\
				& \tau & 0 & T^1_{\pi}
			\end{tikzcd}
		\end{equation}
		
		The first row of the diagram is the cotangent sequence of $\pi : X \to B$, which is injective since it is generically injective and $\pi^* \Omega_B$ is locally free. The second row is the coherent sheaf dual of the first using the notation
		\[ \Omega_{\pi}^\vee = \SheafHom_{\mathcal O_X} (\Omega_{\pi}, \mathcal O_X), \qquad T^1_{\pi} = \SheafExt^1_{\mathcal O_X} (\Omega_{\pi}, \mathcal O_X) .\]
		The second column is an isomorphism induced from the symplectic form $\sigma$. Since $\pi$ is Lagrangian, $\sigma : \Omega_X^{\otimes 2} \to \mathcal O_X$ vanishes on the subsheaf $\pi^* \Omega_B^{\otimes 2}$ and induces a homomorphism $\Omega_{\pi} \to \pi^* T_B$ in the third column. Its cokernel is isomorphic to $T^1_{\pi}$ by the snake lemma. Its kernel $\tau$ is the torsion part of $\Omega_{\pi}$ and is interesting in its own right, but we will not need this in our discussion. Finally, the first column is induced by diagram chasing; it is injective and has cokernel $\tau$.
		
		Before using the diagram, let us prove the following lemma.
		
		\begin{lemma} \label{lem:pi is a universal fibration}
			The adjunction map $\mathcal O_B^{\fppf} \to \pi_* \mathcal O_X^{\fppf}$ is an isomorphism of fppf $\mathcal O_B$-modules.\footnote{In other words, $\pi_* \mathcal O_X = \mathcal O_B$ universally, or $\pi$ is cohomologically flat in dimension $0$.}
		\end{lemma}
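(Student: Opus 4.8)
The plan is to split the assertion into two parts: $(i)$ the equality $\pi_*\mathcal{O}_X=\mathcal{O}_B$ of $\mathcal{O}_B$-modules, and $(ii)$ the compatibility of this equality with arbitrary base change, which is exactly the statement that the adjunction map is an isomorphism of \emph{fppf} $\mathcal{O}_B$-modules, i.e.\ that $\pi$ is cohomologically flat in dimension $0$. The claim is local on $B$, so I may assume $B$ is connected and hence integral (being smooth); write $K=k(B)$ for its function field and $n=\dim B$ for the relative dimension of the flat morphism $\pi$.

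For $(i)$: since $\pi$ is proper, $\mathcal{A}:=\pi_*\mathcal{O}_X$ is a coherent $\mathcal{O}_B$-algebra, finite over $\mathcal{O}_B$. It is torsion-free because $X$ is reduced and $\pi$ is flat: a local section killed by a nonzero $g\in\mathcal{O}_B$ would vanish on $\pi^{-1}(B\setminus V(g))$, which is dense in $X$ by flatness, hence identically. Its stalk at the generic point is $H^0(X_K,\mathcal{O}_{X_K})=K$, since $X_K$ is an abelian variety over $K$ and in particular geometrically integral; thus $\mathcal{A}$ is a torsion-free rank-$1$ $\mathcal{O}_B$-module and embeds into the constant sheaf $\underline{K}$. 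A finite $\mathcal{O}_B$-subalgebra of $K$ consists of elements integral over $\mathcal{O}_B$, and these lie in $\mathcal{O}_B$ because $B$ is normal; hence $\pi_*\mathcal{O}_X=\mathcal{O}_B$. In particular $\pi$ has geometrically connected fibres.

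For $(ii)$: by the theorem on cohomology and base change it suffices to show $h^0(X_b,\mathcal{O}_{X_b})=1$ for every $b\in B$, for then $(\pi_*\mathcal{O}_X)\otimes\kappa(b)\to H^0(X_b,\mathcal{O}_{X_b})$ is bijective for all $b$, which is cohomological flatness in dimension $0$ and hence the desired fppf statement (and also what makes $\Pic_\pi$ of \Cref{thm:Picard space} available). I would proceed by relative duality. Since $X$ is symplectic, $\omega_X\cong\mathcal{O}_X$; since $X$ and $B$ are smooth and $\pi$ is flat and projective, $\pi$ is a flat lci morphism, so $\omega_{X/B}$ is a line bundle with $\omega_{X/B}\cong\omega_X\otimes\pi^*\omega_B^{-1}\cong\pi^*\omega_B^{-1}$, and on each fibre $\omega_{X_b}\cong\omega_{X/B}|_{X_b}\cong\mathcal{O}_{X_b}$, the fibres being Gorenstein of pure dimension $n$. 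Serre duality on $X_b$ then gives $h^0(X_b,\mathcal{O}_{X_b})=h^n(X_b,\omega_{X_b})=h^n(X_b,\mathcal{O}_{X_b})$. On the other hand $R^n\pi_*\mathcal{O}_X=R^n\pi_*\omega_{X/B}$ is the top relative cohomology sheaf, so its formation commutes with arbitrary base change and $R^n\pi_*\mathcal{O}_X\otimes\kappa(b)\cong H^n(X_b,\mathcal{O}_{X_b})$; it is moreover locally free of rank $1$, being $\bigwedge^n R^1(\pi_0)_*\mathcal{O}_{X_0}$ over $B_0$ (for the abelian scheme $\pi_0$), hence of rank $1$ generically, and one rules out jumps of $h^n(X_b,\mathcal{O}_{X_b})$ over the discriminant by a filtration argument on $\mathcal{O}_{X_b}$ along the (Gorenstein, trivially polarised) fibre, reducing the possible extra classes to global sections of nontrivial numerically trivial sheaves on subschemes of $X_b$, which vanish. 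Combining, $h^0(X_b,\mathcal{O}_{X_b})=h^n(X_b,\mathcal{O}_{X_b})=1$ for all $b$.

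I expect the main obstacle to be exactly this last point — the local freeness of $R^n\pi_*\mathcal{O}_X$, equivalently the constancy of $h^0(X_b,\mathcal{O}_{X_b})$ across the singular fibres — since for a general flat family $h^0$ does jump. Over a one-dimensional base this is the classical cohomological flatness in dimension $0$ of genus-one fibrations in characteristic $0$ (the multiplicities and normal bundles of the singular, possibly multiple, fibres conspiring to kill the would-be extra sections); in the higher-dimensional Lagrangian setting it is where one must use the structure of the fibres of $\pi$ — or, if a hands-on argument is not available, invoke that $R^i\pi_*\mathcal{O}_X$ is locally free via the Hodge-theoretic splitting of $R\pi_*\mathcal{O}_X$, the same kind of input used for \Cref{prop:vanishing of higher direct image sheaf}. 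Everything else in the argument is formal.
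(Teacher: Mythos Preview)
Your part (i) is correct and pleasantly elementary. Part (ii), however, contains a genuine gap at exactly the point you flag: the ``filtration argument'' that is supposed to rule out jumps of $h^n(X_b,\mathcal O_{X_b})$ is not an argument but a hope. Reducing extra classes to ``global sections of nontrivial numerically trivial sheaves on subschemes of $X_b$, which vanish'' is not a proof---over singular, possibly non-reduced fibres there is no reason such sections vanish, and indeed for arbitrary flat families they do not. Serre duality correctly reduces you to local freeness of $R^n\pi_*\mathcal O_X$, but torsion-freeness (e.g.\ via Koll\'ar) does not imply local freeness on a higher-dimensional base, so you are stuck.

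Your own fallback---invoke the Hodge-theoretic splitting of $R\pi_*\mathcal O_X$---is precisely what the paper does, but much more efficiently. The paper skips (i) and (ii) entirely: it applies derived base change along an arbitrary affine $f:S\to B$ to get $Lf^*(R\pi_*\mathcal O_X)\simeq R\pi_*\mathcal O_{X_S}$, and then plugs in Matsushita's formula $R\pi_*\mathcal O_X\cong\bigoplus_{k=0}^n\Omega_B^k[-k]$ (as proved in the Hodge-module framework of \cite{sch23}); taking $\mathcal H^0$ immediately gives $\mathcal O_S\cong\pi_*\mathcal O_{X_S}$. The point is that once $R\pi_*\mathcal O_X$ is known to be a sum of locally free sheaves in nonnegative degrees, both your (i) and your (ii) fall out simultaneously, so there is no need for the separate normality argument or the Serre-duality detour. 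Your route would only buy something if the hands-on argument for (ii) could be made to work without Matsushita's theorem, and as written it cannot.
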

		\begin{proof}
			Consider an affine test scheme $f : S = \Spec A \to B$ and cartesian diagram
			\[\begin{tikzcd}
				X_S \arrow[r, "f"] \arrow[d, "\pi"] & X \arrow[d, "\pi"] \\
				S \arrow[r, "f"] & B
			\end{tikzcd}.\]
			We need to show a bijection $H^0 (S, \mathcal O_S) = H^0 (X_S, \mathcal O_{X_S})$ or a coherent sheaf isomorphism $\mathcal O_S = \pi_* \mathcal O_{X_S}$. Since $\pi$ is flat, cohomology base change theorem for quasi-coherent sheaves (Tag~08IB) claims the adjunction map
			\[ Lf^* (R\pi_* \mathcal O_X) \to R\pi_* (Lf^* \mathcal O_X) \]
			is a quasi-isomorphism. In the case of Lagrangian fibrations, we further have $R\pi_* \mathcal O_X = \bigoplus_{k=0}^n \Omega_B^k [-k]$ by Matsushita's theorem in \cite[Theorem 1.3]{mat05} and \cite[\S 4]{sch23}. The $0$-th cohomology of the quasi-isomorphism is the desired isomorphism.
		\end{proof}
		
		Consider the third column $\Omega_{\pi} \to \pi^* T_B \to T^1_{\pi} \to 0$ of \eqref{diag:cotangent}. Apply the fppf dual functor $\Hom^{\fppf}(-, \mathcal O_X)$ and subsequently $\pi_*$ to obtain a left exact sequence of fppf $\mathcal O_B$-modules
		\[\begin{tikzcd}
			0 \arrow[r] & \pi_* \Hom^{\fppf} (T_{\pi}^1, \mathcal O_X) \arrow[r] & \Omega_B^{\fppf} \arrow[r, "\sigma"] & \pi_* T_{\pi}^{\fppf}
		\end{tikzcd}.\]
		Notice that we need \Cref{lem:pi is a universal fibration} to apply the projection formula $\Omega_B^{\fppf} = \pi_* \pi^* \Omega_B^{\fppf}$. Since $\Omega_B^{\fppf}$ and $\pi_* T_{\pi}^{\fppf}$ are represented by linear schemes $\TT^*_B$ and $\Lie \Aut_{\pi}$ (\eqref{eq:tangent scheme} and \Cref{thm:automorphism space}), the latter homomorphism $\sigma$ is the desired homomorphism in \Cref{thm:smooth Lie algebra subscheme}: the content of the theorem is the vanishing of the fppf $\mathcal O_B$-module $\pi_* \Hom^{\fppf}(T_{\pi}^1, \mathcal O_X)$. This question is equivalent to the vanishing of a higher direct image coherent sheaf by the following.
		
		\begin{lemma}
			The fppf $\mathcal O_B$-module $\pi_* \Hom^{\fppf} (T_{\pi}^1, \mathcal O_X)$ vanishes if and only if the coherent sheaf $R^n \pi_* T^1_{\pi}$ on $B$ vanishes.
		\end{lemma}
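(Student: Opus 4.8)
The plan is to recognize $\pi_*\SheafHom^{\fppf}(T^1_\pi,\mathcal O_X)$ as a linear space over $B$ and to compute the coherent sheaf underlying it by relative Serre duality. By the left-exact sequence established just before the statement, this fppf $\mathcal O_B$-module is the kernel of the linear-space homomorphism $\mathbb T^*_B \xrightarrow{\;\sigma\;} \Lie\Aut_\pi$; writing $\mathbb T^*_B = \Spec_B(\Sym^* T_B)$ and $\Lie\Aut_\pi = \Spec_B(\Sym^* C_e)$ and unwinding what a linear homomorphism is, $\sigma$ is induced by an $\mathcal O_B$-linear map $C_e \to T_B$, and its kernel is the linear space $\mathbb V_B(\mathcal Q) := \Spec_B(\Sym^* \mathcal Q)$ attached to $\mathcal Q := \coker(C_e \to T_B)$. (Equivalently, $\pi_*\SheafHom^{\fppf}(T^1_\pi,\mathcal O_X)$ is the relative $\underline{\Hom}_{X/B}(T^1_\pi,\mathcal O_X)$-functor, representable by such a linear space since $\pi$ is proper and $\mathcal O_X$ is $B$-flat.) Because for each $b\in B$ one has $\mathbb V_B(\mathcal Q)(\Spec k(b)) = \Hom_{k(b)}(\mathcal Q\otimes k(b), k(b))$, Nakayama's lemma shows that the fppf module $\mathbb V_B(\mathcal Q)$ vanishes if and only if $\mathcal Q = 0$. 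So it remains to identify $\mathcal Q \cong R^n\pi_* T^1_\pi \otimes \omega_B^{-1}$.

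For this, first make $\mathcal Q$ explicit: as $X$ is smooth we may choose a finite locally free resolution $\mathcal E_\bullet \to T^1_\pi$, and after twisting the $\mathcal E_i$ by large powers of a $\pi$-ample line bundle we may assume every $\mathcal E_i^\vee$ is $\pi$-acyclic. Then $\mathcal W_\bullet := \pi_*\mathcal E_\bullet^\vee$ is a bounded complex of locally free $\mathcal O_B$-modules computing $R\pi_* R\SheafHom_X(T^1_\pi,\mathcal O_X)$, and unwinding the relative $\Hom$-functor — via flat base change and the adjunction $\Gamma(T,\mathcal W_{i,T}) = \Hom_{\mathcal O_T}(\mathcal W_{i,T}^\vee,\mathcal O_T)$ — identifies $\mathcal Q$ with $\coker(\mathcal W_1^\vee \to \mathcal W_0^\vee) = \mathcal H^0\big(R\SheafHom_B(R\pi_* R\SheafHom_X(T^1_\pi,\mathcal O_X),\mathcal O_B)\big)$. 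Now apply relative Grothendieck duality for the proper morphism $\pi$: being flat with $X$ and $B$ regular, $\pi$ is a local complete intersection, hence Gorenstein, so $\pi^!\mathcal O_B = \omega_{X/B}[n]$ with $\omega_{X/B}$ invertible, and since $\sigma^n$ trivializes $\omega_X$ we get $\omega_{X/B} = \omega_X\otimes\pi^*\omega_B^{-1} \cong \pi^*\omega_B^{-1}$. Feeding this and the projection formula into the duality isomorphism yields
\[ R\pi_* R\SheafHom_X(T^1_\pi,\mathcal O_X) \;\cong\; R\SheafHom_B\big(R\pi_* T^1_\pi,\ \mathcal O_B\big)\otimes\omega_B[-n]. \]
Dualizing once more and invoking biduality on the regular scheme $B$ (where $R\SheafHom_B(R\SheafHom_B(-,\mathcal O_B),\mathcal O_B)$ is the identity on $D^b_{\mathrm{coh}}(B)$), the right-hand side collapses to $R\pi_* T^1_\pi\otimes\omega_B^{-1}[n]$; taking $\mathcal H^0$ gives $\mathcal Q \cong R^n\pi_* T^1_\pi\otimes\omega_B^{-1}$. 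As $\omega_B^{-1}$ is a line bundle, $\mathcal Q = 0$ if and only if $R^n\pi_* T^1_\pi = 0$, which is the assertion.

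The only substantive input is relative duality, which applies once one notes that $\pi$ is Gorenstein with relative dualizing sheaf pulled back from $B$; the work lies in the bookkeeping of the middle paragraph — pinning down the representing sheaf $\mathcal Q$ and tracking the shifts in the duality isomorphism — and I expect that is where most of the care is needed. The same computation can also be reorganized through the diagram \eqref{diag:cotangent}: applying $R^n\pi_*$ to its third column $\Omega_\pi \to \pi^*T_B \to T^1_\pi \to 0$ and using Matsushita's $R^n\pi_*\mathcal O_X \cong \omega_B$, together with the analogous duality identification $C_e \cong R^n\pi_*\Omega_\pi\otimes\omega_B^{-1}$, recovers $\coker(C_e\to T_B)\otimes\omega_B \cong R^n\pi_* T^1_\pi$.
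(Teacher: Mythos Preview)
Your proof is correct, but the paper takes a shorter and more elementary route. Instead of identifying the coherent sheaf $\mathcal Q$ globally via relative Grothendieck duality and biduality on $B$, the paper works fiberwise: at each closed point $b \in B$ one has
\[
\mathbb V_B(\mathcal Q)(b) = \Hom_{\mathcal O_{X_b}}\big((T^1_\pi)_{|X_b},\mathcal O_{X_b}\big),
\]
and since $X_b$ is an lci projective scheme with trivial canonical bundle (adjunction), its dualizing complex is $\mathcal O_{X_b}[n]$, so ordinary Serre duality on the fiber gives $\Hom_{\mathcal O_{X_b}}((T^1_\pi)_{|X_b},\mathcal O_{X_b}) \cong H^n(X_b,(T^1_\pi)_{|X_b})^\vee$. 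Cohomology and base change in top degree then identifies this with $(R^n\pi_* T^1_\pi \otimes k(b))^\vee$, and Nakayama concludes. Your global approach buys the sheaf-level identification $\mathcal Q \cong R^n\pi_* T^1_\pi \otimes \omega_B^{-1}$, which is conceptually clean and explains the appearance of $\omega_B$; the paper's fiberwise argument avoids the bookkeeping with resolutions, derived pushforwards, and biduality, at the cost of not naming $\mathcal Q$ explicitly. One small imprecision in your write-up: you cannot literally ``twist the $\mathcal E_i$'' in an existing resolution without breaking the differentials---rather, you construct the resolution from the start by successive surjections from sums of high negative powers of a $\pi$-ample line bundle, so that each $\mathcal E_i^\vee$ is $\pi$-acyclic with locally free pushforward.
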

		\begin{proof}
			The sheaf $\pi_* \Hom^{\fppf} (T_{\pi}^1, \mathcal O_X)$, as the kernel of a linear scheme homomorphism \eqref{eq:Lie algebra scheme homomorphism}, is representable by a linear scheme over $B$ (\Cref{prop:linear scheme}) and thus its vanishing can be checked pointwise over closed points of $B$ by Nakayama's lemma. Over a closed point $b \in B$, we have
			\[ \big( \pi_* \Hom^{\fppf} (T^1_{\pi}, \mathcal O_X) \big) (b) = \Hom^{\fppf} (T^1_{\pi}, \mathcal O_X) (X_b) = \Hom_{\mathcal O_{X_b}} (T^1_{\pi|b}, \mathcal O_{X_b}) .\]
			
			Since $\pi_b : X_b \to \{ b \}$ is an lci scheme with trivial canonical bundle (by adjunction formula), its dualizing complex is isomorphic to $\mathcal O_{X_b}[n]$. Combining the Serre duality, cohomology base change (Tag~08IB), and convergence of Grothendieck spectral sequences, we have
			\[ \Hom_{\mathcal O_{X_b}} (T^1_{\pi|b}, \mathcal O_{X_b}) = H^n (X_b, T^1_{\pi|b})^{\vee} = \big( R^n\pi_* T^1_{\pi} \otimes_{\mathcal O_B} k(b) \big)^{\vee} .\]
			Again by Nakayama's lemma, $R^n\pi_* T^1_{\pi} \otimes_{\mathcal O_B} k(b) = 0$ for all closed points $b \in B$ if and only if $R^n\pi_* T^1_{\pi} = 0$. This completes the proof.
		\end{proof}
		
		Therefore, \Cref{thm:smooth Lie algebra subscheme} boils down to showing the vanishing of the higher direct image coherent sheaf $R^n\pi_* T^1_{\pi}$. We prove this using the symmetry property of the decomposition theorem in \cite{sch23}.

		\subsubsection{Vanishing of the higher direct image}
			Our goal is reduced to showing the vanishing of the coherent sheaf $R^n\pi_* T^1_{\pi}$. We use Hodge modules to achieve this.
			
			\begin{proposition} \label{prop:vanishing of higher direct image sheaf}
				Let $T^1_{\pi} = \SheafExt^1_{\mathcal O_X} (\Omega_{\pi}, \mathcal O_X)$ be a coherent sheaf on $X$. Then
				\[ R^n\pi_* T^1_{\pi} = 0 .\]
			\end{proposition}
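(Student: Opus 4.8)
The plan is to reduce the vanishing to the surjectivity of a single morphism of coherent sheaves read off from \eqref{diag:cotangent}, and then to feed in the Hodge-module refinement of Matsushita's decomposition theorem from \cite{sch23}. First I would use the third column of \eqref{diag:cotangent}, the four-term exact sequence $0 \to \tau \to \Omega_{\pi} \xrightarrow{\bar\sigma} \pi^* T_B \to T^1_{\pi} \to 0$, where $\bar\sigma$ is the homomorphism induced by $\sigma$. Splitting it into two short exact sequences, applying $R\pi_*$, and using that $\pi$ has relative dimension $n$ (so $R^{>n}\pi_* = 0$), one obtains a canonical identification
\begin{equation*}
	R^n\pi_* T^1_{\pi} \;\cong\; \coker\!\big( R^n\pi_*(\bar\sigma) : R^n\pi_*\Omega_{\pi} \to R^n\pi_*(\pi^* T_B) \big).
\end{equation*}
By the projection formula and Matsushita's theorem $R^n\pi_*\mathcal O_X \cong \Omega_B^n = \omega_B$, the target is $R^n\pi_*(\pi^*T_B) \cong T_B \otimes \omega_B \cong \Omega_B^{n-1}$. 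Over $B_0$ the map $\bar\sigma$ is an isomorphism (there $\tau$ and $T^1_{\pi}$ both vanish), hence $R^n\pi_*(\bar\sigma)$ is an isomorphism over $B_0$; so it remains to prove $R^n\pi_*(\bar\sigma)$ is surjective over all of $B$. Equivalently, by cohomology and base change for the top direct image, one must show $H^n(X_b, T^1_{\pi}|_{X_b}) = 0$ for every closed point $b \in B$. When $X_b$ is reduced this is automatic, since then $\Sing(X_b) = \Sing(\pi) \cap X_b$ has dimension $< n$ and $T^1_{\pi}|_{X_b}$ is supported there; so the content of the proposition is concentrated over the proper closed locus of $B$ parametrising non-reduced fibres.

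At this point I would pause to note that plain Serre--Grothendieck duality is \emph{not} enough. The morphism $\pi$ is flat Gorenstein with $\omega_{X/B} \cong \pi^*\omega_B^{-1}$ (because $\omega_X \cong \mathcal O_X$), and relative Serre duality on the fibres dualises $R^n\pi_*(\bar\sigma)$ to the morphism $\pi_*(\bar\sigma^{\vee} \otimes \omega_{X/B})$ induced by the first column $\bar\sigma^{\vee} : \pi^*\Omega_B \hookrightarrow \Omega_{\pi}^{\vee}$ of \eqref{diag:cotangent}, which is injective; since $\pi_*$ is left exact, the dual map is injective, so $R^n\pi_* T^1_{\pi} = \coker R^n\pi_*(\bar\sigma)$ has no nonzero morphism to $\mathcal O_B$. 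But this only reproves that $R^n\pi_* T^1_{\pi}$ is a torsion sheaf supported over the discriminant, which we already knew; the real task is to kill this torsion.

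The genuine input is the Hodge-theoretic one. Here I would invoke \cite{sch23}: the decomposition theorem for $R\pi_*\QQ_X[2n]$ holds in the refined Hodge-module form conjectured in \cite{shen-yin22}, so every perverse summand is the intersection complex of a polarisable variation of Hodge structure on $B$ with full support (no summand is supported over a proper closed subset), and the Hodge filtrations satisfy the predicted Poincaré--Verdier self-duality. Passing to $\operatorname{gr}^F$ of the Hodge filtration, this controls the coherent sheaves $R^q\pi_*\Omega_X^p$ — in particular $R^n\pi_*\Omega_X^1$, which via $\sigma$ computes $R^n\pi_*T_X$ and receives the surjection $R^n\pi_*\Omega_X^1 \twoheadrightarrow R^n\pi_*\Omega_{\pi}$ — and shows that the only contributions present are those dictated by the variation of Hodge structure over $B_0$, with no torsion appearing over $B \setminus B_0$. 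Combined with the factorisation of $R^n\pi_*(\bar\sigma)$ over $B_0$, this forces $R^n\pi_*(\bar\sigma)$ to be surjective, and hence $R^n\pi_* T^1_{\pi} = 0$.

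I expect the step requiring the most care to be this last one: matching the elementary maps of \eqref{diag:cotangent} (ultimately the cotangent sequence $0 \to \pi^*\Omega_B \to \Omega_X \to \Omega_{\pi} \to 0$) with the canonical morphisms produced by the decomposition of \cite{sch23}, and then exploiting the absence of extra supports to exclude a cokernel concentrated over the non-reduced locus — precisely the behaviour that the formal duality argument of the second paragraph cannot detect. Everything before that (the spectral-sequence collapse, the identification of the target via Matsushita, and the reduction to a fibrewise statement by base change) is routine.
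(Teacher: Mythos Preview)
Your reduction is sound: using the third column of \eqref{diag:cotangent} and $R^{>n}\pi_* = 0$ you correctly identify $R^n\pi_* T^1_{\pi}$ with the cokernel of $R^n\pi_*(\bar\sigma) : R^n\pi_*\Omega_{\pi} \to R^n\pi_*(\pi^* T_B) \cong \Omega_B^{n-1}$, and you are right that formal Serre duality only shows this cokernel is torsion. The paper follows an equivalent route, passing instead through Grothendieck duality applied to the first row $0 \to \pi^*\Omega_B \to \Omega_X \to \Omega_{\pi} \to 0$ to place $R^n\pi_* T^1_{\pi}$ in a four-term exact sequence built from the graded de Rham pieces $G_{i,k} = \operatorname{gr}^F_{-k}\operatorname{DR}(\mathcal P_i)[-i]$ of the perverse summands.

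The gap is in your Hodge-module step. The assertion that ``every perverse summand \dots\ has full support (no summand is supported over a proper closed subset)'' is not what \cite{sch23} proves, and it is false for general Lagrangian fibrations --- reducible fibres already produce summands with strictly smaller support. What the paper actually extracts from \cite{sch23} is quite different: the Verdier duality $\DD_B G_{i,-n+1} \cong G_{i,n-1}$ (their (43.1)), the degree bound that $G_{i,k}$ is concentrated in cohomological degrees $\le \min(i,k,i+k)$ (their Lemma~44), and the symmetry with Matsushita's computation of $G_{i,n}$ (their Theorem~24). These are used to compute each term of the four-term sequence explicitly: the final term vanishes by the degree bound, the two middle terms each contain a canonical summand $\Omega_B^{n-1}$, and the map between them restricts to the identity on that summand because it arises from the $\Omega_B$-contraction in the filtered $D$-module structure on $\pi_+\omega_X$ --- which respects the decomposition \eqref{eq:decomposition theorem} componentwise, so one can check it on the lowest-weight piece $P_{-n} = \QQ_B(n)[n]$ by hand. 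This forces surjectivity and hence $R^n\pi_* T^1_{\pi} = 0$. Your proposal never isolates this mechanism; the phrase ``no torsion appearing over $B \setminus B_0$'' is exactly what needs to be \emph{proved}, and it does not follow from any support statement.
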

			
			Denote the (coherent sheaf) Verdier dual functors of the smooth varieties $X$ and $B$ by
			\[ \DD_X = R\SheafHom_{\mathcal O_X} (-, \omega_X[2n]), \qquad \DD_B = R\SheafHom_{\mathcal O_B} (-, \omega_B[n]) .\]
			The symplectic structure on $X$ gives an isomorphism $\omega_X \cong \mathcal O_X$.
			
			\begin{lemma}
				The desired higher direct image sheaf lies in a four-term exact sequence of coherent sheaves on $B$
				\begin{equation} \label{eq:four term exact sequence}
					\begin{multlined}
						\mathcal H^{n-1} \Big[ \DD_B \big( R\pi_* \Omega_X [2n-1] \big) \Big]
						\longrightarrow \mathcal H^{n} \Big[ \DD_B (R\pi_* \mathcal O_X [2n] \otimes \Omega_B ) \Big] \qquad \\
						\longrightarrow R^n\pi_* T^1_{\pi}
						\longrightarrow \mathcal H^{n} \Big[ \DD_B \big( R\pi_* \Omega_X[2n-1] \big) \Big] .
					\end{multlined}
				\end{equation}
			\end{lemma}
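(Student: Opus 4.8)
The plan is to read off \eqref{eq:four term exact sequence} directly from the cotangent sequence (the first row of \eqref{diag:cotangent}) by Grothendieck--Verdier duality. Notably this uses no input beyond \eqref{diag:cotangent} and the flatness of $\pi$; the deeper ingredients (Matsushita's formula for $R\pi_* \mathcal O_X$ and the symmetry of \cite{sch23}) are only needed afterwards, to show that the relevant terms of \eqref{eq:four term exact sequence} actually vanish in \Cref{prop:vanishing of higher direct image sheaf}.

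First I would dualize. Regarding the first row of \eqref{diag:cotangent} as a distinguished triangle $\pi^*\Omega_B \to \Omega_X \to \Omega_\pi \to \pi^*\Omega_B[1]$ and applying the contravariant functor $\DD_X$ (recall $\omega_X \cong \mathcal O_X$) gives a triangle $\DD_X\Omega_\pi \to \DD_X\Omega_X \to \DD_X\pi^*\Omega_B \to \DD_X\Omega_\pi[1]$. Since $\pi$ is proper with $\pi^!(\omega_B[n]) = \mathcal O_X[2n]$, Grothendieck duality gives $R\pi_* \circ \DD_X \cong \DD_B \circ R\pi_*$; pushing the triangle forward and using the projection formula $R\pi_*\pi^*\Omega_B \cong R\pi_*\mathcal O_X \otimes \Omega_B$ ($\Omega_B$ being locally free) yields the triangle
\[ \DD_B(R\pi_*\Omega_\pi) \to \DD_B(R\pi_*\Omega_X) \to \DD_B(R\pi_*\mathcal O_X \otimes \Omega_B) \to \DD_B(R\pi_*\Omega_\pi)[1]. \]

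Next I would identify a cohomology sheaf of $\DD_B(R\pi_*\Omega_\pi) \cong R\pi_*\DD_X\Omega_\pi$ with $R^n\pi_* T^1_\pi$. The second row of \eqref{diag:cotangent}, being the coherent dual of the first, shows in particular that $\SheafExt^i_X(\Omega_\pi, \mathcal O_X) = 0$ for $i \geq 2$, so $\DD_X\Omega_\pi = R\SheafHom_X(\Omega_\pi, \mathcal O_X)[2n]$ has only the two cohomology sheaves $\Omega_\pi^\vee$ in degree $-2n$ and $T^1_\pi$ in degree $1-2n$. Applying $R\pi_*$ to the associated truncation triangle $\Omega_\pi^\vee[2n] \to \DD_X\Omega_\pi \to T^1_\pi[2n-1] \to \Omega_\pi^\vee[2n+1]$ and reading the long exact sequence of cohomology sheaves in degree $1-n$, the two neighbouring terms $R^{n+1}\pi_*\Omega_\pi^\vee$ and $R^{n+2}\pi_*\Omega_\pi^\vee$ vanish (the fibres of $\pi$ have dimension $n$), giving $\mathcal H^{1-n}(\DD_B R\pi_*\Omega_\pi) \cong R^n\pi_* T^1_\pi$. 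Finally, taking the long exact sequence of the displayed triangle in cohomological degrees $-n$ and $1-n$ and inserting this isomorphism produces a four-term exact sequence
\[ \mathcal H^{-n}(\DD_B R\pi_*\Omega_X) \to \mathcal H^{-n}(\DD_B(R\pi_*\mathcal O_X \otimes \Omega_B)) \to R^n\pi_* T^1_\pi \to \mathcal H^{1-n}(\DD_B R\pi_*\Omega_X), \]
and rewriting each term via $\DD_B(C[j]) = (\DD_B C)[-j]$ and $\mathcal H^k(C[j]) = \mathcal H^{k+j}(C)$ turns it into \eqref{eq:four term exact sequence}. The argument is essentially bookkeeping with dualities and shifts; the one point deserving care is that the cohomological dimension bound $R^{>n}\pi_* = 0$ is precisely what collapses the iterated extension so that the third term of the sequence is $R^n\pi_* T^1_\pi$ on the nose rather than a larger object.
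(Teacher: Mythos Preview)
Your proof is correct and follows essentially the same approach as the paper: apply $\DD_B \circ R\pi_*$ (equivalently $R\pi_* \circ \DD_X$ via Grothendieck duality) to the cotangent sequence, identify the relevant cohomology sheaf of $\DD_B R\pi_*\Omega_\pi$ with $R^n\pi_* T^1_\pi$, and read off the long exact sequence. The only cosmetic difference is that the paper obtains the identification $R^n\pi_* T^1_\pi \cong \mathcal H^{-n+1}(\DD_B R\pi_*\Omega_\pi)$ by invoking the hypercohomology spectral sequence $R^p\pi_*\SheafExt^q(\Omega_\pi,\mathcal O_X) \Rightarrow \mathcal H^{p+q}(R\pi_* R\SheafHom(\Omega_\pi,\mathcal O_X))$ together with the length~$1$ resolution of $\Omega_\pi$, whereas you unpack the same two inputs ($\SheafExt^{\ge 2}(\Omega_\pi,\mathcal O_X)=0$ and $R^{>n}\pi_*=0$) via the truncation triangle of $\DD_X\Omega_\pi$; both routes encode the identical vanishing.
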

			\begin{proof}
				We first claim there exists an isomorphism
				\[ R^n\pi_* T^1_{\pi} \ \cong \ \mathcal H^{-n+1} \big[ \DD_B R\pi_* \Omega_{\pi} \big] .\]
				To prove this, recall from \eqref{diag:cotangent} that $\Omega_{\pi}$ had a length $1$ locally free resolution $\pi^* \Omega_B \to \Omega_X$. Hence from the convergence of the spectral sequence we have
				\begin{align*}
					R^n\pi_* T^1_{\pi} &= R^n \pi_* \SheafExt^1_{\mathcal O_X} (\Omega_{\pi}, \mathcal O_X) \\
					&= \mathcal H^{n+1} \big[ R\pi_* R\SheafHom_{\mathcal O_X} (\Omega_{\pi}, \mathcal O_X) \big] \quad & \mbox{(spectral sequence)}\\
					&= \mathcal H^{-n+1} \big[ R\pi_* \DD_X \Omega_{\pi} \big] \quad & (\omega_X^{\bullet} = \mathcal O_X [2n]) \\
					&= \mathcal H^{-n+1} \big[ \DD_B R\pi_* \Omega_{\pi} \big] \quad & \mbox{(Grothendieck duality)}.
				\end{align*}
				
				Again, start from the short exact sequence $0 \to \pi^* \Omega_B \to \Omega_X \to \Omega_{\pi} \to 0$. Applying $\DD_B \circ R\pi_*$, we obtain an exact triangle
				\[\begin{tikzcd}
					\DD_B R\pi_* \Omega_{\pi} \arrow[r] & \DD_B R\pi_* \Omega_X \arrow[r] & \DD_B \big( R\pi_* \mathcal O_X \otimes \Omega_B \big) \arrow[r, "+1"] & { }
				\end{tikzcd}.\]
				The desired sequence \eqref{eq:four term exact sequence} is its cohomology long exact sequence.
			\end{proof}
			
			To understand the exact sequence \eqref{eq:four term exact sequence}, let us use some facts on the decomposition theorem for $\pi : X \to B$ in \cite{shen-yin22} and \cite{sch23}. We do not need the full strength of their results, so let us summarize only the necessary parts following \cite[\S 1.2]{sch23}. The decomposition theorem for $\pi : X \to B$ is an isomorphism in the bounded derived category of Hodge modules on $B$
			\begin{equation} \label{eq:decomposition theorem}
				R\pi_* \Big( \QQ_X(n)[2n] \Big) \cong \bigoplus_{i=-n}^n P_i[-i] ,
			\end{equation}
			where $P_i$ is a Hodge module of pure weight $i$ for $i = -n, \cdots, n$. The bound on the index $i$ is determined by the defect of semismallness $\dim X \times_B X - \dim X = n$. The weight $i$ Hodge module $P_i$ contains a filtered (regular holonomic) $D$-module $(\mathcal P_i, F_{\bullet} \mathcal P_i)$ in its datum, so we may consider its associated filtered de Rham complex
			\[ F_k \operatorname{DR} (\mathcal P_i) = \Big[ F_{k-n} \mathcal P_i \otimes \bigwedge^n T_B \longrightarrow \ \cdots \ \longrightarrow F_{k-1} \mathcal P_i \otimes T_B \longrightarrow F_k \mathcal P_i \Big] ,\]
			that lives in $D^b_{coh}(B)$ and is concentrated in degree $[-n, 0]$.
			
			\begin{definition}
				Following \cite[\S 1.2]{sch23}, we denote the $(-k)$-th graded associated piece of the filtered de Rham complex $(\operatorname{DR} (\mathcal P_i), \, F_{\bullet} \operatorname{DR} (\mathcal P_i))$, up to degree shift, by
				\[ G_{i,k} = \operatorname{gr}_{-k}^F \operatorname{DR} (\mathcal P_i) [-i] .\]
			\end{definition}
			
			The isomorphism \eqref{eq:decomposition theorem} induces an isomorphism of filtered $D$-modules. Applying the functor $\operatorname{gr}^F_{-k} \operatorname{DR}$ to this filtered $D$-module isomorphism yields an isomorphism in $D^b_{coh}(B)$:
			\begin{equation} \label{eq:direct image of Omega}
				R\pi_* \Omega_X^{n+k} [n-k] \cong \bigoplus_{i=-n}^n G_{i,k} \qquad \mbox{for} \quad k = -n, \cdots, n .
			\end{equation}
			The following is originally proved in \cite[Theorem 1.3]{mat05} and later revisited in \cite[\S 4]{sch23}.
			
			\begin{theorem} [Matsushita, Schnell] \label{thm:matsushita}
				We have isomorphisms
				\[ G_{i,n} \cong \begin{cases}
					\Omega_B^i[-i] \quad & \mbox{for} \quad i \ge 0 \\
					0 & \mbox{for} \quad i < 0
				\end{cases} .\]
				In other words, the isomorphism \eqref{eq:direct image of Omega} for $k = n$ is
				\[ R\pi_* \omega_X \cong \bigoplus_{i=0}^n \Omega_B^i[-i] .\]
			\end{theorem}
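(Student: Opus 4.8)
The plan is to reduce the statement to the original theorem of Matsushita on $R\pi_*\mathcal{O}_X$ and then to read off the individual graded pieces $G_{i,n}$ from the filtered $D$-modules $(\mathcal{P}_i, F_\bullet)$, following Schnell. Since $\pi$ is Lagrangian the symplectic form trivializes $\omega_X \cong \mathcal{O}_X$, so $R\pi_*\omega_X \cong R\pi_*\mathcal{O}_X$, and \eqref{eq:direct image of Omega} specialized to $k = n$ already supplies the splitting $R\pi_*\omega_X \cong \bigoplus_{i=-n}^{n} G_{i,n}$; what remains is to identify each summand. Over $B_0$ this is elementary: the fibers are abelian varieties of dimension $n$, hence $R^{n+i}\pi_*\QQ(n)|_{B_0} \cong \bigwedge^{n+i}\bigl(R^1\pi_*\QQ|_{B_0}\bigr)(n)$, and combining this with \Cref{thm:smooth Lie algebra subscheme} (which over $B_0$ identifies $\pi_*T_\pi$, the Lie algebra of $\Aut^{\circ}_{\pi}$, canonically with $\Omega^1_{B_0}$, so that by duality of the fibrewise Lie algebras $R^1\pi_*\mathcal{O}_X|_{B_0}$ and then, via cup product, $R^j\pi_*\mathcal{O}_X|_{B_0}$ are identified with $\Omega^j_{B_0}$) shows that the asserted formula holds away from the discriminant; moreover, by Kollár's theorems the sheaves $R^j\pi_*\mathcal{O}_X$ are torsion-free on all of $B$.

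The next step is the Hodge-theoretic computation of $G_{i,n} = \operatorname{gr}^F_{-n}\operatorname{DR}(\mathcal{P}_i)[-i]$. Using the description of $P_i|_{B_0}$ above, the lowest Hodge subbundle of the weight-$(i-n)$ variation underlying $\mathcal{P}_i|_{B_0}$ is, for $i \ge 0$, of rank $\binom{n}{i}$ and — after accounting for the $\bigwedge^n T_B$ twist built into the bottom term of the de Rham complex in the formula for $\operatorname{gr}^F_{-n}\operatorname{DR}$ — canonically $\Omega^i_{B_0}$, while for $i < 0$ the Hodge levels of $\mathcal{P}_i$ all lie strictly below $-n$. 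By Saito's theory $\operatorname{gr}^F_{-n}\operatorname{DR}(\mathcal{P}_i)$ is then concentrated in a single cohomological degree and is (a twist of) the \emph{lower canonical extension} of that lowest Hodge bundle across $B$; this extension is locally free, so $G_{i,n}$ is a locally free sheaf placed in degree $i$ when $i \ge 0$, and the de Rham complex at level $-n$ is identically zero, so $G_{i,n} = 0$, when $i < 0$. For the extreme indices this is transparent: $P_n$ (respectively $P_{-n}$) underlies the constant variation $R^{2n}\pi_*\QQ(n)$ (resp. $R^0\pi_*\QQ(n)$) over $B_0$, so its intersection-complex extension is the constant Hodge module $\QQ_B^H[n]$ up to Tate twist, whose filtered $D$-module is $(\mathcal{O}_B, F_\bullet)$ with its single filtration jump, giving $G_{n,n} = \Omega^n_B[-n]$ and $G_{-n,n} = 0$. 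One can propagate these to all $i$ using relative hard Lefschetz together with the self-duality of $R\pi_*\bigl(\QQ_X^H(n)[2n]\bigr)$, which yield $P_{-i} \cong P_i(i) \cong \DD_B(P_i)$, combined with the Grothendieck-duality identity $\DD_B\bigl(\bigoplus_i G_{i,k}\bigr) \cong \bigoplus_i G_{i,-k}$ deduced from $\DD_X \Omega^j_X \cong \Omega^{2n-j}_X[2n]$.

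The main obstacle — and the reason the Hodge module input (not merely Kollár's theorems) is indispensable — is the extension across the discriminant, which in general has codimension $1$ in $B$, so torsion-freeness together with the identification over $B_0$ of the first paragraph does not by itself force $R^j\pi_*\mathcal{O}_X \cong \Omega^j_B$ globally. Concretely, one must prove that the lower canonical extension of each lowest Hodge bundle of $P_i|_{B_0}$ is globally $\Omega^i_B$ (with the appropriate twist), which amounts to analyzing the limiting mixed Hodge structure of the abelian fibration at the generic point of each component of $\operatorname{Disc}(\pi)$ and verifying that the monodromy weight filtration there is compatible with the Koszul filtration on $\Omega^\bullet_B$. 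For an abelian fibration this local structure is sufficiently constrained to carry this out; I would follow Matsushita's original analysis in \cite{mat05} and its Hodge-module reformulation in \cite[\S 4]{sch23}, and finally use the $B_0$-identifications to make the resulting isomorphisms canonical.
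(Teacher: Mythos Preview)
The paper does not prove this statement; it is quoted as a known result with attribution to \cite[Theorem 1.3]{mat05} and \cite[\S 4]{sch23}. So there is no argument in the paper to compare against---your sketch is an outline of what those references do, and in your final paragraph you yourself defer the substantive step (the extension of the lowest Hodge bundle across the discriminant) to exactly the same two sources. In that sense you and the paper land in the same place: the theorem is treated as input from the literature.

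One genuine issue in your write-up is a circularity. You invoke \Cref{thm:smooth Lie algebra subscheme} to identify $R^1\pi_*\mathcal O_X|_{B_0}\cong\Omega^1_{B_0}$, but in the paper's logical order \Cref{thm:smooth Lie algebra subscheme} is proved \emph{using} \Cref{thm:matsushita}: the vanishing in \Cref{prop:vanishing of higher direct image sheaf} goes through \Cref{lem:four term exact sequence}, whose items (1) and (2) explicitly appeal to \Cref{thm:matsushita}. Over $B_0$ the identification you need is of course elementary for a smooth abelian fibration and does not require \Cref{thm:smooth Lie algebra subscheme}, so you should state it directly rather than cite that theorem. Apart from this, your Hodge-module sketch in the second paragraph is broadly along the right lines (Saito's identification of $\operatorname{gr}^F_{-n}\operatorname{DR}$ with the lowest Hodge piece, together with Koll\'ar's theorems), but as you yourself note, the real content---why the canonical extension of that lowest piece is globally $\Omega_B^i$ and not merely a reflexive sheaf agreeing with it in codimension one---is precisely what is supplied by Matsushita's argument and Schnell's Hodge-module reformulation, neither of which you reproduce.
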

			
			We can now start calculating the sequence \eqref{eq:four term exact sequence} of interest.
			
			\begin{lemma} \label{lem:four term exact sequence}
				The sequence \eqref{eq:four term exact sequence} up to isomorphism is computed as follows.
				\begin{enumerate}
					\item The first term of the sequence is
					\[ \mathcal H^{n-1} \Big( G_{n-1, n-1} \oplus G_{n, n-1} \Big) \cong \mathcal H^{n-1} \Big( G_{n-1, n-1} \Big) \oplus \Omega_B^{n-1} .\]
					
					\item The second term of the sequence is
					\[ \mathcal H^n \Big( G_{n, n} \otimes T_B \Big) \cong \Omega_B^{n-1} .\]
					
					\item The homomorphism between the first two terms, with respect to the above isomorphisms, contains the identity map $\Omega_B^{n-1} \to \Omega_B^{n-1}$. In particular, it is surjective.
					
					\item The last term of the sequence vanishes.
				\end{enumerate}
			\end{lemma}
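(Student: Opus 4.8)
The plan is to identify all four entries of \eqref{eq:four term exact sequence} by means of the graded decomposition \eqref{eq:direct image of Omega}, to read off (1), (2) and (4) from Matsushita--Schnell (\Cref{thm:matsushita}) together with an elementary count of cohomological degrees, and to deduce (3) by tracking the map induced by the inclusion $\pi^*\Omega_B \hookrightarrow \Omega_X$.

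First I would rewrite the two Verdier-dual terms using Grothendieck duality and the triviality $\omega_X \cong \mathcal O_X$. Since $\Omega_X$ is locally free, $\DD_X\Omega_X \cong \Omega_X^{2n-1}[2n]$, so Grothendieck duality gives $\DD_B\big(R\pi_*\Omega_X[2n-1]\big) \cong R\pi_*\Omega_X^{2n-1}[1]$, which by \eqref{eq:direct image of Omega} with $k=n-1$ is $\bigoplus_{i=-n}^{n} G_{i,n-1}$; likewise $\DD_X\mathcal O_X \cong \omega_X[2n] \cong \mathcal O_X[2n]$, so $\DD_B\big(R\pi_*\mathcal O_X[2n]\otimes\Omega_B\big) \cong R\pi_*\omega_X\otimes T_B \cong \big(\bigoplus_{i=-n}^{n} G_{i,n}\big)\otimes T_B$ by \eqref{eq:direct image of Omega} with $k=n$. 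Next comes the degree bookkeeping: each $G_{i,k}$ is the $[-i]$-shift of a graded piece of a filtered de Rham complex, hence is concentrated in cohomological degrees $[i-n,\,i]$, a range independent of $k$. Thus $\mathcal H^n$ of $\bigoplus_i G_{i,n-1}$ is fed only by $i=n$, and $\mathcal H^{n-1}$ only by $i\in\{n-1,n\}$ --- this is the rewriting of the first and last terms asserted in (1) and (4). For the second term, \Cref{thm:matsushita} gives $G_{i,n}=\Omega_B^i[-i]$ for $i\ge 0$ and $0$ for $i<0$, so $G_{i,n}\otimes T_B$ sits in degree $i$ and only $i=n$ survives $\mathcal H^n$; this already proves (2), the last identification $\mathcal H^n(G_{n,n}\otimes T_B)=\Omega_B^n\otimes T_B\cong\Omega_B^{n-1}$ being the perfect pairing $\Omega_B^1\otimes\Omega_B^{n-1}\to\omega_B$.

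The remaining input is the index-$n$ Hodge module $P_n$, which governs $G_{n,n-1}$. Over $B_0$ the variation underlying $P_n$ is $R^{2n}\pi_{0*}\QQ_{X_0}(n)\cong\QQ_{B_0}$, the trivial weight-$0$ variation (the Tate twist cancels the one in the top cohomology of an $n$-dimensional abelian variety), so $P_n=\IC_B(\QQ_{B_0})=\QQ_B^H[n]$ and $\mathcal P_n$ is the $D$-module of the trivial Hodge module on $B$. For the trivial Hodge module each graded piece of the Hodge-filtered de Rham complex is concentrated in a single cohomological degree, and comparison with \Cref{thm:matsushita} at level $k=n$ fixes the shift, yielding $\operatorname{gr}^F_{-(n-1)}\operatorname{DR}(\mathcal P_n)\cong\Omega_B^{n-1}[1]$ and hence $G_{n,n-1}\cong\Omega_B^{n-1}[1-n]$, concentrated in degree $n-1$. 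From this, $\mathcal H^{n-1}(G_{n,n-1})\cong\Omega_B^{n-1}$, which finishes (1), and $\mathcal H^n(G_{n,n-1})=0$; as no $G_{i,n-1}$ with $i\ne n$ reaches degree $n$, this is (4).

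For (3), I would observe that the map between the first two terms is $\DD_B$ of the pushforward of the tautological inclusion $\pi^*\Omega_B\hookrightarrow\Omega_X$ from the first row of \eqref{diag:cotangent} --- equivalently, via the projection formula and \Cref{lem:pi is a universal fibration}, of the natural map $R\pi_*\mathcal O_X\otimes\Omega_B=R\pi_*\pi^*\Omega_B\to R\pi_*\Omega_X$. The point to establish is that this morphism respects the splitting \eqref{eq:decomposition theorem} and that on the top summand $P_n$ --- the common source of both distinguished $\Omega_B^{n-1}$-factors, appearing through $G_{n,n}$ on the target side and $G_{n,n-1}$ on the source side --- it is induced by the identity of $\mathcal P_n$, twisted into $\Omega_B^{n-1}$ exactly as in the proof of \Cref{thm:matsushita}; the induced map $\Omega_B^{n-1}\to\Omega_B^{n-1}$ is then the identity. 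Surjectivity of the first map of \eqref{eq:four term exact sequence} follows, and combined with (4) this forces $R^n\pi_*T^1_\pi=0$, i.e. \Cref{prop:vanishing of higher direct image sheaf}. The hard part will be exactly this compatibility --- showing $\pi^*\Omega_B\hookrightarrow\Omega_X$ is strict for the relevant Hodge filtrations and induces an isomorphism on the $P_n$-component; the rest is bookkeeping with \Cref{thm:matsushita} and with the degree ranges $[i-n,i]$ of the $G_{i,k}$.
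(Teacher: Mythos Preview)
Your treatment of (1), (2), and (4) is correct and in fact slightly different from the paper's route: you pass through Grothendieck duality on $X$ (using $\omega_X\cong\mathcal O_X$) to land directly on $R\pi_*\Omega_X^{2n-1}[1]\cong\bigoplus_i G_{i,n-1}$, whereas the paper first applies \eqref{eq:direct image of Omega} at $k=-n+1$ and then invokes Schnell's duality formula $\DD_B G_{i,k}\cong G_{-i,-k}$.  Likewise, you identify the needed summand via $P_n$ while the paper uses $P_{-n}$ (and, for the bound, cites Schnell's Lemma~44 that $G_{i,k}$ sits in degrees $\le\min(i,k,i+k)$, which kills $\mathcal H^n G_{n,n-1}$ immediately without computing $P_n$).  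One caveat: your identification of $P_n$ only determines $P_n|_{B_0}$; you must still exclude summands supported on the discriminant.  The clean fix is Verdier self-duality of $R\pi_*\QQ_X(n)[2n]$, giving $P_n=\DD_B P_{-n}=\DD_B(\QQ_B(n)[n])=\QQ_B[n]$ on the nose.

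For (3) there is a genuine gap, and you have correctly flagged where it lies.  The missing idea is not ``strictness of $\pi^*\Omega_B\hookrightarrow\Omega_X$ for the Hodge filtration'' but rather the recognition that the map $R\pi_*\mathcal O_X\otimes\Omega_B\to R\pi_*\Omega_X$ \emph{is} the $\Omega_B$-contraction coming from the filtered $D$-module structure on $\pi_+(\omega_X,F_\bullet\omega_X)$.  Since the decomposition \eqref{eq:decomposition theorem} is a decomposition of Hodge modules, this contraction is automatically diagonal and acts componentwise as $G_{i,-n}\otimes\Omega_B\to G_{i,-n+1}[1]$.  One then evaluates on the bottom piece $P_{-n}=\QQ_B(n)[n]$ (known from connectedness of fibers), where both sides are explicit and the map is visibly the identity $\mathcal O_B[2n]\otimes\Omega_B\to\Omega_B[2n]$.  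After $\DD_B$ this is exactly the identity between your two $\Omega_B^{n-1}$ summands indexed by $i=n$.  Without this $D$-module interpretation you have no mechanism to show the map respects the splitting, and (3) does not go through.
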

			
			\begin{proof} [Proof of \Cref{prop:vanishing of higher direct image sheaf}]
				From \Cref{lem:four term exact sequence}, the first homomorphism in \eqref{eq:four term exact sequence} is surjective and the fourth term vanishes. This proves the third term $R^n \pi_* T^1_{\pi}$ vanishes as well (and completes the proof of \Cref{thm:smooth Lie algebra subscheme}).
			\end{proof}
			
			\begin{proof} [Proof of \Cref{lem:four term exact sequence}]
				(1) and (4) Consider a sequence of isomorphisms
				\begin{align*}
					\mathcal H^p \Big[ \DD_B \big( R\pi_* \Omega_X[2n-1] \big) \Big] &\cong \mathcal H^p \Big[ \bigoplus_{i=-n}^n \DD_B G_{i, -n+1} \Big] & \mbox{(by \eqref{eq:direct image of Omega})} \\
					&\cong \mathcal H^p \bigoplus_{i=-n}^n G_{i,n-1} & \mbox{(by \cite[(43.1)]{sch23})} .
				\end{align*}
				By \cite[Lemma 44]{sch23}, the complex $G_{i,k}$ is concentrated in degrees $\le \min (i, k, i+k)$, so none of the $G_{i, n-1}$'s can have a nontrivial $n$-th cohomology. This proves (4). For (1), set $p = n-1$ so that the only terms with potentially nonvanishing $(n-1)$-th cohomology are $G_{n-1, n-1}$ and $G_{n, n-1}$. The latter cohomology is
				\[ \mathcal H^{n-1} G_{n, n-1} \cong \Omega^{n-1}_B \]
				from either an explicit computation as in (3) below, or more conveniently from \Cref{thm:matsushita} together with the main result of Schnell in \cite[Theorem 24]{sch23}.
				
				\bigskip
				
				(2) Recall again from \eqref{eq:direct image of Omega} that we have
				\[ R\pi_* \mathcal O_X [2n] \cong \bigoplus_{i=-n}^n G_{i,-n}, \qquad G_{i,-n} \cong \Omega_B^{n+i}[n+i] .\tag{\eqref{eq:direct image of Omega} \& Matsushita}\]
				We therefore get
				\begin{align*}
					& \mathcal H^n R\SheafHom (R\pi_* \mathcal O_X[2n] \otimes \Omega_B, \omega_B^{\bullet}) \\
					\cong \ & \mathcal H^n R\SheafHom (\bigoplus_{i=-n}^n G_{i,-n} \otimes \Omega_B, \omega_B^{\bullet})
					= \mathcal H^n \bigoplus_{i=-n}^n R\SheafHom (G_{i,-n} \otimes \Omega_B, \omega_B^{\bullet}) \\
					\cong \ & \mathcal H^n \bigoplus_{i=-n}^n R\SheafHom (\Omega_B^{n+i}, \Omega_B^{n-1})[-i]
					= R \SheafHom (\mathcal O_B, \Omega_B^{n-1}) = \Omega_B^{n-1} .
				\end{align*}
				
				\smallskip
				
				(3) To analyze the homomorphism, we need to be more careful about applying the isomosphisms above. Recall how we have defined the homomorphism. We considered a map $\pi^* \Omega_B \to \Omega_X$ and applied the functor $R\pi_*$ to get
				\begin{equation} \label{eq:derived pushforward map}
					R\pi_* \mathcal O_X \otimes \Omega_B \to R\pi_* \Omega_X .
				\end{equation}
				This homomorphism is the $\Omega_B$-contraction homomorphism from the filtered $D$-module structure on $\pi_+ (\omega_X, F_{\bullet} \omega_X)$ (e.g., \cite[(14)]{sch23}). As a result, the $\Omega_B$-contraction respects the decomposition \eqref{eq:direct image of Omega} and acts componentwise:
				\[ G_{i,-n} \otimes \Omega_B \to G_{i, -n+1}[1] .\]
				The map \eqref{eq:derived pushforward map} can be decomposed as a direct sum of these maps for $i = -n, \cdots, n$ together with an isomorphism \eqref{eq:direct image of Omega}.
				
				If we set $i = -n$, then we can explicitly calculate this map $G_{-n, -n} \otimes \Omega_B \to G_{-n, -n+1}[1]$. Since the fibers of $\pi$ are all connected, standard argument in decomposition theorem implies that the lowest weight part of the decomposition theorem $R\pi_* \QQ_X(n)[2n] \cong \bigoplus_{i=-n}^n P_i[-i]$ is
				\[ P_{-n} = \QQ_B(n)[n] ,\]
				the weight $-n$ Hodge module associated to the trivial VHS $\QQ_B(n)$ on $B$. Its filtered $D$-module part $(\mathcal P_{-n}, F_{\bullet} \mathcal P_{-n})$ is described by
				\[ \mathcal P_{-n} = \omega_B ,\qquad \cdots = F_{-2} = F_{-1} = 0 ,\qquad F_0 = F_1 = \cdots = \omega_B .\]
				Hence one can explicitly compute the gradings on the de Rham complex $\operatorname{DR}(\mathcal P_{-n})$ as
				\[ G_{-n, -n} = \mathcal O_B[2n], \qquad G_{-n, -n+1} = \Omega_B[2n-1] .\]
				The homomorpihsm $G_{-n,-n} \otimes \Omega_B \to G_{-n, -n+1}[1]$ is the contraction homomorphism coming from the $D$-module structure, which in this case is an identity $\mathcal O_B[2n] \otimes \Omega_B \to \Omega_B[2n]$. This completes the proof of \Cref{lem:four term exact sequence}.
			\end{proof}

	\subsection{Smooth subgroup scheme of the automorphism scheme}
		We can now upgrade the infinitesimal statement \Cref{thm:smooth Lie algebra subscheme} to our desired \Cref{thm:translation automorphism scheme}.
		
		\begin{proof} [Proof of \Cref{thm:translation automorphism scheme}]
			Let us write $\Aut$ for $\Aut_{\pi}$ to ease notation. \Cref{thm:smooth Lie algebra subscheme} showed the main component of the Lie algebra scheme $\Lie \Aut$ is smooth. We can thus use \Cref{thm:main component 1} to yield the largest universally equidimensional and locally closed subscheme $P^a \subset \Aut$. To show $P^a \to B$ is a smooth subgroup scheme, it is enough to show that it has reduced fibers by \Cref{prop:main component 2}.
			
			Since the question is \'etale local on $B$ and $\pi'$ is surjective, we may assume $\pi : X \to B$ has a section $e : B \to X$ (\Cref{prop:etale local section}). Such $e$ is a section of $X'$ because $X$ and $B$ are smooth (\Cref{prop:section passes through regular point}). Note that $\Aut$ acts on $X'$, so we can consider an orbit map (e.g., \S \ref{sec:group scheme action})
			\[ \phi = \phi_e : \Aut \to X' ,\qquad f \mapsto f.e .\]
			We claim its restriction
			\begin{equation} \label{eq:orbit map}
				\phi_{|P^a} : P^a \subset \Aut \to X'
			\end{equation}
			is an open immersion. This will prove $P^a_b$ is reduced for every point $b \in B$ and hence completes the proof of the theorem.
			
			Let us first prove the following lemma.
			
			\begin{lemma}
				Let $\Delta \to B$ be a non-degenerate trait and $\pi'_{\Delta} : X'_{\Delta} \to \Delta$ the base change of $\pi'$. Let $e : \Delta \to X'_{\Delta}$ be a section and $\phi : \Aut_{\Delta} \to X'_{\Delta}$ its orbit map. Then $\phi$ restricted to the main component
				\begin{equation} \label{eq:orbit map small}
					\phi_{|H^{\Delta}} : H^{\Delta} \subset \Aut_{\Delta} \to X_{\Delta}'
				\end{equation}
				is an open immersion.
			\end{lemma}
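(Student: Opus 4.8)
The plan is to show that $\phi_{|H^{\Delta}}$ is an \'etale birational morphism onto $X'_{\Delta}$, and then to conclude with Zariski's main theorem. Write $\phi' = \phi_{|H^{\Delta}}$ and $\Aut = \Aut_{\Delta}$; note that $\Aut$ acts on $X'_{\Delta} = X' \times_B \Delta$ because a relative automorphism of $X_{\Delta}/\Delta$ preserves the critical locus, and this action is what the orbit map $\phi(f) = f.e$ uses. Two structural inputs are already available: by \Cref{prop:main component over Dedekind} the main component $H^{\Delta}$ is a smooth closed subgroup scheme of $\Aut$, of relative dimension $n$ over $\Delta$, while $X'_{\Delta} \to \Delta$ is smooth of relative dimension $n$ since $\pi'$ is a smooth morphism; moreover \Cref{prop:main component over Dedekind} together with the base change of \Cref{thm:smooth Lie algebra subscheme} identifies $\Lie H^{\Delta}$ with the main component $\TT^*_{\Delta}$ of $\Lie \Aut$, embedded in $\Lie \Aut$ via the map \eqref{eq:Lie algebra scheme homomorphism} built from the symplectic form $\sigma$.

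First I would compute the differential of $\phi'$ along the identity section of $H^{\Delta}$. The differential of an orbit map at the identity is the anchor map $\Lie \Aut \to e^* T_{X'_{\Delta}/\Delta}$ evaluating a relative vector field at $e(\Delta)$. Because the section $e$ factors through $X'_{\Delta}$, the morphism $\pi_{\Delta}$ is smooth near $e(\Delta)$; there the torsion sheaf $\tau$ of \eqref{diag:cotangent} vanishes, so the first column of \eqref{diag:cotangent} shows that $\sigma$ restricts to an isomorphism $\pi^*\Omega_B \xrightarrow{\ \sim\ } \Omega_{\pi}^{\vee} = T_{\pi}$ in a neighbourhood of $e(\Delta)$. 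Hence the composite $\TT^*_{\Delta} = \Lie H^{\Delta} \hookrightarrow \Lie \Aut \to e^* T_{X'_{\Delta}/\Delta}$ is $\xi \mapsto \sigma(\pi^*\xi)|_{e}$, an isomorphism of rank-$n$ vector bundles on $\Delta$. Equivariance then propagates this over all of $H^{\Delta}$: from $\phi'(gh) = g.\phi'(h)$ one sees that for every section $g$ the composite $\phi' \circ L_g$ (with $L_g$ left translation) equals the composite of $\phi'$ with the translation action of $g$ on $X'_{\Delta}$; since left translation carries the identity section onto any prescribed point of $H^{\Delta}$ within its fibre, while $L_g$ and the $g$-translation of $X'_{\Delta}$ are isomorphisms, the relative differential $d\phi' : T_{H^{\Delta}/\Delta} \to (\phi')^* T_{X'_{\Delta}/\Delta}$ is an isomorphism everywhere. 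A morphism between $\Delta$-smooth schemes with invertible relative differential is \'etale, so $\phi'$ is \'etale.

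Next I would record that $\phi'$ is birational: here the non-degeneracy of $\Delta$ enters, in that the generic point of $\Delta$ lands in $B_0$, so $X'_K = X_K$ is an abelian variety, $H^{\Delta}_K = \Aut^{\circ}_{X_K} = X_K$, and $\phi'_K$ is the translation $t \mapsto t + e(K)$, an isomorphism onto $X'_K$. Thus $\phi'$ is an isomorphism over $\Delta_0 = \Spec K$, hence birational; here $X'_{\Delta}$ is integral because it is smooth over the regular trait $\Delta$ and flat over $\Delta$ with irreducible generic fibre $X_K$, and $H^{\Delta}$ is integral as the main component.

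Finally I would upgrade ``\'etale and birational'' to ``open immersion'' via Zariski's main theorem, in the same spirit as the proofs of \Cref{lem:Neron model of quasi-projective etale scheme} and \Cref{lem:Neron extension is an open immersion}. The morphism $\phi'$ is quasi-finite (being \'etale) and separated, since $H^{\Delta}$ is separated over $\Delta$ (closed in the separated group scheme $\Aut$ of \Cref{thm:automorphism space}); so Zariski's main theorem factors it as an open immersion $H^{\Delta} \hookrightarrow T$ followed by a finite morphism $T \to X'_{\Delta}$, and taking $T$ to be the reduced closure of $H^{\Delta}$ makes $T$ integral with function field $K(X'_{\Delta})$. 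Then $T \to X'_{\Delta}$ is finite and birational onto a normal Noetherian scheme, hence an isomorphism, so $\phi' = \phi_{|H^{\Delta}}$ is an open immersion. The only step requiring real care is the differential computation along the identity section, but that is a direct consequence of \Cref{thm:smooth Lie algebra subscheme} and the diagram \eqref{diag:cotangent}; the remainder of the argument is formal.
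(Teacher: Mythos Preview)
Your proof is correct and takes essentially the same approach as the paper. The paper shows $\Lie \St'_{\Delta} = 0$ by restricting the third column of \eqref{diag:cotangent} to $X'_{\Delta}$ (where $T^1_{\pi}$ vanishes) and then invokes \Cref{prop:trivial stabilizer is free}, whose proof is exactly the Zariski main theorem argument you spell out; your version simply unpacks that proposition, computing the differential of $\phi'$ directly (via the first column of \eqref{diag:cotangent} near $e(\Delta)$) and applying Zariski's main theorem to the orbit map itself rather than to the universal translation morphism.
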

			\begin{proof}
				By \Cref{prop:main component over Dedekind}, the main component $H^{\Delta}$ of $\Aut_{\Delta}$ is a smooth and closed subgroup scheme over $\Delta$ whose Lie algebra is
				\[ \Lie H^{\Delta} = (\TT^*_B)_{|\Delta} \]
				by \Cref{thm:smooth Lie algebra subscheme}. Consider the induced $H^{\Delta}$-action on $X'_{\Delta}$. To prove its orbit map $H^{\Delta} \to X_{\Delta}'$ is an open immersion, it is enough to show the $H^{\Delta}$-action on $X'_{\Delta}$ is free, or equivalently the stabilizer scheme $\St'_{\Delta} \to X'_{\Delta}$ is trivial by \Cref{prop:trivial stabilizer is free}.
				
				Recall that \eqref{eq:Lie algebra scheme homomorphism} was an fppf $\mathcal O_B$-module homomorphim $\Omega^{\fppf}_B \hookrightarrow \pi_* T_{\pi}^{\fppf}$, which was obtained by first taking the right exact sequence $\Omega_{\pi} \to \pi^* T_B \to T^1_{\pi} \to 0$ in \eqref{diag:cotangent}, taking its fppf dual $0 \to \Hom^{\fppf} (T^1_{\pi}, \mathcal O_B) \to \pi^* \Omega^{\fppf}_B \to T_{\pi}^{\fppf}$, and then subsequently taking $\pi_*$. Note that $T^1_{\pi}$ is supported outside of $X'$ (it is supported on $\Sing(\pi) = X \setminus X'$, thanks to \Cref{prop:fitting2}). Now pullback the original right exact sequence first to $X'_{\Delta}$ and obtain
				\[ \Omega_{\pi'_{\Delta}} \to (\pi'_{\Delta})^* T_B \to 0 \to 0 .\]
				Taking its fppf dual, we get a left exact sequence
				\[ 0 \to 0 \to (\pi'_{\Delta})^* \big( (\Omega_B^{\fppf})_{|\Delta} \big) \to T^{\fppf}_{\pi'_{\Delta}} ,\]
				which models the infinitesimal $H^{\Delta}$-action on $X'_{\Delta}$ because $\Lie H^{\Delta} = (\Omega^{\fppf}_B)_{|\Delta}$. In other words, this sequence is \Cref{prop:infinitesimal action} and hence $\Lie \St'_{\Delta} = 0$.
			\end{proof}
			
			We can now complete the proof of the theorem. To show \eqref{eq:orbit map} is an open immersion, it is enough to show it is injective on closed points by \Cref{prop:Zariski main theorem} (Zariski's main theorem). Let $b \in B$ be a closed point and $\Delta \to B$ a non-degenerate \emph{quasi-finite} (or \emph{unramified}) trait through $b$. The quasi-finiteness says the closed point of $\Delta$ has residue field $\CC$. Since \eqref{eq:orbit map small} is an open immersion, $\phi$ induces an injective map on closed points $(H^{\Delta})_b(\CC) \hookrightarrow X'_b(\CC)$. Since $(P^a_{\Delta})_{\red} \subset H^{\Delta}$ by \Cref{prop:main component 2}(2a), $\phi$ induces an injective map on closed points
			\[ \phi : P^a_b(\CC) \subset (H^{\Delta})_b(\CC) \hookrightarrow X'_b(\CC) .\]
			Vary $b \in B$ to conclude that $\phi : P^a(\CC) \to X'(\CC)$ is injective.
		\end{proof}

	\subsection{$\delta$-regularity of the group scheme}
		We show the $\delta$-regularity of $P^a \to B$ in this subsection. For the notion of $\delta$-regularity and weak abelian fibrations, see \S \ref{sec:delta-regularity}. We have already seen that the coherent sheaf
		\[ T^1_{\pi} = \SheafExt^1_{\mathcal O_X} (\Omega_{\pi}, \mathcal O_X) \]
		plays a particular role in the proof of \Cref{thm:translation automorphism scheme} and \ref{thm:smooth Lie algebra subscheme}. A posteriori, we can endow it with the following geometric meaning.
		
		\begin{proposition} \label{prop:Lie algebra of stabilizer scheme}
			Let $\St \to X$ be the stabilizer group scheme of the $P^a$-action on $X$ over $B$. Then $T^1_{\pi}$ is the conormal sheaf of the identity section $X \to \St$, i.e., $\Lie \St$ is a linear scheme associated to the coherent sheaf $T^1_{\pi}$.
		\end{proposition}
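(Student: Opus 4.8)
The approach is entirely infinitesimal: I would compute $\Lie\St$ as a linear scheme over $X$ and identify it with $\Spec_X(\Sym^* T^1_\pi)$, the linear scheme associated to $T^1_\pi$; the conormal sheaf of the identity section $X\to\St$ is then $T^1_\pi$ by the linear-scheme dictionary of \S\ref{sec:linear scheme}. Recall that $\St\to X$ is the fibre product of $(a,\pr_2)\colon P^a\times_B X\to X\times_B X$ (where $a$ is the $P^a$-action) and the diagonal $X\to X\times_B X$; thus it is a closed subgroup scheme of the pulled-back group scheme $P^a\times_B X$ over $X$, so its identity section is a closed immersion and $C_\St$ makes sense. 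Since forming $\Lie$ commutes with fibre products and base change, the Lie algebra of $P^a\times_B X$ over $X$ is $\pi^*(\Lie P^a)$ and $\Lie\St$ is, as a linear scheme over $X$, the kernel of the infinitesimal-action morphism of the $P^a$-action on $X$ over $B$. This is precisely the content of \Cref{prop:infinitesimal action}, which yields a left exact sequence of fppf $\mathcal O_X$-modules
\[
0\longrightarrow\Lie\St\longrightarrow\pi^*(\Lie P^a)\longrightarrow T^{\fppf}_\pi .
\]

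The next step is to identify this last arrow. By \Cref{thm:smooth Lie algebra subscheme}, $\Lie P^a$ is the image of the embedding $\Omega_B^{\fppf}\hookrightarrow\pi_*T^{\fppf}_\pi=\Lie\Aut_\pi$ induced by the symplectic form, and, as carried out in \S\ref{sec:Lie algebra of P}, this embedding is obtained by applying $\Hom^{\fppf}(-,\mathcal O_X)$ to the third column $\Omega_\pi\xrightarrow{\sigma}\pi^*T_B\to T^1_\pi\to 0$ of \eqref{diag:cotangent} and then pushing forward by $\pi$ (using $\pi_*\pi^*\Omega_B^{\fppf}=\Omega_B^{\fppf}$, \Cref{lem:pi is a universal fibration}). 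On the other hand, the infinitesimal action of $\Aut_\pi$ on $X$ is the natural adjunction (evaluation) map $\pi^*\pi_*T^{\fppf}_\pi\to T^{\fppf}_\pi$. Pulling the construction of \S\ref{sec:Lie algebra of P} back along $\pi$ therefore identifies the arrow $\pi^*(\Lie P^a)\to T^{\fppf}_\pi$ above with the fppf dual $\sigma^{\vee}\colon\pi^*\Omega_B^{\fppf}\to T^{\fppf}_\pi$ of $\sigma$, so the displayed sequence is nothing but the fppf dual of the right exact third column of \eqref{diag:cotangent}:
\[
0\longrightarrow\Hom^{\fppf}(T^1_\pi,\mathcal O_X)\longrightarrow\pi^*\Omega_B^{\fppf}\xrightarrow{\ \sigma^{\vee}\ }T^{\fppf}_\pi .
\]
Comparing the two displays gives $\Lie\St=\Hom^{\fppf}(T^1_\pi,\mathcal O_X)$, which is the linear scheme $\Spec_X(\Sym^* T^1_\pi)$ (\S\ref{sec:linear scheme}); hence $C_\St\cong T^1_\pi$, as claimed. (Note this is consistent with the computation already made in the proof of \Cref{thm:translation automorphism scheme}, where over $X'$ one has $T^1_\pi=0$ and the stabilizer Lie algebra vanishes.)

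The one point deserving genuine care is matching the two incarnations of the map $\pi^*(\Lie P^a)\to T^{\fppf}_\pi$: the one produced by the abstract stabilizer formalism of \Cref{prop:infinitesimal action} and the hand-built $\sigma^{\vee}$ of \S\ref{sec:Lie algebra of P}. This is a diagram chase rather than a computation — one checks that the symplectic isomorphism $\Lie P^a\cong\Omega_B^{\fppf}$ of \Cref{thm:smooth Lie algebra subscheme} intertwines the tautological $\Aut_\pi$-action on $X$ with $\sigma^{\vee}$ — and once this compatibility is in place, everything else is automatic by flat pullback along $\pi$ and by $\pi_*$ (\Cref{lem:pi is a universal fibration}). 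As an alternative that avoids the formalism, one may argue fibrewise: over a closed point $b\in B$ the stabilizer Lie algebra at a point of $X_b$ is the kernel of the differential of the orbit map, which dualising the third column of \eqref{diag:cotangent} restricted to $X_b$ identifies with $\Hom_{\mathcal O_{X_b}}(T^1_{\pi|b},\mathcal O_{X_b})$, and then conclude by Nakayama exactly as in \S\ref{sec:Lie algebra of P}.
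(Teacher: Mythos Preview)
Your proposal is correct and follows essentially the same route as the paper: both dualize the third column of \eqref{diag:cotangent} to obtain $0\to\VV_{T^1_\pi}\to\pi^*\TT^*_B\to\TT_\pi$, identify $\Lie P^a$ with $\TT^*_B$ via \Cref{thm:smooth Lie algebra subscheme}, and invoke \Cref{prop:infinitesimal action} to read off $\Lie\St$ as the kernel. The paper is terser about the compatibility you flag (matching the abstract infinitesimal-action map with the symplectic $\sigma^\vee$), simply noting that the inclusion of \Cref{thm:smooth Lie algebra subscheme} was constructed as the adjoint of $\sigma:\pi^*\TT^*_B\to\TT_\pi$, so the identification is immediate from the construction in \S\ref{sec:Lie algebra of P}.
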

		\begin{proof}
			Take the third column $\Omega_{\pi} \to \pi^* T_B \to T^1_{\pi} \to 0$ of \eqref{diag:cotangent}. Its fppf dual is a left exact sequence of linear schemes
			\[ 0 \to \VV_{T^1_{\pi}} \to \pi^* \TT^*_B \to \TT_{\pi} .\]
			The translation automorphism scheme $P^a \subset \Aut_{\pi}$ satisfies
			\[ \Lie P^a = \TT^*_B \ \subset \ \Lie \Aut_{\pi} = \pi_* \TT_{\pi} \]
			via the inclusion in \Cref{thm:smooth Lie algebra subscheme}, which was again induced from the adjoint map of $\sigma : \pi^* \TT^*_B \to \TT_{\pi}$ by the construction in \S \ref{sec:Lie algebra of P}. This means the last homomorphism above is precisely $\pi^* (\Lie P^a) \to \TT_{\pi}$ induced from the $P^a$-action. \Cref{prop:infinitesimal action} concludes $\Lie \St = \VV_{T^1_{\pi}}$.
		\end{proof}
		
		We call the coherent sheaf $T^1_{\pi}$ the \emph{stabilizer sheaf} of the $P^a$-action on $X$. It contains infinitesimal information of the stabilizers of the $P^a$-action, e.g., dimension of the stabilizers.
		
		\begin{proposition} \label{prop:action is free on X'}
			The $P^a$-action on $X'$ is free.
		\end{proposition}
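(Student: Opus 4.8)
The plan is to reduce the assertion to the triviality of the stabilizer group scheme $\St' \to X'$ of the $P^a$-action on $X'$, and then to check this fibrewise at closed points using the orbit-map computation already carried out in the proof of \Cref{thm:translation automorphism scheme}. By \Cref{prop:trivial stabilizer is free} the action is free if and only if $\St' \to X'$ equals its identity section. Since $P^a \to B$ is separated, $\St'$ is closed in $P^a \times_B X'$, so the identity section $X' \hookrightarrow \St'$ is a closed immersion with open complement $U \subseteq \St'$. Now $\St'$ and $X'$ are of finite type over $\CC$, and a morphism of finite-type $\CC$-schemes carries closed points to closed points; hence if $U$ were nonempty it would contain a closed point mapping to a closed point $x \in X'$ with $U_x \neq \emptyset$. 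It therefore suffices to prove that the stabilizer $\St_x \coloneq \St'_x$ is trivial for every closed point $x \in X'$. (Consistently, by \Cref{prop:Lie algebra of stabilizer scheme} the Lie algebra of the stabilizer over $X$ is the linear scheme attached to $T^1_{\pi}$, which is supported on $\Sing(\pi) = X \setminus X'$ by \Cref{prop:fitting2}; so $\Lie \St' = 0$ and $\St' \to X'$ is even unramified.)

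Fix a closed point $x \in X'$ and put $b = \pi(x)$, a closed point of $B$. As $\pi' \colon X' \to B$ is smooth --- this is how $\Sing(\pi)$ is defined --- and surjective, I may pass to an \'etale neighbourhood of $b$ and choose a section $e \colon B \to X'$ of $\pi'$ with $e(b) = x$ (\Cref{prop:etale local section}, or the local structure of smooth morphisms); passing to such a neighbourhood does not affect the triviality of $\St_x$. The proof of \Cref{thm:translation automorphism scheme} shows that, for any section $e$ of $X'$, the orbit map
\[ \phi_e \colon P^a \to X', \qquad f \mapsto f.e \]
of the $P^a$-action is an open immersion, in particular a monomorphism; thus each of its scheme-theoretic fibres is either empty or a single reduced point. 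The fibre over $x = e(b)$ is precisely $\{\, g \in P^a_b : g.e(b) = e(b) \,\} = \St_x$, and it contains the identity of $P^a_b$. Hence $\St_x = \Spec \CC$. Letting $x$ vary over the closed points of $X'$ yields that $\St' \to X'$ is trivial, and the proposition follows from \Cref{prop:trivial stabilizer is free}.

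I do not anticipate a real obstacle: the single nontrivial ingredient is that $\phi_e$ is an open immersion, which was already proved in \Cref{thm:translation automorphism scheme} for an arbitrary section $e$ of $X'$ (by reduction to non-degenerate traits and the freeness of the $H^\Delta$-action established there), and the existence of an \'etale-local section through the prescribed point $x$ is standard for smooth morphisms. Should one prefer not to re-invoke $\phi_e$ directly, one can instead work over a non-degenerate quasi-finite trait $\Delta \to B$ through $b$: then $P^a_\Delta = P^a \times_B \Delta$ is reduced because $P^a \to B$ is smooth, so by \Cref{prop:main component 2}(2) it is an open subgroup of the main component $H^\Delta$ of the automorphism scheme of $X_\Delta \to \Delta$, on which the corresponding orbit map is an open immersion --- again forcing $\St_x$ to be trivial.
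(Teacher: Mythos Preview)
Your proposal is correct. In fact, your parenthetical remark already contains the paper's entire argument: by \Cref{prop:Lie algebra of stabilizer scheme} the Lie algebra of the stabilizer is the linear scheme attached to $T^1_\pi$, whose (Fitting) support is $\Sing(\pi)$; hence $\Lie \St$ vanishes on $X'$, and \Cref{prop:trivial stabilizer is free} applies directly. That is the whole proof in the paper, and nothing more is needed.

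Your main line of argument---choosing an \'etale-local section $e$ through a given closed point $x$ and using that the orbit map $\phi_e : P^a \to X'$ was shown to be an open immersion in the proof of \Cref{thm:translation automorphism scheme}---is a valid alternative that reuses the pointwise injectivity established there rather than the support computation for $T^1_\pi$. It is slightly more laborious but has the virtue of making the triviality of each $\St'_x$ explicit. One minor wording issue: your opening sentence cites \Cref{prop:trivial stabilizer is free} as an equivalence (``free if and only if $\St'$ equals its identity section''), whereas that proposition only gives a sufficient condition, namely that $\Lie \St$ be trivial. This is harmless, since your argument ends up showing $\St'$ trivial and hence $\Lie \St' = 0$, so the proposition applies as stated; but you could have stopped right after the parenthetical.
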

		\begin{proof}
			Since the (Fitting) support of $T^1_{\pi}$ is $\Sing(\pi)$, the Lie algebra $\Lie (\St) \to X$ vanishes on $X'$. Use \Cref{prop:trivial stabilizer is free} to conclude.
		\end{proof}
		
		\begin{proof} [Proof of \Cref{thm:delta-regularity}]
			The following is essentially \cite[\S 2]{ngo11} and \cite[Proposition 8.9]{ari-fed16}. The group scheme $P^a$ is a subgroup scheme of $\Aut_{\pi}$ by construction. Hence $P^a$ acts on $X$ faithfully and $(X, P^a)$ is automatically a weak abelian fibration (e.g., \Cref{ex:weak abelian fibration}). We may thus use \Cref{prop:delta-regularity} to check the $\delta$-regularity of $P^a$. By \Cref{prop:Lie algebra of stabilizer scheme}, it is enough to show
			\[ \codim \pi (\Fitt_{i-1} T^1_{\pi}) \ge i .\]
			By the diagram \eqref{diag:cotangent} and \Cref{prop:fitting2}, we have an identity
			\[ \Fitt_{i-1} T^1_{\pi} = \Fitt_{n+i-1} \Omega_{\pi} ,\]
			so the claim follows from \Cref{lem:image of the fitting subscheme}.
		\end{proof}

\section{Codimension $1$ behavior of a Lagrangian fibration} \label{sec:Neron model of fibration codimension 1}
	Let again $\pi : X \to B$ be a Lagrangian fibration of a smooth symplectic variety over a smooth variety $B$. To use the results in \S \ref{sec:translation automorphism scheme}, we always assume $\pi' : X' \to B$ is surjective. Consider the $\delta$-function $\delta : B \to \ZZ$ of the $\delta$-regular group scheme $P^a$ as in \eqref{eq:delta-function} and define a decreasing sequence of Zariski closed subsets
	\[ D_i = \{ b \in B : \delta(b) \ge i \} \ \subset B \qquad\mbox{for}\quad i = 1, \cdots, n .\]
	Note that $D_1$ is precisely the discriminant locus of $\pi$, so every irreducible component of $D_1$ has codimension $1$ by \cite[Proposition 3.1]{hwang-ogu09}. The $\delta$-regularity result in \Cref{thm:delta-regularity} is the inequality $\codim D_i \ge i$. We will more often use their complements
	\begin{equation} \label{eq:delta-loci}
		B_i = \{ b \in B : \delta(b) \le i \} = B \setminus D_{i+1} \qquad\mbox{for}\quad i = 0, \cdots, n ,
	\end{equation}
	which is an increasing sequence of Zariski open subsets.
	
	In this section, we focus on the behavior of $\pi$ over the first nontrivial $\delta$-locus $B_1$. We somewhat vaguely call it a study of the \emph{codimension $1$ behavior} of $\pi$. Results in \S \ref{sec:group spaces} and \S \ref{sec:Neron models} will be freely used. Recall that the Lagrangian fibration outside of the discriminant locus $\pi_0 : X_0 \to B_0$ is a $P_0$-torsor for a uniquely determined abelian scheme $P_0 = \Aut^{\circ}_{\pi_0} \to B_0$ (see \cite[\S 8]{ari-fed16} or \cite[\S 3]{kim25}). The goal of this section is to show the morphism $\pi_1 : X_1 = \pi^{-1}(B_1) \to B_1$ is still manageable. Our results are highly motivated by the work \cite{hwang-ogu09}, which is a generalization of Kodaira's work on singular fibers of minimal elliptic fibrations.
	
	\begin{theorem} \label{thm:Neron model of fibration codimension 1}
		Keep the notations and assumptions of \Cref{thm:translation automorphism scheme}. Let $B_0$ and $B_1$ be its first two $\delta$-loci in $B$ as in \eqref{eq:delta-loci}. Then
		\begin{enumerate}
			\item $X'_1 \to B_1$ is a N\'eron model of $X_0 \to B_0$.
			\item $P^a_1 \to B_1$ is a N\'eron model of $P_0 \to B_0$.
		\end{enumerate}
		As a consequence, $X'_1$ is a $P^a_1$-torsor over $B_1$.
	\end{theorem}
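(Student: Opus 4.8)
\emph{Strategy.} The plan is to reduce both N\'eron assertions to the corresponding \emph{weak} N\'eron mapping property by means of the finite-type criteria \Cref{thm:weak Neron model of group space} and \Cref{thm:weak Neron model of torsor}, and then to recognise this weak mapping property as the statement that a translation automorphism of the fibration extends in codimension $1$ --- which is delivered by the codimension-$1$ birational rigidity \Cref{thm:birational map of fibration codimension 1}. First I would record that the hypotheses are in place: $B_1\subset B$ is open in a smooth variety, hence normal; $P^a_1\to B_1$ is the restriction of the smooth, commutative, quasi-projective group scheme $P^a\to B$ of \Cref{thm:translation automorphism scheme}, so it is smooth, commutative and of finite type over $B_1$; and $X'_1\to B_1$ is smooth, of finite type and surjective (since $\pi'$ is), carrying the $P^a_1$-action obtained by restriction, which is free by \Cref{prop:action is free on X'} and transitive over $B_0$ because $X_0\to B_0$ is a torsor under $P_0=P^a_0$. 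Hence \Cref{thm:weak Neron model of group space} applies to $P^a_1$ and \Cref{thm:weak Neron model of torsor} to $X'_1$, and it suffices to verify the weak N\'eron mapping property of each under $B_0\subset B_1$. By \Cref{rmk:more assumptions in weak NMP} the test objects may be taken to be quasi-projective \'etale $U\to B_1$, and the uniqueness part is automatic from separatedness of $P^a_1\to B_1$ (resp.\ of $X'_1\to B_1$) together with density of $U_0$ in $U$; so only existence of extensions remains.

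\emph{Step 1: the group scheme.} Given quasi-projective \'etale $U\to B_1$ and $u_0\in P^a_0(U_0)$, I would use \Cref{thm:translation automorphism scheme}: since $U\to B_1\hookrightarrow B$ is \'etale, $P^a(U)=P^a_1(U)$ is the set of translation automorphisms of $X_U\to U$ and $P^a_0(U_0)$ the translation automorphisms of $X_{U_0}\to U_0$; thus $u_0$ is an automorphism $g_0$ of $X_{U_0}$ which, because $X_{U_0}\subset X_U$ is a dense open, is a birational self-map of $X_U$ over $U$. Now $X_U\to U$ is again a projective Lagrangian fibration of a smooth symplectic variety, the symplectic form and the fibration pulling back along the \'etale morphism $X_U\to X$; and the $\delta$-loci of $X_U$ are computed from the fibres of $P^a_U$, which over every point of $U$ coincide with fibres of $P^a_1$ and hence have $\delta\le 1$, so the codimension-$1$ locus of $U$ equals $U$. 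Therefore \Cref{thm:birational map of fibration codimension 1} shows $g_0$ extends to an automorphism $g$ of $X_U$ over $U$; since $g$ restricts to $g_0$ over $U_0$, which acts fibrewise by translation, $g$ is a translation automorphism, i.e.\ an element of $P^a_1(U)$ restricting to $u_0$. This establishes the weak N\'eron mapping property of $P^a_1$, so $P^a_1$ is a N\'eron model of $P_0$, proving (2).

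\emph{Step 2: the torsor, and the torsor property.} Given quasi-projective \'etale $U\to B_1$ and a section $s_0\colon U_0\to X_{U_0}$ (note $X'_{U_0}=X_{U_0}$, as $\pi$ is smooth over $B_0$), I would use that $s_0$ trivialises the $P^a_{U_0}$-torsor $X_{U_0}\to U_0$, giving a $P^a_{U_0}$-equivariant isomorphism $\theta_0\colon P^a_{U_0}\xrightarrow{\ \sim\ }X_{U_0}$. By Step 1 and \Cref{prop:Neron is local in smooth topology}, $P^a_U\to U$ is a group space N\'eron under $U_0\subset U$. Identifying $X'_{U,0}=X_{U_0}$ with $P^a_{U_0}$ via $\theta_0$, I would apply \Cref{lem:Neron extension is an open immersion} with $G=G^a=P^a_U$ and $Z=X'_U$ (smooth over $U$, with the free $P^a_U$-action): $\theta_0^{-1}$ extends to a $P^a_U$-equivariant \'etale morphism $\Theta\colon X'_U\to P^a_U$, which is an open immersion because $U$ is normal and $X'_U\to U$ is separated of finite type; by equivariance and surjectivity of $X'_U\to U$ the open image $\Theta(X'_U)$ meets each fibre of $P^a_U$ in a nonempty translation-stable subset, hence equals $P^a_U$, so $\Theta$ is an isomorphism. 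Then the section $\Theta^{-1}\circ e$ of $X'_U\to U$, with $e$ the identity section of $P^a_U$, restricts over $U_0$ to $\theta_0\circ e=s_0$; this is the desired extension. Hence $X'_1$ is weakly N\'eron, so by \Cref{thm:weak Neron model of torsor} it is a N\'eron model of $X_0$, proving (1). Finally, since $X_0\to B_0$ is a $P_0$-torsor, $P^a_1$ and $X'_1$ are the N\'eron models of $P_0$ and $X_0$, and $X'_1\to B_1$ is surjective, \Cref{prop:Neron torsor} gives that $X'_1$ is a $P^a_1$-torsor over $B_1$.

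\emph{The main obstacle.} Everything outside Step 1 is formal manipulation of the group-space and N\'eron-model machinery of \S\ref{sec:group spaces}--\S\ref{sec:Neron models}. The hard part will be the geometric input \Cref{thm:birational map of fibration codimension 1}: that a birational self-map of a Lagrangian fibration is regular over the codimension-$1$ locus $B_1$. I expect its proof to decompose the birational map into a sequence of flops by Kawamata's theorem \cite{kaw08} and then to exclude each genuine flop in codimension $1$ using the $\delta$-regularity of the $P^a$-action (a flop in the fibre direction would make the affine part of a stabiliser jump along a divisor, contradicting $\codim D_2\ge 2$). A secondary point worth care is that the extension across the codimension-$\ge 2$ locus of $U$ succeeds only because one is extending a translation automorphism --- equivalently a section of a $P^a$-torsor --- rather than an arbitrary map; this is precisely why the whole argument is routed through $P^a$ and \Cref{lem:Neron extension is an open immersion} rather than carried out directly on $X'$.
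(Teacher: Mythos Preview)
Your proof is correct and follows essentially the same approach as the paper: both reduce to the weak N\'eron mapping property via \Cref{thm:weak Neron model of group space} and \Cref{thm:weak Neron model of torsor}, interpret local sections of $P^a$ as translation automorphisms, and invoke \Cref{thm:birational map of fibration codimension 1} (together with the observation $U_1=U$) as the key geometric input. The only cosmetic difference is the order and packaging: the paper proves (1) first by encapsulating your reference-section/trivialization argument into \Cref{lem:equivalence between Neron and birational automorphism}, then (2); you prove (2) first and deduce (1) by transporting the identity section across the isomorphism $X'_U\cong P^a_U$ obtained from \Cref{lem:Neron extension is an open immersion} --- an equivalent reorganization of the same ideas.
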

	
	\begin{remark}
		Let $b \in B$ be a closed point and $\Delta \to B$ a non-degenerate \emph{unramified} trait through $b$. \cite{hwang-ogu09, hwang-ogu11} studied the behavior of the fiber $X_b$ under an assumption that $X_{\Delta} = X \times_B \Delta$ is regular. One can show in this situation that (we omit the details)
		\begin{enumerate}
			\item $b \in B_1$ (construct a surjection $\pi^* \Omega_{\Delta} \to T^1_{\pi_{\Delta}} = (T^1_{\pi})_{|\Delta}$ similarly to \eqref{diag:cotangent} and show that the stabilizer sheaf $T^1_{\pi_{\Delta}}$ has fiber dimensions $\le 1$); and
			\item $X'_{\Delta} \to \Delta$ is a N\'eron model (use \Cref{thm:Neron algorithm} and imitate the first paragraph of \Cref{ex:minimal elliptic surface}).
		\end{enumerate}
		\Cref{thm:Neron model of fibration codimension 1} is a higher-dimensional base formulation of these results.
	\end{remark}
	
	We will see that the N\'eron mapping property of $X'$ is intimately related to the behavior of \emph{birational translation automorphisms} of $X$ (\Cref{def:birational translation automorphism}). This means we should study birational translation automorphisms, or more generally arbitrary birational maps between two Lagrangian fibered symplectic varieties. To state the main theorem to this direction, let us first note that the translation \emph{automorphism} scheme $P^a$ is not an invariant under birational maps between Lagrangian fibrations. To reconcile this, we consider its strict neutral component (\S \ref{sec:strict neutral component})
	\[ P^{\circ\circ} \subset P^a .\]
	
	\begin{theorem} \label{thm:birational map of fibration codimension 1}
		Let $\pi : X \to B$ and $\tau : Y \to B$ be Lagrangain fibrations of smooth symplectic varieties with surjective $\pi'$ and $\tau'$. Let $f : X \dashrightarrow Y$ be a birational map over $B$. Then
		\begin{enumerate}
			\item $f$ induces an isomorphism $P_X^{\circ\circ} = P_Y^{\circ\circ} (= P^{\circ\circ})$ and it is $P^{\circ\circ}$-equivariant. In particular, the $\delta$-loci \eqref{eq:delta-loci} of $\pi$ and $\tau$ coincide.
			\item $f$ is an isomorphism over the first $\delta$-locus $B_1$. In particular, $P^a_X$ and $P^a_Y$ are isomorphic over $B_1$.
		\end{enumerate}
	\end{theorem}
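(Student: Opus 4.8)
The plan is to reduce to the case of a single flop via Kawamata's theorem, read off the geometry over $B_1$ from the $\delta$-regular $P^a$-action (recovering along the way the fibre structure of \cite[\S 2--3]{hwang-ogu09}), and then globalise with Raynaud's extension theorem \Cref{thm:Raynaud extension of homomorphism}. First I would note that $X$ and $Y$ are smooth — hence $\QQ$-factorial and terminal — with $K_X = K_Y = 0$, since the symplectic form trivialises $\omega_X$ and $\omega_Y$; thus they are relative minimal models over $B$, and \cite{kaw08} factors $f$ over $B$ into a finite chain of flops $X = X_0 \dashrightarrow X_1 \dashrightarrow \cdots \dashrightarrow X_m = Y$ through $\QQ$-factorial terminal symplectic Lagrangian-fibred varieties (for which the constructions of \Cref{thm:translation automorphism scheme} and \ref{thm:delta-regularity} carry over, as the set-up of the paper anticipates). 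It then suffices to treat a single flop $g : X \dashrightarrow X^+$, with small contraction $X \to \bar X \leftarrow X^+$, and prove that $g$ is an isomorphism over $B_1$. Since a general fibre of $\pi$ is an abelian variety and carries no rational curve, the flopping locus $Z = \Exc(X/\bar X)$ — of codimension $\ge 2$ in $X$ — already lies over $D_1$, and the real content is the sharper containment $\pi(Z) \subseteq D_2$, equivalently $Z \cap \pi^{-1}(B_1) = \emptyset$.

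For this I would combine $\delta$-regularity with the structure of the singular fibres over $B_1$. Over $B_1$ one has $\delta \le 1$, so by \Cref{prop:Lie algebra of stabilizer scheme} the stabiliser sheaf $T^1_{\pi} = \SheafExt^1_{\mathcal O_X}(\Omega_{\pi}, \mathcal O_X)$ has fibre dimension $\le 1$ there; and since the connected group $P^{\circ\circ}$ acts trivially on $N_1$, the contraction $X \to \bar X$ is $P^{\circ\circ}$-equivariant. Using the $P^{\circ\circ}$-action in place of the foliation techniques of \cite{hwang-ogu09}, one identifies the singular fibres over $D_1 \cap B_1$ as generalised Kodaira fibres — this is the step recovering \cite[\S 2--3]{hwang-ogu09} — and the germ of $X$ along such a fibre, transverse to $D_1$, as that of a Kodaira degeneration of elliptic surfaces fibred over an abelian variety (or an isotrivial twist of one), up to finite étale base change. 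Such a germ is relatively minimal, hence admits no flop, so $Z$ can meet no fibre over $B_1$; the estimate $\codim_B \pi(\Fitt_1 T^1_{\pi}) \ge 2$ from the proof of \Cref{thm:delta-regularity} is the quantitative counterpart, confining the locus of such degenerations to $D_2$. The hard part will be making this structural statement precise, in particular over the non-generic points of $D_1$ still lying in $B_1$ (where $\delta \equiv 1$); that is where I expect the main obstacle.

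Granting it, each flop in the chain is an isomorphism over its own first $\delta$-locus, and since a flop transports $P^a$ — hence $\delta$ — across the open set where it is an isomorphism, consecutive members of the chain share the same first $\delta$-locus (apply the previous step to the flop and to its inverse). So there is a single open $B_1 \subseteq B$ over which every $X_j$, and $f$ itself, is defined and an isomorphism: this is (2), and it identifies $P^a_X$ with $P^a_Y$ over $B_1$ and shows $\pi$ and $\tau$ have the same $\delta$-loci there. For (1), conjugation by the isomorphism $f|_{B_1}$ of Lagrangian fibrations identifies $P^{\circ\circ}_X|_{B_1}$ with $P^{\circ\circ}_Y|_{B_1}$ compatibly with $f$; both $P^{\circ\circ}_X$ and $P^{\circ\circ}_Y$ are smooth group schemes with connected fibres over the smooth — in particular $\mathrm{(S_2)}$ — variety $B$ (\Cref{thm:strict neutral component}), and $B \setminus B_1 = D_2$ has codimension $\ge 2$ by $\delta$-regularity, so \Cref{thm:Raynaud extension of homomorphism} extends this isomorphism to a homomorphism $P^{\circ\circ}_X \to P^{\circ\circ}_Y$ over $B$; extending the inverse and invoking the uniqueness in that theorem makes it an isomorphism $P^{\circ\circ}_X \cong P^{\circ\circ}_Y =: P^{\circ\circ}$. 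Finally, $P^{\circ\circ}$-equivariance of $f$ holds on the dense open $\pi^{-1}(B_1)$ and hence everywhere as an identity of rational maps over $B$, and the full $\delta$-loci of $\pi$ and $\tau$ coincide since $\delta(b) = \dim (P^{\circ\circ}_b)_{\mathrm{aff}}$.
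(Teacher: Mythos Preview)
Your overall architecture matches the paper's: Kawamata's factorisation into flops, a fibre-structure analysis over $B_1$ in the spirit of Hwang--Oguiso carried out via the $P^{\circ\circ}$-action, and Raynaud's extension theorem for part~(1). The order of deductions is also essentially the same. But the heart of the argument --- why no flop can occur over $B_1$ --- has a genuine gap.

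You write: ``Such a germ is relatively minimal, hence admits no flop.'' This is circular. A flop is \emph{by definition} a birational map between two relative minimal models; relative minimality does not preclude flops, it is a prerequisite for them. The informal picture you sketch (a Kodaira elliptic degeneration fibred over an abelian variety) is suggestive, but even if the local model were literally a product, a flop in the total space need not respect any product decomposition, so the ``no flops in dimension $2$'' intuition does not transfer. Likewise the inequality $\codim_B \pi(\Fitt_1 T^1_{\pi}) \ge 2$ from the proof of \Cref{thm:delta-regularity} only says $D_2$ has codimension $\ge 2$; it does not by itself confine the flopping locus to $D_2$.

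What the paper actually uses is a clean dimension count. For a single flopping contraction $g : X \to \bar X$ with exceptional component $W$ of codimension $d \ge 2$, Wierzba's inequality for symplectic resolutions \cite[Theorem~1.2(i)]{wie03} gives $\dim W - \dim g(W) \ge \codim W \ge 2$, so the general fibre of $W \to g(W)$ has dimension $\ge 2$; and by \cite[Corollary~1.5]{hacon-mcker07} these fibres are rationally chain connected. But the Hwang--Oguiso structure you recover (the paper's \Cref{prop:Hwang-Oguiso's result}) shows that for $b \in B_1$ every rational curve in $X_b$ is a $\PP^1$ sitting in a $\PP^1$-bundle over an abelian variety, so $X_b$ contains no $2$-dimensional rationally chain connected subvariety. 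Hence $\pi(W) \cap B_1 = \emptyset$. This is the missing ingredient: once you have it, your remaining steps (matching the $B_1$'s along the chain, then Raynaud for $P^{\circ\circ}$) go through as in the paper --- though note the paper first proves the weaker ``isomorphism in codimension~$1$'' (\Cref{prop:birational map between fibration codimension 1}) to bootstrap the $P^{\circ\circ}$-equivariance of the contraction via Raynaud (\Cref{prop:birational map is equivariant}), rather than appealing to an action on $N^1$ as you do.
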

	
	\begin{remark}
		Two remarks about \Cref{thm:birational map of fibration codimension 1}.
		\begin{enumerate}
			\item The birational map $f : X \dashrightarrow Y$ is allowed to be non-symplectic.
			\item $B_1 \subset B$ always has a codimension $\ge 2$ complement by $\delta$-regularity but the inequality may be strict. For example, we may have $B_1 = B$ in certain cases, and the theorem for these cases claims that every birational map $f$ is an isomorphism.
		\end{enumerate}
	\end{remark}
	
	The rest of this section is devoted to the proof of the two main theorems.

	\subsection{Flops between Lagrangian fibrations}
		We start with an observation that the N\'eron mapping property of $\pi' : X' = X \setminus \Sing(\pi) \to B$ in \eqref{eq:pi'} and \emph{birational translation automorphisms} of $\pi$ are closely related.
		
		\begin{definition} \label{def:birational translation automorphism}
			In the setting of \Cref{def:translation automorphism}, a \emph{birational translation automorphism} of $\pi : X \to B$ is a birational automorphism $f : X \dashrightarrow X$ over $B$ such that for every point $b \in B_0$, the automorphism $f_b : X_b \to X_b$ is a translation of an abelian variety.
		\end{definition}
		
		Notice that every birational automorphism of $X$ over $B$ is necessarily defined on $X_0$ (since abelian varieties do not contain rational curves, e.g., \cite[Lemma 3.26]{kim25}). Hence $f_b : X_b \to X_b$ is indeed a regular automorphism for every $b \in B_0$ and the definition makes sense.
		
		\begin{lemma} \label{lem:equivalence between Neron and birational automorphism}
			Assume $\pi'$ in \eqref{eq:pi'} is surjective. Then the following are equivalent.
			\begin{enumerate}
				\item $X'$ is N\'eron under $B_0 \subset B$.
				\item For every quasi-projective \'etale morphism $U \to B$, a birational translation automorphism $f : X_U \dashrightarrow X_U$ over $U$ induces a regular translation automorphism $f : X'_U \to X'_U$ over $U$.
			\end{enumerate}
		\end{lemma}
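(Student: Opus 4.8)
The plan is to route both implications through the \emph{weak} N\'eron mapping property of $X'$, using that $P^a$ acts on $X'$ freely (\Cref{prop:action is free on X'}) and transitively over $B_0$. Since $X'\to B$ is smooth, surjective and of finite type (as $\pi$ is projective), $B$ is a smooth, hence normal, variety, and $P^a\to B$ is a smooth group scheme acting on $X'$ freely over $B$ and transitively over $B_0$ (because $X_0\to B_0$ is a torsor under $P_0=P^a_0$), \Cref{thm:weak Neron model of torsor} shows that $X'$ is N\'eron under $B_0\subset B$ if and only if it is weakly N\'eron under $B_0\subset B$. Moreover $X'\to B$ is separated, being quasi-projective over $B$, so a morphism into $X'$ from a reduced space is determined by its restriction to any dense open (\Cref{lem:morphism to separated space is determined by dense subspace}); hence the weak N\'eron mapping property of $X'$ amounts to the following surjectivity statement: for every quasi-projective \'etale $U\to B$, every section $s_0:U_0\to X'_{U,0}=X_{U,0}$ of $\pi_{U,0}$ extends to a section $s:U\to X'_U$ of $\pi'_U$. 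Here $X'_U=X'\times_B U$ is the smooth locus of $\pi_U$, since \'etale base change preserves smooth loci; and this reformulation is \'etale-local on $B$ (by \Cref{rmk:more assumptions in weak NMP} and the \'etale-topology version of \Cref{prop:Neron is local in smooth topology}), so we may refine $U$ freely.

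For $(2)\Rightarrow(1)$: given such $s_0$, refine $U$ so that $\pi'_U$ admits a section $e:U\to X'_U$ (\Cref{prop:etale local section}, applied to the smooth surjection $\pi'_U$). Over $U_0$ the section $e_0$ trivialises the $P^a_{U,0}$-torsor $X_{U,0}$, so $s_0$ corresponds to a section $t_0:U_0\to P^a_{U,0}$, i.e.\ to a translation automorphism $f_0$ of $X_{U,0}$ over $U_0$ with $f_0\circ e_0=s_0$. As $X_{U,0}$ is dense in $X_U$ (flatness of $\pi_U$), the automorphism $f_0$ extends to a birational translation automorphism $f:X_U\dashrightarrow X_U$ over $U$. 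Hypothesis (2) upgrades $f$ to a regular automorphism $f:X'_U\to X'_U$ over $U$, and then $s\coloneq f\circ e:U\to X'_U$ extends $s_0$. By the previous paragraph this establishes the weak N\'eron mapping property, hence $X'$ is N\'eron.

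For $(1)\Rightarrow(2)$: let $U\to B$ be quasi-projective \'etale and $f:X_U\dashrightarrow X_U$ a birational translation automorphism over $U$. Smooth base change for N\'eron models (\Cref{prop:Neron is local in smooth topology}) makes $X'_U\to U$ the N\'eron model of $X_{U,0}\to U_0$. Since abelian varieties contain no rational curves, $f$ is defined on all of $X_{U,0}$, hence restricts to an automorphism $f_0:X'_{U,0}\to X'_{U,0}$ over $U_0$ (recall $X'_{U,0}=X_{U,0}$). Applying the N\'eron mapping property of $X'_U$ with test space $Z=X'_U$ (which is smooth over $U$), $f_0$ extends uniquely to a morphism $f:X'_U\to X'_U$ over $U$; extending $f_0^{-1}$ likewise and invoking uniqueness of the extension shows this $f$ is an automorphism, and it is a translation automorphism because it is one over $U_0$. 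Finally $f:X'_U\to X'_U$ agrees with the birational map $f$ on the dense open $X_{U,0}$, so by separatedness of $X_U$ it represents that birational map on $X'_U$; this is exactly the assertion in (2).

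The genuinely structural step is the dictionary in $(2)\Rightarrow(1)$ between a $U_0$-point of $X_0$ and a birational translation automorphism of $X_U$, which rests on the torsor structure over $B_0$ and on the existence of an \'etale-local section of $\pi'$; together with the appeal to \Cref{thm:weak Neron model of torsor} to pass from the weak to the full N\'eron mapping property, these are the only non-routine inputs. The remaining points — density of $X_{U,0}$ in $X_U$, separatedness of $X'$, and compatibility of the smooth locus with \'etale base change — are bookkeeping, and I do not anticipate a real obstacle.
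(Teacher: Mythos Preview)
Your proof is correct and follows essentially the same approach as the paper: both directions route through the weak N\'eron mapping property via \Cref{thm:weak Neron model of torsor}, use an \'etale-local section $e$ of $\pi'$ to convert sections $s_0$ into birational translation automorphisms, and invoke the N\'eron mapping property to extend $f_0$ over $U$. Your write-up is in fact slightly more careful than the paper's in two places: you explicitly extend $f_0^{-1}$ and invoke uniqueness to verify the extension is an automorphism, and you check that the extended morphism on $X'_U$ actually represents the given birational map by comparing on the dense open $X_{U,0}$.
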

		\begin{proof}
			$(\Longrightarrow)$ A birational automorphism $f$ induces a regular automorphism $f : X_{U_0} \to X_{U_0}$ over $U_0$ because it has abelian variety fibers. It subsequently extends to an automorphism $f : X'_U \to X'_U$ over $U$ by the N\'eron mapping property of $X'$.
			
			\bigskip
			
			$(\Longleftarrow)$ Since $\pi'$ is surjective and $P^a$ is acting freely on $X'$, it is enough to show that $X'$ is weakly N\'eron under $B_0 \subset B$ by \Cref{thm:weak Neron model of torsor}. By \Cref{prop:Neron is local in smooth topology} and the surjectivity of $\pi' : X' \to B$, up to taking a covering of $B$ in \'etale topology we may assume $\pi'$ has a section $e : B \to X'$. Let $U \to B$ be a quasi-projective \'etale morphism (see \Cref{rmk:more assumptions in weak NMP}) and $s_0 : U_0 \to X'_{U_0}$ a morphism. The difference $s_0 - e_0$ defines a translation automorphism of $X_{U_0}$, or a birational translation automorphism
			\[ f : X_U \dashrightarrow X_U .\]
			By assumption, $f$ defines an automorphism $f : X'_U \to X'_U$ over $U$. In particular, $s_0$ uniquely extends to a morphism $s = f \circ e : U \to X'_U$. This shows the weak N\'eron mapping property of $X'$.
		\end{proof}
		
		This reduces the study of N\'eron mapping property of $X'$ to the study of all birational translation automorphisms locally on $B$. In the rest of this subsection, we focus on proving the following \cref{prop:birational map between fibration codimension 1}, which will be a preparatory step to the proof of \Cref{thm:birational map of fibration codimension 1}.
		
		Let us explain the terminology beforehand. The smooth part of a Lagrangian fibration $\pi' : X' \to B$ is \emph{surjective over codimension $1$ points in $B$} if the open subset $\pi(X') \subset B$ has codimension $\ge 2$ complement. Equivalently, this means every codimension $1$ (geometric) fiber $X_{\eta} = \pi^{-1}(\eta)$ has at least one integral component. A birational map $f : X \dashrightarrow Y$ over $B$ is \emph{isomorphic over codimension $1$ points in $B$} if there exists a dense open subscheme $U \subset B$ of codimension $\ge 2$ complement such that $f_U : X_U \dashrightarrow Y_U$ is defined everywhere and is an isomorphism.
		
		\begin{proposition} \label{prop:birational map between fibration codimension 1}
			Let $\pi : X \to B$ and $\tau : Y \to B$ be Lagrangian fibrations of smooth symplectic varieties. Assume $\pi'$ is surjective over codimension $1$ points in $B$. Then every birational map $f : X \dashrightarrow Y$ over $B$ is isomorphic over codimension $1$ points in $B$.
		\end{proposition}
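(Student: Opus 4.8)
The plan is to reduce to a one-dimensional base. The locus of $B$ over which $f$ extends to an isomorphism is open, so it is enough to prove that for each codimension $1$ point $\eta\in B$ the base change of $f$ along the trait $\Delta=\Spec\mathcal O_{B,\eta}$ is an isomorphism: the union of the resulting open neighbourhoods together with $B_0$ is then a dense open of $B$, with codimension $\ge 2$ complement, over which $f$ is an isomorphism. If $\eta\in B_0$ this is clear ($f$ is already a morphism on $X_0$, the fibres being abelian torsors). Otherwise $\eta\in D_1$; since $D_2$ has codimension $\ge 2$ by \Cref{thm:delta-regularity} we have $\eta\notin D_2$, i.e.\ $\delta(\eta)=1$, so the connected commutative algebraic group $P^a_\eta$ (of dimension $n$) has $1$-dimensional Chevalley-affine part $(P^a_\eta)_{\operatorname{aff}}$.

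Over the trait, $X_\Delta$ and $Y_\Delta$ are regular and projective over the discrete valuation ring $\mathcal O_{B,\eta}$, with trivial canonical bundle: for a Lagrangian fibration $\omega_{X/B}=\pi^*\omega_B^{\vee}$ (using $\omega_X\cong\mathcal O_X$), and $\omega_B^{\vee}$ restricts to a trivial line bundle on $\Spec\mathcal O_{B,\eta}$. The birational map $f_\Delta\colon X_\Delta\dashrightarrow Y_\Delta$ is an isomorphism over the generic point of $\Delta$ (abelian torsors again). Comparing discrepancies on a common resolution of $f_\Delta$, and using that the smooth $X_\Delta,Y_\Delta$ are terminal, one finds that neither $f_\Delta$ nor $f_\Delta^{-1}$ contracts a divisor, so $f_\Delta$ is an isomorphism in codimension $1$. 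By Kawamata's theorem \cite{kaw08} (in its relative form over $\Delta$), $f_\Delta$ is then a composite of a finite sequence of flops over $\Delta$, and it is an isomorphism precisely when no non-trivial flop occurs.

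Suppose a non-trivial flop occurs and take the first one, a small $K$-trivial flopping contraction $g\colon X_\Delta\to Z$ over $\Delta$. Its exceptional locus $F$ is non-empty, of dimension between $1$ and $\dim X_\Delta-2=n-1$ (so $n\ge 2$), and — $f_\Delta$ being an isomorphism over the generic point — contained in the central fibre $X_\eta$. Since $g|_F$ has connected fibres of positive dimension over $g(F)$, we get $\dim g(F)\le\dim F-1\le n-2$. Now $P^a_\Delta$ acts on $X_\Delta$ over $\Delta$ by \Cref{thm:translation automorphism scheme}; being connected it acts trivially on the relative N\'eron--Severi group, so $g$ is $P^a_\Delta$-equivariant for an induced action on $Z$, and $P^a_\eta$ acts on $Z_\eta$ preserving $g(F)$. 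As $\dim P^a_\eta=n$ and $\dim g(F)\le n-2$, the stabiliser $Q\subset P^a_\eta$ of a general point $z\in g(F)$ satisfies $\dim Q\ge 2$. An affine subgroup scheme of $P^a_\eta$ maps trivially to the abelian part of $P^a_\eta$, hence lands in the $1$-dimensional $(P^a_\eta)_{\operatorname{aff}}$ and is at most $1$-dimensional; so $Q$ is not affine, and its Chevalley-abelian part is a positive-dimensional abelian variety $A'$. This $A'$ preserves the fibre $g^{-1}(z)\subset F$ over $z$, hence one of its irreducible components $C$, a rational curve; being projective, $A'$ acts trivially on $C$. Consequently, for every point $c\in C$ the $P^a$-isotropy group of $c$ contains $A'$.

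This is absurd: $(X,P^a)$ is a weak abelian fibration, so the stabilisers of the $P^a$-action on $X$ are affine group schemes (\S\ref{sec:delta-regularity}), whereas $A'$ is a positive-dimensional abelian variety, which cannot occur inside an affine algebraic group. Hence no flop occurs, $f_\Delta$ is an isomorphism for every codimension $1$ point $\eta$, and $f$ is an isomorphism over a dense open of $B$ with codimension $\ge 2$ complement, as claimed. The subtle point is to confine the whole analysis to the smooth projective model $X_\Delta$, where the regular $P^a$-action of \S\ref{sec:translation automorphism scheme} is available: a flop decomposition passes a priori through singular intermediate models, and the argument works because already the \emph{first} flopping contraction out of $X_\Delta$ is incompatible with $\delta$-regularity. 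This analysis simultaneously recovers the codimension $1$ singular fibre structure of \cite{hwang-ogu09}.
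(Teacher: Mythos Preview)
Your reduction to a trait is natural, but the argument has two real problems.

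\medskip

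\textbf{The abelian quotient $A'$ is not a subgroup.} You write that ``$A'$ preserves the fibre $g^{-1}(z)$'' and conclude ``the $P^a$-isotropy group of $c$ contains $A'$''. But $A'=Q^\circ/Q^\circ_{\mathrm{aff}}$ is the Chevalley \emph{quotient} of $Q^\circ$, not a subgroup of $P^a_\eta$; the Chevalley sequence of a commutative group need not split. So $A'$ does not act on anything here. What one can say is: if $C$ is a rational curve, then $\Aut^\circ(C)$ is affine, so the kernel $K$ of $Q^\circ\to\Aut(C)$ surjects onto $A'$; since $K$ fixes $C$ pointwise it lies in the affine stabiliser $\St_c$, contradicting that affine groups cannot surject onto a positive-dimensional abelian variety. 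This fixes the logic \emph{provided} $C$ is a rational curve.

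\medskip

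\textbf{Why should $C$ be a curve?} This is the main gap. Over $\Delta$ the scheme $X_\Delta$ has dimension $n+1$ and is \emph{not} symplectic, so you cannot invoke Wierzba's inequality $\dim W-\dim\bar W\ge\codim W$ (\Cref{lem:flop contracts dimension at least 2}). Small contractions of regular $(n+1)$-folds can have exceptional fibres of any dimension up to $n-1$; nothing in your setup forces $g^{-1}(z)$ to be one-dimensional. Without this, one only gets $\dim(\St_c\cap Q^\circ)\ge n-\dim F\ge 1$, which is consistent with $\delta(\eta)=1$ and yields no contradiction. (For $n=2$ your argument does go through, since terminal $3$-fold flops have one-dimensional fibres.)

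\medskip

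\textbf{Equivariance.} The claim that $g$ is $P^a_\Delta$-equivariant because ``$P^a_\Delta$ is connected'' also needs work: $P^a_\Delta$ need not be connected, and even for $P^{\circ\circ}_\Delta$ one must argue carefully that the action on $N_1(X_\Delta/\Delta)$ is trivial. The paper establishes equivariance only \emph{after} \Cref{prop:birational map between fibration codimension 1}, via Raynaud's extension theorem (\Cref{prop:birational map is equivariant}).

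\medskip

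\textbf{Comparison with the paper.} The paper runs Kawamata's flop decomposition over $B$ itself, so each intermediate $X^{(i)}$ remains a smooth \emph{symplectic} variety. Then Wierzba's inequality gives $\dim W-\dim\bar W\ge 2$, Hacon--McKernan makes the general fibre of $W\to\bar W$ rationally chain connected of dimension $\ge 2$, and the Hwang--Oguiso structure of fibres over $B_1$ (\Cref{prop:Hwang-Oguiso's result}) forbids a $2$-dimensional rationally chain connected subvariety there. Passing to a trait destroys the symplectic structure and with it the key dimension estimate; that is why the paper stays over $B$.
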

		
		Observe the following before getting into its proof.
		
		\begin{lemma} \label{lem:surjective over codimension 1}
			If $\pi' : X' \to B$ is surjective over codimension $1$ points in $B$, then so is $\tau' : Y' = Y \setminus \Sing(\tau) \to B$.
		\end{lemma}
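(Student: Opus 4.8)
The plan is to work one prime divisor of $B$ at a time and transport the existence of a multiplicity-one fibre component from $\pi$ to $\tau$ using divisorial valuations on the common function field. Recall that ``$\pi'$ surjective over codimension $1$ points in $B$'' means precisely that for every prime divisor $D \subset B$, if $\pi^* D = \sum_i m_i C_i$ is the decomposition of the pullback divisor into its distinct prime components on $X$, then some $m_i = 1$; the statement to prove is the same assertion for $\tau$. (That each component $C_i$ of $\pi^{-1}(D)$ dominates $D$, so that these are exactly the components of the fibre $X_\eta$ over $\eta = $ the generic point of $D$, follows from flatness of $\pi$.)

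The first step is to record that $f : X \dashrightarrow Y$ is an isomorphism in codimension $1$ on both $X$ and $Y$. Indeed $X$ and $Y$ are smooth symplectic, hence $\omega_X \cong \mathcal O_X$ and $\omega_Y \cong \mathcal O_Y$, so $f$ is a crepant birational map of varieties proper over $B$; the negativity lemma applied to a common resolution shows that neither $f$ nor $f^{-1}$ contracts a divisor. (Equivalently, once $f$ is written as a composition of flops via Kawamata's theorem \cite{kaw08}, every flopping contraction is small, so $f$ is small; this is in any case the framework used in the proof of \Cref{thm:birational map of fibration codimension 1}.) Consequently strict transform gives a bijection $C \leftrightarrow \tilde C$ between prime divisors on $X$ and prime divisors on $Y$, and for each such pair the associated divisorial valuations on $K \coloneqq \CC(X) = \CC(Y)$ coincide, $\nu_C = \nu_{\tilde C}$.

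The second step is the multiplicity comparison. Fix a prime divisor $D \subset B$ with generic point $\eta$, and let $t$ be a uniformizer of the DVR $\mathcal O_{B,\eta}$; then $\pi^* D = \operatorname{div}_X(\pi^* t)$ near $\pi^{-1}(\eta)$ and $\tau^* D = \operatorname{div}_Y(\tau^* t)$ near $\tau^{-1}(\eta)$. Since $f$ is a map over $B$ we have $\tau \circ f = \pi$ as rational maps, so $\pi^* t = f^*(\tau^* t)$ under the identification of function fields. If $C$ is a component of $\pi^* D$, then $\overline{\pi(C)} = D$, hence $\overline{\tau(\tilde C)} = D$ and $\tilde C$ is a component of $\tau^* D$; by symmetry $C \mapsto \tilde C$ is a bijection between the components of $\pi^* D$ and those of $\tau^* D$. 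Then
\[
 m_C = \operatorname{ord}_C(\pi^* t) = \nu_C\!\left(f^*\tau^* t\right) = \nu_{\tilde C}(\tau^* t) = m_{\tilde C},
\]
so the multiset of component multiplicities of $\pi^* D$ equals that of $\tau^* D$. In particular $\pi^* D$ has a multiplicity-one component if and only if $\tau^* D$ does; running over all prime divisors $D \subset B$ and using the hypothesis on $\pi'$ gives the claim.

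The only non-formal ingredient is the first step — that $f$ is small — and I expect that to be the point one must be careful about; I would justify it via crepancy and the negativity lemma, or simply cite the flop decomposition already invoked in this section. After that, the argument is pure valuation bookkeeping and needs no further geometric input.
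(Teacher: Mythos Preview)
Your argument is correct and follows the same route as the paper: the key point is that $f$ is an isomorphism in codimension $1$ (the paper cites \cite[Corollary 3.54]{kol-mori}, which is exactly the crepancy/negativity-lemma argument you sketch), and then one transports the multiplicity-one condition on fibre components across the birational map. The paper phrases the second step as a one-line contrapositive (if $Y_D$ is nowhere reduced then so is $X_D$), whereas you spell out the valuation bookkeeping explicitly; these are equivalent.
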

		\begin{proof}
			Assume on the contrary that there exists a codimension $1$ point $\eta \in B$ with nowhere reduced fiber $Y_{\eta}$. Taking closure, we have a prime divisor $D = \overline{\{\eta\}} \subset B$ such that $Y_D = \tau^{-1}(D)$ is a nowhere reduced divisor in $Y$. But $X$ and $Y$ are both relative minimal models over $B$ (i.e., they have terminal singularities, proper over $B$, and have relatively nef canonical divisors), so $f$ is isomorphic in codimension $1$ points by \cite[Corollary 3.54]{kol-mori}. This means $X_D = \pi^{-1}(D)$ is a nowhere reduced divisor in $X$ as well, contradicting the assumption that $\pi'$ is surjective over codimension $1$ points in $B$.
		\end{proof}

		\subsubsection{Flops between Lagrangian fibrations}
			Let $\pi : X \to B$ and $\tau : Y \to B$ be Lagarngian fibrations of smooth symplectic varieties, and let $f : X \dashrightarrow Y$ an arbitrary birational map over $B$ that is possibly non-symplectic. Since $X$ (resp. $Y$) is a relative minimal model over $B$ (i.e., $X$ has terminal singularities, $\pi$ is proper, and $K_X$ is $\pi$-nef), \cite[Theorem 1]{kaw08} shows $f$ is a composition of a finite sequence of $\varepsilon L$-flops where $L$ is a Cartier divisor on $X$ and $0 < \varepsilon \ll 1$ is a rational number (in particular, $(X, \varepsilon L)$ is a terminal pair):
			\begin{equation} \label{diag:flops}
				\begin{tikzcd}[column sep=tiny]
					X = X^{(1)} \arrow[rr, dashed, "f^{(1)}"] \arrow[rd, "g^{(1)}"'] & & X^{(2)} \arrow[rr, dashed, "f^{(2)}"] \arrow[dl, "h^{(1)}"] \arrow[rd] & & \ \cdots \ \arrow[rr, dashed, "f^{(t)}"] \arrow[dl] \arrow[dr, "g^{(t)}"'] & & X^{(t+1)} \arrow[rr, "f^{(t+1)}", "\cong"'] \arrow[dl, "h^{(t)}"] & \ & Y \\
					& \bar X^{(1)} & & \bar X^{(2)} & & \bar X^{(t)}
				\end{tikzcd}.
			\end{equation}
			The diagram is defined over $B$. Specifying the last isomorphism $f^{(t+1)}$ will be important in the case $Y = X$.
			
			\begin{lemma}
				In \eqref{diag:flops}, the following holds for all $i$.
				\begin{enumerate}
					\item $X^{(i)}$ is a smooth symplectic variety.
					\item $\bar X^{(i)}$ is a (singular) symplectic variety.
					\item $X^{(i)} \to B$ (resp. $\bar X^{(i)} \to B$) is a Lagrangian fibration.
					\item Assume that $\pi' : X' \to B$ is surjective over codimension $1$ points in $B$. Then so is $(\pi^{(i)})' : (X^{(i)})' \to B$.
				\end{enumerate}
			\end{lemma}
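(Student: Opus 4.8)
The plan is to prove (1)--(4) simultaneously by induction on $i$ along the decomposition \eqref{diag:flops}, the base case $i=1$ being exactly the standing hypotheses on $\pi : X = X^{(1)} \to B$. So one assumes $X^{(i)}$ is a smooth symplectic variety, $\pi^{(i)} : X^{(i)} \to B$ a Lagrangian fibration, and $(\pi^{(i)})'$ surjective over codimension $1$ points of $B$, and one examines the single step $X^{(i)} \overset{f^{(i)}}{\dashrightarrow} X^{(i+1)}$: an $\varepsilon L$-flop over $B$ given by a flopping (hence small, crepant) contraction $g^{(i)} : X^{(i)} \to \bar X^{(i)}$ and its flop $h^{(i)} : X^{(i+1)} \to \bar X^{(i)}$. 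Throughout I would lean on two standard inputs: flops preserve $\QQ$-factoriality and terminality, and each $X^{(j)}$ ($j \le i+1$) is a relative minimal model over $B$ with trivial canonical class, so that the composite birational map $X \dashrightarrow X^{(j)}$ over $B$ is an isomorphism over some dense open $U_0 \subseteq B$ and an isomorphism in codimension $1$ over $B$.

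For (1) and (2): since $g^{(i)}$ is a small birational morphism from the holomorphic symplectic manifold $X^{(i)}$, its flop $X^{(i+1)}$ should again be a holomorphic symplectic manifold --- concretely, $X^{(i+1)}$ is $\QQ$-factorial and terminal, the reflexive symplectic $2$-form transports across the codimension-$1$ isomorphism $X^{(i)} \dashrightarrow X^{(i+1)}$, and the resulting $\QQ$-factorial terminal symplectic variety $X^{(i+1)}$ is smooth because it already carries a symplectic resolution, namely the identity. For (2): $\bar X^{(i)}$ is normal and $g^{(i)}$ exhibits it as the image of a crepant contraction from a holomorphic symplectic manifold, so it is a symplectic variety in the sense of Beauville--Namikawa, genuinely singular along the (nonempty) exceptional locus of $g^{(i)}$.

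For (3): $\pi^{(i)} : X^{(i)} \to B$ is proper (a birational model of $\pi$ over $B$), has $\dim X^{(i)} = 2\dim B$, and has connected fibers, as $\pi^{(i)}_* \mathcal O_{X^{(i)}} = \mathcal O_B$ --- this equality holds over the dense open $U_0$ and both sides are torsion-free over the normal $B$. Over $U_0$ one has $X^{(i)}_{U_0} \cong X_{U_0}$ compatibly with the symplectic forms, so the general fibre of $\pi^{(i)}$ is the same abelian variety and is Lagrangian; hence $\pi^{(i)}$ is a Lagrangian fibration. Similarly $\bar X^{(i)} \to B$ is proper, factors $X^{(i)} \to \bar X^{(i)} \to B$, is an isomorphism over $U_0$ since $g^{(i)}$ is small, so has the same general fibre --- Lagrangian for the symplectic form on the smooth locus of $\bar X^{(i)}$ --- and connected fibres by Stein factorization; so it too is a Lagrangian fibration, in the evident sense for (singular) symplectic varieties.

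For (4): once (1) and (3) are in hand, $X^{(i)}$ is a Lagrangian fibered smooth symplectic variety and a relative minimal model over $B$, birational over $B$ to $X$, so --- using that $\pi'$ is surjective over codimension $1$ points of $B$ by hypothesis --- \Cref{lem:surjective over codimension 1} applied to $X \dashrightarrow X^{(i)}$ gives that $(\pi^{(i)})'$ is surjective over codimension $1$ points of $B$, closing the induction. The main obstacle, and the only nonformal ingredient, is the smoothness assertion inside (1): that flopping a smooth symplectic variety over $B$ again produces a smooth variety. This has to come from the structure theory of symplectic flopping contractions rather than from the generalities of \S\ref{sec:group spaces}--\ref{sec:Neron models}; everything else is bookkeeping with Kawamata's decomposition \eqref{diag:flops}, the flop-invariance of $\QQ$-factoriality and terminality, and the agreement of these models over a dense open of $B$.
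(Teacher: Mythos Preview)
Your approach is essentially the same as the paper's: induction along \eqref{diag:flops}, transport of the symplectic form across the codimension-$1$ isomorphism, $\QQ$-factoriality and terminality from general flop theory, Lagrangian-ness from the generic isomorphism over $B$, and \Cref{lem:surjective over codimension 1} for (4). The paper's proof of (3) is in fact terser than yours (it simply notes that $X$, $\bar X$, $Y$ are isomorphic generically over $B$).

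The one genuine gap is exactly the one you flag: your sentence ``$X^{(i+1)}$ is smooth because it already carries a symplectic resolution, namely the identity'' is circular --- the identity is a resolution only if you already know smoothness. The paper fills this by citing \cite[Corollary 31]{nam08}: once you know $X^{(i+1)}$ is a $\QQ$-factorial terminal symplectic variety (which you have established), Namikawa's result gives smoothness. The mechanism is that $X^{(i)}$ and $X^{(i+1)}$ are both $\QQ$-factorial terminalizations of the symplectic variety $\bar X^{(i)}$; since one of them is smooth (hence a genuine symplectic resolution), Namikawa's deformation-theoretic argument forces the other to be smooth as well. So you correctly located the only nonformal step, and the missing reference is precisely \cite{nam08}.
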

			\begin{proof}
				Using induction on $i$, we may focus on the case $i = 1$ and (to ease notation) temporarily denote a single flop $f$ by
				\[\begin{tikzcd}
					X \arrow[r, "g"] & \bar X & Y \arrow[l, "h"']
				\end{tikzcd}.\]
				Flops are isomorphic in codimension $1$, so the symplectic form $\sigma$ on $X$ defines a symplectic form on a codimension $2$ complement of $Y$. Such a symplectic form extends to a regular $2$-form on the common resolution of $X$ and $Y$, so this proves $Y$ is a symplectic variety. Similarly, $\bar X$ is a symplectic variety. Note that $Y$, as a flop of a smooth variety, is $\QQ$-factorial and terminal by \cite[Proposition 3.37, Corollary 3.42]{kol-mori}. Hence \cite[Corollary 31]{nam08} applies and shows $Y$ is smooth.
				
				The varieties $X$, $\bar X$, and $Y$ are isomorphic generically over $B$ so the third item is clear. The last item is simply \Cref{lem:surjective over codimension 1} applied to $X \dashrightarrow Y$.
			\end{proof}
			
			This reduces the study of a birational map $f : X \dashrightarrow Y$ to a single flop $f^{(i)} : X^{(i)} \dashrightarrow X^{(i+1)}$ in \eqref{diag:flops}. We may thus assume $f : X \dashrightarrow Y$ is a single $\varepsilon L$-flop between \emph{smooth} symplectic varieties. Denote the flopping diagram by
			\begin{equation} \label{diag:single flop}
			\begin{tikzcd}
				X \arrow[rr, dashed, "f"] \arrow[rd, "g"'] & & Y \arrow[ld, "h"] \\
				& \bar X
			\end{tikzcd}
			\end{equation}
			where everything is Lagrangian fibered over $B$. Since $\pi'$ and $\tau'$ are surjective over codimension $1$ points in $B$, we may use the results in \S \ref{sec:translation automorphism scheme} up to taking a codimension $\ge 2$ complement of $B$. Let us study this diagram step by step.
			
			\begin{lemma} \label{lem:flop contracts dimension at least 2}
				In \eqref{diag:single flop}, let $W \subset \Exc(g)$ be an irreducible component of the exceptional locus of $g$ and $\bar W = g(W)$ its image in $\bar X$. Then $\dim W - \dim \bar W \ge 2$.
			\end{lemma}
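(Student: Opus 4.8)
The plan is to combine two inputs: first, that $g$ is a \emph{small} birational contraction — this comes for free from the flop structure — and second, that a birational contraction of a holomorphic symplectic manifold is constrained by the symplectic form, so that the fibre over an exceptional component is at least as large as the codimension of that component. Granting both, the lemma is immediate from codimension bookkeeping.

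\textbf{Smallness and reduction.} First I would record that $g$ is small. Since $f\colon X\dashrightarrow Y$ is an $\varepsilon L$-flop, neither $g$ nor $h$ contracts a divisor — equivalently $g$ is an isomorphism in codimension one — so $\Exc(g)$ has codimension $\ge 2$ in $X$ and every irreducible component $W\subset\Exc(g)$ satisfies $\codim_X W=\dim X-\dim W\ge 2$. Writing $\dim X=2n$, and using upper semicontinuity of fibre dimension, the general fibre $F:=g^{-1}(\bar w)$ over a general $\bar w\in\bar W$ has $\dim F=\dim W-\dim\bar W$; thus the lemma is equivalent to the inequality $\dim F\ge 2$. (Here $X$ is a smooth symplectic variety with $K_X\cong\mathcal O_X$, and $\bar X$ is a normal — indeed singular symplectic — variety by the preceding lemma, so all hypotheses needed below are in place.)

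\textbf{Symplectic input.} The heart of the matter is Wierzba's analysis of contractions of symplectic varieties. Since $g\colon X\to\bar X$ is a projective birational morphism from the smooth holomorphic symplectic manifold $X$ onto the normal variety $\bar X$, each irreducible component $W$ of $\Exc(g)$ is coisotropic with respect to $\sigma$: at a general smooth point $w\in W$ one has $(T_wW)^{\perp_\sigma}\subseteq T_wW$, so the kernel of $\sigma|_{T_wW}$ has rank $\dim X-\dim W=\codim_X W$ and defines the null (characteristic) foliation of $W$. The contraction $g$ must collapse the leaves of this null-foliation — this is precisely the content of Wierzba's theorem on how $g$ interacts with $\sigma$ — so the general fibre $F$ contains the closure of such a leaf, whence $\dim F\ge\codim_X W$. (In fact $\dim F=\codim_X W$, but only the inequality is needed.) Combining with the previous paragraph, $\dim W-\dim\bar W=\dim F\ge\codim_X W\ge 2$, which is the assertion.

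\textbf{Main obstacle.} The only genuinely substantial step is the invocation of the symplectic contraction theorem: coisotropy of the exceptional components, and the fact that $g$ contracts the null-foliation, are theorems about holomorphic symplectic manifolds, not formal consequences of dimension counting — indeed pure dimension counting (e.g.\ semismallness of symplectic contractions) only bounds $\dim F$ from \emph{above}. Everything else — smallness from the flop, the reduction to $\dim F\ge 2$, and the final arithmetic — is routine.
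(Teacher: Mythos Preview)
Your proposal is correct and follows essentially the same argument as the paper: both combine smallness of the flopping contraction ($\codim_X W \ge 2$) with Wierzba's inequality $\dim W - \dim \bar W \ge \codim_X W$ for symplectic resolutions. The paper simply cites \cite[Theorem~1.2(i)]{wie03} as a black box for the latter inequality, whereas you unpack its content (coisotropy of $W$, contraction of the null-foliation) before arriving at the same bound; the logic is identical.
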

			\begin{proof}
				Since $g : X \to \bar X$ is a symplectic resolution, \cite[Theorem 1.2(i)]{wie03} applies and shows
				\[ \dim W - \dim \bar W \ge \codim W .\]
				But $\codim W \ge 2$ because $g$ was a flopping contraction. This lemma will be generalized in \Cref{prop:exceptional locus of flop}.
			\end{proof}
			
			The following is a result in \cite[\S 2--3]{hwang-ogu09} (especially Proposition 3.8). Let us recover their results to illustrate the use of \Cref{thm:translation automorphism scheme} and \ref{thm:delta-regularity}. See \Cref{rmk:Hwang-Oguiso's result} as well.
			
			\begin{proposition} [Hwang--Oguiso] \label{prop:Hwang-Oguiso's result}
				There exists a Zariski open subset $U \subset B$ with codimension $\ge 2$ complement with the following properties for every closed point $b \in U$:
				\begin{enumerate}
					\item The normalization $X_b^{\nu}$ of the fiber $X_b$ is a smooth projective variety, possibly disconnected.
					\item Assume that a connected component $T$ of $X_b^{\nu}$ contains a rational curve $C$. Then $C$ is isomorphic to $\PP^1$ and there exists a $\PP^1$-bundle morphism
					\[ f : T \to A \]
					to an abelian variety $A$ of dimension $n-1$, realizing $C$ as one of its fibers.
				\end{enumerate}
			\end{proposition}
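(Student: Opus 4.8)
The plan is to recover \Cref{prop:Hwang-Oguiso's result} entirely from the $\delta$-regular $P^a$-action of \Cref{thm:translation automorphism scheme} and \Cref{thm:delta-regularity}, replacing Hwang--Oguiso's characteristic foliation by the group action. I would take $U := B_1$, the first $\delta$-locus as in \eqref{eq:delta-loci}, intersected with the codimension-$\ge 2$-complement open subset over which the results of \Cref{sec:translation automorphism scheme} are available; by $\delta$-regularity $\codim D_2 \ge 2$, so $U$ still has codimension $\ge 2$ complement. Fix a closed point $b \in U$ and set $G := (P^a_b)^{\circ}$, a connected commutative smooth algebraic group of dimension $n$. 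The Chevalley decomposition gives $1 \to L \to G \to A \to 1$ with $L$ the connected affine part and $A$ abelian; by definition of the $\delta$-function $\dim L = \delta(b) \le 1$, so $L \in \{1, \GG_a, \GG_m\}$ and $A$ is an abelian variety of dimension $n - \dim L$.

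Next I would realize each component of $X_b^{\nu}$ as an equivariant compactification of a $G$-torsor. Since $G$ is smooth its total space is normal, so the $P^a$-action induces, by the universal property of normalization, an action of $G$ on $X_b^{\nu}$; being connected, $G$ preserves each irreducible component $T$, a normal projective variety of dimension $n$. If $T$ dominates a component of $X_b$ meeting $X'$, let $O \subseteq T$ be the preimage of the smooth locus: it is dense open, contained in $X'$, and $G$ acts freely on it by \Cref{prop:action is free on X'}. A free action has open orbits (the orbit maps are étale monomorphisms, hence open immersions), so the irreducible $O$ is a single orbit, i.e.\ a $G$-torsor, and $T$ is a normal projective $G$-equivariant compactification of it. The components of $X_b^{\nu}$ lying over non-reduced components of $X_b$ meet $X'$ in the empty set, hence carry no such open orbit and must be handled separately (by a degeneration argument or as in \cite{hwang-ogu09}); this, together with the compactification analysis below, is where the real content lies.

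For the structure of such a $T$: its Albanese morphism agrees with that of the dense smooth open $O$, which for a torsor under the (quasi-)semiabelian $G$ is the quotient $O \to O/L =: A'$, an $A$-torsor and hence (working over $\CC$) an abelian variety of dimension $n - \dim L$; so $O \to A'$ extends to a surjective $G$-equivariant morphism $p : T \to A'$. As $G$ acts transitively on $A'$ through translations, all fibers of $p$ are isomorphic; by generic smoothness the generic fiber is smooth, and it is a proper curve (or a point, if $\dim L = 0$) whose $L$-action has a dense orbit isomorphic to $L$, hence $\cong \PP^1$ when $\dim L = 1$. When $\dim L = 0$ the open orbit $O \cong A'$ is already proper, so $O = T$ is an abelian variety; when $\dim L = 1$ one identifies $T$ with the $\PP^1$-bundle $\PP(\mathcal O_{A'} \oplus \mathcal L) \to A'$ completing the $L$-torsor $O \to A'$. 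In either case $T$ is smooth, which gives $(1)$. For $(2)$, an abelian variety contains no rational curve, so if $T$ contains a rational curve $C$ we are in the case $\dim L = 1$, where $p : T \to A$ (with $A := A'$, $\dim A = n-1$) is a $\PP^1$-bundle; since $A$ is abelian $p|_{C}$ is constant, so $C$ lies in a fiber $\cong \PP^1$ and therefore equals it.

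The main obstacle is the compactification step: upgrading ``the general fiber of $p : T \to A'$ is $\PP^1$'' to ``$p$ is a $\PP^1$-bundle and $T$ is smooth''. The decisive input is $\dim L \le 1 \Leftrightarrow \delta(b) \le 1$, which forces the toric/additive direction of the compactification to be the smooth completion $\GG_m \subset \PP^1$ or $\GG_a \subset \PP^1$; making the patching over $A'$ rigorous — e.g.\ by deducing flatness of $p$ from the constancy of the fibers, or by checking that the birational map $T \dashrightarrow \PP(\mathcal O_{A'}\oplus\mathcal L)$ over $A'$ is everywhere an isomorphism because both sides are normal and proper over $A'$ with irreducible $\PP^1$-fibers — is the one genuinely delicate point, together with the treatment of the non-reduced components flagged above.
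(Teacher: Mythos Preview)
Your setup and the reduced-component case match the paper's proof: take $U = B_1$, use the Chevalley sequence $0 \to \GG \to G \to A \to 0$ with $\GG \in \{\GG_m, \GG_a\}$, and for a component $T$ mapping to a reduced component of $X_b$ extend the rational map $T \dashrightarrow A$ to a $G$-equivariant morphism whose fibres are all isomorphic and hence all $\cong \PP^1$. Your worry about the $\PP^1$-bundle step is overblown: the paper simply observes that $G$ acts transitively on $A$ through translations, so all fibres of $f : T \to A$ are isomorphic; generic smoothness makes one of them smooth, hence they are all smooth projective curves containing a dense $\GG$-orbit, i.e.\ $\PP^1$. No explicit identification with $\PP(\mathcal O \oplus \mathcal L)$ is needed (and that formula is not quite right for $\GG_a$-torsors anyway).

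The genuine gap is exactly where you flag it: the non-reduced components, and relatedly the smoothness of $X_b^{\nu}$. The paper does \emph{not} defer to \cite{hwang-ogu09} or a degeneration argument; it handles both directly from the group action. For smoothness: $\Sing(X_b^{\nu})$ is a $G$-invariant closed subset of dimension $\le n-2$ (normality), but every $G$-orbit has dimension $\ge n-1$ since stabilizers have dimension $\le \delta(b) = 1$; hence $\Sing(X_b^{\nu}) = \emptyset$. This one-line argument is independent of reducedness and should replace your attempt to deduce smoothness from the bundle structure. For a component $T$ over a non-reduced component of $X_b$: every point of $\nu(T)$ lies in $\Sing(\pi)$, so by \Cref{prop:action is free on X'} and the stabilizer bound every point has stabilizer exactly $\GG$; since $\nu$ is finite and $\GG$ is connected, $\GG$ acts trivially on $T$. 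Thus $T$ carries an $A$-action with \emph{finite} stabilizers. Choose a finite subgroup $K \subset A$ containing all of them, pass to $\bar T = T/K$ with its free $\bar A = A/K$-action, and repeat the orbit-dimension argument to get $\bar T$ smooth. The quotient $\bar T \to \bar T/\bar A$ is an abelian-fibration over a smooth projective curve $C'$; any rational curve in $\bar T$ must surject onto $C'$, forcing $C' \cong \PP^1$, and then $H^1(\PP^1, \bar A) = 0$ gives $\bar T \cong \bar A \times \PP^1$. Finally Stein-factor $T \to \bar T \to \bar A$ to produce the $\PP^1$-bundle $T \to A'$ over an abelian variety $A'$ \'etale over $\bar A$. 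This is the missing idea in your proposal.
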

			\begin{proof}
				We prove $U = B_1$ satisfies the desired property. The statements are trivial when $X_b$ is an abelian variety, so let us assume $b \in B_1 \setminus B_0$. The algebraic group $G = P^{\circ\circ}_b$ acts on $X_b$ by \Cref{thm:translation automorphism scheme} and hence its normalization $X_b^{\nu}$ as well. The $\delta$-value of $G$ is precisely $\delta(G) = 1$ by \Cref{thm:delta-regularity}, so the Chevalley short exact sequence of $G$ is
				\[ 0 \to \GG \to G \to A \to 0 ,\]
				where $\GG = \GG_m$ or $\GG_a$ is a $1$-dimensional connected and commutative linear algebraic group and $A$ is an abelian variety of dimension $n-1$. Moreover, the stabilizer of each point $x \in X_b$ is either trivial or $\GG$ again by the $\delta$-regularity in \Cref{prop:delta-regularity} (it is trivial if and only if $x \in X_b'$ by \Cref{prop:action is free on X'}). Consider the normalization morphism
				\[ \nu : X_b^{\nu} \to X_b ,\]
				which is automatically $G$-equivariant. The stabilizer dimension of each point in $X_b^{\nu}$ is $\le 1$. But the $G$-subvariety $\Sing(X_b^{\nu})$ has dimension $\le n-2$ by normality, so this forces $\Sing(X_b^{\nu}) = \emptyset$ and proves the smoothness of $X_b^{\nu}$.
				
				Take a connected component $T$ of $X_b^{\nu}$, a smooth projective connected variety. If $\nu(T) \subset X_b$ is a reduced component, then the $G$-action on $T$ is generically free, so $T$ can be considered as a compactification $j : G \hookrightarrow T$. Consider a rational map
				\[\begin{tikzcd}[column sep=normal]
					f : T \arrow[r, dashed, "j^{-1}"] & G \arrow[r] & A
				\end{tikzcd}.\]
				Any rational map from a smooth variety to an abelian variety is everywhere defined, so $f$ is a morphism. This morphism is $G$-equivariant, so every closed fiber of $f$ is isomorphic to each other. In particular, $f$ is smooth projective by generic smoothness. Since it is a compactification of a $\GG$-torsor $G \to A$, it must be a $\PP^1$-bundle. As a result, $C \cong \PP^1$.
				
				If $\nu(T) \subset X_b$ is a non-reduced component, then $\GG \subset G$ acts trivially on the entire $T$. Hence $T$ has an induced $A$-action with finite stabilizers. Take a finite abelian subgroup $K \subset A$ containing all stabilizer subgroups and consider the quotient $\bar T = T / K$ with a free $\bar A = A/K$-action. Since $\bar T$ is normal and $\bar A$ acts on it freely, similar argument to above shows that $\bar T$ is smooth. Take a free quotient
				\[ g : \bar T \to C' = \bar T/ \bar A ,\]
				where $C'$ is a $1$-dimensional smooth proper algebraic space, whence a smooth projective curve. Since the fibers of $g$ are abelian varieties, $g$ induces a surjective morphism $\bar C \to C'$ from a rational projective curve $\bar C$ in $\bar T$. This forces $C' \cong \PP^1$. But $H^1_{\acute et}(\PP^1, \bar A) = 0$, so $g$ is necessarily a trivial $\bar A$-torsor and hence $\bar T \cong \bar A \times \PP^1$.
				
				To conclude, compose $T \to \bar T$ with $\pr_1 : \bar T \to \bar A$ and take its Stein factorization:
				\[\begin{tikzcd}
					T \arrow[r] \arrow[d, "f"] & \bar T \arrow[d] \\
					A' \arrow[r] & \bar A
				\end{tikzcd}.\]
				The $A$-action on $A'$ forces it to be smooth. Moreover, since $A' \to \bar A$ is finite and $A$-equivariant, it is an \'etale covering and hence $A'$ is an abelian variety. Finally, since $f : T \to A'$ is $A$-equivariant, it is a smooth projective morphism whose fibers are connected and containing a rational curve. The claim follows.
			\end{proof}
			
			\begin{remark} \label{rmk:Hwang-Oguiso's result}
				In \Cref{prop:Hwang-Oguiso's result}, the singular fiber $X_b$ and hence its normalization $X_b^{\nu}$ may not contain any rational curves \cite[Proposition~6.1]{mat01} \cite[Theorem~1.1]{hwang-ogu11}. This arises when $X_b$ is irreducible and non-reduced. In this case, $(X_b)_{\red} \to A$ is an ellpitic curve fiber bundle over an abelian variety of dimension $n-1$ (e.g., bielliptic surface). See \cite[Proposition~3.7--8]{hwang-ogu09} and \cite[Type~$\mathrm I_0$ in Table~4]{mat01}.
			\end{remark}
			
			\begin{proof} [Proof of \Cref{prop:birational map between fibration codimension 1}]
				Using the decomposition \eqref{diag:flops}, we may reduce to the case when $f$ is a single $\varepsilon L$-flop of $X$ as in \eqref{diag:single flop}. Let $W \subset \Exc(g)$ be an irreducible component of the exceptional locus of $g$ and $\bar W = g(W)$ its image. Then $f$ is isomorphic outside of $\pi(W) \subset B$. Hence, it is enough to show $\codim \pi(W) \ge 2$.
				
				Let $\bar x \in \bar W$ be a general point and set $b = \bar \pi (\bar x)$. Since $\bar X$ is symplectic and hence has canonical singularities, \cite[Corollary 1.5]{hacon-mcker07} shows the fiber $W_{\bar x}$ of $W \to \bar W$ is rationally chain connected. From \Cref{lem:flop contracts dimension at least 2}, we have $\dim W_{\bar x} \ge 2$. But $W_{\bar x}$ is contained in the fiber $X_b$, so this means $X_b$ contains a $2$-dimensional rationally chain connected subvariety. This is impossible for $b \in U$ where $U \subset B$ is a Zariski open subset of codimension $\ge 2$ complement defined in \Cref{prop:Hwang-Oguiso's result}. Hence $b \notin U$, proving $\pi(W) \cap U = \emptyset$ and in particular $\codim \pi(W) \ge 2$.
			\end{proof}

	\subsection{Codimension $1$ behavior of birational maps}
		\Cref{prop:birational map between fibration codimension 1}, together with Raynaud's extension \cref{thm:Raynaud extension of homomorphism}, implies the following proposition. This proposition in turn strengthens the discussions in the previous subsection.
		
		\begin{proposition} \label{prop:birational map is equivariant}
			Let $\pi : X \to B$ and $\tau : Y \to B$ be Lagrangain fibrations of smooth symplectic varieties with surjective $\pi'$ and $\tau'$. Let $f : X \dashrightarrow Y$ be a birational map over $B$. Then
			\begin{enumerate}
				\item $f$ induces an isomorphism $P^{\circ\circ}_X \cong P^{\circ\circ}_Y$. Let us denote them by $P^{\circ\circ}$.
				\item In \eqref{diag:flops}, $X^{(i)}$ and $\bar X^{(i)}$ for all $i$ admit $P^{\circ\circ}$-actions over $B$. As a consequence, $(X^{(i)}, P^{\circ\circ})$ and $(\bar X^{(i)}, P^{\circ\circ})$ are $\delta$-regular abelian fibrations.
				\item $g^{(i)}$ and $h^{(i)}$ are equivariant.
			\end{enumerate}
		\end{proposition}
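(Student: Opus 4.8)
The plan is to prove (1) by descending to the locus $B_0$, where $f$ is visibly an isomorphism, and then re-extending the resulting isomorphism of group schemes over all of $B$ via Raynaud's extension theorem \Cref{thm:Raynaud extension of homomorphism}; items (2) and (3) then follow from the same extension mechanism combined with the flop analysis of \eqref{diag:flops}. For (1): since abelian varieties contain no rational curves, $f$ and $f^{-1}$ are defined on $X_0$ and $Y_0$ respectively and restrict to an isomorphism $f_0\colon X_0\xrightarrow{\ \sim\ }Y_0$ over $B_0$; conjugation $g\mapsto f_0\circ g\circ f_0^{-1}$ carries translation automorphisms to translation automorphisms, so it identifies $P^a_{X,0}=\Aut^{\circ}_{\pi_0}$ with $P^a_{Y,0}=\Aut^{\circ}_{\tau_0}$, hence $P^{\circ\circ}_X$ with $P^{\circ\circ}_Y$ over $B_0$. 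By \Cref{prop:birational map between fibration codimension 1} there is a dense open $U\subset B$ with $\codim(B\setminus U)\ge 2$ over which $f\colon X_U\to Y_U$ is an isomorphism; since $\pi'$, hence also $\tau'$, is surjective over $U$, \Cref{thm:translation automorphism scheme} applies over $U$ and conjugation by $f_U$ gives an isomorphism $P^{\circ\circ}_X|_U\cong P^{\circ\circ}_Y|_U$. Now $B$ is smooth, in particular $\mathrm{(S_2)}$, and $P^{\circ\circ}_X\to B$ is smooth with connected fibers (\Cref{thm:smooth Lie algebra subscheme} shows $\Lie P^a_X$ is smooth, so \Cref{thm:strict neutral component} produces $P^{\circ\circ}_X$), so \Cref{thm:Raynaud extension of homomorphism} extends this isomorphism and its inverse uniquely over $B$; by the uniqueness clause the composites are the identities, whence $P^{\circ\circ}_X\cong P^{\circ\circ}_Y=:P^{\circ\circ}$ over $B$ (and consequently $\pi$ and $\tau$ share the same $\delta$-loci).

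\textbf{Item (2).} Fix $i$. Combining \Cref{prop:birational map between fibration codimension 1} with \Cref{lem:flop contracts dimension at least 2}, the image in $B$ of every exceptional locus appearing in \eqref{diag:flops} has codimension $\ge 2$, and the flopping contractions are isomorphisms over $B_0$; hence there is a dense open $U_i\subset B$ with $\codim(B\setminus U_i)\ge 2$ over which $X\dashrightarrow X^{(i)}$ and $X^{(i)}\to\bar X^{(i)}$, and therefore also $X\dashrightarrow\bar X^{(i)}$, are isomorphisms. Over $U_i$ the fibration $X^{(i)}$ (resp.\ $\bar X^{(i)}$, which is flat over $B$ by miracle flatness, being a symplectic, hence Cohen--Macaulay, variety equidimensional over the smooth $B$) has surjective smooth locus, so \Cref{thm:translation automorphism scheme} together with part (1) yields a faithful $P^{\circ\circ}$-action over $U_i$, i.e.\ a homomorphism $P^{\circ\circ}|_{U_i}\to\Aut_{\pi^{(i)}}|_{U_i}$, and similarly into $\Aut_{\bar\pi^{(i)}}|_{U_i}$. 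Since $\Aut_{\pi^{(i)}}$ and $\Aut_{\bar\pi^{(i)}}$ are separated group spaces over $B$ (\Cref{thm:automorphism space}), \Cref{thm:Raynaud extension of homomorphism} extends these homomorphisms over all of $B$, producing $P^{\circ\circ}$-actions on $X^{(i)}$ and $\bar X^{(i)}$ over $B$. Each extended action is again faithful (the action morphism is determined by its restriction to the dense $U_i$, the automorphism group space being separated) and restricts over $B_0$ to a transitive action; moreover $P^{\circ\circ}\subset P^a_X$ is $\delta$-regular by \Cref{thm:delta-regularity}. Hence $(X^{(i)},P^{\circ\circ})$ and $(\bar X^{(i)},P^{\circ\circ})$ are $\delta$-regular abelian fibrations.

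\textbf{Item (3).} The contractions $g^{(i)}\colon X^{(i)}\to\bar X^{(i)}$ and $h^{(i)}\colon X^{(i+1)}\to\bar X^{(i)}$ are morphisms over $B$, and their $P^{\circ\circ}$-equivariance amounts to the equality of two morphisms $P^{\circ\circ}\times_B X^{(i)}\to\bar X^{(i)}$ (respectively $P^{\circ\circ}\times_B X^{(i+1)}\to\bar X^{(i)}$). These two morphisms agree over $B_0$: by the construction in (2), the actions on $X^{(i)}$, $X^{(i+1)}$ and $\bar X^{(i)}$ were obtained by transporting the $P^{\circ\circ}_X$-action through isomorphisms over $B_0$ that are compatible with $g^{(i)}$ and $h^{(i)}$. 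The source $P^{\circ\circ}\times_B X^{(i)}$ is integral ($P^{\circ\circ}$ is smooth with connected fibers, $X^{(i)}$ has connected fibers, $B$ is irreducible) with a scheme-theoretically dense part over $B_0$, and $\bar X^{(i)}\to B$ is separated, so the two morphisms coincide; thus $g^{(i)}$ and $h^{(i)}$ are equivariant.

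\textbf{The main obstacle.} The only place real work is hidden is the passage from data available only over a codimension $\ge 2$ open $U\subset B$ — which is exactly what \Cref{prop:birational map between fibration codimension 1} together with \S\ref{sec:translation automorphism scheme} supply directly — to data over all of $B$. This is precisely the scope of Raynaud's extension theorem \Cref{thm:Raynaud extension of homomorphism}, so the proof reduces to verifying its hypotheses ($B$ is $\mathrm{(S_2)}$; $P^{\circ\circ}$ is smooth with connected fibers; the receiving object is a group space), together with the mild but necessary observation that faithfulness of the extended $P^{\circ\circ}$-actions persists — so that $(X^{(i)},P^{\circ\circ})$ and $(\bar X^{(i)},P^{\circ\circ})$ genuinely satisfy the axioms of a weak abelian fibration — which follows from the action morphism being determined by its restriction to the dense open $U_i$ once $\Aut_{\pi^{(i)}}$ is known to be separated.
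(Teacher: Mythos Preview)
Your overall architecture is the same as the paper's: use \Cref{prop:birational map between fibration codimension 1} to get an isomorphism over a codimension~$\ge 2$ open, apply Raynaud's \Cref{thm:Raynaud extension of homomorphism} to extend group homomorphisms, and use density plus separatedness of the target to check equivariance. Items (1) and (3) are handled exactly as in the paper.

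The gap is in your last step of item (2), where you assert that faithfulness of the extended $P^{\circ\circ}$-action persists because ``the action morphism is determined by its restriction to the dense open $U_i$.'' That argument shows only that the \emph{morphism} $P^{\circ\circ}\to\Aut_{\bar\pi^{(i)}}$ is uniquely determined, not that it is a monomorphism fiberwise. The kernel is a closed subgroup scheme of $P^{\circ\circ}$ that is trivial over $U_i$, but nothing you have said rules out a nontrivial jump in the kernel over a point $b\in B\setminus U_i$; separatedness of $\Aut$ does not control this. So you have not verified axiom~(iii) of \Cref{def:weak abelian fibration} for $(\bar X^{(i)},P^{\circ\circ})$.

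The paper circumvents this by reversing the order: it first proves the equivariance of $g^{(i)}$ (your item (3)), and only then deduces the weak abelian fibration property. Since stabilizers in $X^{(i)}$ are affine and $g^{(i)}$ is equivariant and an isomorphism off the exceptional locus, for each $b\in B$ one finds a point $\bar x\in \bar X^{(i)}_b\setminus \bar W_b = X^{(i)}_b\setminus W_b$ with affine stabilizer; then \cite[Lemma 5.16]{ari-fed16} (cf.\ \Cref{lem:delta-regularity}) upgrades this to affine stabilizers at \emph{all} points of $\bar X^{(i)}_b$. This gives axiom~(iii) directly, without ever asserting faithfulness over the bad locus. You can fix your argument by adopting this route: prove (3) first, then use it to finish (2).
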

		\begin{proof}
			(1) By \Cref{prop:birational map between fibration codimension 1}, there exists a codimension $\ge 2$ complement $U \subset B$ such that $f_U : X_U \to Y_U$ over $U$ is an isomorphism. This induces an isomorphism of smooth group spaces $P^{\circ\circ}_{X|U} \cong P^{\circ\circ}_{Y|U}$ over $U$. The isomorphism uniquely extends to an isomorphism $P^{\circ\circ}_X \cong P^{\circ\circ}_Y$ over $B$ by \Cref{thm:Raynaud extension of homomorphism}.
			
			\bigskip
			
			(2) and (3) Again let us simply write $X = X^{(i)}$, $\bar X = \bar X^{(i)}$, and $Y = X^{(i+1)}$. Since $g : X \to \bar X$ is an isomorphism over $U$ by \Cref{prop:birational map between fibration codimension 1}, there exists a homomorphism of group spaces $P^{\circ\circ}_{X|U} \to \Aut_{\bar \pi_U}$ over $U$. This uniquely extends to a homomorphism $P^{\circ\circ}_X \to \Aut_{\bar \pi}$ over $B$ again by \Cref{thm:Raynaud extension of homomorphism}, proving that $\bar X$ admits a $P^{\circ\circ}$-action. To prove the morphism $g$ is $P^{\circ\circ}$-equivariant, consider the diagram
			\[\begin{tikzcd}
				P^{\circ\circ} \times_B X \arrow[r] \arrow[d, "\id \times g"] & X \arrow[d, "g"] \\
				P^{\circ\circ} \times_B \bar X \arrow[r] & \bar X
			\end{tikzcd}.\]
			The diagram commutes over $U$ because $g$ was an isomorphism over it. Since $\bar X \to B$ is separated and $(P^{\circ\circ} \times_B X)_{|U} \subset P^{\circ\circ} \times_B X$ is dense by \Cref{lem:density respects flat base change}, the diagram commutes over the entire $B$ by \Cref{lem:morphism to separated space is determined by dense subspace}. Same discussions hold for $Y$ and $h : Y \to \bar X$.
			
			Finally, recall that the $P^{\circ\circ}$-stabilizers of points in $X$ are affine. Since $g : X \to \bar X$ was $P^{\circ\circ}$-equivariant, this shows that for each $b \in B$, there exists a point $\bar x \in \bar X_b \setminus \bar W_b = X_b \setminus W_b \neq \emptyset$ with affine stabilizer. By \cite[Lemma 5.16]{ari-fed16} (or the proof of \Cref{lem:delta-regularity}), this shows that every point $x \in \bar X_b$ has affine stabilizers and hence $(\bar X, P^{\circ\circ})$ is a weak abelian fibration.
		\end{proof}
		
		\begin{proposition} \label{prop:exceptional locus of flop}
			In \eqref{diag:single flop}, let $W \subset \Exc(g)$ be an irreducible component of the exceptional locus of $g$ and $\bar W = g(W)$ its image in $\bar X$. Write $d = \codim_X W$ and $e = \codim_{\bar X} \bar W$. Then
			\begin{enumerate}
				\item $e = 2d$.
				\item $W$ is an irreducible component of $\pi^{-1}(D_d)$ and $\pi(W)$ is an irreducible component of $D_d$ with codimension $d$.
				\item Every fiber of the morphism $W \to \bar W$ is rationally chain connected and has dimension $\ge d$.
				\item The normalization $\bar W^{\nu}$ of $\bar W$ is a symplectic variety of dimension $2(n-d)$. The morphism $\pi$ induces a Lagrangian fibration $\bar W^{\nu} \to \pi(W)^{\nu}$.
			\end{enumerate}
			Same results hold for $h$.
		\end{proposition}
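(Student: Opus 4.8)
The plan is to prove the four assertions in turn for the single flop \eqref{diag:single flop} of smooth symplectic varieties, using throughout the $P^{\circ\circ}$-equivariance of $g$ (and $h$) from \Cref{prop:birational map is equivariant} together with the geometry of the symplectic (crepant) resolution $g\colon X\to\bar X$. Recall $\bar X$ is a normal Gorenstein symplectic variety, hence has canonical — in particular klt — singularities. I will carry out the argument for $g$ only; the statements for $h$ then follow verbatim, since $h$ is a symplectic resolution of the \emph{same} $\bar X$ and $\bar W$ is equally the image of an irreducible component of $\Exc(h)$. For (1) and the dimension bound in (3): write $F$ for a general fibre of $W\to\bar W$, so $\dim F=\dim W-\dim\bar W=e-d$. \Cref{lem:flop contracts dimension at least 2} (Wierzba's inequality) gives $e-d\ge d$, i.e.\ $e\ge 2d$; and since a projective symplectic resolution is semismall \cite{wie03}, the locus of $\bar X$ where $g$ has fibre dimension $\ge e-d$ — which contains $\bar W$ — has dimension $\le\dim\bar X-2(e-d)$, i.e.\ $2n-e\le 2n-2(e-d)$, forcing $e\le 2d$. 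Hence $e=2d$ and $\dim F=d$, and by upper semicontinuity of fibre dimension every fibre of $W\to\bar W$ has dimension $\ge d$. Rational chain connectedness of every such fibre follows from \cite[Corollary~1.5]{hacon-mcker07} applied to the resolution $g$ of the klt variety $\bar X$, exactly as in the proof of \Cref{prop:birational map between fibration codimension 1}, but now for all fibres rather than only the general one.

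For (2), fix a general $\bar x\in\bar W$ and set $b=\bar\pi(\bar x)$, so $W_{\bar x}\subseteq X_b$ is a rationally chain connected subvariety of dimension $d\ge 2$. The $\delta$-regular weak abelian fibration structure on $(X,P^{\circ\circ})$ — the structure underlying \Cref{prop:Hwang-Oguiso's result}, extended to the deeper $\delta$-strata as in \cite[Lemma~5.16]{ari-fed16} — implies that $X_b$ contains no rationally chain connected subvariety of dimension exceeding $\delta(b)$; hence $\delta(b)\ge d$, so $\pi(W)\subseteq D_d$. On the other hand $\dim W=2n-d$ while the fibres of $\pi|_W$ lie inside fibres of $\pi$, so have dimension $\le n$, whence $\codim_B\pi(W)\le d$; since every component of $D_d$ has codimension $\ge d$ by $\delta$-regularity (\Cref{thm:delta-regularity}), the irreducible set $\pi(W)$ must be a component of $D_d$ of codimension exactly $d$. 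Finally $W\subseteq\pi^{-1}(\pi(W))$, whose components have dimension $(n-d)+n=2n-d=\dim W$, so $W$ is an irreducible component of $\pi^{-1}(\pi(W))$, hence of $\pi^{-1}(D_d)$.

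For (4), first note that $g$ is an isomorphism over the smooth locus of $\bar X$: otherwise the pullback of the symplectic form of $\bar X_{\mathrm{sm}}$, which equals $\sigma$ on the locus where $g$ is an isomorphism and hence on all of $g^{-1}(\bar X_{\mathrm{sm}})$ by uniqueness of extension of $2$-forms, would be degenerate along a positive-dimensional fibre, contradicting non-degeneracy of $\sigma$. Thus $\bar W\subseteq\Sing(\bar X)$, and since $W$ is an irreducible component of $\Exc(g)$, the local triviality of a symplectic resolution along the symplectic leaves of $\bar X$ (the Poisson-geometric description of symplectic singularities, cf.\ \cite{nam08}) shows $\bar W$ is the closure of a symplectic leaf. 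Consequently $\bar W^{\nu}$ is a symplectic variety, of dimension $\dim\bar W=2n-e=2(n-d)$. Then $\bar\pi$ induces $\bar W^{\nu}\to\pi(W)^{\nu}$ whose general fibre has dimension $(2n-2d)-(n-d)=n-d=\tfrac12\dim\bar W^{\nu}$; it is proper with connected fibres (the $P^{\circ\circ}$-action being available on $\bar W$), and its fibres are isotropic for the symplectic form on $\bar W^{\nu}$ because they lie in fibres of $\pi$, which are Lagrangian in $X$ and whose symplectic pairing is compatible with that on $\bar W^{\nu}$ through $g$ and the Poisson structure; a half-dimensional isotropic fibre is Lagrangian, so $\bar W^{\nu}\to\pi(W)^{\nu}$ is a Lagrangian fibration.

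The main obstacle will be step (4): identifying $\bar W$ as the closure of a symplectic leaf and transferring the symplectic form to $\bar W^{\nu}$ requires the Poisson-geometric theory of symplectic singularities, and verifying that the restricted fibration is genuinely Lagrangian — not merely isotropic of the expected dimension — demands a careful compatibility of the symplectic form on $\bar W^{\nu}$ with that on $X$ along the flopping contraction. A secondary point needing care is the bound used in (2), that $X_b$ admits no rationally chain connected subvariety of dimension $>\delta(b)$ over arbitrary $\delta$-strata, which goes beyond the codimension-one statement recorded in \Cref{prop:Hwang-Oguiso's result} and must be extracted from the general weak abelian fibration structure theory.
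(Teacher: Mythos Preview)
Your argument for (1) via semismallness of the symplectic resolution $g$ is valid and in fact cleaner than the paper's route; semismallness is exactly Kaledin's theorem \cite{kal06}, and it gives $e\le 2d$ in one line. The paper does \emph{not} invoke semismallness for the upper bound. Instead it argues (1) and (2) simultaneously through the $\delta$-regular action on $\bar X$ established in \Cref{prop:birational map is equivariant}: choose the largest $k$ with $\pi(W)\subset D_k$, so $k\le\codim\pi(W)\le d$; for general $b\in\pi(W)$ one has $\delta(b)=k$, and since $\bar W_b$ is $P^{\circ\circ}_b$-invariant every orbit inside it has dimension $\ge n-k$, whence $\dim\bar W_b\ge n-k$; then $(n-k)+(e-d)\le\dim W_b\le n$ forces $e-d\le k\le d$, and together with Wierzba's inequality all inequalities collapse to equalities, giving $e=2d$ and $k=d$ at once.

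The gap in your write-up is step (2). The assertion that $X_b$ contains no rationally chain connected subvariety of dimension exceeding $\delta(b)$ is not proved anywhere in the paper, is not a formal consequence of the weak abelian fibration axioms, and does not follow from \cite[Lemma~5.16]{ari-fed16} (which only says stabilizers are affine). You acknowledge this yourself, but it is the load-bearing step of your (2). The fix is immediate and already implicit in your setup: drop the RCC claim entirely and use instead that $\bar W$, hence $\bar W_b$, is $P^{\circ\circ}$-invariant (because $g$ is equivariant and $P^{\circ\circ}$ is connected). Then $\dim\bar W_b\ge n-\delta(b)$, and since the general fibre of $W_b\to\bar W_b$ has dimension $d$ by your (1), you get $n\ge\dim W_b\ge(n-\delta(b))+d$, i.e.\ $\delta(b)\ge d$. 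This is precisely the paper's mechanism, and it slots directly into your decoupled proof of (1).

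For (3) you agree with the paper. For (4) the paper does not attempt the Poisson-leaf argument you sketch; it simply invokes \cite[Theorem~3.1]{mat15} (built on \cite[Theorem~2.5]{kal06}), which packages exactly the statement that the normalization of such a $\bar W$ is symplectic and the induced map to $\pi(W)^\nu$ is Lagrangian. Your sketch points at the right ingredients but would need those references to be made rigorous in any case.
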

		\begin{proof}
			(1) In \Cref{lem:flop contracts dimension at least 2}, we already used the inequality $\dim W - \dim \bar W \ge \codim W$ in \cite[Theorem 1.2(i)]{wie03}, i.e., the inequality $e - d \ge d$. Let us prove the opposite inequality $e - d \le d$.
			
			The image $\pi(W)$ is an irreducible subvariety of $B$. Since $\pi$ is equidimensional, we have $\codim \pi(W) \le \codim W = d$. Take the \emph{largest} integer $k$ with $\pi(W) \subset D_k$. We then have $k \le \codim D_k \le \codim \pi(W)$ by the $\delta$-regularity of the Lagrangian fibration. Combining the inequalities, we get
			\begin{equation} \label{eq:exceptional locus of flop:inequality}
				k \le \codim D_k \le \codim \pi(W) \le \codim W = d
			\end{equation}
			
			A general fiber of the morphism $W \to \bar W$ has dimension $e - d$. Fixing a general closed point $b \in \pi(W)$, this shows $\dim W_b = \dim \bar W_b + (e-d)$. Note that $b \in D_k \setminus D_{k+1}$ because $b \in \pi(W)$ is chosen to be general and $k$ is assumed to be maximal. Since $(\bar X, P^{\circ\circ})$ is a $\delta$-regular abelian fibration by \Cref{prop:birational map is equivariant}, $\bar W_b \subset \bar X_b$ has stabilizer dimension $\le k$ at every point. Hence $\dim \bar W_b \ge n-k$. We have a sequence of inequalities
			\[ (n-k) + (e-d) \le \dim \bar W_b + (e-d) = \dim W_b \le n .\]
			This means $e-d \le k$ and hence $e - d \le d$ by \eqref{eq:exceptional locus of flop:inequality}. Therefore, $e = 2d$.
			
			\bigskip
			
			(2) All the intermediate inequalities above are now equalities, so we have $k = d$, $\codim \pi(W) = d$, and that $\pi(W)$ is an irreducible component of $D_d$. Since $\dim W = 2n-d$, $\dim \pi(W) = n-d$, and $\pi$ is equidimensional of relative dimension $n$, this forces $W$ to be an irreducible component of $\pi^{-1}(\pi(W))$. (3) Apply \cite[Corollary 1.5]{hacon-mcker07} to $g : X \to \bar X$ (note that $\bar X$ is symplectic so it has canonical singularities). (4) This is simply a copy of \cite[Theorem 3.1]{mat15}, based on the previous result of \cite[Theorem 2.5]{kal06}.
		\end{proof}
		
		We can finally complete the proof of \Cref{thm:Neron model of fibration codimension 1} and \ref{thm:birational map of fibration codimension 1}.
		
		\begin{proof} [Proof of \Cref{thm:birational map of fibration codimension 1}]
			We have already proved in \Cref{prop:birational map is equivariant} that $P^{\circ\circ}_X \cong P^{\circ\circ}_Y$ and $f$ is equivariant. Hence the $\delta$-loci of $\pi$ and $\tau$ coincide. It only remains to prove $f$ is isomorphic over $B_1$. To do so, reduce it to the case when $f$ is a single flop as in \eqref{diag:single flop}. Let $W \subset \Exc(g)$ be an irreducible component of codimension $d \ge 2$. By \Cref{prop:exceptional locus of flop}, we have $\pi(W) \subset D_d$. In particular, $\pi(W) \cap B_1 = \emptyset$ and hence $g$ is isomorphic over $B_1$. Similarly, $h$ is isomorphic over $B_1$ and hence the result follows.
		\end{proof}
		
		\begin{proof} [Proof of \Cref{thm:Neron model of fibration codimension 1}]
			(1) To show $X_1'$ is N\'eron under $B_0 \subset B_1$, thanks to \Cref{lem:equivalence between Neron and birational automorphism}, it is enough to prove that for every quasi-projective \'etale morphism $U \to B_1$, a birational translation automorphism $f : X_U \dashrightarrow X_U$ over $U$ is a translation automorphism $f : X_U' \to X_U'$ over $U$. The $\delta$-regular loci are invariant under base change, so this means $U_1 = U$. Hence, the birational automorphism $f$ is in fact a regular automorphism on the entire domain $X_U$ by \Cref{thm:birational map of fibration codimension 1}.
			
			\bigskip
			
			(2) Thanks to \Cref{thm:weak Neron model of group space}, we only need to show $P^a_1$ is weakly N\'eron under $B_0 \subset B_1$. Let $U \to B_1$ be a quasi-projective \'etale morphism and $f : U_0 \to P_0$ a $U_0$-section. Since $P_0 \to B_0$ represents the translation automorphisms of $X_0$ by \Cref{thm:translation automorphism scheme}, $f$ induces a birational translation automorphism $f : X_U \dashrightarrow X_U$ over $U$. By \Cref{thm:birational map of fibration codimension 1} and $U_1 = U$, this uniquely extends to a translation automorphism of $X_U$, which in turn is equivalent to a section $U \to P^a_1$. This proves the weak N\'eron mapping property of $P^a_1$. Finally, by \Cref{prop:Neron torsor}, $X'_1$ is a $P^a_1$-torsor because $\pi'$ is surjective.
		\end{proof}
		
		We conclude this section with the following lemma, which will be used later on in the proof of \Cref{prop:equivalence of Neron mapping property of X'} and in \S \ref{ex:Beauville-Mukai system}. Recall from \Cref{prop:exceptional locus of flop} that every irreducible component $W \subset \Exc(g)$ of the exceptional locus of a flop has codimension $\le n$. If $\codim W = n$ then $X \to \bar X$ contracts $W$ to a point, so \cite[Theorem 1.1]{wie-wis03} and \cite[Theorem 1.2]{ame-ver24} conclude that $W$ is isomorphic to $\PP^n$.
		
		\begin{lemma} \label{lem:Mukai flop of Pn}
			Let $\pi : X \to B$ be a Lagrangian fibration of a smooth symplectic algebraic space $X$ of dimension $2n$. Assume that a closed fiber $X_b$ contains $\PP^n$. Let $f : X \dashrightarrow Y$ be the Mukai flop of such $\PP^n$ and let $\check \PP^n$ be the flopped locus. Then the following are equivalent.
			\begin{enumerate}
				\item The scheme-theoretic component of $X_b$ corresponding to $\PP^n$ is reduced.
				\item The scheme-theoretic component of $Y_b$ corresponding to $\check \PP^n$ is reduced.
			\end{enumerate}
		\end{lemma}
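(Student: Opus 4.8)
The plan is to analyze the local geometry of the Mukai flop $f : X \dashrightarrow Y$ near the contracted $\PP^n \subset X_b$ and track how the scheme-theoretic fiber structure behaves. Let $g : X \to \bar X$ contract $\PP^n$ to a point $\bar x \in \bar X$, and $h : Y \to \bar X$ contract $\check\PP^n$ to the same point. By \Cref{prop:exceptional locus of flop}(2) (applied with $d = n$), the locus $\PP^n$ is an irreducible component of the fiber $X_b = \pi^{-1}(b)$ and $b = \pi(\PP^n)$ is an isolated point of $D_n$ (the deepest $\delta$-stratum). First I would pass to an \'etale or analytic neighborhood of $\bar x$ in $\bar X$, where by the standard local model of a Mukai flop (Mukai, Namikawa), $\bar X$ looks like the cone $\{ (p,q) : p \cdot q \text{ has rank} \le 1 \} \subset \Hom(V, V^*)$ with its two small resolutions $T^*\PP^n$ and $T^*\check\PP^n$; the Lagrangian fibration $\bar\pi$ restricted to this neighborhood factors through the base $\pi(\PP^n)^{\nu}$ of the induced Lagrangian fibration on $\bar W^{\nu}$ from \Cref{prop:exceptional locus of flop}(4), which here is just the point $b$.

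The key step is to compute the multiplicity of the component $\PP^n$ in the scheme-theoretic fiber $X_b$ in terms of data intrinsic to $\bar X$ near $\bar x$, in a way that is manifestly symmetric between $X$ and $Y$. Concretely, since $\PP^n$ is an \emph{isolated} component of $X_b$ (it meets no other component of $X_b$, because the $P^{\circ\circ}$-stabilizer dimension on the rest of $X_b$ is $< n$ by $\delta$-regularity, while it is $n$ on all of $\PP^n$), the scheme-theoretic fiber $X_b$ near $\PP^n$ is exactly the divisorial part $g^*(\bar X_b)$ supported on $g^{-1}(\bar x)$. The multiplicity of $\PP^n$ in $g^*(\bar X_b)$ is computed by pulling back a local equation of $b$ on $B$ (equivalently of $\bar X_b$ on $\bar X$ near $\bar x$) under $g$ and reading off the order of vanishing along the exceptional divisor's generic point — but $\PP^n$ has codimension $n \ge 2$, so there is no exceptional \emph{divisor}, and instead one works with the valuation attached to a general curve through $\PP^n$ in $X$ mapping isomorphically to its image in $\bar X$, or equivalently with $\ord_{\PP^n}$ on the local ring. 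The crucial point is that $g$ and $h$ are both \emph{crepant} small resolutions of the same $\bar X$, so for any Cartier (or $\QQ$-Cartier) divisor $D$ on $\bar X$, the coefficients with which the components of $g^*D$ and $h^*D$ appear are governed by the same discrepancy data; applying this to $D = \bar X_b = \bar\pi^*(b)$ and observing that $\PP^n \leftrightarrow \check\PP^n$ are the unique exceptional loci mapping to $\bar x$, I would conclude that the multiplicity of $\PP^n$ in $X_b$ equals the multiplicity of $\check\PP^n$ in $Y_b$.

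The main obstacle I anticipate is making the multiplicity comparison rigorous \emph{despite the exceptional loci being of codimension $\ge 2$}, where the usual "pull back a divisor and read discrepancies" bookkeeping does not literally apply. I would handle this by reducing to the case of a transverse slice: cut $X$, $\bar X$, $Y$ by a general complete intersection of $n-1$ ample divisors through a general point of $\PP^n$ to obtain a threefold flop $X^{\flat} \dashrightarrow Y^{\flat}$ over a curve, in which $\PP^n$ becomes a single $\PP^1$ which \emph{is} an exceptional divisor after one more blow-up — or better, use that the analytic local model of the Mukai flop is explicit, so that a defining equation of $\bar X_b$ near $\bar x$ is a function $\phi$ on the rank-$\le 1$ cone, and its vanishing orders along the zero sections of $T^*\PP^n$ and $T^*\check\PP^n$ are literally equal by the symmetry of the local model under $V \leftrightarrow V^*$. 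A secondary check is that $\bar\pi$ near $\bar x$ genuinely pulls back from the quotient $\bar X \to \bar X / (\text{local model})$, i.e.\ that $\bar X_b$ is Cartier near $\bar x$: this follows because $\bar X$ is symplectic hence Gorenstein, and $b$ is an isolated point of the relevant stratum, so $\bar\pi^*\mathfrak m_b$ is locally principal after shrinking. Once the local model symmetry is in place, the equivalence (1) $\Longleftrightarrow$ (2) is immediate, since "reduced" means "multiplicity $1$."
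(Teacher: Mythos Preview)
Your approach goes \emph{down} through the common contraction $\bar X$ and tries to compare multiplicities via the $V \leftrightarrow V^*$ symmetry of the local cone model. The paper instead goes \emph{up} through the common blow-up $\tilde X$ (the roof of the Mukai flop), where the argument becomes almost a tautology: the exceptional locus of $\epsilon : \tilde X \to X$ and of $\delta : \tilde X \to Y$ is the \emph{same} divisor $E \subset \PP^n \times \check\PP^n$, and since $\epsilon^{-1}(X_b) = \tilde X_b = \delta^{-1}(Y_b)$ with both $\epsilon,\delta$ being $\PP^{n-1}$-bundles over their images along $E$, the component $\PP^n$ is reduced in $X_b$ iff $E$ is reduced in $\tilde X_b$ iff $\check\PP^n$ is reduced in $Y_b$. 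Passing to the roof turns the codimension-$n$ exceptional loci into a single divisor, which is exactly the difficulty you identify as ``the main obstacle.''

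Your route also contains genuine errors, not just extra work. The claim that $\PP^n$ is an \emph{isolated} component of $X_b$ is false, and the justification --- that the $P^{\circ\circ}$-stabilizer has dimension $n$ on all of $\PP^n$ --- is false as well: when $\PP^n$ is reduced its generic point lies in $X'$, where the action is free (\Cref{prop:action is free on X'}); and in the paper's own example in \S\ref{ex:Beauville-Mukai system} the fiber is $X_b = S_1 \cup S_2 \cup S_3$ with $S_1 \cong \PP^2$ meeting $S_2$ along three lines and $S_3$ in three points. Your ``secondary check'' is also off: $\bar X_b$ has codimension $n$ in $\bar X$, so $\bar\pi^*\mathfrak m_b$ is generated by $n$ functions, not one, and there is no single ``local equation of $\bar X_b$'' to pull back and compare vanishing orders of. The symmetry heuristic is sound, but making it precise would force you to pass to the blow-up anyway --- which is the paper's one-line argument.
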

		\begin{proof}
			Before starting the proof, let us first notice that $Y$ admits a Lagrangian fibration $\tau : Y \to B$ with $\pi = \tau \circ f$. Moreover, since $X_b$ and $Y_b$ are lci schemes, they admit unique scheme-theoretic irreducible decompositions (the primary decomposition). Consider the Muaki flop diagram
			\[\begin{tikzcd}[column sep=tiny]
				& \tilde X \arrow[ld, "\epsilon"'] \arrow[rd, "\delta"] \\
				X \arrow[rr, dashed, "f"] \arrow[rd, "\pi"'] & & Y \arrow[ld, "\tau"] \\
				& B
			\end{tikzcd},\]
			where $\epsilon$ and $\delta$ are blowups of $\PP^n \subset X$ and $\check \PP^n \subset Y$. They share the same exceptional divisor $E \subset \tilde X$ so that both $\PP^n \longleftarrow E \to \check \PP^n$ are $\PP^{n-1}$-bundles ($E \subset \PP^n \times \check \PP^n$ is the universal hyperplane). If $\PP^n \subset X_b$ is a reduced component, then $E = \epsilon^{-1}(\PP^n) \subset \tilde X_b = \epsilon^{-1}(X_b)$ is a reduced component. By $E = \delta^{-1}(\check \PP^n) \subset \tilde X_b = \delta^{-1} (Y_b)$, this implies that $\check \PP^n \subset Y_b$ is a reduced component. The converse holds as well since $X$ is the Mukai flop of $\check \PP^n \subset Y$.
		\end{proof}

\section{The N\'eron model of a Lagrangian fibration} \label{sec:Neron model of fibration}
	This section extends the codimension $1$ behavior of Lagrangian fibrations studied in \S \ref{sec:Neron model of fibration codimension 1} to the deeper $\delta$-strata in $B$. Recall from \Cref{thm:Neron model of fibration codimension 1} that both the translation automorphism scheme $P^a$ and the smooth locus $X'$ are N\'eron under $B_0 \subset B_1$, and as result $X'_1$ is a $P_1$-torsor over $B_1$. When $n = 1$, i.e., when $\pi : X \to B$ is a minimal elliptic fibration without multiple fibers, we have $B_1 = B$ and this recovers the original result of N\'eron--Raynaud in the introduction and \Cref{ex:minimal elliptic surface}. For higher dimensional Lagrangian fibrations, both $P^a$ and $X'$ may fail to be N\'eron under the full extension of the base $B_0 \subset B$. In fact, it is unclear from our previous discussions whether $P^a_1 \to B_1$ and $X'_1 \to B_1$ may even admit a N\'eron model under the further extension $B_1 \subset B$. We need a new input to tackle this problem: the new ingredient will be the \emph{dual side} of the story.
	
	Recall that the translation automorphism scheme $P^a$ restricted over $B_0$ was an abelian scheme $\nu_0 : P_0 \to B_0$. Consider its dual abelian scheme $\check P_0 \to B_0$ (e.g., \cite[\S I.1]{fal-chai:abelian}), which by definition is its neutral Picard scheme $\check P_0 = \Pic^{\circ}_{\nu_0} \to B_0$. Since $X_0$ is a $P_0$-torsor, it has a same neutral Picard schemes to $P_0$. We may thus define the dual abelian scheme by
	\[ \check P_0 = \Pic^{\circ}_{\pi_0} \to B_0 ,\]
	which is a restriction of the full neutral Picard space $\Pic^{\circ}_{\pi} \to B$ of the Lagrangian fibration. A projective abelian scheme $P_0$ and its dual $\check P_0$ admit polarizations
	\[ \lambda_0 : P_0 \to \check P_0 ,\qquad \check \lambda_0 : \check P_0 \to P_0 .\]
	These finite \'etale homomorphisms connect many properties of $P_0$ and $\check P_0$, including their N\'eron extendability. More specifically, $P_0$ admits a N\'eron model $P^a_1$ over $B_1$ and $\Pic^{\circ}_{\pi}$ is N\'eron under $B_1 \subset B$ (line bundles on a regular scheme are determined by their codimension $1$ behavior), so the polarizations connect these complementary properties and yield the N\'eron models of both $P_0$ and $\check P_0$.
	
	\begin{theorem} \label{thm:Neron model of abelian schemes}
		Keep the notations and assumptions of \Cref{thm:translation automorphism scheme} and \ref{thm:Neron model of fibration codimension 1}. Then
		\begin{enumerate}
			\item There exists a N\'eron model $P \to B$ of the abelian scheme $P_0 \to B_0$.
			\item There exists a N\'eron model $\check P \to B$ of the dual abelian scheme $\check P_0 \to B_0$.
			\item Every finite \'etale homomorphism $\lambda_0 : P_0 \to \check P_0$ (e.g., polarization) over $B_0$ uniquely extends to a quasi-projective \'etale homomorphism
			\[ \lambda : P \to \check P \]
			over $B$. Same holds for finite \'etale homomorphisms $\check \lambda_0 : \check P_0 \to P_0$.
			\item $P$ and $\check P$ are $\delta$-regular with the same $\delta$-loci.
		\end{enumerate}
	\end{theorem}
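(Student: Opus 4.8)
The plan is to exploit the \emph{complementary} N\'eron behavior of the two smooth commutative group spaces $P^a \to B$ and $\Pic^\circ_\pi \to B$: the former is a N\'eron model of $P_0$ under $B_0 \subset B_1$ by \Cref{thm:Neron model of fibration codimension 1}, while the latter is N\'eron under $B_1 \subset B$, because for any smooth $Z \to B$ the complement of $X\times_B Z_1$ in the smooth variety $X\times_B Z$ (with $Z_1 = Z\times_B B_1$) has codimension $\ge 2$, so every line bundle extends uniquely across it. (Smoothness of $\Pic^\circ_\pi$ itself comes from $R^1\pi_*\mathcal O_X$ being locally free, via Matsushita's decomposition $R\pi_*\mathcal O_X \cong \bigoplus_k \Omega_B^k[-k]$ used in \Cref{lem:pi is a universal fibration}, together with \Cref{prop:neutral component}.) The $\pi$-ample line bundle $L$ supplied by projectivity gives a homomorphism $\lambda : P^a \to \Pic^\circ_\pi$ over $B$, $f \mapsto f^*L\otimes L^{-1}$ (the image lands in the neutral component), restricting over $B_0$ to a polarization $\lambda_0 : P_0 \to \check P_0$; since $P^a$ is quasi-projective over $B$, $\lambda$ is automatically quasi-projective.

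The crucial point, and the step I expect to be the main obstacle, is that $\lambda$ is \emph{\'etale}, equivalently that $\Lie\lambda : \Lie P^a \to \Lie\Pic^\circ_\pi$ is an isomorphism of rank-$n$ vector bundles on $B$. Over $B_0$ this is the nondegeneracy of the polarization of an abelian scheme, but the discriminant has codimension only one, so a Hartogs argument does not apply; I expect this to genuinely require the Hodge-module symmetry of the decomposition theorem of \cite{shen-yin22,sch23}, which identifies $\Lie P^a = \mathbb T^*_B$ (via \Cref{thm:smooth Lie algebra subscheme}) and $\Lie\Pic^\circ_\pi = R^1\pi_*\mathcal O_X$ with $\Omega_B^1$ in a way compatible with $\Lie\lambda$ — the same input that powers \Cref{prop:vanishing of higher direct image sheaf}.

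Granting that $\lambda : P^a \to \Pic^\circ_\pi$ is quasi-projective \'etale, part (1) follows from \Cref{cor:Neron under quasi-projective etale homomorphism} applied with $S=B$, $H=\Pic^\circ_\pi$ (N\'eron under $B_1\subset B$), $G^a=P^a$: it produces a smooth group space $P\to B$ containing $P^a$ as an open subgroup, N\'eron of $P^a_{|B_1}=P^a_1$ over $B$, and extends $\lambda$ to a quasi-projective \'etale $\lambda : P \to \Pic^\circ_\pi$. Since $P^a_1$ is N\'eron of $P_0$ under $B_0\subset B_1$, transitivity of base extensions makes $P$ a N\'eron model of $P_0$ over $B$. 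For part (2), I would extend a dual polarization $\check\lambda_0 : \check P_0 \to P_0$ (with $\check\lambda_0\circ\lambda_0=[m]$) to a homomorphism $\check\lambda : \Pic^\circ_\pi \to P$ using the N\'eron property of $P$; it is \'etale because $\check\lambda\circ\lambda = [m]$ on $P$ by uniqueness of the N\'eron extension while $[m]$ and $\lambda$ are \'etale. Then apply \Cref{cor:Neron under quasi-projective etale homomorphism} again, now with $H=P$ (N\'eron under $B_0\subset B$) and the open subgroup $G^a = \lambda(P)\subset\Pic^\circ_\pi$ equipped with $f^a = \check\lambda_{|\lambda(P)}$, which is quasi-projective \'etale since $\check\lambda_{|\lambda(P)}\circ\lambda=[m]$; the output $\check P\to B$ contains $\lambda(P)$ as an open subgroup and is a N\'eron model of $\check P_0 = (\lambda(P))_{|B_0}$ over $B$. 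For $n=1$ this recovers $P = \check P = S'$ of \Cref{ex:minimal elliptic surface}.

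For part (3), an arbitrary finite \'etale homomorphism $\mu_0 : P_0 \to \check P_0$ extends to a homomorphism $\mu : P \to \check P$ by the N\'eron property of $\check P$ and \Cref{prop:Neron model of homomorphism}; choosing $\nu_0$ with $\nu_0\circ\mu_0 = [k]$ and extending it to $\nu : \check P \to P$, the relations $\nu\circ\mu = [k]$ and $\mu\circ\nu = [k]$ (uniqueness of N\'eron extensions) together with \'etaleness of $[k]$ in characteristic zero force $\Lie\mu$ and $\Lie\nu$ to be isomorphisms, so $\mu$ is \'etale, and it is quasi-projective by the $[k]$-factorization (cf.\ \Cref{rmk:finite type separated implies quasi-projective morphism}); the argument for homomorphisms $\check P_0 \to P_0$ is symmetric. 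Finally, part (4): $P^a\subset P$ is open, so $P$ and $P^a$ have the same fibrewise neutral components, hence the same Chevalley affine parts and the same $\delta$-function, whence $P$ inherits the $\delta$-regularity of $P^a$ from \Cref{thm:delta-regularity}; and since the polarization $\mu : P \to \check P$ is \'etale, $\Lie P_b \cong \Lie\check P_b$ for all $b$ and the isogeny preserves the dimension of the affine part, so $P$ and $\check P$ are $\delta$-regular with the same $\delta$-loci.
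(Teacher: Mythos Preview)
Your overall strategy matches the paper's: use the complementary N\'eron behaviors of $P^a$ (under $B_0\subset B_1$) and $\Pic^\circ_\pi$ (under $B_1\subset B$), bridge them by a polarization-type map, and invoke \Cref{cor:Neron under quasi-projective etale homomorphism}. However, there is a genuine gap at your first step.

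You assert that the $\pi$-ample line bundle $L$ gives a \emph{homomorphism} $\lambda : P^a \to \Pic^\circ_\pi$, $f \mapsto f^*L\otimes L^{-1}$. Over $B_0$ this is the classical polarization of an abelian scheme, but over the full base $B$ (or even over $B_1$) the map is only known to be a \emph{morphism} of schemes sending the identity to the identity; that it is a group homomorphism is precisely the ``theorem of the square'' for $L$, and the paper states explicitly (\Cref{rmk:cubish line bundle}) that this is not known to hold. Without $\lambda$ being a homomorphism, \Cref{cor:Neron under quasi-projective etale homomorphism} does not apply. The paper's fix is to precompose with $[m] : P^a \to P^{\circ\circ}$ for $m$ divisible enough, and to construct the homomorphism $\lambda : P^{\circ\circ} \to \Pic^\circ_\pi$ abstractly via N\'eron extension (first to $\check P_1$ using \Cref{lem:check P_1}, then across $B_1\subset B$), rather than by the explicit formula; the composite $\lambda\circ[m] : P^a \to \Pic^\circ_\pi$ is then a genuine quasi-projective \'etale homomorphism.

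A secondary point: you flag the \'etaleness of $\lambda$ as the main obstacle and expect it to require Hodge-module compatibility of $\Lie\lambda$. In fact the paper handles this more simply (in the proof of \Cref{lem:check P_1}): once one knows $\Lie P^a \cong \Lie\Pic^\circ_\pi \cong \mathbb T^*_B$ (which does use the Hodge-module input already packaged in \Cref{thm:smooth Lie algebra subscheme} and Matsushita's theorem), pick a dual polarization $\check\lambda_0$ with $\check\lambda_0\circ\lambda_0 = [m]$, extend $\check\lambda_0$ over $B_1$ via the N\'eron property of $P^a_1$, and observe that the induced conormal maps $\lambda_1^*,\check\lambda_1^* : T_{B_1}\to T_{B_1}$ compose to $m\cdot\id$ on a locally free sheaf, hence are isomorphisms. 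No further Hodge-module compatibility is needed. Your later arguments for parts (2)--(4) are essentially correct in spirit, though for (2) the paper works with $\Pic^{\circ\circ}_\pi$ (separated, finite type) rather than the image $\lambda(P)$, which sidesteps questions about the latter.
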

	
	Recall the notion of a birational translation automorphism in \Cref{def:birational translation automorphism}. The N\'eron mapping property of $P$ says that for every \'etale morphism $U \to B$, we have
	\begin{equation} \label{eq:etale local section of birational translation automorphism space}
		P(U) = \{ f : X_U \dashrightarrow X_U : f \mbox{ is a birational translation automorphism over } U \} .
	\end{equation}
	This means $P$ can be considered as a moduli space of birational translation automorphisms of $\pi$; compare it with \eqref{eq:etale local section of translation automorphism scheme} and \Cref{def:translation automorphism scheme}.
	
	\begin{definition} \label{def:birational translation automorphism scheme}
		We call the N\'eron model $P \to B$ in \Cref{thm:Neron model of abelian schemes} the \emph{birational translation automorphism space} of $\pi$.
	\end{definition}
	
	By construction, there is an open immersion $P^a \subset P$ of group spaces over $B$. This immersion is an equality over $B_1$ by \Cref{thm:Neron model of fibration codimension 1} but not over $B$ in general. It is unclear from \Cref{thm:Neron model of abelian schemes} whether $P$ is separated (or of finite type), but its numerically trivial open subgroup space $P^{\tau} \subset P$ (\S \ref{sec:etale group space}) is always separated by \Cref{prop:aligned subgroup in weak Neron model is separated}. We obtain a sequence of open immersions of smooth commutative group spaces
	\begin{equation} \label{eq:inclusions of group spaces}
		P^{\circ\circ} \subset P^a \subset P^{\tau} \subset P .
	\end{equation}
	For each inclusion, we present an example for which the inclusion is strict (\S \ref{sec:main examples}). The three groups $P^{\circ\circ}$, $P^{\tau}$, and $P$ are invariant under birational equivalences of $\pi$, but the group $P^a$ is not.
	
	Similar results hold for the N\'eron model of $\pi_0 : X_0 \to B_0$ under the extension $B_0 \subset B$. The N\'eron model always exists, but checking whether it is of finite type or separated is a subtle question.
	
	\begin{theorem} \label{thm:Neron model of fibration}
		Keep the notations and assumptions of \Cref{thm:Neron model of abelian schemes}. Then
		\begin{enumerate}
			\item There exists a N\'eron model $X^n \to B$ of $\pi_0 : X_0 \to B_0$. It contains $X'$ as a $P^a$-invariant open subscheme.
			\item $X^n = \bigcup_{Y_U} Y'_U$ where $U \to B$ runs through every quasi-projective \'etale morphism and $\tau : Y_U \to U$ a Lagrangian fibered smooth symplectic variety that is birational to $X_U$.
		\end{enumerate}
		Consequently, $X^n$ is a $P$-torsor over $B$.
	\end{theorem}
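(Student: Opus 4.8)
The plan is to build $X^n$ directly as a $P$-torsor, read off its N\'eron mapping property from that of $P$, and only afterwards identify it with the union of the smooth loci of all birational Lagrangian models; the final assertion will then be automatic. For the construction, note that $\pi':X'\to B$ is smooth by the very definition of $X'=X\setminus\Sing(\pi)$, it is surjective by hypothesis, and $X'|_{B_0}=X_0$ because $\pi$ is smooth over $B_0$. By \Cref{thm:Neron model of abelian schemes} the abelian scheme $P_0\to B_0$ has a N\'eron model $P\to B$, so \Cref{prop:Neron torsor restriction} applies with $G=P$ and $B_0\subset B$: since the $P_0$-torsor $X_0\to B_0$ admits the surjective smooth extension $X'\to B$, its class lies in the image of $H^1_{\text{\'et}}(B,P)\to H^1_{\text{\'et}}(B_0,P_0)$. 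We thus obtain a $P$-torsor $X^n\to B$ restricting to $X_0\to B_0$, and it is surjective because $P\to B$ is (it contains the surjective group scheme $P^a$). By \Cref{prop:torsor is Neron iff group is}, $X^n$ is N\'eron under $B_0\subset B$, which proves the existence statement and, simultaneously, the last sentence of the theorem.

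Next I would show $X'$ and each $Y'_U$ are open in $X^n$. Since $X'\to B$ is smooth and $X'|_{B_0}=X^n|_{B_0}$, the N\'eron mapping property of $X^n$ extends the identity to a morphism $u:X'\to X^n$ over $B$. The two morphisms $P^a\times_B X'\rightrightarrows X^n$ given by ``act then $u$'' and ``$u$ then act'' agree over the dense open $B_0$ and $P^a\times_B X'$ is smooth over $B$, so uniqueness in the N\'eron mapping property forces $u$ to be $P^a$-equivariant. Working \'etale-locally on $B$, a section of $\pi'$ trivializes $X^n$ as $P$ and exhibits $X'$ as a smooth, quasi-projective, finite type extension of $P_0$ carrying a free $P^a$-action (\Cref{prop:action is free on X'}); \Cref{lem:Neron extension is an open immersion} then makes $u$ an open immersion, and open immersions descend. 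The identical argument applies to a Lagrangian fibered smooth symplectic variety $\tau:Y_U\to U$ birational to $X_U$ over a quasi-projective \'etale $U\to B$: one uses that $Y'_U|_{U_0}\cong X_0|_{U_0}$ (a birational map of Lagrangian fibrations is an isomorphism over $B_0$, since abelian varieties contain no rational curves) and that the translation automorphism scheme $P^a_Y$ of $\tau$ embeds in $P_U$ as an open subgroup by the N\'eron mapping property of $P$ (cf.\ \Cref{cor:Neron under quasi-projective etale homomorphism}), so that $Y'_U$ carries a free $P^a_Y$-action extending the $P_0$-torsor $X_0|_{U_0}$ and the orbit-map argument goes through. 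This yields $\bigcup_{Y_U}Y'_U\subseteq X^n$.

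Conversely, I would fix $x\in X^n$ over $b\in B$, choose a quasi-projective \'etale $U\to B$ hitting $b$, a point $\tilde x\in X^n\times_B U$ over $x$, and (shrinking $U$) a section $\sigma:U\to X^n\times_B U$ through $\tilde x$ together with a reference section $e:U\to X'_U$ of $\pi'_U$ (which exists \'etale-locally as $\pi'$ is surjective). Over $U_0$ the difference $g:=\sigma_0-e_0$ is a section of $P_0$, which extends to $g\in P(U)$ by the N\'eron mapping property of $P$; by \eqref{eq:etale local section of birational translation automorphism space} it is a birational translation automorphism $f_g:X_U\dashrightarrow X_U$ over $U$, and $\sigma=g\cdot e$ (viewing $e$ in $X^n$ via the immersion above). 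If $g\in P^a(U)$ then $f_g$ is an automorphism of $X_U$ preserving $X'_U$ and one takes $Y_U=X_U$. In general, Kawamata's theorem \cite{kaw08} decomposes $f_g$ into a finite chain of flops exactly as in \eqref{diag:flops}, every intermediate model being a smooth Lagrangian fibered symplectic variety birational to $X_U$; tracking the image of $X'_U$ along this chain inside $X^n\times_B U$ identifies the translate $g\cdot X'_U$ with the smooth locus $Y'_U$ of an appropriate model $Y_U$ in the chain, and $\sigma$, hence $\tilde x$, lies in $Y'_U\subseteq X^n\times_B U$; its image $x$ therefore lies in the image of $Y'_U\to X^n$. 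This gives $X^n\subseteq\bigcup_{Y_U}Y'_U$ and completes item (2).

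The main obstacle is the last step: showing that \emph{every} \'etale-local section of $X^n$, equivalently every element of $P(U)$, is ``regularized'' on some birational model, i.e.\ that $g\cdot X'_U$ is precisely the smooth locus of a Lagrangian model obtained from $X_U$ by flops. This amounts to matching the combinatorics of Kawamata's flop decomposition of $f_g$ with the way the open pieces $(X^{(i)})'_U$ overlap inside $X^n\times_B U$, and it relies essentially on the facts that flops of Lagrangian fibrations are isomorphisms over $B_1$ (\Cref{thm:birational map of fibration codimension 1}) and that their exceptional loci lie over the deep $\delta$-strata (\Cref{prop:exceptional locus of flop}). By contrast, the inclusion $\bigcup Y'_U\subseteq X^n$ and the existence of $X^n$ are fairly formal consequences of \S\ref{sec:Neron models}.
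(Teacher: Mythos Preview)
Your construction of $X^n$ as a $P$-torsor and the embedding $X'\hookrightarrow X^n$ are correct and essentially match the paper's argument (the paper routes through $B_1$ rather than $B_0$, but this is cosmetic). The inclusion $\bigcup_{Y_U}Y'_U\subseteq X^n$ is also fine, though the paper does it more cleanly: since any birational $f:X_U\dashrightarrow Y_U$ is an isomorphism over $B_1$ by \Cref{thm:birational map of fibration codimension 1}, it induces an isomorphism of N\'eron models $X^n_U\cong Y^n_U$, and then $Y'_U\hookrightarrow Y^n_U\cong X^n_U$ directly.

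Your ``main obstacle'' for the reverse inclusion is illusory, and the Kawamata decomposition is a red herring. The point you are missing is that in the union $\bigcup_{Y_U}Y'_U$, the model $Y_U$ is allowed to equal $X_U$ \emph{as a variety}, equipped with a nontrivial birational map to $X_U$. Concretely: given your section $\sigma$ and reference section $e$, the element $g\in P(U)$ \emph{is} a birational translation automorphism $f_g:X_U\dashrightarrow X_U$ by \eqref{eq:etale local section of birational translation automorphism space}. Take $Y_U\coloneqq X_U$ with this birational map. Then $Y'_U=X'_U$ as a scheme, but its open immersion into $X^n_U$ is the standard one composed with translation by $g$; in other words, its image is precisely $g\cdot X'_U$, which contains $\sigma=g\cdot e$. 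That is the entire argument --- no flops, no tracking through intermediate models. Your attempt to locate $Y_U$ among the intermediate $X^{(i)}$ in the Kawamata chain would require showing $g\cdot X'_U=(X^{(i)})'_U$ for some $i$, which is both harder and unnecessary.
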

	
	To talk about the finite type and separatedness of the N\'eron models $P$ and $X^n$, notice first that $P$ is of finite type (resp. separated) if and only if $X^n$ is. It is relatively easy to see the implications: $X' = X^n$ $\Longrightarrow$ $X^n$ is of finite type $\Longrightarrow$ $X^n$ is separated. This motivates us to consider the following.
	
	\begin{definition} \label{def:torsion locus}
		Define the following three subsets of $B$:
		\begin{align*}
			B_{\tor} &= \{ b \in B : X'_b = X^n_b \} ,\\
			B_{\ft} &= \mbox{maximal Zariski open subset of } B \mbox{ over which } X^n \mbox{ is of finite type} ,\\
			B_{\sep} &= \mbox{maximal Zariski open subset of } B \mbox{ over which } X^n \mbox{ is separated} .
		\end{align*}
	\end{definition}
	
	Clearly, $B_{\tor} = B$ (resp. $B_{\ft} = B$ and $B_{\sep} = B$) if and only if $X'$ is a $P$-torsor (resp. $X^n$ is of finite type and separated).
	
	\begin{proposition} \label{prop:subloci of B}
		Keep the notations and assumptions of \Cref{thm:Neron model of fibration}. Then
		\begin{enumerate}
			\item $B_{\tor}$ is the complement of a finite union of certain irreducible components of the $\delta$-loci $D_i$'s. In particular, it is Zariski open in $B$.
			
			\item $B_1 \subset B_{\tor} \subset B_{\ft} \subset B_{\sep} \subset B$.
			
			\item Assume $\dim X = 2n = 4$. Then the following are equivalent:
			\begin{enumerate}
				\item $P^{\tau} = P$, i.e., every (local) birational translation automorphism is numerically trivial.
				\item $X' = X^n$, i.e., $X'$ is a $P$-torsor.
				\item $X^n \to B$ is of finite type.
				\item $X^n \to B$ is separated.
			\end{enumerate}
			In particular, $B_{\tor} = B_{\ft} = B_{\sep}$.
			
			\item $B_1 \subset B_{\tor}$ and $B_{\sep} \subset B$ may be strict.
		\end{enumerate}
	\end{proposition}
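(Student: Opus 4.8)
The plan is to reduce everything to two elementary implications together with the flop analysis of \S\ref{sec:Neron model of fibration codimension 1}. The elementary implications, valid in any dimension, are: \textbf{(i)} if $X' = X^n$ over an open $U \subseteq B$ then $X^n_{|U}$ is an open subscheme of the projective morphism $X \to B$, hence of finite type; and \textbf{(ii)} if $X^n_{|U}$ is of finite type then the group space $P_{|U}$ it is a torsor under is a finite type smooth commutative group space over the normal base $U$, hence aligned by \Cref{prop:numerically trivial group is aligned}, and since it is weakly N\'eron under $B_0\cap U \subseteq U$ with $P_0$ separated (an abelian scheme), \Cref{prop:aligned subgroup in weak Neron model is separated} applied to $H = P_{|U}$ forces $P_{|U}$, hence $X^n_{|U}$, to be separated; one also reads off from (ii) that $X' = X^n$ over $U$ makes $\pi_0(P)_{|U}$ quasi-finite, hence torsion, i.e. $P^\tau = P$ over $U$. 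These give the inclusions $B_{\tor} \subseteq B_{\ft} \subseteq B_{\sep} \subseteq B$ of (2) at once (restriction of a N\'eron model to an open subset is again a N\'eron model there, directly from the mapping property), and $B_1 \subseteq B_{\tor}$ follows because $X^n_{|B_1}$ and $X'_1$ are both N\'eron models of $X_0 \to B_0$ over $B_1$ — the first by restricting $X^n$, the second by \Cref{thm:Neron model of fibration codimension 1} — hence equal by uniqueness.

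For (1), I would start from $X^n = \bigcup_{Y_U} Y'_U$ (\Cref{thm:Neron model of fibration}) and from $B_1 \subseteq B_{\tor}$, which confines $X^n \setminus X'$ to lie over $D_2$. Decomposing each birational map $X_U \dashrightarrow Y_U$ into flops via \cite[Theorem 1]{kaw08} and invoking \Cref{prop:exceptional locus of flop}, the exceptional locus of each flop is a union of components $W$ with $\pi(W)$ an irreducible component of $D_{d}$ and $d = \codim W \in \{2,\dots,n\}$; using the $P^{\circ\circ}$-equivariance of the flops (\Cref{prop:birational map is equivariant}) together with the $\delta$-regular orbit structure, the part of $Y'_U$ that is new relative to $X'$ over such a component $Z$ is a $P^{\circ\circ}$-invariant subset that, once nonempty, covers all of $Z$. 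Since only finitely many irreducible components occur among the $D_d$ ($2 \le d \le n$), $B \setminus B_{\tor}$ is a finite union of such components, so $B_{\tor}$ is Zariski open.

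For (3), assume $\dim X = 4$, so $n = 2$, $B\setminus B_1 = D_2$ is a finite set of closed points, and by the codimension bound $d = n = 2$ in \Cref{prop:exceptional locus of flop} together with the $\PP^n$-contraction theorems \cite{wie-wis03, ame-ver24}, every flop is a Mukai flop of a $\PP^2$ contracted to a point. Then (b)$\Rightarrow$(c)$\Rightarrow$(d) and (b)$\Rightarrow$(a) are the elementary observations (i)--(ii), and (a)$\Rightarrow$(d) holds because $P = P^\tau$ is separated ($P^\tau$ is aligned by \Cref{prop:numerically trivial group is aligned} and $P^\tau_0 = P_0$ is separated, so \Cref{prop:aligned subgroup in weak Neron model is separated} applies). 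The crucial implication is (d)$\Rightarrow$(b): if $X^n$ is separated but $X'_b \subsetneq X^n_b$ for some $b\in D_2$, then a point of $X^n_b$ outside $X'_b$ lies in the interior of a flopped $\check\PP^2$ inside some model $Y_U$, which must therefore be a \emph{reduced} component of $(Y_U)_b$ (otherwise $\tau$ is singular along it, so it is not in $Y'$); by \Cref{lem:Mukai flop of Pn} this reducedness propagates one step back in the flop chain, producing smooth symplectic Lagrangian fibrations $X^{(i)}_U \dashrightarrow X^{(i+1)}_U$ with a reduced $\PP^2 \subset X^{(i)}_b$ and a reduced $\check\PP^2 \subset X^{(i+1)}_b$, whose interiors lie in $(X^{(i)})'$ and $(X^{(i+1)})'$ respectively; but then $X^n$ contains the gluing of $(X^{(i)})'$ and $(X^{(i+1)})'$ along the common open $X^{(i)}\setminus\PP^2 \cong X^{(i+1)}\setminus\check\PP^2$, and this gluing is non-separated — the two natural morphisms to $X^n$ from the common blow-up $\widetilde X$ (with exceptional divisor the incidence variety $E \subset \PP^2\times\check\PP^2$) agree off $E$ but carry $E$ to the two disjoint copies $\PP^2$ and $\check\PP^2$ — contradicting separatedness of $X^n$. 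This closes the loop (a)$\Leftrightarrow$(b)$\Leftrightarrow$(c)$\Leftrightarrow$(d), whence $B_{\tor} = B_{\ft} = B_{\sep}$. Finally, (4) is the content of \S\ref{sec:main examples}: the three examples there realize $B_1 \subsetneq B_{\tor}$ and $B_{\sep} \subsetneq B$.

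I expect the genuine work to be the implication (d)$\Rightarrow$(b) of (3): truncating the flop chain so that a single Mukai flop exhibits the reduced $\PP^2$/$\check\PP^2$ pair — this is exactly where $\dim X = 4$ (through $\codim W = n = 2$ and \Cref{lem:Mukai flop of Pn}) is indispensable and why the statement is restricted to the fourfold case — and then nailing down the non-separated subspace inside $X^n$. A secondary but real subtlety lies in (1): one must check that the locus where some $Y'_U$ adds a point fills an \emph{entire} irreducible component of $D_d$ rather than a proper closed subset of it, which is where the equivariance of \Cref{prop:birational map is equivariant} and the $\delta$-regularity of \Cref{thm:delta-regularity} enter.
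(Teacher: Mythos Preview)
Your proof is correct and follows essentially the same route as the paper: flop decomposition via \cite{kaw08} and \Cref{prop:exceptional locus of flop} for (1), the chain of elementary implications through alignment (\Cref{prop:numerically trivial group is aligned}, \Cref{prop:aligned subgroup in weak Neron model is separated}) for (2), the Mukai flop analysis for (3), and the examples of \S\ref{sec:main examples} for (4). For (3), your cycle of implications is exactly the one the paper proves in \Cref{prop:equivalence of Neron mapping property of X'}.

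The one substantive variation is in (d)$\Rightarrow$(b). The paper, after isolating a single Mukai flop $X^{(i)} \dashrightarrow X^{(i+1)}$ with both $\PP^2$ and $\check\PP^2$ reduced, picks an \'etale local section $a : B \to (X^{(i)})'$ through the reduced $\PP^2$, observes that the closure of the induced rational section of $(X^{(i+1)})'$ has a $\PP^1$-fiber over $b$ meeting $(X^{(i+1)})'$, and then embeds both $(X^{(i)})'$ and $(X^{(i+1)})'$ into $P$ via \Cref{lem:Neron extension is an open immersion} to contradict separatedness of $P$. Your argument instead embeds both into $X^n$ (via \Cref{thm:Neron model of fibration}(2)) and uses the two blow-down maps from $\widetilde X$ to exhibit two morphisms agreeing off $E$ but disagreeing on it. Both are valid; the paper's section argument is slightly cleaner because it avoids having to track the domains of definition of the two partial morphisms $\widetilde X \dashrightarrow X^n$.

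One caution on (1): your proposed justification that the ``new'' locus fills an entire irreducible component $Z$ of $D_d$ via $P^{\circ\circ}$-invariance is fiberwise, not basewise --- it tells you the new part of $Y'_U$ over a single $z \in Z$ is $P^{\circ\circ}_z$-invariant, not that it spreads to neighboring points of $Z$. The paper's proof is equally terse here (it simply asserts $T \subset B \setminus B_{\tor}$ because the flop in the decomposition of the birational translation automorphism occurs over all of $T$), so this is not a discrepancy between the two, but the mechanism you name is not the one doing the work.
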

	
	We expect \Cref{prop:subloci of B}(3) to hold in arbitrary dimensions, and therefore the sequence \eqref{eq:inclusions of group spaces} will be useful to understand the behavior of the N\'eron models $X^n$ and $P$. We provide three examples in \S \ref{sec:main examples} for which we can calculate all four groups in \eqref{eq:inclusions of group spaces}. In the first two examples, we have $P^{\tau} = P$ and moreover $X'$ is a $P$-torsor. In the last example, we have $P^{\tau} \subsetneq P$ and moreover $X'$ is not a $P$-torsor.
	
	Though $X'$ fails to be a $P$-torsor in general, one may still hope whether there \emph{exists} a smooth group space $G \to B$ making $X'$ a $G$-torsor. Unfortunately \S \ref{ex:Beauville-Mukai system} provides a counterexample for this statement, too:
	
	\begin{proposition} \label{prop:X' is not G-torsor}
		There is an example of a Lagrangian fibration $\pi : X \to B$ with surjective $\pi'$ such that $X'$ is not a $G$-torsor for every smooth group space $G \to B$.
	\end{proposition}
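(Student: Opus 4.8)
The plan is to take for $\pi\colon X\to B$ the Beauville--Mukai system analyzed in \S\ref{ex:Beauville-Mukai system} (so $\pi'$ is surjective) and to derive a contradiction from the assumption that $X'$ is a $G$-torsor for \emph{some} smooth group space $G\to B$. The only input I would draw from the example is geometric: there is a point $b_0\in B$, corresponding to a reducible curve, over which the smooth locus $X'_{b_0}$ of the fibre is the finite union of the Picard schemes $\operatorname{Pic}^{\vec d}$ ranging over the polarization-balanced (i.e.\ stable) multidegrees $\vec d$, and this set of multidegrees is an ``interval'' inside the $\pi_0(P_{b_0})$-torsor of all multidegrees which is \emph{not} a coset of any subgroup of $\pi_0(P_{b_0})$. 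In particular, for no choice of base point is $X'_{b_0}$ an open subgroup of the fibre $P_{b_0}$ of the birational translation automorphism space. Everything else is general N\'eron-model bookkeeping.

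So suppose $X'$ is a $G$-torsor. First I would record that $G\to B$ is automatically smooth, separated, and of finite type: these properties are \'etale-local on $B$, and \'etale-locally $X'\cong G$, while $X'\to B$ is separated (as $X$ is projective over $B$ and $X'$ is open in $X$) and of finite type. Over $B_0$ the $G_0$-action is simply transitive on $X'_0=X_0$, a torsor under the abelian scheme $P_0=\operatorname{Aut}^\circ_{\pi_0}$, so $G_0$ is canonically the neutral component of $\underline{\operatorname{Aut}}_{B_0}(X_0)$, i.e.\ $G_0=P_0$. Next, by \Cref{thm:Neron model of fibration codimension 1}, $X'|_{B_1}$ is a $P^a|_{B_1}$-torsor and $P^a|_{B_1}=P|_{B_1}$ is a N\'eron model of $P_0$ over $B_1$; hence the N\'eron mapping property extends $\operatorname{id}\colon G_0\to P_0$ to a homomorphism $G_1\to P|_{B_1}$, whose kernel is a separated \'etale group space ($P|_{B_1}$ being quasi-projective, and $G_1$ separated) that is trivial over $B_0$, hence trivial; so it is an open immersion $G_1\hookrightarrow P|_{B_1}$. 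Now apply \Cref{prop:extension of etale homomorphism}(2), which is valid because $P$ is a N\'eron model of $P_0$ over $B$ hence also over $B_1\subset B$, the base $B$ satisfies $(\mathrm{S_2})$, the complement $B\setminus B_1$ has codimension $\ge 2$ by the $\delta$-regularity of \Cref{thm:delta-regularity}, and $G$ is separated: the open immersion $G_1\hookrightarrow P|_{B_1}$ extends to an open immersion of group spaces $G\hookrightarrow P$ over $B$.

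Finally I would compare the two torsors. Since $X^n$ contains $X'$ as an open subscheme (\Cref{thm:Neron model of fibration}(1)) and over $B_0$ both torsor structures restrict to $X_0$ as a $P_0$-torsor, the $G$-torsor action $G\times_B X'\to X'\hookrightarrow X^n$ and the restriction along $G\hookrightarrow P$ of the $P$-action $P\times_B X^n\to X^n$ agree over $B_0$; by the uniqueness part of the N\'eron mapping property of $X^n$ (applied to the smooth test space $G\times_B X'$) they agree over all of $B$. Consequently, for every $b\in B$ the fibre $X'_b$ is a $G_b$-orbit inside the $P_b$-torsor $X^n_b$, so after trivializing $X^n_b\cong P_b$ at a point of $X'_b$ it is exactly the open subgroup $G_b\subseteq P_b$. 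Taking $b=b_0$ contradicts the description of $X'_{b_0}$ recalled above, so no such $G$ exists. I expect the only genuine difficulty to lie in the explicit computation in \S\ref{ex:Beauville-Mukai system}: identifying $X'_{b_0}$, $X^n_{b_0}$, $(P_{b_0})^\circ$, and $\pi_0(P_{b_0})$ for a reducible nodal curve on a K3 surface, and verifying that the locus of stable multidegrees is not a subgroup-coset. The general steps are routine, modulo the minor kernel- and density-type verifications indicated above.
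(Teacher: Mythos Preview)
Your strategy is essentially the paper's: show that any such $G$ embeds as an open subgroup of $P$ (this is \Cref{lem:G-torsor}), then contradict the specific geometry of the Beauville--Mukai fibre. Two points of comparison are worth noting.

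First, the paper packages the contradiction more cleanly by observing that since $G$ is of finite type, every $\pi_0(G_b)$ is finite and hence lands in $\pi_0(P_b)_{\tor}$; thus $G\subset P^{\tau}$. In the example one has $P^{\tau}=P^{\circ\circ}$ (established via the infinite flop sequence and \Cref{lem:sequence of Mukai flops is not isomorphism}), so $G_b$ is connected, contradicting that $X'_b$ has three components. Your formulation (``the three components are not a coset of any subgroup of $\pi_0(P_b)$'') is equivalent once you know $\pi_0(P_b)$ is torsion-free, but you would still need exactly that computation. Your description of $X'_{b_0}$ as a union of $\operatorname{Pic}^{\vec d}$'s over stable multidegrees is not how the paper presents the fibre (the three components are two $\PP^2$'s and a quotient of a degree-$6$ del Pezzo, cf.\ the references to Mumford and Oda--Seshadri); what you actually need is just that there are three reduced components.

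Second, there is a small gap in your reduction: you assert the kernel of $G_1\to P|_{B_1}$ is \'etale, but the parenthetical justification (separatedness, quasi-projectivity) does not give this. The paper bypasses the issue by quoting \cite{moret:torsor} for the fact that two smooth group spaces making the same $X'_1$ a torsor are isomorphic over $B_1$. Alternatively, you can argue that both $G_1$ and $P_1$ have Lie algebra $\TT^*_{B_1}$ (since both are \'etale-locally isomorphic to $X'_1$), so the extended homomorphism induces an endomorphism of $\Omega_{B_1}$ which is the identity over $B_0$, hence everywhere; then the map is \'etale and your kernel argument goes through.
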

	
	The first two subsections will be devoted to the proof of the above results. Examples and applications will be presented in the next two sections \S \ref{sec:main examples}--\ref{sec:applications}. Again we will freely use the results in \S \ref{sec:group spaces} and \S \ref{sec:Neron models}. Notations and assumptions in \Cref{thm:translation automorphism scheme} and \ref{thm:Neron model of fibration codimension 1} will be used throughout.

	\subsection{The N\'eron model of the automorphism and Picard space}
		A line bundle on a regular scheme is determined by its restriction to a codimension $\ge 2$ complement. This can be interpreted as the N\'eron mapping property of the Picard space and is used in \cite[Theorem 9.5.4]{neron} and \cite[Theorem 6.2]{hol19} in the context of constructing the N\'eron model of the relative Jacobian of a flat projective family of curves.
		
		\begin{lemma} \label{lem:Picard space exists}
			There exists a well-defined relative Picard space $\Pic_{\pi} \to B$ with $\Lie \Pic_{\pi} \cong \TT^*_B$. As a result, the neutral relative Picard space $\Pic^{\circ}_{\pi} \to B$ is a smooth group space.
		\end{lemma}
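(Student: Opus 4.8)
The plan is to get representability from the general Picard space theorem (\Cref{thm:Picard space}) and then read off the Lie algebra from Matsushita's decomposition theorem, exactly as in the proof of \Cref{lem:pi is a universal fibration}. First I would check that $\pi\colon X\to B$ meets the hypotheses of \Cref{thm:Picard space}: the base $B$ is a smooth variety over $\CC$, hence a Noetherian scheme; $\pi$ is proper because it is projective; $\pi$ is flat because $B$ is smooth (equivalently, $\pi$ is equidimensional with $X$ and $B$ smooth, so miracle flatness applies); and $\pi$ is cohomologically flat in dimension $0$, i.e.\ $\pi_*\mathcal O_X^{\fppf}=\mathcal O_B^{\fppf}$, which is precisely \Cref{lem:pi is a universal fibration}. \Cref{thm:Picard space} then produces a locally separated, locally finite type, commutative group space $\Pic_\pi\to B$ together with a canonical isomorphism $\Lie\Pic_\pi\cong(R^1\pi_*\mathcal O_X)^{\fppf}$.

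Next I would identify $R^1\pi_*\mathcal O_X$. Since the symplectic form gives $\omega_X\cong\mathcal O_X$, \Cref{thm:matsushita} (equivalently the decomposition $R\pi_*\mathcal O_X\cong\bigoplus_{k=0}^{n}\Omega_B^{k}[-k]$ already invoked in the proof of \Cref{lem:pi is a universal fibration}) yields $R^1\pi_*\mathcal O_X\cong\Omega_B^{1}=\Omega_B$. Hence $\Lie\Pic_\pi\cong\Omega_B^{\fppf}$, and as recalled in \S\ref{sec:Lie algebra of P} the associated fppf sheaf $\Omega_B^{\fppf}$ is represented by the linear scheme $\TT^*_B=\Spec_B(\Sym^*T_B)$; this gives the claimed isomorphism $\Lie\Pic_\pi\cong\TT^*_B$.

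For the final assertion, $\Lie\Pic_\pi\cong\TT^*_B\to B$ is a vector bundle, hence smooth, while $B$ is smooth and in particular reduced and geometrically unibranch. Applying \Cref{prop:neutral component}(ii) to the commutative group space $\Pic_\pi\to B$ shows that its neutral component $\Pic^{\circ}_\pi\to B$ is a smooth (open and closed, normal) subgroup space, as desired. There is no serious obstacle here: every nontrivial ingredient — cohomological flatness of $\pi$ and the Hodge-module decomposition theorem of \cite{sch23} — has already been established or cited above. The only point requiring genuine care is the bookkeeping around fppf sheaves: matching the canonical description $\Lie\Pic_\pi=(R^1\pi_*\mathcal O_X)^{\fppf}$ of \Cref{thm:Picard space} with $\Omega_B^{\fppf}$ and with the linear scheme $\TT^*_B$ under the conventions fixed in \S\ref{sec:linear scheme} and \S\ref{sec:Lie algebra of P}, so that the isomorphism is compatible with the one appearing in \Cref{thm:smooth Lie algebra subscheme}.
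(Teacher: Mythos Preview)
Your proposal is correct and follows essentially the same route as the paper: representability via \Cref{thm:Picard space} using \Cref{lem:pi is a universal fibration}, identification of $R^1\pi_*\mathcal O_X\cong\Omega_B$ via Matsushita's theorem (\Cref{thm:matsushita}), and smoothness of $\Pic^{\circ}_{\pi}$ via \Cref{prop:neutral component}. The paper's proof is more terse and adds the remark that smoothness of the full $\Pic_{\pi}$ is unknown, but the substance is identical.
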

		\begin{proof}
			The existence of a group space $\Pic_{\pi}$ (\Cref{thm:Picard space}) is from the flat properness of $\pi$ together with $\pi_* \mathcal O_X^{\fppf} = \mathcal O_B^{\fppf}$ (\Cref{lem:pi is a universal fibration}). Its Lie algebra scheme can be computed from Matsushita's result $R^1\pi_*\mathcal O_X \cong \Omega_B$ \cite[Theorem 1.3]{mat05} \cite[\S 4]{sch23} (or \Cref{thm:matsushita}). The smoothness of the Lie algebra scheme implies that its neutral component $\Pic^{\circ}_{\pi}$ is a smooth group space by \Cref{prop:neutral component}. We do not know whether $\Pic_{\pi}$ is smooth.
		\end{proof}
		
		\begin{proposition} \label{prop:Neron mapping property of neutral Picard space}
			Assume $\pi' : X' \to B$ is surjective. Then the neutral Picard space $\Pic_{\pi}^{\circ}$ is N\'eron under $B_1 \subset B$.
		\end{proposition}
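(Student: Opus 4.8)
\emph{Plan of proof.} The plan is to use the classical fact that a line bundle on a regular scheme is determined by, and extends across, a closed subset of codimension $\ge 2$, and to feed this into the representability of the relative Picard functor. Note first that $\Pic^{\circ}_{\pi}\to B$ is a smooth group space by \Cref{lem:Picard space exists}, and that since $B$ is smooth — hence reduced and geometrically unibranch — and $\Lie\Pic_{\pi}$ is smooth, \Cref{prop:neutral component} shows $\Pic^{\circ}_{\pi}\subset\Pic_{\pi}$ is open \emph{and} closed. Also, for any flat $Z\to B$ the open subset $Z_1=Z\times_B B_1$ is dense with codimension $\ge 2$ complement in $Z$ (the complement of $B_1$ has codimension $\ge 2$ by $\delta$-regularity, \Cref{thm:delta-regularity}; then use \Cref{lem:density respects flat base change}), and since $\pi$ is flat and equidimensional the same holds for $X_{Z_1}\subset X_Z:=X\times_B Z$.

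First I would reduce to the case where $\pi$ has a section. Since $\pi'$ is surjective and $X,B$ are smooth, there is an étale cover $B'\to B$ over which $\pi_{B'}$ admits a section landing in $X'$ (\Cref{prop:etale local section}, \Cref{prop:section passes through regular point}); as the Picard space and its neutral component are compatible with base change — the latter because open-and-closedness is preserved — smooth descent for Néron models (\Cref{prop:Neron is local in smooth topology}) lets me assume $\pi$ carries a section $e:B\to X'$. (Strictly, after descent one verifies the property for the pulled-back $\Pic^{\circ}_{\pi}$, which is still a smooth, open-and-closed subgroup of $\Pic_{\pi}$ through the identity; the argument below uses only that.)

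Then I would verify the Néron mapping property directly. By \Cref{rmk:more assumptions in NMP} it suffices to treat quasi-affine smooth $Z\to B$ of finite type. For such $Z$, the existence of the section $e$ makes $\underline{\Pic}_{\pi}$ the rigidified relative Picard functor, so $\operatorname{Mor}_B(Z,\Pic_{\pi})=\underline{\Pic}_{\pi}(Z)$ is literally the group of line bundles on $X_Z$ rigidified along $e_Z$; the same description holds over $Z_1$. Now $Z\to B$ smooth implies $X_Z\to X$ smooth, hence $X_Z$ is smooth over $\CC$, in particular regular and locally factorial, and $X_{Z_1}\subset X_Z$ is a dense open with codimension $\ge 2$ complement by the first paragraph. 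Reflexivity of line bundles on $X_Z$ then gives that restriction
\[ \underline{\Pic}_{\pi}(Z)\ \xrightarrow{\ \sim\ }\ \underline{\Pic}_{\pi}(Z_1) \]
is bijective, i.e. $\operatorname{Mor}_B(Z,\Pic_{\pi})\xrightarrow{\sim}\operatorname{Mor}_{B_1}(Z_1,\Pic_{\pi}|_{B_1})$.

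Finally I would descend from $\Pic_{\pi}$ to $\Pic^{\circ}_{\pi}$. Given $u_1:Z_1\to\Pic^{\circ}_{\pi}|_{B_1}$, view it inside $\Pic_{\pi}$ and extend it uniquely to $\bar u:Z\to\Pic_{\pi}$ by the displayed bijection; since $\Pic^{\circ}_{\pi}\subset\Pic_{\pi}$ is open and closed, $\bar u^{-1}(\Pic^{\circ}_{\pi})\subset Z$ is open and closed and contains the dense subset $Z_1$, hence equals $Z$, so $\bar u$ factors through $\Pic^{\circ}_{\pi}$; uniqueness is inherited from the injectivity just proved (as $\operatorname{Mor}_B(Z,\Pic^{\circ}_{\pi})\hookrightarrow\operatorname{Mor}_B(Z,\Pic_{\pi})$). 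This establishes that $\Pic^{\circ}_{\pi}$ is Néron under $B_1\subset B$. I do not expect a genuine obstacle here: the geometric heart — extending line bundles across the codimension $\ge 2$ locus $X_{Z_1}\subset X_Z$ on the \emph{regular} space $X_Z$ — is the classical reflexivity argument, and the only thing requiring care is the bookkeeping (that the rigidified Picard functor is already a sheaf, that codimension and density survive the flat pullback $Z\to B$, and that the reduction to a sectioned $\pi$ via smooth descent is legitimate).
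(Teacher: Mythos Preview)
Your proof is correct and follows essentially the same approach as the paper's: reduce \'etale locally to the case where $\pi$ has a section, use that on the regular total space $X_Z$ line bundles extend uniquely across the codimension~$\ge 2$ complement of $X_{Z_1}$, deduce the N\'eron mapping property for $\Pic_{\pi}$, and then pass to the neutral component using that $\Pic^{\circ}_{\pi}\subset\Pic_{\pi}$ is open and closed. The only cosmetic difference is that the paper phrases the sectioned Picard functor as $\Pic(X_Z)/\Pic(Z)$ (via \cite[Theorem 9.2.5]{kle:picard}) rather than as rigidified line bundles, and simply notes that $\Pic^{\circ}_{\pi}$ is a connected component of $\Pic_{\pi}$ rather than invoking \Cref{prop:neutral component} explicitly.
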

		\begin{proof}
			It is enough to check the N\'eron mapping property of $\Pic^{\circ}_{\pi}$ \'etale locally in $B$ by \Cref{prop:Neron is local in smooth topology}. Since $\pi'$ is surjective, we may assume $\pi : X \to B$ has a section. Let $Z \to B$ be a smooth separated scheme of finite type (\Cref{rmk:more assumptions in NMP}). Both $Z$ and $X_Z = X \times_B Z$ are smooth varieties with dense open subsets $Z_1 \subset Z$ and $X_{Z_1} \subset X_Z$ with codimension $\ge 2$ complements. Hence the restriction homomorphisms
			\[ \Pic(X_Z) \to \Pic(X_{Z_1}), \qquad \Pic(Z) \to \Pic(Z_1) \]
			are isomorphisms. Since $\pi$ has a section, there exist isomorphisms $\Pic_{\pi}(Z) \cong \Pic(X_Z) / \Pic Z$ and $\Pic_{\pi}(Z_1) \cong \Pic(X_{Z_1}) / \Pic Z_1$ by \cite[Theorem 9.2.5]{kle:picard} (or Tag~0D28). This yields an isomorphism
			\[ \Pic_{\pi}(Z) \to \Pic_{\pi}(Z_1) ,\]
			which is the N\'eron mapping property of $\Pic_{\pi}$ under $B_1 \subset B$. Its neutral component $\Pic^{\circ}_{\pi}$ is a connected component of $\Pic_{\pi}$ so it satisfies the N\'eron mapping property, too.
		\end{proof}
		
		\begin{remark}
			In fact, the above proof shows: if a connected component $\Pic^{\alpha}_{\pi}$ of $\Pic_{\pi}$ dominates $B$, then it is N\'eron under $B_1 \subset B$.
		\end{remark}
		
		The neutral Picard space $\Pic^{\circ}_{\pi}$ is in general \emph{not} N\'eron under the extension $B_0 \subset B_1$. However, we can force it to be N\'eron by taking its separated quotient (this is the strategy in \cite[Theorem 9.5.4]{neron} and \cite[Theorem 6.2]{hol19}). Recall from the introduction of this section that we have the equality $\check P_0 = \Pic^{\circ}_{\pi_0}$.
		
		\begin{lemma} \label{lem:check P_1}
			The dual abelian scheme $\check P_0 \to B_0$ admits a separated N\'eron model $\check P_1 \to B_1$.
		\end{lemma}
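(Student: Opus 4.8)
The plan is to construct $\check P_1$ by means of \Cref{cor:Neron under quasi-projective etale homomorphism}, exploiting that $P^a_1$ is already N\'eron under $B_0\subset B_1$ by \Cref{thm:Neron model of fibration codimension 1} and is quasi-projective by \Cref{thm:translation automorphism scheme}. First, $\check P_0=\Pic^{\circ}_{\pi_0}$ is the restriction to $B_0$ of the smooth group space $\Pic^{\circ}_{\pi}|_{B_1}$ of \Cref{lem:Picard space exists}; since $\Lie\Pic_{\pi}=\TT^{*}_B$ is smooth, the strict neutral component $\Pic^{\circ\circ}_{\pi}|_{B_1}$ is well defined (\Cref{thm:strict neutral component}), and having connected fibers it is separated and of finite type by \Cref{thm:smooth group space with connected fibers}; over $B_0$ it still equals $\check P_0$ because the fibers of $\pi_0$ have connected Picard scheme. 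Thus it suffices to produce a quasi-projective \'etale homomorphism $\Pic^{\circ\circ}_{\pi}|_{B_1}\to P^a_1$ which over $B_0$ restricts to a finite \'etale isogeny $\check P_0\to P_0$.

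To build such a homomorphism I would first extend the relevant isogenies, in both directions, over the whole of $B$. A $\pi$-ample line bundle $L$ on $X$ together with the $P^a$-action on $X$ over $B$ (\Cref{thm:translation automorphism scheme}) defines the polarization homomorphism $\lambda\colon P^a\to\Pic^{\circ}_{\pi}$, $a\mapsto[t_a^{*}L-L]$, over $B$, which on strict neutral components restricts to $\lambda^{\circ\circ}\colon P^{\circ\circ}\to\Pic^{\circ\circ}_{\pi}$ over $B$. Let $\check\lambda_0\colon\check P_0\to P_0$ be the complementary isogeny of $\lambda_0$, so that $\lambda_0\check\lambda_0=[N]$ and $\check\lambda_0\lambda_0=[N]$ for some integer $N>0$. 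Since the source $\check P_0$ of $\check\lambda_0$ is the restriction of the smooth $B_1$-space $\Pic^{\circ\circ}_{\pi}|_{B_1}$, the N\'eron mapping property of $P^a_1$ extends $\check\lambda_0$ uniquely to a homomorphism $\Pic^{\circ\circ}_{\pi}|_{B_1}\to P^{\circ\circ}_1$ over $B_1$ (landing in $P^{\circ\circ}_1$ because the source has connected fibers); and since $B_1\subset B$ has codimension $\ge 2$ complement while $\Pic^{\circ\circ}_{\pi}$ is smooth with connected fibers, Raynaud's theorem \Cref{thm:Raynaud extension of homomorphism} extends it further to a homomorphism $\check\lambda^{\circ\circ}\colon\Pic^{\circ\circ}_{\pi}\to P^{\circ\circ}$ over $B$.

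The main obstacle is to prove that $\check\lambda^{\circ\circ}$ is \'etale. Both $\lambda^{\circ\circ}\check\lambda^{\circ\circ}$ and $[N]$ on $\Pic^{\circ\circ}_{\pi}$, and likewise $\check\lambda^{\circ\circ}\lambda^{\circ\circ}$ and $[N]$ on $P^{\circ\circ}$, are homomorphisms over $B$ that agree over the dense open $B_0$; since $\Pic^{\circ\circ}_{\pi}$ and $P^{\circ\circ}$ are separated and the source is reduced, the identities $\lambda^{\circ\circ}\check\lambda^{\circ\circ}=[N]$ and $\check\lambda^{\circ\circ}\lambda^{\circ\circ}=[N]$ hold over all of $B$. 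Passing to Lie algebras, which are all canonically identified with $\TT^{*}_B$ via \Cref{thm:smooth Lie algebra subscheme} and \Cref{lem:Picard space exists}, the maps $(\check\lambda^{\circ\circ})_{*}$ and $(\lambda^{\circ\circ})_{*}$ become mutually inverse up to the nonzero scalar $N$, hence automorphisms of $\TT^{*}_B$; therefore $\check\lambda^{\circ\circ}$ is \'etale by the infinitesimal criterion used in the proof of \Cref{prop:extension of etale homomorphism}. Composing its restriction over $B_1$ with the open immersion $P^{\circ\circ}_1\hookrightarrow P^a_1$ produces an \'etale homomorphism $f^a\colon\Pic^{\circ\circ}_{\pi}|_{B_1}\to P^a_1$, which is quasi-projective by \Cref{rmk:finite type separated implies quasi-projective morphism} since its source is separated and of finite type, and which restricts over $B_0$ to $\check\lambda_0$.

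I would then invoke \Cref{cor:Neron under quasi-projective etale homomorphism} with $S=B_1$, $S_0=B_0$, $H=P^a_1$, $G^a=\Pic^{\circ\circ}_{\pi}|_{B_1}$ and $f^a$ as above: it produces a N\'eron model $\check P_1\to B_1$ of $\check P_0=G^a_0$ containing $\Pic^{\circ\circ}_{\pi}|_{B_1}$ as an open subgroup space, together with a quasi-projective \'etale homomorphism $\check P_1\to P^a_1$ extending $f^a$. For separatedness I would use the construction behind that corollary, namely \Cref{thm:les of Neron models}: $\check P_1$ fits into a left exact sequence $1\to K\to\check P_1\to P^a_1$ whose kernel $K$ is the quasi-projective N\'eron model of $\ker\check\lambda_0$ (finite \'etale over $B_0$, so $K\to B_1$ is quasi-projective by \Cref{thm:Neron model of quasi-projective etale scheme}) and whose image is an open subgroup of $P^a_1$; being an extension of an open subgroup of the separated group $P^a_1$ by the separated group $K$, the identity section of $\check P_1$ is closed, so $\check P_1$ is separated. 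The only genuinely delicate step in this argument is the \'etaleness of $\check\lambda^{\circ\circ}$ in the third paragraph.
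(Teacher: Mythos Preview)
Your argument is essentially correct but follows a different path from the paper, and contains one terminological slip worth flagging.

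\textbf{The slip.} You call $\lambda\colon P^a\to\Pic^{\circ}_{\pi}$, $a\mapsto[t_a^{*}L\otimes L^{-1}]$, a ``polarization homomorphism''. The paper explicitly warns (\Cref{rmk:cubish line bundle}) that this map need \emph{not} be a group homomorphism over $B$; it is only a morphism of spaces sending the identity to the identity. Fortunately your argument survives: you only use $\lambda^{\circ\circ}$ to form the compositions $\lambda^{\circ\circ}\check\lambda^{\circ\circ}$ and $\check\lambda^{\circ\circ}\lambda^{\circ\circ}$ as \emph{morphisms}, and the density-plus-separatedness argument (\Cref{lem:morphism to separated space is determined by dense subspace}) forces these to equal $[N]$ as morphisms, which is enough to conclude that the differential $(\check\lambda^{\circ\circ})_*$ at the identity is an isomorphism. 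So replace ``homomorphisms'' by ``morphisms'' in your third paragraph and the logic stands. (The paper handles this by working directly with the conormal maps $\lambda^{*},\check\lambda^{*}\colon T_B\to T_B$ and extending the identity $\lambda_0^{*}\circ\check\lambda_0^{*}=m\cdot\id$ from $T_{B_0}$ to $T_{B_1}$ via local freeness.)

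\textbf{Comparison of approaches.} The paper constructs $\check P_1$ as the \emph{separated quotient} of $\Pic^{\circ}_{\pi_1}$: once $\check\lambda_1\colon\Pic^{\circ}_{\pi_1}\to P^a_1$ is shown to be \'etale, \Cref{prop:aligned under etale homomorphism} gives alignment of $\Pic^{\circ}_{\pi_1}$, whence a separated quotient $\check P_1$ exists by \Cref{prop:separated quotient}; the N\'eron mapping property is then verified directly by the surjectivity of $\Pic(X_Z)\to\Pic(X_{Z_0})$ for smooth $Z$. You instead feed the \'etale homomorphism $\Pic^{\circ\circ}_{\pi}|_{B_1}\to P^a_1$ into \Cref{cor:Neron under quasi-projective etale homomorphism} and read off the N\'eron model directly. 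Your route is slicker once that corollary is available, and separatedness is in fact immediate from the corollary's conclusion that $f\colon\check P_1\to P^a_1$ is quasi-projective (hence separated), composed with the separated $P^a_1\to B_1$---your torsor-by-$K$ argument works but is more than needed. The paper's route has the advantage of explicitly identifying $\check P_1$ as a quotient of $\Pic^{\circ}_{\pi_1}$, which is used downstream (e.g.\ in the proof of \Cref{thm:Neron model of abelian schemes}(1) to recognize $\check P^{\circ\circ}_1=\Pic^{\circ\circ}_{\pi_1}$, and in \Cref{prop:dual Neron for geometrically integral fiber}).
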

		\begin{proof}
			We construct $\check P_1 \to B_1$ as a separated quotient of $\Pic^{\circ}_{\pi_1} \to B_1$. Take an arbitrary polarization $\check \lambda_0 : \check P_0 \to P_0$, a finite \'etale homomorphism of abelian schemes over $B_0$. By the N\'eron mapping property of $P^a$ under $B_0 \subset B_1$ (\Cref{thm:Neron model of fibration codimension 1}), the polarization uniquely extends to a homomorphism
			\[ \check \lambda_1 : \Pic^{\circ}_{\pi_1} \to P^a_1 .\]
			If $\check \lambda_1$ is \'etale, then $\Pic^{\circ}_{\pi_1} \to B_1$ is aligned by \Cref{prop:aligned under etale homomorphism}.\footnote{We caution the reader that $\Pic^{\circ}_{\pi}$ is not aligned over the entire base $B$ in general.} We can therefore consider its separated quotient (\Cref{prop:separated quotient})
			\[ \Pic^{\circ}_{\pi_1} \to \check P_1 .\]
			
			Recall from \Cref{thm:smooth Lie algebra subscheme} and \ref{lem:Picard space exists} that we have $\Lie P^a \cong \Lie \Pic^{\circ}_{\pi} \cong \TT^*_B$. To show $\check \lambda_1$ is \'etale, it is thus enough to show $\Lie \check \lambda_1 : \TT^*_{B_1} \to \TT^*_{B_1}$ is an isomorphism. Equivalently, we may show the coherent sheaf homomorphism $\check \lambda_1^* : T_{B_1} \to T_{B_1}$ is an isomorphism. Choose a polarization $\lambda_0 : P_0 \to \check P_0$ of $P_0$ so that the compositions
			\begin{equation} \label{eq:composition of polarization and dual polarization}
			\begin{tikzcd}
				P_0 \arrow[r, "\lambda_0"] & \check P_0 \arrow[r, "\check \lambda_0"] & P_0
			\end{tikzcd} ,\qquad \begin{tikzcd}
				\check P_0 \arrow[r, "\check \lambda_0"] & P_0 \arrow[r, "\lambda_0"] & \check P_0
			\end{tikzcd}
			\end{equation}
			are both multiplication endomorphisms by $m$ for a positive integer $m$ (e.g., dual polarization in \cite[\S 14.4]{bir-lange:abelian}). Replacing $\lambda_0$ by $2\lambda_0$ if necessary, we may assume $\lambda_0$ is a polarization obtained by a line bundle $L$ on $X$ (that is $\pi$-ample over $B_0$) \cite[\S I.1.6]{fal-chai:abelian}. Let us explicitly define an extension of $\lambda_0$ over $B$ by
			\begin{equation} \label{eq:polarization morphism}
				\lambda : P^a \to \Pic^{\circ}_{\pi} ,\qquad f \mapsto [f^* L \otimes L^{-1}] .
			\end{equation}
			Though $\lambda$ may \emph{not} be a group homomorphism (see \Cref{rmk:cubish line bundle}), it sends the identity section to the identity section. Hence we can consider the homomorphism between their conormal sheaves $\lambda^* : T_B \to T_B$. Over $B_0$, \eqref{eq:composition of polarization and dual polarization} says the compositions $\lambda_0^* \circ \check \lambda_0^*$ and $\check \lambda_0^* \circ \lambda_0^*$ are both multiplication by $m$ endomorphism of $T_{B_0}$. Hence both $\lambda_1^* \circ \check \lambda_1^*$ and $\check \lambda_1^* \circ \lambda_1^*$ are forced to be $m \cdot \id$ on $T_{B_1}$ because $T_{B_1}$ is locally free, proving that $\check \lambda_1^*$ is an isomorphism.
			
			It remains to show the N\'eron mapping property of $\check P_1 \to B_1$. The uniqueness part is clear from its separatedness, so let us prove the existence using the same strategy to the proof of \Cref{prop:Neron mapping property of neutral Picard space}. Assume without loss of generality that $\pi$ has a section and let $Z \to B_1$ be a smooth separated scheme of finite type. The base change $X_Z$ is a smooth variety so the localization exact sequence yields surjective homomorphisms $\Pic(X_Z) \twoheadrightarrow \Pic(X_{Z_0})$ and $\Pic Z \twoheadrightarrow \Pic Z_0$. This yields a surjective restriction map $\Pic^{\circ}_{\pi_1}(Z) \twoheadrightarrow \check P_0 (Z_0)$. But this map factors through
			\[ \Pic^{\circ}_{\pi_1}(Z) \to \check P_1(Z) \to \check P_0 (Z_0) ,\]
			so the latter homomorphism $\check P_1(Z) \to \check P_0 (Z_0)$ is a surjection.
		\end{proof}
		
		\begin{proof} [Proof of \Cref{thm:Neron model of abelian schemes}]
			(1) The translation automorphism scheme $P^a \to B$ is of finite type. Hence there exists a sufficiently divisible integer $m$ such that the multiplication endomorphism $[m]$ on $P^a$ has the image in its strict neutral component:
			\begin{equation} \label{eq:Neron extension of P0:m}
				[m] : P^a \to P^{\circ\circ} .
			\end{equation}
			
			Take an arbitrary polarization $\lambda_0 : P_0 \to \check P_0$ and extend it to an \'etale homomorphism $\lambda_1 : P^{\circ\circ}_1 \to \check P_1$ using the N\'eron mapping property of $\check P_1$ (\Cref{lem:check P_1}). Since $P^{\circ\circ}$ has connected fibers, the image of $\lambda_1$ is contained in the strict neutral component $\check P^{\circ\circ}_1$. But recall that $\check P_1$ was the separated quotient of $\Pic^{\circ}_{\pi_1}$ by construction, so we have $\check P^{\circ\circ}_1 = \Pic^{\circ\circ}_{\pi_1} \subset \Pic^{\circ}_{\pi_1}$ from \Cref{prop:separated quotient}. This means we have a homomorphism $\lambda_1 : P^{\circ\circ}_1 \to \Pic^{\circ}_{\pi_1}$ over $B_1$. Now by the N\'eron mapping property of $\Pic^{\circ}_{\pi}$ under $B_1 \subset B$ (\Cref{prop:Neron mapping property of neutral Picard space}) or Raynaud's extension \cref{thm:Raynaud extension of homomorphism}, this extends to a homomorphism
			\begin{equation} \label{eq:Neron extension of P0:lambda}
				\lambda : P^{\circ\circ} \to \Pic^{\circ}_{\pi} .
			\end{equation}
			
			The composition of \eqref{eq:Neron extension of P0:m} and \eqref{eq:Neron extension of P0:lambda} is
			\begin{equation} \label{eq:Neron extension of P0:lambda m}
				f = \lambda \circ [m] : P^a \to \Pic^{\circ}_{\pi} ,
			\end{equation}
			which is a quasi-projective \'etale homomorphism of smooth commutative group spaces (\Cref{rmk:finite type separated implies quasi-projective morphism}). \Cref{cor:Neron under quasi-projective etale homomorphism} shows $P^a_1 \to B_1$ has a N\'eron model $P \to B$ that contains $P^a$ as an open subgroup scheme. Since $P^a$ was already N\'eron under $B_0 \subset B_1$, this shows $P$ is N\'eron under $B_0 \subset B$.
			
			\bigskip
			
			(2) Take a polarization $\check \lambda_0 : \check P_0 \to P_0$. By the N\'eron mapping property of $P$, it uniquely extends to an \'etale homomorphism $\lambda : \Pic^{\circ\circ}_{\pi} \to P$. Since $\Pic^{\circ\circ}_{\pi} \to B$ is separated and of finite type by \Cref{thm:smooth group space with connected fibers}, $\lambda$ is a quasi-projective \'etale homomorphism. Again applying \Cref{cor:Neron under quasi-projective etale homomorphism}, $\check P_0 \to B_0$ has a N\'eron model $\check P \to B$ (whose strict neutral component is $\Pic^{\circ\circ}_{\pi}$). The restriction of $\check P$ over $B_1$ coincides with $\check P_1$ in \Cref{lem:check P_1}.
			
			\bigskip
			
			(3) Extension of $\lambda_0$ to $\lambda : P \to \check P$ is done by the N\'eron mapping property of $\check P$. It is an \'etale homomorphism since its extension over $B_1$ is \'etale as in the proof of \Cref{lem:check P_1} (and then we can use \Cref{prop:extension of etale homomorphism}). Its kernel $K = \ker \lambda$ is N\'eron under $B_0 \subset B$ by \Cref{prop:ses of Neron models}. Since $K_0 = \ker \lambda_0$ is a finite \'etale group scheme over $B_0$, the quasi-projectiveness of $K$ follows from \Cref{thm:Neron model of quasi-projective etale scheme}.
			
			\bigskip
			
			(4) $P$ is $\delta$-regular since it contains a $\delta$-regular group scheme $P^a$ (\Cref{thm:delta-regularity}) as an open subgroup scheme. To show the $\delta$-regularity of $\check P$, it is enough to show their $\delta$-functions agree. Take quasi-finite homomorphisms $\lambda : P \to \check P$ and $\check \lambda : \check P \to P$ such that $\lambda \circ \check \lambda$ and $\check \lambda \circ \lambda$ are multiplication by $m$ endomorphisms. For each $b \in B$, the homomorphism $\lambda_b : P^{\circ\circ}_b \to \check P^{\circ\circ}_b$ is quasi-finite and hence finite by \cite[Lemma 7.3.1]{neron}. Therefore, it induces a finite homomorphism $\lambda_b : (P^{\circ\circ}_b)_{\operatorname{aff}} \to (\check P^{\circ\circ}_b)_{\operatorname{aff}}$ between their maximal affine subgroups. Same argument applies to $\check \lambda_b$ and shows $(P^{\circ\circ}_b)_{\operatorname{aff}}$ and $(\check P^{\circ\circ}_b)_{\operatorname{aff}}$ are isogenous. In particular, their dimensions are the same.
		\end{proof}
		
		\begin{remark} \label{rmk:cubish line bundle}
			Fix a line bundle $L$ on $X$ and define a morphism
			\begin{equation}
				\lambda : P^a \to \Pic^{\circ}_{\pi}, \qquad f \mapsto \big[ f^* L \otimes L^{-1} \big] . \tag{\eqref{eq:polarization morphism} restated}
			\end{equation}
			Over $B_0$, it is a homomorphism $\lambda_0 : P_0 \to \check P_0$ by the theory of abelian schemes. When $\lambda$ is a homomorphism over the full base $B$, we say that $L$ \emph{satisfies the theorem of the square} \cite[\S 6.3]{neron}.\footnote{\cite[Proposition 5.12]{ari-fed16} calls such a line bundle $L$ a \emph{cubish line bundle}.} Since $\Pic^{\circ}_{\pi}$ is N\'eron under $B_1 \subset B$, the morphism $\lambda$ is a homomorphism over $B$ iff it is over $B_1$. We do not know how to check this even over $B_1$.
			
			To overcome this issue, we introduced the multiplication map $[m]$ in the proof of \Cref{thm:Neron model of abelian schemes} and considered instead
			\[ \lambda \circ [m] : P^a \to \Pic^{\circ}_{\pi}, \qquad f \mapsto \big[ (f^*)^{\circ m} (L) \otimes L^{-1} \big] \]
			as in \eqref{eq:Neron extension of P0:lambda m}. This is a homomorphism for every line bundle $L$.
		\end{remark}
		
		We have constructed $\check P$ as a N\'eron model of the separated quotient $\Pic^{\circ}_{\pi_1} \to \check P_1$ over $B_1$. Denote by $\check E_1$ its kernel, or the closure of the identity section of $\Pic^{\circ}_{\pi_1}$, so that we have a short exact sequence $0 \to \check E_1 \to \Pic^{\circ}_{\pi_1} \to \check P_1 \to 0$ over $B_1$. Extend it over $B$ using the N\'eron mapping property to yield a left exact sequence
		\[ 0 \to \check E \to \Pic^{\circ}_{\pi} \to \check P ,\]
		where $\check E$ is a N\'eron model of $\check E_1$ by \Cref{prop:ses of Neron models} (Note: $\check E$ is \emph{not} the closure of the identity section of $\Pic^{\circ}_{\pi}$). We do not know whether this sequence is right exact, but there is an easy situation when this is the case:
		
		\begin{proposition} \label{prop:dual Neron for geometrically integral fiber}
			Assume that $\pi$ has integral fibers over $B_1$. Then $\Pic^{\circ}_{\pi} = \check P$.
		\end{proposition}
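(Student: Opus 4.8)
The plan is to reduce everything to the base $B_1$ and then invoke the uniqueness of N\'eron models. The key claim I would establish is that, under the integrality hypothesis, the neutral Picard space $\Pic^{\circ}_{\pi_1} \to B_1$ is already separated, so that it coincides with its separated quotient $\check P_1$ constructed in \Cref{lem:check P_1}. Granting this, both $\Pic^{\circ}_{\pi}$ and $\check P$ are N\'eron models under $B_1 \subset B$ of the same group space over $B_1$: the former by \Cref{prop:Neron mapping property of neutral Picard space} (using that $\pi'$ is surjective), the latter because $\check P$ is N\'eron under $B_0 \subset B$ by \Cref{thm:Neron model of abelian schemes}(2) while $\check P|_{B_1} = \check P_1$ is N\'eron under $B_0 \subset B_1$ by \Cref{lem:check P_1}, so that $\check P$ is N\'eron under $B_1 \subset B$ by the ``sequence of base extensions'' proposition. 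The uniqueness of the N\'eron model then yields a canonical isomorphism $\Pic^{\circ}_{\pi} \cong \check P$ restricting to the identity over $B_1$, hence over $B_0$, i.e., an equality of extensions of $\check P_0 = \Pic^{\circ}_{\pi_0}$.

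To prove that $\Pic^{\circ}_{\pi_1} \to B_1$ is separated, I would note that the neutral component is open and closed in $\Pic_{\pi_1}$, so it suffices to prove $\Pic_{\pi_1} \to B_1$ is separated; and this is the classical separatedness of the relative Picard space for a proper flat family with geometrically integral fibers (cohomological flatness in dimension $0$ being automatic, as $h^0(\mathcal O_{X_s}) = 1$ on every fiber). The underlying mechanism is the valuative criterion: for a trait $\Delta = \Spec R \to B_1$ with closed point over $b$ and fraction field $K$, a line bundle $L$ on $X_\Delta = X \times_{B_1} \Delta$ that is trivial on $X_K$ must be of the form $\mathcal O_{X_\Delta}(D)$ with $D$ supported on the integral prime divisor $X_b$, hence $D = m\,X_b$; since $X_b$ is principal (the zero locus of a uniformizer of $R$), $L$ is trivial. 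Once $\Pic^{\circ}_{\pi_1}$ is separated, its identity section is a closed immersion, so the closure $\check E_1$ of the identity section --- which is the kernel of the separated quotient $\Pic^{\circ}_{\pi_1} \to \check P_1$ --- is trivial, and hence $\Pic^{\circ}_{\pi_1} = \check P_1$.

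The only genuinely substantive ingredient is the separatedness of the relative Picard space over the locus of integral fibers; everything else is N\'eron-model bookkeeping on top of \Cref{lem:check P_1}, \Cref{prop:Neron mapping property of neutral Picard space}, and \Cref{thm:Neron model of abelian schemes}. The one point that needs a little care is the reading of ``$\pi$ has integral fibers over $B_1$'': it should mean geometric integrality of $X_b$ for every $b \in B_1$, which is automatic at closed points since we work over $\CC$ and extends to the remaining points of each irreducible component of $B_1$ by flatness together with the openness of the geometrically-integral locus in a proper flat family. In the valuative-criterion step one does not need $X_\Delta$ itself to be regular: reducedness and irreducibility of the special fibre $X_b$ suffice for the divisor-class argument, and both hold by hypothesis.
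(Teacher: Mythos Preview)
Your proposal is correct and follows essentially the same route as the paper: separatedness of $\Pic_{\pi_1}$ under the integral-fiber hypothesis (for which the paper simply cites \cite[Theorem 9.4.8]{kle:picard} or \cite[Theorem 8.2.1]{neron}) gives $\check E_1 = 0$ and hence $\Pic^{\circ}_{\pi_1} = \check P_1$, and then uniqueness of the N\'eron model under $B_1 \subset B$ finishes. Your expanded justification that $\check P$ is N\'eron under $B_1 \subset B$ via the sequence-of-base-extensions proposition, and your valuative-criterion sketch for separatedness, are exactly the details the paper leaves implicit.
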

		\begin{proof}
			The integral fiber assumption implies that $\Pic_{\pi_1} \to B_1$ is a separated group scheme, thanks to \cite[Theorem 9.4.8]{kle:picard} or \cite[Theorem 8.2.1]{neron}. This means $\check E_1 = 0$ and $\Pic^{\circ}_{\pi_1} = \check P_1$. Since both $\Pic^{\circ}_{\pi}$ and $\check P$ are N\'eron under $B_1 \subset B$, the claim follows from the uniqueness of the N\'eron model.
		\end{proof}

	\subsection{Proof of the main result}
		With the N\'eron model $P$ at hand, we can construct the N\'eron model $X^n$ in two different ways: either a $P$-torsor or the union of smooth loci for all birational models of the Lagrangian fibration.
		
		\begin{proof} [Proof of \Cref{thm:Neron model of fibration}]
			(1) Recall from \Cref{thm:Neron model of fibration codimension 1} that $X'_1$ is a $P_1$-torsor over $B_1$. This yields a cohomology class $[X_1'] \in H^1_{\acute et} (B_1, P)$. The restriction map
			\[ H^1_{\acute et} (B, P) \to H^1_{\acute et} (B_1, P) \]
			is injective and contains $X'_1$ in its image by \Cref{prop:Neron torsor restriction}. Hence $X'_1 \to B_1$ uniquely extends to a $P$-torsor $X^n \to B$. It is a N\'eron model as desired by \Cref{prop:torsor is Neron iff group is}. The equality $X'_1 = X^n_1$ over $B_1$ uniquely extends to a moprhism $X' \to X^n$, which is an equivariant open immersion by \Cref{lem:Neron extension is an open immersion}.
			
			\bigskip
			
			(2) To ease notation, we write $U = B$ and shrink $B$ whenever needed. Say $\tau : Y \to B$ is a Lagrangian fibered smooth symplectic variety and $f : X \dashrightarrow Y$ is a birational map. By \Cref{thm:birational map of fibration codimension 1}, $f_1 : X_1 \to Y_1$ is an isomorphism over $B_1$ and hence induces an isomorphism between their N\'eron models $f : X^n \to Y^n$ over $B$. We thus have an open immersion $f^{-1} : Y' \hookrightarrow Y^n \cong X^n$ and hence an open immersion
			\begin{equation} \label{eq:thm:Neron model of fibration}
				\bigcup_{f : X \dashrightarrow Y} f^{-1}(Y') \subset X^n .
			\end{equation}
			
			Fix a reference section $e : B \to X'$. Let $a : B \to X^n$ be an arbitrary (local) section. Considering $e$ as a section of $X^n$, the difference between the two sections define a section $f = e - a : B \to P$ because $X^n$ is a $P$-torsor. Recalling that $P$ is the birational translation automorphism space, this yields a birational translation automorphism $f : X \dashrightarrow X$. We have constructed a commutative diagram
			\[\begin{tikzcd}
				B \arrow[r, dashed, "a"] \arrow[rd, bend right=20, "e"'] & X' \arrow[d, dashed, "f"] \arrow[r, phantom, "\subset"] & X \arrow[d, dashed, "f"] \\
				& X' \arrow[r, phantom, "\subset"] & X
			\end{tikzcd}.\]
			An alternative interpretation of this diagram is that the birational automorphism $f : X \dashrightarrow Y \coloneq X$ to a Lagrangian fibered symplectic variety $\tau \coloneq \pi : Y \to B$ sends a rational section $a$ of $X'$ to a regular section $e$ of $Y'$. In other words, we have a commutative diagram
			\[\begin{tikzcd}[sep=tiny]
				& X' \arrow[dd, dashed, "f"] \arrow[rr, hook] & & X^n \arrow[dd, "f", "\cong"'] \\
				B \arrow[ru, dashed, "a"] \arrow[rd, "e"'] \\
				& Y' \arrow[rr, hook] & & Y^n
			\end{tikzcd}.\]
			This means the section $a : B \to X^n$ is contained in an open subscheme $f^{-1}(Y') \subset X^n$. Varying the arbitrary section $a$, the open immersion \eqref{eq:thm:Neron model of fibration} is an equality.
		\end{proof}
		
		The following is a more comprehensive version of \Cref{prop:subloci of B}(3). Compare it with \Cref{lem:equivalence between Neron and birational automorphism}.
		
		\begin{proposition} \label{prop:equivalence of Neron mapping property of X'}
			Assume $\dim X = 2n = 4$. Then all the following are equivalent.
			\begin{enumerate}
				\item $X' = X^n$, or equivalently
				\begin{enumerate}
					\item[\textnormal{(1n)}] $X'$ is N\'eron under $B_0 \subset B$.
					\item[\textnormal{(1t)}] $X'$ is a $P$-torsor over $B$.
					\item[\textnormal{(1f)}] $X^n \to B$ is of finite type.
					\item[\textnormal{(1s)}] $X^n \to B$ is separated.
				\end{enumerate}
				
				\item $P^{\tau} = P$, or equivalently
				\begin{enumerate}
					\item[\textnormal{(2f)}] $P \to B$ is of finite type.
					\item[\textnormal{(2s)}] $P \to B$ is separated.
				\end{enumerate}
				
				\item $\check P^{\tau} = \check P$, or equivalently
				\begin{enumerate}
					\item[\textnormal{(3f)}] $\check P \to B$ is of finite type.
					\item[\textnormal{(3s)}] $\check P \to B$ is separated.
				\end{enumerate}
			\end{enumerate}
		\end{proposition}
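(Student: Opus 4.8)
The plan is to isolate a single geometric implication --- that separatedness of $X^n$ already forces $X^n=X'$ --- and to derive everything else by formal manipulation of the chain $P^{\circ\circ}\subset P^a\subset P^{\tau}\subset P$ of \eqref{eq:inclusions of group spaces}, the $P$-torsor structure of $X^n$, and the polarization isogeny. First I would dispose of the formal part. Since $X^n\to B$ is a $P$-torsor (\Cref{thm:Neron model of fibration}), it is of finite type (resp.\ separated, resp.\ a N\'eron model) precisely when $P$ is, so (1) $\Leftrightarrow$ (1n) $\Leftrightarrow$ (1t) is immediate from \Cref{prop:torsor is Neron iff group is} and uniqueness of N\'eron models, and (1f), (1s) translate into the finite-type, resp.\ separatedness, of $P$. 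If $P=P^{\tau}$ then $P$ is separated: $P^{\tau}$ is aligned by \Cref{prop:numerically trivial group is aligned} ($B$ being normal), with $P^{\tau}_0=P_0$ an abelian scheme, so \Cref{prop:aligned subgroup in weak Neron model is separated} applies. Conversely, if $P$ is of finite type then $\pi_0(P)$ is quasi-finite by \Cref{prop:finite type from pi0 group}, so every fibre $\pi_0(P_b)$ is finite --- in particular torsion --- which forces $P=P^{\tau}$. This gives (2f) $\Rightarrow$ (2) $\Rightarrow$ (2s) immediately, the two converses being deferred to the geometric step. Finally, the polarizations of \Cref{thm:Neron model of abelian schemes}(3) provide quasi-finite \'etale homomorphisms $\lambda\colon P\to\check P$ and $\check\lambda\colon\check P\to P$ with $\lambda\circ\check\lambda=\check\lambda\circ\lambda=[m]$; applying $\pi_0(-)$ shows at once that $\pi_0(P)$ is quasi-finite (resp.\ torsion) if and only if $\pi_0(\check P)$ is, and that $P$ is separated if and only if $\check P$ is, so (3) and its refinements are equivalent to (2) and its refinements. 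After this, all that remains is the implication
\[ X^n\to B \text{ separated} \ \Longrightarrow\ X^n=X', \]
the forward implications (1) $\Rightarrow$ (1f) $\Rightarrow$ (1s) being trivial, since $X'$ is open in the separated variety $X$ and $P$ finite type $\Rightarrow$ $P$ separated as above.

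For this implication I would work locally. By \Cref{prop:subloci of B}(1)--(2) we have $B_1\subset B_{\tor}$, hence the locus $B\setminus B_{\tor}$ where $X^n$ can be strictly larger than $X'$ is contained in $B\setminus B_1=D_2$, which is a finite set of closed points because $2n=4$ and $\codim D_2\geq 2$. So it suffices to prove: for $b_0\in D_2$, if $X^n$ is separated over a quasi-projective \'etale neighbourhood $U$ of $b_0$, then $X^n_U=X'_U$. Shrinking $U$, \Cref{thm:Neron model of fibration}(2) writes $X^n_U=\bigcup_{Y_U}Y'_U$ over the Lagrangian fibered smooth symplectic birational models $\tau\colon Y_U\to U$ of $X_U$, and by \cite[Theorem 1]{kaw08} each birational map $X_U\to Y_U$ decomposes as a finite chain of flops as in \eqref{diag:flops}. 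Applying \Cref{prop:exceptional locus of flop} to such a flop, an exceptional component $W$ has codimension $d$ with $2\leq d\leq n=2$, so $d=2$, its image $\bar W$ has codimension $4$ hence is a point, and $W$ --- a component of $\pi^{-1}(D_2)\cap X_U=X_{b_0}$ contracted to a point --- is isomorphic to $\PP^2$ by \cite[Theorem 1.1]{wie-wis03} (cf.\ \Cref{lem:Mukai flop of Pn}). Thus every flop occurring is a Mukai flop of a $\PP^2$-component of the fibre over $b_0$; being an isomorphism away from the flopping $\PP^2$ and its flopped $\check\PP^2$, it carries the generic point of every fibre component isomorphically, so it preserves which fibre components are reduced, and by \Cref{lem:Mukai flop of Pn} the flopped $\check\PP^2$-component is reduced if and only if the $\PP^2$-component it replaces is.

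Now I would run the following dichotomy at $b_0$. If every $\PP^2$-component of the central fibre of every birational model is non-reduced, then such a component lies in the singular locus of the corresponding fibration, so the Mukai flop restricts to an isomorphism between the two smooth loci; inducting along \eqref{diag:flops} gives $Y'_U=X'_U$ inside $X^n_U$ for every $Y_U$, whence $X^n_U=X'_U$ (in particular separated). Otherwise some $X^{(i)}$ in some chain has a reduced $\PP^2$-component: its generic point is then a smooth point of the fibration, and Mukai-flopping it gives $X^{(i+1)}$ whose $\check\PP^2$-component is reduced and meets $(X^{(i+1)})'$; both $(X^{(i)})'$ and $(X^{(i+1)})'$ are open in $X^n_U$ and glued along the locus where the flop is an isomorphism, but the $\PP^2\leftrightarrow\check\PP^2$ duality (a point on one side corresponds to a line on the other) makes a one-parameter family running into the flopping locus acquire two distinct limits in $X^n_U$ --- so $X^n$ is non-separated there. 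Hence separatedness of $X^n$ forces the first alternative, i.e.\ $X^n_U=X'_U$; assembling over the finitely many $b_0\in D_2$ proves (1s) $\Rightarrow$ (1), and with the formalities of the first paragraph the whole proposition follows.

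The main obstacle is precisely this last case analysis: establishing that, in dimension four, the \emph{only} mechanism by which $X^n$ exceeds $X'$ is a Mukai flop of a $\PP^2$ sitting as a reduced component of a fibre over $D_2$, and that every such flop is detected by the failure of separatedness. The hypothesis $2n=4$ is used twice and essentially --- it forces $D_2$ to be zero-dimensional, and it forces every flop exceptional locus to be a $\PP^2$ contracted to a point. In higher dimension $D_2$ can be positive-dimensional and, by \Cref{prop:exceptional locus of flop}, a flop exceptional locus is only controlled up to being fibered in rationally chain connected varieties of dimension $\geq d$, so neither the finiteness reduction nor the clean reduced/non-reduced dichotomy survives, which is why one only expects (2) in general.
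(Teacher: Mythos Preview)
Your proof is correct and follows essentially the same route as the paper. The formal reductions --- torsor transfer between $X^n$ and $P$, the chain $(2\mathrm{f})\Rightarrow(2)\Rightarrow(2\mathrm{s})$ via \Cref{prop:finite type from pi0 group}, \Cref{prop:numerically trivial group is aligned}, \Cref{prop:aligned subgroup in weak Neron model is separated}, and the polarization isogeny for the $P\leftrightarrow\check P$ equivalences --- match the paper exactly. For the geometric step the paper proves the contrapositive of $(2\mathrm{s})\Rightarrow(1)$ directly: assuming $X'\neq X^n$, \Cref{lem:equivalence between Neron and birational automorphism} produces a birational translation automorphism that is not regular, decomposed into Mukai flops by \cite{wie-wis03}; picking one flop $X\dashrightarrow Y$ where $X'\dashrightarrow Y'$ fails to be an isomorphism forces (via \Cref{lem:Mukai flop of Pn}) both $\PP^2$ and $\check\PP^2$ to be reduced, and then an explicit local section $a\colon B\to X'$ through $\PP^2$ acquires a $\PP^1$-closure in $Y$ meeting $Y'$, contradicting separatedness of $P$ through the open immersions $X',Y'\hookrightarrow P$ of \Cref{lem:Neron extension is an open immersion}. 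Your dichotomy (all flopped $\PP^2$'s non-reduced versus some reduced) together with the description $X^n=\bigcup Y'$ is an equivalent repackaging; the only place you are sketchier than the paper is the ``two distinct limits'' claim, which the paper makes precise exactly as above --- the dual $\PP^1\subset\check\PP^2$ meets $Y'$ because $\check\PP^2$ is reduced, and this $\PP^1$ is the positive-dimensional closure of a section that would have to be closed were $P$ separated.
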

		\begin{proof}
			(1) $\Longleftrightarrow$ (1n) $\Longleftrightarrow$ (1t) is clear from \Cref{thm:Neron model of fibration}(1) that $X^n$ is a $P$-torsor and $X' \subset X^n$. (1f) $\Longleftrightarrow$ (2f) and (1s) $\Longleftrightarrow$ (2s) are again clear since $X^n$ is a $P$-torsor. (2) $\Longleftrightarrow$ (3) , (2f) $\Longleftrightarrow$ (3f), and (2s) $\Longleftrightarrow$ (3s) follow from the existence of quasi-projective \'etale homomorphisms $\lambda : P \to \check P$ and $\check \lambda : \check P \to P$ in \Cref{thm:Neron model of abelian schemes}. Therefore, we may show the equivalence between (1), (2), (2f), and (2s). (1) $\Longrightarrow$ (2f) follows since $X^n$ is a $P$-torsor. (2f) $\Longrightarrow$ (2) is clear from definition of the numerically trivial subgroup $P^{\tau}$. (2) $\Longrightarrow$ (2s) is a consequence of \Cref{prop:numerically trivial group is aligned} and \ref{prop:aligned subgroup in weak Neron model is separated}.
			
			\bigskip
			
			It remains to show (2s) $\Longrightarrow$ (1), and this is the only part we use the assumption $\dim X = 4$. Assume that $X' \to B$ is not N\'eron under $B_0 \subset B$. \Cref{lem:equivalence between Neron and birational automorphism} implies that (after shrinking $B$) there exists a birational translation automorphism $X \dashrightarrow X$ whose restriction to $X' \dashrightarrow X'$ is not a regular automorphism. Such a birational map decomposes into a sequence of flops as in \eqref{diag:flops}. When $\dim X = 4$, it further decomposes into a sequence of \emph{Mukai flops} by \cite[Theorem 1.2]{wie-wis03}.
			
			We may thus consider a single Mukai flop $f : X \dashrightarrow Y$ arising in the sequence whose induced map $f : X' \dashrightarrow Y'$ is not an isomorphism. The flopping $\PP^2$ and flopped $\check \PP^2$ are contained in fibers $X_b$ and $Y_b$ for a closed point $b \in B$. Since $f : X' \dashrightarrow Y'$ is not an isomorphism, both $\PP^2 \subset X_b$ and $\check \PP^2 \subset Y_b$ are reduced components by \Cref{lem:Mukai flop of Pn}. Consider a section $a : B \to X'$ passing through the reduced component $\PP^2 \subset X_b$ (after shrinking $B$). The closure of the rational section $B \to X \dashrightarrow Y$ has a fiber $\PP^1$ over $b$ since $f$ is a Mukai flop, and its intersection with $Y'$ is nonempty because $\check \PP^2 \subset Y_b$ is a reduced component. As a result, the closure of the rational section $B \dashrightarrow Y'$ has a $1$-dimensional fiber over $b$.
			
			Finally, the varieties $X'$ and $Y'$ admit open immersions to $P$ by \Cref{lem:Neron extension is an open immersion}:
			\[\begin{tikzcd}[column sep=tiny]
				B \arrow[rr, "a"] \arrow[rrrd, bend right=15] & & X' \arrow[rr, dashed, "f"] \arrow[rd, hook] & & Y' \arrow[ld, hook'] \\
				& & & P
			\end{tikzcd}.\]
			If $P \to B$ is separated, then the section $B \to P$ is a closed immersion, violating the previous conclusion that the closure of the rational section $a : B \dashrightarrow Y'$ has a $1$-dimensional fiber over $b$. Hence $P \to B$ cannot be separated.
		\end{proof}
		
		\begin{remark}
			The assumption $\dim X = 4$ in \Cref{prop:equivalence of Neron mapping property of X'} is used to produce a Mukai flop (so that we can use \Cref{lem:Mukai flop of Pn}). We expect a variant of \cite[Theorem 9.1]{cho-miy-she02} holds for Lagrangian fibrations, extending the result of \Cref{prop:exceptional locus of flop}, so that our strategy works for arbitrary dimensional symplectic varieties as well. We did not pursue this direction further.
		\end{remark}
		
		\begin{proof}[Proof of \Cref{prop:subloci of B}]
			(1) Let $b \in B \setminus B_{\tor}$ be a point. That is, assume that $X' \to B$ fails to be N\'eron at $b \in B$. By \Cref{lem:equivalence between Neron and birational automorphism}, up to shrinking $B$ there exists a birational map $f : X \dashrightarrow Y$ to $\tau : Y \to B$ such that $f : X' \dashrightarrow Y'$ has an undefined locus over $b$. By Kawamata's decomposition theorem in \eqref{diag:flops} and \Cref{prop:exceptional locus of flop}, a flop arises over an irreducible component $T \subset D_d$ of a $\delta$-locus containing $b$. This means $b \in T \subset B \setminus B_{\tor}$ where $T \subset D_d$ is an irreducible component. That is, $B \setminus B_{\tor}$ is a finite union of irreducible components of the $\delta$-loci $D_d$'s.
			
			(2) $X'_1 = X^n_1$ implies $B_1 \subset B_{\tor}$. Since $B_{\tor}$ is an open subset of $B$, we can restrict to $B_{\tor}$ and say $X'_{\tor} = X^n_{\tor}$ is separated over $B_{\tor}$. This means $B_{\tor} \subset B_{\sep}$. (3) was already proved in \Cref{prop:equivalence of Neron mapping property of X'}. (4) will be presented in \S \ref{sec:main examples}.
		\end{proof}
		
		We conclude with the following weaker versions of \Cref{prop:equivalence of Neron mapping property of X'}
		
		\begin{proposition} \label{prop:birational automorphism equals automorphism implies Neron}
			If $P^a = P$ then $X' = X^n$.
		\end{proposition}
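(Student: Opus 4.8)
The plan is to use that $X^n \to B$ is a $P$-torsor (\Cref{thm:Neron model of fibration}) and that $X' \hookrightarrow X^n$ is a $P^a$-equivariant open immersion over $B$ (\Cref{thm:Neron model of fibration}(1)). Under the hypothesis $P^a = P$, the open subspace $X' \subset X^n$ is $P$-invariant, so its complement $Z = X^n \setminus X'$ is a closed $P$-invariant subspace of $X^n$. Since an open immersion that is surjective is an isomorphism, the proposition reduces to showing $Z = \emptyset$.

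First I would record that $\pi' : X' \to B$ is surjective by the standing assumption, and that the composite $X' \hookrightarrow X^n \to B$ is this projection; hence $X'_b \ne \emptyset$ for every $b \in B$. Now suppose $Z \ne \emptyset$ and choose $b$ in the (nonempty) image of $Z \to B$, so that $Z_b = Z \cap X^n_b$ is a nonempty closed subset of the fiber. Because $X^n \to B$ is a $P$-torsor, $X^n_b$ is a torsor under $P_b$ over $\CC$, so $P_b$ acts simply transitively on $\CC$-points. A nonempty closed $P_b$-invariant subset of $X^n_b$ therefore contains every closed point of $X^n_b$; as $X^n_b$ is locally of finite type over $\CC$ (hence Jacobson, its closed points being dense in every closed subscheme), this forces $Z_b = X^n_b$. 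But $X'_b$ is a nonempty subset of $X^n_b$ disjoint from $Z_b$, a contradiction. Hence $Z = \emptyset$ and $X' = X^n$.

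I do not expect a real obstacle here: the argument is a direct torsor-invariance computation. The only point needing slight care is that $X^n$ need not be of finite type over $B$, so the fiber $X^n_b$ can have infinitely many components; one should invoke that a scheme locally of finite type over a field is Jacobson rather than quote a quasi-compact statement. Alternatively one may pass to an \'etale cover of $B$ trivializing the torsor and argue with $X^n \cong P$ directly, but the fiberwise transitivity argument is cleaner.
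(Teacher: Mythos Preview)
Your proof is correct. It differs from the paper's, which simply cites \Cref{lem:equivalence between Neron and birational automorphism}: by the moduli descriptions \eqref{eq:etale local section of translation automorphism scheme} and \eqref{eq:etale local section of birational translation automorphism space}, the equality $P^a = P$ says precisely that every \'etale local birational translation automorphism of $X$ is regular, which is condition (2) of that lemma, hence $X'$ is N\'eron under $B_0 \subset B$ and so $X' = X^n$.

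Your route bypasses \Cref{lem:equivalence between Neron and birational automorphism} entirely and argues from the already-established torsor structure in \Cref{thm:Neron model of fibration}: since $X' \subset X^n$ is $P^a$-invariant and $X^n$ is a $P$-torsor, the hypothesis $P^a = P$ makes $X'$ a nonempty $P$-invariant open in each fiber, whence all of $X^n$ by transitivity. This is a cleaner and more self-contained argument once \Cref{thm:Neron model of fibration} is in hand; the paper's route has the advantage of reusing the characterization of the N\'eron property via birational automorphisms that drives much of \S\ref{sec:Neron model of fibration codimension 1}. Your care about the Jacobson property (since $X^n_b$ need not be quasi-compact) is appropriate.
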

		\begin{proof}
			Immediate from \Cref{lem:equivalence between Neron and birational automorphism}.
		\end{proof}
		
		\begin{proposition}
			Assume that $\pi$ has integral fibers over $B_1$. Then $P^{\tau} = P$ if and only if $X' = X^n$.
		\end{proposition}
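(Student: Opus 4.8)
\emph{Proof proposal.} The plan is to prove the two implications separately, the forward one by passing to the dual abelian scheme where the integrality hypothesis gives us a Picard group space to work with. For $(\Longleftarrow)$, assume $X'=X^n$. Then $X^n$ is quasi-projective over $B$, in particular of finite type; since $X^n$ is a $P$-torsor (\Cref{thm:Neron model of fibration}) and being of finite type is smooth-local on the base, $P\to B$ is of finite type. By \Cref{prop:finite type from pi0 group}, $\pi_0(P)$ is quasi-finite, so every fibre $\pi_0(P)_b$ is a finite — hence torsion — group; thus $\pi_0(P)$ is torsion, i.e. $P^\tau=P$. (This direction does not use the integrality hypothesis.)

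For $(\Longrightarrow)$, assume $P^\tau=P$. The first step is to transport numerical triviality to the dual side. Fix a polarization $\lambda_0\colon P_0\to\check P_0$ and a dual polarization $\check\lambda_0\colon\check P_0\to P_0$ whose composites are multiplication by some $m\in\ZZ_{>0}$; by \Cref{thm:Neron model of abelian schemes}(3) these extend to quasi-projective \'etale homomorphisms $\lambda\colon P\to\check P$ and $\check\lambda\colon\check P\to P$. Passing to component groups, $\pi_0(\check\lambda)\circ\pi_0(\lambda)$ and $\pi_0(\lambda)\circ\pi_0(\check\lambda)$ are multiplication by $m$, so torsion-ness of $\pi_0(P)$ forces torsion-ness of $\pi_0(\check P)$, i.e. $\check P^\tau=\check P$. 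Now the integrality hypothesis enters: by \Cref{prop:dual Neron for geometrically integral fiber} we have $\check P=\Pic^\circ_\pi$, so $\Pic^\circ_\pi$ is numerically trivial. I then claim $\Pic^\circ_\pi$ must be of finite type. Indeed, for each $b\in B$ the fibre $(\Pic^\circ_\pi)_b$ is an open and closed subgroup of the Picard scheme of the projective fibre $X_b$ (here one uses cohomological flatness, \Cref{lem:pi is a universal fibration}), so its component group embeds into $\NS(X_b)$, a finitely generated abelian group; a finitely generated torsion group is finite, so $\pi_0(\Pic^\circ_\pi)$ is quasi-finite, and \Cref{prop:finite type from pi0 group} (whose hypothesis holds since $\Lie\Pic^\circ_\pi\cong\TT^*_B$ is smooth, \Cref{lem:Picard space exists}) shows $\check P=\Pic^\circ_\pi$ is of finite type. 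Being numerically trivial over the normal base $B$, it is aligned (\Cref{prop:numerically trivial group is aligned}), hence separated by \Cref{prop:aligned subgroup in weak Neron model is separated}.

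It remains to return to $X'$. Using the \'etale isogenies $\lambda,\check\lambda$ once more one transfers finiteness of $\pi_0(\check P)$ back to $\pi_0(P)$, so that $P$ — and therefore the $P$-torsor $X^n$ — is of finite type and, being numerically trivial, separated; one then deduces $X'=X^n$ exactly as in the implication $(2s)\Rightarrow(1)$ of \Cref{prop:equivalence of Neron mapping property of X'}. Concretely, if $X'\neq X^n$ then by \Cref{lem:equivalence between Neron and birational automorphism} there is (after shrinking $B$) a flop, arising in a Kawamata decomposition \eqref{diag:flops}, that alters $X'$; by \Cref{prop:exceptional locus of flop} its exceptional fibres have dimension $\ge 2$, and a section of $X'$ meeting the flopping locus produces a rational section of $X^n\cong P$ whose closure has a positive-dimensional fibre, contradicting separatedness of $P\to B$. \emph{The hard part is this last paragraph}: the passage from separatedness/finiteness of $X^n$ back to the genuine equality $X'=X^n$ in arbitrary dimension. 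In $\dim X=4$ it is clean because the flop is a Mukai flop and \Cref{lem:Mukai flop of Pn} matches up the reduced fibre components on both sides; for general $\dim X$ one needs the expected strengthening of \Cref{prop:exceptional locus of flop} (a Cho--Miyaoka--Shepherd-Barron type statement for the flopped locus), as already noted in the remark following \Cref{prop:equivalence of Neron mapping property of X'}. I would isolate that geometric input and, granting it, the argument above goes through verbatim; alternatively, when one can verify the stronger conclusion $P^a=P$ the equality $X'=X^n$ follows directly from \Cref{prop:birational automorphism equals automorphism implies Neron}.
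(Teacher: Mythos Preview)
Your $(\Longleftarrow)$ direction is fine and agrees with the paper. Your $(\Longrightarrow)$ direction, however, has a genuine gap that you yourself identify: after establishing that $P$ (equivalently $X^n$) is separated and of finite type, you try to invoke the flop-analysis argument of \Cref{prop:equivalence of Neron mapping property of X'} to conclude $X'=X^n$, but that argument is only carried out for $\dim X=4$ and relies on the Mukai flop structure via \Cref{lem:Mukai flop of Pn}. You correctly note that the general case would require an unproven Cho--Miyaoka--Shepherd-Barron type statement; as written, your proof is incomplete in arbitrary dimension.

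The paper avoids this entirely by proving the stronger conclusion $P^a=P$ directly, which you mention as an ``alternative'' at the end but do not see how to achieve. The key idea is this: once you know $\Pic^\circ_\pi=\check P$ is numerically trivial, pick a $\pi$-ample line bundle $L$ on $X$ and consider the map
\[
\lambda\colon P\longrightarrow \Pic^\circ_\pi,\qquad f\longmapsto [f^*L\otimes L^{-1}],
\]
defined first over $B_1$ (where $f$ is a regular automorphism by \Cref{thm:birational map of fibration codimension 1}) and then extended over $B$ by the N\'eron mapping property of $\Pic^\circ_\pi$. Since $\Pic^\circ_\pi$ is numerically trivial, $f^*L\otimes L^{-1}$ is numerically trivial, so $f^*L$ is again $\pi$-ample by the Nakai--Moishezon criterion. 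But a birational automorphism of a projective variety over $B$ that pulls back an ample class to an ample class is a regular automorphism; hence every section of $P$ already lies in $P^a$, i.e.\ $P^a=P$, and \Cref{prop:birational automorphism equals automorphism implies Neron} finishes. This bypasses the flop analysis altogether and works in all dimensions. Your detour through finite type and separatedness of $\check P$ is unnecessary; all you need from the dual side is the numerical triviality of $\Pic^\circ_\pi$.
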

		\begin{proof}
			The implication $(\Longleftarrow)$ was proved in \Cref{prop:equivalence of Neron mapping property of X'}, so let us prove $(\Longrightarrow)$. Recall from \Cref{prop:dual Neron for geometrically integral fiber} that we have $\Pic^{\circ}_{\pi} = \check P$ under such assumption. If we further assume that $P^{\tau} = P$, then $\check P^{\tau} = \check P$ by the equivalence (2) $\Longleftrightarrow$ (3) in \Cref{prop:equivalence of Neron mapping property of X'} and hence $\Pic^{\circ}_{\pi}$ is numerically trivial.
			
			Let $L$ be a $\pi$-ample line bundle on $X$ and consider the morphism
			\[ \lambda : P \to \Pic^{\circ}_{\pi} ,\qquad f \mapsto [f^*L \otimes L^{-1}] ,\]
			similarly to \eqref{eq:polarization morphism} (first define it over $B_1$ using \Cref{thm:birational map of fibration codimension 1} and then extend it over $B$ by the N\'eron mapping property of $\Pic^{\circ}_{\pi}$). But $\Pic^{\circ}_{\pi}$ is numerically trivial, so $f^* L \otimes L^{-1}$ is a numerically trivial line bundle and hence $f^*L$ is $\pi$-ample by the Nakai--Moishezon criterion. This shows that every $f \in P$ is a regular translation automorphism. In other words, $P^a = P$ and hence $X' = X^n$ by \Cref{prop:birational automorphism equals automorphism implies Neron}.
		\end{proof}

	\subsection{Examples and counterexamples} \label{sec:main examples}
		We present three examples that are illustrative to understand the behaviors of $P$ and its subgroup spaces in \eqref{eq:inclusions of group spaces}. We will freely use our main results.

		\subsubsection{First example: product of minimal elliptic surfaces} \label{ex:product of ellpitic surfaces}
			Let
			\[ \pi^i : S^i \to \Delta \qquad\mbox{for}\quad i = 1, \cdots, n \]
			be proper minimal elliptic surfaces with no multiple fibers over a smooth quasi-projective curve $\Delta$. Consider a Lagrangian fibration of a smooth symplectic variety
			\[ \pi : X = S^1 \times \cdots \times S^n \to B = \Delta^{\times n} .\]
			By N\'eron--Raynaud (or \Cref{thm:Neron model of fibration codimension 1}), each $S^i$ has a translation automorphism scheme $(P^i)^a = P^i \to \Delta$ making $(S^i)'$ its torsor. By \Cref{prop:product of Neron models}, both $P = P^1 \times \cdots \times P^n \to B$ and $X' = (S^1)' \times \cdots \times (S^n)' \to B$ are N\'eron under $B_0 = \Delta_0^{\times n} \subset B$. Moreover, $P$ regularly acts on the full symplectic variety $X$, so this shows
			\[ P^{\circ\circ} \subset P^a = P^{\tau} = P ,\]
			and that $X' = X^n$ from \Cref{prop:birational automorphism equals automorphism implies Neron}. The first inclusion is strict when $S^i$ has a singular fiber with at least two integral components.

		\subsubsection{Second example: Hilbert square of an elliptic surface} \label{ex:Hilbert square of ellpitic surface}
			Let $S \to \Delta$ be a proper minimal elliptic surface over a smooth quasi-projective curve with a central singular fiber $S_{t_0}$ of Kodaira type $\text{I}_2$ over $t_0 \in \Delta$. Consider a morphism
			\[ \pi : X = S^{[2]} \to \bar X = S^{(2)} \to B = \Delta^{(2)} ,\]
			where $S^{[2]}$ is the Hilbert scheme of length $2$ subschemes on $S$, and $S^{(2)} = S^{\times 2}/\mathfrak S_2$ and $\Delta^{(2)} = \Delta^{\times 2}/\mathfrak S_2$ are symmetric squares. Note that $X$ is a smooth symplectic fourfold, $B$ is a smooth surface, and $\pi$ is a Lagrangian fibration. The complement of the discriminant locus is
			\[ B_0 = \{ (t, s) \in \Delta_0^{(2)} : t \neq s \} \subset B .\]
			
			We claim $P^a = P$, i.e., every (local) birational translation automorphism is regular. By the previous example in \S \ref{ex:product of ellpitic surfaces}, we only need to consider the case around the diagonal in $B$. Shrinking $\Delta$, we may consider a birational translation $f : X \dashrightarrow X$ over $B = \Delta^{(2)}$. It induces a birational translation $\bar f : \bar X \dashrightarrow \bar X$, which again induces an $\mathfrak S_2$-equivariant birational translation
			\[ g : S^{\times 2} \dashrightarrow S^{\times 2} \quad\mbox{over}\quad \Delta^{\times 2} .\]
			Again by the previous example, every birational translation of $S^{\times 2} \to \Delta^{\times 2}$ is regular, so this shows $g$ and hence $\bar f$ is a regular automorphism. Therefore, the undefined locus of $f$ is contained in the exceptional divisor $E$ of $X = S^{[2]} \to \bar X = S^{(2)}$. By \cite[Theorem 1.1]{wie-wis03}, the undefined locus must be a disjoint union of $\PP^2$'s. But the exceptional divisor $E$ contains no $\PP^2$ in a single fiber because it is a $\PP^1$-bundle over $S$. Hence $f$ is defined everywhere. We conclude
			\[ P^{\circ\circ} \subsetneq P^a = P^{\tau} = P .\]
			In particular, $X' = X^n$ by \Cref{prop:subloci of B} (or \Cref{prop:birational automorphism equals automorphism implies Neron}). To show the first inclusion is strict, notice that the fiber over $b = (t_0, t_0) \in B = \Delta^{(2)}$ is
			\[ X_b \cong \big( \PP^1 \mbox{-bundle over }S_{t_0} \big) \cup 2 \cdot \big( \PP^2 \cup (\PP^1)^{\times 2} \cup \PP^2 \big) ,\]
			where the $2$ indicates that the three irreducible components in $X_b$ are non-reduced with multiplicity $2$ (as a cycle). Hence $X'_b$ has two connected components and so does $P^a_b$.
			
			\bigskip
			
			Notice that the fiber $X_b$ above contains two irreducible components isomorphic to $\PP^2$. Muaki flopping one $\PP^2$ yields a new smooth symplectic algebraic space $X \dashrightarrow Y$ with a Lagrangian fibration $\tau : Y \to B$. Recall that $P^{\circ\circ}$, $P^{\tau}$, and $P$ are birational invariants. However, we have lost the $\mathfrak S_2$-symmetry on the two irreducible components isomorphic to $\PP^2$'s in the fiber $Y_b$, so this means for the new Lagrangian fibration $\tau : Y \to B$, we have
			\[ P^{\circ\circ} = P^a \subsetneq P^{\tau} = P .\]
			This modified example still satisfies $P^{\tau} = P$, so $Y' = Y^n$ is a $P$-torsor by \Cref{prop:subloci of B}. The reason for this phenomenon is because the Mukai flopped $\PP^2$ has multiplicity $2$, so the birational map $X \dashrightarrow Y$ still induces an isomorphism $X' \to Y'$ and the N\'eron mapping property is preserved by \Cref{lem:equivalence between Neron and birational automorphism}. If we can flop a \emph{reduced} irreducible component $\PP^2$ in $X_b$, then this will no longer be the case and the torsor property will fail. This leads us to the third example.

		\subsubsection{Third example: Beauville--Mukai system of a degree $2$ K3 surface} \label{ex:Beauville-Mukai system}
			We thank Antonio Rapagnetta for pointing out an example in \cite[Example 3.11]{melo-rap-viv17}, \cite[\S 13 Example (3)]{oda-ses79}, and \cite[p.271]{mum72}. Let us first present two preparatory lemmas.
			
			\begin{lemma} \label{lem:G-torsor}
				Assume $X'$ is a $G$-torsor for a smooth group space $G \to B$. Then $G$ is an open subgroup space of $P^{\tau}$.
			\end{lemma}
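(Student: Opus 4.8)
The plan is to produce a homomorphism $\rho\colon G\to P$ extending the tautological identification over $B_0$, to show $\rho$ is an open immersion, and to check its image lands in $P^\tau$. First I would record the picture over $B_0$: there the fibration $\pi_0\colon X_0\to B_0$ is a torsor under the abelian scheme $P_0=\Aut^\circ_{\pi_0}$ and, by hypothesis, also under $G_0$. A torsor under a smooth group space is \'etale-locally trivial, so $G_0\to B_0$ is \'etale-locally isomorphic to $X_0\to B_0$; hence $G_0$ has connected fibres, and $G\to B$ is separated (\'etale-locally $G\cong X'$, a variety, and separatedness descends along an fppf cover). The free $G_0$-action gives a monomorphism $G_0\hookrightarrow\Aut_{X_0/B_0}$, whose image lies in $\Aut^\circ_{X_0/B_0}=P_0$ by connectedness of fibres, and a fibrewise comparison (an immersion of smooth connected algebraic groups of equal dimension is an isomorphism) shows $G_0\xrightarrow{\sim}P_0$ compatibly with the two torsor structures; call it $\psi_0$. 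The same density argument also shows $G$ is commutative.

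Next I would build the comparison maps. For quasi-projective \'etale $U\to B$ and $g\in G(U)$ the $G$-torsor structure gives a regular automorphism $f_g$ of $X'_U$ over $U$, hence a birational self-map $f_g\colon X_U\dashrightarrow X_U$ which, by the previous paragraph, is a translation on $X_{U_0}$, i.e.\ a birational translation automorphism in the sense of \Cref{def:birational translation automorphism}. By the N\'eron mapping property of $P$, equivalently \eqref{eq:etale local section of birational translation automorphism space}, these assemble into a morphism $\rho\colon G\to P$, and since $G$ is a smooth group space $\rho$ is a homomorphism extending $\psi_0$ (\Cref{prop:Neron model of homomorphism}). In the other direction, the $P^a$-action on $X'$ commutes with the $G$-action (the two agree over $B_0$ via $\psi_0$, $X'\to B$ is separated, and $B_0$ is dense, so one applies \Cref{lem:morphism to separated space is determined by dense subspace}); thus the $P^a$-action is $G$-equivariant, and since the sheaf of $G$-equivariant automorphisms of a torsor under a commutative $G$ is canonically $\cong G$, we get a homomorphism $\sigma\colon P^a\to G$. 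Its restriction over $B_0$ is $\psi_0^{-1}$, so $\rho\circ\sigma$ restricts over $B_0$ to the open immersion $P^a\hookrightarrow P$, hence equals it by the uniqueness half of the N\'eron mapping property of $P$.

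Then I would argue $\rho$ is an open immersion and $\rho(G)\subseteq P^\tau$. Over $B_1$ the space $X'_1$ is also a $P^a_1$-torsor (\Cref{thm:Neron model of fibration codimension 1}), and the group acting on a given torsor is unique up to unique isomorphism, so $\sigma$ restricts to an isomorphism $P^a_1\xrightarrow{\sim}G_1$. The conormal sheaves $C_G$ and $C_{P^a}=T_B$ (the latter by \Cref{thm:smooth Lie algebra subscheme}) are locally free on the smooth, hence $(\mathrm S_2)$, variety $B$, and $B\setminus B_1$ has codimension $\ge 2$ by $\delta$-regularity; so by the Hartogs argument in the proof of \Cref{prop:extension of etale homomorphism} the map $\sigma^\ast\colon C_G\to C_{P^a}$, being an isomorphism over $B_1$, is an isomorphism, i.e.\ $\sigma$ is \'etale. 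Its kernel is then a separated \'etale group space trivial over the dense $B_1$, hence trivial (\Cref{lem:etale separated group space is trivial}), so $\sigma$ is a monomorphism, thus an open immersion. From $\rho\circ\sigma=(P^a\hookrightarrow P)$ and $\sigma^\ast$ invertible it follows that $\rho^\ast\colon C_P\to C_G$ is an isomorphism, so $\rho$ is \'etale; and $\rho$ is a monomorphism because the $G$-action on $X'$ is free ($\rho(g)=e$ forces $f_g=\mathrm{id}$ on $X'_U$, hence $g=e$), so $\rho$ is an open immersion and $\rho(G)\subseteq P$ is an open subgroup space. Finally, on a neighbourhood of any point of $B$ fix a bound $N$ on the number of irreducible components of the fibres of $X'\to B$ and set $m=\operatorname{lcm}(1,\dots,N)$: for $g\in G(U)$ and any point $b$, $f_g$ permutes the components of the $G_b$-torsor $X'_b$ by translation by $[g_b]\in\pi_0(G_b)$, a permutation of order dividing $m$, so $m\cdot[g_b]=0$; as $\pi_0(G)\to B$ is unramified the section $m\cdot[g]$ vanishes identically, i.e.\ $g^m\in G^{\circ\circ}$, so $\pi_0(G)$ is torsion and $\rho(G)\subseteq P^\tau$. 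This realizes $G$ as an open subgroup space of $P^\tau$.

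The hard part is not any single estimate but the bookkeeping of the second and third steps: one must make sure that $\rho$, $\sigma$, and the inclusion $P^a\hookrightarrow P$ are compatible precisely through the \emph{uniqueness} clause of the N\'eron mapping property of $P$, and that the two a priori unrelated group actions on $X'$ literally coincide after transport along $\psi_0$ — this is where separatedness of $X'$ together with density of $B_0$ and $B_1$ is doing the real work, and it is also what lets the codimension-two Hartogs step apply. The one genuinely geometric (rather than formal) ingredient is the component-counting of the last step, which crucially uses that $X'$ is a $G$-torsor over \emph{all} of $B$, not merely over $B_1$.
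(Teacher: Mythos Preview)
Your argument is correct and follows the same underlying strategy as the paper---build a homomorphism $G\to P$ via the N\'eron mapping property, show it is an open immersion using a Hartogs-type comparison of conormal sheaves over $B_1$, and then observe that $G$ is numerically trivial---but your execution is considerably more elaborate than necessary. The paper's proof is five lines: it first notes that $G\to B$ is separated and of finite type by descent from $X'$; then over $B_1$ it invokes the fact (citing Moret-Bailly) that two smooth group spaces making the \emph{same} space a torsor are canonically isomorphic, giving $G_1\cong P_1$ directly rather than via your auxiliary map $\sigma\colon P^a\to G$; then it applies \Cref{prop:extension of etale homomorphism}(2) in one stroke (the Hartogs argument on conormal sheaves is already packaged there) to extend this isomorphism to an open immersion $G\hookrightarrow P$; and finally it observes that any finite-type smooth group space has torsion $\pi_0$ (each fibre of $\pi_0(G)$ is a finite group), so $G\subset P^\tau$ without any explicit component counting.

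What your approach buys is self-containment: the construction of $\sigma$ via $G$-equivariant automorphisms of the torsor $X'$ replaces the citation to Moret-Bailly, and your direct Hartogs argument reproves the relevant part of \Cref{prop:extension of etale homomorphism}. Two small points where you could tighten things: your monomorphism argument for $\rho$ is phrased on \'etale $U$-points only, but since you have already shown $\rho$ is \'etale it is cleaner to note that $\ker\rho$ is a locally closed (hence separated) \'etale subgroup of the separated $G$, trivial over $B_0$, and apply \Cref{lem:etale separated group space is trivial}; and your component-counting step can be replaced by the one-line observation that $G$ finite type forces $\pi_0(G)$ to have finite, hence torsion, fibres.
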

			\begin{proof}
				Since $X' \to B$ is finite type and separated, so is $G \to B$ by descent. Note that $X'_1$ is both a $G_1$-torsor and $P_1$-torsor by \Cref{thm:Neron model of fibration codimension 1}. This forces $G$ and $P$ to be isomorphic over $B_1$ (see \cite{moret:torsor}). By the N\'eron mapping property of $P$ under $B_1 \subset B$, this isomorphism extends to an open immersion $G \hookrightarrow P$ (\Cref{prop:extension of etale homomorphism}). Since $P^{\tau}$ is a maximal numerically trivial subgroup space of $P$, the immersion factors though $P^{\tau}$.
			\end{proof}
			
			\begin{lemma} \label{lem:Mukai flop of fourfold}
				Assume $\dim X = 2n = 4$. Say a closed fiber $X_b$ contains a reduced component isomorphic to $\PP^2$ and another (not necessarily reduced) irreducible component $S$ with $S \cap \PP^2 \neq \emptyset$. Let $f : X \dashrightarrow X^+$ be the Mukai flop of $\PP^2$, $\check \PP^2 \subset X^+_b$ the flopped locus, and $S^+ \subset X^+_b$ the strict transform of $S$. Then
				\begin{enumerate}
					\item $S \cap \PP^2 \subset \PP^2$ and $S^+ \cap \check \PP^2 \subset \check \PP^2$ are projective dual to each other.
					\item Every irreducible component of $S \cap \PP^2$ is either a point or a line in $\PP^2$. Moreover, it can contain at most three points and three lines.
				\end{enumerate}
			\end{lemma}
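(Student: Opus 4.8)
The strategy is to pass to the common resolution of the Mukai flop and translate the whole statement into the geometry of its exceptional divisor. Write $\epsilon\colon \tilde X\to X$ and $\delta\colon \tilde X\to X^+$ for the two blow-downs of $\tilde X=\operatorname{Bl}_{\PP^2}X=\operatorname{Bl}_{\check\PP^2}X^+$, with common exceptional divisor $E$. Since $\PP^2\subset X$ is a Lagrangian subvariety of the symplectic fourfold $X$, the form $\sigma$ induces $N_{\PP^2/X}\cong\Omega^1_{\PP^2}$, so $E=\mathbb P(\Omega^1_{\PP^2})$ is the flag variety $\{(p,\ell)\in\PP^2\times\check\PP^2:p\in\ell\}$, with $\epsilon|_E$ the projection forgetting $\ell$ and $\delta|_E$ the one forgetting $p$, each a $\PP^1$-bundle. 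Let $\tilde S$ be the strict transform of $S$ and put $Z=\tilde S\cap E$. Because $\epsilon$ (resp. $\delta$) is an isomorphism off $E$ and $\tilde S$ is the closure of $S\setminus\PP^2$ (resp. of $S^+\setminus\check\PP^2$), one gets the set-theoretic equalities $\epsilon(Z)=S\cap\PP^2$ and $\delta(Z)=S^+\cap\check\PP^2$. Throughout I would work with reduced structures and note that the conclusions are unaffected by the possible non-reducedness of $S$.

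For (1), the point is a local Lagrangian computation. As $X_b$ is a Lagrangian subvariety of $X$ — its generic fibre is an abelian surface, so $\sigma$ vanishes on a dense open subset of $X_b$, hence on $X_b$ by closedness — both $S$ and $\PP^2$ are Lagrangian. At a general point $p$ of a one-dimensional component $W$ of $S\cap\PP^2$, where $W$, $S$, $\PP^2$ are smooth and $T_pS\cap T_p\PP^2=T_pW$, the fibre $Z_p\subset E_p=\mathbb P(\Omega^1_{\PP^2,p})$ is the class of the normal direction of $S$, i.e.\ the image of $T_pS$ in $N_{\PP^2,p}=T_pX/T_p\PP^2$. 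Under the identification $N_{\PP^2,p}\cong\Omega^1_{\PP^2,p}$, $w\mapsto\sigma(w,-)|_{T_p\PP^2}$, this covector annihilates $T_pS\cap T_p\PP^2=T_pW$ (as $S$ is Lagrangian), so $Z_p=(p,\mathbb T_pW)$ is the point of $E$ over the embedded tangent line of $W$ at $p$; at an isolated point $p$ of $S\cap\PP^2$, where $T_pS\cap T_p\PP^2=0$, the covector image fills $\Omega^1_{\PP^2,p}$ and $Z_p=E_p$ is the whole pencil of lines through $p$, which is exactly the conormal variety of the point. Hence $Z$ is the closure of the tangent-line correspondence of $S\cap\PP^2$, so $\delta(Z)=S^+\cap\check\PP^2$ is precisely the projective dual variety $(S\cap\PP^2)^\vee$ (componentwise); the symmetric statement is biduality in characteristic $0$.

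For (2), first observe that $X_b$ is lci (as $\pi$ is flat between smooth varieties), hence Cohen--Macaulay, hence $S_2$, hence connected in codimension $1$; therefore the gluing locus $\Gamma$ of $\PP^2$ with the other components of $X_b$ is a pure one-dimensional divisor on $\PP^2$, and adjunction $\omega_{\PP^2}\cong\omega_{X_b}|_{\PP^2}(-\Gamma)\cong\mathcal O_{\PP^2}(-\Gamma)$ (using $\omega_{X_b}\cong\mathcal O_{X_b}$, from $\omega_X\cong\mathcal O_X$) forces $\Gamma$ to be an anticanonical cubic. Next I would pin down the group action: by \Cref{prop:action is free on X'} the $P^{\circ\circ}$-action is free on $X'\supseteq\PP^2\setminus\Gamma$, so $P^{\circ\circ}_b$ embeds into $\PGL_3=\operatorname{Aut}(\PP^2)$; since $\PGL_3$ is affine and thus has no positive-dimensional abelian-variety quotient, the Chevalley affine part of $P^{\circ\circ}_b$ is all of it, i.e.\ $\delta(b)=2$ (and $X_b\neq\PP^2$, as $\chi(\mathcal O_{\PP^2})\neq0$). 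Thus $\PP^2$ is an equivariant compactification of the two-dimensional commutative group $P^{\circ\circ}_b$ with dense open orbit $\PP^2\setminus\Gamma$, and $\Gamma$ together with $S\cap\PP^2$ is $P^{\circ\circ}_b$-invariant. Any positive-dimensional invariant irreducible curve $C\subset\PP^2$ must be a line: otherwise $\operatorname{Aut}(C_{\mathrm{red}})$, being at most one-dimensional commutative (no two-dimensional commutative subgroup in $\PGL_2$, and for cuspidal cubics one reduces to $\PGL_2$ through the normalization), would receive $P^{\circ\circ}_b$ with positive-dimensional kernel fixing $C$ pointwise, impossible since a conic or cuspidal cubic spans $\PP^2$ and its pointwise stabilizer in $\PGL_3$ is trivial. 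So $\Gamma$ is a union of invariant lines of total degree $\le3$, hence at most three lines meeting in at most three points; and $S\cap\PP^2\subseteq\Gamma$ together with its isolated points (which are invariant, hence among those $\le 3$ intersection points) gives the bound.

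The main obstacle is this last numerical bookkeeping — the precise $\le3$: it genuinely rests on the (short) list of two-dimensional connected commutative subgroups of $\PGL_3$ ($\mathbb G_m^2$, $\mathbb G_m\times\mathbb G_a$, $\mathbb G_a^2$) and their compactifications to $\PP^2$, where the unipotent case needs the most care because its fixed locus is a whole line, so one must use the lci/connectedness constraint on $X_b$ to rule out extra isolated-point components; the torus case is the clean ``coordinate triangle'' picture and the mixed case is similar. A secondary, purely bookkeeping subtlety is the passage between scheme- and set-theoretic intersections and the effect of multiplicities when $S$ is non-reduced, handled by working with $S_{\mathrm{red}}$ and checking the dual-variety description of (1) is insensitive to it.
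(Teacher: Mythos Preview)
Your argument for (1) is correct and is essentially a direct proof of the special case of Leung's Legendre-transformation result that the paper simply cites; nothing to add there except that the paper saves work by quoting \cite[\S 4.2]{leung02}.

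For (2), your line bound agrees with the paper's: both use that $P^{\circ\circ}_b$ acts freely on $\PP^2\cap X'_b$, so $\PP^2$ is an equivariant compactification of the two-dimensional connected commutative affine group $P^{\circ\circ}_b\in\{\GG_m^2,\GG_m\times\GG_a,\GG_a^2\}$, whence the boundary is a union of at most three lines and every one-dimensional component of $S\cap\PP^2$ is a line. Your adjunction computation that $\Gamma$ is an anticanonical cubic is a pleasant complement but not needed.

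The genuine gap is in your bound on isolated points. You argue that isolated points are $P^{\circ\circ}_b$-fixed and hence among the $\le 3$ pairwise intersections of the boundary lines, but this breaks down in the $\GG_a^2$ case: there the boundary is a single (triple) line $L$, and every point of $L$ is fixed by $\GG_a^2$, so invariance gives no finiteness at all. Your fallback to ``the lci/connectedness constraint'' does not rescue this: Hartshorne connectedness of $X_b$ only rules out the situation where $\PP^2$ is attached to the rest of $X_b$ solely through a finite set, not the situation where $S\cap\PP^2$ has many isolated points lying on $L$ while $\PP^2$ is glued to other components along $L$. The paper bypasses this casework entirely with a one-line trick you did not use: by \Cref{lem:Mukai flop of Pn} the flopped $\check\PP^2\subset X^+_b$ is again a reduced component, so the line bound applies equally on the $X^+$ side; by your own part (1), isolated points of $S\cap\PP^2$ dualize to lines of $S^+\cap\check\PP^2$, of which there are at most three. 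This is both shorter and complete.
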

			\begin{proof}
				(1) By \cite[Theorem 1]{mat00}, both $\PP^2$ and $S_{\red}$ are Lagrangian subvarieties of $X$. Therefore, this is a consequence of \cite[\S 4.2]{leung02}, where the projective dual is called the Legendre transformation.
				
				(2) Since $\PP^2 \subset X_b$ is an integral component, $\PP^2 \cap X'_b$ is isomorphic to $P^{\circ\circ}_b$, which is isomorphic to $\GG_m^{\times 2}$, $\GG_m \times \GG_a$, or $\GG_a^{\times 2}$. This means every $1$-dimensional irreducible component of $S \cap \PP^2$ is a line, and there are at most three of them. If there is a $0$-dimensional irreducible component, applying the same argument to the Mukai flop says there are at most three of them.
			\end{proof}
			
			Consider a degree $2$ K3 surface $\varphi : S \to \PP^2$ branched along a smooth sextic curve in $\PP^2$. Choose the sextic sufficiently so that it meets with a line $\PP^1 \subset \PP^2$ along precisely three points with multiplicity $2$. The preimage
			\[ \varphi^{-1}(\PP^1) = C_1 \cup C_2 \]
			is a union of two smooth rational curves with three transversal intersections.
			
			Let $L = \varphi^* \mathcal O_{\PP^2}(1)$ be an ample line bundle of degree $2$ and set $v = (0, L, 1) \in H^* (S, \ZZ)$, a primitive Mukai vector. The Beauville--Mukai system
			\[ \pi : X = M_v(S) \to B = |L| \cong \PP^2 \]
			is a Lagrangian fibration of a projective hyper-K\"ahler fourfold of $\text{K3}^{[2]}$-type. Over the closed point $b = [C_1 \cup C_2] \in B$, $\pi$ has a \emph{reduced} fiber with three irreducible components
			\[ X_b = S_1 \cup S_2 \cup S_3 .\]
			The two surfaces $S_1$ and $S_3$ are isomorphic to $\PP^2$ and $S_2$ is a projective surface that normalizes to a degree $6$ del Pezzo surface $\tilde S_2 \to S_2$ (that is, $\tilde S_2$ is isomorphic to a blowup of three non-collinear points in $\PP^2$). More precisely, there are six distinct points $p_i, q_i \in \tilde S_2$ for $i = 1,2,3$ so that $S_2$ is precisely $\tilde S_2$ with $p_i$ and $q_i$ identified. The interesting configuration of $S_1$, $S_2$, and $S_3$ is well-described in \cite[p.271]{mum72} and \cite[Figure 35 in p.84]{oda-ses79}. We simply recall that $S_1 \cap S_2$ consists of three lines in $S_1 = \PP^2$ and similarly $S_2 \cap S_3$ consists of three lines in $S_3 = \PP^2$. The intersection $S_1 \cap S_3$ consists of three points (compare with \Cref{lem:Mukai flop of fourfold}).
			
			\bigskip
			
			We imitate the method in \cite[Theorem 6.2]{has-tsc10} to construct an infinite sequence of Mukai flops. Perform a Mukai flop of $S_1 \cong \PP^2 \subset X$ and yield a smooth symplectic algebraic space $X \dashrightarrow X^+$. It has a fiber
			\[ X_b^+ = S_1^+ \cup S_2^+ \cup S_3^+ ,\]
			where $S_1^+$ is the flopped locus, and $S_2^+$ and $S_3^+$ are strict transforms of $S_2$ and $S_3$. Here are the more precise descriptions of $S_i^+$:
			\begin{itemize}
				\item $S_1^+$ is a flopped locus, so it is isomorphic to $\PP^2$ and is a reduced component of $X_b^+$ (\Cref{lem:Mukai flop of Pn}).
				\item Consider the Mukai flop diagram $X \to \bar X \longleftarrow X^+$. The image $\bar S_2 \subset \bar X$ of $S_2$ is a contraction of three lines in $S_2$ and hence isomorphic to $\PP^2$ with three points $p,q,r \in \PP^2$ identified. By \Cref{lem:Mukai flop of fourfold}, the strict transform $S_2^+$ separates this point to three different points, the projective dual of three lines in $S_2$, so it is isomorphic to $\PP^2$.
				\item This time, consider the roof diagram $X \longleftarrow \tilde X \to X^+$. The strict transform $\tilde S_3 \subset \tilde X$ is the blowup of $S_3$ along three points $S_1 \cap S_3 \subset S_3$, so it is isomorphic to a degree $6$ del Pezzo surface. Its image $S_3^+$ under the contraction $\tilde X \to X^+$ identifies three pairs of points.
			\end{itemize}
			Therefore, the configuration of $X_b^+$ looks similar to $X_b$ but the roles of each irreducible components are shuffled. Now $S_1^+$ and $S_2^+$ are the two components of $X_b^+$ isomorphic to $\PP^2$. Flopping $S_1^+$ will yield back the original $X$, so the only nontrivial Mukai flop is the flop of $S_2^+$. This gives another smooth symplectic algebraic space $X^+ \dashrightarrow X^{++}$ whose fiber is
			\[ X_b^{++} = S_1^{++} \cup S_2^{++} \cup S_3^{++} .\]
			Similar argument shows that $S_1^{++}$ is now a degree $6$ del Pezzo surface (with three pairs of points identified), and $S_2^{++}$ and $S_3^{++}$ are isomorphic to $\PP^2$. Flop $S_3^{++}$ again and yield $X^{+++}$, and keep going. We have constructed an infinite sequence of Mukai flops.
			
			\begin{table}[h]
				\begin{tabular}{|c|ccc|ccc|} \hline
					& \multicolumn{3}{c|}{Fiber components} & \multicolumn{3}{c|}{degree of $L$ on each component} \\\hline
					$X$ & $\PP^2$ & dP6 & $\PP^2$ & $a$ & & $b$ \\
					$\dashdownarrow$ & \multicolumn{1}{c}{$\dashdownarrow$} & & & \multicolumn{1}{c}{$\dashdownarrow$} & & \\
					$X^+$ & $\PP^2$ & $\PP^2$ & dP6 & $-a$ & $2a+b$ & \\
					$\dashdownarrow$ & & \multicolumn{1}{c}{$\dashdownarrow$} & & & \multicolumn{1}{c}{$\dashdownarrow$} & \\
					$X^{++}$ & dP6 & $\PP^2$ & $\PP^2$ & & $-2a-b$ & $3a+2b$ \\
					$\dashdownarrow$ & & & \multicolumn{1}{c|}{$\dashdownarrow$} & & & \multicolumn{1}{c|}{$\dashdownarrow$} \\
					$X^{+++}$ & $\PP^2$ & dP6 & $\PP^2$ & $4a+3b$ & & $-3a-2b$ \\
					$\dashdownarrow$ & \multicolumn{1}{c}{$\dashdownarrow$} & & & \multicolumn{1}{c}{$\dashdownarrow$} & & \\
					$\vdots$ & \multicolumn{3}{c|}{$\vdots$} & \multicolumn{3}{c|}{$\vdots$} \\\hline
				\end{tabular}
				
				\caption{Infinite sequence of Mukai flops and the degree of $L$ on the three irreducible components of their fibers over $b$.}
				\label{table:sequence of flops}
			\end{table}
			
			\begin{lemma} \label{lem:sequence of Mukai flops is not isomorphism}
				For every $k \in \ZZ_{>0}$, the composition of $k$ Mukai flops
				\[ X \dashrightarrow X^+ \dashrightarrow \cdots \dashrightarrow X^{+k} \]
				as above is not an isomorphism.
			\end{lemma}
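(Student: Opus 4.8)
The plan is to detect the nontriviality of the composite birational map through a relatively ample line bundle whose multidegree on the fiber over $b$ changes strictly at each flop. Fix a $\pi$-ample line bundle $L$ on $X$ (such an $L$ exists since $\pi$ is projective); because $L$ restricts to an ample bundle on each fiber of $\pi$, its degrees $a$ and $b$ on the two $\PP^2$-components $S_1$ and $S_3$ of $X_b$ are positive integers. Each Mukai flop $X^{+i} \dashrightarrow X^{+(i+1)}$ is an isomorphism in codimension one between smooth fourfolds, so it induces a canonical identification of Picard groups (strict transform); let $L^{(k)} \in \Pic X^{+k}$ be the class corresponding to $L$ under the iteration of these identifications, and let $\tau_k : X^{+k} \to B$ be the induced Lagrangian fibration, birational to $\pi$ over $B$. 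First I would compute the multidegree of $L^{(k)}$ on the three irreducible components of $X^{+k}_b$.

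For a single Mukai flop of a component $\PP^2 \subset X^{+i}_b$ on which $L^{(i)}$ has degree $d$, the standard local model applies: on the common blowup, with exceptional divisor $E$ isomorphic to the incidence variety in $\PP^2 \times \check\PP^2$ and $\mathcal O(E)|_E \cong \mathcal O(-1,-1)$, the two pullbacks of $L^{(i)}$ and $L^{(i+1)}$ differ by a multiple of $E$, which forces $L^{(i+1)}$ to have degree $-d$ on the flopped $\check\PP^2 \subset X^{+(i+1)}_b$. Combining this with the projective-duality description of the intersection loci in \Cref{lem:Mukai flop of fourfold} to follow the change on the remaining del Pezzo (resp. $\PP^2$) components, and iterating, I would reproduce exactly the entries of Table~\ref{table:sequence of flops}. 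The key consequence to extract is that the $\PP^2$-component flopped at the $k$-th stage carries $L^{(k-1)}$-degree equal to one of the numbers $a,\,2a+b,\,3a+2b,\,\dots$, each a $\ZZ_{\geq 0}$-combination of $a$ and $b$ with positive total, hence a positive integer; therefore $L^{(k)}$ has strictly negative degree on a line contained in the flopped $\PP^2 \subset X^{+k}_b$, for every $k \geq 1$.

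With this in hand the lemma follows quickly. Suppose, for contradiction, that for some $k \geq 1$ the composition of $k$ Mukai flops is a biregular isomorphism $\psi : X \xrightarrow{\sim} X^{+k}$ over $B$. Since $\psi$ is an isomorphism, the strict transform of $L$ along the composite map is simply $\psi_* L$, so $L^{(k)} = \psi_* L$; and since $\psi$ carries each fiber of $\pi$ isomorphically onto a fiber of $\tau_k$, the line bundle $L^{(k)} = \psi_* L$ is $\tau_k$-ample, in particular $\tau_k$-nef, hence of nonnegative degree on every curve in a fiber of $\tau_k$. This contradicts the negativity just established. Hence $X \dashrightarrow X^{+k}$ is not an isomorphism.

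I expect the main obstacle to be the bookkeeping of the second paragraph: verifying that the multidegree of $L$ transforms precisely as in Table~\ref{table:sequence of flops} requires a careful treatment of the strict transforms of the degree-$6$ del Pezzo components and of how their identified points move under the flop, on top of the Legendre-duality input of \Cref{lem:Mukai flop of fourfold}; the resulting linear recursion and the final contradiction are then routine. An alternative, essentially equivalent packaging, closer in spirit to \cite[Theorem 6.2]{has-tsc10}, is to phrase the iterated flop as an infinite-cyclic-group action on $\Pic X$ and to prove that the orbit of $[L]$ is infinite, which at once gives $L^{(k)} \neq L$ (under the identifications) for all $k \geq 1$ and hence the same conclusion.
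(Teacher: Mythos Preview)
Your proposal is correct and follows essentially the same argument as the paper: both fix a $\pi$-ample line bundle $L$, track its strict transform through the sequence of Mukai flops, compute the resulting degrees on the $\PP^2$-components of the central fiber (reproducing Table~\ref{table:sequence of flops}), and conclude from the appearance of a strictly negative degree that the strict transform cannot be relatively ample, hence the composite cannot be an isomorphism. The paper's writeup is slightly terser on the contradiction step but the logic is identical.
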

			\begin{proof}
				Take an arbitrary $\pi$-ample line bundle $L$ on $X$. Since $S_1$ and $S_3$ are isomorphic to $\PP^2$, their Picard groups are isomorphic to $\ZZ$. Let $a > 0$ and $b > 0$ be the degree of $L_{|S_1}$ and $L_{|S_3}$, respectively. That is, if $l$ is a line in $S_1 = \PP^2$ then $a$ is the intersection number of $L$ and $l$ (similarly for $b$). Since $\tilde S_2$ is a degree $6$ del Pezzo surface, its Picard group is $\ZZ^4$ and we can compute the intersection numbers of $L$ with the rational curve generators of $\Pic S_2$.
				
				Let us now consider the strict transform of $L$ to $X^+$ and denote it by the same letter $L$ for simplicity. Since $X^+$ was the Mukai flop of $S_1 \subset X$, the degree of $L_{|S_1^+}$ becomes $-a$. The strict transform $S_2^+$ is now isomorphic to $\PP^2$, so we can consider the degree of $L_{|S_2^+}$. It is computed by the configuration of six lines on the degree $6$ del Pezzo surface $S_2$ and their contraction; one can check the degree is precisely $2a+b$. After another flop, we can play the same game and compute the degree of $L$ restricted to each irreducible component of the fiber $X^{++}_b$. This calculation shows that on the $k$-th step $X^{+k}$, the degree of $L$ on two $\PP^2$ components of the fiber $X^{+k}_b$ are (see \Cref{table:sequence of flops})
				\[ (k+1)a + kb > 0 \qquad\mbox{and}\qquad -ka - (k-1)b < 0 .\]
				Since there is a negative degree component, $L$ cannot be $\pi$-ample on $X^{+k}$. Therefore, the composition of a series of Mukai flops $X \dashrightarrow X^{+k}$ will never be an isomorphism for every $k$.
			\end{proof}
			
			We claim that the sequence of inclusions \eqref{eq:inclusions of group spaces} for this example is
			\begin{equation} \label{eq:third example sequence of inclusions}
				P^{\circ\circ} = P^a = P^{\tau} \subsetneq P .
			\end{equation}
			The last strict inclusion can be shown as in the proof of \Cref{prop:equivalence of Neron mapping property of X'}. Namely, take two birational models $X \dashrightarrow X^+$ that are connected by a single Mukai flop. By \Cref{thm:Neron model of fibration}, their smooth parts are open subschemes of $X^n$ and we have a commutative diagram
			\[\begin{tikzcd}[column sep=tiny]
				X' \arrow[rr, dashed] \arrow[rd, hook] & & (X^+)' \arrow[ld, hook'] \\
				& X^n
			\end{tikzcd}.\]
			This forces $X^n \to B$ to be non-separated by \Cref{lem:Mukai flop of Pn} and hence $P^{\tau} \subsetneq P$ by \Cref{prop:subloci of B}. Equivalently, $X' \subsetneq X^n$ or $X'$ is not a $P$-torsor.
			
			To show the equality $P^a = P^{\tau}$, let $f$ be a local section of $P^{\tau}$ around $b$. That is, shrinking $B$, $f : X \dashrightarrow X$ is a birational translation automorphism whose self-composition $f^{\circ m} : X \dashrightarrow X$ is a regular automorphism for some $m \ge 1$. By \cite[Theorem 1.2]{wie-wis03}, $f$ decomposes into a sequence of Mukai flops
			\[ f : X \dashrightarrow X^{(1)} \dashrightarrow \cdots \dashrightarrow X^{(t)} \xlongrightarrow{\cong} X .\]
			Since $f$ is an isomorphism over $B_1 \subset B$ by \Cref{thm:birational map of fibration codimension 1}, shrinking $B$, we may assume that the Mukai flops are over $b \in B$. There is only one possible sequence of Mukai flops we can take, which is $X \dashrightarrow X^+ \dashrightarrow X^{++} \dashrightarrow \cdots$ as above. By \Cref{lem:sequence of Mukai flops is not isomorphism}, $f^{\circ m}$ will never be an isomorphism unless $f$ was an isomorphism at the beginning. This proves $P^a = P^{\tau}$. Finally, $P^a_b$ cannot have multiple connected components because it acts freely on $X'_b$ and not all three components of $X_b$ are isomorphic. This proves $P^{\circ\circ} = P^a$.
			
			We have shown that $X'$ is not a $P$-torsor. Assume that there \emph{exists} a smooth group space $G \to B$ making $X'$ a $G$-torsor. \Cref{lem:G-torsor} shows $G$ must be an open subgroup space of $P^{\tau}$. Because $P^{\tau} = P^{\circ\circ}$ has connected fibers by \eqref{eq:third example sequence of inclusions}, this forces $G = P^{\circ\circ}$. Now $X'_b$ had three connected components whereas $G_b = P^{\circ\circ}_b$ has one, which is a contradiction. This justifies \Cref{prop:X' is not G-torsor}.

	\subsection{Applications} \label{sec:applications}
		Let us finally present applications of our main results and their relations to several known results in literature.

		\subsubsection{Arinkin--Fedorov's result: the elliptic locus} \label{sec:Arinkin-Fedorov}
			The following is slightly stronger than \cite[Theorem 2]{ari-fed16}.
			
			\begin{theorem} [Arinkin--Fedorov]
				If every fiber of $\pi$ is integral, then the inclusions in \eqref{eq:inclusions of group spaces} are equalities:
				\[ P^{\circ\circ} = P^a = P^{\tau} = P .\]
				As a result, $X' = X^n$ is a $P$-torsor.
			\end{theorem}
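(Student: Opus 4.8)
The plan is to collapse the tower \eqref{eq:inclusions of group spaces} by establishing the two outer equalities $P^{\circ\circ}=P^a$ and $P^\tau=P$ directly; the middle equality, and the torsor assertion, then follow formally. Note first that an integral fiber is in particular reduced, so $\pi$ has no nowhere reduced fiber, $\pi'$ is surjective, and the whole apparatus of \S\ref{sec:translation automorphism scheme}--\ref{sec:Neron model of fibration} is available (in particular the hypothesis ``integral fibers over $B_1$'' of \Cref{prop:dual Neron for geometrically integral fiber} and of the final proposition of \S\ref{sec:Neron model of fibration} holds a fortiori).

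For $P^{\circ\circ}=P^a$: by the proof of \Cref{thm:translation automorphism scheme}, after an \'etale base change over which $\pi$ has a section $e\colon B\to X'$, the orbit map of $e$ restricts to an open immersion $P^a\hookrightarrow X'$. Restricting to a fibre gives an open immersion $P^a_b\hookrightarrow X'_b$. Since $X$ is smooth and $\pi$ flat, $X'_b=X_b\setminus\Sing(\pi)$ is the smooth locus of $X_b$, which for integral $X_b$ is a dense open subscheme of an irreducible scheme, hence irreducible; so the nonempty open $P^a_b$ is connected. Thus every fibre of $P^a\to B$ is connected and $P^a=P^{\circ\circ}$. (Equivalently, $X'_b$ is itself a $P^{\circ\circ}_b$-torsor, since the free action of the $n$-dimensional $P^{\circ\circ}_b$ on the $n$-dimensional irreducible $X'_b$ is forced to be transitive; this is the fibrewise picture of Arinkin--Fedorov.)

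For $P^\tau=P$ I would pass to the dual side. Kleiman's separatedness criterion (used inside the proof of \Cref{prop:dual Neron for geometrically integral fiber}) makes $\Pic_\pi\to B$ separated, so $\check E=0$ there and $\check P=\Pic^\circ_\pi$, whose strict neutral component is $\Pic^{\circ\circ}_\pi$ by \Cref{thm:Neron model of abelian schemes}. The decisive point is that $\Pic^\circ_\pi$ is \emph{numerically trivial} (in fact of finite type): on the generic abelian fibre its sections are algebraically, hence numerically, trivial, and over an integral fibre a line bundle cannot pick up a nonzero numerical class in a flat family --- there is no ``multidegree'' for it to drift into, in contrast with, say, an $\mathrm{I}_n$ fibre. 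Granting this, $\check P^\tau=\check P$; and the quasi-projective \'etale homomorphisms $\lambda\colon P\to\check P$ and $\check\lambda\colon\check P\to P$ of \Cref{thm:Neron model of abelian schemes}, which satisfy $\lambda\circ\check\lambda=\check\lambda\circ\lambda=[m]$, induce maps on component groups whose composite is $[m]$; since $\pi_0(\check P)$ is torsion this forces $\pi_0(P)$ to be torsion as well, i.e. $P^\tau=P$.

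Assembling: $P^\tau=P$, combined with the equivalence ``$P^\tau=P\iff X'=X^n$'' valid for Lagrangian fibrations with integral fibres over $B_1$ (the final proposition of \S\ref{sec:Neron model of fibration}, whose proof runs the Nakai--Moishezon argument: a $\pi$-ample $L$ stays $\pi$-ample after any birational translation automorphism $f$ because $f^\ast L\otimes L^{-1}$ lies in the numerically trivial $\Pic^\circ_\pi$, so $f$ is regular, whence $X'=X^n$ by \Cref{prop:birational automorphism equals automorphism implies Neron}), gives $X'=X^n$; by \Cref{thm:Neron model of fibration} this common space is a $P$-torsor, and since it equals $X'$ its fibres $X'_b$ are connected, so $P\to B$ has connected fibres and $P=P^{\circ\circ}$. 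Together with $P^{\circ\circ}=P^a$ this makes every inclusion in \eqref{eq:inclusions of group spaces} an equality, and $X'=X^n$ is a $P$-torsor, as claimed. The one genuinely substantive input is the numerical triviality of $\Pic^\circ_\pi$ for integral fibres --- the step I expect to require the most care, and exactly the place where the hypothesis does its work. A more hands-on alternative would be to show directly that no flop can occur over any point of $B$, an integral fibre containing no rationally chain connected subvariety of dimension $\ge 2$; but the structural description of integral singular fibres in \Cref{prop:Hwang-Oguiso's result} is only established over $B_1$, so that route would first have to be pushed into the deeper $\delta$-strata, whereas the Picard argument sidesteps this.
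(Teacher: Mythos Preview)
Your argument for $P^{\circ\circ}=P^a$ is correct and essentially identical to the paper's. The route to $P^\tau=P$, however, rests on the numerical triviality of $\Pic^\circ_\pi$, which you yourself flag as ``the step I expect to require the most care'' and then do not prove: the remark that an integral fibre has ``no multidegree to drift into'' is a heuristic, not an argument, and the finite-type claim is likewise unjustified. This is a genuine gap.

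The irony is that the ``hands-on alternative'' you dismiss in your last sentence is exactly the paper's proof, and it needs far less than you suppose. You worry it would require extending \Cref{prop:Hwang-Oguiso's result} beyond $B_1$, but the relevant input is \Cref{prop:exceptional locus of flop}, which is already valid over all of $B$: for any single flop $g:X\to\bar X$ in the Kawamata decomposition, each irreducible component $W$ of $\Exc(g)$ is an irreducible component of $\pi^{-1}(T)$ for some irreducible component $T\subset D_d$, with $d=\codim W\ge 2$ and $\dim W-\dim\bar W=d$. When every fibre of $\pi$ is integral, $\pi^{-1}(T)$ is itself integral, so $W=\pi^{-1}(T)$ is the \emph{entire} preimage; contracting it forces the general fibre of $\bar\pi:\bar X\to B$ over $T$ to have dimension $n-d<n$, contradicting upper-semicontinuity of fibre dimension (the generic fibre of $\bar\pi$ still has dimension $n$). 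Hence no flop occurs, every birational translation automorphism is regular, and $P^a=P$ directly. Combined with $P^{\circ\circ}=P^a$, this collapses \eqref{eq:inclusions of group spaces} in one stroke, and $X'=X^n$ follows from \Cref{prop:birational automorphism equals automorphism implies Neron}. No detour through the dual side is needed, and the argument is dimension-free.
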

			\begin{proof}
				Say $f : X \dashrightarrow X$ is a birational translation automorphism over $B$ (after shrinking $B$ locally). By Kawamata, $f$ decomposes to finite sequence of flops as in \eqref{diag:flops}. For a single flop \eqref{diag:single flop}, every irreducible component of the flopping locus has to be an irreducible component of $\pi^{-1}(T)$ where $T \subset D_d$ is an irreducible component (\Cref{prop:exceptional locus of flop}). But if the fibers of $\pi$ are (geometrically) integral, then $\pi^{-1}(T)$ is integral and hence this cannot be contracted to $\bar X$ in \eqref{diag:single flop} (because the fiber dimension function of $\bar X \to B$ should be upper-semicontinuous). This means $f$ is an automorphism, showing $P^a = P$. Now $X'$ admits a free $P^a$-action but $X'$ has connected fibers by assumption, so $P^a$ also has to have connected fibers. This shows $P^{\circ\circ} = P^a$.
			\end{proof}
			
			An alternative interpretation of this result is $B_{\elliptic} \subset B_{\tor}$, where the former is the open locus over which the fiber of $\pi$ is geometrically integral (see \cite[Theorem 12.2.4]{EGAIV3}) and the latter is as in \Cref{def:torsion locus}.

		\subsubsection{Domain of definition of local rational sections} \label{sec:rational section}
			Let $U \to B$ be an \'etale or smooth morphism and $a : U \dashrightarrow X$ be an \emph{(\'etale or smooth) local rational section} of the Lagrangian fibration. The N\'eron mapping property of $X' \to B$ under $B_0 \subset B_{\tor}$ shows $a$ is necessarily defined on $U_{\tor} = U \times_B B_{\tor}$.
			
			\begin{proposition}
				Let $U \to B$ be a smooth morphism. Then every rational map $U \dashrightarrow X$ over $B$ is necessarily defined over $B_{\tor}$. \qed
			\end{proposition}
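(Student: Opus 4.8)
The statement is essentially a restatement of the N\'eron mapping property of $X'$ over the extension $B_0 \subset B_{\tor}$, so the plan is to extract exactly that from \Cref{thm:Neron model of fibration}. Let $U \to B$ be smooth and $a \colon U \dashrightarrow X$ a rational map over $B$; put $U_0 = U \times_B B_0$, which is dense in $U$ because $U \to B$ is flat and $B_0 \subset B$ is dense (\Cref{lem:density respects flat base change}). First I would note that over $B_0$ the rational map is automatically a morphism: $U_0$ is normal (being smooth over the smooth base $B_0$) and the fibres of $X_0 \to B_0$ are abelian varieties, so by the no-rational-curves argument recalled after \Cref{def:birational translation automorphism} (cf. \cite[Lemma 3.26]{kim25}) the indeterminacy locus of $a_0 \colon U_0 \dashrightarrow X_0$ is empty, i.e. $a_0$ is a genuine morphism $U_0 \to X_0$.

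The one piece of actual content is that $X'$ is N\'eron under $B_0 \subset B_{\tor}$. On one hand, $X^n \to B$ is the N\'eron model of $\pi_0$ (\Cref{thm:Neron model of fibration}), and its restriction $X^n_{\tor} = X^n \times_B B_{\tor}$ is again a N\'eron model over $B_{\tor}$: any smooth algebraic space $Z \to B_{\tor}$ is smooth over $B$, and the extension $Z \to X^n$ over $B$ provided by the mapping property factors through $X^n_{\tor}$ because its composite with $X^n \to B$ factors through $B_{\tor}$; uniqueness over $B_{\tor}$ follows from uniqueness over $B$. On the other hand $X' \hookrightarrow X^n$ is an open immersion (\Cref{thm:Neron model of fibration}(1)) which, by the very definition of $B_{\tor}$ (\Cref{def:torsion locus}), becomes fibrewise surjective after base change to $B_{\tor}$; a surjective open immersion is an isomorphism, so $X'_{\tor} = X^n_{\tor}$. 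Since $X'_0 = X_0$ and $B_0 \subset B_{\tor}$, the space $X'_{\tor} \to B_{\tor}$ is an extension of $X_0 \to B_0$, and we conclude it is N\'eron under $B_0 \subset B_{\tor}$.

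It remains to put these together. Applying the N\'eron mapping property of $X'_{\tor}$ to the smooth test space $U \to B_{\tor}$ and the morphism $a_0 \colon U_0 \to X_0$ yields a morphism $\tilde a \colon U_{\tor} \to X'_{\tor} \subset X$ with $U_{\tor} = U \times_B B_{\tor}$; it agrees with $a$ over the dense open $U_0 \subset U_{\tor}$ (using $B_0 \subset B_1 \subset B_{\tor}$ from \Cref{prop:subloci of B}(2)), so, $X \to B$ being separated, $\tilde a$ is the unique representative of $a$ over $U_{\tor}$. Hence $\Dom(a) \supseteq U_{\tor}$, i.e. $a$ is defined over $B_{\tor}$. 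I do not anticipate any real obstacle: everything substantive is already packaged in \Cref{thm:Neron model of fibration} and the definition of $B_{\tor}$, and the only point needing a line of care is the elementary observation that a N\'eron model restricts to a N\'eron model over a smaller open subset of the base.
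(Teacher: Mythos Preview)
Your proposal is correct and follows the same approach as the paper: the paper marks the proposition with \qed and records in the preceding paragraph that it is immediate from the N\'eron mapping property of $X'$ under $B_0 \subset B_{\tor}$, which is exactly what you unpack (via $X'_{\tor} = X^n_{\tor}$ and restriction of the N\'eron model). One cosmetic slip: when you write ``the smooth test space $U \to B_{\tor}$'' you mean $U_{\tor} \to B_{\tor}$, as your own conclusion $\tilde a \colon U_{\tor} \to X'_{\tor}$ makes clear.
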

			
			In particular, since $B_{\tor}$ contains $B_1$ and $B_{\elliptic}$, every local rational section is necessarily defined over both $B_1$ and $B_{\elliptic}$. Here is an alternative version: for every local rational section $U \dashrightarrow X$, there exists a birational model $f : X \dashrightarrow Y$ such that the composition $f \circ a : U \dashrightarrow Y$ is regular.

		\subsubsection{Hwang--Oguiso's result: codimension $1$ fibers} \label{sec:Hwang-Oguiso}
			We have already discussed this in \Cref{prop:Hwang-Oguiso's result} and \ref{rmk:Hwang-Oguiso's result}, so let us be short. In \cite{hwang-ogu09, hwang-ogu11}, Hwang--Oguiso studied the structure of general singular fibers of $\pi : X \to B$. Let $b$ be a general closed point in the discriminant locus and $X_b = \pi^{-1}(b)$. In loc. cit., two sets of vector fields in $X_b$ are defined, a twisted vector field $\lambda$ and Hamiltonian vector fields $\xi_1, \cdots, \xi_{n-1}$. The integration of $\lambda$ and $\xi_i$'s yields the notion of a characteristic cycle and a $\CC^{n-1}$-action on $X_b$, which are crucial to understand the structure of $X_b$ in the paper. Let us now consider the Chevalley short exact sequence
			\[ 0 \to \GG \to P^{\circ\circ}_b \to A \to 0 ,\]
			where $\GG$ is a connected algebraic group of dimension $1$ and $A$ is an abelian variety of dimension $n-1$. The characteristic cycle is a connected sequence of closures of $\GG$-orbits, and the $\CC^{n-1}$-action is the $\Lie A \cong \CC^{n-1}$-action on $X_b$. This explains why their method is related to the group $P^{\circ\circ}$.

		\subsubsection{Abasheva--Rogov's result: the Tate--Shafarevich group} \label{sec:Abasheva-Rogov}
			In \cite{aba-rogov21}, Abasheva--Rogov studied the \emph{Tate--Shafarevich group} of a higher-dimensional Lagrangian fibration (based on work of \cite[\S 7]{mar14}) and its relations with the degenerate twistor deformation in \cite{ver15, bog-deev-ver22}. Abasheva--Rogov's definition of the Tate--Shafarevich group is (in our terminology)
			\[ \Sha^{\circ\circ}_{an} (B) = H^1_{an} (B, P^{\circ\circ}) .\]
			The definition of Tate--Shafarevich group is not uniform in literature (e.g., Markman's paper or \cite{huy-mat23}). We propose to study several groups together as follows.
			
			\begin{definition}
				The \emph{strict neutral, regular, numerically trivial, and birational Tate--Shafarevich groups} of $\pi$ are the first \'etale cohomology of the groups in \eqref{eq:inclusions of group spaces}:
				\[ \Sha^*(B) = \Sha^*(B, \pi) = H^1_{\acute et} (B, P^*) \qquad\mbox{for}\quad * = \circ\circ, \ a, \ \tau, \mbox{ or } \emptyset .\]
				Their analytic versions are similarly defined:
				\[ \Sha^*_{an}(B) = \Sha^*_{an}(B, \pi) = H^1_{an} (B, P^*) \qquad\mbox{for}\quad * = \circ\circ, \ a, \ \tau, \mbox{ or } \emptyset .\]
			\end{definition}
			
			Note that \Cref{thm:Neron model of fibration} gives a natural element
			\[ [X^n] \ \ \in \ \Sha(B) = H^1_{\acute et} (B, P) .\]
			A study of these groups and their relations should be done more comprehensively in separate work. Let us present some basic observations to illustrate the interactions of these groups.
			
			\begin{proposition} [Torsion]
				\begin{enumerate}
					\item $\Sha(B)$ is torsion.
					\item If $P^{\tau} = P$ then $\Sha^{\circ\circ}(B)$ and $\Sha^a(B)$ are both torsion.
				\end{enumerate}
			\end{proposition}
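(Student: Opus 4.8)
The plan is to reduce both claims to two standard facts — that $H^1$ of an abelian variety over a field is torsion, and that $H^0$ of a torsion \'etale sheaf is torsion — using the injectivity of N\'eron restriction maps (\Cref{prop:Neron torsor restriction}) together with the cohomology long exact sequences of the inclusions in \eqref{eq:inclusions of group spaces}.

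\emph{Part (1).} First I would observe the chain
\[ \Sha(B)=H^1_{\acute et}(B,P)\ \hookrightarrow\ H^1_{\acute et}(B_0,P_0)\ \hookrightarrow\ H^1(\Spec K,P_K), \]
where $K=\CC(B)$ and $P_K$ is the generic fibre of $P$, an abelian variety over $K$. The first injection is \Cref{prop:Neron torsor restriction}; the second holds because $P_0\to B_0$ is an abelian scheme over the smooth base $B_0$, so a $P_0$-torsor acquiring a section over the generic point is already trivial (a relative form of Weil's extension theorem; cf. \cite[\S 8.4]{neron} or \cite[\S I.2]{fal-chai:abelian}). It then suffices to see $H^1(\Spec K,P_K)$ is torsion: a class there is the class of a $P_K$-torsor $V$, a non-empty $K$-scheme of finite type, hence possessing a closed point of some finite degree $d$; the class dies after the corresponding finite extension of $K$, and restriction--corestriction then shows $d$ annihilates it. Thus $\Sha(B)$ is torsion.

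\emph{Part (2).} Suppose $P^{\tau}=P$, i.e. the \'etale sheaf $\pi_0(P)=P/P^{\circ\circ}$ is torsion (\S \ref{sec:etale group space}); then its further quotient $P/P^a=\pi_0(P)/\pi_0(P^a)$ is torsion as well. The short exact sequences $0\to P^{\circ\circ}\to P\to\pi_0(P)\to 0$ and $0\to P^a\to P\to P/P^a\to 0$ of commutative group spaces induce exact sequences of abelian groups
\[ H^0(B,\pi_0(P))\to\Sha^{\circ\circ}(B)\to\Sha(B),\qquad H^0(B,P/P^a)\to\Sha^a(B)\to\Sha(B). \]
The images of $\Sha^{\circ\circ}(B)$ and $\Sha^a(B)$ in $\Sha(B)$ are torsion by part (1); and the left-hand terms are torsion because a global section of a torsion \'etale sheaf over the quasi-compact $B$ is annihilated by $[m]$ on a Zariski-open cover of $B$, hence by a single $m$. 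So $\Sha^{\circ\circ}(B)$ and $\Sha^a(B)$ are each extensions of a torsion group by a torsion group, hence torsion.

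The step I expect to be the genuine obstacle is the second injection in part (1): one needs that a torsor under an abelian scheme over a smooth (normal) variety is determined by its restriction to the generic point — essentially the only ingredient not already contained in \S \ref{sec:group spaces}--\ref{sec:Neron models}. Everything else is formal diagram chasing together with the classical torsionness of $H^1(\text{field},\text{abelian variety})$ and of $H^0$ of torsion \'etale sheaves.
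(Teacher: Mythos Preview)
Your proof is correct and follows the same overall architecture as the paper: for (1), inject $\Sha(B)$ into $H^1(K,P_K)$ via the N\'eron restriction map and then into Galois cohomology over the function field, and use that the latter is torsion; for (2), compare $\Sha^{\circ\circ}$ and $\Sha^a$ to $\Sha$ via long exact sequences whose outer terms are torsion.

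The differences are in the supporting lemmas. For the second injection in (1) you invoke Weil's extension theorem for rational maps to abelian schemes (applied \'etale-locally to the torsor); the paper instead argues that the closure of a $K$-point in the torsor $N_0$ gives a proper birational $T\to B_0$, and rules out positive-dimensional fibres using Hacon--McKernan's rational chain connectedness of fibres of resolutions---a heavier hammer, but one already imported elsewhere in the paper. For the torsion of $H^1(K,P_K)$ you use the elementary restriction--corestriction trick via a closed point of finite degree; the paper instead cites that $P_K$-torsors are projective (\cite[6.4.1]{neron}) and Raynaud's characterisation of torsion classes. For (2), you run the long exact sequence of $0\to P^{\circ\circ}\to P\to\pi_0(P)\to 0$ directly, whereas the paper factors through $P^a$ in two steps ($P^{\circ\circ}\subset P^a$ with finite $\pi_0$, then $P^a\subset P^{\tau}=P$ with torsion quotient). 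Your route is slightly more direct; the paper's has the mild advantage of isolating the \emph{finite} part coming from $\pi_0(P^a)$.

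One small point to tighten: in (2) your claim that $H^0(B,\pi_0(P))$ is torsion deserves the one-line justification that for an \'etale group space the equaliser of two sections is open (unramified diagonal), so the loci $\{m\cdot s=0\}$ form an open cover of the quasi-compact $B$. You essentially say this, but the phrase ``Zariski-open cover'' could be made precise this way.
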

			\begin{proof}
				(1) We thank Olivier Benoist and James Hotchkiss for helping the following argument. Notice the inclusion
				\[ \Sha(B) = H^1_{\acute et} (B, P) \ \subset \ \Sha(B_0) = H^1_{\acute et} (B_0, P_0) \]
				from \Cref{prop:Neron torsor restriction}. Let us further claim $H^1_{\acute et} (B_0, P_0) \subset H^1 (K, P_K)$ where $K$ is the function field of $B$. Indeed, if a $P_0$-torsor $N_0 \to B_0$ admits a $K$-rational point then its Zariski closure $T \subset N_0$ defines a proper birational morphism $T \to B_0$. Its positive-dimensional fiber is rationally chain connected by \cite[Corollary 1.5]{hacon-mcker07}, violating the fact that fibers of $N_0 \to B_0$ are abelian varieties. Hence $T \to B_0$ is an isomorphism by Zariski's main theorem, proving the desired inclusion. Now $H^1 (K, P_K)$ is torsion since every $P_K$-torsor is projective by \cite[Theorem 6.4.1]{neron} and projective torsors correspond to torsion elements by \cite[Proposition XIII.2.3]{ray:group_schemes}.
				
				\bigskip
				
				(2) It is enough to show the following two separate statements:
				\begin{itemize}
					\item $\Sha^{\circ\circ}(B) \to \Sha^a(B)$ has a finite kernel and cokernel.
					\item $\Sha^a(B) \to \Sha^{\tau}(B)$ has a torsion kernel and cokernel.
				\end{itemize}
				For the first item, let $\epsilon = \pi_0(P^a)$ be the $\pi_0$-group (\S \ref{sec:etale group space}) and take the long exact sequence of $0 \to P^{\circ\circ} \to P^a \to \epsilon \to 0$. Since $P^a$ is quasi-projective, $\epsilon$ is an \'etale constructible sheaf (of finite abelian groups) and hence $H^i (B, \epsilon)$ is finite for all $i \ge 0$. For the second one, take the long exact sequence of $0 \to P^a \to P^{\tau} \to \epsilon \to 0$ and observe that $\epsilon$ is an \'etale sheaf of torsion abelian groups.
			\end{proof}
			
			\begin{proposition} [Tate--Shafarevich twist; Markman \cite{mar14}, Abasheva--Rogov \cite{aba-rogov21}] \label{prop:TS twist}
				For each $\alpha \in \Sha^a(B)$, there exists a proper Lagrangian fibration $\pi^{(\alpha)} : X^{(\alpha)} \to B$ from a smooth algebraic space $X^{(\alpha)}$, called a \emph{Tate--Shafarevich twist} of $\pi$.
			\end{proposition}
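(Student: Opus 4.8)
The plan is to realize $X^{(\alpha)}$ as a \emph{twist} of $\pi$ by a torsor (the contracted product construction). First I would represent $\alpha \in \Sha^a(B) = H^1_{\acute et}(B, P^a)$ by a $P^a$-torsor $T \to B$; by definition $T$ is, \'etale-locally on $B$, isomorphic to $P^a$, so it is a smooth separated finite-type algebraic space over $B$ on which $P^a$ acts freely. By \Cref{thm:translation automorphism scheme} the group scheme $P^a$ also acts on all of $X$ over $B$, so I can form
\[
	X^{(\alpha)} \ := \ \big( X \times_B T \big) \big/ P^a ,
\]
the quotient by the diagonal action $g \cdot (x,t) = (gx, gt)$. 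This action is free, since $P^a$ already acts freely on $T$; and after base change along an \'etale cover $\{ U_i \to B \}$ that trivializes $T$, the quotient becomes $(X_{U_i} \times_{U_i} P^a_{U_i})/P^a \cong X_{U_i}$ via $(x,g) \mapsto g^{-1}x$. Hence, by \'etale descent of algebraic spaces, $X^{(\alpha)}$ exists and comes with a morphism $\pi^{(\alpha)} : X^{(\alpha)} \to B$ which is \'etale-locally on $B$ identified with $\pi : X \to B$, the gluing cocycle being the transition functions of $T$ viewed as translation automorphisms of $X$.

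From this description the coarse properties are immediate by \'etale descent on the base: $\pi^{(\alpha)}$ is proper because $\pi$ is, $X^{(\alpha)}$ is a smooth algebraic space over $\CC$ because $X$ is smooth, and the fibres of $\pi^{(\alpha)}$ agree \'etale-locally with those of $\pi$, so $\dim B = \tfrac12 \dim X^{(\alpha)}$ and the general fibre is an abelian variety. What remains — and what I expect to be the real difficulty — is to equip $X^{(\alpha)}$ with a symplectic form making $\pi^{(\alpha)}$ a Lagrangian fibration. Descending $\mathrm{pr}_X^* \sigma$ from $X \times_B T$ does \emph{not} work in general, because a translation automorphism $f$ of $X$ need not preserve $\sigma$: in Arnold--Liouville coordinates a fibrewise translation by a local $1$-form $\beta$ on $B$ changes $\sigma$ by $\pi^* d\beta$. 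One therefore has to twist the symplectic form as well. Writing the transition functions of $T$ as classes of local $1$-forms $\beta_{ij}$ on $B$, the local forms $\sigma|_{X_{U_i}}$ differ on overlaps by $\pi^*(d\beta_{ij})$, and $\{ d\beta_{ij} \}$ is a \v{C}ech $1$-cocycle valued in the sheaf of exact $2$-forms on $B$ (well defined because the period lattice is a local system of closed $1$-forms). The crucial point is that this cocycle is a coboundary, $d\beta_{ij} = \eta_j - \eta_i$ for local $2$-forms $\eta_i$ on $B$; granting this, the corrected forms $\sigma|_{X_{U_i}} + \pi^*\eta_i$ patch to a global closed $2$-form $\sigma^{(\alpha)}$ on $X^{(\alpha)}$, non-degenerate with Lagrangian fibres since it is \'etale-locally $\sigma$.

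The vanishing of that cohomological obstruction is exactly the analytic heart of the constructions of \cite{mar14} and \cite{aba-rogov21}; I would either invoke it directly, or recover it by unwinding the exact sequence of sheaves $0 \to Z^1_B \to \Omega^1_B \xrightarrow{d} d\Omega^1_B \to 0$ on $B$ together with the fact that the period lattice consists of closed forms. Finally I would record that $X^{(\alpha)}$ depends only on the class $\alpha$ and that $X^{(0)} \cong X$, both formal consequences of the universal property of the contracted product; and observe that if one is content with $X^{(\alpha)}$ merely as a smooth algebraic space over $B$, the first two paragraphs already yield the statement, the third paragraph being precisely what upgrades it to a Lagrangian fibration in the sense of the paper.
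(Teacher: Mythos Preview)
Your construction of $X^{(\alpha)}$ as the contracted product $(X \times_B T)/P^a$ is correct and, once unwound, is exactly the paper's argument: the paper represents $\alpha$ by a \v{C}ech cocycle $\{f_{ij} \in P^a(U_{ij})\}$ on an \'etale cover, views each $f_{ij}$ as a translation automorphism of $X_{U_{ij}}$, and uses these to twist the descent datum of $X$; this is the contracted product written out explicitly. The paper likewise obtains properness and smoothness by descent and records that $X^{(0)} = X$ and that $\pi^{(\alpha)} \cong \pi^{(\beta)}$ iff $\alpha = \beta$.

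The one substantive difference is the symplectic form. You correctly worry that translation automorphisms need not preserve $\sigma$, and you sketch the standard correction (local adjustment by pullbacks of $2$-forms on $B$, with the obstruction living in a cohomology of exact $2$-forms). The paper's proof, by contrast, does not address this at all: it constructs the descent $\pi^{(\alpha)} : X^{(\alpha)} \to B$ and cites \cite{mar14} and \cite{aba-rogov21} for the rest. So you are doing strictly more than the paper does. Your outline of the correction is the right one in the analytic setting of those references; be aware, though, that your language (Arnold--Liouville coordinates, the short exact sequence of sheaves of forms) is analytic, so if you want an algebraic symplectic form over all of $B$ you would need either to invoke those references directly, as the paper implicitly does, or to reformulate the obstruction argument in \'etale or Zariski terms.
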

			\begin{proof}
				This is the construction in the paragraph after \cite[Observation 1.4]{mar14} or \cite[\S 2.2]{aba-rogov21}. We copy their construction for classes in $\Sha^a(B)$ (same works for $\Sha^a_{an}(B)$). A class $\alpha \in \Sha^a(B) = H^1_{\acute et}(B, P^a)$ is representable by a \v{C}ech cocycle $\{ f_{ij} \in P^a(U_{ij}) \}$ where $B^* = \bigsqcup_i U_i \to B$ is a covering in \'etale topology, $B^{**} = \bigsqcup_{ij} U_{ij} \to B$ for $U_{ij} = U_i \times_B U_j$, and $B^{***} = B^* \times_B B^* \times_B B^*$. We may consider $X \to B$ as a descent of $X^* \to B^*$ along $B^* \to B$ with a descent datum $\varphi : p_1^* X^* \to p_2^* X^*$ satisfying a cocycle condition.
				
				Since $P^a$ is a translation automorphism scheme, $f_{ij} : X_{ij} \to X_{ij}$ is a translation automorphism over $U_{ij}$ and hence induces an isomorphism $f : (X^{(\alpha)})^* \coloneq X^* \to X^*$ over $B^*$. Define a twisted isomorphism $\varphi^{(\alpha)}$ as follows:
				\[\begin{tikzcd}
					p_1^* (X^{(\alpha)})^* \arrow[r, "\varphi^{(\alpha)}"] \arrow[d, "p_1^* f"] & p_2^* (X^{(\alpha)})^* \arrow[d, "p_2^* f"] \\
					p_1^* X^* \arrow[r, "\varphi"] & p_2^* X^*
				\end{tikzcd}.\]
				The cocycle condition of $f_{ij}$ implies the cocyle condition of the descent datum $\varphi^{(\alpha)}$, proving the existence of a descent $\pi^{(\alpha)} : X^{(\alpha)} \to B$. We add three remarks:
				\begin{itemize}
					\item The Tate--Shafarevich twist corresponding to $0 \in \Sha^a(B)$ is $\pi^{(0)} = \pi$.
					\item If $\alpha, \beta \in \Sha^a(B)$ or $\Sha^a_{an}(B)$, then $\pi^{(\alpha)} \cong \pi^{(\beta)}$ if and only if $\alpha = \beta$. This is because a translation isomorphism $h : X^{(\alpha)} \to X^{(\beta)}$ corresponds to a coboundary condition of $\alpha = (f_{ij})$ and $\beta = (g_{ij})$.
					\item The homomorphism $\Sha^{\circ\circ}(B) \to \Sha^a(B)$ may not be injective, so it is better to think about the Tate--Shafarevich twist for elements in $\Sha^a(B)$ instead of $\Sha^{\circ\circ}(B)$.\qedhere
				\end{itemize}
			\end{proof}
			
			\begin{proposition} [Analytification] \label{prop:TS group analytification}
				Assume further that $B$ is proper. Then the analytification homomorphisms have the following properties.
				\begin{enumerate}
					\item $\Sha^a(B) \to \Sha^a_{an}(B)$ is injective.
					\item $\Sha^{\circ\circ}(B) \to \Sha^{\circ\circ}_{an}(B)$ is injective and induces $\Sha^{\circ\circ}(B)_{\tor} = \Sha^{\circ\circ}_{an}(B)_{\tor}$.
				\end{enumerate}
			\end{proposition}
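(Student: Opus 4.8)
The plan is to reduce everything to two classical comparison theorems over the proper base $B$: Artin's analytic--algebraic comparison (``GAGA'') for proper algebraic spaces over $\CC$ and his comparison theorem for torsion constructible \'etale sheaves. The common content of (1) and of the injectivity in (2) is the following assertion: for a smooth group space $G \to B$ that is separated and of finite type --- e.g. $G = P^a$, which is even quasi-projective by \Cref{thm:translation automorphism scheme}, or $G = P^{\circ\circ}$, an open subgroup space of $P^a$ by \Cref{thm:strict neutral component} and of finite type by \Cref{thm:smooth group space with connected fibers} --- the analytification map $H^1_{\acute et}(B, G) \to H^1_{an}(B^{an}, G^{an})$ is injective. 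To prove this, let $N \to B$ be a $G$-torsor with $N^{an}$ analytically trivial, so there is a holomorphic section $\sigma : B^{an} \to N^{an}$; I must produce an algebraic section. As a torsor under a separated finite-type group space, $N$ is a separated finite-type algebraic space over $B$, so by Nagata compactification for algebraic spaces I may fix an open immersion $N \hookrightarrow \bar N$ with $\bar N \to B$ proper; since $B$ is proper, $\bar N$ is a proper algebraic space over $\CC$. The graph $\sigma(B^{an})$ is a compact analytic subspace of $\bar N^{an}$ (the image of a section of the separated morphism $\bar N^{an} \to B^{an}$), hence by Artin's GAGA it equals $Z^{an}$ for a unique closed subspace $Z \subset \bar N$; then $Z$ has empty analytification away from the open $N$, so $Z \subset N$, and the projection $Z \to B$ has analytification $\sigma(B^{an}) \to B^{an}$, an isomorphism, hence is itself an isomorphism (by coherent GAGA over $\bar B$). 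Its inverse $B \to Z \subset N$ is an algebraic section, so $[N] = 0$. Properness of $B$ is genuinely used here; the statement fails without it, already for nonconstant abelian schemes over $\AA^1$. Equivalently, for (1) one may phrase this through the Tate--Shafarevich twist of \Cref{prop:TS twist}: if $\alpha \in \Sha^a(B)$ dies in $\Sha^a_{an}(B)$ then $\pi^{(\alpha)} : X^{(\alpha)} \to B$ is analytically translation-isomorphic to $\pi$ over $B^{an}$, and since $X^{(\alpha)}$ and $X$ are proper algebraic spaces over $\CC$, Artin's GAGA upgrades this to an algebraic translation isomorphism, forcing $\alpha = 0$ by the rigidity remark following \Cref{prop:TS twist}.

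It remains to prove $\Sha^{\circ\circ}(B)_{\tor} = \Sha^{\circ\circ}_{an}(B)_{\tor}$; by the injectivity just established it suffices to show that every $n$-torsion class $\beta \in \Sha^{\circ\circ}_{an}(B)$ lies in the image (the resulting algebraic class is then automatically $n$-torsion, again by injectivity). Since $P^{\circ\circ}$ is smooth over $B$ with connected fibers and we are in characteristic $0$, multiplication by $n$ has invertible differential at the identity, hence is \'etale, and being fiberwise surjective it is an \'etale epimorphism; this gives a Kummer-type short exact sequence
\[ 0 \to P^{\circ\circ}[n] \to P^{\circ\circ} \xrightarrow{[n]} P^{\circ\circ} \to 0 \]
in both the \'etale and the analytic topology, in which $P^{\circ\circ}[n]$ is an \'etale group scheme of finite type over $B$, i.e. a constructible sheaf of finite abelian groups. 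Because $n\beta = 0$, $\beta$ lifts along $[n]$ to a class $\tilde\beta \in H^1_{an}(B^{an}, (P^{\circ\circ}[n])^{an})$; Artin's comparison theorem for torsion constructible sheaves identifies this group with $H^1_{\acute et}(B, P^{\circ\circ}[n])$, so $\tilde\beta$ is the analytification of an \'etale class $\tilde\alpha$. Pushing $\tilde\alpha$ forward through the algebraic Kummer sequence yields $\alpha \in \Sha^{\circ\circ}(B)$, and compatibility of the comparison isomorphisms with the connecting homomorphisms of the two Kummer sequences gives $\alpha^{an} = \beta$. Hence $\beta$ is algebraic, completing the proof.

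The real work sits in the algebraization step of the first paragraph: the torsors $N$ are never proper over $\CC$ (their fibers acquire affine parts over the discriminant), so one cannot invoke GAGA directly and must first compactify $N$ over $B$ \emph{as an algebraic space} and then recognize the analytic graph inside $\bar N$ as algebraic; checking that $\bar N$ exists, that the resulting $Z$ really is a graph, and that it lands in $N$ rather than in $\bar N \setminus N$ is the delicate part. The second paragraph is comparatively formal --- the only points needing attention being the compatibility of the algebraic and analytic Kummer long exact sequences under analytification, and the observation that $[n]$ is \'etale in characteristic $0$ even though its fiberwise degree jumps along the strata $D_i$ of $B$.
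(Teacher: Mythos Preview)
Your proof is correct. The torsion part of (2) is essentially identical to the paper's argument (Kummer sequence plus Artin comparison for the constructible kernel $P^{\circ\circ}[n]$), and your alternative phrasing of (1) via Tate--Shafarevich twists is exactly what the paper does.

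The genuine difference is in how injectivity is handled. You prove a single general lemma: for any smooth separated finite-type group space $G \to B$, the map $H^1_{\acute et}(B,G) \to H^1_{an}(B,G)$ is injective, by Nagata-compactifying the torsor $N$ itself and algebraizing the graph of the analytic section inside $\bar N$ via GAGA for proper algebraic spaces. This handles $G = P^a$ and $G = P^{\circ\circ}$ uniformly. The paper instead treats the two cases asymmetrically: for (1) it uses that $\Sha^a(B)$ already parametrizes \emph{proper} objects (the twists $X^{(\alpha)} \to B$), so GAGA applies directly with no compactification step; for (2) it deduces injectivity from (1) by the four lemma applied to the long exact sequences of $0 \to P^{\circ\circ} \to P^a \to \epsilon \to 0$, after checking that $H^0_{\acute et}(B,P^a) = H^0_{an}(B,P^a)$ (here the paper compactifies the \emph{group} $P^a$, not a torsor, and this is elementary since $P^a$ is quasi-projective) and that $H^0_{\acute et}(B,\epsilon) = H^0_{an}(B,\epsilon)$ (constructible comparison). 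Your route is cleaner and more portable but invokes Nagata for algebraic spaces; the paper's route avoids that by exploiting the specific quasi-projectivity of $P^a$ and the built-in properness of the twists $X^{(\alpha)}$.
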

			\begin{proof}
				(1) An element $\alpha \in \Sha^a(B)$ defines a Tate--Shafarevich twist $\pi^{(\alpha)} : X^{(\alpha)} \to B$ by \Cref{prop:TS twist}, which is a flat proper algebraic space. If $X$ and $X^{(\alpha)}$ are proper varieties that are analytically isomorphic, they are algebraically isomorphic by GAGA.
				
				\bigskip
				
				(2) Consider the $\pi_0$-group short exact sequence $0 \to P^{\circ\circ} \to P^a \to \epsilon \to 0$ and compare its \'etale and analytic long exact sequences
				\[\begin{tikzcd}
					H^0_{\acute et} (B, P^a) \arrow[r] \arrow[d, phantom, "\parallel"] & H^0_{\acute et} (B, \epsilon) \arrow[r] \arrow[d, phantom, "\parallel"] & \Sha^{\circ\circ}(B) \arrow[r] \arrow[d] & \Sha^a(B) \arrow[d, hook] \\
					H^0_{an} (B, P^a) \arrow[r] & H^0_{an} (B, \epsilon) \arrow[r] & \Sha^{\circ\circ}_{an}(B) \arrow[r] & \Sha^a_{an}(B)
				\end{tikzcd},\]
				The second vertical arrow is bijective because $\epsilon$ is an \'etale constructible sheaf. Since $P^a \to B$ is quasi-projective, we may consider an open immersion into a projective morphism $P^a \hookrightarrow \overline{P^a} \to B$. Now every analytic section $B \to P^a$ induces an analytic section $B \to \overline {P^a}$, which is algebraic by GAGA. This means the first vertical arrow is bijective. The injectivity of the third arrow now follows from the four lemma (diagram chasing).
				
				For the second statement, we prove the $m$-torsion parts of $\Sha^{\circ\circ}(B)$ and $\Sha^{\circ\circ}_{an}(B)$ agree. Consider the multiplication by $m$ endomorphism on $P^{\circ\circ}$ and its kernel $K$. Since $P^{\circ\circ}$ is quasi-projective, $K$ is an \'etale constructible sheaf of finite abelian groups. Compare the \'etale and analytic long exact sequences of $0 \to K \to P^{\circ\circ} \to P^{\circ\circ} \to 0$:
				\[\begin{tikzcd}
					H^1_{\acute et} (B, K) \arrow[r] \arrow[d, phantom, "\parallel"] & \Sha^{\circ\circ}(B) \arrow[r, "{[m]}"] \arrow[d, hook] & \Sha^{\circ\circ}(B) \arrow[d, hook] \\
					H^1_{an} (B, K) \arrow[r] & \Sha^{\circ\circ}_{an} (B) \arrow[r, "{[m]}"] & \Sha^{\circ\circ}_{an} (B)
				\end{tikzcd}.\]
				The left vertical arrow is an isomorphism by the Artin comparison theorem, showing the isomorphism of $m$-torsion parts $\Sha^{\circ\circ}(B) [m] = \Sha^{\circ\circ}_{an} (B) [m]$. Same argument yields $\Sha^a(B) [m] = \Sha^a_{an} (B) [m]$ for every $m$ coprime to the orders of $\pi_0(P^a)$.
			\end{proof}

		\subsubsection{de Cataldo--Rapagnetta--Sacc\`a's result} \label{sec:de Cataldo-Rapagnetta-Sacca}
			In \cite{decat-rap-sacca21}, de Cataldo--Rapagnetta--Sacc\`a analyzed the singular fibers of two different but related Lagrangian fibrations of projective hyper-K\"ahler manifolds. Let $S$ be a projective K3 surface with $\Pic(S) = \ZZ H$ for an ample line bundle $H$ and consider two Mukai vectors $v = (0, 2H, 2)$ and $v' = (0, 2H, 1)$ in $H^* (S, \ZZ)$. Consider the Beauville--Mukai systems over $B = |2H| = \PP^5$:
			\[ \pi : X \to M_v(B) \to B ,\qquad \tau : Y = M_{v'}(S) \to B ,\]
			where $X$ is a OG10-type and $Y$ is a $\text{K3}^{[5]}$-type hyper-K\"ahler manifold. The relative Jacobian of the universal family of curves $\mathcal C \to B = |2H|$ defines a smooth commutative group scheme $J = P^{\circ\circ} \to B$ with connected fibers acting on both $X$ and $Y$ over $B$.
			
			The paper shows that both $\pi : X \to B$ and $\tau : Y \to B$ are $\delta$-regular abelian fibrations with respect to the $P^{\circ\circ}$-action. Moreover, it calculates all non-integral singular fibers of $\pi$ and $\tau$, showing the structure of the $\delta$-loci
			\[ D_1 = |H|^{(2)} \cup D_{1, \elliptic} ,\quad D_2 = |H| \cup D_{2, \elliptic} ,\quad D_3 = |H| \cup D_{3, \elliptic} ,\quad D_4 = D_{4, \elliptic}, \quad \cdots \]
			where the closed immersions $|H|^{(2)} = |H|^{\times 2} / \mathfrak S_2 \subset |2H|$ and $|H| \subset |2H|$ are defined naturally and the subscript $-_{\elliptic}$ indicates the locus over which the fiber is geometrically integral.
			
			In their discussions, $X_0$ and $Y_0$ are both $P^{\circ\circ}_0$-torsors over $B_0$. Therefore, from our \Cref{thm:Neron model of abelian schemes}, the birational translation automorphism spaces of $X$ and $Y$ are isomorphic: $P = P_X = P_Y$. By \Cref{thm:Neron model of fibration codimension 1}, $X_1'$ and $Y_1'$ are both $P_1$-torsors and indeed this agrees with their calculations. The inclusion $P^{\circ\circ} \subset P$ is not an equality over $B_1$, because $P_b (= X'_b = Y'_b)$ has two connected components when $b \in |H|^{(2)} \cap B_1$. Hence $X'$ and $Y'$ cannot be $P^{\circ\circ}$-torsors over $B_1$. Further analyzing their results together with our previous discussions will yield the following:
			
			\begin{proposition}
				Notation as in the discussion above about \cite{decat-rap-sacca21}. Restricting everything over $B \setminus |H|$, we have the following.
				\begin{enumerate}
					\item $X'$ is a $P$-torsor and the group spaces in \eqref{eq:inclusions of group spaces} for $\pi$ are:
					\[ P^{\circ\circ} \subsetneq P^a = P^{\tau} = P .\]
					\item Same statements hold for $Y'$ and $\tau$.
				\end{enumerate}
				In particular, both $X'$ and $Y'$ are N\'eron under $B_0 \subset B \setminus |H|$, and $Y$ is a Tate--Shafarevich twist of $X$ by an element in $\Sha^a(B \setminus |H|)$ (see \Cref{prop:TS twist}). \qed
			\end{proposition}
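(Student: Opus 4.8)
The plan is to run the Arinkin--Fedorov argument of \S\ref{sec:Arinkin-Fedorov} locally over $B' = B \setminus |H|$, using that the only non-integral fibres of $\pi$ (and of $\tau$) over $B'$ lie over the codimension-one locus $|H|^{(2)} \setminus |H|$, while all deeper $\delta$-strata meet $B'$ inside the elliptic locus. Concretely, restricting the description of the $\delta$-loci recalled above one has $D_d \cap B' \subset D_{d, \elliptic}$ for every $d \ge 2$, whereas $|H|^{(2)} \setminus |H|$ is a non-empty component of $D_1 \setminus D_2$ over which $X'_b = P_b$ has exactly two connected components. First I would take a local birational translation automorphism $f : X_{B'} \dashrightarrow X_{B'}$ and decompose it into flops via Kawamata's theorem as in \eqref{diag:flops}; by \Cref{prop:exceptional locus of flop} the exceptional locus $W$ of a single flop satisfies that $\pi(W)$ is an irreducible component of $D_d$ with $d = \codim_X W \ge 2$, hence $\pi(W) \cap B'$ lies in the elliptic locus. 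Then $\pi^{-1}(\pi(W) \cap B')$ is flat with geometrically integral fibres over an irreducible base, so it is integral and, again by \Cref{prop:exceptional locus of flop}, equals $W \cap X_{B'}$; being a union of entire fibres of an equidimensional Lagrangian fibration, it cannot be contracted by the small contraction in \eqref{diag:single flop}. Hence no flop occurs over $B'$, i.e.\ $P^a = P$ over $B'$; the middle equality $P^a = P^\tau$ is then automatic from $P^a \subset P^\tau \subset P$, and $X' = X^n$ is a $P$-torsor over $B'$ by \Cref{prop:birational automorphism equals automorphism implies Neron}. The strictness $P^{\circ\circ} \subsetneq P^a$ over $B'$ is immediate, since for $b \in |H|^{(2)} \setminus |H|$ the group $P^a_b = X'_b$ is disconnected while $P^{\circ\circ}_b$ is connected.

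The same argument applies verbatim to $\tau : Y \to B$, because \cite{decat-rap-sacca21} shows that the non-integral fibres of $\tau$ over $B'$ also occur only over $|H|^{(2)} \setminus |H|$ and have two components; this gives $P^{\circ\circ} \subsetneq P^a_Y = P^\tau_Y = P_Y = P$ over $B'$ (recall $P = P_X = P_Y$ by \Cref{thm:Neron model of abelian schemes}) and $Y' = Y^n$ a $P$-torsor over $B'$. In particular $X'$ and $Y'$, being the restrictions over $B'$ of the N\'eron models $X^n, Y^n \to B$, are N\'eron under $B_0 \subset B'$.

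For the Tate--Shafarevich statement I would use that $X'$ and $Y'$ are $P^a$-torsors over $B'$ (as $P^a = P$ there) and set $\alpha = [Y'] - [X'] \in \Sha^a(B') = H^1_{\acute et}(B', P^a)$. The twist $\pi^{(\alpha)} : X^{(\alpha)} \to B'$ of \Cref{prop:TS twist} is a smooth symplectic Lagrangian fibration, \'etale-locally isomorphic to $X$ over $B'$, whose smooth locus is the $P^a$-torsor of class $[X'] + \alpha = [Y']$; hence $(X^{(\alpha)})_0 \cong Y_0$ over $B_0$, and this isomorphism extends to a birational map $X^{(\alpha)} \dashrightarrow Y$ over $B'$. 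Running the flop analysis of the first paragraph once more (the $\delta$-loci of $X^{(\alpha)}$ agree with those of $X$ by \Cref{thm:birational map of fibration codimension 1}, and $X^{(\alpha)}$ has geometrically integral fibres over $B'_{\elliptic}$), this birational map is forced to be an isomorphism over $B'$. Therefore $Y|_{B'} \cong X^{(\alpha)}$, so $Y$ is a Tate--Shafarevich twist of $X$ by $\alpha \in \Sha^a(B \setminus |H|)$.

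The hard part will mostly be bookkeeping: extracting from the explicit fibre computations of \cite{decat-rap-sacca21} that $|H|^{(2)} \setminus |H|$ sits only in $D_1$ (so that every deeper $\delta$-stratum meets $B'$ inside the elliptic locus) and that $X'_b = Y'_b$ has precisely two connected components there; with these inputs every step reduces to applications of results already established. A minor point requiring care is the equidimensionality of $\bar X \to B'$ used to exclude contractions, but this holds because $\bar X$ is a symplectic variety over the smooth base $B'$, so its Lagrangian fibration is flat.
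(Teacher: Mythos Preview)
Your argument for parts (1) and (2) is correct and is precisely the approach the paper intends (the paper gives no explicit proof, only \qed\ after noting that the statement follows from ``analyzing their results together with our previous discussions''). The key step---running the Arinkin--Fedorov no-flops argument of \S\ref{sec:Arinkin-Fedorov} using that every component of $D_d \cap B'$ for $d \ge 2$ lies in the elliptic locus---is exactly right. One small correction: to exclude the contraction of $W = \pi^{-1}(\pi(W) \cap B')$ you should invoke upper-semicontinuity of fibre dimension for the \emph{proper} morphism $\bar X \to B'$ (as the paper does in \S\ref{sec:Arinkin-Fedorov}), rather than flatness; the latter would need $\bar X$ to be Cohen--Macaulay plus equidimensionality over $B'$, which is true but is an extra step, whereas semicontinuity is immediate from properness and suffices.

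For the Tate--Shafarevich twist there is a genuine, though easily repaired, gap. You construct $X^{(\alpha)}$ and then apply Kawamata's flop decomposition to the birational map $X^{(\alpha)} \dashrightarrow Y$, but Kawamata's theorem requires the source to be projective over the base, and $X^{(\alpha)}$ is a priori only a proper smooth \emph{algebraic space} (\Cref{prop:TS twist}). The fix is to work \'etale-locally on $B'$: on each $U_i$ one has $X^{(\alpha)}|_{U_i} \cong X|_{U_i}$, which is projective over $U_i$, so Kawamata applies there and your no-flops argument forces $X^{(\alpha)}|_{U_i} \cong Y|_{U_i}$; these glue. Equivalently and more directly, you can bypass $X^{(\alpha)}$ entirely: since $X'$ and $Y'$ are both $P^a$-torsors over $B'$, choose local trivializations giving birational maps $h_i : X|_{U_i} \dashrightarrow Y|_{U_i}$; the no-flops argument (now between projective varieties) makes each $h_i$ an isomorphism, and the cocycle $f_{ij} = h_j \circ h_i^{-1} \in P^a(U_{ij})$ exhibits $Y$ as $X^{(\alpha)}$ for $\alpha = [(f_{ij})] \in \Sha^a(B')$.
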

			
			\begin{remark}
				\begin{enumerate}
					\item Over $|H|$, the fibers of $\tau$ are nowhere reduced by computations in \cite{decat-rap-sacca21} and \cite[Theorem 1.1]{hel21}, so our results do not apply.
					\item $X$ is of OG10-type and $Y$ is of $\text{K3}^{[5]}$-type. Hence they are not deformation equivalent but yet $X$ and $Y$ are Tate--Shafarevich twist of each other in a precise sense (over $B \setminus |H|$).
				\end{enumerate}
			\end{remark}
			
			Similar descriptions are given for a generalized Kummer $6$-fold and OG6-type hyper-K\"ahler manifold in \cite{wu24}. Same type of analyzation should work. Note also from \cite[Theorem 5.1]{kim25} that the N\'eron model $P \to B$ contains global sections $K = (\ZZ/2)^{\oplus 4}$ and $\check P = P/K$, explaining the coefficients $16$ in their result.

\appendix
\section{Backgrounds} \label{sec:backgrounds}
	The use of algebraic spaces in this article will be necessary due to the following reasons:
	\begin{itemize}
		\item The theory of N\'eron models is well-behaved (e.g., smooth descent) when they are assumed to be algebraic spaces.
		\item The torsor representing an element in $H^1$ may fail to be a scheme.
		\item The relative Picard space of a flat projective family may fail to be a scheme.
		\item A Mukai flop of a symplectic variety may fail to be a scheme.
		\item Some results in this article generalize to smooth, separated, and symplectic algebraic spaces $X$ of finite type.
	\end{itemize}
	It is often immediate to upgrade a statement about schemes to that about algebraic spaces (e.g., when the statement is \'etale local on the source). In such cases, we will use the statement for algebraic spaces without explaining it explicitly.
	
	In this article, every scheme is locally Noetherian and defined over a field of characteristic $0$. Every algebraic space is locally Noetherian and \emph{locally separated} over a field of characteristic $0$, so that it is a \emph{decent space} the sense of \cite[Tag~06NK, 088J]{stacks-project} as well as quasi-separated \cite[Tag~0BB6]{stacks-project}. Since \cite{stacks-project} will be cited quite often, we will specify only its Tag number when we cite their results, e.g., (Tag~ABCD) means \cite[Tag~ABCD]{stacks-project}.

	\subsection{Scheme theory facts}
		We collect various scheme theory facts in this section.

		\subsubsection{Sections}
		The following well-known results about local sections are extensively used throughout the article. Recall that the notion of a reside field is well-defined for points on a (decent) algebraic space (Tag~0EMV).
		
		\begin{proposition} [{\cite[Proposition 2.2.14]{neron}}] \label{prop:etale local section}
			Let $f : X \to S$ be a smooth morphism between algebraic spaces. Let $s \in S$ be a point and $x \in X_s$ a closed point on the fiber. Then there exists an \'etale morphism $\phi : U \to S$ and a point $s' \in U$ lying above $s$ with $k(s') = k(x)$, such that $f$ has a $U$-section $h : U \to X$ with $h(s') = x$:
			\[\begin{tikzcd}
				U \arrow[r, "h"] \arrow[rd, "\id"'] & X_U \arrow[r] \arrow[d] & X \arrow[d, "f"] \\
				& U \arrow[r, "\phi"] & S
			\end{tikzcd}.\]
		\end{proposition}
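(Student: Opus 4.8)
The plan is to reduce to schemes and then invoke the local structure of smooth morphisms. Since $X$ is a decent algebraic space, I would first choose an étale morphism $V \to X$ from a scheme together with a point $v$ above $x$ with $k(v) = k(x)$; after shrinking $V$ the composite $V \to S$ is smooth and $v$ is a closed point of its fibre over $s$. Closed points of a scheme locally of finite type over a field have finite residue field, so $k(x)/k(s)$ is finite and, since we work in characteristic $0$, separable. As composing any section of $V \to S$ through a point over $s$ of residue field $k(x)$ with $V \to X$ produces the desired section of $f$, it suffices to treat the case where $X$ is an affine scheme, $x \in X_s$ is closed, and $k(x)/k(s)$ is finite separable.

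Next I would apply the standard local structure theorem for smooth morphisms: after replacing $X$ by an affine open neighbourhood of $x$, the morphism $f$ factors as $g : X \to \mathbb{A}^{n}_{S}$ étale followed by the projection $p : \mathbb{A}^{n}_{S} \to S$. Put $y = g(x)$; then $y$ lies over $s$ and $k(x)/k(y)$ is finite separable because $g$ is étale. This reduces the statement to two sub-problems: (i) produce, after a suitable étale base change of $S$, a section of $p$ through a point over $s$ with residue field $k(y)$; and (ii) transport such a section across the étale morphism $g$ to obtain one of $f$ through a point with residue field $k(x)$.

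For (i), I would write $k(y) = k(s)[\theta]$ with $\theta$ having a monic minimal polynomial $\bar P$ over $k(s)$, lift $\bar P$ to a monic $P \in \mathcal{O}_{S,s}[T]$, and replace $S$ by a small affine neighbourhood of $s$; then $U_1 = \Spec\big(\mathcal{O}_S[T]/(P)\big)$ is finite étale over $S$ near the point $s_1$ cut out by $\theta$, with $k(s_1) = k(y)$. Over $U_1$ the point $y$ has residue field $k(s_1)$, so its affine coordinates lie in $k(s_1)$; lifting them to $\mathcal{O}_{U_1,s_1}$ and shrinking $U_1$ gives a section $\sigma : U_1 \to \mathbb{A}^{n}_{U_1}$ with $\sigma(s_1) = y$. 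For (ii), set $Y = X_{U_1} \times_{\mathbb{A}^{n}_{U_1}} U_1$ along $\sigma$, an algebraic space étale over $U_1$; the point $x$ determines a point of the fibre $Y_{s_1}$ with residue field $k(x)$, finite separable over $k(s_1)$. One more finite étale base change $U \to U_1$, again coming from a primitive element of $k(x)/k(s_1)$, realises that point as the image of a section $U \to Y$; composing $U \to Y \to X_{U_1} \to X$ yields the required $h$, and the distinguished point $s' \in U$ over $s$ satisfies $k(s') = k(x)$.

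I expect the main obstacle to be the residue-field bookkeeping in the two base changes (i) and (ii): one must verify that each monic-lift construction is genuinely étale — not merely unramified — at the relevant point, that shrinking to an affine neighbourhood of $s$ never destroys the chosen point, and, in the reduction to schemes, that an étale chart of $X$ realising $k(x)$ can indeed be selected (this uses that $X$ is decent). With those checks in place the argument is a routine chain of applications of the structure theorems for smooth and étale morphisms together with separability of residue-field extensions in characteristic $0$; in particular no descent step is needed, since the section is allowed to live over an étale neighbourhood of $S$, which is exactly the output of the construction.
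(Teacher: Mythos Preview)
The paper does not supply its own proof of this proposition: it is stated in the appendix with a citation to \cite[Proposition 2.2.14]{neron} and no further argument. Your proposal is correct and is essentially the standard proof given in that reference --- reduce to schemes via an \'etale chart, factor the smooth morphism as \'etale followed by projection from affine space, and handle the residue-field extension by a primitive-element/monic-lift construction to produce the required \'etale neighbourhood and section.
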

		
		\begin{proposition} [{\cite[{$\mathrm{VI_B}$ Lemma 5.6.1}]{SGA3I}}] \label{prop:quasi-finite local section}
			Let $f : X \to S$ be a locally finite type and universally open morphism between algebraic spaces. Let $s \in S$ be a point and $x \in X_s$ a closed point. Then there exists an \'etale morphism $U \to S$ and finite surjective morphism $S' \to U$, such that the composition
			\[ \phi : S' \to U \to S \]
			satisfies $\phi^{-1}(s) = \{ s' \}$, $k(s') = k(x)$, and that $f$ has an $S'$-section $h : S' \to X$ with $h(s') = x$:
			\[\begin{tikzcd}
				S' \arrow[r, "h"] \arrow[rd, "\id"'] & X' \arrow[r] \arrow[d] & X \arrow[d, "f"] \\
				& S' \arrow[r, "\phi"] & S
			\end{tikzcd}\]
		\end{proposition}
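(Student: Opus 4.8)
The strategy is to reduce, by standard d\'evissage, to a quasi-finite morphism over a Henselian local base, dispose of that case by exhibiting an explicit clopen ``finite part'', and then descend the construction to a genuine \'etale neighborhood of $s$; the \'etale morphism $U\to S$ in the statement appears precisely at this last, descending, step. First I would pass to the case where $S$ is Noetherian (a limit argument, since only finitely much data is involved) and, the conclusion being local on $S$ at $s$, replace $S$ by $\Spec\mathcal O_{S,s}$, a Noetherian local scheme with closed point $s$. Next, choosing an affine scheme $V$ \'etale over $X$ with a point $v$ over $x$ such that $k(v)=k(x)$, and noting that $V\to X\to S$ is again locally of finite type and universally open (an \'etale morphism is universally open, and a composite of universally open morphisms is universally open) while $v$ is closed in the fibre $V_s$ (the preimage of a closed point under an \'etale morphism consists of closed points), I would replace $X$ by $V$, so that we are dealing with a finite-type affine, hence separated, \emph{scheme} $g:V\to S$; since $g$ is universally open, hence open, and $S$ is local with $s\in g(V)$, the morphism $g$ is in fact surjective.

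The next reduction is to the quasi-finite case $\dim_v V_s=0$. If $d=\dim_v V_s\ge 1$, I would shrink $V$ around $v$ and cut by a hypersurface section $V(t)\subseteq V$, with $t\in\mathfrak m_v$ chosen transversal enough that $V(t)\to S$ is again universally open of relative dimension $d-1$ at $v$, and then induct on $d$; since the quasi-finite locus of a finite-type morphism is open, finitely many such cuts leave us with $g:V\to S$ quasi-finite (hence with finite fibres), separated, of finite type and universally open, $v\in V_s$ the unique closed point over $s$, and $k(v)=k(x)$. \emph{This hypersurface step is the main obstacle}: it is where the full force of universal openness is spent, and it rests on the dimension-formula characterization of universally open (equivalently, universally equidimensional) morphisms of finite type. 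I would either carry it out directly from that criterion or quote it from the dimension theory of universally open morphisms in EGA IV, \S 14 (it is also contained in the relevant part of \cite[$\mathrm{VI_B}$ \S 5]{SGA3I}).

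For the quasi-finite case I would pass to the Henselization $A^h$ of $A=\mathcal O_{S,s}$. By the structure theory of quasi-finite separated morphisms over a Henselian local ring, $V\times_S\Spec A^h = W\sqcup W'$ with $W\to\Spec A^h$ finite, $W$ a clopen neighborhood of the point over $v$, and $W\cap V_s=\{v\}$. Universal openness of $g$ makes $V\times_S\Spec A^h\to\Spec A^h$ open, so its restriction to the clopen piece $W$ is open as well; being also finite with image containing the closed point, and $\Spec A^h$ connected, $W\to\Spec A^h$ is surjective. Now I would spread this out: writing $A^h=\varinjlim A'$ over connected affine \'etale neighborhoods $U=\Spec A'$ of $s$ with a unique point $s'$ over $s$ and $k(s')=k(s)$, and using that $V$ is of finite presentation over $A$, the clopen $W$ already descends to a clopen $W_{A'}\subseteq V_U:=V\times_S U$, finite over $U$; this morphism is again open (base change of the universally open $g$ along the \'etale $U\to S$) and finite with image containing $s'$, hence surjective onto the connected $U$, and $W_{A'}$ meets the fibre over $s'$ in the single point $v$, of residue field $k(x)$. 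Setting $S':=W_{A'}$ and taking the composite $S'\hookrightarrow V_U\to V\to X$ as the section $h$, we obtain $U\to S$ \'etale, $S'\to U$ finite surjective, $\phi^{-1}(s)=(W_{A'})_{s'}=\{v\}$ with $k(v)=k(x)$, and $h(v)=x$ — exactly the asserted data.
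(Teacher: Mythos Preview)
The paper does not give its own proof of this proposition: it is stated with a citation to \cite[$\mathrm{VI_B}$ Lemma~5.6.1]{SGA3I} and no proof environment follows. Your sketch is a faithful reconstruction of the SGA3 argument (reduce to an affine \'etale chart, slice by hypersurfaces to reach the quasi-finite case using the dimension formula for universally open morphisms, then use the structure of quasi-finite separated schemes over a Henselian local base and descend), so there is nothing to compare against in the paper itself; your approach is the standard one and is essentially correct as outlined, with the hypersurface-section step correctly flagged as the place where the real work lies.
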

		
		\begin{remark} \label{rmk:local section}
			We may choose the \'etale morphism $\phi : U \to S$ in \Cref{prop:etale local section} to be quasi-finite. We may similarly do so in \Cref{prop:quasi-finite local section}, so that the morphism $\phi : S' \to S$ is quasi-finite. We may furthermore assume $S'$ is integral. In this case, $\phi : S' \to S$ is equidimensional in the sense of \S \ref{sec:equidimensional morphism}.
		\end{remark}
		
		\begin{proposition} [{\cite[Proposition 3.1.2]{neron}}] \label{prop:section passes through regular point}
			Let $f : X \to S$ be a locally finite type morphism between regular algebraic spaces. Then a section $S \to X$ of $f$ passes through a regular point of $X_s$ for all $s \in S$.
		\end{proposition}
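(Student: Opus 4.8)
The plan is to reduce the statement to commutative algebra of local rings, where it becomes a short consequence of the characterization of regular quotients of a regular local ring. Since regularity of a point, the existence of a section, and the formation of fibers are all compatible with \'etale base change on source and target, I would first pass to compatible \'etale charts through the relevant points (the section lifts after shrinking) and thereby assume $X$ and $S$ are locally Noetherian regular schemes. Fix $s \in S$, let $e\colon S \to X$ be the section, set $x = e(s) \in X_s$, and write $A = \mathcal{O}_{S,s}$, $\mathcal{O} = \mathcal{O}_{X,x}$, with maximal ideals $\mathfrak{n}$ and $\mathfrak{m}$. The structure morphism $f$ and the section $e$ induce local homomorphisms $f^{\#}\colon A \to \mathcal{O}$ and $e^{\#}\colon \mathcal{O} \to A$ with $e^{\#}\circ f^{\#} = \mathrm{id}_A$. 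Hence $f^{\#}$ is injective, $e^{\#}$ surjective, the residue field extension $k(s)\to k(x)$ is an isomorphism, and, setting $I = \ker e^{\#}$, one has $\mathcal{O} = f^{\#}(A)\oplus I$ as $A$-modules. Identifying $A$ with $f^{\#}(A)\subset\mathcal{O}$, a direct computation gives $\mathfrak{m} = \mathfrak{n}\oplus I$, $\mathfrak{n}\mathcal{O} = \mathfrak{n}\oplus\mathfrak{n}I$, and $I\cap\mathfrak{n}\mathcal{O} = \mathfrak{n}I$.

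Consequently the local ring of the fiber at $x$ is $\mathcal{O}_{X_s,x} = \mathcal{O}/\mathfrak{n}\mathcal{O} \cong k(s)\oplus(I/\mathfrak{n}I)$, a Noetherian local ring with residue field $k(s)$ and maximal ideal $\bar I := I/\mathfrak{n}I$; moreover $\bar I/\bar I^{2}\cong I/\mathfrak{m}I$ (using $\mathfrak{m}I = \mathfrak{n}I + I^{2}$), so its embedding dimension equals the minimal number of generators $c$ of $I$. Now comes the key input: by hypothesis $A = \mathcal{O}/I$ is a regular local ring, and a quotient of a regular local ring is regular if and only if the kernel is generated by part of a regular system of parameters; hence $I$ is minimally generated by $c = \dim\mathcal{O} - \dim A$ elements. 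On the other hand, for any local homomorphism of Noetherian local rings one has the inequality $\dim\mathcal{O} \le \dim A + \dim(\mathcal{O}/\mathfrak{n}\mathcal{O})$, so $\dim\mathcal{O}_{X_s,x} \ge \dim\mathcal{O} - \dim A = c$, while the general bound $\dim \le \operatorname{embdim}$ gives $\dim\mathcal{O}_{X_s,x} \le \dim_{k(s)}\bar I/\bar I^{2} = c$. Therefore $\dim\mathcal{O}_{X_s,x} = \dim_{k(s)}\bar I/\bar I^{2}$, i.e.\ $\mathcal{O}_{X_s,x}$ is regular, which is exactly the assertion that the section passes through a regular point of $X_s$.

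I expect no essential obstacle: this is precisely \cite[Proposition 3.1.2]{neron} recast for algebraic spaces, the only points demanding a little care being the reduction to schemes via \'etale charts (harmless, as everything is \'etale-local on source and target) and the bookkeeping around the decomposition $\mathcal{O} = A\oplus I$, which is one of $A$-modules and not of rings. Note that the ``locally of finite type'' hypothesis is not actually needed in the local computation above; it is part of the statement we wish to match and merely ensures the fibres and the comparison $\dim_x X_s = \dim\mathcal{O}_{X_s,x}$ behave as expected.
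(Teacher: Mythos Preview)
The paper does not supply its own proof of this proposition; it merely cites \cite[Proposition 3.1.2]{neron}. Your argument is correct and is essentially the proof given in that reference, with the additional (and indeed harmless) reduction from algebraic spaces to schemes via \'etale charts.
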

		
		\begin{example}
			\Cref{prop:section passes through regular point} fails once $X$ is singular. Consider a minimal proper elliptic surface $\pi : S \to \Delta$ over a Dedekind scheme with central fiber $S_{t_0} = C_1 \cup C_2$ of Kodaira type $\text{I}_2$. Assume there exists a section $s : \Delta \to S$ passing through $C_1$. Contracting a $(-2)$-curve $C_1$ yields a proper flat morphism $\bar \pi : \bar S \to \Delta$ from a surface $\bar S$ with an ordinary double point singularity. Then $s$ induces a section of $\bar \pi$ passing through a singular point of $\bar S_{t_0}$.
		\end{example}

		\subsubsection{Fitting subschemes} \label{sec:Fitting subscheme}
			This subsection generally follows \cite[\S 20.2]{eis:commutative}.
			
			\begin{definition} [Fitting subscheme]
				Let $F$ be a coherent sheaf on a locally Noetherian scheme $X$. Consider its finite rank locally free presentation
				\[\begin{tikzcd}
					E_1 \arrow[r, "f"] & E_0 \arrow[r] & F \arrow[r] & 0
				\end{tikzcd} \qquad\mbox{with}\quad r = \rk E_0 .\]
				For each $i \ge 0$, the wedge power $\wedge^{r-i} f : \wedge^{r-i} E_1 \to \wedge^{r-i} E_0$ induces a homomorphism
				\[ f^{r-i} : \wedge^{r-i} E_1 \otimes \wedge^{r-i} E_0^{\vee} \to \mathcal O_X .\]
				The \emph{$i$-th Fitting subscheme of $F$} is a closed subscheme $\Fitt_i F \subset X$ defined by the ideal sheaf $\im f^{r-i} \subset \mathcal O_X$.
			\end{definition}
			
			We have a descending sequence of closed subschemes
			\[ X \supset \Fitt_0 F \supset \Fitt_1 F \supset \cdots ,\]
			which becomes an empty set in finite steps. Here are the properties of Fitting subschemes we use.
			
			\begin{proposition}
				Let $F$ be a coherent sheaf on $X$.
				\begin{enumerate}
					\item The scheme-theoretic support $\Supp F$ and the $0$-th Fitting subscheme $\Fitt_0 F$ are set-theoretically identical and scheme-theoretically related by
					\[ \Fitt_0 F \supset \Supp F .\]
					
					\item $F$ is locally free of rank $i$ on the locally closed subscheme $\Fitt_{i-1} F \setminus \Fitt_i F$.
					
					\item If $f : Y \to X$ is a morphism between locally Noetherian schemes, then there exists a canonical isomorphism $\Fitt_i (f^* F) = f^{-1} (\Fitt_i(F))$.
				\end{enumerate}
			\end{proposition}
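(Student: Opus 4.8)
The plan is to reduce all three statements to commutative algebra over a local ring and invoke the standard theory of Fitting ideals, following \cite[\S 20.2]{eis:commutative}. The one structural point that everything rests on is that the ideal sheaf $\operatorname{im} f^{r-i} \subset \mathcal O_X$ is independent of the chosen finite locally free presentation of $F$; I would establish this first (or simply cite it), together with its obvious compatibility with localization, so that $\Fitt_i F$ is a well-defined quasi-coherent ideal sheaf. The comparison of two presentations is the classical argument: lift $\operatorname{id}_F$ to a map of presentations, reduce to the case where one presentation is obtained from the other by adding a trivially split free summand, and note that this operation multiplies the generating minors by coordinate functions without enlarging or shrinking the ideal they generate.

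For item (3), take a presentation $E_1 \xrightarrow{g} E_0 \to F \to 0$ and pull it back along $f$. Since $f^*$ is right exact and sends a locally free sheaf of rank $r$ to one of rank $r$, the sequence $f^* E_1 \xrightarrow{f^* g} f^* E_0 \to f^* F \to 0$ is a presentation of $f^* F$, and the $(r-i)\times(r-i)$ minors of $f^* g$ are precisely the pullbacks of those of $g$. Hence the ideal sheaf they generate equals the inverse-image ideal sheaf $f^{-1}(\Fitt_i F) = \operatorname{im}(f^*(\Fitt_i F) \to \mathcal O_Y)$, which is the claim.

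For item (1), I would work over a local ring $(R,\mathfrak m)$ with residue field $k$. From a presentation $R^a \xrightarrow{g} R^r \to F \to 0$ one gets $F \otimes_R k = \coker(g \otimes_R k)$, so $F$ vanishes at the closed point iff $g \otimes_R k$ is surjective iff some $r \times r$ minor of $g$ is a unit iff $\Fitt_0 F = R$; this gives the set-theoretic equality of $\Supp F$ and $\Fitt_0 F$. For the scheme-theoretic inclusion I would show $\Fitt_0 F \subset \operatorname{Ann} F$ as ideals, equivalently $\Fitt_0 F \cdot F = 0$: for any choice of $r$ columns of $g$, forming the submatrix $g'$, the identity $g' \cdot \operatorname{adj}(g') = \det(g') \cdot \mathrm{I}_r$ shows that $\det(g') \cdot e_j$ lies in the image of $g$ for every standard basis vector $e_j$, hence maps to $0$ in $F$; as the $\det(g')$ generate $\Fitt_0 F$ and the $e_j$ generate $R^r \twoheadrightarrow F$, we get $\Fitt_0 F \cdot F = 0$. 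Since a smaller ideal defines a larger closed subscheme, $\Fitt_0 F \subset \operatorname{Ann} F$ yields $\Supp F \subset \Fitt_0 F$, i.e.\ $\Fitt_0 F \supset \Supp F$.

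For item (2), I would first record the local freeness criterion: a finitely generated module $M$ over a local ring $R$ is free of rank $i$ iff $\Fitt_{i-1} M = 0$ and $\Fitt_i M = R$. The forward implication is immediate from the presentation with zero map and $r = i$; for the converse, pick by Nakayama a minimal presentation $R^a \xrightarrow{g} R^i \to M \to 0$ (so all entries of $g$ lie in $\mathfrak m$) and observe that $\Fitt_i M = R$ and $\Fitt_{i-1} M = 0$ force the number of generators to be exactly $i$ and then $g = 0$. Now $Z := \Fitt_{i-1} F \setminus \Fitt_i F$ is, as a closed subscheme of the open set $X \setminus \Fitt_i F$, cut out by the ideal sheaf $\Fitt_{i-1} F$; applying item (3) to the inclusion $Z \hookrightarrow X$ gives $\Fitt_{i-1}(F|_Z) = (\Fitt_{i-1} F)|_Z = 0$ and $\Fitt_i(F|_Z) = (\Fitt_i F)|_Z = \mathcal O_Z$, so $F|_Z$ is locally free of rank $i$ at every point by the criterion. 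The main obstacle is not conceptual but bookkeeping: one must be careful that the scheme structure on $Z$, the normalization of the local freeness criterion, and the base-change formula (3) are all used with correctly indexed Fitting ideals; and I would be content to cite \cite[\S 20.2]{eis:commutative} for the presentation-independence rather than reprove it in detail.
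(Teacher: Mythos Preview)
Your proof is correct and follows the standard approach found in \cite[\S 20.2]{eis:commutative}. The paper itself does not give a proof of this proposition: it is stated without proof as a collection of well-known facts, with the subsection header noting that the material ``generally follows \cite[\S 20.2]{eis:commutative}.'' Your write-up is thus a faithful expansion of exactly the reference the paper cites, and there is nothing to compare.
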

			
			\begin{proposition} [{\cite[Lemma 1.4]{kal13}}] \label{prop:fitting2}
				Let $E_1 \xlongrightarrow{f} E_0 \to F \to 0$ be a locally free presentation of a coherent sheaf $F$ with $\rk E_i = r_i$. Consider the dual of $f$ and its cokernel $E_0^{\vee} \to E_1^{\vee} \to C \to 0$. Then we have an equality
				\[ \Fitt_i F = \Fitt_{i + r_1 - r_0} C .\]
			\end{proposition}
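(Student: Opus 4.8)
The plan is to reduce the identity of ideal sheaves to the elementary fact that a square matrix and its transpose have the same determinant, applied to the exterior powers of $f$. Since the asserted equality $\Fitt_i F=\Fitt_{i+r_1-r_0}C$ is an equality of closed subschemes of $X$, it may be checked after restricting to an affine open $U=\Spec A$ over which $E_0$ and $E_1$ are trivialized; so I would first fix such a $U$ together with isomorphisms $E_0|_U\cong A^{r_0}$ and $E_1|_U\cong A^{r_1}$, under which $f$ is represented by an $r_0\times r_1$ matrix $M$ over $A$ and $f^\vee$ by its transpose $M^{T}$, an $r_1\times r_0$ matrix.

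Next I would unwind the definition of the Fitting subscheme recalled above. For an integer $d$, the map $f^{d}\colon\bigwedge^{d}E_1\otimes\bigwedge^{d}E_0^\vee\to\mathcal O_X$ factors as $\bigwedge^{d}E_1\otimes\bigwedge^{d}E_0^\vee\xrightarrow{\bigwedge^{d}f\,\otimes\,\mathrm{id}}\bigwedge^{d}E_0\otimes\bigwedge^{d}E_0^\vee\xrightarrow{\mathrm{ev}}\mathcal O_X$, so over $U$ its image is the ideal generated by all entries of the compound matrix $\bigwedge^{d}M$, i.e.\ the ideal of $d\times d$ minors of $M$. Taking $d=r_0-i$ shows $\Fitt_i F|_U$ is the ideal of $(r_0-i)$-minors of $M$; applying the same computation to the presentation $E_0^\vee\to E_1^\vee\to C\to0$, whose middle term $E_1^\vee$ has rank $r_1$, shows $\Fitt_k C|_U$ is the ideal of $(r_1-k)$-minors of $M^{T}$.

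Then I would invoke $\det N=\det N^{T}$: every $d\times d$ minor of $M^{T}$ equals the corresponding $d\times d$ minor of $M$, so $M$ and $M^{T}$ generate the same ideal of $d\times d$ minors for each $d$. Choosing $k=i+r_1-r_0$ makes $r_1-k=r_0-i$, whence $\Fitt_k C|_U$ is the ideal of $(r_0-i)$-minors of $M$, which is exactly $\Fitt_i F|_U$; since $X$ is covered by such affines, this proves the equality. An equivalent basis-free formulation, which avoids even the localization, is to note that $\bigwedge^{d}(f^\vee)$ is canonically $(\bigwedge^{d}f)^\vee$ under $\bigwedge^{d}(E^\vee)\cong(\bigwedge^{d}E)^\vee$, and that the ideal $\im f^{d}$ is unchanged upon dualizing a homomorphism of locally free sheaves; thus both sides of the claimed identity are the single ideal $\im\big(\bigwedge^{r_0-i}f\big)$.

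I do not expect a genuine obstacle: this is a routine linear-algebra lemma, which is why the statement is quoted from \cite[Lemma 1.4]{kal13}. The points that require attention are purely bookkeeping: that in the presentation $E_0^\vee\to E_1^\vee\to C\to0$ it is the rank $r_1$ of the middle term $E_1^\vee$ (not $r_0$) that enters the formula $\Fitt_k C=(\text{ideal of }(r_1-k)\text{-minors})$ — this is exactly what produces the shift $i\mapsto i+r_1-r_0$ — and that the boundary cases where $r_0-i$ lies outside $[0,\min(r_0,r_1)]$ are harmless, since there the relevant exterior power vanishes (or is $\mathcal O_X$) and both sides agree trivially.
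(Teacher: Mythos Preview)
Your argument is correct and is the standard proof of this elementary fact. The paper does not supply its own proof of this proposition; it simply cites \cite[Lemma 1.4]{kal13} and moves on, so there is nothing to compare against beyond noting that your minors-of-$M$-versus-minors-of-$M^T$ computation is exactly the expected content of that reference.
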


			\begin{lemma} [{\cite[Proposition III.10.6]{hartshorne}}] \label{lem:image of the fitting subscheme}
				Let $f : X \to S$ be a proper flat morphism of relative dimension $d$ between smooth schemes over an algebraically closed field of characteristic $0$. Then
				\[ \codim f(\Fitt_{d + i-1} \Omega_f) \ge i \qquad \mbox{for all} \quad i \ge 0 .\]
			\end{lemma}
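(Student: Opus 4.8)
The plan is to reduce the statement to a degeneracy-locus estimate and then invoke generic smoothness in characteristic zero, following the approach of \cite[Ch.~III, \S10]{hartshorne}. Write $m = \dim S$, so $\dim X = m+d$. Since $X$ and $S$ are smooth and $f$ is flat of relative dimension $d$, the conormal sequence
\[ f^{*}\Omega_S \longrightarrow \Omega_X \longrightarrow \Omega_f \longrightarrow 0 \]
is a locally free presentation of $\Omega_f$ with $\Omega_X$ of rank $m+d$ and $f^{*}\Omega_S$ of rank $m$. Hence for every $x \in X$ one has $\dim_{k(x)}\Omega_f \otimes k(x) = (m+d) - \rk_x(f^{*}\Omega_S \to \Omega_X)$, and therefore, set-theoretically,
\[ V\!\left(\Fitt_{d+i-1}\Omega_f\right) \;=\; \bigl\{ x \in X : \dim_{k(x)}\Omega_f \otimes k(x) \ge d+i \bigr\} \;=\; \bigl\{ x \in X : \rk(df_x) \le m-i \bigr\} \;=:\; W_i , \]
where $df_x \colon T_{X,x} \to T_{S,f(x)}$ is the tangent map (its rank equals that of the presentation map at $x$). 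Since the codimension of $f(\Fitt_{d+i-1}\Omega_f)$ depends only on the underlying set, I may work with $W_i$ carrying its reduced structure.

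Next I would argue component by component. Fix an irreducible component $Z \subseteq W_i$ and set $T = \overline{f(Z)} \subseteq S$; it suffices to show $\dim T \le m-i$. Let $Z^{\circ} \subseteq Z$ be the smooth locus, which is dense open because the base field is perfect, so that $\overline{f(Z^{\circ})} = T$. Corestrict to the dominant morphism $f|_{Z^{\circ}} \colon Z^{\circ} \to T$ and apply generic smoothness (here the characteristic-zero hypothesis enters): there is a dense open $V \subseteq T$ over which $f|_{Z^{\circ}}$ is smooth. Shrinking $V$, I may assume $V$ lies in the smooth locus of $T$ and meets $f(Z^{\circ})$, and I pick $z \in Z^{\circ}$ with $v := f(z) \in V$. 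Smoothness of $f|_{Z^{\circ}}$ at $z$ makes the tangent map $T_{Z^{\circ},z} \to T_{T,v}$ surjective, while $\dim T_{T,v} = \dim T$ since $v$ is a smooth point of $T$.

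Finally one compares the two ranks. The composite $T_{Z^{\circ},z} \to T_{T,v} \hookrightarrow T_{S,v}$ factors as $T_{Z^{\circ},z} \hookrightarrow T_{X,z} \xrightarrow{\,df_z\,} T_{S,f(z)}$, so
\[ \dim T \;=\; \rk\!\bigl(T_{Z^{\circ},z} \to T_{T,v}\bigr) \;\le\; \rk(df_z) \;\le\; m-i , \]
the last inequality because $z \in Z \subseteq W_i$. Thus $\codim_S \overline{f(Z)} \ge i$ for each of the finitely many components $Z$ of $W_i$, which yields $\codim_S f(\Fitt_{d+i-1}\Omega_f) \ge i$. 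The only substantive ingredient is the theorem of generic smoothness, which is precisely where the characteristic-zero assumption is used; everything else is routine manipulation of the conormal sequence and of tangent maps. The one mildly delicate point is the clean set-theoretic identification of $V(\Fitt_{d+i-1}\Omega_f)$ with the degeneracy locus $W_i$ at non-closed points, but this is immediate from the conormal sequence because every sheaf appearing in it other than $\Omega_f$ itself is locally free.
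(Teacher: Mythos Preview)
Your proof is correct and follows the same route as the paper's: you identify $\Fitt_{d+i-1}\Omega_f$ set-theoretically with the degeneracy locus $\{x:\rk(df_x)\le m-i\}$ via the conormal sequence, and then bound the dimension of its image using generic smoothness. The paper does exactly the first step and then simply cites \cite[Proposition III.10.6]{hartshorne} for the second, whereas you have written out the proof of that proposition (the component-by-component argument via the tangent map factorization). So there is no genuine difference in strategy, only in how much of Hartshorne's argument is unwound.
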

			\begin{proof}
				To see our statement is a special case of Hartshorne's, consider a locally free presentation $f^* \Omega_S \to \Omega_X \to \Omega_f \to 0$. By definition, $\Fitt_{d+i-1} \Omega_f$ consists of points $x \in X$ over which the rank of $f^* \Omega_S \to \Omega_X$ is $\le \dim S - i$. This reduces our statement to Hartshorne's.
			\end{proof}

		\subsubsection{Scheme-theoretically dense open subspaces} \label{sec:open dense subspace}
			This subsection summarizes (Tag~0831). Because we are assuming algebraic spaces are locally Noetherian, we can adopt the simpler definition in (Tag~0835).
			
			\begin{definition}
				Let $S$ be a locally Noetherian algebraic space. Its open subspace $S_0 \subset S$ is called \emph{(scheme-theoretically) dense} if the scheme-theoretic image of the open immersion $S_0 \to S$ is $S$.
			\end{definition}
			
			If $S$ is $\mathrm{(S_1)}$, i.e., it has no embedded points, then an open subspace $S_0 \subset S$ is scheme-theoretically dense if and only if it is topologically dense in Zariski topology (Tag~083P).
			
			\begin{lemma} \label{lem:density respects flat base change}
				Let $S_0 \subset S$ be a dense open subspace. Then for a flat morphism $X \to S$ form a locally Noetherian algebraic space $X$, the open subspace $X_0 = X \times_S S_0 \subset X$ is dense.
			\end{lemma}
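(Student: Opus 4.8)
The plan is to rephrase scheme-theoretic density as an injectivity of structure-sheaf maps and then deduce the result from exactness of flat pullback together with flat base change for the pushforward along an open immersion. By definition $S_0\subset S$ is scheme-theoretically dense precisely when the canonical map $\mathcal O_S\to j_*\mathcal O_{S_0}$ is injective, where $j\colon S_0\hookrightarrow S$ is the open immersion; likewise $X_0\subset X$ is scheme-theoretically dense iff $\mathcal O_X\to j'_*\mathcal O_{X_0}$ is injective, where $j'\colon X_0\hookrightarrow X$, and where $g\colon X_0\to S_0$ is the base change of $f$. Since the assertion is local on $S$ in the Zariski (indeed étale) topology, I would first shrink $S$ to affine Noetherian opens; this makes $j$ quasi-compact and quasi-separated (so that $j_*\mathcal O_{S_0}$ is quasi-coherent and the above characterization of density is valid), and reduces the problem to a setting in which the hypotheses of flat base change are met.

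Now the argument runs as follows. The morphism $f\colon X\to S$ is flat, so $f^{*}$ is exact on $\mathcal O_S$-modules; applying it to the injection $\mathcal O_S\hookrightarrow j_*\mathcal O_{S_0}$ yields an injection $\mathcal O_X=f^{*}\mathcal O_S\hookrightarrow f^{*}(j_*\mathcal O_{S_0})$. On the other hand, for the cartesian square formed by $j$, $j'$, $f$, $g$, the flat base-change theorem for quasi-compact quasi-separated morphisms (Tag~073K, in its form for algebraic spaces) provides a canonical isomorphism $f^{*}(j_*\mathcal O_{S_0})\xrightarrow{\ \sim\ }j'_*(g^{*}\mathcal O_{S_0})=j'_*\mathcal O_{X_0}$, compatible with the evident maps from $\mathcal O_X$. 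Composing, $\mathcal O_X\to j'_*\mathcal O_{X_0}$ is injective, which is exactly the statement that $X_0\subset X$ is scheme-theoretically dense.

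The only genuine points requiring care are the reduction step and the bookkeeping of the base-change isomorphism: one must ensure $j\colon S_0\hookrightarrow S$ is quasi-compact (automatic after passing to affine Noetherian opens of $S$) and that the base-change map used is the standard one, so that its compatibility with the canonical comparison map $\mathcal O_X\to j'_*\mathcal O_{X_0}$ is formal. Alternatively, and equivalently, one can give the proof via associated points: in the locally Noetherian setting an open subspace is scheme-theoretically dense iff it contains every associated point, and for the flat morphism $f$ every associated point of $X$ maps to an associated point of $S$ (Tag~05DZ and its analogue for algebraic spaces), hence lies in $f^{-1}(S_0)=X_0$; this route trades flat base change for the theory of (weakly) associated points on algebraic spaces.
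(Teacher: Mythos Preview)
Your main argument is correct and is precisely an unpacked version of the paper's one-line proof, which simply cites \cite[Proposition 2.5.2]{neron} that the scheme-theoretic image of a qcqs morphism commutes with flat base change (and notes that $j$ is qcqs since $S$ is locally Noetherian). Your alternative via associated points is also valid and gives an equally short route in the locally Noetherian setting.
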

			\begin{proof}
				The scheme-theoretic image of a qcqs morphism is compatible with arbitrary flat base change \cite[Proposition 2.5.2]{neron}. The open immersion $S_0 \subset S$ is qcqs because $S$ is locally Noetherian.
			\end{proof}
			
			\begin{lemma} [Tag~084N] \label{lem:morphism to separated space is determined by dense subspace}
				Let $f, g : X \to Y$ be morphisms of algebraic spaces over $S$ and $U \subset X$ a dense open subspace. Assume $Y$ is separated over $S$. Then $f_{|U} = g_{|U}$ implies $f = g$.
			\end{lemma}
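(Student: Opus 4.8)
The plan is to run the standard equalizer argument, which works verbatim for algebraic spaces. First I would form the morphism $h = (f, g) : X \to Y \times_S Y$ and recall that, since $Y \to S$ is separated, the diagonal $\Delta_{Y/S} : Y \to Y \times_S Y$ is a closed immersion. Base changing $\Delta_{Y/S}$ along $h$, I obtain a closed immersion
\[ E \coloneq X \times_{(Y \times_S Y)} Y \hookrightarrow X , \]
i.e.\ $E \subset X$ is a closed subspace, characterized by the property that a morphism $T \to X$ factors through $E$ if and only if the two composites $T \to X \rightrightarrows Y$ (via $f$ and via $g$) coincide.

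Next I would observe that the hypothesis $f_{|U} = g_{|U}$ says precisely that the open immersion $j : U \to X$ factors through $E$. Since $U \subset X$ is scheme-theoretically dense, its scheme-theoretic image in $X$ is all of $X$; but the scheme-theoretic image is contained in every closed subspace through which $j$ factors, in particular in $E$. Hence $E = X$ as closed subspaces of $X$, which means $h$ factors through the diagonal $\Delta_{Y/S}$. Composing that factorization with the two projections $Y \times_S Y \rightrightarrows Y$ gives $f = g$.

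There is essentially no obstacle here: the only point requiring a little care is that one needs scheme-theoretic density of $U$ (not merely topological density) to conclude $E = X$ on the nose, and one needs $Y/S$ separated (not merely quasi-separated) so that $\Delta_{Y/S}$ is a genuine \emph{closed} immersion and $E$ is therefore a closed subspace. Both are hypotheses in the statement, and the argument is exactly (Tag~084N), so in the paper it suffices to cite that tag; the sketch above records why it holds.
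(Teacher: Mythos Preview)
Your argument is correct and is precisely the standard equalizer proof. The paper does not give its own proof of this lemma; it simply records the statement with the citation (Tag~084N), so there is nothing to compare beyond noting that your sketch is exactly the argument behind that tag.
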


		\subsubsection{Traits} \label{sec:trait}
			This subsection follows \cite[\S 5.1]{hol19}. A \emph{Dedekind scheme} is a regular (connected) Noetherian scheme of dimension $1$. A local Dedekind scheme is the Spec of a discrete valuation ring (dvr).
			
			\begin{definition}
				Let $S$ be an algebraic space.
				\begin{enumerate}
					\item A \emph{trait} of $S$ is a nonconstant morphism $\Delta \to S$ from a local Dedekind scheme $\Delta$. Given a point $s \in S$, a \emph{trait passing through $s$} is a trait sending the closed point $0 \in \Delta$ to $s$.
					\item A trait is called \emph{flat} if it is a flat morphism. A trait is called \emph{unramified} (resp. \emph{quasi-finite}) if the induced morphism $\Delta \to \Spec \mathcal O^h_{S, s}$ is \emph{unramified} (resp. \emph{quasi-finite}).
					\item Say we have a preferred choice of a dense open subspace $S_0 \subset S$. A trait is called \emph{non-degenerate} if it sends the generic point $\xi \in \Delta$ to a point in $S_0$.
				\end{enumerate}
			\end{definition}
			
			A trait can be an arbitrary nonconstant morphism $\Delta \to S$; for example, it can be ramified or the field extension $k(0) / k(s)$ may even be transcendental. An unramified trait is quasi-finite (Tag~02V5).
			
			\begin{proposition} [{\cite[Proposition 2.4.1]{neron}}] \label{prop:valuative criterion for flatness}
				Let $f : X \to S$ be an algebraic space locally of finite type over a reduced scheme $S$. Let $x \in X$ and $s = f(x) \in S$ be points. Then $f$ is flat at $x$ if and only if for every trait $\Delta \to S$ passing through $s$, the base change $X_{\Delta} \to \Delta$ is flat at every point $x' \in X_{\Delta}$ lying over $x$.
			\end{proposition}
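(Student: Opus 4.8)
The implication ``$f$ flat at $x$ $\Longrightarrow$ flat after every base change'' is formal, so the content is the converse. First I would dispose of the algebraic-space language: flatness at a point can be tested after an \'etale surjection onto the source, and \'etale base change commutes with the formation of traits, so one reduces to the case where $X$ is a scheme. The assertion being local on $X$ and $S$, one may then take $X = \Spec B$ and $S = \Spec A$ affine with $A$ reduced Noetherian, $x \leftrightarrow \mathfrak q$, $s \leftrightarrow \mathfrak p$, and after localizing replace $A$ by $\mathcal O_{S,s}$ --- a reduced Noetherian local ring whose traits through the closed point are exactly the traits of $S$ through $s$ (recall a trait may be ``crooked'': its generic point need not map to a minimal prime of $A$). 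So the problem becomes: a point $x$ of a finite-type $A$-scheme at which $f$ is not flat must admit a trait $\Spec V \to \Spec A$ through the closed point over which the base change acquires $V$-torsion at a point over $x$.

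The plan is to reduce this to a statement about finitely generated modules. Using that the non-flat locus of a finite-type morphism over a Noetherian base is closed and, by generic flatness, dominates no component of $\Spec A$, non-flatness at $x$ is already detected by the failure of $\mathcal O_{X,x}$ to be $A$-flat together with a jump at $\mathfrak p$ of a rank-type invariant; tracking, for a coherent model $\mathcal F$ of $\mathcal O_X$ near $x$, the function $\mathfrak q' \mapsto \dim_{\kappa(\mathfrak q')}\bigl(\mathcal F \otimes_{\mathcal O_S} \kappa(\mathfrak q')\bigr)$, and running Noetherian induction on $\Spec A$ together with passage to the reduced quotients $A/\mathfrak p_i$ cut out by the minimal primes (to which the trait hypothesis visibly descends), one is left with the core assertion: \emph{if $A$ is a reduced Noetherian local ring and $M$ a finitely generated $A$-module with $M \otimes_A V$ torsion-free for every local homomorphism $A \to V$ to a discrete valuation ring, then $M$ is free}. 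The integral-domain case of this is treated via the normalization $\widetilde{A/\mathfrak p_i}$, which in the present setting (everything essentially of finite type over $\CC$) is module-finite; a further d\'evissage cutting by nonzerodivisors chosen to avoid the bad locus brings one to semilocal Dedekind domains, over which ``torsion-free $=$ flat'' is precisely what traits see.

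The genuinely non-formal point --- and the step I expect to be the main obstacle --- is the \emph{multi-branch} case, where $s$ lies on several irreducible components of $S$. Here flatness over each component $A/\mathfrak p_i$ plus $A$-torsion-freeness does \emph{not} force flatness over the singular ring $A$ (the ideal of a single branch of a node is the standard warning), so one must exploit the crooked traits whose generic point lies along one branch while the special point is $s$ itself. The resolution uses that $A$ reduced embeds into $\prod_i \widetilde{A/\mathfrak p_i}$, and that a crooked trait can be produced through $\mathfrak p$ and through (the image in $A$ of) any prescribed prime of any $\widetilde{A/\mathfrak p_i}$ --- a lifting of valuations across the finite morphism $\widetilde{A/\mathfrak p_i} \to A/\mathfrak p_i$, furnished by going-down/the valuative criterion of properness for normalization; imposing torsion-freeness along all such traits pins the rank of $M$ to its generic value on every component uniformly, and constancy of that rank over the reduced ring $A$ gives freeness of $M$, whence flatness of $\mathcal O_{X,x}$ over $A$ by localization. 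Arranging the model $\mathcal F$ so that this rank bookkeeping is faithful, and checking that enough crooked traits really pass through the required pairs of points, are the technical heart of the proof.
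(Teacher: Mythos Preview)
The paper does not prove this proposition; it is stated with a bare citation to \cite[Proposition~2.4.1]{neron} and no argument is supplied. So there is nothing in the paper itself to compare your proposal against beyond that reference.

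On the proposal itself: the opening reductions (\'etale-localize to pass to schemes, replace $S$ by $\Spec \mathcal O_{S,s}$) are fine, and you are right that the multi-branch case is where the genuine difficulty lies. But there is a real gap in the middle, where you pass from flatness of $f$ at $x$ to your ``core assertion'' about a finitely generated $A$-module $M$. You propose to do this by ``tracking, for a coherent model $\mathcal F$ of $\mathcal O_X$ near $x$, the function $\mathfrak q' \mapsto \dim_{\kappa(\mathfrak q')}(\mathcal F \otimes_{\mathcal O_S} \kappa(\mathfrak q'))$''. This does not make sense as written: $\mathcal O_X$ is not coherent as an $\mathcal O_S$-module, its fibers over points of $S$ are structure sheaves of positive-dimensional schemes rather than finite-dimensional vector spaces, and $\mathcal O_{X,x}$ is only a localization of a finitely generated $A$-\emph{algebra}, not a finitely generated $A$-module. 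There is no single finitely generated $A$-module whose freeness is equivalent to flatness of $f$ at $x$, so the bridge you describe to the module statement is missing. The argument in \cite{neron} does not attempt this reduction; it works directly with the ring homomorphism $A \to \mathcal O_{X,x}$, detects non-flatness via a nonzero element of a suitable $\mathrm{Tor}$-group against a residue field, and then constructs a trait along which that obstruction survives. Your module-level assertion is true, but the problem does not reduce to it in the way you sketch.
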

			
			\begin{proposition} [{\cite[Proposition III.9.7]{hartshorne}}] \label{prop:flat over Dedekind}
				Let $f : X \to \Delta$ be an algebraic space locally of finite type over a Dedekind scheme $\Delta$. Then $f$ is flat if and only if $f$ sends every associated point of $X$ to a generic point of $\Delta$.
			\end{proposition}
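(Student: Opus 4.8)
The plan is to reduce the proposition to a standard local statement in commutative algebra, following Hartshorne's argument. Both hypotheses are \'etale-local on $X$: flatness over $\Delta$ ascends and descends along \'etale morphisms, and the formation of the set of associated points commutes with \'etale localization (this is where the standing assumption that $X$ is a decent, i.e.\ locally separated and locally Noetherian, algebraic space enters, so that residue fields and hence associated points of $X$ make sense, cf.\ (Tag~0EMV)). So I would first replace $X$ by an \'etale chart and assume $X$ is a locally Noetherian scheme. Flatness is also local on the base, and a Dedekind scheme is covered by affine opens $\Spec R$ with $R$ a Dedekind domain; covering such a chart further by affine opens $\Spec B$ with $B$ a finite type (hence Noetherian) $R$-algebra, the claim reduces to: $B$ is flat over $R$ if and only if $\mathfrak q \cap R = (0)$ for every $\mathfrak q \in \operatorname{Ass}_B(B)$.

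Next I would invoke the fact that a module over a Dedekind domain is flat if and only if it is torsion free: flatness may be checked after localizing at each maximal ideal $\mathfrak m \subset R$, and $R_{\mathfrak m}$ is a discrete valuation ring, over which flat $=$ torsion free for modules. (Alternatively, one may localize $\Delta$ at each closed point and apply \Cref{prop:valuative criterion for flatness}.) Thus $B$ is $R$-flat if and only if $B$ has no nonzero $R$-torsion, i.e.\ if and only if $\operatorname{Ass}_R(B) \subseteq \{(0)\}$.

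To finish, I would use the standard identity (valid since $B$ is Noetherian) $\operatorname{Ass}_R(B) = \{\, \mathfrak q \cap R : \mathfrak q \in \operatorname{Ass}_B(B) \,\}$. The associated points of the scheme $\Spec B$ are exactly the primes $\mathfrak q \in \operatorname{Ass}_B(B)$, and such a point is sent by $f$ to the generic point of $\Spec R$ precisely when $\mathfrak q \cap R = (0)$. Hence the condition $\operatorname{Ass}_R(B) \subseteq \{(0)\}$ translates verbatim into ``every associated point of $X$ maps to a generic point of $\Delta$'', and reassembling over the affine covers and the \'etale chart yields the proposition.

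This argument is entirely formal, so I do not expect a genuine obstacle; it is essentially a bookkeeping result. The only place asking for a little care is the passage to algebraic spaces --- that associated points are well-defined and \'etale-local --- but this is exactly guaranteed by the blanket decency hypothesis, which is why the statement can be phrased in that generality in the first place.
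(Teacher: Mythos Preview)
Your proof is correct and follows the standard Hartshorne-style reduction to commutative algebra. The paper itself gives no proof for this proposition; it simply cites \cite[Proposition III.9.7]{hartshorne}, so there is nothing further to compare.
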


		\subsubsection{Universally equidimensional morphisms} \label{sec:equidimensional morphism}
			This section summarizes \cite[\S 2.1]{sus-voe00} and some parts in \cite[\S 13--15]{EGAIV3}.
			
			\begin{definition}
				Let $S$ be a locally Noetherian scheme. A locally finite type algebraic space $f : X \to S$ is called \emph{(locally) equidimensional} if:
				\begin{enumerate}
					\item The fiber dimension function
					\begin{equation} \label{eq:fiber dimension function}
						X \to \ZZ, \qquad x \mapsto \dim_x X_{f(x)}
					\end{equation}
					is (locally) constant.
					
					\item Every generic point of $X$ is sent to a generic point of $S$.
				\end{enumerate}
				The morphism $f$ is called \emph{universally (locally) equidimensional} if $X_T \to T$ is (locally) equidimensional for arbitrary base change $T \to S$ of schemes.
			\end{definition}
			
			\begin{remark}
				The original reference \cite[\S 2.1]{sus-voe00} assumes everything is separated. However, (local) equidimensionality is an \'etale local property on both the source and target, so we can reduce to the case $\Spec A \to \Spec B$.
			\end{remark}
			
			The fiber dimension function \eqref{eq:fiber dimension function} is upper-semicontinuous by Chevalley's theorem \cite[Theorem 13.1.3]{EGAIV3}, so it is locally constant if and only if it is locally constant on closed points of $X$. The following two results show the relation between universally open and universally equidimensional morphisms.
			
			\begin{proposition} [{\cite[Proposition 2.1.7]{sus-voe00}}] \label{prop:universally equidimensional morphism 1}
				Let $f : X \to S$ be a locally finite type algebraic space.
				\begin{enumerate}
					\item If \eqref{eq:fiber dimension function} is (locally) constant and $f$ is open, then $f$ is (locally) equidimensional.
					\item $f$ is universally (locally) equidimensional if and only if \eqref{eq:fiber dimension function} is (locally) constant and $f$ is universally open.
				\end{enumerate}
			\end{proposition}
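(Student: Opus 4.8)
The plan is to reduce the statement to the case of schemes and then extract both parts from the dimension theory of \cite[\S 13--14]{EGAIV3}, the only genuinely non-formal ingredient being Chevalley's openness criterion. First I would observe that every notion occurring in the statement --- the fiber dimension function \eqref{eq:fiber dimension function}, openness and universal openness, the set of generic points, and (universal, local) equidimensionality --- is compatible with \'etale localization on both source and target: an \'etale morphism is flat, quasi-finite and open, hence generizing with $0$-dimensional fibers and carrying generic points to generic points, while openness descends along an \'etale surjection. This permits replacing $f : X \to S$ by an affine \'etale chart $\Spec A \to \Spec B$ with $B$ Noetherian, after which the two assertions are \cite[Proposition 2.1.7]{sus-voe00}; I would then reprove that scheme-theoretic statement in the remaining steps, being careful that the outcome stays consistent with \Cref{prop:universally equidimensional morphism 2} and \Cref{thm:universal open implies flat}.

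For part (1), I would use the elementary fact that an open morphism lifts generizations. Given a generic point $\eta \in X$ with image $s = f(\eta)$, any proper generization $s' \rightsquigarrow s$ would lift to a generization of $\eta$, which must be $\eta$ itself, forcing $s' = s$; hence $s$ is a generic point of $S$. Together with the hypothesis that \eqref{eq:fiber dimension function} is (locally) constant, this is by definition (local) equidimensionality, so part (1) is essentially immediate. The implication ``$\Longleftarrow$'' of part (2) I also expect to be formal: for an arbitrary base change $T \to S$, each fiber of $X_T \to T$ arises from a fiber of $f$ by a field extension, which leaves dimensions unchanged, so the fiber dimension function of $X_T \to T$ is the pullback along $X_T \to X$ of that of $f$ and is therefore (locally) constant; and $X_T \to T$ is open since $f$ is universally open. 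Part (1) applied to $X_T \to T$ shows it is (locally) equidimensional, and as $T$ is arbitrary, $f$ is universally (locally) equidimensional.

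The direction ``$\Longrightarrow$'' of part (2) is where the work lies. Taking $T = S$ gives that \eqref{eq:fiber dimension function} is (locally) constant, so the real content is universal openness. Since a base change of a universally (locally) equidimensional morphism is again such, it suffices to show that a (locally) equidimensional, locally finite type morphism over a \emph{locally Noetherian} base is open; a general, possibly non-Noetherian, base $T$ is then handled by writing $T$ Zariski-locally as a cofiltered limit of finite type $S$-schemes (which are Noetherian) and checking that openness is inherited in such limits. The hard step is this openness assertion over a locally Noetherian base: by equidimensionality one obtains, at every point $x \in X$ with $s = f(x)$, the dimension formula relating $\dim \mathcal{O}_{X,x}$, $\dim \mathcal{O}_{S,s}$ and $\dim_x X_s$ (equivalently $\dim_x \overline{\{x\}} = \dim_x X_s + \dim_s \overline{\{s\}}$), and I would invoke Chevalley's openness theorem \cite[\S 14]{EGAIV3} --- the statement that validity of this dimension formula at all points of $X$ is equivalent to $f$ being universally open --- to conclude.

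In summary, the only nontrivial commutative algebra enters through Chevalley's openness criterion in the last step; everything else --- the \'etale reduction, the generization argument for part (1), the base-change behaviour of fiber dimension and openness, and the limit reduction to the Noetherian case --- is bookkeeping. The main obstacle I anticipate is packaging the limit reduction cleanly enough that the invocation of \cite[\S 14]{EGAIV3} is legitimate for the a priori arbitrary test scheme $T$ appearing in the definition of universal equidimensionality.
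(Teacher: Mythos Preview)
The paper does not prove this proposition; it is stated as a citation to \cite[Proposition 2.1.7]{sus-voe00} with no argument, so there is no paper-proof to compare against and your proposal is an attempted reconstruction.

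Your treatment of part (1) and of $(\Longleftarrow)$ in part (2) is correct. The gap is in $(\Longrightarrow)$ of part (2). You reduce to the assertion that an equidimensional, locally finite type morphism over a locally Noetherian base is open, planning to deduce this from the formula $\dim \mathcal O_{X,x} = \dim \mathcal O_{S,s} + \dim_x X_s$ together with Chevalley. But that assertion is false, and the paper's own example immediately after \Cref{prop:universally equidimensional morphism 2} is a counterexample: with $S = (xy=0) \subset \AA^2$ and $X = \AA^1_x \sqcup (\AA^1_y \setminus 0)$, the map $X \to S$ is equidimensional in the Suslin--Voevodsky sense and your dimension identity holds at every point (e.g.\ $1 = 1 + 0$ at the origin of $\AA^1_x$), yet the morphism is not open, since a small open neighbourhood of that origin has image lying entirely in the $x$-axis. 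The genuine Chevalley criterion in \cite[\S 14.4]{EGAIV3} is finer: it is a component-wise condition requiring, for each irreducible component $S'$ of $\Spec \mathcal O_{S,s}$, control of the base change $X \times_S S'$. Checking this at a non-unibranch point of $S$ uses the \emph{universal} equidimensionality hypothesis --- one base-changes to each such $S'$ and invokes that the result is still equidimensional --- whereas your reduction discards exactly this information. The fix is either to carry the universal hypothesis through to the invocation of the correct form of Chevalley, or to argue via base change to traits; this is also why the converse \Cref{prop:universally equidimensional morphism 2} needs a geometrically unibranch base.
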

			
			\begin{proposition} [{\cite[Proposition 2.1.7]{sus-voe00}}] \label{prop:universally equidimensional morphism 2}
				If $S$ is geometrically unibranch, then $f : X \to S$ is equidimensional if and only if it is universally equidimensional.
			\end{proposition}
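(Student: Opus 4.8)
The implication ``universally equidimensional $\Longrightarrow$ equidimensional'' is immediate: apply the definition to the identity base change $T=S$. So the plan is to prove the converse, assuming $S$ is geometrically unibranch and $f$ is equidimensional. By \Cref{prop:universally equidimensional morphism 1}(2), and since the fibre dimension function \eqref{eq:fiber dimension function} is already locally constant by hypothesis, it suffices to prove that $f$ is universally open. Universal openness is local on $S$, and it is a purely topological condition, hence insensitive to base change along, and precomposition with, a universal homeomorphism; this permits a chain of reductions. First, to $S=\operatorname{Spec}A$ with $A$ Noetherian. Next, using that on a geometrically unibranch scheme every point lies on a \emph{unique} irreducible component, so that the components of $S$ are pairwise disjoint, to $S$ integral; then, via the universal homeomorphism $S_{\mathrm{red}}\hookrightarrow S$, to $S$ reduced. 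Finally, via the normalization $\nu\colon S^{\nu}\to S$, which is finite surjective and, by geometric unibranchness, a universal homeomorphism, to the case where $S$ is a normal Noetherian integral scheme; here one checks that $f^{\nu}\colon X\times_S S^{\nu}\to S^{\nu}$ is again equidimensional, the fibre dimensions being unchanged and $X\times_S S^{\nu}\to X$ being a universal homeomorphism (so that components of the source still dominate the base).

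With $S$ reduced to a normal Noetherian integral scheme, I would invoke Chevalley's criterion for universal openness \cite[14.4.4 and corollaries]{EGAIV3}: a finite type morphism over a locally Noetherian base is universally open if and only if, for every $x\in X$, every generization of $f(x)$ lifts to a generization of $x$ and the fibre dimension is constant along such generizations. The dimension condition is automatic from the constancy of \eqref{eq:fiber dimension function}, so only the lifting of generizations remains. Localizing at $f(x)$ and normalizing the integral one-dimensional closed subscheme cut out by an immediate generization (possible as $S$ is excellent), this reduces to the case of a trait $r\colon \operatorname{Spec}R\to S$ with $R$ a DVR (closed point $0_R$, generic point $\eta_R$): given a point $x$ of $X_R\coloneqq X\times_S\operatorname{Spec}R$ lying over $0_R$, one must produce a point of $X_R$ over $\eta_R$ specializing to $x$. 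By \Cref{prop:flat over Dedekind}, any integral closed subscheme of $X_R$ dominating $\operatorname{Spec}R$ is flat over it, hence has its generic point over $\eta_R$; so it is enough to show that no irreducible component of $X_R$ is contained in the special fibre $X_s\otimes_{k(s)}k(0_R)$.

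This last assertion is the main obstacle, and the only place where normality of the base is truly used. Note that it is \emph{not} a consequence of dimension counting: equidimensionality makes both the special fibre $X_s\otimes_{k(s)}k(0_R)$ and the generic fibre of $X_R$ equidimensional of the common dimension $d=\dim_x X_{f(x)}$, leaving room a priori for a spurious component of $X_R$ supported on the special fibre. Ruling this out is exactly the situation analyzed in \cite[\S 14.3--14.4]{EGAIV3}: over a normal Noetherian base, a finite type morphism whose fibre dimension function is locally constant and each of whose components dominates the base is universally open. Granting this (or reproving it in our case by the trait analysis above, tracking a would-be special-fibre component $C\subseteq X_R$ back to an integral subscheme of $X$ dominating $\overline{\{s\}}$ inside a component of $X$ and contradicting the going-down property of a dominant finite type morphism onto a normal base, which forces the fibre dimension along that component to jump), every irreducible component of $X_R$ dominates $\operatorname{Spec}R$; hence $X_R\to\operatorname{Spec}R$ is open. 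By the reductions of the first paragraph $f$ is then universally open, and therefore universally equidimensional by \Cref{prop:universally equidimensional morphism 1}(2).
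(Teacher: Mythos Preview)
The paper does not give its own proof of this proposition; it simply records the statement with a citation to \cite[Proposition~2.1.7]{sus-voe00}. There is therefore nothing to compare your argument against, and it must be assessed on its own merits.

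Your strategy is sound: reduce to universal openness via \Cref{prop:universally equidimensional morphism 1}(2), pass to a normal base using that the normalization of a reduced geometrically unibranch scheme is a universal homeomorphism, and then invoke the EGA result that an equidimensional finite-type morphism over a normal locally Noetherian base is universally open. Two remarks. First, the reduction to a normal base, while correct, is not strictly necessary: the relevant statement in \cite[Corollaire~14.4.4]{EGAIV3} and its neighbours is already formulated for geometrically unibranch bases, so one can cite it directly after the first reduction step. Second, your claim that the normalization $S^{\nu}\to S$ is \emph{finite} requires $S$ to be Nagata, which does not follow from ``locally Noetherian over a field of characteristic~$0$'' alone; but only the universal-homeomorphism property is actually used in your argument, and that holds without finiteness. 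The trait analysis and going-down sketch in your third paragraph are correct in outline but somewhat loosely phrased (``Granting this\ldots''); since you are already citing \cite{EGAIV3}, it would be cleaner to replace that paragraph by a direct reference and drop the parenthetical alternative.
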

			
			\begin{example}
				\begin{itemize}
					\item The morphism $\AA^1_k \sqcup \Spec k \to \AA^1_k$ is not equidimensional.
					\item Define $S = (xy = 0) \subset \AA^2_{(x,y)}$ and $X = \AA^1_x \sqcup (\AA^1_y \setminus \{ y = 0 \})$. Then $X \to S$ is equidimensional but not universally equidimensional, either because its restriction to $(x = 0) \subset S$ is not equidimensional or it is not open.
				\end{itemize}
			\end{example}
			
			The following relates universal equidimensionality to flatness.
			
			\begin{theorem} [{\cite[Corollary 15.2.3]{EGAIV3}}] \label{thm:universal open implies flat}
				Let $f : X \to S$ be a locally finite type algebraic space. Let $x \in X$ and $s = f(x) \in S$. Assume
				\begin{enumerate}
					\item $f$ is universally open at generic points of the irreducible components of $X_s$ containing $x$.
					\item $X_s$ is (geometrically) reduced at $x$.
					\item $S$ is reduced at $s$.
				\end{enumerate}
				Then $f$ is flat at $x$.
			\end{theorem}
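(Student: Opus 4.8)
The plan is to deduce this from the corresponding statement for schemes --- which is exactly \cite[Corollaire 15.2.3]{EGAIV3} (itself resting on the flatness criterion \emph{loc.\ cit.}\ 15.2.2 via the dimension formula) --- by a routine étale-localization argument, together with the observation that in characteristic $0$ the geometric reducedness costs nothing. Indeed, every residue field $\kappa(s)$ of a scheme over a field of characteristic $0$ again has characteristic $0$, hence is perfect, so for the finite type $\kappa(s)$-scheme $X_s$ the conditions ``reduced at $x$'' and ``geometrically reduced at $x$'' coincide; this is why the parenthetical appears.

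First I would recall and, if needed, lightly reprove the scheme case in the exact form above: for $f:X\to S$ locally of finite type between schemes, $x\in X$, $s=f(x)$, with $S$ reduced at $s$, $X_s$ geometrically reduced at $x$, and $f$ universally open at every generic point of an irreducible component of $X_s$ through $x$, one has that $f$ is flat at $x$. Then I would pass to algebraic spaces as follows. Choose a surjective étale morphism $U\to S$ from a scheme, with a point $u\in U$ over $s$ satisfying $\kappa(u)=\kappa(s)$; form $X_U=X\times_S U$ and choose a surjective étale morphism $W\to X_U$ from a scheme, with a point $w\in W$ over $(x,u)$ satisfying $\kappa(w)=\kappa(x)$. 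Composition gives an étale chart $W\to X$ through $w\mapsto x$ and a morphism of schemes $W\to U$ with $W\to U\to S$ equal to $W\to X\to S$ and $w\mapsto u$. Now flatness is étale-local on source and target, so $f$ is flat at $x$ iff $W\to U$ is flat at $w$; reducedness of $S$ at $s$ is equivalent to reducedness of $U$ at $u$, and reducedness of $X_s$ at $x$ is equivalent to reducedness of the fiber $W_u$ at $w$ (the map $W_u\to X_s$ being étale). Finally, the generic points of the irreducible components of $X_s$ through $x$ are precisely the images under the étale map $W_u\to X_s$ of the generic points of the irreducible components of $W_u$ through $w$, and universal openness of $f$ at such a point corresponds to universal openness of $W\to U$ at the matching point. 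Thus the hypotheses of the scheme version hold for $W\to U$ at $w$, giving flatness of $W\to U$ at $w$ and hence of $f$ at $x$.

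The main point requiring care is the last translation: one must check that the \emph{localized} hypothesis ``universally open at the generic points of the fiber components through $x$'' behaves well under the étale chart. This I would handle by using that openness of a morphism is étale-local on the source, that universal openness may be verified after a single surjective étale base change on the target (here $U\to S$), and that the étale map $W_u\to X_s$ induces a dimension-preserving, generic-point-preserving bijection between the components of $W_u$ through $w$ and those of $X_s$ through $x$; intersecting these facts at the relevant generic points gives the equivalence. No deeper obstacle arises --- the substance of the statement is the EGA result, and everything else is bookkeeping with étale charts.
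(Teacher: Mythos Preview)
The paper gives no proof of this statement: it simply records the citation to \cite[Corollaire 15.2.3]{EGAIV3} and, in the preamble to the appendix, notes that statements which are \'etale-local on the source will be used for algebraic spaces without further comment. Your proposal is exactly this implicit argument made explicit --- reduce to the scheme case via an \'etale chart and invoke EGA --- together with the (correct) observation that in characteristic~$0$ the parenthetical costs nothing. So the approach is the same.

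One small inaccuracy worth fixing: the map $W_u \to X_s$ need not induce a \emph{bijection} between components through $w$ and components through $x$ (an \'etale cover can have several sheets over one component). What you actually need, and what holds, is only one direction: every generic point $\eta'$ of a component of $W_u$ through $w$ maps to a generic point $\eta$ of a component of $X_s$ through $x$, and universal openness of $f$ at $\eta$ passes to universal openness of $W\to U$ at $\eta'$. The cleanest way to see the latter is to factor $W\to U$ as the \'etale chart $W\to X_U$ followed by the base change $X_U\to U$ of $f$; universal openness at a point is stable under base change on the target and under precomposition with an \'etale map, so both steps preserve it. With that adjustment your argument is complete.
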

			
			\begin{example} [Flatness and universal equidimensionality] \
				\begin{itemize}
					\item A flat morphism is universally locally equidimensional.
					\item If $X$ is non-reduced, then $X_{\red} \hookrightarrow X$ is equidimensional but not flat.
					\item Let $G$ be a finite group acting on $\AA^n_k$. The quotient $\AA^n \to \AA^n / G$ is universally equidimensional, but it is flat if and only if $\AA^n / G$ is regular.
					\item Let $f : X \to \Delta$ be an algebraic space locally of finite type over a Dedekind scheme. Then $f$ is flat if and only if every associated scheme-theoretic component of $X$ dominates $\Delta$. It is universally equidimensional if and only if every set-theoretic irreducible component of $X$ dominates $\Delta$.
				\end{itemize}
			\end{example}

		\subsubsection{Linear schemes} \label{sec:linear scheme}
			This section roughly follows \cite[\S 5.3.3]{nit:fga}.
			
			\begin{definition}
				Let $S$ be a (locally Noetherian) algebraic space. A \emph{linear space} is an affine morphism of finite type
				\[ \VV_F = \Spec_S (\Sym^* F) \to S \]
				for a coherent sheaf $F$ on $S$. When $S$ is a scheme, we call it a \emph{linear scheme}.
			\end{definition}
			
			An fppf sheaf on an algebraic space $S$ is a functor $F : (\Sp/S)^{\opp} \to \Set$ satisfying the sheaf axioms for the big fppf topology on $\Sp/S$. Let $\mathcal O_S^{\fppf}$ be the fppf structure sheaf on $S$ (represented by $\AA^1_S$). An \emph{fppf $\mathcal O_S$-module} is an fppf abelian sheaf $F$ on $S$ equipped with a bilinear map $\mathcal O_S^{\fppf} \times F \to F$ satisfying the axioms of modules.
			
			A coherent sheaf $F$ on an algebraic space can define an fppf $\mathcal O_S$-module in (at least) two different ways. The first way is to consider its \emph{associated fppf sheaf} defined by
			\begin{equation} \label{eq:associated fppf sheaf}
				F^{\fppf} : (\Sp/S)^{\opp} \to \Ab, \qquad \big[ f : T \to S \big] \mapsto H^0 (T, f^* F) .
			\end{equation}
			The second way is to consider its \emph{fppf dual} defined by
			\begin{equation} \label{eq:fppf dual}
				\Hom^{\fppf} (F, \mathcal O_S) : (\Sp/S)^{\opp} \to \Ab, \qquad \big[ f : T \to S \big] \mapsto \Hom_{\mathcal O_T} (f^* F, \mathcal O_T) .
			\end{equation}
			Both are fppf $\mathcal O_S$-modules.
			
			\begin{lemma}
				Let $F$ be a coherent sheaf on an algebraic space $S$. Then the linear space $\VV_F \to S$ represents the fppf dual \eqref{eq:fppf dual} of $F$.
			\end{lemma}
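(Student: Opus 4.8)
The plan is to unwind three universal properties in sequence --- that of relative $\Spec$, that of the symmetric algebra, and the $(g^*,g_*)$-adjunction for quasi-coherent modules --- and to observe at the end that both sides are already fppf sheaves, so no further sheafification is needed. Fix a coherent sheaf $F$ on the locally Noetherian algebraic space $S$ and set $\mathcal A = \Sym^*_{\mathcal O_S} F$, a quasi-coherent $\mathcal O_S$-algebra, so that $\VV_F = \Spec_S \mathcal A \to S$ is an affine morphism of finite type. First I would recall the defining property of relative $\Spec$ for algebraic spaces: for every algebraic space $g : T \to S$, the sheaf $g_*\mathcal O_T$ is a quasi-coherent $\mathcal O_S$-algebra and there is a bijection, natural in $g$,
\[
	\operatorname{Mor}_S(T, \VV_F) \;=\; \Hom_{\mathcal O_S\text{-alg}}(\mathcal A,\, g_*\mathcal O_T),
\]
the naturality coming from the compatibility of $\Spec_S$ with base change.

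Next I would apply the universal property of the symmetric algebra: an $\mathcal O_S$-algebra homomorphism $\mathcal A = \Sym^* F \to g_*\mathcal O_T$ is the same datum as an $\mathcal O_S$-module homomorphism $F \to g_*\mathcal O_T$. Finally, the adjunction between $g^*$ and $g_*$ on quasi-coherent modules gives
\[
	\Hom_{\mathcal O_S}(F,\, g_*\mathcal O_T) \;=\; \Hom_{\mathcal O_T}(g^*F,\, \mathcal O_T),
\]
and the right-hand side is, by definition \eqref{eq:fppf dual}, the value $\Hom^{\fppf}(F,\mathcal O_S)(T)$. Composing the three bijections yields an identification $\VV_F(T) = \Hom^{\fppf}(F,\mathcal O_S)(T)$, natural in $g : T \to S$. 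One then checks that this identification is additive and compatible with multiplication by sections of $\mathcal O_S^{\fppf}$ --- a scalar $\lambda$ acts on $\VV_F$ through precomposition with $\lambda \cdot (-) : F \to F$, which matches the module structure on the dual --- so it is an isomorphism of fppf $\mathcal O_S$-modules. Since $\VV_F$ is representable (hence an fppf sheaf) and $\Hom^{\fppf}(F,\mathcal O_S)$ is an fppf sheaf by fppf descent for quasi-coherent modules, this is all that is required.

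I do not expect a genuine obstacle here; the content is purely the chain of adjunctions. The only points deserving a line of care are (i) invoking the relative-$\Spec$ universal property in the algebraic-space setting together with its base-change compatibility, so that the final identification is natural on all of $(\Sp/S)^{\opp}$ and not merely on affine schemes, and (ii) the routine verification that the bijection is $\mathcal O_S^{\fppf}$-linear rather than a mere bijection of sets. If one prefers, (i) can be bypassed by first treating affine $T = \Spec B$ over affine $S = \Spec A$, where the claim reduces to the classical identity $\Hom_{A\text{-alg}}(\Sym^*_A M, B) = \Hom_A(M, B) = \Hom_B(M \otimes_A B, B)$, and then appealing to the fact that a morphism of fppf sheaves that is bijective on affines is an isomorphism.
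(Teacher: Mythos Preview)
Your proof is correct and follows essentially the same route as the paper: the chain relative-$\Spec$ universal property $\to$ symmetric-algebra universal property $\to$ $(g^*,g_*)$-adjunction. The paper compresses this into a single line checked only on affine $T \to S$, relying implicitly on the fppf-sheaf property you spell out at the end.
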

			\begin{proof}
				For every affine morphism $f : T = \Spec_S \mathcal A \to S$, we have $f_* \mathcal O_T = \mathcal A$ and thus
				\[ \Hom_S (T, \VV_F) = \Hom_{\mathcal O_S} (F, \mathcal A) = \Hom_{\mathcal O_T} (f^* F, \mathcal O_T) .\qedhere\]
			\end{proof}
			
			Therefore, a linear space $V \to S$ has a zero section $S \to V$, addition $V \times_S V \to V$, and scalar multiplication $\AA^1_S \times_S V \to V$ satisfying the usual axioms of modules. In particular, it is a separated and commutative group algebraic space. Given a linear space $f : V \to S$, its associated coherent sheaf $F$ can be obtained in two different ways:
			\begin{itemize}
				\item $F$ is the conormal sheaf of the zero section $S \to V$.
				\item Consider the $\ZZ_{\ge 0}$-grading on $f_* \mathcal O_V$ from the $\AA^1_S$-scalar multiplication structure. Then $F = (f_* \mathcal O_V)_1$ is its degree $1$ part.
			\end{itemize}
			
			\begin{proposition} \label{prop:linear scheme}
				Let $V$ and $W$ be fppf $\mathcal O_S$-modules and $f : V \to W$ a homomorphism of fppf $\mathcal O_S$-modules. Assume $V$ and $W$ are representable by linear spaces associated to coherent sheaves $F$ and $G$. Then $\ker f$ is representable by a linear space associated to the coherent sheaf $\coker (G \to F)$.
			\end{proposition}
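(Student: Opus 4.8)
The plan is to deduce this from the description of linear spaces as fppf duals (the lemma preceding the statement) together with the right-exactness of pullback. Recall that $\VV_F$ represents the functor $T \mapsto \Hom_{\mathcal O_T}(F_T, \mathcal O_T)$, where $F_T$ denotes the pullback of $F$ along $T \to S$. The first step is to identify the datum of $f$: since both $V = \VV_F$ and $W = \VV_G$ are representable, $f$ is a morphism of the linear spaces by Yoneda, i.e.\ an $S$-algebra homomorphism $\Sym^* G \to \Sym^* F$; compatibility with the $\mathcal O_S$-module (equivalently, $\GG_m$-equivariant grading) structure forces it to preserve the grading, hence to be $\Sym^*$ of its degree-one part $f^\sharp : G \to F$. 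Tracing through the identifications, the map $f$ on $T$-points is $\phi \longmapsto \phi \circ f^\sharp_T$, where $f^\sharp_T : G_T \to F_T$ is the pullback of $f^\sharp$. (Conversely every $\mathcal O_S$-linear $G \to F$ arises this way, so one recovers the anti-equivalence $\Hom_{\mathcal O_S}(G,F) \cong \Hom_{\mathcal O_S\text{-mod}}(\VV_F,\VV_G)$, but only the above explicit formula is needed.)

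The second step is purely formal. Put $Q = \coker(f^\sharp : G \to F)$. For any $T \to S$, pullback is right exact, so $G_T \to F_T \to Q_T \to 0$ is exact, and applying $\Hom_{\mathcal O_T}(-,\mathcal O_T)$ yields an exact sequence
\[ 0 \to \Hom_{\mathcal O_T}(Q_T,\mathcal O_T) \to \Hom_{\mathcal O_T}(F_T,\mathcal O_T) \to \Hom_{\mathcal O_T}(G_T,\mathcal O_T), \]
whose last arrow is exactly $f(T)$ by Step 1 and whose first two terms are $\VV_Q(T)$ and $\VV_F(T) = V(T)$. Thus $\VV_Q(T) = \ker\!\big(f(T)\big)$, functorially in $T$ since the formation of $Q$ and of the sequence commute with the base changes $T \to S$. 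This identifies $\ker f$ with the fppf sheaf represented by $\VV_Q$, proving that $\ker f$ is a linear space associated to $\coker(G \to F)$. Being a linear space, $\ker f$ is in particular affine, separated and commutative over $S$, so no further checking is required to call it "representable by a linear space''.

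I expect the only delicate point to be Step 1, namely the claim that a homomorphism of fppf $\mathcal O_S$-modules between linear spaces is induced by an $\mathcal O_S$-linear sheaf map in the opposite direction; this is standard (cf.\ the cited \cite{nit:fga}) but worth spelling out, since the contravariance and the "$\coker$ versus $\ker$'' swap both originate here. Step 2, the interaction of right-exactness of pullback with left-exactness of $\Hom(-,\mathcal O_T)$ in its first argument, is automatic. If one wishes to bypass the abstract anti-equivalence, the same two steps run by hand: extract $f^\sharp$ as the degree-one component of the induced $\mathcal O_S$-algebra map on pushforwards $f_*\mathcal O_{\VV_G} \to f_*\mathcal O_{\VV_F}$, and then argue verbatim as above.
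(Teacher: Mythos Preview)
Your proof is correct and follows essentially the same approach as the paper: extract the sheaf map $G \to F$ via Yoneda, take its cokernel, and apply the fppf dual. The paper's version is terser (it simply says ``take its cokernel $G \to F \to C \to 0$ and apply the fppf dual functor to it''), while you spell out the right-exactness of pullback and left-exactness of $\Hom(-,\mathcal O_T)$ explicitly, but the argument is the same.
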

			\begin{proof}
				By Yoneda, $f$ is a morphism of linear spaces $f : \VV_F \to \VV_G$ that induces a homomorphism of coherent sheaves $G \to F$. Take its cokernel $G \to F \to C \to 0$ and apply the fppf dual functor to it.
			\end{proof}
			
			An interesting example of a linear space is a \emph{tangent space}. A morphism between algebraic spaces $f : X \to S$ induces a cotangent sheaf $\Omega_f$ on $X$. Its linear space
			\begin{equation} \label{eq:tangent scheme}
				\TT_f = \VV_{\Omega_f} \to X
			\end{equation}
			is called a \emph{tangent space} of $f$; for every geometric point $\bar x$ of $X$ over $\bar s$, the fiber of the tangent space is $\TT_{f, \bar x} = (\mathfrak m_{\bar x}/\mathfrak m_{\bar x}^2)^{\vee}$, the Zariski tangent space of the geometric fiber $X_{\bar s}$ at $\bar x$ \cite[Proposition II.8.7]{hartshorne}. It represents the fppf $\mathcal O_X$-module $T_f^{\fppf} = \SheafHom^{\fppf} (\Omega_f, \mathcal O_X)$.

		\subsubsection{Others}
			The following is a nice application of Zariski's main theorem.
			
			\begin{proposition} \label{prop:Zariski main theorem}
				Let $f : X \to Y$ be a dominant morphism between integral and separated algebraic spaces of finite type over an algebraically closed field $k$ of characteristic $0$. Assume $Y$ is normal and $f : X(k) \to Y(k)$ is injective. Then $f$ is an open immersion.
			\end{proposition}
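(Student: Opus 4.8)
The plan is to reduce the statement to Grothendieck's form of Zariski's main theorem (Tag~082K). First I would verify that $f$ is quasi-finite. If it were not, upper semicontinuity of the fibre-dimension function $x \mapsto \dim_x X_{f(x)}$ (Chevalley) would give a nonempty closed locus in $X$ on which this function is $\ge 1$, hence a closed point $x_0 \in X$ with $\dim_{x_0} X_{f(x_0)} \ge 1$. Since $X$ and $Y$ are of finite type over the algebraically closed field $k$, the closed point $x_0$ and its image $y_0 = f(x_0)$ both have residue field $k$, and the fibre $X_{y_0}$ is then a positive-dimensional finite-type $k$-algebraic space, so it contains infinitely many $k$-points — all of which map to $y_0 \in Y(k)$, contradicting the injectivity of $f$ on $k$-points. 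Hence $f$ is quasi-finite; being also separated and of finite type, Tag~082K factors it as $X \xrightarrow{\,j\,} T \xrightarrow{\,g\,} Y$ with $j$ an open immersion and $g$ finite. Replacing $T$ by the closure of $j(X)$ with its reduced induced structure (the composite to $Y$ is still finite, being a closed immersion followed by a finite morphism) I may assume $T$ is integral and $j(X) \subseteq T$ is dense; then $g$ is dominant, hence finite surjective, and $K(T) = K(X)$ is a finite extension of $K(Y)$ of some degree $d$.

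Next I would show $d = 1$. In characteristic $0$ the generic fibre of $g$ is the spectrum of a separable (hence étale) field extension of $K(Y)$, so combined with generic flatness $g$ is finite étale over a dense open $V_1 \subseteq Y$. The complement $T \setminus j(X)$ is a proper closed subspace of the integral space $T$, so it has dimension $< \dim T = \dim Y$; as $g$ is finite, $g(T \setminus j(X))$ is a proper closed subspace of $Y$, and $V \coloneq V_1 \setminus g(T \setminus j(X))$ is still a dense open with $g^{-1}(V) \subseteq j(X)$. Choosing any $v \in V(k)$, the fibre $g^{-1}(v)$ consists of exactly $d$ reduced points with residue field $k$ (it is finite étale over $\bar k$), all lying in $j(X) \cong X$ and all mapping to $v$ under $f = g \circ j$; the injectivity of $f$ on $k$-points forces $d \le 1$, so $d = 1$.

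Finally, a finite birational morphism $g$ onto an integral normal algebraic space is an isomorphism: étale-locally on $Y$ one has inclusions $\mathcal O_Y \subseteq g_*\mathcal O_T \subseteq K(Y)$ with $g_*\mathcal O_T$ a finite, hence integral, $\mathcal O_Y$-algebra, so $g_*\mathcal O_T = \mathcal O_Y$ by normality and $g$ is an isomorphism. Therefore $f = g \circ j$ is an open immersion. I expect the only delicate points to be phrasing the ``positive-dimensional fibre has infinitely many $k$-points over a single $k$-point'' step for algebraic spaces rather than schemes, and keeping track of generic étaleness and normality when $T$ itself need not be normal; both are routine given the foundational facts already recalled in the paper.
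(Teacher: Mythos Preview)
Your proof is correct and follows essentially the same route as the paper: establish quasi-finiteness from injectivity on $k$-points, invoke Zariski's main theorem (Tag~082K) to factor $f$ as an open immersion followed by a finite morphism, show this finite morphism is birational, and conclude it is an isomorphism by normality of $Y$. The only cosmetic difference is that the paper proves birationality \emph{before} applying the factorization (via generic smoothness producing an \'etale, hence open, immersion on a dense open of $X$), whereas you deduce it afterward by counting points in a generic fibre of the finite map; both arguments are standard and interchangeable.
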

			\begin{proof}
				We follow Jason Starr's answer in \cite{starr:isomorphism}. The morphism $f$ is quasi-finite because $f : X(k) \to Y(k)$ is quasi-finite \cite[Remark 12.16]{gor-wed:ag}. By generic smoothness, $X$ contains an open dense subscheme $U$ where $f_{|U}$ is smooth, whence \'etale by its quasi-finiteness. Now using the injectivity of $f$ on closed points, $f_{|U}$ is an open immersion so $f$ is birational.
				
				Since $f$ is quasi-finite separated and $Y$ is qcqs, Zariski's main theorem (Tag~082K) says that there exists a factorization $X \xrightarrow{j} Z \xrightarrow{g} Y$ where $j$ is an open immersion and $g$ is finite. The morphism $g$ is birational as well. Now a finite extension $\mathcal O_Y \hookrightarrow g_* \mathcal O_Z$ has to be an equality, as $\mathcal O_Y$ is integrally closed and $g_*\mathcal O_Z$ is contained in the function field $K(Y)$. This proves $g$ is an isomorphism.
			\end{proof}

	\subsection{Group scheme actions and $\delta$-regularity}
		Let us recall the definition of $\delta$-regular group schemes and their actions on weak abelian fibrations.

		\subsubsection{Group scheme actions} \label{sec:group scheme action}
			We quickly clarify our terminology for a group scheme action on an algebraic space. Our reference includes \cite[\S 0]{mum:git} and \cite[\S VI]{ray:group_schemes}. Throughout, we assume $G \to S$ is a group scheme (or a group algebraic space as in \S \ref{sec:group spaces}) and $f : X \to S$ is an algebraic space. A \emph{$G$-action} on $X$ is a morphism of algebraic spaces $G \times_S X \to X$ satisfying the axiom of group actions. Given a $G$-action on $X$, its \emph{universal translation} morphism is
			\begin{equation} \label{eq:universal translation}
				\phi : G \times_S X \to X \times_S X, \qquad (g,x) \mapsto (g.x, x) .
			\end{equation}
			
			\begin{definition}
				Consider a $G$-action on an algebraic space $f : X \to S$.
				\begin{enumerate}
					\item (\cite[Definition 0.8]{mum:git}) The action is \emph{free} if the universal translation morphism $\phi$ in \eqref{eq:universal translation} is an immersion.
					
					\item (\cite[Definition VI.1.1]{ray:group_schemes}) The action is \emph{transitive} (or $X$ is a \emph{$G$-homogeneous space}) if $\phi$ is faithfully flat.
					
					\item $X$ is a \emph{$G$-torsor} if the action is free and transitive, i.e., $\phi$ is an isomorphism.
					
					\item The action is \emph{faithful} if for every point $s \in S$, the $G_s$-action on $X_s$ is faithful.
				\end{enumerate}
			\end{definition}
			
			\begin{definition}
				Let $T \to S$ be a morphism. Given a $T$-section $x : T \to X$, we define its \emph{orbit map} $\phi_x : G_T \to X_T$ by the pullback
				\[\begin{tikzcd}
					G \times_S T \arrow[r, "\phi_x"] \arrow[d] & X \times_S T \arrow[d, "\id \times x"] \\
					G \times_S X \arrow[r, "\phi"] & X \times_S X
				\end{tikzcd}.\]
				Equivalently, it is simply defined by $\phi_x (g) = g.x$.
			\end{definition}
			
			\begin{definition}
				The \emph{stabilizer (group) scheme} $\St \to X$ is the pullback
				\[\begin{tikzcd}
					\St \arrow[r] \arrow[d] & X \arrow[d, "\Delta"] \\
					G \times_S X \arrow[r, "\phi"] & X \times_S X
				\end{tikzcd}.\]
				It is a locally closed subgroup scheme of $G \times_S X = f^*G \to X$.
			\end{definition}
			
			\begin{proposition} \label{prop:infinitesimal action}
				Let $G \to S$ be a group space acting on an algebraic space $f : X \to S$. Then there exists a left exact sequence of Lie algebra spaces on $X$:
				\[\begin{tikzcd}
					0 \arrow[r] & \Lie \St \arrow[r] & f^* (\Lie G) \arrow[r, "d \phi"] & \TT_f
				\end{tikzcd}.\]
				The map $d\phi$ is described as follows: take the fppf sheaf homomorphism $G \to \SheafAut_f$, its Lie algebra $\Lie G \to \Lie \SheafAut_f = f_* \TT_f$, and its adjoint $f^* (\Lie G) \to \TT_f$.
			\end{proposition}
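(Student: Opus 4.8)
The plan is to convert the assertion into a statement about conormal sheaves and then read it off the conormal exact sequence. Write $f^{*}G=G\times_{S}X\to X$ for the pulled-back group space, with identity section $e_{X}$. By \Cref{prop:cotangent sheaf of group scheme} and the discussion of linear spaces in \Cref{sec:linear scheme} we have $f^{*}(\Lie G)=\Lie(f^{*}G)=\VV_{f^{*}C_{e}}$, where $C_{e_{X}}=f^{*}C_{e}$ is the conormal sheaf of $e_{X}$; likewise $\Lie\St=\VV_{C_{e_{\St}}}$ for the conormal sheaf $C_{e_{\St}}$ of the identity section of the $X$-group space $\St\to X$, and $\TT_{f}=\VV_{\Omega_{f}}$. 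By \Cref{prop:linear scheme} a morphism of linear spaces $f^{*}(\Lie G)\to\TT_{f}$ over $X$ is the same datum as a homomorphism of coherent sheaves $\gamma\colon\Omega_{f}\to f^{*}C_{e}$, and its kernel is the linear space $\VV_{\coker\gamma}$. Hence the proposition is equivalent to exhibiting a right exact sequence of coherent sheaves on $X$
\[
\Omega_{f}\xrightarrow{\gamma}f^{*}C_{e}\longrightarrow C_{e_{\St}}\longrightarrow 0,
\]
in which $\gamma$ is the fppf-dual of the map $d\phi$ from the statement and the surjection $f^{*}C_{e}\to C_{e_{\St}}$ is the fppf-dual of the inclusion $\Lie\St\hookrightarrow f^{*}(\Lie G)$ induced by $\St\subset f^{*}G$.

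To build this sequence I would use the Cartesian square that defines $\St$. The universal translation $\phi\colon f^{*}G\to X\times_{S}X$, $(g,x)\mapsto(g.x,x)$, is a morphism of $X$-spaces (both projecting to the last coordinate), it carries $e_{X}$ to the diagonal $\Delta\colon X\to X\times_{S}X$, and $\St=(f^{*}G)\times_{X\times_{S}X,\,\Delta}X$. Thus the immersion $j\colon\St\hookrightarrow f^{*}G$ is the base change of $\Delta$ along $\phi$, so its conormal sheaf is $C_{j}\cong\pi^{*}C_{\Delta}$, where $\pi\colon\St\to X$ is the structure map; since $C_{\Delta}\cong\Omega_{X/S}=\Omega_{f}$ and $\pi\circ e_{\St}=\id_{X}$, this yields a canonical identification $e_{\St}^{*}C_{j}\cong\Omega_{f}$. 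Now I apply the conormal exact sequence to the composition of immersions $X\xrightarrow{e_{\St}}\St\xrightarrow{j}f^{*}G$, whose composite is $e_{X}$:
\[
e_{\St}^{*}C_{j}\longrightarrow C_{e_{X}}\longrightarrow C_{e_{\St}}\longrightarrow 0 .
\]
Under $e_{\St}^{*}C_{j}\cong\Omega_{f}$ and $C_{e_{X}}=f^{*}C_{e}$ this is precisely the sequence above, and by functoriality of conormal sheaves its second map is the one induced by the subgroup inclusion $j$, which dualizes to $\Lie\St\hookrightarrow f^{*}(\Lie G)$ as required; applying $\VV_{(-)}$ then gives the claimed left exact sequence $0\to\Lie\St\to f^{*}(\Lie G)\to\TT_{f}$.

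The point that I expect to take the most care — and the only genuinely non-formal step — is checking that the first map $\gamma\colon\Omega_{f}\to f^{*}C_{e}$ produced this way is fppf-dual to $d\phi$ \emph{as described in the statement}, namely the adjoint of the homomorphism $\Lie G\to\Lie\SheafAut_{f}=f_{*}\TT_{f}$ attached to the action homomorphism $G\to\SheafAut_{f}$ (cf.\ \Cref{thm:automorphism space}). By construction $\gamma$ is the linearization of the pointed morphism $\phi$ along $e_{X}$, i.e.\ the restriction along $e_{X}$ of $\phi^{*}\Omega_{(X\times_{S}X)/X}\to\Omega_{(f^{*}G)/X}$ together with the identification $\Delta^{*}\Omega_{(X\times_{S}X)/X}=\Omega_{f}$; the claim is that this equals the linearization of the action morphism along the identity of $G$. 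I would verify it on $T$-valued points over the dual numbers $T[\varepsilon]=T\times_{\Spec\ZZ}\Spec(\ZZ[\varepsilon]/(\varepsilon^{2}))$: an element of $(\Lie G)(T)$ is a $T[\varepsilon]$-point of $G$ reducing to $e$, which via the action is an automorphism of $X_{T[\varepsilon]}$ over $T[\varepsilon]$ restricting to the identity on $X_{T}$, hence a section of $\TT_{f}$ over $T$, and tracing the same element through $\phi$ produces the same section. The remaining ingredients — stability of conormal sheaves under base change for (locally closed) immersions, the identity $C_{\Delta}=\Omega_{f}$, and exactness of the functor $\VV_{(-)}$ turning right exact sequences of coherent sheaves into left exact sequences of linear spaces — are standard and are recorded in \Cref{sec:linear scheme} or available in \cite{stacks-project}.
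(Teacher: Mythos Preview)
Your proposal is correct and follows essentially the same approach as the paper: both arguments reduce to producing the right exact sequence $\Omega_f \to f^*C_e \to C_{e_{\St}} \to 0$ of conormal sheaves and then taking the fppf dual, and both verify the description of $d\phi$ by computing on $T[\varepsilon]$-points. The only cosmetic difference is that the paper obtains this sequence by pulling back the relative cotangent sequence of $\phi$ (over the second projections $p,q$) along the identity section, whereas you obtain it via the conormal exact sequence of the composition $X\xrightarrow{e_{\St}}\St\hookrightarrow f^*G$ together with base change for the conormal of $\Delta$.
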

			\begin{proof}
				Let $C_e$ be the conormal sheaf of the identity section of $G$. Denote by $j : X \to \St$ the identity section, and $p : G \times_S X \to X$ and $q : X \times_S X \to X$ the second projections. Consider the commutative diagram
				\[\begin{tikzcd}[column sep=tiny]
					X \arrow[rd, hook, "i"'] \arrow[r, hook, "j"] & \St \arrow[rr, "\phi'"] \arrow[d] & & X \arrow[d, "\Delta"] \\
					& G \times_S X \arrow[rr, "\phi"] \arrow[rd, "p"'] & & X \times_S X \arrow[ld, "q"] \\
					& & X
				\end{tikzcd}.\]
				Take the cotangent sequence $\phi^* \Omega_q \to \Omega_p \to \Omega_{\phi} \to 0$ and pull it back by $i^*$ to yield:
				\[ \Omega_f \longrightarrow f^* C_e \longrightarrow C_j \longrightarrow 0 .\]
				Here the last term is from isomorphisms $i^* \Omega_{\phi} = j^* \Omega_{\phi'} = C_j$. Its fppf dual is the desired left exact sequence.
				
				Let us now check the map defined in the first paragraph and the map $d\phi$ in the statement coincide. Set $\Spec k[\epsilon] = \Spec k[t]/t^2$ and realize the tangent schemes as Hom-schemes
				\[ \TT_p = \Hom_X (X \times_k k[\epsilon], G \times_S X) ,\qquad \TT_q = \Hom_X (X \times_k k[\epsilon], X \times_S X) .\]
				For each test scheme $x : T \to X$, the map $i^* \TT_p \to i^* \phi^* \TT_q$ in the first paragraph over $T$ is
				\[ \Hom_T (T \times_k k[\epsilon], G_T) \to \Hom_T (T \times_k k[\epsilon], X_T) \]
				given by the composition with the orbit map $\phi_x : G_T \to X_T$. Same description holds for the map $d\phi$ given in the statement.
			\end{proof}
			
			\begin{proposition} \label{prop:trivial stabilizer is free}
				Let $G \to S$ (resp. $f : X \to S$) be a smooth, separated, and finite type group space (resp. algebraic space with a $G$-action) over a normal scheme $S$. Assume that $X$ is a $G$-torsor over a dense open subscheme $S_0 \subset S$ and $\Lie \St \to X$ is trivial. Then the $G$-action on $X$ is free over $S$.
			\end{proposition}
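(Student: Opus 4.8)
The plan is to prove that the stabilizer group scheme $\St \to X$ coincides with the identity section; this is equivalent to the universal translation $\phi \colon G\times_S X \to X\times_S X$ being a monomorphism, after which one upgrades to the immersion property demanded in \Cref{sec:group scheme action}. I read the statement so that $X\to S$ is, like $G\to S$, smooth, separated, and of finite type. Since $S$ is normal, $X$ is then normal, $X\times_S X$ is normal, and $X_{S_0} = X\times_S S_0$ is scheme-theoretically dense in $X$ (\Cref{lem:density respects flat base change}), with $X_{S_0}\times_{S_0} X_{S_0}$ dense in $X\times_S X$; over $S_0$ the action is a torsor, so $\phi$ restricts there to an isomorphism and $\St\times_X X_{S_0}$ is the identity section of $X_{S_0}$.

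The crux — and what I expect to be the main obstacle — is to show that $\St\to X$ is \emph{étale}; granting that, the rest is formal. Unramifiedness is immediate: by \Cref{prop:cotangent sheaf of group scheme} the relative cotangent sheaf of $\St\to X$ is the pullback of its conormal sheaf along the structure map, and this conormal sheaf vanishes because $\Lie \St$ is the trivial linear space. For flatness I would argue as follows. As $X\to S$ is smooth, the diagonal $\Delta_{X/S}\subset X\times_S X$ is a regular immersion of codimension $d := \dim(X/S)$; moreover the torsor hypothesis together with the normality of $S$ — so its connected components are irreducible and the locally constant relative dimensions of $G\to S$ and $X\to S$, which agree over the dense open $S_0$, agree everywhere — forces $\dim(G/S) = d$. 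Working locally on $X\times_S X$, write $\Delta_{X/S} = V(t_1,\dots,t_d)$ and pull the $t_i$ back along $\phi$ to sections of $\mathcal O_{G\times_S X}$, so that $\St = V(\phi^*t_1,\dots,\phi^*t_d)$. For each $x\in X$ the fibre $G_x$ is regular of dimension $d$ and $\St_x = V(\phi^*t_1|_{G_x},\dots,\phi^*t_d|_{G_x})$ is $0$-dimensional (its Lie algebra vanishes, hence it is finite by Cartier), so $\phi^*t_1,\dots,\phi^*t_d$ restrict to a regular sequence on $G_x$; since $G\times_S X\to X$ is flat, the flatness criterion for the vanishing locus of a fibrewise regular sequence shows $\St\to X$ is flat, and flat together with unramified gives étale.

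Once $\St\to X$ is étale, the identity section $e\colon X\hookrightarrow \St$ is open and closed, so $\St = e(X)\sqcup \St''$ with $\St''\to X$ étale. Since $\St''$ is empty over $X_{S_0}$, its image in $X$ is an open subset contained in the closed nowhere-dense set $X\setminus X_{S_0}$, hence empty; thus $\St'' = \emptyset$, $\St = e(X)$, and $\phi$ is a monomorphism. Finally $\phi$ is of finite type and restricts to an isomorphism over the scheme-theoretically dense open $X_{S_0}\times_{S_0} X_{S_0}$, so by Zariski's main theorem — using that $X\times_S X$ is normal, in the spirit of \Cref{prop:Zariski main theorem} — it is an immersion onto a locally closed subspace, i.e. the $G$-action on $X$ is free over $S$.

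The delicate step is the flatness of $\St\to X$: it is precisely here that the smoothness of $X\to S$ (which makes $\Delta_{X/S}$ a regular immersion and $X$ normal) and the torsor hypothesis (which equates the two relative dimensions) enter, and without them the fibrewise-regular-sequence argument breaks down. Everything before this step (the conormal-sheaf computation) and everything after it (the open-image argument and the Zariski's-main-theorem upgrade) is routine.
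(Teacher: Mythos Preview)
Your proof is correct, but it takes a longer route than the paper's. The paper observes directly that $\Lie\St=0$ forces $\St\to X$ to be quasi-finite (relative dimension $0$ and finite type), so each orbit map $\phi_x\colon G_x\to X_x$ has finite fibers (a nonempty fiber is a torsor under the finite group $\St_x$), hence $\phi$ is quasi-finite; Zariski's main theorem applied to the quasi-finite, separated, birational $\phi$ with normal target $X\times_S X$ then yields that $\phi$ is an open immersion. You instead prove the stronger intermediate statement that $\St\to X$ is \emph{\'etale}, which requires your fibrewise-regular-sequence argument for flatness (correct: at each point of $\St_x$ the $d$ defining equations are a system of parameters in the $d$-dimensional regular local ring $\mathcal O_{G_x,z}$, hence a regular sequence, and the slicing criterion for flatness applies). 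From \'etaleness you deduce that $\St$ is scheme-theoretically trivial and $\phi$ is a monomorphism, and only then invoke Zariski. Both arguments terminate in the same Zariski step; what your approach buys is a cleaner conceptual statement ($\St$ is trivial, not merely quasi-finite), while the paper's buys economy, since it never needs flatness of $\St\to X$ at all.
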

			\begin{proof}
				Let us first show that the universal translation morphism $\phi : G \times_S X \to X \times_S X$ in \eqref{eq:universal translation} is quasi-finite. The morphism is defined over $X$ by second projections, so we may show the orbit map $\phi_x : G_x \to X_x$ is quasi-finite for every $x \in X$. Note that the stabilizer scheme $\St \to X$ is quasi-finite because its relative dimension is $0$ ($\Lie \St$ is trivial) and it is of finite type. Hence for every $x \in X$, the stabilizer $\St_x$ at $x$ is a finite algebraic group. For each point $y \in X_x$, its preimage under $\phi_x : G_x \to X_x$ is either empty or isomorphic to $\St_x \times_{k(x)} k(y)$. Hence it is a finite set so $\phi_x$ is quasi-finite.
				
				We now claim $\phi$ is an open immersion. Note that $\phi$ is quasi-finite, separated, and birational (isomorphic over $S_0$), and that $X \times_S X$ is normal. By Zariski's main theorem as in \Cref{prop:Zariski main theorem}, $\phi$ is an open immersion.
			\end{proof}

		\subsubsection{Ng\^o's $\delta$-regularity and weak abelian fibration} \label{sec:delta-regularity}
			Our reference for this section is Ng\^o's original article \cite[\S 7.1]{ngo10} \cite[\S 2]{ngo11} together with \cite[\S 5.4]{ari-fed16}, \cite[\S 2.2]{decat-rap-sacca21}, and \cite[\S 6]{ana-cav-lat-sac23}. In this subsection, we assume $S$ is a Noetherian scheme of characteristic $0$.
			
			Given a connected algebraic group $G$ over a field $k$ of characteristic $0$, Chevalley's structure theorem (e.g., \cite[Theorem 9.2.1]{neron}) claims there is a canonical short exact sequence
			\[ 1 \to G_{\operatorname{aff}} \to G \to G_{\operatorname{ab}} \to 1 ,\]
			where $G_{\operatorname{aff}}$ is a connected linear algebraic group and $G_{\operatorname{ab}}$ is an abelian variety, both defined over $k$. Now let $P \to S$ be a smooth commutative group scheme. For each point $s \in S$, the fiber $P_s$ is a locally algebraic group over $k(s)$, so its (strict) neutral component $P^{\circ\circ}_s$ admits a Chevalley short exact sequence as above. We call
			\begin{equation}\label{eq:delta-function}
				\delta : S \to \ZZ, \qquad s \mapsto \dim (P^{\circ\circ}_s)_{\operatorname{aff}}
			\end{equation}
			the \emph{$\delta$-function}. Notice that the $\delta$-function of $P$ is identical to that of its strict neutral component $P^{\circ\circ}$ (\S \ref{sec:strict neutral component}). Since $P^{\circ\circ} \to S$ is of finite type (\Cref{thm:smooth group space with connected fibers}), the $\delta$-function is an upper-semicontinuous function by \cite[Lemma 5.6.3]{ngo10}.
			
			\begin{definition} [$\delta$-regular group scheme]
				Let $S$ be a Noetherian scheme. A smooth commutative group scheme $P \to S$ is \emph{$\delta$-regular} if for each $i \ge 0$, the closed subset
				\begin{equation} \label{eq:appendix:delta-loci}
					D_i = \{ s \in S : \delta(s) \ge i \} \ \subset \ S
				\end{equation}
				has codimension $\ge i$ (i.e., every irreducible component of it has codimension $\ge i$).
			\end{definition}
			
			Note that if $P \to S$ is $\delta$-regular then $D_1 \subset S$ is a strict inclusion, so its neutral component $P^{\circ} \to S$ (\S \ref{sec:neutral component}) is an abelian scheme over $S_0 = S \setminus D_1$.
			
			\begin{definition} [Weak abelian fibration] \label{def:weak abelian fibration}
				Let $S$ be a Noetherian scheme. A \emph{weak abelian fibration} over $S$ is a pair $(X, P)$ such that
				\begin{enumerate}[label=\textnormal{(\roman*)}]
					\item $f : X \to S$ is a proper equidimensional morphism of relative dimension $d$.
					\item $P \to S$ is a smooth, commutative, and quasi-projective group scheme acting on $X$ over $S$. It has the same relative dimension $d$.
					\item For every point $s \in S$, the kernel of the $P_s$-action on $X_s$ is affine.
				\end{enumerate}
				We simply call $X \to S$ a weak abelian fibration if the $P$-action on $X$ is clear in context.
			\end{definition}
			
			\begin{example} \label{ex:weak abelian fibration}
				\Cref{def:weak abelian fibration} is more restrictive than Ng\^o's original definition \cite[\S 7.1.1--4]{ngo10} in two respects. (i) We required $f$ to be equidimensional. (ii) We assumed $P \to S$ is quasi-projective instead of assuming its Tate module is \emph{polarizable} as in \cite[\S 7.1.4]{ngo10}.
			\end{example}
			
			\begin{lemma} \label{lem:delta-regularity}
				Let $(X, P)$ be a weak abelian fibration over a Noetherian scheme $S$ as in \Cref{def:weak abelian fibration}. Then
				\begin{enumerate}
					\item The Tate module of $P$ is polarizable.
					\item Every point $x \in X$ has an affine stabilizer $\St_x$.
				\end{enumerate}
				In particular, $(X, P)$ is a weak abelian fibration in the sense of \cite[\S 7.1.1]{ngo10}.
			\end{lemma}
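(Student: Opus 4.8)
The plan is to upgrade the hypotheses of \Cref{def:weak abelian fibration} --- where we assume only that the \emph{kernel} of the action on each geometric fibre is affine, and that $P$ is quasi-projective and $f$ equidimensional --- to the two conditions in Ng\^o's definition that are not immediately visible: that \emph{every} point-stabilizer is affine (part (2)), and that the Tate module of $P$ is polarizable (part (1)).

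For (2), I would fix a geometric point $s$ of $S$ and work inside the proper, pure $d$-dimensional fibre $X_s$ with $d=\dim P_s$, following the argument of \cite[Lemma 5.16]{ari-fed16} (a variant of \cite[\S 7.1]{ngo10}). Its main reduction is that it suffices to produce a \emph{single} point of $X_s$ with affine stabilizer; this is furnished here by our hypothesis, because the kernel $N_s$ of the $P_s$-action is a common substabilizer which is affine, and it agrees, up to finite subgroups, with the generic stabilizer --- a point being that, by commutativity and separatedness, a subgroup fixing a dense subset of a component of $X_s$ fixes that component, hence lies in the kernel of the action on it. One then propagates affineness to all points: if some $\St_x$ had non-trivial abelian part, its image $A$ under the Chevalley projection \cite[Theorem 9.2.1]{neron} $P_s^{\circ}\twoheadrightarrow (P_s^{\circ})_{\operatorname{ab}}$ would be a positive-dimensional abelian subvariety, and by commutativity $A$ would fix the orbit $P_s^{\circ}\cdot x$, hence its closure in $X_s$, pointwise; a dimension count inside the $d$-dimensional fibre --- where properness and the equidimensionality of $f$ are used --- would force $A$ to fix a full $d$-dimensional component pointwise, i.e.\ $A\subseteq N_s$, contradicting affineness of $N_s$ (recall $N_s^{\circ}\subseteq (P_s^{\circ})_{\operatorname{aff}}$ automatically, an affine group admitting no non-constant homomorphism to an abelian variety). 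Hence $\St_x$ is affine for every $x$.

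For (1), I would restrict to the dense open $S_0=S\setminus D_1$ over which the $\delta$-function vanishes, so that the neutral component $P^{\circ}|_{S_0}$ is an abelian scheme. Quasi-projectivity of $P$ supplies a line bundle on $P$ ample relative to $P\to S$; replacing it by its product with its pullback under $[-1]$ makes it symmetric, hence defines a relative polarization of $P^{\circ}|_{S_0}$, which dualizes to a non-degenerate alternating pairing valued in $\QQ_{\ell}(1)$ on the Tate module over $S_0$. It then remains to observe that Ng\^o's Tate module of $P/S$, together with its weight filtration, is recovered over all of $S$ from its restriction to $S_0$ --- its weight $-1$ graded piece being fibrewise the Tate module of the abelian quotient --- so that the pairing just constructed is exactly the polarization datum required in \cite[\S 7.1.4]{ngo10}; this is routine bookkeeping. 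The real difficulty is the dimension count in (2): the equidimensionality of $f$, which Ng\^o does not assume, is precisely what prevents a $P_s^{\circ}$-orbit from degenerating in codimension zero without its stabilizer's abelian part annihilating the entire component --- it is the hinge that converts ``some orbit is too small'' into ``$A\subseteq N_s$''. With (1) and (2) established, $(X,P)$ satisfies all of Ng\^o's axioms.
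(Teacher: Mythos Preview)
Your approach matches the paper's: for (2) you invoke \cite[Lemma 5.16]{ari-fed16} exactly as the paper does, and for (1) you extract a polarization from a relatively ample line bundle, which is the content of the paper's citation \cite{anc-fra24}. Two points deserve correction, however.

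For (1), what you call ``routine bookkeeping'' is not routine. Producing the alternating pairing on the Tate module of the abelian scheme $P^{\circ}|_{S_0}$ is indeed easy, but Ng\^o's condition demands a pairing on the full constructible sheaf $R^{2d-1}\nu_{!}\,\QQ_{\ell}$ over all of $S$, compatible with the weight filtration and perfect on the weight $-1$ graded piece at \emph{every} point, including those with $\delta>0$. The Tate module over $S$ is not recovered from its restriction to $S_0$; over a degenerate fibre it sees the torus and unipotent parts as well. That an ample line bundle on $P^{\circ\circ}\to S$ does induce such a pairing globally is precisely the result of Ancona--Fratila the paper cites; you should either cite it or supply the argument, rather than declare it bookkeeping.

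For (2), your sketch of the Arinkin--Fedorov argument has an unclear step and a misplaced emphasis. You assert that ``a dimension count inside the $d$-dimensional fibre \dots would force $A$ to fix a full $d$-dimensional component pointwise''; but $\St_x^{\circ}$ fixing the orbit closure $\overline{P_s^{\circ}\cdot x}$ does not by itself force it to fix an entire irreducible component of $X_s$, and you have not explained what dimension count bridges this. More importantly, the equidimensionality hypothesis is not the hinge here: \cite[Lemma 5.16]{ari-fed16} is stated and proved for an arbitrary commutative algebraic group acting on a proper scheme over a field, with no relation assumed between $\dim G$ and $\dim X$. The paper's proof of this lemma simply cites that result; equidimensionality is used elsewhere (for Ng\^o's support estimates, and in \Cref{prop:delta-regularity}) but not here.
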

			\begin{proof}
				(1) This is Theorem~1.2 and 6.2 in \cite{anc-fra24}: every relative ample line bundle on $\nu : P^{\circ\circ} \to S$ induces a polarization of the Tate module $R^{2d-1} \nu_! \QQ_l$. (2) This is \cite[Lemma 5.16]{ari-fed16}: if $s \in S$ is a point, then a single point $x \in X_s$ has an affine stabilizer $\Longleftrightarrow$ every point $x \in X_s$ has an affine stabilizer $\Longleftrightarrow$ $\ker (P_s \to \Aut_{X_s/k(s)})$ is affine.
			\end{proof}
			
			The following is the original idea of Ng\^o that the $\delta$-regularity of a group scheme can be checked from its action on a weak abelian fibration $X \to S$.
			
			\begin{proposition} \label{prop:delta-regularity}
				Let $(X, P)$ be a weak abelian fibration over a Noetherian scheme $S$ and $\St \to X$ the stabilizer scheme of the action. Then
				\begin{enumerate}
					\item The $\delta$-function in \eqref{eq:delta-function} can be computed by
					\[ \delta(s) = \max \{ \dim \St_x : x \in X_s \} \qquad\mbox{for all}\quad s \in S .\]
					
					\item The $\delta$-loci in \eqref{eq:appendix:delta-loci} can be computed by
					\[ D_i = \pi \big( \Fitt_{i-1} C_e \big) ,\]
					where $C_e$ is the conormal sheaf of the identity section $e : X \to \St$.
				\end{enumerate}
			\end{proposition}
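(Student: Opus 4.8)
The plan is to establish item (1) first and then deduce item (2) from it together with the elementary theory of Fitting ideals and Cartier's theorem. Write $f : X \to S$ for the structure morphism (the ``$\pi$'' of the statement). Fix $s \in S$ and set $Y = X_s$, $G = P^{\circ\circ}_s$ (the connected fibre of the strict neutral component), and $L = G_{\operatorname{aff}}$, so that $\dim L = \delta(s)$. The goal of (1) is the pair of inequalities $\dim \St_x \le \delta(s)$ for every $x \in Y$ and $\dim \St_{x_0} \ge \delta(s)$ for a suitable closed point $x_0 \in Y$; since $x \mapsto \dim \St_x$ is upper semicontinuous (Chevalley), its maximum over $Y$ is attained at a closed point, so these two bounds yield $\delta(s) = \max\{ \dim \St_x : x \in Y \}$.

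The upper bound is formal. By \Cref{lem:delta-regularity}(2) the stabilizer $\St_x$ is an affine group scheme over $k(x)$ for every $x \in Y$, so its neutral component $(\St_x)^{\circ}$ is a connected affine closed subgroup of $P_s$; its image under $P^{\circ}_s \to P^{\circ}_s/L$ is a connected affine subgroup of an abelian variety, hence trivial (a connected group that is both affine and complete is trivial, and a closed subgroup of an abelian variety is complete), so $(\St_x)^{\circ} \subset L$ and $\dim \St_x \le \dim L = \delta(s)$. For the lower bound I would pass to $\overline{k(s)}$ — which affects neither $\delta(s)$ nor $\max_x \dim \St_x$ — and apply Borel's fixed point theorem: since $P$ is commutative, $L$ is a connected commutative, hence solvable, linear algebraic group, and it acts on $Y$, which is non-empty and complete because $f$ is proper (\Cref{def:weak abelian fibration}(i)); as $L$ is connected it preserves each irreducible component of $Y$, so applying Borel's theorem to one such component produces a closed point $x_0 \in Y$ fixed by $L$. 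Then $L \subset \St_{x_0}$, whence $\dim \St_{x_0} \ge \dim L = \delta(s)$.

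For item (2), recall from \S\ref{sec:linear scheme} that the Lie algebra space $\Lie \St \to X$ is the linear scheme $\Spec_X(\Sym^{*} C_e)$ attached to the conormal sheaf $C_e = e^{*}\Omega_{\St/X}$ of the identity section $e : X \to \St$, a coherent sheaf on $X$. By the standard description of Fitting ideals (see \cite[\S 20.2]{eis:commutative}), a point $x \in X$ lies set-theoretically in the closed subscheme $\Fitt_{i-1} C_e$ exactly when $\dim_{k(x)} C_e \otimes_{\mathcal O_X} k(x) \ge i$. But $C_e \otimes k(x)$ is the cotangent space at the identity of the group scheme $\St_x$ over $k(x)$, which has characteristic $0$ and hence is smooth by Cartier's theorem, so $\dim_{k(x)} C_e \otimes k(x) = \dim \St_x$. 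Therefore $\Fitt_{i-1} C_e = \{ x \in X : \dim \St_x \ge i \}$ as sets, and consequently $f(\Fitt_{i-1} C_e) = \{ s \in S : \exists\, x \in X_s \text{ with } \dim \St_x \ge i \}$, which by item (1) equals $\{ s : \delta(s) \ge i \} = D_i$. This image is closed because $f$ is proper, so it coincides with the reduced closed subscheme $D_i$ of \eqref{eq:appendix:delta-loci}.

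The only real difficulty is the lower bound in item (1): exhibiting a point of $X_s$ whose stabilizer attains the full dimension $\delta(s)$. The mechanism is Borel's fixed point theorem, and the care required lies in handling the possible reducibility and non-reducedness of $X_s$ and the base change to $\overline{k(s)}$. Everything else is bookkeeping — matching the stabilizer scheme to the linear-scheme and Fitting-ideal formalism of \S\ref{sec:linear scheme}--\ref{sec:Fitting subscheme}, and a single invocation of Cartier's theorem.
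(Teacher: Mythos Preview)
Your proposal is correct and matches the paper's approach: the paper cites \cite[Lemma 5.17]{ari-fed16} for item (1), noting that Borel's fixed point theorem is the key input, and your argument is precisely the unpacking of that citation (affineness of stabilizers forces $(\St_x)^\circ \subset L$, and Borel yields an $L$-fixed point on a component of $X_s$); for item (2) your identification of $\Fitt_{i-1} C_e$ with $\{x : \dim \St_x \ge i\}$ via Cartier's theorem and then taking the proper image is exactly what the paper does.
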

			\begin{proof}
				(1) This is tacitly mentioned in the discussion of algebraic integrable systems in \cite[\S 2]{ngo11} and proved in \cite[Lemma 5.17]{ari-fed16}: it is a consequence of Borel's fixed point theorem. (2) From the previous item, $D_i$ is the image of the closed set
				\[ \{ x \in X : \dim \St_x \ge i \} \ \subset \ X .\]
				The fiber dimension of $\St$ is the fiber dimension of the coherent sheaf $C_e$. Hence this closed subset is the $(i-1)$-th Fitting subscheme of $C_e$ (see \S \ref{sec:Fitting subscheme}).
			\end{proof}
			
			\begin{definition} [$\delta$-regular abelian fibration]
				A \emph{$\delta$-regular abelian fibration} is a weak abelian fibration $(X, P)$ such that $P$ is $\delta$-regular.
			\end{definition}

\printbibliography
\end{document}